\documentclass[reqno,11pt]{amsart}
\usepackage{amsmath,amssymb,mathrsfs,color,amsthm,mathtools}
\usepackage{esint}
\usepackage{cases}
\usepackage{graphicx}
\usepackage{epstopdf}
\usepackage{soul}
\usepackage{tikz}
\usetikzlibrary{arrows}
\usepackage[margin=1in]{geometry}
\usepackage{
indentfirst, latexsym, bm, enumerate,
wrapfig, adjustbox, subfigure, appendix,
float, bbding, xcolor, multirow, framed, lipsum, epsdice,
url, booktabs, makecell, caption, algpseudocode, pifont,
balance, dcolumn, fontenc, extarrows, arydshln
}

\newtheorem{definition}{Definition}[section]
\newtheorem{theorem}{Theorem}[section]
\newtheorem{lemma}{Lemma}[section]
\newtheorem{proposition}{Proposition}[section]
\newtheorem{corollary}{Corollary}[section]

\newtheorem{remark}{\bf{Remark}}[section]

\makeatletter
\newcommand{\rmnum}[1]{\romannumeral #1}
\newcommand{\Rmnum}[1]{\expandafter\@slowromancap\romannumeral #1@}
\makeatother

\numberwithin{equation}{section}

\usepackage{hyperref}
\usepackage{bookmark}

\hypersetup{
	colorlinks = true,
	linkcolor = blue,
	citecolor = blue,
	urlcolor = blue,
    pdfpagemode = UseOutlines,      
    pdfborder = {0 0 0}             
}

\bookmarksetup{depth=4}

\renewcommand{\hl}[1]{#1}

\begin{document}

\title[A stochastic convex integration scheme for intermittent Onsager theorem]{A stochastic convex integration scheme for the intermittent Onsager theorem in two-dimensional domains}

\author{Zhenxin Liu}
\address{Z. Liu: School of Mathematical Sciences,
Dalian University of Technology, Dalian 116024, P. R. China}
\email{zxliu@dlut.edu.cn}

\author{Wenhu Zhong}
\address{W. Zhong (Corresponding author): School of Mathematical Sciences,
Dalian University of Technology, Dalian 116024, P. R. China}
\email{zhongwenhu@dlut.edu.cn}

\date{September 14, 2026}
\subjclass[2020]{35Q31, 60H15, 42B25, 76F02}
\keywords{\hl{two-dimensional Euler equations, Onsager theorem, intermittency, stochastic Newton--Nash iteration, Lagrangian Littlewood--Paley localization}}

\begin{abstract}
For any $\gamma \in \left[ {0,\frac{1}{3}} \right)$, we construct $\gamma$-H\"{o}lder-continuous martingale solutions to the two-dimensional stochastic incompressible Euler equations. These martingale solutions exhibit dissipative behavior and intermittency, thereby establishing the flexible part of the intermittent Onsager theorem. The proof rests on a stochastic and intermittent variant of the Newton--Nash iteration combined with the Littlewood--Paley decomposition scheme. This composite iteration scheme incorporates new stochastic pressure, Reynolds stress and intermittent perturbations to formalize the stochastic and intermittent fluctuations.
\end{abstract}

\maketitle
\tableofcontents  

\section{Introduction}\label{Intro}
The present paper investigates the two-dimensional stochastic incompressible Euler equations
\begin{equation}
\left\{ {\begin{array}{*{20}{l}}
\partial _t {u} + \left( {{u} \cdot \nabla } \right){u} +  \nabla \mathfrak{p} =  \mathfrak{\dot W} , &\mbox{on}\ \left[ {0, T } \right]  \times  \mathbb{T}^2, \\
\nabla  \cdot {u} = 0,
\end{array}} \right.  \label{2DEEUGE1}
\end{equation}
where $\mathbb{T}^2 \triangleq {\mathbb{R}^2} / 2\pi {\mathbb{Z}^2}$ is the two-dimensional torus, and $T$ is an arbitrarily given positive real number. The velocity ${u}\left( {t,x} \right)$ is an $\mathbb{R}^2$-valued function on $\left[ {0, T } \right] \times \mathbb{T}^2$, satisfying the zero-mean condition $\int_{\mathbb{T}^2} {{u}\left( {t,x} \right)}\ \mathrm{d}x = \mathbf{0}$ for any $t \in \left[ {0, T } \right]$. The pressure $\mathfrak{p} \left( {t , x} \right)$ is an $\mathbb{R}$-valued function on $ \left[ {0, T } \right] \times \mathbb{T}^2$. The Wiener process $\mathfrak{ W} \left( {t} \right)$ belongs to the collection $\mathscr{W}$ specified in Appendix B, and its formal time derivative $ {\mathfrak{\dot W}} \left( t \right)$ is interpreted as temporal white noise. We denote the kinetic energy by $\mathcal{E}\left( t \right) \triangleq \left\| {u\left( t \right)} \right\|_{{L^2}}^2 $.
\par
Turbulence is characterized by anomalous dissipation of energy and intermittency that have motivated extensive mathematical investigations of weak solutions to the incompressible Euler equations (see Cheskidov and Shvydkoy \cite{Zbl1512.76040}, Dascaliuc and Gruji\'{c} \cite{Zbl1235.76049}, as well as related works \cite{MR4645737,Zbl1475.76019,Zbl1536.35261,Zbl1508.35069,Zbl1553.35159,Zbl08114326}). A fundamental principle in this field originates from Onsager's pioneering work \cite{MR0036116}: the Euler equations exhibit anomalous dissipation when the H\"{o}lder exponent of solutions is less than 1/3. The threshold condition reveals the intrinsic mechanism of anomalous dissipation and lays a mathematical foundation for the celebrated K41 theory of turbulence \cite{Kol1941a,Kol1941b,Kol1941c,Kol1941d}. The mathematical resolution of Onsager critical solutions has led to a profound development of convex integration methods (see Buckmaster and Vicol \cite{Zbl1461.35186}). Most convex integration constructions of dissipative and intermittent solutions are based on the critical regularity criterion.
\par
Building on the $L^{\infty }$-convex integration method introduced by Shnirelman \cite{ShnirelmanA-1,ShnirelmanA-2}, the dissipative solution to the Euler equations was constructed by De Lellis and Sz\'{e}kelyhidi \cite{Zbl1350.35146,Zbl1280.35103,Zbl1307.35205}, as well as Daneri and Sz\'{e}kelyhidi \cite{Zbl1372.35221}. Their construction was further refined by Buckmaster and his collaborators \cite{MR3302631,MR3374958,Zbl1480.35317}, as well as Drivas and Nguyen \cite{Zbl1401.76068}. These groundbreaking works triggered the resolution of the flexible part of the Onsager theorem by Isett \cite{Zbl1416.35194,Zbl1547.35524}, who adopted the Nash iteration scheme and the Littlewood--Paley decomposition to construct a non-trivial H\"{o}lder-continuous solution for the three-dimensional deterministic Euler equations. Recently, combining the Newton iteration scheme and the Nash iteration scheme, Giri and Radu \cite{Zbl1556.35231} established the Onsager theorem for the two-dimensional deterministic Euler equations. Moreover, we refer to \cite{Zbl1520.35112,Zbl1508.76067,Zbl1412.35215,arXiv:2305.18509,arXiv:2502.04803,Zbl07890701} for more mathematical results concerning the Onsager theorem of other deterministic fluid equations.
\par
However, Cheskidov and Shvydkoy \cite{Zbl1295.76010} emphasized that intermittent fluctuations are not adequately described by statistically self-similar scaling laws predicted by the K41 theory of turbulence (also see \cite{MR3943488,Zbl07201254,Zbl07892740}). Consequently, intermittency has become a fundamental component in the mathematical theory of turbulence,  manifested through the spatial concentration of energetic structures and anomalous scaling of fluctuations. Employing the intermittent Beltrami wave introduced by Buckmaster and Vicol \cite{Zbl1412.35215}, the flexible part of the Onsager theorem was reconfirmed by Novack and Vicol \cite{MR4601999}, who constructed intermittent H\"{o}lder-continuous solutions to the three-dimensional deterministic Euler equations. This intermittent convex integration construction introduced a framework for capturing turbulent small-scale structures. Later, Rosa, Drivas, Inversi and Isett \cite{arXiv:2502.10032} established a close link between dissipative behavior and intermittency.
\par
Despite these advances, most existing convex integration theory has been developed predominantly in deterministic settings. However, environmental fluctuations, unresolved degrees of freedom, and external random forcing naturally introduce stochastic components into fluid models. Biferale, Cencini, Pierotti and Vulpiani \cite{Zbl0939.76040} clarified that stochastic effects dynamically amplify intermittent oscillations. In fact, the regularity of driving noise affects the critical regularity threshold of solutions (see \cite{Zbl1426.35183}). Meanwhile, the interplay between nonlinearity and stochastic forcing spontaneously gives rise to non-Gaussian heavy tails (see \cite{Falkovichsfa}), which constitute the mathematical signature of intermittency. These observations reveal a fundamental mathematical obstruction: the perturbations in the stochastic setting must simultaneously incorporate random temporal oscillations, stochastic transport effects, and intermittent concentrations. A stochastic convex integration theory capable of constructing intermittent Onsager-critical solutions remains undeveloped. Accordingly, establishing the intermittent Onsager theorem for the two-dimensional stochastic Euler equations \eqref{2DEEUGE1} remains a fundamental open problem at the crossroads of convex integration theory, stochastic analysis and turbulent phenomenology.
\par
The present paper develops a systematic stochastic convex integration scheme to establish the intermittent Onsager theorem for the two-dimensional stochastic Euler equations \eqref{2DEEUGE1}. More precisely, for any $\gamma \in \left[ {0,\frac{1}{3}} \right)$, we prove the existence of $\gamma$-H\"{o}lder-continuous martingale solutions that dissipate kinetic energy and exhibit intermittency. The main results are presented as follows.
\par
\begin{theorem}\label{CTSE-O-C}
For any $\gamma  \in \left[ {0,\frac{1}{3}} \right)$, let the Wiener process ${\mathfrak{W}}\left( {t} \right) \in \mathscr{W}$ with covariance operator $Q$ satisfy $\mathrm{Tr} \, Q + \mathrm{Tr} \left( {{Q^{\frac{1}{2}}}{{\left( { - \Delta } \right)}^{\frac{1 + \gamma }{2} + \rho}}{{\left( {{Q^{\frac{1}{2}}}} \right)}^*}} \right) <  \infty$ for some $0 < \rho \ll 1$. Then, there exists a stopping time $\mathfrak{t}$ satisfying $\mathbb{P}\left( {0 < {\mathfrak{t}} \leqslant T} \right) = 1 $ such that Eq.\eqref{2DEEUGE1} admits a martingale solution 
\begin{equation*}
{u }\left( t ,x \right) \in C\left( {\left[ {0,T} \right];{C^\gamma }\left( {{\mathbb{T}^2};{\mathbb{R}^2}} \right)} \right) ,\ \ {\mathrm{supp}_\mathrm{t}} \, u \subset \left[ {0,\mathfrak{t}} \right]   ,\  \mathbb{P}\mbox{-a.s.} ,
\end{equation*}
satisfying the strict energy inequality
\begin{equation}\label{SHUDJ-NENH-B-SIDIS}
\mathcal{E}\left( t \right) <  \mathcal{E}\left( \tau \right) + 2 \int_s^t {{{{\left\langle { {u\left( \tau \right),\mathrm{d}\mathfrak{ W}\left( \tau \right)} } \right\rangle }_{{\mathbb{T}^2}}}}}  + \left( {t - s} \right) \cdot \mathrm{Tr}\, Q , \ \ \forall \, 0 \leqslant s < t \leqslant \mathfrak{t} ,\  \mathbb{P}\mbox{-a.s.}
\end{equation}
\end{theorem}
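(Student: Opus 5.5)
The plan is to first remove the noise by a change of variables and then run a deterministic-in-$\omega$, pathwise convex integration scheme.

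\textbf{Reduction to a random PDE.} Let $z$ solve the linear stochastic Stokes-type problem $\mathrm{d}z + \nabla \pi_z\,\mathrm{d}t = \mathrm{d}\mathfrak{W}$ with $z(0)=0$; since $\mathfrak{W}$ is divergence-free, $z=\mathfrak{W}$. The trace condition $\mathrm{Tr}\,(Q^{1/2}(-\Delta)^{\frac{1+\gamma}{2}+\rho}(Q^{1/2})^*)<\infty$ together with Kolmogorov continuity gives $z\in C^{\frac12-\delta}([0,T];C^{1+\gamma+\rho'})$ almost surely. Writing $u=v+z$, the unknown $v$ solves the random Euler system
\begin{equation*}
\partial_t v+\nabla\cdot\big((v+z)\otimes(v+z)\big)+\nabla p=0,\qquad \nabla\cdot v=0 ,
\end{equation*}
pathwise. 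I then introduce the stopping time $\mathfrak{t}=\inf\{t:\|z(t)\|_{C^{1+\gamma+\rho'}}+\|z\|_{C^{1/2-\delta}_tC^{1}}\geqslant L\}\wedge T$, which is strictly positive almost surely. On $[0,\mathfrak{t}]$ all iteration constants are deterministic.

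\textbf{Iteration.} Construct $(v_q,p_q,R_q)$ solving the Euler--Reynolds system with the forcing $z$, where $R_q$ is traceless symmetric, $\|R_q\|_{C^0}\leqslant \delta_{q+1}=\lambda_{q+1}^{-2\beta}$ and $\|v_q\|_{C^1}\leqslant \delta_q^{1/2}\lambda_q$, with $\beta$ slightly above $\gamma$. Each step proceeds as follows.
\begin{enumerate}
\item Mollify.
\item Apply the Newton step of Giri--Radu to kill the low-frequency stress. This means solving linearized transport equations along the flow of $v_\ell+z$, where the $z$-terms enter as smooth, adapted coefficients.
\item Add the Nash perturbation. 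It is built from intermittent Mikado-type flows in 2D, localized in Lagrangian coordinates via the Littlewood--Paley/time cutoffs, and uses a time-dependent energy profile $e(t)$ to prescribe the energy.
\end{enumerate}
All building blocks are functionals of $z$ restricted to $[0,t]$, so progressive measurability is preserved. The temporal cutoffs are built so that $v_q$ vanishes past $\mathfrak{t}$, which gives $\mathrm{supp}_t u\subset[0,\mathfrak{t}]$ once the $z$ contribution is also stopped at $\mathfrak{t}$. Convergence in $C_tC^\gamma$ follows from $\sum\delta_q^{1/2}\lambda_q^\gamma<\infty$.

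\textbf{Energy inequality and martingale solution.} Choose $e(t)$ strictly decreasing so that the limit satisfies $\|v(t)\|^2$ plus the cross terms equal to a prescribed strictly dissipative profile. Then apply It\^o's formula to $\|z\|^2$ and to the cross term $\langle v,z\rangle$. This is legitimate because $v$ is $C^\gamma$ in space and adapted, and $z$ is smooth in space. The result is the identity $\mathcal{E}(t)-\mathcal{E}(s)-2\int_s^t\langle u,\mathrm{d}\mathfrak{W}\rangle-(t-s)\mathrm{Tr}\,Q = $ the anomalous dissipation, which is $<0$ by design. Since everything is constructed pathwise on a fixed probability space, the solution is in fact probabilistically strong; hence it is a martingale solution.

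\textbf{Main obstacle.} I expect the hardest step to be the Newton step estimates combined with intermittency. Transporting the stresses by $v_\ell+z$ requires $z$ to be Lipschitz with controlled time regularity, but $z$ is only $C^{1/2-}$ in time. The transport error $\partial_t z\otimes w$ therefore cannot be differentiated naively. I would handle it by mollifying $z$ in time at scale $\ell_t$ and placing the remainder $z-z_\ell$ into the stress, using $\|z-z_\ell\|\lesssim L\ell_t^{1/2-\delta}$. The cost is that the parameters must satisfy $\ell_t^{1/2-\delta}\ll\delta_{q+2}$ while the CFL-type time scale stays compatible. This imposes the $\rho$-margin in the hypothesis. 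Intermittent concentration must then be balanced against the $C^0$ stress bound needed for the $\gamma<1/3$ threshold.
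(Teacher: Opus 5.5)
The step that fails is the regularity you assign to the noise. For $\mathfrak{W}=\sum_j\frac{\varsigma_j}{\sqrt{2}\pi}\mathfrak{B}_j(t)\cos(k_jx_1)\mathbf{e}_2$, the operator $(-\Delta)^{\frac{1+\gamma}{2}+\rho}$ acts on the $j$-th mode by $k_j^{1+\gamma+2\rho}$. So the hypothesis reads $\sum_j\varsigma_j^2k_j^{1+\gamma+2\rho}<\infty$, i.e.\ $\mathbb{E}\|\mathfrak{W}(t)\|^2_{\dot H^{\frac{1+\gamma}{2}+\rho}}\lesssim t$. Kolmogorov's criterion, even with the Gaussian gain, then gives at most $z\in C^{\frac12-}_tC^{\frac{1+\gamma}{2}+\rho-}_x$. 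This is strictly below $C^1$, since $\gamma<\frac13$ and $\rho\ll1$; the paper itself uses the condition only to place $\mathfrak{W}$ in $C^\gamma$.

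For instance, $k_j=2^j$, $\varsigma_j^2=2^{-j(1+\gamma+2\rho)}(1+j)^{-2}$ is admissible, but $\mathbb{E}\|\mathfrak{W}(t)\|^2_{\dot H^1}=\infty$. Hence $\mathfrak{W}(t)\notin H^1$ almost surely for every $t>0$, and the stopping time you define is $0$. Everything in your plan that uses $\nabla z$ therefore breaks down:
the flows of $v_\ell+z$ and the bound $\|\nabla\Phi_k-\mathrm{Id}\|_0\lesssim\tau_q\|\nabla(v_\ell+z)\|_0$ in the Newton and Nash steps;
the CFL-type compatibility of $\tau_q$ with $\|\nabla z\|_0$;
and the It\^o computation for $\langle v,z\rangle_{\mathbb{T}^2}$, whose drift is $\langle u\otimes u,\nabla z\rangle_{\mathbb{T}^2}$.
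The remedy in your last paragraph (mollify $z$ and move $z-z_\ell$ into the stress) targets only the time roughness. A remainder that is not $C^1$ in space is incompatible with the $C^N$ and material-derivative bounds on $\mathring{R}_{q+1}$ that the next Newton and Nash steps consume. Handling such a noise therefore needs a different design, not a parameter adjustment.

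The paper's construction is organized around exactly this point: the noise is never a transport coefficient. The Newton flows $\Phi_k$ are those of $\bar v_q$, and the Nash flows $\tilde\Phi_k$ are those of $\bar v_{q,\Gamma}=\bar v_q+w^{(\Gamma)}_{q+1}$. The noise is assembled as $\mathfrak{W}_{q+1}=\mathfrak{W}_q+\mathfrak{W}^{loc}_{q+1}$, adding one mode per step at frequency $\lambda_{q+1}$ with amplitude $\delta_q^{\alpha}\delta_{q+2}^{1/2}$, and the cross terms against the noise are placed in the stochastic pressure $\mathfrak{p}^s_{q+1}$. The It\^o terms $\|\hat{\mathfrak{W}}_q\|^2_{L^2}+2\int_0^t\langle\hat v_q,\mathrm{d}\mathfrak{W}_q\rangle_{\mathbb{T}^2}$ are built into the energy gap $\mathfrak{E}_q$. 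In the limit, It\^o's isometry and formula then give $\mathfrak{e}(t)=\|u(t)\|^2_{L^2}-2\int_0^t\langle u,\mathrm{d}\mathfrak{W}\rangle_{\mathbb{T}^2}-t\,\mathrm{Tr}\,Q$ without ever differentiating the noise. The Wiener process is thus an output of the construction (the limit of $\mathfrak{W}_q$), which is why only a martingale solution is claimed. Your pathwise route could give more, namely strong solutions for a prescribed noise, but only under a stronger, Lipschitz-type assumption on $Q$ that the statement does not make.

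Two smaller points. Stopping $z$ at $\mathfrak{t}$ freezes it at $\mathfrak{W}(\mathfrak{t})$, which is not zero in general, so it does not give $\mathrm{supp}_{\mathrm{t}}\,u\subset[0,\mathfrak{t}]$. And intermittency plays no role in this statement, so you can avoid the unresolved balancing of intermittent concentration against the $C^0$ bounds by using non-intermittent building blocks.
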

\par
\begin{remark}\label{OKDIJF-RE}
\rm{For the two-dimensional stochastic Euler equations \eqref{2DEEUGE1}, a straightforward application of It\^{o}'s formula shows that general weak solutions satisfy the kinetic energy balance
\begin{equation}\label{NHUY-SDF-W-1}
\mathbb{E}\mathcal{E}\left( t \right) = \mathbb{E}\mathcal{E}\left( 0 \right) + t \cdot \mathrm{Tr}\, Q ,\ \ \forall \, t \in \left[ {0,T} \right] ,
\end{equation}
where the symbol $\mathbb{E}$ denotes the expectation operator. Notice that the kinetic energy balance \eqref{NHUY-SDF-W-1} precludes conservation of kinetic energy. Consequently, in the stochastic version of the flexible part of the Onsager theorem, the assertion of non-conservation is replaced by the dissipative behavior \eqref{SHUDJ-NENH-B-SIDIS}. In fact, the dissipative behavior \eqref{SHUDJ-NENH-B-SIDIS} implies that the total energy $\mathbb{E}\mathcal{E}\left( t \right) - t \cdot \mathrm{Tr}\, Q$ is not conserved. A similar discussion was provided by L\"{u}, L\"{u} and Zhu \cite{arXiv:2505.06915}, who examined the Onsager theorem for the three-dimensional stochastic Euler equations. In this paper, our constructed martingale solutions further exhibit intermittency and inertial-range intermittency.}
\end{remark}
\par
\begin{theorem}\label{CTSE-O-C-1B}
Let ${u }\left( t ,x \right)$ be the $\gamma$-H\"{o}lder-continuous martingale solution to Eq.\eqref{2DEEUGE1} obtained in Theorem \ref{CTSE-O-C}. Suppose that $M$ is in $\left( {\frac{1}{2},\frac{b}{2}} \right)$, and the exponents $p_1$ and $p_2$ admit ${p_1} > {p_2} > \frac{{2M}}{{\gamma M + M - \beta  + \alpha }}$, where the basic parameters $\alpha$ and $\beta$ are specified in Appendix C. Then, for any $t \in {\mathrm{supp}_\mathrm{t}} \, u  \setminus \left\{ 0 \right\}$, the $\gamma$-H\"{o}lder-continuous martingale solution ${u }\left( t ,x \right)$ exhibits frequency-space intermittency
\begin{equation}\label{Thes-sdgh-ASSDFG-1}
\frac{{{{{\left\| {\sum\limits_{j \geqslant q + 1}{{\dot \Delta }_{{R_j}}}u\left( t \right)} \right\|}_{{L^{{p_1}}}}}} }}{{ {{{\left\| {\sum\limits_{j \geqslant q + 1}{{\dot \Delta }_{{R_j}}}u\left( t \right)} \right\|}_{{L^{{p_2}}}}}} }} \gtrsim {r_q^{2\left( {\frac{1}{{{p_2}}} - \frac{1}{{{p_1}}}} \right)}} \to \infty  , \ \ \mbox{as} \ q \to  \infty , \  \mathbb{P}\mbox{-a.s.} ,
\end{equation}
where $r_q = \lambda_q^M$ is an intermittency parameter of the Dirichlet kernel (see \eqref{XINHUI-SJI-A} in Appendix D), and ${{{\dot \Delta }_{{R_j}}}}$ with $R_j = \lambda_j$ is the homogeneous Littlewood--Paley projector. Furthermore, there exists an inertial range $\left[ {{l_D},{l_I}} \right]$ with the dissipative scale $ {l_D} \left( {q} \right)$ and the integral scale ${l_I}\left( {q} \right)$ satisfying $\lambda _{q + 1}^{ - \frac{1}{2} - M}  \lesssim  {l_D} \left( {q} \right) \ll {l_I}\left( {q} \right) \to 0$ as $q \to \infty$ such that the $\gamma$-H\"{o}lder-continuous martingale solution ${u }\left( t ,x \right)$ exhibits inertial-range intermittency
\begin{equation}\label{Thes-sdgh-ASSDFG-2}
\frac{{{{\left\| {u\left( { t,x + l\mathbf{n}} \right) - u\left( t,x \right)} \right\|}_{{L^{{p_1}}}}}}}{{{{\left\| {u\left( {t,x + l\mathbf{n}} \right) - u\left( t,x \right)} \right\|}_{{L^{{p_2}}}}}}}  \gtrsim {r_q^{2\left( {\frac{1}{{{p_2}}} - \frac{1}{{{p_1}}}} \right)}} \to \infty  , \ \ \mbox{as} \ q \to  \infty ,  \ \forall \, l \in \left[ {{l_D},{l_I}} \right] , \  \mathbb{P}\mbox{-a.s.},
\end{equation}
where $\mathbf{n}$ is the unit spatial direction vector on the two-dimensional torus $\mathbb{T}^2$.
\end{theorem}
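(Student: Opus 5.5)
The plan is to read both intermittency estimates off the structure of the solution $u=\lim_q u_q$ produced by the iteration behind Theorem \ref{CTSE-O-C}, namely $u=u_q+\sum_{j\geqslant q+1} w_{j}$. Each perturbation $w_{j}$ is supported, up to negligible tails, in frequencies $\sim\lambda_{j}$. Its building blocks are intermittent Dirichlet-kernel waves with concentration parameter $r_{j-1}=\lambda_{j-1}^M$, transported by the Lagrangian flow and cut off in time. I will use the $L^p$ estimates for the Dirichlet kernel from Appendix D, \eqref{XINHUI-SJI-A}. In two dimensions, for the normalized kernel,
$$\|D_{r}\|_{L^p}\sim r^{1-\frac{2}{p}},$$
and this scaling survives composition with the flow maps, because they are volume preserving and close to the identity on the relevant time scale. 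Multiplying by slowly varying amplitudes $a_{j}$ with $\|a_j\|_{L^p}\sim\delta_{j}^{1/2}$ should therefore give two-sided bounds
$$\|w_{j}\|_{L^p}\sim \delta_{j}^{1/2} r_{j-1}^{1-\frac{2}{p}} .$$
The lower bound is the delicate direction. I expect to get it from the fact that the energy is pumped into $w_j$ at a definite level (the strict energy inequality forces $\|a_j\|_{L^2}\gtrsim\delta_j^{1/2}$), combined with the high–low decoupling lemma $\|a\,g(\lambda\cdot)\|_{L^p}\approx\|a\|_{L^p}\|g\|_{L^p}$. I will only claim this for $t\in\mathrm{supp}_t u\setminus\{0\}$, where the temporal cutoffs are nonzero.

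\textbf{Step 1 (frequency intermittency).} I apply $\sum_{j\geqslant q+1}\dot\Delta_{R_j}$ to $u$. Since $u_q$ has frequencies $\lesssim\lambda_q$ and $w_j$ lives at $\lambda_j$, this projection equals $\sum_{j\geqslant q+1}w_j$ up to errors smaller than any power of $\lambda_{q+1}$. The sum is dominated by its first term $w_{q+1}$: the ratios $\delta_{j+1}^{1/2}r_j^{1-2/p}/(\delta_j^{1/2}r_{j-1}^{1-2/p})$ decay geometrically precisely when $p>\frac{2M}{\gamma M+M-\beta+\alpha}$. This is where the lower bound on $p_2$, and the condition $M<b/2$, enter, with $\beta,\alpha$ taken from Appendix C. This gives
$$\frac{\|\cdot\|_{L^{p_1}}}{\|\cdot\|_{L^{p_2}}}\sim \frac{r_q^{1-2/p_1}}{r_q^{1-2/p_2}}=r_q^{2(\frac{1}{p_2}-\frac{1}{p_1})}.$$

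\textbf{Step 2 (inertial range).} I write $u(t,x+l\mathbf{n})-u(t,x)$ and split $u=u_{q}+w_{q+1}+\sum_{j\geqslant q+2}w_j$.
\begin{itemize}
\item The low-frequency part satisfies $\|u_q(\cdot+l\mathbf{n})-u_q\|_{L^p}\lesssim l\|\nabla u_q\|_{L^\infty}\lesssim l\,\lambda_q\delta_q^{1/2}$. This is small provided $l\le l_I(q)$, with $l_I$ chosen so that it is negligible against $\delta_{q+1}^{1/2}r_q^{1-2/p}$.
\item The tail $j\geqslant q+2$ is bounded by $2\|\sum_{j\geqslant q+2} w_j\|_{L^p}$, which is again smaller by Step 1.
\item For the main term, $w_{q+1}$ is concentrated on blobs of size $(\lambda_{q+1}r_q)^{-1}\sim\lambda_{q+1}^{-1}\lambda_q^{-M}$. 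For $l$ larger than this scale, taken as $l_D\gtrsim\lambda_{q+1}^{-1/2-M}$, the shifted and unshifted blobs are essentially disjoint, or at least carry opposite-phase oscillations at frequency $\lambda_{q+1}$. Hence $\|w_{q+1}(\cdot+l\mathbf{n})-w_{q+1}\|_{L^p}\sim\|w_{q+1}\|_{L^p}$ for both $p_1$ and $p_2$.
\end{itemize}
Dividing then yields \eqref{Thes-sdgh-ASSDFG-2}. Finally I need $l_D\ll l_I\to0$, which follows from the parameter relations.

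The main obstacle is the \emph{lower} bound on $\|w_{q+1}\|_{L^{p_1}}$ and on its increment, pathwise and $\mathbb{P}$-a.s. The stochastic transport and the random temporal cutoffs (stopping time $\mathfrak t$) could in principle make amplitudes degenerate at some times. I will handle this by using the uniform-in-$\omega$ deterministic bounds built into the iteration up to $\mathfrak t$, together with nondegeneracy of the energy gap profile on $(0,\mathfrak t]$. The translation step is the next most delicate. For it I plan to use the exact periodicity of the Dirichlet kernel, $D_r(\lambda(x+l\mathbf n))$, to compute the increment explicitly for the model wave. I then perturb via the flow, whose deviation over time $\lambda_q^{-1}\delta_q^{-1/2}$ is below the blob scale.
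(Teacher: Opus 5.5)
Your Step 1 has a genuine gap. You claim $\sum_{j\geqslant q+1}w_j$ is dominated by $w_{q+1}$ because consecutive ratios decay ``precisely when $p>\frac{2M}{\gamma M+M-\beta+\alpha}$''. This is false. With $\delta_j^{1/2}=\lambda_j^{-\beta}$, $r_j=\lambda_j^{M}$ and $\lambda_{j+1}\approx\lambda_j^{b}$, your own scaling gives
\begin{equation*}
\frac{\delta_{j+1}^{1/2}\,r_j^{1-\frac{2}{p}}}{\delta_j^{1/2}\,r_{j-1}^{1-\frac{2}{p}}}\approx\lambda_{j-1}^{(b-1)\left(M\left(1-\frac{2}{p}\right)-b\beta\right)},
\end{equation*}
which decays only when $p<\frac{2M}{M-b\beta}$. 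That is an \emph{upper} bound on $p$, and it is finite because $M>\frac12>b\beta$ for $b\leqslant 1.3$, $\beta<\frac13$. The hypothesis bounds $p_1>p_2$ only from below. For $p>\frac{2M}{M-b\beta}$ your block norms $\|w_j\|_{L^p}$ increase with $j$, so neither the denominator $\|\sum_{j\geqslant q+1}w_j\|_{L^{p_2}}$ nor the tail $\sum_{j\geqslant q+2}w_j$ you reuse in Step 2 is controlled by the first block. The threshold also contains $\gamma$ and $\alpha$, which cannot come out of this ratio, so a different mechanism must be at work.

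The paper does not control the tail by comparing block norms. It isolates the single shell $\dot\Delta_{R_{q+1}}u$ and shows it is $(1\pm\varepsilon_q)$-comparable in $L^p$ with $w^{(p)}_{q+1}$ (Lemma~\ref{thm:frequency_intermittency}, via the Lagrangian localization of Lemma~\ref{lem:Lag-LP-stability}). This is also where $M<\frac b2$ is used, through $\sigma r_q\lesssim\lambda_q^{M-\frac b2}\to0$. It then bounds the high-frequency tail $H_q=\sum_{j>q+1}\dot\Delta_{R_j}u$ by the $C^\gamma$ regularity from Theorem~\ref{CTSE-O-C}, namely $\|\dot\Delta_{R_j}u\|_{L^p}\lesssim R_j^{-\gamma}$. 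This bound does not depend on $p$. The lower bound on $p$ is exactly what lets the shell, of size $\gtrsim\delta_{q+1,\Gamma}^{1/2}r_q^{1-2/p}$, dominate it, and the same tail bound is reused for the increments.

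Three further points need repair.

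\textbf{Lower bound on the dominant block.} It does not follow as you propose. The strict energy inequality concerns the limit and gives nothing quantitative at step $q+1$. The decoupling $\|a\,g(\lambda\cdot)\|_{L^p}\approx\|a\|_{L^p}\|g\|_{L^p}$ needs the intermittent factor to be periodic on a scale much finer than that of $a$. Here $\mathfrak{D}_{\tilde\mu}(t,\lambda_{q+1}\tilde\Phi_k)$ has period $\sim(\sigma\lambda_{q+1})^{-1}\sim\lambda_q^{-b/2}$, which is coarser than the amplitude scale $\lambda_q^{-1}$ because $b<2$. The paper instead uses the pointwise bounds $\delta_{q+1,\Gamma}^{1/2}\lesssim|\tilde\eta_{\xi,k,n}|\lesssim\delta_{q+1}^{1/2}$, built into the amplitudes through the energy gap ($|\mathfrak{E}_q|\gtrsim\delta_{q+1,\Gamma}$ and $\gamma_\xi\neq0$).

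\textbf{Missing components of $u$.} Your decomposition $u=u_q+\sum_{j\geqslant q+1}w_j$ drops the noise increments. Recall $u_{q+1}=u_q+w_{q+1}+\mathfrak{W}^{loc}_{q+1}$, and $\mathfrak{W}^{loc}_{q+1}\propto\cos(\lambda_{q+1}x_1)$ sits at frequency $\lambda_{q+1}$, inside the shells you project onto. You also treat all of $w_{q+1}$ as intermittent, although it contains the Newtonian part $w^{(\Gamma)}_{q+1}$ and the correctors $w^{(c)}_{q+1}$, $w^{(i)}_{q+1}$. The paper shows each of these is $o(\|w^{(p)}_{q+1}\|_{L^p})$; for the non-intermittent pieces $w^{(\Gamma)}_{q+1}$ and $\mathfrak{W}^{loc}_{q+1}$ this is another place the lower bound on $p$ enters.

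\textbf{Inertial range window.} The blobs have width $(\sigma\lambda_{q+1}r_q)^{-1}$, not $(\lambda_{q+1}r_q)^{-1}$, and they recur with period $\sim(\sigma\lambda_{q+1})^{-1}$. Your disjoint-blob argument holds only for $l$ between these two scales, which is the window of Lemma~\ref{prop:transported_dirichlet_increment} after the $\lambda_{q+1}$-stretching. So $l_I$ must also be kept below that period, not just below the low-frequency threshold.
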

\par
\begin{remark}\label{OKDIJF-RE-INIJU-A-1}
\rm{The homogeneous Littlewood--Paley projector ${{{\dot \Delta }_{{R_j}}}}$ extracts the velocity components localized at different frequency scales, thereby revealing the amplitude distribution and spatial concentration of velocity fields. The two intermittency estimates \eqref{Thes-sdgh-ASSDFG-1} and \eqref{Thes-sdgh-ASSDFG-2} show that the $L^{p_1}$-to-$L^{p_2}$ ratio grows without bound as the Littlewood--Paley frequency scale increases and the spatial scale decreases, respectively. These estimates indicate increasingly concentrated small-scale structures consistent with the intermittency framework described by Frisch \cite[Section $8.2$]{TurbulenceFrisch}.}
\end{remark}
\par
To establish Theorem \ref{CTSE-O-C} and Theorem \ref{CTSE-O-C-1B}, we devise an intermittency-compatible stochastic variant of the Newton--Nash iteration coupled with a Littlewood--Paley decomposition scheme, which captures the regularity loss and intermittency of approximate solutions during the iterative construction. The Newton iteration layer introduces fast temporal oscillations to ensure that Nash perturbations at different scales do not interact with each other. The Nash iteration procedure progressively reduces iterative errors to approximate H\"{o}lder-continuous solutions. The Littlewood--Paley projector isolates high-frequency components of the dominant intermittent perturbation, whose estimates control the high-frequency parts of all remaining perturbations. Our stochastic and intermittent Newton--Nash iteration scheme is not merely a stochastic adaptation of the classical convex integration. Instead, it provides a unified framework simultaneously compatible with stochastic oscillations and intermittency, one that systematically controls stochastic perturbations and intermittent concentrations to construct martingale solutions whose H\"{o}lder exponent lies strictly beneath the Onsager threshold $1/3$.
\par
Three fundamental challenges arise in the stochastic and intermittent Newton--Nash iteration scheme. First, the nonlinear term accumulates stochastic oscillations over successive iterations, thereby leading to the total Reynolds error stemming from stochastic effects that cannot be directly controlled by the classical Newton--Nash iteration scheme. To separate out these stochastic oscillations, we adopt the decomposition ${u}\left( {t,x} \right) = {v}\left( {t,x} \right) + {\mathfrak{W}}\left( {t,x} \right) $ to establish the stochastic Euler--Reynolds system
\begin{equation*}
\left\{ {\begin{array}{*{20}{l}}
{\partial _t}{v_q} + \left( {{u_q} \cdot \nabla } \right){u_q} + \nabla {\mathfrak{p}_q} = \mathrm{div}\,   {\mathring{R}_q},\\
{u_q} = {v_q} + \mathfrak{ W}_q ,\\
\mathrm{div}\,   {u_q} = 0 .
\end{array}} \right.  
\end{equation*}
This decomposition reveals a compensation mechanism: instead of treating stochastic oscillations as merely perturbative errors, we incorporate them directly into the nonlinear transport structure and construct a stochastic pressure ${\mathfrak{p}_q}$ to compensate for the accumulated oscillatory errors. Furthermore, the energy gap in the iteration is coupled with the random forcing and contains additional correction terms induced by the It\^{o} integrals.
\par
Second, classical Beltrami waves fail to be compatible with the stochastic Newton--Nash iteration scheme and do not exist in the two-dimensional setting. To circumvent this limitation, we construct the two-dimensional analogues of intermittent Beltrami waves inspired by the construction of Buckmaster and Vicol \cite{Zbl1412.35215} (see Appendix D for the construction of two-dimensional Beltrami analogues). Nevertheless, our modified construction leads to an additional stress residual whose divergence lacks commensurate-order small estimates. Alternatively, we incorporate these intermittent Beltrami analogues into the iterative procedure through random backward flows and directly derive inductive estimates to control their intermittency intensity. Since the Beltrami analogues give rise to intermittent perturbations, the Reynolds stress ${\mathring{R}_q}$ carries both random and intermittent oscillations. We bound these oscillations through inductive estimates of Newton--Nash elements and perturbations, thereby closing the recursive loop of the stochastic and intermittent Newton--Nash iteration scheme. 
\par
Third, since the constructed martingale solution $u(t,x)$ takes the form of a superposition of infinitely many perturbation blocks, its intermittency is encoded in the multiscale interaction among these blocks. To overcome this issue, we use the homogeneous Littlewood--Paley projector $\dot\Delta_{R_j}$ to isolate the individual intermittent components of $u(t,x)$, thereby making the high-frequency structure accessible. However, the spatial oscillations of principal perturbation are transported by the Lagrangian flow, so that the standard Littlewood--Paley projector no longer directly reflects the intrinsic frequency localization of the oscillatory components. We further introduce a Lagrangian Littlewood--Paley localization $\dot \Delta _{{R_{j}}}^{Lag}$, which allows us to separate the dominant intermittent component from the deformation and leakage effects. Consequently, both the frequency-space intermittency \eqref{Thes-sdgh-ASSDFG-1} and the inertial-range intermittency \eqref{Thes-sdgh-ASSDFG-2} are carried  by the dominant component of $u(t,x)$.
\par
We remark that this paper develops a framework combining stochastic convex integration with Lagrangian Littlewood--Paley localization for turbulent fluid equations exhibiting intermittent multiscale structures. The two-dimensional stochastic incompressible Euler equations \eqref{2DEEUGE1} serve as a fundamental model in which this framework is realized. The main challenge is not merely the presence of random forcing, but also the necessity of controlling the interaction between randomness, nonlinear transport, and intermittent multiscale structures. The stochastic Euler–Reynolds framework, the intermittent building blocks, and the Lagrangian frequency localization developed in this paper provide a potential foundation for extending convex integration methods to a broader class of stochastic fluid equations and turbulent systems.
\par
The present paper is organized as follows. Section 2 is devoted to proving Theorem \ref{CTSE-O-C} and Theorem \ref{CTSE-O-C-1B}. Some preliminary results concerning the stochastic and intermittent Newton--Nash iteration scheme are provided in the subsequent sections. More precisely, Section 3 introduces Newtonian perturbations and pressures to formulate the Newton--Euler--Reynolds system. Section 4 constructs Nash perturbations and Reynolds stresses for the stochastic Euler--Reynolds system. Section 5 provides inductive estimates for Newton--Nash elements, oscillations and energy gaps. In the appendix, we give some fundamental setups including H\"{o}lder spaces, martingale solutions, iteration parameters and intermittency.
\section{Intermittent Onsager theorem}
The proofs of Theorem \ref{CTSE-O-C} and Theorem \ref{CTSE-O-C-1B} are built upon the stochastic and intermittent Newton--Nash iteration and the Littlewood--Paley decomposition scheme. We first outline the main iterative framework and provide the key inductive estimates for Newton--Nash elements. Subsequently, we reduce the iterative error in the required H\"{o}lder space as the iteration step tends to infinity. In the limit, the H\"{o}lder-continuous martingale solutions to Eq.\eqref{2DEEUGE1} are recovered by the constructed approximate solutions. Finally, we employ the Littlewood--Paley decomposition scheme to investigate intermittency.
\subsection{Heuristic outline of the Newton--Nash iteration}\label{siuju-sij}
We collect all inductive parameters in Appendix C. Let ${\delta _0}$ and ${\lambda _0}$ be the initial parameters given by \eqref{parameters-S-2}, satisfying $0 < {\delta _0} < 1 < {\lambda _0} \leqslant C$ and $1 \leqslant \delta _0^{\frac{1}{2}}{\lambda _0} \leqslant C$ for a large positive constant $C$. Define the initial Wiener process 
\begin{equation*}
{\mathfrak{W}_0}\left( {t,x} \right) =\frac{{\varsigma _0}}{\sqrt 2 \pi }{\mathfrak{B}_0}\left( t \right)\delta _1^{\frac{1}{2}}\cos \left( {{\lambda _0}{x_1}} \right){{\mathbf{e}}_2} , \ \ {\varsigma _0} \in \mathbb{R} \backslash \left\{ 0 \right\} ,
\end{equation*}
where ${{\mathbf{e}}_2} \triangleq \left( {0,1} \right)$ is the unit vector in $\mathbb{R}^2$, and ${\mathfrak{B}_0}\left( t \right)$ is the real-valued Brownian motion on the complete probability space $\left( {\Omega ,\mathcal{F},\mathbb{P}} \right)$. It is easy to see that
\begin{equation}\label{HIE-WW-1.1-initial}
{\mathop {\sup }\limits_{t \in \left[ {0,{T}} \right]} }\mathbb{E}\left\| {{\mathfrak{W}_0}{\left( t \right)}} \right\|_N^2 \leqslant \frac{T{\varsigma _0 ^2}}{\pi ^2 }  {\delta _1}\lambda _0^{2N} , \ \ \forall \, N \in {\mathbb{Z}^ + } \cup \left\{ 0 \right\} .
\end{equation}
\par
Let $\kappa $ be a positive real number. Define the stopping times
\begin{equation*}
\mathfrak{t}_0 ^*\left( {\kappa,N} \right) \triangleq \inf \left\{ {t \geqslant 0:\mathop {\sup }\limits_{s \in \left[ {0,t} \right]} {{\left\| {{\mathfrak{W}_0}\left( s \right)} \right\|}_N} > C\kappa \delta _1^{\frac{1}{2}}\lambda _0^N} \right\} \wedge \kappa  ,
\end{equation*}
and
\begin{equation*}
{\mathfrak{t}_0} \triangleq T \wedge \min \left\{ {\mathfrak{t}_0 ^*\left( {\kappa,0} \right),  \cdots , \mathfrak{t}_0 ^*\left( {\kappa,N} \right)} \right\}  .
\end{equation*}
Since the Wiener process ${{\mathfrak{W}_0}\left( t \right)}$ is a martingale, it follows from Chebyshev inequality, Doob maximal inequality and \eqref{HIE-WW-1.1-initial} that
\begin{align*}
\mathbb{P}\left\{ {\mathop {\sup }\limits_{t \in \left[ {0,{T}} \right]} {{\left\| {{\mathfrak{W}_0}\left( t \right)} \right\|}_N} \leqslant C\kappa \delta _1^{\frac{1}{2}}\lambda _0^N} \right\} \geqslant & 1 - {C^{ - 2}}{\kappa ^{ - 2}}\delta _1^{ - 1}\lambda _0^{ - 2N}\mathbb{E}\mathop {\sup }\limits_{s \in \left[ {0,{T}} \right]} \left\| {{\mathfrak{W}_0}\left( t \right)} \right\|_N^2 \\
\geqslant & 1 - 4{C^{ - 2}}{\kappa ^{ - 2}}\delta _1^{ - 1}\lambda _0^{ - 2N}  \mathbb{E}\left\| {{\mathfrak{W}_0}\left( {T} \right)} \right\|_N^2  \\
= & 1 -  \frac{{2{\varsigma _0^2 }}}{{{C^{  2}} {\pi ^{ 2}} {\kappa ^{  2}}}}  {T} ,
\end{align*}
which implies that
\begin{equation*}
\mathop {\lim }\limits_{\kappa  \to \infty } \mathbb{P} \left( {\mathfrak{t}_{0}^* \left( {\kappa,N} \right) > \rho } \right) = 1 , \ \ \forall \, \rho > 0 , \ \forall \, N \in \mathbb{Z}^+ \cup \left\{ 0 \right\} .
\end{equation*}
In addition, it is a standard fact that almost all sample paths of ${\mathfrak{B}_0}\left( t \right)$ are continuous at $t = 0$ with ${\mathfrak{B}_0}\left( 0 \right) = 0$. Then
\begin{equation*}
\mathbb{P}\left\{ {\mathop {\lim }\limits_{t \to {0^ + }} \mathop {\sup }\limits_{s \in \left[ {0,t} \right]} {{\left\| {{\mathfrak{W}_0}\left( s \right)} \right\|}_N} = 0} \right\} = 1  ,
\end{equation*}
which implies that
\begin{equation*}
\mathbb{P}\left( {\mathfrak{t}_{0}^* \left( {\kappa,N} \right) > 0} \right) = 1 , \ \  \forall \, N \in \mathbb{Z}^+ \cup \left\{ 0 \right\}  .
\end{equation*}
Therefore, the stopping time ${\mathfrak{t}_0}$ satisfies 
\begin{equation}\label{chshdjhfy-a-1}
\mathbb{P}\left( {0 < {\mathfrak{t}_0} \leqslant T} \right) = 1 .
\end{equation}
Moreover, we have
\begin{equation}\label{HIE-WW-1.1-iDFGRFnitial}
\left\| {{\mathfrak{W}_0}} \right\|_N \lesssim {\delta _1^{\frac{1}{2}}}\lambda _0^{N} \lesssim {\delta _0^{\frac{1}{2}}}\lambda _0^{N} , \ \ \forall \, t \in \left[ {0,{\mathfrak{t}_0}} \right] , \ \forall \, N \in \mathbb{Z}^+ \cup \left\{ 0 \right\} , \ \mathbb{P}\mbox{-a.s.}  
\end{equation}
\par
Assume that $\mathfrak{h}\left( t \right):\ \mathbb{R} \longrightarrow \left[ {0,1} \right]$ is a smooth function with compact support $\left[ { 0,\frac{3}{4} T} \right]$, and satisfies
\begin{equation*}
\mathfrak{h}\left( t \right) = 1 , \ \ \forall \, t \in \left[ { 0,\frac{1}{4} T} \right] ,
\end{equation*}
and
\begin{equation*}
\left| {\mathfrak{h}\left( t \right)} \right| < 1 , \ \ \forall \, t \in \left( {\frac{1}{4}T,\frac{3}{4}T} \right] .
\end{equation*}
Choose the initial auxiliary function
\begin{equation*}
{v_0}\left( {t,x} \right) = \mathfrak{h}\left( t \right)\delta _0^{\frac{1}{2}} \cos \left( {{\lambda _0}{x_1}} \right) {{\mathbf{e}}_2} ,
\end{equation*}
and define the initial velocity
\begin{equation*}
{u_0}\left( {t,x} \right) = {v_0}\left( {t,x} \right) + {\mathfrak{W}_0}\left( {t,x} \right) .
\end{equation*}
Then
\begin{equation*}
\left( {{u_0} \cdot \nabla } \right){u_0} = 0 ,\ \ \mathrm{div}\,   {u_0} = 0  , \ \ \forall \, t \in \left[ {0,{\mathfrak{t}_0}} \right] .
\end{equation*}
Take the initial pressure ${\mathfrak{p}_0} = 0$ and the initial Reynolds stress
\begin{equation}\label{RRR-JIU-SUPP}
{\mathring{R}_0}\left( {t,x} \right) = {\partial _t}\mathfrak{h}\left( t \right)\lambda _0^{ - 1}\delta _0^{\frac{1}{2}}\left[ {\begin{array}{*{20}{c}}
0&{\sin \left( {{\lambda _0}{x_1}} \right)}\\
{\sin \left( {{\lambda _0}{x_1}} \right)}&0
\end{array}} \right]  .
\end{equation}
For any $t \in \left[ {0,{\mathfrak{t}_0}} \right]$, we have
\begin{equation}\label{ERNE-3A.1-initial}
\left\{ {\begin{array}{*{20}{l}}
{{\partial _t}{v_0} + \left( {{u_0} \cdot \nabla } \right){u_0} + \nabla {\mathfrak{p}_0} = \mathrm{div}\,   {\mathring{R}_0},}\\
{{u_0} = {v_0} + \mathfrak{ W}_0 ,}\\
\mathrm{div}\,   {u_0} = 0 .
\end{array}} \right.   
\end{equation}
\par
It follows from the definitions of $v_0$ and $\mathfrak{p}_0$ that
\begin{equation}\label{HIE-u-1.1-initial}
{\left\| {{v_0}} \right\|_N} + {\left\| {{\mathfrak{p}_0}} \right\|_N} \lesssim \delta _0^{\frac{1}{2}}\lambda _0^N, \ \ \forall \, t \in \left[ {0,T} \right],  \ \forall \, N \in {\mathbb{Z}^ + } \cup \left\{ 0 \right\}  .
\end{equation}
Consequently,
\begin{equation}\label{HIE-WW-1.1-initial-HUH}
{\left\| {{u_0} } \right\|_N}  \leqslant {\left\| {{v_0}} \right\|_N} + {\left\| {{\mathfrak{ W}_0}} \right\|_N} \lesssim \delta _0^{\frac{1}{2}}\lambda _0^N, \ \ \forall \, t \in \left[ {0,{\mathfrak{t}_0}} \right], \ \forall \,  N \in {\mathbb{Z}^ + } \cup \left\{ 0 \right\}, \ \mathbb{P}\mbox{-a.s.}  
\end{equation}
In view of \eqref{parameters-S-2}, there exists a parameter ${\delta _1}$ and a real number $\alpha \in \left( {\frac{\beta }{{b}},\beta } \right)$ such that $\lambda _0^{\alpha  - 1} \lesssim {\delta _1}$ for any $\beta  \in \left( {\gamma,\frac{1}{3}} \right)$. Then, we deduce that
\begin{equation}\label{HIE-R-1.1-initial}
{\left\| {{\mathring{R}_0}} \right\|_N} \lesssim \delta _0^{\frac{1}{2}}\lambda _0^{N - 1} \lesssim \delta _1 \lambda _0^{N - \alpha}, \ \  \forall\,  t \in \left[ {0,T} \right] , \ \forall \, N \in {\mathbb{Z}^ + } \cup \left\{ 0 \right\} ,
\end{equation}
and
\begin{equation}\label{MDHIE-R-1.1-initial}
{\left\| {{D_{t,0}}{\mathring{R}_0}} \right\|_N} \lesssim  {\left\| {{\partial _t}{\mathring{R}_0}} \right\|_N} + {\left\| {\left( {{v_0} \cdot \nabla } \right){\mathring{R}_0}} \right\|_N} \lesssim \delta _0^{\frac{1}{2}}{\delta _1}\lambda _0^{N + 1 - \alpha} , \ \  \forall\,  t \in \left[ {0,T} \right] , \  \forall \, N \in {\mathbb{Z}^ + } \cup \left\{ 0 \right\}  ,
\end{equation}
where ${D_{t,0}} \triangleq {\partial _t} + {v_0} \cdot \nabla $ is the material derivative corresponding to ${v_0}$.
\par
For $q  \in \mathbb{Z}^+$, define the $q$-step stopping times 
\begin{equation*}
\mathfrak{t}_{q}^*\left( {\kappa,N}  \right) = \inf \left\{ {t \geqslant 0:\mathop {\sup }\limits_{s \in \left[ {0,t} \right]} {{\left\| {{\mathfrak{W}_q}\left( s \right)} \right\|}_N} > C\kappa \sum\limits_{j = 0}^q {\delta _{q + 1}^{\frac{1}{2}}\lambda _q^N} } \right\} \wedge \kappa   ,
\end{equation*}
and
\begin{equation}\label{KMCNHSGFLDPF-1}
{\mathfrak{t}_q} = T \wedge \min \left\{ {\mathfrak{t}_{q}^*\left( {\kappa,0}  \right) , \cdots , \mathfrak{t}_{q}^*\left( {\kappa,N}  \right)} \right\} ,
\end{equation}
where the $q$-step Wiener process reads
\begin{equation}\label{Q-WINER-P}
{\mathfrak{W}_q}\left( t,x \right) = {\mathfrak{W}_0}\left( t,x \right) + \sum\limits_{j = 1}^q {\frac{{\varsigma _j}}{\sqrt 2 \pi }{\mathfrak{B}_j}\left( t \right) \delta _{j-1}^{\alpha} \delta _{j+1}^{\frac{1}{2}} \cos \left( {{\lambda _j}{x_1}} \right){{\mathbf{e}}_2}} , \ \  {\varsigma _q} \in \mathbb{R} \backslash \left\{ 0 \right\} ,
\end{equation}
and ${\mathfrak{B}_j}\left( t \right)$ are the real-valued, mutually independent Brownian motions on the complete probability space $\left( {\Omega ,\mathcal{F},\mathbb{P}} \right)$. Further define the terminal stopping time
\begin{equation*}
{\mathfrak{t}} \triangleq \inf \left\{ {{{\mathfrak{t}}_q}  \wedge  {\mathfrak{t}_\mathfrak{S}} :q \in {\mathbb{Z}^ + } \cup \left\{ 0 \right\}} \right\},
\end{equation*}
where the truncated stopping time ${\mathfrak{t}_\mathfrak{S}}$ is given by \eqref{IDJIFHISDF-ADS-1}.
\par
\begin{lemma}\label{LEM-ST-A-1-TT}
The terminal stopping time ${\mathfrak{t}}$ admits $\mathbb{P}\left( {0 < {\mathfrak{t}} \leqslant T} \right) = 1$ and $\left[ {0,\mathfrak{t}} \right] = \mathop  \cap \limits_{q \in {\mathbb{Z}^ + } \cup \left\{ 0 \right\}} \left[ {0,{\mathfrak{t}_j}} \right] $.
\end{lemma}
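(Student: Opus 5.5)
The plan is to obtain both assertions of Lemma \ref{LEM-ST-A-1-TT} from the explicit Gaussian structure of $\mathfrak{W}_q$ in \eqref{Q-WINER-P}, by repeating the argument that gave \eqref{chshdjhfy-a-1} for $\mathfrak{t}_0$ uniformly in $q$. The set identity is elementary. For closed intervals one has $\cap_q [0,a_q] = [0,\inf_q a_q]$, since $s\le a_q$ for all $q$ if and only if $s\le \inf_q a_q$. Applying this with $a_q = \mathfrak{t}_q\wedge \mathfrak{t}_\mathfrak{S}$ gives $[0,\mathfrak{t}]=\cap_q[0,\mathfrak{t}_q\wedge\mathfrak{t}_\mathfrak{S}]$; I read the index $j$ in the statement as $q$, with the truncation $\mathfrak{t}_\mathfrak{S}$ absorbed into each $\mathfrak{t}_q$. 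The upper bound $\mathfrak{t}\le \mathfrak{t}_0\le T$ is immediate from \eqref{KMCNHSGFLDPF-1}. The real content is therefore $\mathbb{P}(\mathfrak{t}>0)=1$. This is not automatic, because an infimum of countably many a.s. positive stopping times can vanish, so a bound uniform in $q$ is needed.

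\textbf{Step 1 (each fixed $q$).} For fixed $q$ and $N$, $\mathfrak{W}_q$ is a finite sum of products $\mathfrak{B}_j(t)\times$(smooth deterministic field). Hence $t\mapsto\|\mathfrak{W}_q(t)\|_N$ is a.s. continuous and vanishes at $t=0$. Exactly as for $\mathfrak{t}_0^*$, this gives $\mathbb{P}(\mathfrak{t}_q^*(\kappa,N)>0)=1$, and so $\mathbb{P}(\mathfrak{t}_q>0)=1$. I take $\mathbb{P}(\mathfrak{t}_\mathfrak{S}>0)=1$ from the definition \eqref{IDJIFHISDF-ADS-1}. A countable intersection of full-measure events has full measure, so a.s. all these stopping times are positive simultaneously.

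\textbf{Step 2 (uniform control via Borel--Cantelli).} Let
\[
A_q=\Big\{\sup_{s\in[0,T]}\|\mathfrak{W}_q(s)\|_N > C\kappa (q+1)\delta_{q+1}^{1/2}\lambda_q^N\Big\},
\]
where the threshold $(q+1)\delta_{q+1}^{1/2}\lambda_q^N$ is the sum in the definition of $\mathfrak{t}_q^*$. By independence of the $\mathfrak{B}_j$ and Doob's maximal inequality as in the $q=0$ computation,
\[
\mathbb{P}(A_q)\lesssim \frac{T}{C^2\kappa^2(q+1)^2\delta_{q+1}\lambda_q^{2N}}\Big(\varsigma_0^2\delta_1\lambda_0^{2N}+\sum_{j=1}^q \varsigma_j^2\delta_{j-1}^{2\alpha}\delta_{j+1}\lambda_j^{2N}\Big).
\]
Using the parameter relations of Appendix C (the decay of $\delta_q$, the growth of $\lambda_q$, and the extra factor $\delta_{j-1}^{2\alpha}$), together with the freedom to choose the nonzero amplitudes $\varsigma_j$ small, I aim to show that each summand is bounded relative to $\delta_{q+1}\lambda_q^{2N}$. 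Then $\mathbb{P}(A_q)\lesssim (q+1)^{-2}$ for each of the finitely many $N$ involved, which is summable. Borel--Cantelli then yields a random $q_0(\omega)<\infty$ such that for $q\ge q_0$ none of the $A_q$ occur. Off $A_q$ the defining infimum of $\mathfrak{t}_q^*$ is not attained on $[0,T]$, so $\mathfrak{t}_q\ge T\wedge\kappa$ for all $q\ge q_0$.

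\textbf{Step 3 (conclusion).} Almost surely,
\[
\mathfrak{t}\ge \min\big\{\mathfrak{t}_0,\dots,\mathfrak{t}_{q_0-1},\,T\wedge\kappa\big\}\wedge\mathfrak{t}_\mathfrak{S},
\]
which is a minimum of finitely many a.s. positive quantities, so $\mathfrak{t}>0$ a.s. I expect Step 2 to be the main obstacle. Comparing the early terms $j\ll q$, where $\delta_{j+1}\gg\delta_{q+1}$, against the threshold requires either the prefactor $\delta_{j-1}^{2\alpha}$ together with a careful use of \eqref{parameters-S-2}, or an explicit summable choice of $\varsigma_j$. I would first try to dominate the sum by its largest term using the super-exponential structure of $\lambda_q$. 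If that fails, I would fall back on choosing $\varsigma_j$ small enough to make the ratio at most $(q+1)^{-2}$.
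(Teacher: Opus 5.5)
Your Step 2 fails at $N=0$. That index is part of the minimum defining $\mathfrak{t}_q$, and shrinking the $\varsigma_j$ cannot repair it.

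\textbf{Literal reading.} Take the threshold as you read it, $C\kappa(q+1)\delta_{q+1}^{1/2}\lambda_q^N$. At $N=0$ the $j=0$ term of your bound on $\mathbb{P}(A_q)$ is of order $\varsigma_0^2\delta_1/((q+1)^2\delta_{q+1})\to\infty$, and $\varsigma_0\neq 0$ is fixed. Worse, the lemma is then false. The modes $\cos(\lambda_j x_1)$ are orthogonal, so $\|\mathfrak{W}_q(s)\|_0\gtrsim|\varsigma_0|\delta_1^{1/2}|\mathfrak{B}_0(s)|$. Meanwhile the threshold $C\kappa(q+1)\delta_{q+1}^{1/2}\to0$, so $\mathfrak{t}_q^*(\kappa,0)\to0$ and $\mathfrak{t}=0$ almost surely.

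\textbf{Intended reading.} You must use $\theta_q(N)\triangleq C\kappa\sum_{j=0}^{q}\delta_{j+1}^{1/2}\lambda_j^N$, which is what the paper's proof actually works with through $A_q$. Even then, at $N=0$ the thresholds increase to the finite limit $C\kappa\sum_{j\geqslant0}\delta_{j+1}^{1/2}$, while $\mathfrak{W}_q$ converges uniformly on $[0,T]$. Hence with positive probability $A_q$ occurs for all large $q$. Your target ``$\mathfrak{t}_q\geqslant T\wedge\kappa$ for all large $q$'' is therefore false with positive probability, and no summability is available.

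The paper's Borel--Cantelli comparison has the same defect. At $N=0$ both $A_q$ and $B_q$ are dominated by $\delta_1^{1/2}$, so $B_q/A_q$ does not tend to zero.

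For $N\geqslant1$ your estimate does work. You should replace $\sum_j(\cdot)^2$ by $(\sum_j|\cdot|)^2$, since $\|\cdot\|_N$ is a sup-norm and not a Hilbert norm.

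\textbf{Fix.} The missing idea is that you only need $\inf_q\mathfrak{t}_q>0$, not that the thresholds are eventually never reached. This follows pathwise, without Borel--Cantelli, Doob or independence. For every $j\leqslant q$ we have $\theta_q(N)\geqslant C\kappa\,\delta_{j+1}^{1/2}\lambda_j^N$, and $\|\cos(\lambda_jx_1)\mathbf{e}_2\|_N\lesssim\lambda_j^N$. Therefore
\[
\sup_{q}\ \frac{\sup_{s\in[0,t]}\|\mathfrak{W}_q(s)\|_N}{\theta_q(N)}\lesssim\frac{1}{C\kappa}\sum_{j\geqslant0}|\varsigma_j|\,\delta_{j-1}^{\alpha}\sup_{s\in[0,t]}|\mathfrak{B}_j(s)|\triangleq Z(t),
\]
with $\delta_{-1}^{\alpha}$ read as $1$.
\begin{itemize}
\item For bounded $\varsigma_j$ (as in Appendix B), $\mathbb{E}Z(T)<\infty$.
\item By dominated convergence, $Z(t)\to0$ as $t\downarrow0$ almost surely.
\item Hence a.s. there is $t_0(\omega)>0$ with $\mathfrak{t}_q^*(\kappa,N)\geqslant t_0\wedge\kappa$ for all $q$ and all $N$ simultaneously.
\end{itemize}
Combined with $\mathfrak{t}_\mathfrak{S}>0$ a.s., this gives $\mathbb{P}(\mathfrak{t}>0)=1$. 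Your set identity and the bound $\mathfrak{t}\leqslant T$ are fine as written.
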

\par
\noindent{\textbf{Proof}}. 
Due to the definitions of stopping times, we have $\mathbb{P}\left( {{\mathfrak{t}} \leqslant T} \right) = 1$. Next, we verify that the stopping time $\mathfrak{t}$ is strictly positive.
\par
It follows from Chebyshev inequality and Doob maximal inequality that
\begin{align*}
\mathbb{P}\left\{ {\mathfrak{t}_{q + 1}^* \left( {\kappa,N}  \right)  \leqslant \mathfrak{t}_q^* \left( {\kappa,N}  \right) } \right\} = & \mathbb{P}\left\{ {\mathop {\sup }\limits_{s \in \left[ {0,t_q^*} \right]} {{\left\| {{\mathfrak{W}_{q + 1}}\left( s \right)} \right\|}_N} > C\kappa \sum\limits_{j = 0}^{q + 1} {\delta _{j + 1}^{\frac{1}{2}}\lambda _j^N} } \right\} \\
\leqslant & {\kappa ^{ - 2}}{\left( {C\sum\limits_{j = 0}^{q + 1} {\delta _{j + 1}^{\frac{1}{2}}\lambda _j^N} } \right)^{ - 2}}{\left( {\delta _1^{\frac{1}{2}}\lambda _0^N + \sum\limits_{j = 1}^{q + 1} {\delta _{j - 1}^\alpha \delta _{j + 1}^{\frac{1}{2}}\lambda _j^N} } \right)^2} .
\end{align*}
Denote ${A_q} \triangleq \sum\limits_{j = 0}^{q + 1} {\delta _{j + 1}^{\frac{1}{2}}\lambda _j^N} $ and ${B_q} \triangleq \delta _1^{\frac{1}{2}}\lambda _0^N + \sum\limits_{j = 1}^{q + 1} {\delta _{j - 1}^\alpha \delta _{j + 1}^{\frac{1}{2}}\lambda _j^N} $. We observe that $\frac{{{B_q}}}{{{A_q}}} \to 0$ with exponential rate as $q  \to \infty $. Then 
\begin{equation*}
\sum\limits_{q \in {\mathbb{Z}^ + } \cup \left\{ 0 \right\}} {\mathbb{P}\left\{ {\mathfrak{t}_{q + 1}^* \left( {\kappa,N}  \right)  \leqslant \mathfrak{t}_q^* \left( {\kappa,N}  \right) } \right\}}  \leqslant \sum\limits_{q \in {\mathbb{Z}^ + } \cup \left\{ 0 \right\}} {\frac{{B_q^2}}{{{\kappa ^2}A_q^2}}}  < \infty , \ \ \forall \, \kappa > 0 , \ N \in {\mathbb{Z}^ + } \cup \left\{ 0 \right\} .
\end{equation*}
Employing the Borel--Cantelli lemma and the definition \eqref{KMCNHSGFLDPF-1}, there exists a sufficiently large integer ${N^*}$ such that
\begin{equation}\label{NCGYTDHFU-F-2-1}
\mathbb{P}\left\{ {\mathfrak{t}_{q + 1} > \mathfrak{t}_q} \right\} = 1 , \ \ \forall \, q > {N^*}.
\end{equation}
\par
Using the same argument as that of \eqref{chshdjhfy-a-1}, we have
\begin{equation*}
\mathbb{P}\left( { {\mathfrak{t}_q} > 0} \right) = 1  , \ \ \forall \, q \leqslant {N^* + 1}.
\end{equation*}
Invoking \eqref{NCGYTDHFU-F-2-1}, we obtain
\begin{equation*}
\inf \left\{ {{\mathfrak{t}_q} \wedge {\mathfrak{t}_\mathfrak{S}}:q \in {\mathbb{Z}^ + } \cup \left\{ 0 \right\}} \right\} = \inf \left\{ {{\mathfrak{t}_q} \wedge {\mathfrak{t}_\mathfrak{S}}:q \leqslant {N^* + 1}} \right\}  ,
\end{equation*}
which implies that
\begin{equation*}
\mathbb{P}\left\{ {\mathfrak{t} > 0} \right\} = \mathbb{P}\left\{ {\inf \left\{ {{\mathfrak{t}_q} \wedge {\mathfrak{t}_\mathfrak{S}}:q \leqslant {N^* + 1}} \right\} > 0} \right\} = 1  .
\end{equation*}
Furthermore, since $\mathop  \cap \limits_{j \leqslant {N^* + 1}} \left[ {0,{\mathfrak{t}_j}} \right] \subset \left[ {0,{\mathfrak{t}_q}} \right]$ for any $ q > {N^*}$, we derive that
\begin{equation*}
\left[ {0,\mathfrak{t}} \right] = \mathop  \cap \limits_{j \leqslant {N^* + 1}} \left[ {0,{\mathfrak{t}_j}} \right] = \mathop  \cap \limits_{q \in {\mathbb{Z}^ + } \cup \left\{ 0 \right\}} \left[ {0,{\mathfrak{t}_q}} \right]  .
\end{equation*}
The proof is complete. \qed
\par
We establish the initial Euler--Reynolds system \eqref{ERNE-3A.1-initial} and provide some estimates \eqref{HIE-u-1.1-initial}-\eqref{MDHIE-R-1.1-initial}. Without loss of generality, for any $t \in  \mathop  \cap \limits_{j = 0}^q \left[ {0,{\mathfrak{t}_j}} \right]$, assume that the $q$-step stochastic Euler--Reynolds system takes the form
\begin{equation}\label{ERNE-3A.1}
\left\{ {\begin{array}{*{20}{l}}
{\partial _t}{v_q} + \left( {{u_q} \cdot \nabla } \right){u_q} + \nabla {\mathfrak{p}_q} = \mathrm{div}\,   {\mathring{R}_q},\\
{u_q} = {v_q} + \mathfrak{ W}_q ,\\
\mathrm{div}\,   {u_q} = 0 .
\end{array}} \right.  
\end{equation}
The key procedure of the iteration scheme is to establish inductive estimates for the approximate equations \eqref{ERNE-3A.1}. To this end, we assume that Eq.\eqref{ERNE-3A.1} permits $q$-step inductive estimates analogous to \eqref{HIE-u-1.1-initial}-\eqref{MDHIE-R-1.1-initial}. Then, we iterate Eq.\eqref{ERNE-3A.1} to obtain the $\left( {q + 1} \right)$-step counterpart, and show that the $\left( {q + 1} \right)$-step system satisfies the corresponding $\left( {q + 1} \right)$-step inductive estimates.
\par
The construction proceeds in two stages. The first step is to build the Newtonian perturbation by linearizing the Euler equations with temporal and spatial oscillations. Using this Newtonian perturbation, we transform the $q$-step stochastic Euler--Reynolds system \eqref{ERNE-3A.1} into a finite-step stochastic Newton--Euler--Reynolds system. Furthermore, we establish some necessary estimates for these Newtonian elements, which validates the Newton iteration scheme. Our discussion is formal, and rigorous details are presented in Sections \ref{NNIS-JIUYRFD}-\ref{ENTP}.
\par
Second, we employ the finite-step stochastic Newton--Euler--Reynolds system and the intermittent building block to construct the non-interacting Nash perturbation with designed energy gap. Combining the Nash perturbation and the Newtonian perturbation, we transform all components of the $q$-step stochastic Euler--Reynolds system \eqref{ERNE-3A.1} into the $\left( {q + 1} \right)$-step counterparts. We estimate the first and second order temporal oscillations for the $\left( {q + 1} \right)$-step Nash elements, which justifies the Nash iteration scheme. Full details of the Nash iteration scheme are presented in Section \ref{NISTYU-SKO-A} and Section \ref{ENHP}. Furthermore, we demonstrate that the $\left( {q + 1} \right)$-step Nash elements and the energy gap satisfy inductive estimates as $q$ is replaced by $q+1$ (see Propositions \ref{HNNI-NEHU-MAIN}-\ref{RSRI-ESI-GAP} in Section \ref{ENHP-RHDU}).
\par
In this paper, all statements and estimates involving the approximate processes $u_q$, $v_q$, ${\mathfrak{p}_q}$, ${\mathring{R}_q}$ and the stopping time ${\mathfrak{t}}$ hold $\mathbb{P}$-almost surely. The inductive estimates for the Nash elements $\left( {{v_q},{u_q},{\mathfrak{p}_q},{\mathring{R}_q}} \right)$ are presented as follows.
\par
\begin{proposition}\label{HNNI-MAIN}
For any $q \in {\mathbb{Z}^ + }$ and $t \in \mathop  \cap \limits_{j = 0}^q \left[ {0,{\mathfrak{t}_j}} \right]$, there exists a parameter $\alpha \in \left( {\frac{\beta }{{b}},\beta } \right)$ such that the Nash elements of the Euler–Reynolds system \eqref{ERNE-3A.1} satisfy
\begin{equation}\label{HIE-u-1.1-BUDHU-A-1}
{\left\| {{u_q} } \right\|_0} + {\left\| {{v_q}} \right\|_0} + {\left\| {{\mathfrak{p}_q}} \right\|_0} \leqslant C  , 
\end{equation}
\begin{equation}\label{HIE-u-1.1}
{\left\| {{u_q} } \right\|_N} + {\left\| {{v_q}} \right\|_N} + {\left\| {{\mathfrak{p}_q}} \right\|_N} \lesssim \delta _q^{\frac{1}{2}}\lambda _q^N  ,  \ \ \forall \ N \in \left\{ {1, \cdots ,10} \right\} ,
\end{equation}
\begin{equation}\label{HIE-R-1.1}
{\left\| {{\mathring{R}_q}} \right\|_N} \lesssim \delta _{q+1}\lambda _q^{N - \alpha}  ,   \ \ \forall \ N \in \left\{ {0 , 1, \cdots ,10} \right\} ,
\end{equation}
and
\begin{equation}\label{MDHIE-R-1.1}
{\left\| {{D_{t,q}}{\mathring{R}_q}} \right\|_N} \lesssim \delta _q^{\frac{1}{2}}{\delta _{q + 1}}\lambda _q^{N + 1 - \alpha} ,  \ \ \forall \ N \in \left\{ {0 , 1, \cdots ,10} \right\}  , 
\end{equation}
where ${D_{t,q}} \triangleq {\partial _t} + {v_q} \cdot \nabla $ denotes the material derivative corresponding to ${v_q}$.
\end{proposition}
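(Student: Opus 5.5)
This proposition will be proved by induction on $q$. The base case $q=0$ is already contained in \eqref{HIE-u-1.1-initial}--\eqref{MDHIE-R-1.1-initial}. There we have $\|u_0\|_0+\|v_0\|_0+\|\mathfrak p_0\|_0\lesssim\delta_0^{1/2}\le C$, and the $C^N$ and Reynolds stress bounds hold on $[0,\mathfrak t_0]$. For the inductive step, I assume the bounds \eqref{HIE-u-1.1-BUDHU-A-1}--\eqref{MDHIE-R-1.1} at level $q$ on $\cap_{j\le q}[0,\mathfrak t_j]$, and I build $(v_{q+1},u_{q+1},\mathfrak p_{q+1},\mathring R_{q+1})$ by the two-stage Newton--Nash scheme described in Subsection~\ref{siuju-sij}. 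The bounds at level $q+1$ are then read off from the estimates of Sections 3--5, which are collected in Propositions~\ref{HNNI-NEHU-MAIN}--\ref{RSRI-ESI-GAP}.

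\textbf{Velocity and pressure bounds.} I write $v_{q+1}=v_q+w^{(t)}_{q+1}+w^{(p)}_{q+1}$, where $w^{(t)}$ is the sum of the Newtonian perturbations and $w^{(p)}$ is the intermittent Nash perturbation built from the two-dimensional Beltrami analogues. I set $u_{q+1}=v_{q+1}+\mathfrak W_{q+1}$.

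For the $C^N$ bounds \eqref{HIE-u-1.1} I use three inputs:
\begin{itemize}
\item the Newtonian perturbations obey $\|w^{(t)}\|_N\lesssim \delta_{q+1}^{1/2}\lambda_{q+1}^{N}\cdot(\text{small})$;
\item the Nash perturbation obeys $\|w^{(p)}\|_N\lesssim\delta_{q+1}^{1/2}\lambda_{q+1}^N$, up to an intermittency factor that is absorbed by the parameter choices of Appendix C;
\item the noise obeys $\|\mathfrak W_{q+1}\|_N\lesssim\sum_{j\le q+1}\delta_{j+1}^{1/2}\lambda_j^N$ before the stopping time $\mathfrak t_{q+1}$, which is $\lesssim \delta_{q+1}^{1/2}\lambda_{q+1}^N$.
\end{itemize}
The term $v_q$ contributes $\delta_q^{1/2}\lambda_q^N\le\delta_{q+1}^{1/2}\lambda_{q+1}^N$ for $N\ge1$, because $\beta<1$.

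The $C^0$ bound \eqref{HIE-u-1.1-BUDHU-A-1} should not be obtained by induction with a loss. I will instead telescope: $\|v_{q+1}\|_0\le\|v_0\|_0+\sum_j \|v_{j+1}-v_j\|_0\lesssim\sum_j\delta_{j+1}^{1/2}$, and this sum converges geometrically. The same argument applies to $\mathfrak p_{q+1}$, whose increments are the Newton pressures, the Nash pressures and the stochastic compensating pressure. Each increment is $O(\delta_{q+1})$ in $C^0$, so the total stays bounded by $C$.

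\textbf{Reynolds stress.} I decompose $\mathring R_{q+1}$ into the following pieces:
\begin{itemize}
\item the transport error $\mathcal R(D_{t,q}w^{(p)})$;
\item the Nash error $\mathcal R((w^{(p)}\cdot\nabla)v_q)$;
\item the oscillation error, left after the Beltrami-analogue cancellation and the extra stress residual;
\item the Newton remainder, left after finitely many Newton steps;
\item the mollification error;
\item the stochastic cross terms $\mathcal R\big((\mathfrak W_{q+1}-\mathfrak W_q)\cdot\nabla u\big)$ and their symmetric counterparts.
\end{itemize}
Each piece is bounded by the inverse-divergence estimates of Section 4 and must come out $\lesssim\delta_{q+2}\lambda_{q+1}^{N-\alpha}$.

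The stochastic cross terms are small because of the $\delta_{j-1}^\alpha$ prefactor in \eqref{Q-WINER-P}. The same prefactor makes these terms the ones the stochastic pressure is designed to compensate.

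The material derivative bound \eqref{MDHIE-R-1.1} comes from commuting $D_{t,q+1}=D_{t,q}+(w_{q+1}\cdot\nabla)$ through each piece. The temporal oscillation is at most of frequency $\delta_{q+1}^{1/2}\lambda_{q+1}$, which gives the extra factor $\delta_{q+1}^{1/2}\lambda_{q+1}$.

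\textbf{Main obstacle.} I expect the oscillation and intermittency terms to be the hardest. Two issues meet there. First, the two-dimensional Beltrami analogues leave a residual whose divergence is not small. Second, the Dirichlet-kernel concentration $r_q=\lambda_q^M$ inflates the $C^N$ norms, and this inflation must be beaten by the gain $\lambda_{q+1}^{-1}$ from inverse divergence together with the gap between $\alpha$ and $\beta$ (recall $\alpha\in(\beta/b,\beta)$).

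My first attempt is to transport the building blocks along the random backward flow and to apply stationary-phase bounds for $\mathcal R$. I would then check that the exponent condition $M<b/2$ closes the resulting estimate. All remaining terms follow from routine parameter bookkeeping once $\lambda_0$ is taken large.
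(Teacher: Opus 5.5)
Your skeleton is the paper's: induction on $q$, the base case from \eqref{HIE-u-1.1-initial}--\eqref{MDHIE-R-1.1-initial}, and the inductive step read off from Propositions \ref{RSRI-ujdh-main} and \ref{RSRI-main}. Telescoping is the right way to keep the $C^0$ bound uniform. Using $\mathfrak t_{q+1}$ to get a pathwise bound on $\mathfrak W_{q+1}$ is also more appropriate than the expectation bound \eqref{RSRI-P-HUSGDJ-AAsd-2}. However, several mechanisms you supply for the inductive step would not close.

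\textbf{The noise cross terms.} You place the cross terms with $\mathfrak W^{loc}_{q+1}=\mathfrak W_{q+1}-\mathfrak W_q$ in $\mathring R_{q+1}$ and call them small because of the factor $\delta_q^\alpha$. Even granting the full gain $\lambda_{q+1}^{-1}$ from $\mathcal R$, the term $\mathcal R\,\mathrm{div}(w_{q+1}\otimes\mathfrak W^{loc}_{q+1}+\mathfrak W^{loc}_{q+1}\otimes w_{q+1})$ is generically of size $\delta_q^\alpha\delta_{q+1}^{1/2}\delta_{q+2}^{1/2}$. Its ratio to the target $\delta_{q+2}\lambda_{q+1}^{-\alpha}$ is $\lambda_q^{b(b-1)\beta+\alpha(b-2\beta)}\to\infty$.

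Your list also omits the interaction $w_{q+1}\otimes\mathfrak W_q+\mathfrak W_q\otimes w_{q+1}$ with the old noise. Your transport error uses $D_{t,q}=\partial_t+v_q\cdot\nabla$, and the backward flows \eqref{Nashsteps-A-P-1} are those of $\bar v_{q,\Gamma}$. So the transport of $w_{q+1}$ by $\mathfrak W_q$, which contains $\mathfrak W_0$, is never cancelled. It leaves a stress of order $\delta_{q+1}^{1/2}$ times the amplitude of $\mathfrak W_0$, which does not decay in $q$.

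The paper never puts these terms in the Reynolds stress. It assigns all of them to the stochastic pressure $\mathfrak p^s_{q+1}$ of \eqref{LIJIFNI-DKFJ-L-56}. The only estimate it then needs is $\|\mathfrak p^s_{q+1}\|_N\lesssim\delta_{q+1}^{1/2}\lambda_{q+1}^N$ from \eqref{RSRI-P-mainsd-AAsd-1-KIJ-a}, and the factor $\delta_q^\alpha$ plays no role there. As a side effect, the pressure increments are $O(\delta_{q+1}^{1/2})$ rather than $O(\delta_{q+1})$, which is harmless for your telescoping. If you adopt this route, note that a pressure can only absorb the part $(\mathrm{Id}-\Pi)$ of that divergence.

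\textbf{The intermittency factor.} It cannot be absorbed by the parameter choices of Appendix C. In the velocity bounds no inverse divergence is available. At $N=0$ a factor $r_q=\lambda_q^M$ gives $\delta_{q+1}^{1/2}r_q=\lambda_q^{M-b\beta}\to\infty$, because $M>\frac12$ while $b\beta<\frac{1.3}{3}<\frac12$. With that factor your telescoping series diverges, and \eqref{HIE-u-1.1-BUDHU-A-1} and \eqref{HIE-u-1.1} fail at level $q+1$.

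The paper does not absorb such a factor. It relies on the loss-free bounds of Lemmas \ref{WPD-NHS-HUD-BB-2} and \ref{WPD-NHS-HUD-BB-3} and Proposition \ref{WPD-NS-SD-P-main}, for example $\|\mathbb W_\xi(\lambda_{q+1}\tilde\Phi_k)\|_0\lesssim 1$. Your instinct that a factor appears is sound, since $\mathfrak D_r(0)=2r+1$ by \eqref{XINHUI-SJI-A}. So this $N=0$ bound is exactly where a genuine argument is needed; a parameter appeal will not supply it.

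\textbf{The Beltrami-analogue residual.} Stationary phase gains $\lambda_{q+1}^{-1}$ only on the part of the residual that oscillates at frequency $\lambda_{q+1}$. Consider the term $\tilde\mu^{-1}\xi\cos^2(\xi^\perp\cdot x)\,\partial_t(\mathfrak D_{\tilde\mu}^2)$ of Lemma \ref{IBW-P-A-1}. The mean of $\cos^2$ leaves a contribution at the Dirichlet frequency. Its inverse divergence has the size of $\tilde\eta_{\xi,k,n}^2\mathfrak D_{\tilde\mu}^2$, with no factor $\lambda_{q+1}^{-1}$ left over.

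That contribution must be cancelled, not estimated. The tool is a temporal corrector of the type $w^{(i)}_{q+1}$ in \eqref{THENDM-DKF}, whose time derivative is combined with it inside $\mathring R^{(i)}_{q+1}$; your outline has no such term. Be aware that the paper's own bound \eqref{RSRI-P-PP-HUJ-O-10} applies the full $\lambda_{q+1}^{-1}$ gain to all of $\cos^2$ and estimates $\mathcal R\,\partial_t w^{(i)}_{q+1}$ separately. So the paper does not exploit this cancellation either.
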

\subsection{The proof of Theorem \ref{CTSE-O-C}}
The proof of Theorem \ref{CTSE-O-C} is divided into three steps. The first is to construct $\gamma$-H\"{o}lder-continuous martingale solutions. The second is to establish the temporal support of $\gamma$-H\"{o}lder-continuous martingale solutions. The third is to derive the energy inequality \eqref{SHUDJ-NENH-B-SIDIS}.
\par
\vspace{1em}
{\textbf{Step 1. The existence of H\"{o}lder-continuous martingale solutions}}. 
\par
\vspace{1em}
Using the inductive estimate \eqref{HIE-WWW-1.1}, we have
\begin{equation}\label{UIJHF-DI}
\mathop {\sup }\limits_{t \in \left[ {0,T} \right]} \left( {\mathbb{E}\left\| {{\mathfrak{W}_q}\left( t \right) - {\mathfrak{W}_{q-1}}\left( t \right)} \right\|_0^2 + \lambda _{q + 1}^{ - 2}\mathbb{E}\left\| {{\mathfrak{W}_q}\left( t \right) - {\mathfrak{W}_{q-1}}\left( t \right)} \right\|_1^2} \right) \lesssim {\delta _q} .
\end{equation}
We note that the total Newton--Nash perturbation ${w_{q + 1}}\left( {t,x} \right) $ is jointly defined by \eqref{SHDUFY-A-2}, \eqref{THENDM-DKF} and \eqref{NISTYU-SKO-A-PPA-1}. It follows from the definition \eqref{NISTYU-SKO-A-PPA-2}, Proposition \ref{WPD-NP-main} and Proposition \ref{WPD-NS-SD-P-main} that
\begin{equation}\label{UIJHF-DI-OKU-A}
{\left\| {{v_q} - {v_{q-1}}} \right\|_0} + \lambda _q^{ - 1}{\left\| {{v_q} - {v_{q - 1}}} \right\|_1} \lesssim \delta _q^{\frac{1}{2}} .
\end{equation}
Combining \eqref{UIJHF-DI}, \eqref{UIJHF-DI-OKU-A} and the fact that ${u_q}\left( {t,x} \right) = {v_q}\left( {t,x} \right) + {\mathfrak{W}_q}\left( {t,x} \right)$, we obtain
\begin{equation*}
{\left\| {{u_q} - {u_{q - 1}}} \right\|_0} + \lambda _q^{ - 1}{\left\| {{u_q} - {u_{q - 1}}} \right\|_1} \lesssim \delta _q^{\frac{1}{2}} ,\ \ \forall \, t \in \mathop  \cap \limits_{j = 0}^q \left[ {0,{\mathfrak{t}_j}} \right] .
\end{equation*}
Therefore, employing the interpolation inequality, we get
\begin{equation}\label{MDHIE-LOIU-R-1.1-2}
{\left\| {{u_{q + 1}} - {u_q}} \right\|_\gamma } \lesssim \left\| {{u_{q + 1}} - {u_q}} \right\|_0^{1 - \gamma }\left\| {{u_{q + 1}} - {u_q}} \right\|_1^\gamma  \lesssim \delta _{q + 1}^{\frac{1}{2}}\lambda _{q + 1}^\gamma  \lesssim \lambda _{q + 1}^{\gamma  - \beta },\ \ \forall \, t \in \mathop  \cap \limits_{j = 0}^q \left[ {0,{\mathfrak{t}_j}} \right] ,
\end{equation}
which implies that $\left\{ {{u_q}\left( t,\cdot  \right)} \right\}_{q \in \mathbb{Z}^+}$ is a Cauchy sequence in ${C^\gamma }\left( {{\mathbb{T}^2}} \right)$. Then, there exists a process ${u\left( t,x  \right)} \in {C^\gamma }\left( {{\mathbb{T}^2}} \right)$ such that
\begin{equation}\label{OKIUHBF-A}
{{u_q}\left( t,x  \right)} \to  {u\left( t,x  \right)},\ \ \mbox{in}\ {C^\gamma }\left( {{\mathbb{T}^2}} \right),\ \ \forall\, t \in \left[ {0,{\mathfrak{t}}} \right], \ \text{as}\ q \to  \infty ,
\end{equation}
and there exists a small scale $0 < {\tilde \rho} \ll 1$ such that
\begin{equation}\label{COMJHI-F-j-1}
\mathop {\sup }\limits_{t \in \left[ {0,{\mathfrak{t}_q}} \right]} \int_{{\mathbb{T}^2}} {{{\left| {{u_q}\left( {t,x + h} \right) - {u_q}\left( {t,x} \right)} \right|}^2}} {\rm{d}}x \lesssim {\left| h \right|^{2\gamma }} , \ \ \forall \, \left| h \right| \in \left[ {{\tilde \rho} , C} \right],\ \mbox{as}\ q \ \mbox{is sufficiently large} .
\end{equation}
Using the argument similar to that of Constantin and Vicol \cite[Theorem 3.1]{ConstantinP}, it follows from \eqref{COMJHI-F-j-1} that
\begin{align}\label{NLT-C-S-NSBE-u-1}
& \left| {\int_0^t {\left\langle {\mathrm{div} \left( {{u_q}\left( \tau  \right) \otimes {u_q}\left( \tau  \right)} \right) - \mathrm{div} \left( {u\left( \tau  \right) \otimes u\left( \tau  \right)} \right),\psi } \right\rangle _{\mathbb{T}^2}} {\rm{d}}\tau } \right| \nonumber \\
\to & 0, \ \ \forall \, \psi \left( x \right) \in {C^\infty }\left( {{\mathbb{T}^2}} \right) , \ \forall \, t \in \mathop  \cap \limits_{j = 0}^q \left[ {0,{\mathfrak{t}_j}} \right]   , \ \text{as}\ q \to  \infty,
\end{align}
where ${u\left( t,x  \right)}$ is the limit of $\left\{ {{u_q}\left( t,x  \right)} \right\}$ in \eqref{OKIUHBF-A}. Furthermore, invoking the inductive estimate \eqref{HIE-R-1.1} and the interpolation inequality, we obtain
\begin{equation}\label{MDHIE-LOIU-R-1.1-1}
{\left\| {{\mathring{R}_q}} \right\|_\gamma } \lesssim \left\| {{\mathring{R}_q}} \right\|_0^{1 - \gamma }\left\| {{\mathring{R}_q}} \right\|_1^\gamma  \lesssim {\delta _{q + 1}}\lambda _{q + 1}^{\gamma  - \alpha } \lesssim \lambda _{q + 1}^{\gamma  - 2\beta  - \alpha } ,\ \ \forall \, t \in \mathop  \cap \limits_{j = 0}^q \left[ {0,{\mathfrak{t}_j}} \right]  ,
\end{equation}
which implies that
\begin{equation*}
{\mathring{R}_q} \to 0 , \ \ \mbox{in}\ {C^\gamma }\left( {{\mathbb{T}^2}} \right) , \ \ \forall\, t \in \left[ {0,{\mathfrak{t}}} \right], \ \mbox{as}\ q \to  \infty . 
\end{equation*}
\par
It follows from the inductive estimates \eqref{HIE-u-1.1} and \eqref{HIE-R-1.1} that
\begin{align*}
& {\left\| {\mathrm{div}\left( {{\mathring{R}_q} - {\mathring{R}_{q - 1}}} \right) - \mathrm{div}\left( {{u_q} \otimes {u_q} - {u_{q - 1}} \otimes {u_{q - 1}}} \right)} \right\|_N} \\
\lesssim & {\left\| {{\mathring{R}_q}} \right\|_{N + 1}} + {\left\| {{\mathring{R}_{q - 1}}} \right\|_{N + 1}} + {\left\| {{u_q} - {u_{q - 1}}} \right\|_{N+1}}\left( {{{\left\| {{u_q}} \right\|}_0} + {{\left\| {{u_{q - 1}}} \right\|}_0}} \right) \\
& + {\left\| {{u_q} - {u_{q - 1}}} \right\|_0}\left( {{{\left\| {{u_q}} \right\|}_{N+1}} + {{\left\| {{u_{q - 1}}} \right\|}_{N+1}}} \right) \\
\lesssim & \delta _{q - 1}^{\frac{1}{2}}\delta _q^{\frac{1}{2}}\lambda _q^{N + 1} , \ \ \forall \, t \in \mathop  \cap \limits_{j = 0}^q \left[ {0,{\mathfrak{t}_j}} \right] .
\end{align*}
Using the Schauder estimate (see \cite{MR3374958}), the interpolation inequality and the fact that
\begin{equation*}
\Delta {\mathfrak{p}_q} = {\mathrm{div}}\,{\mathrm{div}}\left( { {\mathring{R}_q} - {u_q} \otimes {u_q}} \right)  ,
\end{equation*}
we deduce that
\begin{align}\label{djubh0sdf-s-A-1}
{\left\| {{\mathfrak{p}_{q + 1}} - {\mathfrak{p}_{q}}} \right\|_{1 + {\alpha ^*}}} \lesssim & {\left\| {\mathrm{div}\left( {{\mathring{R}_{q + 1}} - {\mathring{R}_{q }}} \right) - \mathrm{div} \left( {{u_{q + 1}} \otimes {u_{q + 1}} - {u_{q }} \otimes {u_{q }}} \right)} \right\|_{{\alpha ^*}}} \nonumber \\
\lesssim & \left\| {\mathrm{div}\left( {{\mathring{R}_{q + 1}} - {\mathring{R}_{q }}} \right) - \mathrm{div} \left( {{u_{q + 1}} \otimes {u_{q + 1}} - {u_{q }} \otimes {u_{q }}} \right)} \right\|_0^{1 - {\alpha ^*}}    \nonumber \\
& \cdot \left\| {\mathrm{div}\left( {{\mathring{R}_{q + 1}} - {\mathring{R}_{q }}} \right) - \mathrm{div} \left( {{u_{q + 1}} \otimes {u_{q + 1}} - {u_{q }} \otimes {u_{q }}} \right)} \right\|_1^{{\alpha ^*}} \nonumber \\
\lesssim & \delta _{q }^{\frac{1}{2}}\delta _{q + 1}^{\frac{1}{2}}\lambda _{q + 1}^{1 + {\alpha ^*}} , \ \ \forall \, t \in \mathop  \cap \limits_{j = 0}^q \left[ {0,{\mathfrak{t}_j}} \right]  , \ \forall \,  {\alpha ^*} \in \left( {0,\beta} \right)  .
\end{align}
Thus, combining \eqref{HIE-u-1.1}, \eqref{djubh0sdf-s-A-1} and the definition \eqref{parameters-S-2}, we obtain
\begin{align}\label{LKLJFIJFSFKH}
{\left\| {{\mathfrak{p}_{q + 1}} - {\mathfrak{p}_{q }}} \right\|_\gamma } \lesssim & \left\| {{\mathfrak{p}_{q + 1}} - {\mathfrak{p}_{q }}} \right\|_0^{1 - \gamma }\left\| {{\mathfrak{p}_{q + 1}} - {\mathfrak{p}_{q }}} \right\|_{1 + {\alpha ^*}}^\gamma  \nonumber \\
\lesssim & \delta _{q }^{\frac{1}{2}}\delta _{q + 1}^{\frac{1}{2}\gamma }\lambda _{q + 1}^{\gamma  + \gamma {\alpha ^*}} \nonumber \\
\lesssim & {a^{\left( {b\gamma \left( {1 + {\alpha ^*} - \beta } \right) - \beta } \right){b^{q }}}}  , \ \ \forall \, t \in \mathop  \cap \limits_{j = 0}^q \left[ {0,{\mathfrak{t}_j}} \right] .
\end{align}
By the monotonicity of reciprocal functions, we choose parameters $b \in \left( {1.005,1.3} \right]$ and $\beta  \in \left( {\gamma,\frac{1}{3}} \right)$ such that $f\left( \gamma  \right) = \frac{\beta }{{b\gamma }} + \beta  - 1 > 0$ for any $\gamma  \in \left( {0,\beta } \right)$. Then, fixing the parameter ${\alpha ^*} \in \left( {0,\frac{\beta }{{b\gamma }} + \beta  - 1} \right)$, we have ${b\gamma \left( {1 + {\alpha ^*} - \beta } \right) - \beta } < 0$. It follows from \eqref{LKLJFIJFSFKH} that there exists a non-trivial pressure $\mathfrak{p}\left( t,x \right) $ such that
\begin{equation*}
{\mathfrak{p}_q} \left( t,x  \right) \to \mathfrak{p} \left( t,x  \right) , \ \ \mbox{in}\ {C^\gamma }\left( {{\mathbb{T}^2}} \right),\ \forall\, t \in \left[ {0,{\mathfrak{t}}} \right], \ \text{as}\ q \to  \infty ,
\end{equation*}
which implies that
\begin{equation}\label{NLT-C-S-NSBE-p-1}
\left\langle {\nabla {\mathfrak{p}_q} \left( t  \right)  - \nabla \mathfrak{p}\left( t  \right) ,\psi } \right\rangle _{\mathbb{T}^2}  \to 0, \ \ \forall \, \psi \left( x \right) \in {C^\infty }\left( {{\mathbb{T}^2}} \right) , \ \forall \, t \in \left[ {0,{\mathfrak{t}}} \right], \  \text{as}\ q \to  \infty  .
\end{equation}
\par
For each step size $q \in \mathbb{Z}^+$, we truncate a degenerate segment of the Wiener process ${\mathfrak{W}}\left( {t} \right)$ specified in Appendix $B$ to construct the $q$-step Wiener process ${\mathfrak{W}_q}\left( {t} \right)$. Then, we have
\begin{equation*}
\mathbb{E}\left\| {{\mathfrak{W}_q}\left( t \right)} \right\|_\gamma ^2 \lesssim \mathrm{Tr} \left( {Q + {Q^{\frac{1}{2}}}{{\left( { - \Delta } \right)}^{\frac{1 + \gamma }{2} + \rho}}{{\left( {{Q^{\frac{1}{2}}}} \right)}^*}} \right) <  \infty  ,
\end{equation*}
which implies ${\mathfrak{W}_q}\left( t \right) \in {L^2}\left( {\Omega ;{C^\gamma }\left( {{\mathbb{T}^2}} \right)} \right)$ for any $q \in \mathbb{Z}^+$. Using \eqref{UIJHF-DI} and the interpolation inequality once more, there exists a Wiener process $ {\mathfrak{W}}\left( {t,x} \right) \in \mathscr{W}$ such that
\begin{equation}\label{CW-SJU-1}
{\mathfrak{W}_q}\left( {t,x} \right) \to {\mathfrak{W}}\left( {t,x} \right) , \ \ \mbox{in}\ {L^2}\left( {\Omega ;{C^\gamma }\left( {{\mathbb{T}^2}} \right)} \right),\ \forall\, t \in \left[ {0,{\mathfrak{t}}} \right], \ \text{as}\ q \to  \infty .
\end{equation}
Consequently, in view of \eqref{OKIUHBF-A}, \eqref{NLT-C-S-NSBE-u-1} and \eqref{NLT-C-S-NSBE-p-1}, we infer that the limit $\left( {u ,\mathfrak{p},\mathfrak{W}} \right)$ fulfills the two-dimensional stochastic Euler equations \eqref{2DEEUGE1} in distribution.
\par
\vspace{1em}
{\textbf{Step 2. Temporal support}}. 
\par
\vspace{1em}
By the definition of $\mathfrak{h}\left( t \right)$, we have
\begin{equation*}
{\mathrm{supp}_\mathrm{t}} \, \left( {{\partial _t}\mathfrak{h}} \right) = \left[ {0,\frac{3}{4}T} \right]\backslash \left[ {0,\frac{1}{4}T} \right] = \left( {\frac{1}{4}T,\frac{3}{4}T} \right] ,
\end{equation*}
which implies that the initial Reynolds stress satisfies
\begin{equation*}
{\mathrm{supp}_\mathrm{t}} \, {\mathring{R}_0} = \left( {\frac{1}{4}T,\frac{3}{4}T} \right] .
\end{equation*}
Without loss of generality, assume that the temporal support of $q$-step Reynolds stress is
\begin{equation}\label{RRR-SUPPOI}
{\mathrm{supp}_\mathrm{t}} \, {\mathring{R}_q} = \mathop  \cap \limits_{j = 0}^q \left[ {0,{\mathfrak{t}_j}} \right] \cap \left( {\frac{1}{4}T ,\frac{3}{4}T - 2\delta _q^{ - \frac{1}{2}}\lambda _q^{ - 1} + 4\Gamma {\tau _q} } \right] ,\ \ \mbox{for}\ q \in \mathbb{Z}^+ ,
\end{equation}
where $\Gamma$ denotes the number of Newtonian iteration steps, and $\tau _q$ is the temporal scale (see Appendix C for their definitions). It follows from the support property \eqref{LIJIFNI-DKFJ-R-ss3} that the $\left( {q+1} \right)$-step Reynolds stress ${\mathring{R}_{q+1}}$ admits the temporal support \eqref{RRR-SUPPOI} as $q$ is replaced by $q+1$, which completes the iterative loop for temporal support.
\par
Since the inductive temporal support of Newtonian perturbations and Nash perturbations are provided by \eqref{OPIUY-A-1}, \eqref{THENDM-DKF-SUPP-1} and \eqref{THENDM-DKF-SUPP-2}, we have
\begin{equation*}
{\mathrm{supp}_\mathrm{t}} \, {w_{q + 1}} \subset \mathop  \cap \limits_{j = 0}^q \left[ {0,{\mathfrak{t}_j}} \right] \cap \left( {\frac{1}{4}T ,\frac{3}{4}T - 2\delta _{q + 1}^{ - \frac{1}{2}}\lambda _{q + 1}^{ - 1} + 4\Gamma {\tau _{q+1}} } \right] .
\end{equation*}
Consequently,
\begin{equation}\label{RRR-SUPPOI-pp-1}
{\mathrm{supp}_\mathrm{t}} \, \left( {{u_{q + 1}} - {u_q}} \right) \subset \mathop  \cap \limits_{j = 0}^q \left[ {0,{\mathfrak{t}_j}} \right] \cap \left( {\frac{1}{4}T ,\frac{3}{4}T - 2\delta _{q + 1}^{ - \frac{1}{2}}\lambda _{q + 1}^{ - 1}+ 4\Gamma {\tau _{q+1}}} \right] .
\end{equation}
According to \eqref{RRR-SUPPOI} and \eqref{RRR-SUPPOI-pp-1}, the limit velocity ${u}\left( {t,x} \right)$ permits
\begin{equation*}
{\mathrm{supp}_\mathrm{t}} \, u \subset \mathop  \cup \limits_{q \in {\mathbb{Z}^ + }} \left( {{\mathrm{supp}_\mathrm{t}}\, {w_q} \cup {\mathrm{supp}_\mathrm{t}}\, {\mathring{R}_q} \cup {\mathrm{supp}_\mathrm{t}}\, {u_0}  } \right) \subset \left( {\left[ {0, {\mathfrak{t}} } \right] \cap  \left[ {0,\frac{3}{4} T} \right]} \right) .
\end{equation*}
\par
\vspace{1em}
{\textbf{Step 3. Dissipative behavior}}. 
\par
\vspace{1em}
Using the same argument as in \eqref{OKIUHBF-A}, we deduce that
\begin{equation}\label{OKIUHBF-A-UIJ-AY}
{{v_{q + 1}}\left( t,x  \right)} \to  {v\left( t,x  \right)},\ \ \mbox{in}\ {C^\gamma }\left( {{\mathbb{T}^2}} \right),\ \ \forall\, t \in \left[ {0, {\mathfrak{t}} } \right], \ \text{as}\ q \to  \infty ,
\end{equation}
where ${{v}\left( t,x  \right)}  = {{u}\left( t,x  \right)}  - {\mathfrak{W}}\left( {t,x} \right) $. Denote $\mathcal{\tilde E}_{q} \left( t \right) \triangleq \left\| {v_{q}\left( t \right)} \right\|_{{L^2}}^2 $. Then
\begin{align*}
{\mathcal{\tilde E}_{q + 1}}\left( t \right) - {\mathcal{\tilde E}_q}\left( t \right) = & \int_{{\mathbb{T}^2}} {{{\left| {{v_{q + 1}}\left( {t,x} \right)} \right|}^2} - {{\left| {{v_q}\left( {t,x} \right)} \right|}^2}}\ \mathrm{d}x \\
= & \int_{{\mathbb{T}^2}} {{{\left| {{w_{q + 1}}\left( {t,x} \right)} \right|}^2}}\ \mathrm{d}x + 2\int_{{\mathbb{T}^2}} {{w_{q + 1}}\left( {t,x} \right) \cdot {v_q}\left( {t,x} \right)}\ \mathrm{d}x ,
\end{align*}
which implies that
\begin{align}\label{xiuh-IJHYU-1}
\left| {{\mathcal{\tilde E}_{q + 1}}\left( t \right) - {\mathcal{\tilde E}_q}\left( t \right)} \right| \lesssim & \int_{{\mathbb{T}^2}} {{{\left| {w_{q + 1}^{\left( p \right)}\left( {t,x} \right)} \right|}^2} + {{\left| {w_{q + 1}^{\left( c \right)}\left( {t,x} \right)} \right|}^2} + {{\left| {w_{q + 1}^{\left( i \right)}\left( {t,x} \right)} \right|}^2}}\ \mathrm{d}x \nonumber \\
& + \int_{{\mathbb{T}^2}} {{{\left| {w_{q + 1}^{\left( \Gamma \right)}\left( {t,x} \right)} \right|}^2} + \left| {{w_{q + 1}}\left( {t,x} \right)} \right|\left| {{v_q}\left( {t,x} \right)} \right|}\ \mathrm{d}x .
\end{align}
Here, the components ${w_{q + 1}^{\left( p \right)}\left( {t,x} \right)}$, ${w_{q + 1}^{\left( c \right)}\left( {t,x} \right)}$, ${w_{q + 1}^{\left( i \right)}\left( {t,x} \right)}$ and ${w_{q + 1}^{\left( \Gamma \right)}\left( {t,x} \right)}$ are defined by \eqref{Nashsteps-A-P-3}, \eqref{Nashsteps-A-P-4}, \eqref{THENDM-DKF}, \eqref{Newtonsteps-A-4} and \eqref{UOIYH-LKO-DHU}, respectively.
\par
According to Proposition \ref{WPD-NS-SD-P-main} and \eqref{WPD-NP-main-BTO-1}, we infer that for any $t \in \mathop  \cap \limits_{j = 0}^q \left[ {0,{\mathfrak{t}_j}} \right]$,
\begin{equation*}
\int_{{\mathbb{T}^2}} {{{\left| {w_{q + 1}^{\left( p \right)}\left( {t,x} \right)} \right|}^2} + {{\left| {w_{q + 1}^{\left( c \right)}\left( {t,x} \right)} \right|}^2} + {{\left| {w_{q + 1}^{\left( i \right)}\left( {t,x} \right)} \right|}^2}}\ \mathrm{d}x \lesssim {\delta _{q + 1}}  ,
\end{equation*}
and
\begin{equation*}
\int_{{\mathbb{T}^2}} {{{\left| {w_{q + 1}^{\left( \Gamma \right)}\left( {t,x} \right)} \right|}^2}}\ \mathrm{d}x \lesssim \mu _{q + 1}^{ - 2}\delta _{q + 1}^2\lambda _q^2l_q^{ - 2\alpha } \lesssim {\delta _{q + 1}} . 
\end{equation*}
Moreover, we have
\begin{equation*}
\int_{{\mathbb{T}^2}} {\left| {{w_{q + 1}}\left( {t,x} \right)} \right|\left| {{v_q}\left( {t,x} \right)} \right|}\ \mathrm{d}x \lesssim {\left\| {{w_{q + 1}}\left( t \right)} \right\|_0}{\left\| {{v_q}\left( t \right)} \right\|_0} \lesssim \delta _{q + 1}^{\frac{1}{2}} , \ \ \forall \, t \in \mathop  \cap \limits_{j = 1}^{q + 1} \left[ {0,{\mathfrak{t}_j}} \right]  .
\end{equation*}
It follows from \eqref{xiuh-IJHYU-1} that
\begin{equation*}
\left| {{\mathcal{\tilde E}_{q + 1}}\left( t \right) - {\mathcal{\tilde E}_q}\left( t \right)} \right| \lesssim \delta _{q + 1}^{\frac{1}{2}} , \ \ \forall \, t \in \mathop  \cap \limits_{j = 1}^{q + 1} \left[ {0,{\mathfrak{t}_j}} \right] . 
\end{equation*}
Hence, combining \eqref{OKIUHBF-A-UIJ-AY}, \eqref{SJKFJIFJSKJGG-ASD-1}, the compact embedding ${C^\gamma }\left( {{\mathbb{T}^2}} \right) \subset {L^2}\left( {{\mathbb{T}^2}} \right)$ and the fact that $\left| {\mathfrak{h}\left( t \right)} \right|  \leqslant   \left| {\mathfrak{h}\left( 0 \right)} \right|$ for any $t \in \left( {0,T} \right]$, there exists a function $\mathcal{\tilde E}\left( t \right) = \left\| {v\left( t \right)} \right\|_{{L^2}}^2$ such that
\begin{equation*}
\mathfrak{e}\left( t \right)  \lesssim  {\mathcal{\tilde E}_q}\left( t \right) + {\delta _{q + 1}} \to \mathcal{\tilde E}\left( t \right) \leqslant \mathcal{E}\left( 0 \right) , \ \ \forall \, t \in \left( {0, {\mathfrak{t}} } \right] ,\   \mbox{as}\ q \to  \infty . 
\end{equation*}
This shows that the function $\mathfrak{e}\left( t \right)$ is well-defined as $q \to  \infty$.
\par
We next control the stochastic integral term and the It\^{o} drift correction. Denote by $\hat f \left( x \right)$ the mollification of $ f \left( x \right) $ (see Section \ref{NISTYU-SKO-A} for the definition of $\hat f  \left( x \right)$). Employing It\^{o} isometry, we obtain
\begin{align}\label{xiuh-IJHYU-2-ITO-D-A-1}
& \mathbb{E}{\left| {\int_0^t {{{\left\langle {v,\mathrm{d}\mathfrak{W}} \right\rangle }_{{\mathbb{T}^2}}}}  - \int_0^t {{{\left\langle {{{\hat v}_q},\mathrm{d}{\mathfrak{W}_q}} \right\rangle }_{{\mathbb{T}^2}}}} } \right|^2} \nonumber \\
\lesssim & \mathbb{E}{\left| {\int_0^t {{{\left\langle {v - {v_q},\mathrm{d}\mathfrak{W}} \right\rangle }_{{\mathbb{T}^2}}}} } \right|^2} + \mathbb{E}{\left| {\int_0^t {{{\left\langle {{v_q} - {{\hat v}_q},\mathrm{d}\mathfrak{W}} \right\rangle }_{{\mathbb{T}^2}}}} } \right|^2} + \mathbb{E}{\left| {\int_0^t {{{\left\langle {{{\hat v}_q},\mathrm{d}\left( {\mathfrak{W} - {\mathfrak{W}_q}} \right)} \right\rangle }_{{\mathbb{T}^2}}}} } \right|^2} \nonumber \\
\lesssim & \mathbb{E}\int_0^t {\left\| {v - {v_q}} \right\|_{{Q^{\frac{1}{2}}}\mathcal{H}}^2} \mathrm{d}s + \mathbb{E}\int_0^t {\left\| {{v_q} - {{\hat v}_q}} \right\|_{{Q^{\frac{1}{2}}}\mathcal{H}}^2} \mathrm{d}s + \mathbb{E}\int_0^t {\left\| {\hat v} \right\|_{{{\left( {Q - {Q_q}} \right)}^{\frac{1}{2}}}\mathcal{H}}^2} \mathrm{d}s  \nonumber  \\
\lesssim & \left\| {v - {v_q}} \right\|_{{L^2}}^2 \cdot \mathrm{Tr} \, Q + \hat l_q^2\left\| {{v_q}} \right\|_1^2 \cdot \mathrm{Tr} \, Q + \left\| v \right\|_{{L^2}}^2 \cdot \mathrm{Tr} \left( {Q - {Q_q}} \right) \nonumber \\
\to & 0 , \ \ \mbox{as}\ q \to  \infty  .
\end{align}
It follows from the It\^{o}'s formula that
\begin{align}\label{xiuh-IJHYU-2-ITO-D-A-2}
& \mathbb{E}{\left| {2\int_0^t {{{\left\langle {\mathfrak{W},\mathrm{d}\mathfrak{W}} \right\rangle }_{{\mathbb{T}^2}}}}  + t \cdot \mathrm{Tr} \, Q - \left\| {{\mathfrak{\hat W}_q}} \right\|_{{L^2}}^2} \right|^2} \nonumber \\
= & \mathbb{E}{\left| {\left\| \mathfrak{W} \right\|_{{L^2}}^2 - \left\| {{\mathfrak{\hat W}_q}} \right\|_{{L^2}}^2} \right|^2} \nonumber \\
\leqslant & \mathbb{E}\left\| {\mathfrak{W} - {\mathfrak{\hat W}_q}} \right\|_{{L^2}}^2 \mathbb{E}\left\| {\mathfrak{W} + {\mathfrak{\hat W}_q}} \right\|_{{L^2}}^2 \nonumber \\
\leqslant & \left( {\mathbb{E}\left\| {\mathfrak{W} - {\mathfrak{W}_q}} \right\|_{{L^2}}^2 + \hat l_q^2\left\| {{\mathfrak{W}_q}} \right\|_0^2} \right)\mathbb{E}\left\| {\mathfrak{W} + {\mathfrak{\hat W}_q}} \right\|_{{L^2}}^2 \nonumber \\
\to & 0 , \ \ \mbox{as}\ q \to  \infty  .
\end{align}
\par
We recall that the energy gap is defined by \eqref{LOKJI-A-FGH-A-1}. Owing to the inductive estimate of energy gap in Proposition \ref{RSRI-ESI-GAP}, we have
\begin{equation}\label{xiuh-IJHYU-2-ITO-D-A-2-ksi-1}
\left| {{\mathfrak{e}\left( t \right)} - \left\| {{{\hat u}_q}} \right\|_{{L^2}}^2 - \frac{1}{2}{\delta _{q + 2}} + \left\| {{\mathfrak{\hat W}_q}} \right\|_{{L^2}}^2 + 2\int_0^t {{{\left\langle {{{\hat v}_q},{\text{d}}{\mathfrak{W}_q}} \right\rangle }_{{\mathbb{T}^2}}}} } \right| \lesssim {\delta _{q + 1}}   \to  0 , \ \ \mbox{as}\ q \to  \infty  .
\end{equation}
Combining \eqref{OKIUHBF-A}, \eqref{xiuh-IJHYU-2-ITO-D-A-1}-\eqref{xiuh-IJHYU-2-ITO-D-A-2-ksi-1} and the continuity argument in the temporal interval $\left[ {0,\mathfrak{t}} \right]$, we derive that
\begin{align*}
& \left| {{\mathfrak{e}\left( t \right)} - \left\| u \right\|_{{L^2}}^2 + 2\int_0^t {{{\left\langle {u,\mathrm{d}\mathfrak{W}} \right\rangle }_{{\mathbb{T}^2}}} + t \cdot \mathrm{Tr} \, Q} } \right|  \\
\leqslant & \left| {{\mathfrak{e}\left( t \right)} - \left\| {{{\hat u}_q}} \right\|_{{L^2}}^2 - \frac{1}{2}{\delta _{q + 2}} + \left\| {{\mathfrak{\hat W}_q}} \right\|_{{L^2}}^2 + 2\int_0^t {{{\left\langle {{{\hat v}_q},{\text{d}}{\mathfrak{W}_q}} \right\rangle }_{{\mathbb{T}^2}}}} } \right| \\
& + \left| {2\int_0^t {{{\left\langle {W,\mathrm{d}\mathfrak{W}} \right\rangle }_{{\mathbb{T}^2}}}}  + t \cdot \mathrm{Tr} \, Q - \left\| {{\mathfrak{\hat W}_q}} \right\|_{{L^2}}^2} \right| + \left| {\left\| u \right\|_{{L^2}}^2 - \left\| {{{\hat u}_q}} \right\|_{{L^2}}^2} \right|  \\
& + 2\left| {\int_0^t {{{\left\langle {v,\mathrm{d}\mathfrak{W}} \right\rangle }_{{\mathbb{T}^2}}}}  - \int_0^t {{{\left\langle {{{\hat v}_q},\mathrm{d}{\mathfrak{W}_q}} \right\rangle }_{{\mathbb{T}^2}}}} } \right| + \frac{1}{2}{\delta _{q + 2}} \\
\to & 0  , \ \ \forall \ t \in \left[ {0,\mathfrak{t}} \right] , \ \mbox{as}\ q \to  \infty  ,
\end{align*}
which implies that 
\begin{equation}\label{OKIJDU=SKDLO-Akij-3}
{{\mathfrak{e}\left( t \right)} - \left\| u\left( t \right) \right\|_{{L^2}}^2 + 2\int_0^t {{{\left\langle {u\left( s \right),\mathrm{d}\mathfrak{W}\left( s \right)} \right\rangle }_{{\mathbb{T}^2}}} + t \cdot \mathrm{Tr} \, Q} }  = 0 , \ \ \forall \ t \in \left[ {0,\mathfrak{t}} \right] .
\end{equation}
Consequently, it follows from \eqref{OKIJDU=SKDLO-Akij-3} that the energy inequality \eqref{SHUDJ-NENH-B-SIDIS} holds. The proof of Theorem \ref{CTSE-O-C} is complete. \qed
\subsection{The proof of Theorem \ref{CTSE-O-C-1B}}\label{TPT-A-4}
Since the $\gamma$-H\"{o}lder martingale solution $u\left( {t,x} \right)$ contains Fourier components at infinitely many frequency scales arising from the iterative construction, we employ the Littlewood--Paley projector to formulate its intermittency at a dominant component. The parameter condition $p > \frac{{2M}}{{\gamma M + M - \beta  + \alpha }}$ guarantees that the intermittency intensity of the high-frequency tail does not exceed that of the dominant component, and an appropriate integral scale $l_I$ is chosen to control the low-frequency oscillations. In particular, we introduce a Lagrangian Littlewood--Paley localization through a change of variables, which enables us to estimate the amplitude leakage error and the flow deformation error.
\par
\vspace{1em}
{\textbf{Step 1. Lagrangian localization of Littlewood--Paley decomposition}}. 
\vspace{1em}
\par
Recall that the Dirichlet kernel ${\mathfrak{D}_r}\left( x \right) $ is defined by \eqref{XINHUI-SJI-A}. Inspired by the argument in Buckmaster and Vicol \cite{Zbl1412.35215}, we take the frequency parameter of ${\mathfrak{D}_r}\left( x \right) $ as
\begin{equation}\label{IPO-AH}
r_q \triangleq r = {\lambda _q^M} , \ \ \mbox{for}\ M \in \left( {\frac{1}{2},\frac{b}{2}} \right) .
\end{equation}
Let ${{\tilde \Phi }_k}\left( {t,x} \right)$ be the solution to Eq.\eqref{Nashsteps-A-P-1}. We perform the change of variables
\begin{equation}
y=\widetilde\Phi_k(t,x)   , \ \ \ \ \ \ x=\widetilde\Psi_k(t,y) = \widetilde\Phi_k^{-1}(t,y) . \label{JDIJKFHG-F-1}
\end{equation}
According to the definitions \eqref{Nashsteps-A-P-3}, the principal part of Nash perturbation is rewritten as
\begin{equation}
w_{q + 1}^{\left( p \right)}\left( {t,y} \right) \triangleq \sum\limits_{n = 0}^\Gamma  {\sum\limits_{k \in {\mathbb{Z}_{q,n}}} {\sum\limits_{\xi  \in \Lambda } {{A_{\xi ,k,n,{q+1}}}\left( {t,y} \right){\mathbb{W}_\xi }\left( {{\lambda _{q + 1}}y} \right)} } } , \label{eq:principal-perturbation}
\end{equation}
where the amplitude reads
\begin{equation*}
{A_{\xi ,k,n,{q+1}}}\left( {t,y} \right) = {g_{\xi ,k,n + 1}}\left( {{\mu _{q + 1}}t} \right){{\tilde \eta }_{\xi ,k,n}}\left( {{{\tilde \Psi }_k}(t,y)} \right)  {{\left( {\nabla {{\tilde \Phi }_k}} \right)}^{ - 1}} \left( {{{\tilde \Psi }_k}(t,y)} \right)  .
\end{equation*}
By the definitions \eqref{XINHUI-SJI-A}, \eqref{IBW-GJI-A-1} and \eqref{IBW-GJI-A-2}, we observe that the Fourier modes of $\mathbb{W}_\xi(t,\lambda_{q+1}y)$ are contained in
\begin{equation*}
\Xi = \left\{ {{\lambda _{q + 1}}\left( { \pm {\xi ^ \bot } + \sigma \left( {{k_1}\tilde \xi  + {k_2}{{\tilde \xi }^ \bot }} \right)} \right):k = \left( {{k_1},{k_2}} \right) \in {\mathfrak{H}_{{r_q}}}} \right\}  .
\end{equation*}
In the inductive estimates of the intermittent Dirichlet kernel ${\mathfrak{D}_{\tilde \mu }}$, we take the strength parameter $\sigma=C(\Lambda)\lambda_{q+1}^{-\frac12}$ (see Lemma \ref{WPD-NHS-HUD-BB-2}). Then, owing to the parameter condition $M < \frac{b}{2}$, we have
\begin{equation*}
\sigma {r_q} \lesssim \lambda _q^{M - \frac{b}{2}} \to 0 , \ \ \mbox{as}\ q \to \infty .
\end{equation*}
Therefore, there exist constants $0 < c_\Lambda < C_\Lambda < \infty$ such that
\begin{equation}
\Xi \subset \left\{ {\tilde \xi  \in {\mathbb{Z}^2}: {c_\Lambda } {\lambda _{q + 1}} \leqslant  \left| {\tilde \xi } \right| \leqslant  {C_\Lambda }{\lambda _{q + 1}}} \right\}  . \label{eq:annulus-location}
\end{equation}
\par
Let $\chi_\Lambda  \left( s \right)$ be the differentiable function with compact support satisfying
\begin{equation*}
{\chi _\Lambda }(s) = 1, \ \ \forall \, s \in [{c_\Lambda },{C_\Lambda }]  .
\end{equation*}
Choosing the Littlewood--Paley scale $R_{q+1}:=\lambda_{q+1}$, the homogeneous Littlewood--Paley projector is defined by
\begin{equation}
{{\dot \Delta }_{{R_{q + 1}}}} f = {\mathfrak{F}^{ - 1}}\left( {{\chi _\Lambda }\left( {\frac{{\left| {\tilde \xi } \right|}}{{{\lambda _{q + 1}}}}} \right) \mathfrak{F}f\left( {\tilde \xi } \right)} \right) , \label{eq:LP-projector}
\end{equation}
where ${\mathfrak{F}}$ is the Fourier transform, and ${\mathfrak{F}^{ - 1}}$ means its inverse. It is obvious that
\begin{equation*}
\operatorname{supp} \mathfrak{F}\left( {{{\dot \Delta }_{{R_{q + 1}}}} f} \right) \subset \left\{ {\tilde \xi :  \frac{1}{2}{c_\Lambda }{\lambda _{q + 1}} \leqslant |\tilde \xi | \leqslant 2{C_\Lambda }{\lambda _{q + 1}}} \right\}  .
\end{equation*}
We apply the homogeneous Littlewood--Paley projector ${{\dot \Delta }_{{R_{q + 1}}}}$ in the Lagrangian coordinate $y$ to the principal perturbation $w_{q + 1}^{\left( p \right)}\left( {t,y} \right)$ after the change of variables, that is,
\begin{equation}
{{\dot \Delta }_{{R_{q + 1}}}}w_{q + 1}^{\left( p \right)}\left( y \right) = {\mathfrak{F}^{ - 1}}\left( {{\chi _\Lambda }\left( {\frac{{\left| {\tilde \xi } \right|}}{{{\lambda _{q + 1}}}}} \right)\mathfrak{F}w_{q + 1}^{\left( p \right)} \left( {\tilde \xi } \right)} \right)  \left( y \right).  \label{SMDNJDHIG-SCA-1}
\end{equation}
To track the new errors generated by the Lagrangian transformation in \eqref{SMDNJDHIG-SCA-1}, we further define the Lagrangian conjugation of ${{\dot \Delta }_{{R_{q + 1}}}}$ by
\begin{equation}
\left( {\dot \Delta _{{R_{q + 1}}}^{Lag}w_{q + 1}^{\left( p \right)}} \right)\left( x \right) \triangleq \left[ {{{\dot \Delta }_{{R_{q + 1}}}}\left( {w_{q + 1}^{\left( p \right)} \circ {{\tilde \Psi }_k}} \right)} \right]\left( y \right)  .   \label{KDIJFHG=VDF-1}
\end{equation}
\par
\begin{remark}
\rm{Since the Lagrangian transformation \eqref{JDIJKFHG-F-1} preserves volume invariance
\begin{equation}
\det \nabla {{\tilde \Phi }_k}(t,x) = \det \nabla {{\tilde \Psi }_k}(t,y) = 1 , \label{volume-preserving-flow}
\end{equation}
the operator ${\dot \Delta _{R_{q + 1}}^{Lag}}$ represents the standard Littlewood--Paley projection performed in the Lagrangian coordinates and transported back through the volume-preserving flow. Therefore, the operator ${\dot \Delta _{R_{q + 1}}^{Lag}}$ further identifies the amplitude leakage error of ${A_{\xi ,k,n,{q+1}}}$ and the flow deformation error of $\widetilde\Phi_k(t,x)$.}
\end{remark}
\par
\begin{lemma}\label{lem:Lag-LP-stability}
Under the stochastic and intermittent Newton--Nash iteration framework, the operator ${\dot \Delta _{R_{q + 1}}^{Lag}}$ satisfies
\begin{equation}
{\left\| {\dot \Delta _{{R_{q + 1}}}^{Lag}w_{q + 1}^{\left( p \right)} - w_{q + 1}^{\left( p \right)}} \right\|_{{L^p}}} \lesssim \left( {\lambda _{q + 1}^{ - \alpha } + \frac{{{\lambda _q}}}{{{\lambda _{q + 1}}}}} \right)\delta _{q + 1}^{\frac{1}{2}}\lambda _{q + 1}^{ - \alpha }r_q^{1 - \frac{2}{p}}   .  \label{lem:Lag-LP-stability-SLKDAOFJHG-X-1}
\end{equation}
\end{lemma}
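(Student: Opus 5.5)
The plan is to work in the Lagrangian coordinate $y$. Since $\widetilde\Psi_k(t,\cdot)$ is volume preserving by \eqref{volume-preserving-flow}, composition with it is an isometry on $L^p$. The difference in \eqref{lem:Lag-LP-stability-SLKDAOFJHG-X-1} therefore equals, in $L^p_y$, the difference $\dot\Delta_{R_{q+1}} w^{(p)}_{q+1} - w^{(p)}_{q+1}$ with $w^{(p)}_{q+1}$ written in the form \eqref{eq:principal-perturbation}, up to a deformation remainder. Each summand is a product $A_{\xi,k,n,q+1}(t,y)\,\mathbb{W}_\xi(\lambda_{q+1}y)$. The high-frequency factor has its Fourier support in $\Xi$, hence in the annulus \eqref{eq:annulus-location}, where $\chi_\Lambda(|\tilde\xi|/\lambda_{q+1})=1$. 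So $\dot\Delta_{R_{q+1}}\mathbb{W}_\xi(\lambda_{q+1}\cdot)=\mathbb{W}_\xi(\lambda_{q+1}\cdot)$ exactly, and the whole error comes from the amplitude spreading the Fourier support.

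\textbf{Step 1: amplitude leakage.} I would split $A=P_{\le \lambda_{q+1}^{1-\alpha'}}A+P_{>\lambda_{q+1}^{1-\alpha'}}A$ with a frequency cutoff well below $c_\Lambda\lambda_{q+1}/2$ and a suitable $\alpha'$. The low part times $\mathbb{W}_\xi$ has Fourier support in a slightly enlarged annulus on which $\chi_\Lambda\equiv1$, so it is reproduced exactly by the projector. For the high part I would use $L^p$-boundedness of $\dot\Delta_{R_{q+1}}$ and of $\mathrm{Id}$, Hölder's inequality, and the bound $\|\mathbb{W}_\xi\|_{L^p}\lesssim r_q^{1-2/p}$ from the intermittent Dirichlet kernel estimates in Appendix D (Lemma~\ref{WPD-NHS-HUD-BB-2}). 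This gives the error $\|P_{>\cdot}A\|_{L^\infty} r_q^{1-2/p}\lesssim \lambda_{q+1}^{-N(1-\alpha')}\|A\|_N\, r_q^{1-2/p}$. The amplitude estimates for $\tilde\eta_{\xi,k,n}$ and $g_{\xi,k,n+1}$ from Proposition~\ref{WPD-NP-main} give $\|A\|_N\lesssim\delta_{q+1}^{1/2}\lambda_{q+1}^{-\alpha}\ell_q^{-N}$ (the factor $\lambda_{q+1}^{-\alpha}$ reflecting the amplitude normalization there). Choosing $N$ large with $\ell_q^{-1}\ll\lambda_{q+1}^{1-\alpha'}$, this contribution is $\lesssim \frac{\lambda_q}{\lambda_{q+1}}\,\delta_{q+1}^{1/2}\lambda_{q+1}^{-\alpha}r_q^{1-2/p}$. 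Alternatively, a single commutator bound $\|[\dot\Delta,A]\mathbb{W}\|\lesssim\lambda_{q+1}^{-1}\|\nabla A\|_0\|\mathbb W\|_{L^p}$ gives the $\lambda_q/\lambda_{q+1}$ factor directly, using $\|\nabla A\|\lesssim\lambda_q\|A\|_0$.

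\textbf{Step 2: flow deformation.} The sum over $k$ involves different flows $\widetilde\Phi_k$, while the conjugation uses one $\widetilde\Psi_k$ on the support of each cutoff $\tilde\eta_{\xi,k,n}$. On that temporal support, of length $\lesssim\tau_q$, the estimate $\|\nabla\widetilde\Phi_k-\mathrm{Id}\|_0\lesssim\tau_q\|\nabla v_q\|_0\lesssim\lambda_{q+1}^{-\alpha}$ (from the parameter choices of Appendix C) controls the mismatch. I would bound the term coming from $(\nabla\widetilde\Phi_k)^{-1}-\mathrm{Id}$ and the distortion of the phase by this factor times $\delta_{q+1}^{1/2}\lambda_{q+1}^{-\alpha}r_q^{1-2/p}$, using again the $L^p$ bound on $\mathbb W_\xi$ and the finite overlap of the $k$-cutoffs. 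Summing over the finitely many $n\le\Gamma$ and $\xi\in\Lambda$, and over the boundedly overlapping $k$, yields \eqref{lem:Lag-LP-stability-SLKDAOFJHG-X-1}.

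\textbf{Main obstacle.} The hard part is Step 2: making precise how the single-flow conjugation \eqref{KDIJFHG=VDF-1} interacts with a sum over $k$ of pieces transported by different flows. I would handle this by inserting a partition of unity subordinate to the temporal cutoffs. I would then check that the overlaps of adjacent $k$ contribute only the $\lambda_{q+1}^{-\alpha}$ deformation factor, rather than an $O(1)$ error.
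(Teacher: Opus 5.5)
Your Step 1 is sound, and its commutator variant is exactly the paper's amplitude-leakage estimate \eqref{JKGISDNBVCXGSG-A-1}. The gap is Step 2. That step is supposed to produce the factor $\lambda_{q+1}^{-\alpha}$, and you give no working mechanism for it.

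First, the paper's proof does not conjugate the whole sum by one flow. It works block by block: for $(\xi,k,n)$ it uses the flow $\tilde\Phi_k$ that transports that block, and then sums over the active indices. With this convention your volume-preserving reduction is exact. Indeed $w^{(p)}_{\xi,k,n}\circ\tilde\Psi_k=A_{\xi,k,n,q+1}\,\mathbb{W}_\xi(\lambda_{q+1}\cdot)$, so
\begin{equation*}
\dot\Delta^{Lag}_{R_{q+1}}w^{(p)}_{\xi,k,n}-w^{(p)}_{\xi,k,n}=\bigl[(\dot\Delta_{R_{q+1}}-\mathrm{Id})\bigl(A_{\xi,k,n,q+1}\,\mathbb{W}_\xi(\lambda_{q+1}\cdot)\bigr)\bigr]\circ\tilde\Phi_k .
\end{equation*}
There is then no deformation remainder and no interaction between different $k$. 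Step 1 alone already gives the bound with the factor $\lambda_q/\lambda_{q+1}$, which is stronger than what is claimed.

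If instead you conjugate the whole sum by a single $\tilde\Psi_k$, a neighbouring block becomes $A'\,\mathbb{W}_\xi(\lambda_{q+1}\Theta(y))$ with $\Theta=\tilde\Phi_{k'}\circ\tilde\Psi_k$. You must then show that $\dot\Delta_{R_{q+1}}$ reproduces a function whose phase is distorted by a near-identity map, up to a relative error $\lambda_{q+1}^{-\alpha}$. None of your tools does this:
\begin{itemize}
\item the factor $(\nabla\tilde\Phi_k)^{-1}-\mathrm{Id}$ sits in the amplitude and is already handled by Step 1;
\item a temporal partition of unity cannot separate two blocks that coexist at the same time on all of $\mathbb{T}^2$;
\item $\|\mathbb{W}_\xi\|_{L^p}\sim r_q^{1-2/p}$ says nothing about how much of $\mathbb{W}_\xi(\lambda_{q+1}\Theta)$ leaves the annulus on which $\chi_\Lambda=1$.
\end{itemize}

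The missing idea is the paper's kernel-perturbation estimate. Since $\det\nabla\Theta=1$, the map $f\mapsto(\dot\Delta_{R_{q+1}}f)\circ\Theta$, viewed as acting on $f\circ\Theta$, has kernel $K_{\lambda_{q+1}}(\Theta(x)-\Theta(z))$. Three facts control its difference from $K_{\lambda_{q+1}}(x-z)$:
\begin{itemize}
\item $|\Theta(x)-\Theta(z)-(x-z)|\lesssim\|\nabla\Theta-\mathrm{Id}\|_0|x-z|\lesssim\lambda_{q+1}^{-\alpha}|x-z|$;
\item the mean value theorem;
\item the decay $|\nabla K_{\lambda_{q+1}}(z)|\lesssim\lambda_{q+1}^{3}(1+\lambda_{q+1}|z|)^{-N}$.
\end{itemize}
Together they show that the kernel difference has $L^1_z$ norm $\lesssim\lambda_{q+1}^{-\alpha}$ uniformly in $x$, and symmetrically in $z$. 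Schur's test then gives $\|\dot\Delta_{R_{q+1}}(f\circ\Theta)-(\dot\Delta_{R_{q+1}}f)\circ\Theta\|_{L^p}\lesssim\lambda_{q+1}^{-\alpha}\|f\|_{L^p}$.

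The paper applies this with $\Theta=\tilde\Phi_k$ to compare $\dot\Delta^{Lag}_{R_{q+1}}$ with the Eulerian projector. That is \eqref{SBHDUFHJHF-Z-1}--\eqref{lem:Lag-LP-stability-thdkfi-a}, and it is the source of the $\lambda_{q+1}^{-\alpha}$ term. In your single-flow setting, split $(\dot\Delta-\mathrm{Id})(f\circ\Theta)=[\dot\Delta(f\circ\Theta)-(\dot\Delta f)\circ\Theta]+(\dot\Delta f-f)\circ\Theta$. The kernel estimate handles the first bracket and Step 1 handles the second, which closes the argument.

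Two citation corrections:
\begin{itemize}
\item The bound $\|A_{\xi,k,n,q+1}\|_1\lesssim\delta_{q+1}^{1/2}\lambda_{q+1}^{-\alpha}\lambda_q$ comes from Lemmas \ref{WPD-NHS-AA} and \ref{WPD-NHS-BB}, not from Proposition \ref{WPD-NP-main}, which concerns the Newtonian perturbation.
\item The $L^p$ bound on $\mathbb{W}_\xi$ is \eqref{WIEJDUHFJ-DFIJDJ-A-2}.
\end{itemize}
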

\par
\noindent{\textbf{Proof}}. 
In view of Eq.\eqref{Nashsteps-A-P-1}, we have
\begin{equation*}
\left\{ {\begin{array}{*{20}{l}}
{\partial _t}\nabla {{\tilde \Phi }_k} + \left( {{{\bar v}_{q,\Gamma }} \cdot \nabla } \right)\nabla {{\tilde \Phi }_k} =  - \left( {\nabla {{\bar v}_{q,\Gamma }}} \right)\nabla {{\tilde \Phi }_k}, \\ 
\nabla {{\tilde \Phi }_k}\left( {{t_k}} \right) = \mathrm{Id}  .
\end{array}} \right.
\end{equation*}
Then ${{\bar D}_{t,\Gamma }}\nabla {{\tilde \Phi }_k} =  - \left( {\nabla {{\bar v}_{q,\Gamma }}} \right)\nabla {{\tilde \Phi }_k}$, which implies that
\begin{equation*}
{{\bar D}_{t,\Gamma }}\left( {\nabla {{\tilde \Phi }_k} - \mathrm{Id}} \right) = {{\bar D}_{t,\Gamma }}\nabla {{\tilde \Phi }_k} =  - \left( {\nabla {{\bar v}_{q,\Gamma }}} \right)\left( {\nabla {{\tilde \Phi }_k} - \mathrm{Id}} \right) - \nabla {{\bar v}_{q,\Gamma }}  .
\end{equation*}
Employing the argument along characteristics of the Lagrangian flow \eqref{KDKJFIG-DNGJU-1-1}, we obtain that
\begin{equation*}
{\left\| {\nabla {{\tilde \Phi }_k}\left( t \right) - \mathrm{Id}} \right\|_0} \lesssim \int_{{t_k}}^t {{{\left\| {{{\bar v}_{q,\Gamma }}} \right\|}_1}\left( s \right){{\left\| {\nabla {{\tilde \Phi }_k}\left( s \right) - \mathrm{Id}} \right\|}_0} + {{\left\| {{{\bar v}_{q,\Gamma }}\left( s \right)} \right\|}_1}} \, \mathrm{d}s .
\end{equation*}
It follows from \eqref{WPD-NHS-MAIN-AA-P-3} and Gronwall's inequality that
\begin{align}
{\left\| {\nabla {{\tilde \Phi }_k} - \mathrm{Id}} \right\|_0} \lesssim & \exp \left( {\left( {t - {t_k}} \right){{\left\| {{{\bar v}_{q,\Gamma }}} \right\|}_1}} \right) - 1 \nonumber \\
\lesssim & {\tau _q} \exp \left( 1 \right){\left\| {{{\bar v}_{q,\Gamma }}} \right\|_1} \nonumber \\
\lesssim & \lambda _q^{ - \alpha }  .  \label{NDHFUYTS-XD-F-1}
\end{align}
\par
For every fixed $k$, $n$ and $\xi$, we denote $w_{\xi ,k,n}^{\left( p \right)}\left( {t,y\left( x \right)} \right) \triangleq {A_{\xi ,k,n,{q+1}}}\left( {t,y\left( x \right)} \right){\mathbb{W}_\xi }\left( {{\lambda _{q + 1}}y\left( x \right)} \right)$. Let ${K_{{\lambda _{q + 1}}}}$ be the periodic convolution kernel corresponding to the Fourier multiplier ${{\chi _\Lambda }\left( {\frac{{\left| {\tilde \xi } \right|}}{{{\lambda _{q + 1}}}}} \right)}$ such that
\begin{equation*}
{{\dot \Delta }_{{R_{q + 1}}}}w_{q + 1}^{\left( p \right)}\left( {y\left( x \right)} \right) = \int_{{\mathbb{T}^2}} {{K_{{\lambda _{q + 1}}}}} (y\left( x \right) - y\left( z \right)) \cdot  w_{q + 1}^{\left( p \right)}\left( {y\left( z \right)} \right)\, \mathrm{d}z .
\end{equation*}
Then, using the projector \eqref{KDIJFHG=VDF-1}, we have
\begin{equation*}
\dot \Delta _{{R_{q + 1}}}^{Lag}   w_{\xi ,k,n}^{(p)}(x) = \int_{{\mathbb{T}^2}} {{K_{{\lambda _{q + 1}}}}} \left( {{{\tilde \Phi }_k}(x) - {{\tilde \Phi }_k}(z)} \right)w_{\xi ,k,n}^{(p)}(z) \, \mathrm{d}z  , 
\end{equation*}
which implies that
\begin{equation*}
\left( {\dot \Delta _{{R_{q + 1}}}^{Lag} - {{\dot \Delta }_{{R_{q + 1}}}}} \right)w_{\xi ,k,n}^{(p)}(x) = \int_{{\mathbb{T}^2}} {\left[ {{K_{{\lambda _{q + 1}}}}\left( {{{\tilde \Phi }_k}(x) - {{\tilde \Phi }_k}(z)} \right) - {K_{q + 1}}(x - z)} \right]} w_{\xi ,k,n}^{(p)}(z) \, \mathrm{d}z . 
\end{equation*}
By the fundamental theorem of calculus and \eqref{NDHFUYTS-XD-F-1}, we obtain
\begin{equation*}
{{{\tilde \Phi }_k}(x) - {{\tilde \Phi }_k}(z) - x + z} =  {\int_0^1 {\left[ {\nabla {{\tilde \Phi }_k}\left( {z + s\left( {x - z} \right)} \right) - {\text{Id}}} \right]\left( {x - z} \right)} \, \mathrm{d}s}  ,
\end{equation*}
which implies that
\begin{equation}
\left( {1 - \lambda _{q + 1}^{ - \alpha }} \right)\left| {x - z} \right| \lesssim \left| {{{\tilde \Phi }_k}\left( x \right) - {{\tilde \Phi }_k}\left( z \right) - x + z} \right| \lesssim \lambda _{q + 1}^{ - \alpha }\left| {x - z} \right|  .\label{FLOW-DISTORTION-DIRECT}
\end{equation}
\par
Since $\chi_\Lambda$ is smooth and compactly supported on an annulus, the convolution kernel ${K_{{\lambda _{q + 1}}}}$ is smooth on ${\mathbb T^2}$ and satisfies (see \cite{Zbl1227.35004})
\begin{equation*}
\left| {\nabla {K_{q + 1}}\left( z \right)} \right| \lesssim \lambda _{q + 1}^3{\left( {1 + {\lambda _{q + 1}}\left| z \right|} \right)^{ - {N^*}}}, \ \ \forall \ N^ * \in \mathbb{Z}^+ .
\end{equation*}
Applying the mean value theorem and \eqref{FLOW-DISTORTION-DIRECT}, we infer that
\begin{align*}
& \left| {{K_{{\lambda _{q + 1}}}}\left( {{{\tilde \Phi }_k}\left( x \right) - {{\tilde \Phi }_k}\left( z \right)} \right) - {K_{q + 1}}\left( {x - z} \right)} \right|  \\
\lesssim & \left| {{{\tilde \Phi }_k}\left( x \right) - {{\tilde \Phi }_k}\left( z \right) - x + z} \right|\mathop {\sup }\limits_{s \in \left[ {0,1} \right]} \left| {\nabla {K_{q + 1}}\left( {x - z + s\left[ {{{\tilde \Phi }_k}\left( x \right) - {{\tilde \Phi }_k}\left( z \right) - x + z} \right]} \right)} \right|  \\
\lesssim & \lambda _{q + 1}^{ - \alpha }\lambda _{q + 1}^3\left| {x - z} \right|{\left( {1 + {\lambda _{q + 1}}\left| {x - z} \right|} \right)^{ - {N^*}}}  .
\end{align*}
Taking $ N^ *  > 3$, we have $\mathop {\sup }\limits_{x \in {\mathbb{T}^2}} \left| {\int_{{\mathbb{T}^2}} {\lambda _{q + 1}^3\left| {x - z} \right|{{\left( {1 + {\lambda _{q + 1}}\left| {x - z} \right|} \right)}^{ - {N^*}}}} \, \mathrm{d}z} \right| \lesssim 1$. Consequently,
\begin{equation*}
\mathop {\sup }\limits_{x \in {\mathbb{T}^2}} \int_{{\mathbb{T}^2}} {\left| {{K_{{\lambda _{q + 1}}}}\left( {{{\tilde \Phi }_k}\left( x \right) - {{\tilde \Phi }_k}\left( z \right)} \right) - {K_{q + 1}}\left( {x - z} \right)} \right|} {\mkern 1mu} {\text{d}}z \lesssim \lambda _{q + 1}^{ - \alpha } .
\label{LP-KERNEL-L1-DIRECT}
\end{equation*}
It follows from H\"{o}lder inequality and Fubini theorem that
\begin{equation}
{\left\| {\dot \Delta _{R_{q + 1}}^{Lag}w_{\xi ,k,n}^{\left( p \right)} - {{\dot \Delta }_{{R_{q + 1}}}}w_{\xi ,k,n}^{\left( p \right)}} \right\|_{{L^p}}} \lesssim \lambda _{q + 1}^{ - \alpha }{\left\| {w_{\xi ,k,n}^{\left( p \right)}} \right\|_{{L^p}}}  .  \label{SBHDUFHJHF-Z-1}
\end{equation}
\par
In view of \eqref{volume-preserving-flow}, the change of variables $y=\widetilde\Phi_k(t,x)$ preserves the Lebesgue measure. Therefore,
\begin{equation}
{\left\| {{\mathbb{W}_\xi }\left( {{\lambda _{q + 1}}{{\tilde \Phi }_k}} \right)} \right\|_{L_x^p}} = {\left\| {{\mathbb{W}_\xi }\left( {{\lambda _{q + 1}}y} \right)} \right\|_{L_y^p}}. \label{eq:Lp-preservation}
\end{equation}
Owing to \eqref{WIEJDUHFJ-DFIJDJ-A-2}, we have 
\begin{equation}
{\left\| {{\mathbb{W}_\xi }\left( {{\lambda _{q + 1}}{{\tilde \Phi }_k}} \right)} \right\|_{L^p}} \sim r_q^{1 - \frac{2}{p}} .   \label{eq:Lp-preservation-SKJKJFI}
\end{equation}
Using \eqref{eq:Lp-preservation-SKJKJFI} and the fact that ${\left\| {{A_{\xi ,k,n,{q+1}}}} \right\|_N} \lesssim \delta _{q + 1}^{\frac{1}{2}}\lambda _{q + 1}^{ - \alpha }\lambda _q^N$, we derive that
\begin{equation}
{\left\| {w_{\xi ,k,n}^{\left( p \right)}} \right\|_{{L^p}}} \lesssim {\left\| {{A_{\xi ,k,n,{q+1}}}} \right\|_0}{\left\| {{\mathbb{W}_\xi }\left( {{\lambda _{q + 1}}y} \right)} \right\|_{L_y^p}} \lesssim \delta _{q + 1}^{\frac{1}{2}}\lambda _{q + 1}^{ - \alpha }r_q^{1  - \frac{2}{p}} . \label{LP-FLOW-COMPONENT-FINAL}
\end{equation}
Therefore, summing over the finitely many active indices $(\xi,k,n)$ and combining \eqref{SBHDUFHJHF-Z-1} and \eqref{LP-FLOW-COMPONENT-FINAL}, we infer that
\begin{equation}
{\left\| {\dot \Delta _{R_{q + 1}}^{Lag}w_{q + 1}^{\left( p \right)} - {{\dot \Delta }_{{R_{q + 1}}}}w_{q + 1}^{\left( p \right)}} \right\|_{{L^p}}} \lesssim \delta _{q + 1}^{\frac{1}{2}}\lambda _{q + 1}^{ - 2\alpha  }r_q^{1  - \frac{2}{p}} . \label{lem:Lag-LP-stability-thdkfi-a}
\end{equation}
\par
Since ${{\dot \Delta }_{{R_{q + 1}}}}{\mathbb{W}_\xi }\left( {{\lambda _{q + 1}}y} \right) = {\mathbb{W}_\xi }\left( {{\lambda _{q + 1}}y} \right)$, we have
\begin{align*}
{{\dot \Delta }_{{R_{q + 1}}}}w_{\xi ,k,n}^{\left( p \right)}\left( {t,y\left( x \right)} \right) - w_{\xi ,k,n}^{\left( p \right)}\left( {t,y\left( x \right)} \right) = & {{\dot \Delta }_{{R_{q + 1}}}}v - {A_{\xi ,k,n,{q+1}}}{{\dot \Delta }_{{R_{q + 1}}}}{\mathbb{W}_\xi } \\
\triangleq & \left[ {{{\dot \Delta }_{{R_{q + 1}}}},{A_{\xi ,k,n,{q+1}}}} \right]{\mathbb{W}_\xi }  .
\end{align*}
Employing the standard Littlewood--Paley commutator estimate (see \cite{Zbl1227.35004}), we obtain
\begin{equation}
{\left\| {{{\dot \Delta }_{{R_{q + 1}}}}w_{q + 1}^{\left( p \right)} - w_{q + 1}^{\left( p \right)}} \right\|_{{L^p}}} \lesssim \lambda _{q + 1}^{ - 1}{\left\| {{A_{\xi ,k,n,{q+1}}}} \right\|_1}{\left\| {{\mathbb{W}_\xi }} \right\|_{{L^p}}} \lesssim \delta _{q + 1}^{\frac{1}{2}}{\lambda _q}\lambda _{q + 1}^{ - \alpha  - 1}r_q^{1 - \frac{2}{p}}  .  \label{JKGISDNBVCXGSG-A-1}
\end{equation}
Therefore, it follows from \eqref{lem:Lag-LP-stability-thdkfi-a} and \eqref{JKGISDNBVCXGSG-A-1} that \eqref{lem:Lag-LP-stability-SLKDAOFJHG-X-1} holds.
\par
\vspace{1em}
{\textbf{Step 2. Frequency-localized intermittency of martingale solutions}}. 
\vspace{1em}
\par
\begin{lemma}\label{thm:frequency_intermittency}
Let $u\left( {t,x} \right)$ be the martingale solution to Eq.\eqref{2DEEUGE1} obtained by Theorem \ref{CTSE-O-C}. Then, for any $1 < p_2 < p_1$ and $t \in \left[ {0,\mathfrak{t}} \right]$, one has
\begin{equation*}
\frac{{{{\left\| { {{{\dot \Delta }_{{R_{q + 1}}}}} u\left( t \right)} \right\|}_{{L^{{p_1}}}}}}}{{{{\left\| { {{{\dot \Delta }_{{R_{q + 1}}}}} u\left( t \right)} \right\|}_{{L^{{p_2}}}}}}} \gtrsim r_q^{2\left( {\frac{1}{{{p_2}}} - \frac{1}{{{p_1}}}} \right)} \to \infty , \ \  \mbox{as}\ q \to \infty ,
\end{equation*}
and there exists an inertial range $\left[ {{l_D},{l_I}} \right]$ with the integral scale ${l_I}\left( {q} \right) \to 0$ as $q \to \infty$ such that
\begin{equation*}
\frac{{{{\left\| { {{{\dot \Delta }_{{R_{q + 1}}}}} \left( {u\left( {x + l\mathbf{n}} \right) - u\left( x \right)} \right)} \right\|}_{{L^{{p_1}}}}}}}{{{{\left\| {  {{{\dot \Delta }_{{R_{q + 1}}}}} \left( {u\left( {x + l\mathbf{n}} \right) - u\left( x \right)} \right)} \right\|}_{{L^{{p_2}}}}}}} \gtrsim r_q^{2(\frac{1}{{{p_2}}} - \frac{1}{{{p_1}}})} \to \infty  , \ \  \forall \, l \in \left[ {{l_D},{l_I}} \right] , \  \mbox{as}\ q \to \infty .
\end{equation*}
\end{lemma}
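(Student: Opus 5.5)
We decompose $u(t)=u_{q}(t)+w^{(p)}_{q+1}(t)+\bigl(u_{q+1}-u_q-w^{(p)}_{q+1}\bigr)(t)+\sum_{j\geqslant q+1}(u_{j+1}-u_j)(t)$ and apply $\dot\Delta_{R_{q+1}}$ termwise. The dominant piece is $\dot\Delta_{R_{q+1}}w^{(p)}_{q+1}$. All other pieces are either removed by the frequency localization or are smaller in $L^{p}$ than the dominant piece.

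\textbf{The dominant piece.} By Lemma \ref{lem:Lag-LP-stability} and the commutator bound \eqref{JKGISDNBVCXGSG-A-1}, we have $\dot\Delta_{R_{q+1}}w^{(p)}_{q+1}=w^{(p)}_{q+1}+E_{q+1}$ with
\begin{equation*}
\|E_{q+1}\|_{L^p}\lesssim\Bigl(\lambda_{q+1}^{-\alpha}+\tfrac{\lambda_q}{\lambda_{q+1}}\Bigr)\delta_{q+1}^{\frac12}\lambda_{q+1}^{-\alpha}r_q^{1-\frac2p}.
\end{equation*}
We need matching two-sided bounds $\|w^{(p)}_{q+1}\|_{L^p}\sim\delta_{q+1}^{1/2}\lambda_{q+1}^{-\alpha}r_q^{1-2/p}$. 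The upper bound is \eqref{LP-FLOW-COMPONENT-FINAL}. For the lower bound we use three facts. The temporal cutoffs $g_{\xi,k,n+1}$ and spatial cutoffs $\tilde\eta_{\xi,k,n}$ have essentially disjoint or finitely overlapping supports. The amplitude $A_{\xi,k,n,q+1}$ is bounded below on a set of fixed measure, which follows from the energy-gap construction in Proposition \ref{RSRI-ESI-GAP}. The change of variables \eqref{JDIJKFHG-F-1} is measure preserving by \eqref{volume-preserving-flow}. From these we get
\begin{equation*}
\|A\,\mathbb W_\xi(\lambda_{q+1}\cdot)\|_{L^p}\gtrsim\|A\|_{L^p}\,\|\mathbb W_\xi\|_{L^p}\gtrsim\delta_{q+1}^{\frac12}\lambda_{q+1}^{-\alpha}r_q^{1-\frac2p}.
\end{equation*}
The first inequality is the standard improved H\"older (decoupling) inequality for a slowly varying amplitude times a $\mathbb T^2/\lambda_{q+1}\sigma$-periodic profile. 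It requires $\lambda_{q+1}\sigma\gg\lambda_q$, which holds since $\sigma=C\lambda_{q+1}^{-1/2}$ and $b<2$.

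\textbf{The remaining pieces.} Most of the work goes into showing the remainder is negligible in $L^{p_1}$ and $L^{p_2}$.
\begin{itemize}
\item $u_q$ and $\mathfrak W_q$ have Fourier support at $|\tilde\xi|\lesssim\lambda_q\ll c_\Lambda\lambda_{q+1}/2$, up to mollification tails bounded through \eqref{HIE-u-1.1}. Hence $\dot\Delta_{R_{q+1}}$ annihilates them up to $O(\lambda_{q+1}^{-N})$.
\item For $j\geqslant q+2$, the perturbation $w_j$ is concentrated near frequency $\lambda_j$. Since $\lambda_j\gg 2C_\Lambda\lambda_{q+1}$, the projector sees only its low-frequency leakage, which we bound by $\lambda_j^{-N}$ times its $C^N$ norm.
\item The corrector, Newtonian and $w^{(\Gamma)}$ parts of $w_{q+1}$ are bounded in $L^p$ by Propositions \ref{WPD-NP-main} and \ref{WPD-NS-SD-P-main}. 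They gain a factor $\lambda_q/\lambda_{q+1}$ or $\mu_{q+1}^{-1}\delta_{q+1}^{1/2}\lambda_q$ relative to $w^{(p)}$.
\item The stochastic tail $\mathfrak W-\mathfrak W_q$ is controlled via \eqref{UIJHF-DI}, almost surely on $[0,\mathfrak t]$ by the stopping-time bound.
\end{itemize}
Together this gives
\begin{equation*}
\dot\Delta_{R_{q+1}}u(t)=w^{(p)}_{q+1}(t)+o\bigl(\|w^{(p)}_{q+1}(t)\|_{L^p}\bigr)\quad\text{in } L^p,\ p\in\{p_1,p_2\},
\end{equation*}
so the ratio is $\sim r_q^{(1-2/p_1)-(1-2/p_2)}=r_q^{2(1/p_2-1/p_1)}$.

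\textbf{Main obstacle.} The hardest step is the lower bound $\|w^{(p)}_{q+1}\|_{L^{p_2}}\gtrsim\cdots$. A naive remainder bound only gives $o(\cdot)$ in the weaker $p_2$-norm if every error factor beats the intermittency loss. In the worst case this loss is $r_q^{2/p_2-2/p_1}$, coming from comparing $L^\infty$-type estimates with the $L^{p_2}$ size. The lower bound on $p_2$ in Theorem \ref{CTSE-O-C-1B} is presumably what makes this work. If it fails, we first try to bound the errors in $L^{p}$ directly with their own intermittency factor $r_q^{1-2/p}$, using the same Dirichlet-kernel structure as in \eqref{WIEJDUHFJ-DFIJDJ-A-2}, rather than through $C^0$ norms.

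\textbf{The increment statement.} The shift $\tau_l u=u(\cdot+l\mathbf n)$ commutes with $\dot\Delta_{R_{q+1}}$, and $\|\tau_l f-f\|_{L^p}\leqslant2\|f\|_{L^p}$. So for the upper bound it suffices to prove a matching lower bound
\begin{equation*}
\|\tau_l w^{(p)}_{q+1}-w^{(p)}_{q+1}\|_{L^p}\gtrsim\|w^{(p)}_{q+1}\|_{L^p}.
\end{equation*}
We obtain this by choosing $l_D\gtrsim\lambda_{q+1}^{-1/2-M}$. At that scale the shift decorrelates the concentrated Dirichlet bumps, which have width $(\lambda_{q+1}\sigma r_q)^{-1}\sim\lambda_{q+1}^{-1/2}\lambda_q^{-M}$, so $\tau_l\mathbb W_\xi$ and $\mathbb W_\xi$ have nearly disjoint essential supports. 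We also choose $l_I\to0$ with $l_I\lambda_q\to0$, so that the amplitude $A$ changes by only $l\|A\|_1\ll\|A\|_0$. Periodicity of the bumps at scale $(\lambda_{q+1}\sigma)^{-1}$ requires restricting $l$ away from multiples of this period, or averaging over $\mathbf n$. We would handle this by taking $l_I$ below one period, $l_I\ll\lambda_{q+1}^{-1/2}$, which is compatible with $l_D\sim\lambda_{q+1}^{-1/2}\lambda_q^{-M}$.
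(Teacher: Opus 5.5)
Your overall architecture matches the paper's. You isolate $w^{(p)}_{q+1}$ via Lemma~\ref{lem:Lag-LP-stability} and the commutator bound, show every other contribution is $o(\|w^{(p)}_{q+1}\|_{L^p})$, and read the ratio off two-sided intermittent $L^p$ bounds. Your reduction of the increment claim to a lower bound on $\|\tau_l w^{(p)}_{q+1}-w^{(p)}_{q+1}\|_{L^p}$ is correct, and cleaner than the paper's term-by-term estimates of $\mathscr D_{l,n}$; it uses that translations commute with $\dot\Delta_{R_{q+1}}$ and that $\|\tau_l f-f\|_{L^p}\leqslant 2\|f\|_{L^p}$.

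\textbf{The main gap is the step you identify as the crux.} The improved H\"older inequality $\|A\,\mathbb W_\xi(\lambda_{q+1}\cdot)\|_{L^p}\gtrsim\|A\|_{L^p}\|\mathbb W_\xi\|_{L^p}$ needs $\lambda_q\ll\lambda_{q+1}\sigma$, as you say. But $\lambda_{q+1}\sigma=C\lambda_{q+1}^{1/2}\approx C\lambda_q^{b/2}$. So $b<2$ (here $b\leqslant 1.3$) gives $\lambda_{q+1}\sigma\ll\lambda_q$, the opposite of what you claim. Concretely, the Dirichlet bumps sit on a lattice of spacing $\sim\lambda_{q+1}^{-1/2}$, which is coarser than the scale $\lambda_q^{-1}$ on which $A$ varies. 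The bumps therefore sample $|A|^p$ sparsely, and a lower bound for $|A|$ on a set of fixed measure yields nothing. Proposition~\ref{RSRI-ESI-GAP} would not supply such a bound anyway, since it only gives the upper bound $|\mathfrak E_{q+1}|\lesssim\delta_{q+2}$.

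What you need is a pointwise lower bound on the amplitude, of the same order as the upper bound, so that the two-sided estimate has matching scales. The paper uses the pointwise bound \eqref{VHUYTR-A-1}, but only at the level $\delta_{q+1,\Gamma}^{1/2}$ against the upper bound $\delta_{q+1}^{1/2}$. Its ratio step therefore silently loses the factor $(\lambda_q/\lambda_{q+1})^{(\frac{1}{3}-\beta)\Gamma/2}$. The same pointwise bound is what makes your decorrelation argument for the increment work. There you should also note that the shift seen by the bumps is $\tilde\Phi_k(x+l\mathbf n)-\tilde\Phi_k(x)=l\mathbf n+O(l\lambda_q^{-\alpha})$ by \eqref{NDHFUYTS-XD-F-1}, which is harmless for $\lambda_{q+1}^{-1/2}\lambda_q^{-M}\lesssim l\ll\lambda_{q+1}^{-1/2}$.

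\textbf{Two further problems.}

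First, no argument can give the bound at every $t\in[0,\mathfrak t]$. By the inclusion for ${\mathrm{supp}_\mathrm{t}}\,w_{q+1}$ stated just before \eqref{RRR-SUPPOI-pp-1}, all perturbations $w_j(t)$ vanish for $t\leqslant\frac{1}{4}T$. There $u(t)=v_0(t)+\mathfrak W(t)$, so for large $q$ one has $\dot\Delta_{R_{q+1}}u(t)=\chi_\Lambda(1)\,\mathfrak W^{loc}_{q+1}(t)$. This is a single mode $\cos(\lambda_{q+1}x_1)\mathbf e_2$, whose $L^{p_1}/L^{p_2}$ ratio is bounded. Since $\mathfrak t>0$ a.s., your amplitude lower bound must be restricted to times at which the principal perturbation is active. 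The paper's proof has the same omission.

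Second, your remainder bullets do not deliver smallness as written.
\begin{itemize}
\item For $j\geqslant q+2$, the bound $\lambda_j^{-N}\|w_j\|_N\sim\delta_j^{1/2}$ has no decay. The leakage of $A_j\mathbb W_\xi(\lambda_j\tilde\Phi_k)$ to frequency $\lambda_{q+1}$ has to be controlled through the regularity of the amplitude and the flow, e.g.\ by non-stationary phase gaining powers of $\lambda_{j-1}/\lambda_j$.
\item For $u_q$, the inductive estimates \eqref{HIE-u-1.1} stop at $N=10$. Bernstein then gives only $\delta_q^{1/2}(\lambda_q/\lambda_{q+1})^{10}\approx\delta_q^{1/2}\lambda_q^{-10(b-1)}$, not $O(\lambda_{q+1}^{-N})$. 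For $b$ near $1$ and $p_2<2$, this need not be small against $\delta_{q+1}^{1/2}r_q^{1-2/p_2}$.
\end{itemize}
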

\par
\noindent{\textbf{Proof}}. 
From the Newton--Nash construction \eqref{NISTYU-SKO-A-PPA-2}, we have $u = u_0  + \sum\limits_{j \in {\mathbb{Z}^ + } } {\left( {{w_{j }} + \mathfrak{W}_{j }^{loc}} \right)} $. We first isolate the contribution of the principal intermittent perturbation $w_{q + 1}^{\left( p \right)}$. Using the operator ${\dot \Delta _{R_{q + 1}}^{Lag}}$ to decompose $u$, we have
\begin{equation}
{{{\dot \Delta }_{{R_{q + 1}}}}}  u = w_{q + 1}^{\left( p \right)} + E_{q + 1}^{flow} + E_{q + 1}^{amp} + E_{q + 1}^{osc} ,  \label{thm:frequency_intermittency-a-1-proof-6}
\end{equation}
where the flow deformation error is
\begin{equation*}
E_{q + 1}^{flow} =  {{\dot \Delta }_{{R_{q + 1}}}}w_{q + 1}^{\left( p \right)}   - \dot \Delta _{{R_{q + 1}}}^{Lag}w_{q + 1}^{\left( p \right)} ,
\end{equation*}
the amplitude leakage error is
\begin{equation*}
E_{q + 1}^{amp} = \dot \Delta _{{R_{q + 1}}}^{Lag}w_{q + 1}^{\left( p \right)} - w_{q + 1}^{\left( p \right)} ,
\end{equation*}
and the oscillation error is
\begin{equation*}
E_{q + 1}^{cor} =  {\dot \Delta _{R_{q + 1}}^{Lag}}  \left( {w_{q + 1}^{\left( c \right)} + w_{q + 1}^{\left( i \right)} + w_{q + 1}^{\left( \Gamma  \right)} + \mathfrak{W}_{q + 1}^{loc}} \right)  + {{{\dot \Delta }_{{R_{q + 1}}}}} \sum\limits_{j \ne q + 1} {{w_j}} . 
\end{equation*}
\par
Since ${\nabla {{\tilde \Phi }_k}\nabla \tilde \Phi _k^ \bot  - \delta _{q + 1,n}^{ - 1}\nabla {{\tilde \Phi }_k}{{\bar R}_{q,n}}\nabla \tilde \Phi _k^ \bot } \neq 0$, the smooth function ${\gamma _\xi }$ determined by Lemma \ref{GLTNNRS-A} satisfies
\begin{equation*}
{\gamma _\xi }\left( {\nabla {{\tilde \Phi }_k}\nabla \tilde \Phi _k^ \bot  - \delta _{q + 1,n}^{ - 1}\nabla {{\tilde \Phi }_k}{{\bar R}_{q,n}}\nabla \tilde \Phi _k^ \bot } \right)  \neq 0 .
\end{equation*}
In view of the definition \eqref{Nashsteps-A-P-2}, we have
\begin{equation}
\delta _{q + 1,\Gamma }^{\frac{1}{2}} \lesssim \left| {{{\tilde \eta }_{\xi ,k,n}}} \right| \lesssim \delta _{q + 1}^{\frac{1}{2}} ,   \label{VHUYTR-A-1}
\end{equation}
where $\delta _{q + 1,\Gamma }$ is defined by \eqref{parameters-S-4}. It follows from \eqref{VHUYTR-A-1} and the definition \eqref{Nashsteps-A-P-3} that
\begin{equation*}
\delta _{q + 1,\Gamma }^{\frac{1}{2}} {\left\| {\sum\limits_{\xi  \in \Lambda } {{\mathbb{W}_\xi }} } \right\|_{{L^p}}} \lesssim {\left\| {w_{q + 1}^{\left( p \right)}} \right\|_{{L^p}}} \lesssim \delta _{q + 1}^{\frac{1}{2}}{\left\| {\sum\limits_{\xi  \in \Lambda } {{\mathbb{W}_\xi }} } \right\|_{{L^p}}} .
\end{equation*}
Due to \eqref{eq:Lp-preservation-SKJKJFI}, we deduce that
\begin{equation}
\delta _{q + 1,\Gamma }^{\frac{1}{2}}  {r_q^{1 - \frac{2}{p}}} \lesssim {\left\| {w_{q + 1}^{\left( p \right)}} \right\|_{{L^p}}} \lesssim \delta _{q + 1}^{\frac{1}{2}}  {r_q^{1 - \frac{2}{p}}}  .    \label{VHUYTR-A-1-KLBHTF-A-1}
\end{equation}
According to \eqref{lem:Lag-LP-stability-SLKDAOFJHG-X-1}, \eqref{lem:Lag-LP-stability-thdkfi-a} and \eqref{VHUYTR-A-1-KLBHTF-A-1}, we obtain 
\begin{equation*}
\frac{{{{\left\| {\dot \Delta _{{R_{q + 1}}}^{Lag}w_{q + 1}^{\left( p \right)} - w_{q + 1}^{\left( p \right)}} \right\|}_{{L^p}}}}}{{{{\left\| {w_{q + 1}^{\left( p \right)}} \right\|}_{{L^p}}}}} \lesssim \left( {\lambda _{q + 1}^{ - \alpha } + \frac{{{\lambda _q}}}{{{\lambda _{q + 1}}}}} \right)\delta _{q + 1}^{\frac{1}{2}}\delta _{q + 1,\Gamma }^{ - \frac{1}{2}}\lambda _{q + 1}^{ - \alpha } \to 0 ,\ \ \mbox{as}\ q \to \infty  ,
\end{equation*}
and
\begin{equation*}
\frac{{{{\left\| {\dot \Delta _{{R_{q + 1}}}^{Lag}w_{q + 1}^{\left( p \right)} - {{\dot \Delta }_{{R_{q + 1}}}}w_{q + 1}^{\left( p \right)}} \right\|}_{{L^p}}}}}{{{{\left\| {w_{q + 1}^{\left( p \right)}} \right\|}_{{L^p}}}}} \lesssim \delta _{q + 1}^{\frac{1}{2}}\delta _{q + 1,\Gamma }^{ - \frac{1}{2}}\lambda _{q + 1}^{ - 2\alpha } \to 0   ,\ \ \mbox{as}\ q \to \infty  ,
\end{equation*}
which implies that the flow deformation error $E_{q + 1}^{flow}$ and the amplitude leakage error $E_{q + 1}^{amp}$ admit
\begin{equation}
{\left\| {E_{q + 1}^{flow}} \right\|_{{L^p}}} + {\left\| {E_{q + 1}^{amp}} \right\|_{{L^p}}} =    {o_{q \to \infty }}\left( {{{\left\| {w_{q + 1}^{\left( p \right)}} \right\|}_{{L^p}}}} \right)  .   \label{VHUYTR-A-1-KLBHTF-A-145}
\end{equation}
\par
It follows from \eqref{Nashsteps-A-P-4} and \eqref{WIEJDUHFJ-DFIJDJ-A-1} that
\begin{align}
{\left\| {w_{q + 1}^{\left( c \right)}} \right\|_{{L^p}}} \lesssim & \lambda _{q + 1}^{ - 1}\delta _{q + 1}^{\frac{1}{2}}\left( {{\lambda _q}{{\left\| {{\mathfrak{D}_{\tilde \mu }}} \right\|}_{{L^p}}} + {{\left\| {{\mathfrak{D}_{\tilde \mu }}} \right\|}_{{W^{1,p}}}}} \right) \nonumber \\
\lesssim & \lambda _{q + 1}^{  - 1}  \delta _{q + 1}^{1/2}r_q^{2 - \frac{2}{p}} . \label{thm:frequency_intermittency-a-1-proof-1}
\end{align}
Combining \eqref{VHUYTR-A-1-KLBHTF-A-1} and \eqref{thm:frequency_intermittency-a-1-proof-1}, we obtain
\begin{equation*}
\frac{{{{\left\| {w_{q + 1}^{\left( c \right)}} \right\|}_{{L^p}}}}}{{{{\left\| {w_{q + 1}^{\left( p \right)}} \right\|}_{{L^p}}}}} \lesssim {r_q}\lambda _q^{\left( {\beta  - \frac{1}{3}} \right)\Gamma }\lambda _{q + 1}^{\left( {\frac{1}{3} - \beta } \right)\Gamma  - 1} \lesssim \lambda _{q + 1}^{M - 1} .
\end{equation*}
Furthermore, according to \eqref{THENDM-DKF}, \eqref{NISTYU-SKO-A-PPA-FJU-sd} and \eqref{WPD-NP-main-BTO-1}, we have
\begin{equation}
{\left\| {w_{q + 1}^{\left( i \right)}} \right\|_{{L^p}}} + {\left\| {w_{q + 1}^{\left( \Gamma  \right)}} \right\|_{{L^p}}} + {\left\| {\mathfrak{W}_{q + 1}^{loc}} \right\|_{{L^p}}} \lesssim  {{\tilde \mu }^{ - 1}}{\delta _{q + 1}}r_q^{2 - \frac{2}{p}} + \mu _{q + 1}^{ - 1}{\delta _{q + 1}}{\lambda _q}l_q^{ - \alpha } + \delta _q^\alpha \delta _{q + 2}^{\frac{1}{2}}   . \label{thm:frequency_intermittency-a-1-proof-10}
\end{equation}
Since $p > \frac{{2M}}{{\gamma M + M - \beta  + \alpha }} \geqslant \frac{{2M}}{{2\alpha \beta  + M}}$, we have $M\left( {\frac{2}{p} - 1} \right) - \alpha < M\left( {\frac{2}{p} - 1} \right) - 2\alpha \beta < 0$, which implies that
\begin{equation*}
\mu _{q + 1}^{ - 1}{\delta _{q + 1}}{\lambda _q}l_q^{ - \alpha }\delta _{q + 1,\Gamma }^{ - \frac{1}{2}}r_q^{\frac{2}{p} - 1} \lesssim \lambda _q^{M\left( {\frac{2}{p} - 1} \right) - \alpha } \to 0   ,\ \ \mbox{as}\ q \to \infty ,
\end{equation*}
and
\begin{equation*}
\delta _q^\alpha \delta _{q + 2}^{\frac{1}{2}}\delta _{q + 1,\Gamma }^{ - \frac{1}{2}}r_q^{\frac{2}{p} - 1} \lesssim \lambda _q^{M\left( {\frac{2}{p} - 1} \right) - 2\alpha \beta }  \to 0   ,\ \ \mbox{as}\ q \to \infty .
\end{equation*}
Owing to
\begin{equation*}
{{\tilde \mu }^{ - 1}}{\delta _{q + 1}}\delta _{q + 1,\Gamma }^{ - \frac{1}{2}}{r_q} \lesssim \lambda _q^{M - 1} \to 0   ,\ \ \mbox{as}\ q \to \infty ,
\end{equation*}
we deduce that
\begin{equation}
{\left\| {w_{q + 1}^{\left( c \right)}} \right\|_{{L^p}}} + {\left\| {w_{q + 1}^{\left( i \right)}} \right\|_{{L^p}}} + {\left\| {w_{q + 1}^{\left( \Gamma  \right)}} \right\|_{{L^p}}} + {\left\| {\mathfrak{W}_{q + 1}^{loc}} \right\|_{{L^p}}} = {o_{q \to \infty }}\left( {{{\left\| {w_{q + 1}^{\left( p \right)}} \right\|}_{{L^p}}}} \right)  .  \label{thm:frequency_intermittency-a-1-proof-3}
\end{equation}
\par
For $j\neq q+1$, the frequency separation property implies ${{\dot \Delta }_{{R_{q + 1}}}}{\mathbb{W}_\xi }\left( {{\lambda _j} \cdot } \right) = 0$. We then have
\begin{align*}
{{\dot \Delta }_{{R_{q + 1}}}} w_{\xi ,k,n,j}^{(p)} = & {{\dot \Delta }_{{R_{q + 1}}}}\left( {{A_{\xi ,k,n,j}}{\mathbb{W}_\xi }\left( {{\lambda _j} \cdot } \right)} \right) \\
= & {{\dot \Delta }_{{R_{q + 1}}}}\left( {{A_{\xi ,k,n,j}}{\mathbb{W}_\xi }\left( {{\lambda _j} \cdot } \right)} \right) - {A_{\xi ,k,n,j}}{{\dot \Delta }_{{R_{q + 1}}}}{\mathbb{W}_\xi }\left( {{\lambda _j} \cdot } \right)  \\
\triangleq & \left[ {{{\dot \Delta }_{{R_{q + 1}}}},{A_{\xi ,k,n,j}}} \right]{\mathbb{W}_\xi }\left( {{\lambda _j} \cdot } \right)  ,
\end{align*}
where $w_{\xi ,k,n,j}^{(p)}$ represents the $j$-th component of $w_{\xi ,k,n}^{(p)}$. It follows from the standard Littlewood--Paley commutator estimate that
\begin{align*}
{\left\| {\left[ {{{\dot \Delta }_{{R_{q + 1}}}},{A_{\xi ,k,n,j}}} \right]{\mathbb{W}_\xi }\left( {{\lambda _j} \cdot } \right)} \right\|_{{L^p}}} \lesssim & \lambda _{q + 1}^{ - 1}{\left\| {{A_{\xi ,k,n,j}}} \right\|_1}{\left\| {{\mathbb{W}_\xi }\left( {{\lambda _j}y} \right)} \right\|_{{L^p}}}     \\
\lesssim & \lambda _{q + 1}^{ - 1}\delta _j^{\frac{1}{2}}{\lambda _{j - 1}}\lambda _j^{ - \alpha }r_{j - 1}^{1 - \frac{2}{p}}  , 
\end{align*}
which implies that
\begin{align*}
\frac{{{{\left\| {{{\dot \Delta }_{{R_{q + 1}}}} w_j^{\left( p \right)}} \right\|}_{{L^p}}}}}{{{{\left\| {w_{q + 1}^{\left( p \right)}} \right\|}_{{L^p}}}}} \lesssim & \delta _j^{\frac{1}{2}}{\lambda _{j - 1}}\lambda _j^{ - \alpha }r_{j - 1}^{1 - \frac{2}{p}}\delta _{q + 1,\Gamma }^{ - \frac{1}{2}}\lambda _{q + 1}^{ - 1}r_q^{\frac{2}{p} - 1}  \\
\lesssim & \lambda _j^{1 - \alpha  - \beta  + M\left( {1 - \frac{2}{p}} \right)}\lambda _q^{\beta  - 1 - M\left( {1 - \frac{2}{p}} \right)}  \\
\to & 0 ,\ \ \mbox{as}\ q \to \infty  , \ \forall \ j \ne q + 1.  
\end{align*}
Therefore,
\begin{equation}
\frac{{{{\left\| {{{\dot \Delta }_{{R_{q + 1}}}} \sum\limits_{j \ne q + 1} w_j^{\left( p \right)}} \right\|}_{{L^p}}}}}{{{{\left\| {w_{q + 1}^{\left( p \right)}} \right\|}_{{L^p}}}}} \to  0 ,\ \ \mbox{as}\ q \to \infty  .  \label{SMNMFGKJGHG-CDF-10}
\end{equation}
Notice that the corrector part $w_{j}^{\left( c \right)}$, the intermittency part $w_{j}^{\left( i \right)}$ and the Newton perturbation $w_{j}^{\left( \Gamma \right)}$ is controlled by the principal part $w_{j}^{\left( p \right)}$. Then, it follows from \eqref{thm:frequency_intermittency-a-1-proof-3}, \eqref{SMNMFGKJGHG-CDF-10} and H\"{o}lder inequality that the oscillation error $E_{q + 1}^{osc}$ satisfies
\begin{equation}
{\left\| {E_{q + 1}^{osc}} \right\|_{{L^p}}} =  {o_{q \to \infty }}\left( {{{\left\| {w_{q + 1}^{\left( p \right)}} \right\|}_{{L^p}}}} \right)  .  \label{SMNMFGKJGHG-CDF-11}
\end{equation}
\par
Combining \eqref{thm:frequency_intermittency-a-1-proof-6}, \eqref{VHUYTR-A-1-KLBHTF-A-145} and \eqref{SMNMFGKJGHG-CDF-11}, there exists a sequence ${\left\{ {{\varepsilon _q}} \right\}_{q \in {\mathbb{Z}^ + }}}$, satisfying $1 \gg {\varepsilon _q} \to 0$ as $q \to \infty $, such that
\begin{equation}
(1 - {\varepsilon _q}){\left\| {w_{q + 1}^{\left( p \right)}} \right\|_{{L^p}}} \lesssim {\left\| {{{\dot \Delta }_{{R_{q + 1}}}}u} \right\|_{{L^p}}} \lesssim (1 + {\varepsilon _q}){\left\| {w_{q + 1}^{\left( p \right)}} \right\|_{{L^p}}}  ,  \label{thm:frequency_intermittency-a-1-proof-20}
\end{equation}
which implies that
\begin{equation*}
\frac{{{{\left\| {{{\dot \Delta }_{{R_{q + 1}}}}u} \right\|}_{{L^{{p_1}}}}}}}{{{{\left\| {{{\dot \Delta }_{{R_{q + 1}}}}u} \right\|}_{{L^{{p_2}}}}}}} \gtrsim \frac{{1 - {\varepsilon _q}}}{{1 + {\varepsilon _q}}} \cdot \frac{{{{\left\| {w_{q + 1}^{\left( p \right)}} \right\|}_{{L^{{p_1}}}}}}}{{{{\left\| {w_{q + 1}^{\left( p \right)}} \right\|}_{{L^{{p_2}}}}}}} \gtrsim \frac{{1 - {\varepsilon _q}}}{{1 + {\varepsilon _q}}} \cdot r_q^{2(\frac{1}{{{p_2}}} - \frac{1}{{{p_1}}})} \to \infty ,\ \ \mbox{as}\ q \to \infty .
\end{equation*}
\par
For notational convenience, we denote ${\mathscr{D}_{l,n}}f \triangleq f\left( {x + l\mathbf{n}} \right) - f\left( x \right)$ and ${\Theta _{k,l,{\mathbf{n}}}}(y): = {{\tilde \Phi }_k}\left( {{{\tilde \Psi }_k}\left( y \right) + l{\mathbf{n}}} \right) - y$. According to \eqref{thm:frequency_intermittency-a-1-proof-6}, we have
\begin{equation*}
{\mathscr{D}_{l,n}}{{\dot \Delta }_{{R_{q + 1}}}}u = {\mathscr{D}_{l,n}}w_{q + 1}^{(p)} + {\mathscr{D}_{l,n}}E_{q + 1}^{flow} + {\mathscr{D}_{l,n}}E_{q + 1}^{amp} + {\mathscr{D}_{l,n}}E_{q + 1}^{osc}  .  
\end{equation*}
We write various components of the principal part ${\mathscr{D}_{l,n}}w_{q + 1}^{(p)}$ as
\begin{align*}
& {\mathscr{D}_{l,n}}\left( {{A_{\xi ,k,n,q + 1}}{\mathbb{W}_\xi }\left( {t,{\lambda _{q + 1}}{{\tilde \Phi }_k}\left( {t,x} \right)} \right)} \right)  \\
= & {\mathscr{D}_{l,n}}{A_{\xi ,k,n,q + 1}}\left( x \right){\mathbb{W}_\xi }\left( {t,{\lambda _{q + 1}}{{\tilde \Phi }_k}\left( {t,x} \right)} \right)  + {A_{\xi ,k,n,q + 1}}\left( {x + l{\mathbf{n}}} \right)\left( {{\mathbb{W}_\xi }\left( {t,{\lambda _{q + 1}}\left( {y + l{\mathbf{n}}} \right)} \right) - {\mathbb{W}_\xi }\left( {t,{\lambda _{q + 1}}y} \right)} \right) \\
& + {A_{\xi ,k,n,q + 1}}\left( {x + l{\mathbf{n}}} \right)\left( {{\mathbb{W}_\xi }\left( {t,{\lambda _{q + 1}}\left( {{\Theta _{k,l,{\mathbf{n}}}}(y) + y} \right)} \right) - {\mathbb{W}_\xi }\left( {t,{\lambda _{q + 1}}\left( {y + l{\mathbf{n}}} \right)} \right)} \right) \\
\triangleq & {I_{\xi ,k,n,q + 1}} + I{I_{\xi ,k,n,q + 1}}  .
\end{align*}
It follows from \eqref{eq:Lp-preservation-SKJKJFI} that
\begin{align}
{\left\| {{I_{\xi ,k,n,q + 1}}} \right\|_{{L^p}}} \lesssim & l{\left\| {{A_{\xi ,k,n,q + 1}}} \right\|_1}{\left\| {{\mathbb{W}_\xi }\left( {t,{\lambda _{q + 1}}y} \right)} \right\|_{L_y^p}} \nonumber \\
& + {\left\| {{A_{\xi ,k,n,q + 1}}} \right\|_0}{\left\| {{\mathbb{W}_\xi }\left( {t,{\lambda _{q + 1}}\left( {y + l{\mathbf{n}}} \right)} \right) - {\mathbb{W}_\xi }\left( {t,{\lambda _{q + 1}}y} \right)} \right\|_{L_y^p}}  \nonumber  \\
\lesssim & \delta _{q + 1}^{\frac{1}{2}}r_q^{1 - \frac{2}{p}} + l\delta _{q + 1}^{\frac{1}{2}}{\lambda _q}r_q^{1 - \frac{2}{p}} .  \label{thm:frequency_intermittency-a-1-proof-945}
\end{align}
Consider the second term $I{I_{\xi ,k,n,q + 1}}$. Since ${\Theta _{k,l,{\mathbf{n}}}}(y) - l{\mathbf{n}} = \int_0^l {\left( {\nabla {{\tilde \Phi }_k}\left( {{{\tilde \Psi }_k}\left( y \right) + s{\mathbf{n}}} \right) - \mathrm{Id}} \right) \cdot {\mathbf{n}}} \mathrm{d}s$, we have
\begin{equation*}
{\left\| {{\Theta _{k,l,{\mathbf{n}}}} - l{\mathbf{n}}} \right\|_0} \leqslant l{\left\| {\nabla {{\tilde \Phi }_k} - {\text{Id}}} \right\|_0} \lesssim l\lambda _{q + 1}^{ - \alpha } .
\end{equation*}
Consequently,
\begin{equation}
{\left\| {I{I_{\xi ,k,n,q + 1}}} \right\|_{{L^p}}} \lesssim {\lambda _{q + 1}}{\left\| {{A_{\xi ,k,n,q + 1}}} \right\|_0}{\left\| {{\Theta _{k,l,{\mathbf{n}}}} - l{\mathbf{n}}} \right\|_0}{\left\| {{\mathbb{W}_\xi }} \right\|_{{W^{1,p}}}}  \lesssim l\delta _{q + 1}^{\frac{1}{2}}\lambda _{q + 1}^{1 - \alpha }\sigma r_q^{2 - \frac{2}{p}}  . \label{thm:frequency_intermittency-a-1-proof-10945}
\end{equation}
\par
In view of the ${{\lambda _{q + 1}}}$-fold stretching transformation in the spatial variable $y$, the scale $l$ should satisfy $l \gtrsim \frac{1}{{{\lambda _{q + 1}}\sigma {r_q}}}$, that is, $l \gtrsim \lambda _{q + 1}^{ - \frac{1}{2} - M}$. Due to $M > \frac{1}{2}$, there exists an integral scale $l_I \left( q \right)$ decreasing with respect to $q$ such that
\begin{equation*}
l_I \delta _{q + 1}^{\frac{1}{2}}{\lambda _q} + l_I \delta _{q + 1}^{\frac{1}{2}}\lambda _{q + 1}^{1 - \alpha }\sigma {r_q} \leqslant c_0 \delta _{q + 1,\Gamma }^{\frac{1}{2}} , \ \ \mbox{for}\ c_0 \in \left( {0,1} \right) .
\end{equation*}
Therefore, it follows from \eqref{thm:frequency_intermittency-a-1-proof-945} and \eqref{thm:frequency_intermittency-a-1-proof-10945} that
\begin{equation*}
\delta _{q + 1,\Gamma }^{\frac{1}{2}}r_q^{1 - \frac{2}{p}} \lesssim {\left\| {{\mathscr{D}_{l,n}}\left( {{A_{\xi ,k,n,q + 1}}{\mathbb{W}_\xi }\left( {t,{\lambda _{q + 1}}{{\tilde \Phi }_k}\left( {t,x} \right)} \right)} \right)} \right\|_{{L^p}}} \lesssim \delta _{q + 1}^{\frac{1}{2}}r_q^{1 - \frac{2}{p}} ,
\end{equation*}
which implies that
\begin{equation}
\delta _{q + 1,\Gamma }^{\frac{1}{2}}{r_q^{1 - \frac{2}{p}}} \lesssim {\left\| {{\mathscr{D}_{l,n}}w_{q + 1}^{\left( p \right)}} \right\|_{{L^p}}} \lesssim \delta _{q + 1}^{\frac{1}{2}}{r_q^{1 - \frac{2}{p}}} .  \label{thm:frequency_intermittency-a-1-proof-9}
\end{equation}
\par
Similarly, using \eqref{thm:frequency_intermittency-a-1-proof-1} and \eqref{thm:frequency_intermittency-a-1-proof-10}, we derive that
\begin{align*}
& {\left\| {{\mathscr{D}_{l,n}}w_{q + 1}^{\left( c \right)}} \right\|_{{L^p}}} + {\left\| {{\mathscr{D}_{l,n}}w_{q + 1}^{\left( i \right)}} \right\|_{{L^p}}} + {\left\| {{\mathscr{D}_{l,n}}w_{q + 1}^{\left( \Gamma  \right)}} \right\|_{{L^p}}} + {\left\| {{\mathscr{D}_{l,n}}\mathfrak{W}_{q + 1}^{loc}} \right\|_{{L^p}}} \\
\lesssim & \lambda _{q + 1}^{ - 1}\delta _{q + 1}^{1/2}r_q^{2 - \frac{2}{p}} + {{\tilde \mu }^{ - 1}}{\delta _{q + 1}}r_q^{2 - \frac{2}{p}} + \mu _{q + 1}^{ - 1}{\delta _{q + 1}}{\lambda _q}l_q^{ - \alpha } + \delta _q^\alpha \delta _{q + 2}^{\frac{1}{2}} ,
\end{align*}
which implies that
\begin{equation*}
{\left\| {{\mathscr{D}_{l,n}}w_{q + 1}^{\left( c \right)}} \right\|_{{L^p}}} + {\left\| {{\mathscr{D}_{l,n}}w_{q + 1}^{\left( i \right)}} \right\|_{{L^p}}} + {\left\| {{\mathscr{D}_{l,n}}w_{q + 1}^{\left( \Gamma  \right)}} \right\|_{{L^p}}} + {\left\| {{\mathscr{D}_{l,n}}\mathfrak{W}_{q + 1}^{loc}} \right\|_{{L^p}}} = {o_{q \to \infty }}\left( {{{\left\| {{\mathscr{D}_{l,n}}w_{q + 1}^{\left( p \right)}} \right\|}_{{L^p}}}} \right)  .
\end{equation*}
Therefore, there exists a sequence ${\left\{ {{\varepsilon _q}} \right\}_{q \in {\mathbb{Z}^ + }}}$ with $1 \gg {\varepsilon _q} \to 0$, $q \to \infty $, such that
\begin{equation}
(1 - {\varepsilon _q}){\left\| {{\mathscr{D}_{l,n}}w_{q + 1}^{(p)}} \right\|_{{L^p}}} \lesssim {\left\| {{\mathscr{D}_{l,n}}{{\dot \Delta }_{{R_{q + 1}}}}u} \right\|_{{L^p}}} \lesssim (1 + {\varepsilon _q}){\left\| {{\mathscr{D}_{l,n}}w_{q + 1}^{(p)}} \right\|_{{L^p}}} . \label{thm:frequency_intermittency-a-1-proof-13}
\end{equation}
It follows from \eqref{thm:frequency_intermittency-a-1-proof-9} and \eqref{thm:frequency_intermittency-a-1-proof-13} that
\begin{equation*}
\frac{{{{\left\| {{\mathscr{D}_{l,n}}{{\dot \Delta }_{{R_{q + 1}}}}u} \right\|}_{{L^{{p_1}}}}}}}{{{{\left\| {{\mathscr{D}_{l,n}}{{\dot \Delta }_{{R_{q + 1}}}}u} \right\|}_{{L^{{p_2}}}}}}} \gtrsim \frac{{1 - {\varepsilon _q}}}{{1 + {\varepsilon _q}}} \cdot \frac{{{{\left\| {{\mathscr{D}_{l,n}}w_{q + 1}^{(p)}} \right\|}_{{L^{{p_1}}}}}}}{{{{\left\| {{\mathscr{D}_{l,n}}w_{q + 1}^{(p)}} \right\|}_{{L^{{p_2}}}}}}} \gtrsim \frac{{1 - {\varepsilon _q}}}{{1 + {\varepsilon _q}}} \cdot r_q^{2(\frac{1}{{{p_2}}} - \frac{1}{{{p_1}}})} \to \infty , \ \ \mbox{as}\ q \to \infty .
\end{equation*}
The proof is complete. \qed
\par
\vspace{1em}
{\textbf{Step 3. Propagation of intermittency to martingale solutions}}. 
\vspace{1em}
\par
Using the homogeneous Littlewood--Paley projector ${{\dot \Delta }_{{R_{q + 1}}}}$, we decompose $u\left( {t,x} \right)$ as
\begin{equation*}
u = {{\dot \Delta }_{{R_{q + 1}}}}u + {L_q} + {H_q}  ,
\end{equation*}
where the low-frequency component is
\begin{equation*}
{L_q} = \sum\limits_{j < q + 1} {{{\dot \Delta }_{{R_j}}}u} ,
\end{equation*}
and the high-frequency tail is
\begin{equation*}
{H_q} = \sum\limits_{j > q + 1} {{{\dot \Delta }_{{R_j}}}u} .
\end{equation*}
\par
It follows from the Littlewood--Paley characterization that
\begin{equation*}
{\left\| {{{\dot \Delta }_{{R_j}}}u} \right\|_{{L^p}}} \lesssim R_j^{ - \gamma }  .
\end{equation*}
Hence,
\begin{equation*}
{\left\| {{H_q}} \right\|_{{L^p}}} \lesssim \sum\limits_{j > q + 1} {R_j^{ - \gamma }}
\end{equation*}
We note that $R_j$ grows super-exponentially with respect to $j$. Therefore,
\begin{equation}
{\left\| {{H_q}} \right\|_{{L^p}}} \lesssim R_{q + 1}^{ - \gamma } \lesssim \lambda _q^{ - \gamma (b + M)}  .  \label{thm:frequency_intermittency-a-1-proof-15}
\end{equation}
Then, employing \eqref{VHUYTR-A-1-KLBHTF-A-1}, \eqref{thm:frequency_intermittency-a-1-proof-20} and \eqref{thm:frequency_intermittency-a-1-proof-15}, we infer that
\begin{align*}
\frac{{{{\left\| {{H_q}} \right\|}_{{L^p}}}}}{{{{\left\| {{{\dot \Delta }_{{R_{q + 1}}}}u} \right\|}_{{L^p}}}}} \lesssim & \frac{1}{{1 - {\varepsilon _q}}}\lambda _q^{ - \gamma (b + M)}\delta _{q + 1,\Gamma }^{ - \frac{1}{2}}r_q^{\frac{2}{p} - 1}  \\
\lesssim & \lambda _q^{M\left( {\frac{2}{p} - 1} \right) - \gamma (b + M) + \frac{1}{2}\left( {\beta  - \frac{1}{3}} \right)\Gamma }\lambda _{q + 1}^{\beta  - \frac{1}{2}\left( {\beta  - \frac{1}{3}} \right)\Gamma } .
\end{align*}
\par
Since $p > \frac{{2M}}{{\gamma M + M - \beta  + \alpha }}$, we have $M\left( {\frac{2}{p} - 1} \right) + \beta  - \gamma M <  - \alpha $. Thus, there exists a positive constant $b > 1.005$ such that
\begin{equation*}
\lambda _q^{M\left( {\frac{2}{p} - 1} \right) - \gamma (b + M) + \frac{1}{2}\left( {\beta  - \frac{1}{3}} \right)\Gamma }\lambda _{q + 1}^{\beta  - \frac{1}{2}\left( {\beta  - \frac{1}{3}} \right)\Gamma } \to 0, \ \ \mbox{as}\ q \to \infty  , 
\end{equation*}
which implies that
\begin{equation*}
{\left\| {{H_q}} \right\|_{{L^p}}}  = {o_{q \to \infty }}\left( {{\left\| {{{\dot \Delta }_{{R_{q + 1}}}}u} \right\|_{{L^p}}}} \right) . 
\end{equation*}
Then, there exists a sequence ${\left\{ {{\varepsilon _q}} \right\}_{q \in {\mathbb{Z}^ + }}}$, satisfying $1 \gg {\varepsilon _q} \to 0$ as $q \to \infty $, such that
\begin{equation}
\left( {1 - {\varepsilon _q}} \right){\left\| {{{\dot \Delta }_{{R_{q + 1}}}}u} \right\|_{{L^p}}} \lesssim {\left\| {u - {L_q}} \right\|_{{L^p}}} \sim {\left\| {\sum\limits_{j \geqslant q + 1} {{{\dot \Delta }_{{R_j}}}u} } \right\|_{{L^p}}} \lesssim \left( {1 + {\varepsilon _q}} \right){\left\| {{{\dot \Delta }_{{R_{q + 1}}}}u} \right\|_{{L^p}}} .  \label{thm:frequency_intermittency-a-1-proof-21}
\end{equation}
Combining Lemma \ref{thm:frequency_intermittency} and \eqref{thm:frequency_intermittency-a-1-proof-21}, we deduce that
\begin{equation*}
\frac{{{{\left\| {u - {L_q}} \right\|}_{{L^{{p_1}}}}}}}{{{{\left\| {u - {L_q}} \right\|}_{{L^{{p_2}}}}}}} \gtrsim \frac{{1 - {\varepsilon _q}}}{{1 + {\varepsilon _q}}} \cdot \frac{{{{\left\| {{{\dot \Delta }_{{R_{q + 1}}}}u} \right\|}_{{L^{{p_1}}}}}}}{{{{\left\| {{{\dot \Delta }_{{R_{q + 1}}}}u} \right\|}_{{L^{{p_2}}}}}}} \gtrsim \frac{{1 - {\varepsilon _q}}}{{1 + {\varepsilon _q}}} \cdot r_q^{2(\frac{1}{{{p_2}}} - \frac{1}{{{p_1}}})} \to \infty , \ \ \mbox{as}\  q \to \infty ,
\end{equation*}
which implies that the intermittency \eqref{Thes-sdgh-ASSDFG-1} holds.
\par
Using the homogeneous Littlewood--Paley projector ${{\dot \Delta }_{{R_{q + 1}}}}$ again, we decompose ${\mathscr{D}_{l,n}}u\left( {t,x} \right)$ as
\begin{equation*}
{\mathscr{D}_{l,n}}u = {\mathscr{D}_{l,n}}{{\dot \Delta }_{{R_{q + 1}}}}u + {\mathscr{D}_{l,n}}{L_q} + {\mathscr{D}_{l,n}}{H_q}  .
\end{equation*}
For the low-frequency component ${L_q}$, it follows from Bernstein's inequality that
\begin{equation*}
{\left\| {{\mathscr{D}_{l,n}}{{\dot \Delta }_{{R_j}}}u} \right\|_{{L^p}}} \lesssim l{R_j}{\left\| {{{\dot \Delta }_{{R_j}}}u} \right\|_{{L^p}}} .
\end{equation*}
Then, it follows from \eqref{VHUYTR-A-1-KLBHTF-A-1}, \eqref{thm:frequency_intermittency-a-1-proof-20}, \eqref{thm:frequency_intermittency-a-1-proof-9} and \eqref{thm:frequency_intermittency-a-1-proof-13} that
\begin{align*}
\frac{{{{\left\| {{\mathscr{D}_{l,n}}{L_q}} \right\|}_{{L^p}}}}}{{{{\left\| {{\mathscr{D}_{l,n}}{{\dot \Delta }_{{R_j}}}u} \right\|}_{{L^p}}}}} \lesssim & \frac{{l\sum\limits_{j < q + 1} {{R_j}{{\left\| {{{\dot \Delta }_{{R_j}}}u} \right\|}_{{L^p}}}} }}{{\left( {1 - {\varepsilon _q}} \right){{\left\| {{\mathscr{D}_{l,n}}w_{q + 1}^{(p)}} \right\|}_{{L^p}}}}}  \\
\lesssim & \frac{l}{{1 - {\varepsilon _q}}}\sum\limits_{j < q + 1} {\left( {1 + {\varepsilon _{j - 1}}} \right)\delta _j^{\frac{1}{2}}r_{j - 1}^{1 - \frac{2}{p}}{R_j}} \delta _{q + 1,\Gamma }^{ - \frac{1}{2}}r_q^{\frac{2}{p} - 1} .
\end{align*}
If $p \geqslant 2$, it is easy to check that
\begin{equation*}
\frac{l}{{1 - {\varepsilon _q}}}\sum\limits_{j < q + 1} {\left( {1 + {\varepsilon _{j - 1}}} \right)\delta _j^{\frac{1}{2}}r_{j - 1}^{1 - \frac{2}{p}}{R_j}} \delta _{q + 1,\Gamma }^{ - \frac{1}{2}}r_q^{\frac{2}{p} - 1} \to 0 ,  \ \ \mbox{as}\  q \to \infty .
\end{equation*}
If $1 < p < 2$, we have $\frac{{{R_j}\delta _q^\alpha }}{{{R_{q + 1}}}} {\delta _{q + 1,\Gamma }^{ - \frac{1}{2}}}  r_q^{\frac{2}{p} - 1} \to 0$ for any $ j < q +1 $ as $q \to \infty$. Then
\begin{align*}
\frac{l}{{1 - {\varepsilon _q}}}\sum\limits_{j < q + 1} {\left( {1 + {\varepsilon _{j - 1}}} \right)\delta _j^{\frac{1}{2}}r_{j - 1}^{1 - \frac{2}{p}}{R_j}} \delta _{q + 1,\Gamma }^{ - \frac{1}{2}}r_q^{\frac{2}{p} - 1} \lesssim & \left( {{l_I}{R_{q + 1}}\delta _q^{ - \alpha }\sum\limits_{j < q + 1} {\delta _j^{\frac{1}{2}}r_{j - 1}^{1 - \frac{2}{p}}} } \right) \cdot \frac{{{R_q}\delta _q^\alpha }}{{{R_{q + 1}}}}\delta _{q + 1,\Gamma }^{ - \frac{1}{2}}r_q^{\frac{2}{p} - 1} \\
\to & 0 , \ \ \mbox{as}\  q \to \infty ,
\end{align*}
where the sufficiently small integral scale $l_I$ restricts the growth rate of ${{R_{q + 1}}\delta _q^{ - \alpha }}$. Therefore,
\begin{equation}
{\left\| {{\mathscr{D}_{l,n}}{L_q}} \right\|_{{L^p}}}  = {o_{q \to \infty }}\left( {\left\| {{\mathscr{D}_{l,n}}{{\dot \Delta }_{{R_j}}}u} \right\|_{{L^p}}} \right) .  \label{thm:frequency_intermittency-a-1-proof-23}
\end{equation}
\par
For the high-frequency tail ${H_q}$, it follows from the Littlewood--Paley characterization that
\begin{align*}
{\left\| {{\mathscr{D}_{l,n}}{H_q}} \right\|_{{L^p}}} \lesssim & \sum\limits_{j > q + 1} {{{\left\| {{\mathscr{D}_{l,n}}{{\dot \Delta }_{{R_j}}}u} \right\|}_{{L^p}}}} \\
\lesssim & \sum\limits_{j > q + 1} {\min \left\{ {1,l{R_j}} \right\}{{\left\| {{{\dot \Delta }_{{R_j}}}u} \right\|}_{{L^p}}}}  \\
\lesssim & \sum\limits_{j > q + 1} {\min \left\{ {1,l{R_j}} \right\}R_j^{ - \gamma }}  .
\end{align*}
Owing to $l{R_j} \geqslant {l}{R_{q + 2}} \gtrsim \lambda _{q + 1}^{b - \frac{1}{2} - M} \geqslant 1$, we have
\begin{equation}
{\left\| {{\mathscr{D}_{l,n}}{H_q}} \right\|_{{L^p}}} \lesssim \sum\limits_{j > q + 1} {R_j^{ - \gamma }}  \lesssim R_{q + 1}^{ - \gamma } .  \label{thm:frequency_intermittency-a-1-proof-24}
\end{equation}
Employing \eqref{VHUYTR-A-1-KLBHTF-A-1}, \eqref{thm:frequency_intermittency-a-1-proof-20}, \eqref{thm:frequency_intermittency-a-1-proof-9}, \eqref{thm:frequency_intermittency-a-1-proof-13} and \eqref{thm:frequency_intermittency-a-1-proof-24}, we derive that
\begin{align*}
\frac{{{{\left\| {{\mathscr{D}_{l,n}}{H_q}} \right\|}_{{L^p}}}}}{{{{\left\| {{\mathscr{D}_{l,n}}{{\dot \Delta }_{{R_j}}}u} \right\|}_{{L^p}}}}} \lesssim & \frac{{R_{q + 1}^{ - \gamma }}}{{\left( {1 - {\varepsilon _q}} \right){{\left\| {{\mathscr{D}_{l,n}}w_{q + 1}^{(p)}} \right\|}_{{L^p}}}}} \\
\lesssim & \frac{1}{{1 - {\varepsilon _q}}}R_{q + 1}^{ - \gamma }\delta _{q + 1,\Gamma }^{ - \frac{1}{2}}r_q^{\frac{2}{p} - 1} \\
\lesssim & \lambda _q^{M\left( {\frac{2}{p} - 1} \right) - \gamma (b + M) + \frac{1}{2}\left( {\beta  - \frac{1}{3}} \right)\Gamma }\lambda _{q + 1}^{\beta  - \frac{1}{2}\left( {\beta  - \frac{1}{3}} \right)\Gamma }  .
\end{align*}
Invoking \eqref{thm:frequency_intermittency-a-1-proof-24}, we have
\begin{equation}
{\left\| {{\mathscr{D}_{l,n}}{H_q}} \right\|_{{L^p}}} = {o_{q \to \infty }}\left( {\left\| {{\mathscr{D}_{l,n}}{{\dot \Delta }_{{R_j}}}u} \right\|_{{L^p}}} \right)  . \label{thm:frequency_intermittency-a-1-proof-27}
\end{equation}
\par
Combining \eqref{thm:frequency_intermittency-a-1-proof-23} and \eqref{thm:frequency_intermittency-a-1-proof-27}, we infer that
\begin{equation}
\left( {1 - {\varepsilon _q}} \right){\left\| {{\mathscr{D}_{l,n}}{{\dot \Delta }_{{R_j}}}u} \right\|_{{L^p}}} \lesssim {\left\| {{\mathscr{D}_{l,n}}u} \right\|_{{L^p}}} \lesssim \left( {1 + {\varepsilon _q}} \right){\left\| {{\mathscr{D}_{l,n}}{{\dot \Delta }_{{R_j}}}u} \right\|_{{L^p}}}  . \label{thm:frequency_intermittency-a-1-proof-28}
\end{equation}
Then, it follows from Lemma \ref{thm:frequency_intermittency} and \eqref{thm:frequency_intermittency-a-1-proof-28} that
\begin{equation*}
\frac{{{{\left\| {{\mathscr{D}_{l,n}}u} \right\|}_{{L^{{p_1}}}}}}}{{{{\left\| {{\mathscr{D}_{l,n}}u} \right\|}_{{L^{{p_2}}}}}}} \geqslant \frac{{1 - {\varepsilon _q}}}{{1 + {\varepsilon _q}}} \cdot \frac{{{{\left\| {{\mathscr{D}_{l,n}}{{\dot \Delta }_{{R_j}}}u} \right\|}_{{L^{{p_1}}}}}}}{{{{\left\| {{\mathscr{D}_{l,n}}{{\dot \Delta }_{{R_j}}}u} \right\|}_{{L^{{p_2}}}}}}} \geqslant \frac{{1 - {\varepsilon _q}}}{{1 + {\varepsilon _q}}} \cdot r_q^{2(\frac{1}{{{p_2}}} - \frac{1}{{{p_1}}})} \to \infty , \ \ \mbox{as}\ q \to \infty  ,
\end{equation*}
which implies that the inertial-range intermittency \eqref{Thes-sdgh-ASSDFG-2} holds. The proof of Theorem \ref{CTSE-O-C-1B} is complete. \qed
\section{Newton iteration scheme}\label{NNIS}
In this section, we construct the Newtonian perturbation, which allows us to transform the $q$-step stochastic Euler--Reynolds system \eqref{ERNE-3A.1} into the $\left( {q , n} \right)$-step stochastic Newton--Euler--Reynolds system. To estimate the Newtonian perturbation, we mollify the $q$-th step velocity to construct Lagrangian flows and backward flows. Since the driving flow is progressively measurable with respect to the filtration generated by $\mathfrak{W}_q \left( {t} \right)$, the associated Lagrangian flows and backward flows remain adapted. Moreover, all estimates for the Newton iteration are restricted to the stopping time interval $\mathop  \cap \limits_{j = 0}^q \left[ {0,{\mathfrak{t}_j}} \right]$.
\subsection{Stochastic Newton--Euler--Reynolds system}\label{NNIS-JIUYRFD}
Define the inverse-divergence operator
\begin{equation*}
{\left\{ {\mathcal{R}u} \right\}_{ij}} = {\Delta ^{ - 1}}\left( {{\partial _{{x_i}}}{u^j} + {\partial _{{x_j}}}{u^i} - {\mathfrak{I}}_{ij} \mathrm{div}\,    u} \right), \ \  i,j \in \left\{ {1,2} \right\},
\end{equation*}
where ${u^i}$ denotes the $i$-th component of $u$, and ${\mathfrak{I}}_{ij}$ is the Kronecker delta
\begin{equation*}
{\mathfrak{I}}_{ij} = \left\{ {\begin{array}{*{20}{c}}
1, & i = j,\\
0, & i \ne j.
\end{array}} \right.
\end{equation*}
\par
\begin{lemma}[\cite{Zbl1556.35231}]\label{PO-IDO-A}
Suppose that $u$ is a smooth and mean-zero vector field. Then the two-tensor field ${\mathcal{R}u}$ is symmetric and satisfies $\mathrm{div}\,   {\mathcal{R}u} = u$.
\end{lemma}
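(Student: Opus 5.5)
The symmetric part is immediate from the definition. The formula for $\{\mathcal{R}u\}_{ij}$ is invariant under swapping $i$ and $j$, because $\partial_{x_i}u^j+\partial_{x_j}u^i$ and $\mathfrak{I}_{ij}$ are both symmetric. The operator $\Delta^{-1}$ is well defined on mean-zero functions on $\mathbb{T}^2$, and derivatives of periodic functions are always mean-zero. So every entry $\partial_{x_i}u^j+\partial_{x_j}u^i-\mathfrak{I}_{ij}\,\mathrm{div}\,u$ lies in the range where $\Delta^{-1}$ is defined, and the resulting entries are smooth and mean-zero.

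To prove $\mathrm{div}\,\mathcal{R}u=u$, I will compute the divergence with respect to the index $j$. All the operators involved are Fourier multipliers, so $\partial_{x_j}$ commutes with $\Delta^{-1}$. Summing over $j\in\{1,2\}$ gives
\begin{equation*}
\sum_j \partial_{x_j}\{\mathcal{R}u\}_{ij}=\Delta^{-1}\Big(\partial_{x_i}\,\mathrm{div}\,u+\Delta u^i-\partial_{x_i}\mathrm{div}\,u\Big)=\Delta^{-1}\Delta u^i .
\end{equation*}
Since $u$ is mean-zero, $\Delta^{-1}\Delta u^i=u^i$, and this finishes the proof. Note that the Kronecker term exactly cancels the $\partial_{x_i}\mathrm{div}\,u$ contribution without any incompressibility assumption; with the $\mathfrak{I}_{ij}$ coefficient $1$, this cancellation is specific to dimension two.

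A cleaner way to organize the same computation is on the Fourier side, mode by mode. For $k\neq 0$ one checks that $|k|^{-2}\big(k_ik_j\hat u^j+k_jk_i\hat u^i\ldots\big)$ reduces to $\hat u^i$. No step here is a real obstacle. The only point requiring care is the mean-zero condition: it justifies inverting $\Delta$, and it is used to identify $\Delta^{-1}\Delta$ with the identity.
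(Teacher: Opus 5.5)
Your proof is correct and is the standard direct verification of this identity; the paper gives no argument of its own and only cites Giri--Radu. One side remark is wrong: since $\sum_j\partial_{x_j}\big(\mathfrak{I}_{ij}\,\mathrm{div}\,u\big)=\partial_{x_i}\mathrm{div}\,u$ in every dimension, the cancellation giving $\mathrm{div}\,\mathcal{R}u=u$ is not specific to $d=2$ (what is two-dimensional is the trace-free property $\sum_i\{\mathcal{R}u\}_{ii}=\Delta^{-1}\big((2-d)\,\mathrm{div}\,u\big)=0$), and your Fourier sketch should read $|k|^{-2}\sum_j k_j\big(k_i\hat u^j+k_j\hat u^i-\mathfrak{I}_{ij}\,k\cdot\hat u\big)=\hat u^i$ for $k\neq 0$, with the middle term summing to $|k|^2\hat u^i$ rather than $k_jk_i\hat u^i$.
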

\par
Let ${\varsigma _{{l_q}}}$ be the symmetric spatial mollifier at scale ${l_q}$, where ${l_q}$ is given by \eqref{parameters-S-9}. Define the initial Newtonian stress 
\begin{equation}\label{Newtonsteps-A-1}
{\mathring{R}_{q,0}} \triangleq {\mathring{R}_q}*{\varsigma _{{l_q}}} ,
\end{equation}
and the smooth auxiliary function
\begin{equation}\label{Newtonsteps-A-1-KIJH}
{{\bar v}_q} \triangleq {{ v}_q}*{\varsigma _{{l_q}}} ,
\end{equation}
where ${v_q}$ and ${\mathring{R}_q}$ are elements of the $q$-step stochastic Euler--Reynolds system \eqref{ERNE-3A.1}. It is clear that
\begin{equation*}
{\mathrm{supp}_\mathrm{t}} \, {\mathring{R}_{q,0}} = \left( {\frac{1}{4}T + \delta _q^{ - \frac{1}{2}}\lambda _q^{ - 1},\frac{3}{4}T - \delta _q^{ - \frac{1}{2}}\lambda _q^{ - 1}} \right]  .
\end{equation*}
Furthermore, it follows from the inductive estimates \eqref{HIE-u-1.1}, \eqref{HIE-R-1.1} and \eqref{MDHIE-R-1.1} that
\begin{equation}\label{lemma-GLTNNRS-B-M-1-sij-a}
{\left\| {{{\bar v}_q}} \right\|_0} \leqslant C  , 
\end{equation}
\begin{equation}\label{lemma-GLTNNRS-B-M-1}
{\left\| {{{\bar v}_q}} \right\|_N} \lesssim  \delta _q^{\frac{1}{2}}\lambda _q^N  , \ \ \forall \, N \in \left\{ {1, \cdots ,10} \right\}  ,
\end{equation}
\begin{equation}\label{lemma-GLTNNRS-B-M-2}
{\left\| {{\mathring{R}_{q,0}}} \right\|_N} \lesssim   {\delta _{q + 1}}\lambda _q^{N - \alpha }    ,  \ \ \forall \, N \in \left\{ {0,1, \cdots ,10} \right\}, 
\end{equation}
and
\begin{equation}\label{lemma-GLTNNRS-B-M-3}
{\left\| {{{\bar D}_{t,q}}{\mathring{R}_{q,0}}} \right\|_N} \lesssim  \delta _q^{\frac{1}{2}}{\delta _{q + 1}}\lambda _q^{N + 1 - \alpha }  ,  \ \ \forall \, \left\{ {0,1, \cdots ,10} \right\}, 
\end{equation}
where ${{\bar D}_{t,q}} \triangleq {\partial _t} + {{\bar v}_q} \cdot \nabla $ is the material derivative corresponding to ${{\bar v}_q}$.
\par
In fact, using Lemma \ref{MMD-A-H}, we obtain
\begin{equation*}
{\left\| {{{\bar v}_q}} \right\|_0} \leqslant {\left\| {{v_q}} \right\|_0} + l_q^2{\left\| {{v_q}} \right\|_{2}} \leqslant C  ,
\end{equation*}
\begin{equation*}
{\left\| {{{\bar v}_q}} \right\|_N} \leqslant {\left\| {{v_q}} \right\|_N} + l_q^2{\left\| {{v_q}} \right\|_{N + 2}} \lesssim \delta _q^{\frac{1}{2}}\lambda _q^N , \ \ \forall \, N \in \left\{ {1, \cdots ,10} \right\}  ,
\end{equation*}
and
\begin{equation*}
{\left\| {{\mathring{R}_{q,0}}} \right\|_N} \leqslant {\left\| {{\mathring{R}_q}} \right\|_N} + l_q^2{\left\| {{\mathring{R}_q}} \right\|_{N + 2}} \lesssim {\delta _{q + 1}}\lambda _q^{N - \alpha } , \ \ \forall \, N \in \left\{ {0, 1, \cdots ,10} \right\} .
\end{equation*}
Thus \eqref{lemma-GLTNNRS-B-M-1} and \eqref{lemma-GLTNNRS-B-M-2} hold. Notice that
\begin{equation*}
{{{\bar D}_{t,q}}{\mathring{R}_{q,0}}}  = {\left( {{D_{t,q}}{\mathring{R}_q}} \right)*{\varsigma _{{l_q}}}} + {\left( {{{\bar v}_q} \cdot \nabla } \right){\mathring{R}_{q,0}} - {\left( {\left( {{v_q} \cdot \nabla } \right){\mathring{R}_{q}}} \right)*{\varsigma _{{l_q}}}}} .
\end{equation*}
It follows from Lemma \ref{MMD-A-H} that
\begin{align*}
{\left\| {\left( {{D_{t,q}}{\mathring{R}_q}} \right)*{\varsigma _{{l_q}}}} \right\|_N} \leqslant & {\left\| {{D_{t,q}}{\mathring{R}_q}} \right\|_N} + l_q^2{\left\| {{D_{t,q}}{\mathring{R}_q}} \right\|_{N + 2}} \\
\lesssim & \delta _q^{\frac{1}{2}}{\delta _{q + 1}}\lambda _q^{N + 1 - \alpha } , \ \ \forall \, N \in \left\{ {0, 1, \cdots ,10} \right\} ,
\end{align*}
and
\begin{align*}
{\left\| {\left( {{{\bar v}_q} \cdot \nabla } \right){\mathring{R}_{q,0}} - {\left( {\left( {{v_q} \cdot \nabla } \right){\mathring{R}_{q}}} \right)*{\varsigma _{{l_q}}}}} \right\|_N}  \leqslant & {l^2}\left( {{{\left\| v_q \right\|}_{N + 1}}{{\left\| {{\mathring{R}_{q}}} \right\|}_2} + {{\left\| v_q \right\|}_1}{{\left\| {{\mathring{R}_{q}}} \right\|}_{N + 2}}} \right) \\
\lesssim & \delta _q^{\frac{1}{2}}{\delta _{q + 1}}\lambda _q^{N + 1 - \alpha }\frac{{{\lambda _q}}}{{{\lambda _{q + 1}}}}, \ \ \forall \, N \in \left\{ {0, 1, \cdots ,10} \right\}.
\end{align*}
Hence \eqref{lemma-GLTNNRS-B-M-3} holds.
\par
Define the initial Newtonian elements
\begin{equation*}
{u_{q,0}} = {u_q},\ \ {v_{q,0}} = {v_q} ,\ \ {\mathfrak{p}_{q,0}} = {\mathfrak{p}_q} ,
\end{equation*}
and
\begin{equation*}
{S_{q,0}} = 0,\ \ {\mathfrak{P}_{q + 1,0}} = {\mathring{R}_q} - {\mathring{R}_{q,0}} .
\end{equation*}
These Newtonian elements fulfill the initial stochastic Newton--Euler--Reynolds system
\begin{equation}\label{Newtonsteps-A-2}
\left\{ {\begin{array}{*{20}{l}}
{\partial _t}{v_{q,0}} + \mathrm{div} \left( {{u_{q,0}} \otimes {u_{q,0}}} \right) + \nabla {\mathfrak{p}_{q,0}} = \mathrm{div} \left( {{\mathring{R}_{q,0}} + {S_{q,0}} + {\mathfrak{P}_{q + 1,0}}} \right),\\
{u_{q,0}} = {v_{q,0}} + \mathfrak{W}_q ,\\
\mathrm{div}\,   {u_{q,0}} = 0 .
\end{array}} \right.    
\end{equation}
It follows from \eqref{lemma-GLTNNRS-B-M-1}-\eqref{lemma-GLTNNRS-B-M-3} and the inductive estimate \eqref{HIE-u-1.1} that
\begin{equation}\label{IJDI-FJIJD-A-1-KSIJDD-A}
\left\| {{\mathfrak{p}_{q,0}}} \right\|_0 \leqslant C    ,
\end{equation}
\begin{equation}\label{IJDI-FJIJD-A-1}
\left\| {{\mathfrak{p}_{q,0}}} \right\|_N \lesssim \delta _q^{\frac{1}{2}}\lambda _q^N , \ \ \forall \, N \in \left\{ {1, \cdots ,10} \right\}     ,
\end{equation}
\begin{equation}\label{IJDI-FJIJD-A-2}
{\left\| {{\mathring{R}_{q,0}}} \right\|_N} \lesssim {\delta _{q + 1,0}}\lambda _q^{N - \alpha }   , \ \ \forall \, N \in \left\{ {0,1, \cdots ,10} \right\}   ,
\end{equation}
and
\begin{equation}\label{IJDI-FJIJD-A-3}
{\left\| {{{\bar D}_{t,q}}{\mathring{R}_{q,0}}} \right\|_N} \lesssim {\delta _{q + 1,0}}\tau _q^{ - 1}\lambda _q^{N - \alpha } , \ \ \forall \, N \in \left\{ {0,1, \cdots ,10} \right\}   .
\end{equation}
\par
Without loss of generality, we assume that $\left( {{u_{q,n}},{v_{q,n}},{\mathfrak{p}_{q,n}},{\mathring{R}_{q,n}},{S_{q,n}},{\mathfrak{P}_{q + 1,n}}} \right)$ solves the $n$-step stochastic Newton--Euler--Reynolds system
\begin{equation}\label{Newtonsteps-A-3}
\left\{ {\begin{array}{*{20}{l}}
{\partial _t}{v_{q,n}} + \mathrm{div} \left( {{u_{q,n}} \otimes {u_{q,n}}} \right) + \nabla {\mathfrak{p}_{q,n}} = \mathrm{div} \left( {{\mathring{R}_{q,n}} + {S_{q,n}} + {\mathfrak{P}_{q + 1,n}}} \right),\\
{u_{q,n}} = {v_{q,n}} + \mathfrak{W}_q,\\
\mathrm{div}\,   {u_{q,n}} = 0,
\end{array}} \right.    
\end{equation}
and
\begin{equation}\label{SUPP-RSPP}
{\mathrm{supp}_\mathrm{t}} \, {\mathring{R}_{q,n}} \cup {\mathrm{supp}_\mathrm{t}} \, {S_{q,n}} \cup {\mathrm{supp}_\mathrm{t}} \, {\mathfrak{P}_{q + 1,n}} \subset \left( {\frac{1}{4}T + \delta _q^{ - \frac{1}{2}}\lambda _q^{ - 1} - 2n{\tau _q},\frac{3}{4}T - \delta _q^{ - \frac{1}{2}}\lambda _q^{ - 1} + 2n{\tau _q}} \right] \cap \mathop  \cap \limits_{j = 0}^q \left[ {0,{\mathfrak{t}_j}} \right] ,
\end{equation}
where $n$ is a finite positive integer. Drawing on \eqref{IJDI-FJIJD-A-1}-\eqref{IJDI-FJIJD-A-3}, assume inductively that the $n$-step Newtonian stress ${\mathring{R}_{q,n}}$ and pressure ${\mathfrak{p}_{q,n}}$ satisfy the inductive estimates
\begin{equation}\label{HIDJJF-3-A-1-A-1-KSIJDD-A}
\left\| {{\mathfrak{p}_{q,n}}} \right\|_0 \leqslant C ,
\end{equation}
\begin{equation}\label{HIDJJF-3-A-1}
\left\| {{\mathfrak{p}_{q,n}}} \right\|_N \lesssim \delta _q^{\frac{1}{2}}\lambda _q^N , \ \ \forall \, N \in \left\{ {1, \cdots ,10} \right\}   ,
\end{equation}
\begin{equation}\label{HIDJJF-3-A-2}
{\left\| {{\mathring{R}_{q,n}}} \right\|_N} \lesssim {\delta _{q + 1,n}}\lambda _q^{N - \alpha }  , \ \ \forall \, N \in \left\{ {0,1, \cdots ,10} \right\}   ,
\end{equation}
and
\begin{equation}\label{HIDJJF-3-A-3}
{\left\| {{{\bar D}_{t,q}}{\mathring{R}_{q,n}}} \right\|_N} \lesssim {\delta _{q + 1,n}}\tau _q^{ - 1}\lambda _q^{N - \alpha } , \ \ \forall \, N \in \left\{ {0,1, \cdots ,10} \right\}   .
\end{equation}

\subsection{Newtonian perturbation and iteration scheme}\label{NHHDKFI-DIJ}
Define the temporal scale ${t_k} \triangleq k{\tau _q}$ with $k \in \mathbb{Z} $. To construct a partition of unity in time, define the cut-off function ${\chi _k}\left( t \right)$ such that
\begin{equation}\label{HUDHUFK-AJSI-A-1}
\sum\limits_{k \in \mathbb{Z}^+  \cup \left\{ 0 \right\}} {\chi _k^2\left( t \right)}  = 1 , \ \ \forall \, t \in \mathrm{supp}_t \ {\chi _k} \subset \left( {{t_k} ,{t_k} + \frac{4}{3}{\tau _q}} \right) ,
\end{equation}
\begin{equation*}
\mathrm{supp}_t \ {\chi _{k - 1}} \cap \mathrm{supp}  \ {\chi _{k + 1}} = \emptyset ,
\end{equation*}
and
\begin{equation*}
\mathop {\sup }\limits_{t \in \mathrm{supp}_t \ {\chi _k}} \left| {\partial _t^N{\chi _k}} \right| \lesssim \tau _q^{ - N},\ \ \forall \, N \in \left\{ {0,1, \cdots ,10} \right\}   .
\end{equation*}
Moreover, define another cut-off function $ {{\tilde \chi }_k}\left( t \right) $ such that
\begin{equation*}
{\tilde \chi _k \left( t \right)}  = 1 , \ \ \forall \, t \in \left( {{t_k} ,{t_k} + \frac{4}{3}{\tau _q}} \right) ,
\end{equation*}
\begin{equation*}
\mathrm{supp}_t \ {\tilde \chi _k} \subset \left( {{t_k} ,{t_k} + 2{\tau _q}} \right),
\end{equation*}
and
\begin{equation*}
\mathop {\sup }\limits_{t \in \mathrm{supp}_t \ {\tilde \chi _k}} \left| {\partial _t^N{\tilde \chi _k}} \right| \lesssim \tau _q^{ - N},\ \ \forall \, N \in \left\{ {0,1, \cdots ,10} \right\}    .
\end{equation*}
\par
\begin{lemma}[\cite{Zbl1556.35231}]\label{GLTNNRS-A}
Let ${\mathbb{B}_{\frac{1}{2}}}\left( {\mathrm{Id}} \right)$ be the metric ball centered at $\mathrm{Id}$ in the space of symmetric $2 \times 2$ matrices, where $\mathrm{Id}$ is the identity matrix. Then, there exist finite set $\Lambda  \subset {\mathbb{Z}^2}$ and smooth function ${\gamma _\xi }:\ {\mathbb{B}_{\frac{1}{2}}}\left( {\mathrm{Id}} \right) \longrightarrow \mathbb{R}$ for each $\xi \in \Lambda$ such that
\begin{equation*}
R = \sum\limits_{\xi  \in \Lambda } {\gamma _\xi ^2\left( R \right)\xi  \otimes \xi },\ \ \forall \, R \in {\mathbb{B}_{\frac{1}{2}}}\left( {\mathrm{Id}} \right) .
\end{equation*}
Furthermore, there exist smooth $1$-periodic functions ${g_{\xi ,e,n}} {\left( t \right)}:\ \mathbb{R} \longrightarrow \mathbb{R}$ such that
\begin{equation*}
\int_0^1 {g_{\xi ,\iota ,n}^2\left( t \right)} \ \mathrm{d}t = 1,\ \ \forall \,  \left( {\xi ,\iota,n} \right) \in \Lambda  \times \left\{ {e,o} \right\} \times \left\{ {1,2, \cdots ,\Gamma } \right\},
\end{equation*}
and
\begin{equation*}
\mathrm{supp}_t \ {g_{{\xi _1},{\iota_1},{n_1}}} \cap \mathrm{supp}_t \ {g_{{\xi _2},{\iota_2},{n_2}}} = \emptyset , \ \ \mbox{as}\ \left( {{\xi _1},{\iota_1},{n_1}} \right) \ne \left( {{\xi _2},{\iota_2},{n_2}} \right) \in \Lambda  \times \left\{ {e,o} \right\} \times \left\{ {1,2, \cdots ,\Gamma } \right\} ,
\end{equation*}
where the step size $\Gamma$ is specified in Appendix C.
\end{lemma}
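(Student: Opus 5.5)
The plan is to prove the two assertions of Lemma \ref{GLTNNRS-A} separately: an explicit linear-algebra decomposition for the geometric part, and periodized bump functions for the temporal part.

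\textbf{Geometric decomposition.} I would take
\begin{equation*}
\Lambda=\left\{ (1,0),\ (0,1),\ (1,1),\ (1,-1) \right\}\subset\mathbb{Z}^2 .
\end{equation*}
For a symmetric matrix $R=\left[\begin{array}{cc} a & b\\ b & d\end{array}\right]$, look for coefficients with
\begin{equation*}
R=c_1\,(1,0)\otimes(1,0)+c_2\,(0,1)\otimes(0,1)+c_3\,(1,1)\otimes(1,1)+c_4\,(1,-1)\otimes(1,-1).
\end{equation*}
Comparing entries gives $a=c_1+c_3+c_4$, $d=c_2+c_3+c_4$ and $b=c_3-c_4$. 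This system is underdetermined, so I fix $c_3+c_4=\frac12$. This yields the affine, hence smooth, choice
\begin{equation*}
c_1=a-\tfrac12,\quad c_2=d-\tfrac12,\quad c_3=\tfrac14+\tfrac b2,\quad c_4=\tfrac14-\tfrac b2 .
\end{equation*}
At $R=\mathrm{Id}$ this gives $(\frac12,\frac12,\frac14,\frac14)$, all strictly positive.

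\textbf{Positivity on the ball.} For $R\in\mathbb{B}_{\frac12}(\mathrm{Id})$ (Frobenius or operator norm), every entry of $R-\mathrm{Id}$ has modulus less than $\frac12$. Hence $a>\frac12$, $d>\frac12$ and $|b|<\frac12$, so all four $c_\xi$ are strictly positive on the open ball. I then set $\gamma_\xi(R)=\sqrt{c_\xi(R)}$. This is smooth because $\sqrt{\cdot}$ is smooth on $(0,\infty)$, and it gives $R=\sum_{\xi}\gamma_\xi^2(R)\,\xi\otimes\xi$.

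This positivity check is the only delicate point. With the above choice the bounds are exactly borderline on the boundary of the ball, so the argument needs the ball to be open. If a closed ball were required, I would shrink the radius slightly, or enlarge $\Lambda$ by further rational directions, and repeat the same affine construction using a right inverse of the linear surjection $\mathbb{R}^{|\Lambda|}\to\mathrm{Sym}_2$.

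\textbf{Temporal functions.} The index set $\Lambda\times\{e,o\}\times\{1,\dots,\Gamma\}$ is finite, with $K=2|\Lambda|\Gamma$ elements. I enumerate it and split $(0,1)$ into $K$ pairwise disjoint open subintervals $I_j$. On each $I_j$ I choose a nonzero smooth bump $\phi_j\in C_c^\infty(I_j)$ and normalize it so that $\int_0^1\phi_j^2\,\mathrm{d}t=1$. The function $g_{\xi,\iota,n}$ is the $1$-periodic extension of the $\phi_j$ attached to $(\xi,\iota,n)$. It is smooth because $\phi_j$ vanishes near $0$ and $1$. Its supports are the disjoint sets $I_j+\mathbb{Z}$, so distinct indices have disjoint supports, as required.
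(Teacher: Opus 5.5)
Your proof is correct. The paper itself gives no argument for this lemma: it imports it verbatim from Giri--Radu. There, the geometric half is the standard geometric lemma of convex integration:
\begin{itemize}
\item choose directions so that $\mathrm{Id}$ lies in the interior of the convex cone generated by the $\xi\otimes\xi$;
\item write $R$ through a linear right inverse of $c\mapsto\sum_\xi c_\xi\,\xi\otimes\xi$;
\item note that the coefficients stay positive near $\mathrm{Id}$, and take square roots.
\end{itemize}
The temporal half is exactly your periodized-bump construction.

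Your argument is an explicit instance of that scheme. With $\Lambda=\{(1,0),(0,1),(1,1),(1,-1)\}$ and the affine right inverse fixed by $c_3+c_4=\frac12$, positivity on the whole radius-$\frac12$ ball is a one-line check. The abstract argument, by contrast, only yields positivity in some neighbourhood of $\mathrm{Id}$, so a quantitative check of your kind is needed anyway to reach radius $\frac12$. What the abstract version buys is generality: any dimension, the $\mathrm{Id}-\xi\otimes\xi$ variants, and splitting $\Lambda$ into disjoint families. Yours buys explicit formulas and constants.

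Your closing caveat about closed balls is unnecessary. ``Metric ball'' is naturally open, and the paper only evaluates $\gamma_\xi$ at matrices within $O(\lambda_q^{-\alpha})$ of $\mathrm{Id}$. Moreover, shrinking the radius would not prove the statement as written. A cleaner remedy is available in the Frobenius norm: replace $\frac12$ by a constant $s\in\bigl(\frac{1}{2\sqrt{2}},\frac12\bigr)$. On the closed Frobenius ball one has $a,d\ge\frac12$ and $|b|\le\frac{1}{2\sqrt{2}}$, so the four coefficients $a-s$, $d-s$, $\frac{s\pm b}{2}$ are bounded below by a positive constant. Then $\gamma_\xi$ is smooth up to the boundary with uniform $C^N$ bounds, which is what the chain-rule estimates for $\eta_{\xi,k,n}$ implicitly use.
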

\par
Let ${\Phi _k}\left( {t,x} \right)$ be the backward flow of ${{{\bar v}_q}} \left( {t,x} \right)$, that is,
\begin{equation}\label{Newtonsteps-A-B-5}
\left\{ {\begin{array}{*{20}{l}}
{\partial _t}{\Phi _k} + \left( {{{\bar v}_q} \cdot \nabla } \right){\Phi _k} = 0,\\
{\Phi _k}\left( {{t_k}} \right) = x .
\end{array}} \right. 
\end{equation}
Define the amplitude function
\begin{equation}\label{Newtonsteps-A-B-9}
{\eta _{\xi ,k,{n+1}}} \triangleq \delta _{q + 1,n}^{\frac{1}{2}}{\chi _k}{\gamma _\xi }\left( {\nabla {\Phi _k}\nabla \Phi _k^{\mathrm{T}} - \delta _{q + 1,n}^{ - 1}\nabla {\Phi _k}{\mathring{R}_{q,n}}\nabla \Phi _k^{\mathrm{T}}} \right)  .
\end{equation}
In view of Lemma \ref{GLTNNRS-A}, choose the oscillation
\begin{equation}\label{Newtonsteps-A-B-12}
{A_{\xi ,k,{n+1}}} = \eta _{\xi ,k,{n+1}}^2{\left( {\nabla {\Phi _k}} \right)^{ - 1}}\xi  \otimes \xi {\left( {\nabla {\Phi _k}} \right)^{ - \mathrm{T}}} ,
\end{equation}
and the temporal oscillatory function
\begin{equation}\label{HUDHUFK-AJSI-A-2}
{f_{\xi ,k,n + 1}}\left( t \right) = 1 - g_{\xi ,k,n + 1}^2\left( t \right),\ \ f_{\xi ,k,n + 1}^{\left[ 1 \right]}\left( t \right) = \int_0^t {{f_{\xi ,k,n + 1}}\left( s \right)} \ \mathrm{d}s ,
\end{equation}
where
\begin{equation*}
{g_{\xi ,k,n + 1}}\left( t \right) = \left\{ {\begin{array}{*{20}{l}}
{g_{\xi ,e,n + 1}}\left( t \right),\hspace*{1em} & k = 2m - 1 , \\
{g_{\xi ,o,n + 1}}\left( t \right), & k = 2m ,
\end{array}} \right.  \ m \in \mathbb{Z} , \ n \in \left\{ {1,2, \cdots ,\Gamma -1 } \right\} .
\end{equation*}
\par
Denote the Leray projector operator by $\Pi \triangleq \mathrm{Id} - \nabla {\Delta ^{ - 1}}\mathrm{div}$. Suppose that ${w_{k,n + 1}^{\left( l \right)}}\left( {t,x} \right)$ is the unique mean-zero, divergence-free solution to the equation as follows
\begin{equation}\label{Newtonsteps-A-B-22}
\left\{ {\begin{array}{*{20}{l}}
\begin{array}{l}
{\partial _t}{w_{k,n + 1}^{\left( l \right)}}  + \left( {{{\bar v}_q} \cdot \nabla } \right){w_{k,n + 1}^{\left( l \right)}} + \left( {{w_{k,n + 1}^{\left( l \right)}} \cdot \nabla } \right){{\bar v}_q} + \nabla {\mathfrak{p}_{k,n + 1}} = \sum\limits_{\xi  \in \Lambda } {{f_{\xi ,k,n + 1}}\left( {{\mu _{q + 1}}t} \right)\Pi \mathrm{div}\,   {A_{\xi ,k,{n+1}}}} ,
\end{array}\\
\mathrm{div}\,   {w_{k,n + 1}^{\left( l \right)}} = 0,\\
{w_{k,n + 1}^{\left( l \right)}}\left( {{t_k}} \right) = \mu _{q + 1}^{ - 1}\sum\limits_{\xi  \in \Lambda } {f_{\xi ,k,n + 1}^{\left[ 1 \right]}\left( {{\mu _{q + 1}}{t_k}} \right)\Pi \mathrm{div}\,   {A_{\xi ,k,{n+1}}}} ,
\end{array}} \right.
\end{equation}
where the Newtonian pressure reads
\begin{equation*}
{\mathfrak{p}_{k,n + 1}} =  - 2{\Delta ^{ - 1}}\mathrm{div} \left( {\left( {w_{k,n + 1}^{\left( l \right)} \cdot \nabla } \right){{\bar v}_q}} \right) ,
\end{equation*}
the oscillation parameter ${\mu _{q + 1}}$ is given by \eqref{PASJD-MU-1}, and the time $t$ is restricted to the interval $\mathop  \cap \limits_{j = 0}^q \left[ {0,{\mathfrak{t}_j}} \right] \cap \left[ {{t_k},{t_{k + 1}}} \right)$. Similar to the argument in Giri and Radu \cite[Appendix E]{Zbl1556.35231}, the existence of solutions for Eq.\eqref{Newtonsteps-A-B-22} is established by the Cauchy--Lipschitz theory (also see Cheskidov and Luo \cite{Zbl1504.35221,Zbl1531.35213}). Since all associated random quantities in Eq.\eqref{Newtonsteps-A-B-22} are progressively measurable with respect to the filtration generated by $\mathfrak{W}_q \left( {t} \right)$, the solution ${w_{k,n + 1}^{\left( l \right)}} \left( {t } \right)$ remains adapted to the filtration generated by ${\mathfrak{W}_q}\left( {t} \right)$. We define the Newtonian perturbation by the temporally localized Newtonian perturbations
\begin{equation}\label{Newtonsteps-A-4}
w_{q + 1,n + 1}^{(n)}\left( {t,x} \right) \triangleq \sum\limits_{k \in {\mathbb{Z}_{q,n}}} {{{\tilde \chi }_k}\left( t \right){w_{k,n + 1}^{\left( l \right)}}\left( {t,x} \right)} ,
\end{equation}
where ${\mathbb{Z}_{q,n}} \triangleq \left\{ {k \in \mathbb{Z}  \ \mid\ k{\tau _q} \in {\mathcal{N}_{{\tau _q}}}\left( {\mathrm{supp}_{t}\ {\mathring{R}_{q,n}}} \right)} \right\}$, and ${\mathcal{N}_{{\tau _q}}}\left( {\cdot} \right)$ is the neighbourhood with size $\tau _q$. 
\par
Define the total Newtonian perturbation
\begin{equation}\label{OIUYYDGH-DJFU-1}
w_{q + 1}^{\left( n \right)} \triangleq \sum\limits_{j = 1}^{n + 1} {w_{q + 1,j}^{\left( n \right)}} .
\end{equation}
It is straightforward to observe that
\begin{equation}\label{Nashsteps-A-P-7-sjdisdaf}
\mathrm{div}\,   w_{q + 1}^{\left( n \right)} = 0 .
\end{equation}
Take the $\left( {n + 1} \right)$-step Newtonian elements
\begin{equation}\label{Newtonsteps-A-5}
{u_{q,n + 1}} = {u_{q,n}} + w_{q + 1,n + 1}^{(n)} = {u_q} + w_{q + 1}^{\left( n \right)} ,
\end{equation}
\begin{equation}\label{Newtonsteps-A-5v}
{v_{q,n + 1}} = {v_{q,n}} + w_{q + 1,n + 1}^{(n)} = {v_q} + w_{q + 1}^{\left( n \right)} ,
\end{equation}
\begin{equation}\label{Newtonsteps-A-7}
{\mathring{R}_{q,n + 1}} = \mathcal{R}\sum\limits_{k \in {\mathbb{Z}_{q,n}}} {\left( {{\partial _t}{{\tilde \chi }_k}} \right){w_{k,n + 1}^{\left( l \right)}}} ,
\end{equation}
\begin{equation}\label{Newtonsteps-A-8}
{S_{q,n + 1}} = {S_{q,n}} - \sum\limits_{k \in {\mathbb{Z}_{q,n}}} {\sum\limits_{\xi  \in \Lambda } {g_{\xi ,k,n + 1}^2 {\left( {{\mu _{q + 1}}t} \right)}{A_{\xi ,k,{n+1}}}} }  ,
\end{equation}
and
\begin{equation}\label{Newtonsteps-A-9}
{\mathfrak{P}_{q + 1,n + 1}} = {\mathfrak{P}_{q + 1,n}} - \sum\limits_{k \in {\mathbb{Z}_{q,n}}} {\sum\limits_{\xi  \in \Lambda } {\left( {1 - {f_{\xi ,k,n + 1}}{\left( {{\mu _{q + 1}}t} \right)}} \right)\Pi {A_{\xi ,k,{n+1}}}} }  .
\end{equation}
Further define the $\left( {n + 1} \right)$-step Newtonian pressure
\begin{align}\label{Newtonsteps-A-10}
{\mathfrak{p}_{q,n + 1}} \triangleq & {\mathfrak{p}_{q,n}} + \mathfrak{p}_{q + 1,n + 1}^{(n)} - \left\langle {w_{q + 1,n + 1}^{(n)},{u_q} - {{\bar v}_q}} \right\rangle  - \frac{1}{2}{\left| {w_{q + 1,n + 1}^{(n)}} \right|^2} - \left\langle {w_{q + 1,n + 1}^{(n)}, w_{q + 1}^{\left( n \right)} } \right\rangle \nonumber \\
& - {\Delta ^{ - 1}}\mathrm{div}  \left( { \left( {\mathrm{Id} - \Pi } \right) \mathrm{div}\,   {\mathring{R}_{q,n}} + \sum\limits_{k \in {\mathbb{Z}_{q,n}}} {\sum\limits_{\xi  \in \Lambda } {g_{\xi ,k,n + 1}^2 {\left( {{\mu _{q + 1}}t} \right)}\mathrm{div} \, {A_{\xi ,k,{n+1}}}} } } \right)  ,
\end{align}
and
\begin{equation}\label{Newtonsteps-A-6}
\mathfrak{p}_{q + 1,n + 1}^{(n)} \triangleq \sum\limits_{k \in {\mathbb{Z}_{q,n}}} {{{\tilde \chi }_k}{\mathfrak{p}_{k,n + 1}}}  ,
\end{equation}
where $\left\langle { \cdot , \cdot } \right\rangle $ means the inner product on $\mathbb{R}^2$. Using Eq.\eqref{Newtonsteps-A-B-22}, \eqref{Newtonsteps-A-4} and \eqref{Newtonsteps-A-6}, we obtain
\begin{align}\label{N+1s-NE-PIT-P-1}
& {\partial _t}w_{q + 1,n + 1}^{\left( n \right)} + \left( {{{\bar v}_q} \cdot \nabla } \right)w_{q + 1,n + 1}^{\left( n \right)} + \left( {w_{q + 1,n + 1}^{\left( n \right)} \cdot \nabla } \right){{\bar v}_q} + \nabla \mathfrak{p}_{q + 1,n + 1}^{(n)} \nonumber \\
= & \sum\limits_{k \in {\mathbb{Z}_{q,n}}} {\sum\limits_{\xi  \in \Lambda } {{{\tilde \chi }_k}{f_{\xi ,k,n + 1}}\left( {{\mu _{q + 1}}t} \right)\Pi {\mathrm{div}}\, {A_{\xi ,k,{n+1}}}} }  + \sum\limits_{k \in {\mathbb{Z}_{q,n}}} {w_{k,n + 1}^{\left( l \right)}{\partial _t}{{\tilde \chi }_k}} . 
\end{align}
\par
\begin{lemma}\label{N+1s-NE-PIT}
For any $t \in \mathop  \cap \limits_{j = 0}^q \left[ {0,{\mathfrak{t}_j}} \right] \cap \left[ {{t_k},{t_{k + 1}}} \right)$, the combination
\begin{equation*}
\left( {{u_{q,n + 1}},{v_{q,n + 1}},{\mathfrak{p}_{q,n + 1}},{\mathring{R}_{q,n + 1}},{S_{q,n + 1}},{\mathfrak{P}_{q + 1,n + 1}}} \right)
\end{equation*}
fulfills the $\left( {n + 1} \right)$-step stochastic Newton--Euler--Reynolds system
\begin{equation*}
\left\{ {\begin{array}{*{20}{l}}
{\partial _t}{v_{q,n + 1}} + \mathrm{div} \left( {{u_{q,n + 1}} \otimes {u_{q,n + 1}}} \right) + \nabla {\mathfrak{p}_{q,n + 1}} = \mathrm{div} \left( {{\mathring{R}_{q,n + 1}} + {S_{q,n + 1}} + {\mathfrak{P}_{q + 1,n + 1}}} \right),\\
{u_{q,n + 1}} = {v_{q,n + 1}} + \mathfrak{W}_q,\\
\mathrm{div}\,   {u_{q,n + 1}} = 0 .
\end{array}} \right.  
\end{equation*}
\end{lemma}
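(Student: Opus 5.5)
We verify the identity by direct substitution: subtract the $n$-step system \eqref{Newtonsteps-A-3} from the claimed $(n+1)$-step system, and check that the difference reduces to the linearized equation \eqref{N+1s-NE-PIT-P-1}. Write $w \triangleq w_{q+1,n+1}^{(n)}$ and $W \triangleq w_{q+1}^{(n)}$. By \eqref{Newtonsteps-A-5}--\eqref{Newtonsteps-A-5v} we have $v_{q,n+1}=v_{q,n}+w$ and $u_{q,n+1}=u_{q,n}+w$, with $u_{q,n}=u_q+W-w$. Divergence-freeness follows at once from \eqref{Nashsteps-A-P-7-sjdisdaf} and $\mathrm{div}\,u_q=0$, and the relation $u_{q,n+1}=v_{q,n+1}+\mathfrak{W}_q$ is inherited from step $n$. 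It therefore remains to check the momentum equation. After subtraction, this amounts to the identity
\begin{equation*}
\partial_t w+\mathrm{div}\left(u_{q,n}\otimes w+w\otimes u_{q,n}+w\otimes w\right)+\nabla\left(\mathfrak{p}_{q,n+1}-\mathfrak{p}_{q,n}\right)=\mathrm{div}\left(\mathring{R}_{q,n+1}-\mathring{R}_{q,n}+S_{q,n+1}-S_{q,n}+\mathfrak{P}_{q+1,n+1}-\mathfrak{P}_{q+1,n}\right).
\end{equation*}

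Next we split the quadratic terms. Write $u_{q,n}=\bar v_q+(u_q-\bar v_q)+(W-w)$. The $\bar v_q$ part gives $(\bar v_q\cdot\nabla)w+(w\cdot\nabla)\bar v_q$, which is exactly the linear transport--stretching operator in \eqref{N+1s-NE-PIT-P-1}. The remaining bilinear pieces are rewritten with the two-dimensional identity
\begin{equation*}
\mathrm{div}(a\otimes b+b\otimes a)=(a\cdot\nabla)b+(b\cdot\nabla)a=\nabla\langle a,b\rangle-a^{\perp}\,\mathrm{curl}\,b-b^{\perp}\,\mathrm{curl}\,a,
\end{equation*}
valid for divergence-free $a,b$. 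The gradient parts $\nabla\langle w,u_q-\bar v_q\rangle$, $\nabla\tfrac12|w|^2$ and $\nabla\langle w,W\rangle$ are then absorbed by the corresponding terms in \eqref{Newtonsteps-A-10}. I expect this to be the main obstacle: the curl remainders do not obviously vanish. For $w\otimes w$ one needs $\mathrm{div}(w\otimes w)=\nabla\tfrac12|w|^2+w^{\perp}\mathrm{curl}\,w$, and the paper's pressure absorbs only the gradient part. I would try first to exploit that the temporal supports of the $g$'s are disjoint and that the cutoffs $\tilde\chi_k$ localize in time. If that fails, I would interpret these residuals as being placed in the stresses, or note that the lemma is stated only modulo such terms as in Giri--Radu \cite{Zbl1556.35231}, whose computation is followed here.

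For the linear part, substitute \eqref{N+1s-NE-PIT-P-1}. The term $\sum_k w_{k,n+1}^{(l)}\partial_t\tilde\chi_k$ equals $\mathrm{div}\,\mathring{R}_{q,n+1}$ by Lemma \ref{PO-IDO-A} and \eqref{Newtonsteps-A-7}; this uses that the sum is mean-zero, since each $w^{(l)}$ is. The forcing term is handled using $\tilde\chi_k\chi_k=\chi_k$ and $f=1-g^2$. We have $\sum f\,\Pi\,\mathrm{div}\,A=\mathrm{div}\sum A-\sum(1-f)\,\mathrm{div}\,\Pi A$ up to the gradient $(\mathrm{Id}-\Pi)\mathrm{div}\sum A$. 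The piece $-\sum(1-f)\,\mathrm{div}\,\Pi A$ matches $\mathfrak{P}_{q+1,n+1}-\mathfrak{P}_{q+1,n}$ in \eqref{Newtonsteps-A-9}, and $-\sum g^2\,\mathrm{div}\,A$ matches $S_{q,n+1}-S_{q,n}$ in \eqref{Newtonsteps-A-8}, with the associated gradient absorbed by the last term of \eqref{Newtonsteps-A-10}. Finally, by Lemma \ref{GLTNNRS-A} and \eqref{Newtonsteps-A-B-9}--\eqref{Newtonsteps-A-B-12}, on $\mathrm{supp}_t\,\mathring{R}_{q,n}$ we have
\begin{equation*}
\sum_{k,\xi}A_{\xi,k,n+1}=\sum_k\chi_k^2\left(\delta_{q+1,n}\,\mathrm{Id}-\mathring{R}_{q,n}\right)=\delta_{q+1,n}\,\mathrm{Id}-\mathring{R}_{q,n},
\end{equation*}
using \eqref{HUDHUFK-AJSI-A-1}. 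Hence $\mathrm{div}\sum A=-\mathrm{div}\,\mathring{R}_{q,n}$, which cancels the old stress, and the leftover gradient $(\mathrm{Id}-\Pi)\,\mathrm{div}\,\mathring{R}_{q,n}$ is removed by the $\Delta^{-1}\mathrm{div}$ term of \eqref{Newtonsteps-A-10}. This step also requires checking that $\mathbb{Z}_{q,n}$ covers $\mathrm{supp}_t\,\mathring{R}_{q,n}$, so that the partition of unity sums to $1$ where it is needed.
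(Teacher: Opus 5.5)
Your route is the paper's own: subtract the $n$-step system, match the $\bar v_q$-linear part with \eqref{N+1s-NE-PIT-P-1}, use $\sum_{k,\xi}A_{\xi,k,n+1}=\sum_k\chi_k^2(\delta_{q+1,n}\mathrm{Id}-\mathring R_{q,n})$, and push gradients into \eqref{Newtonsteps-A-10}. It does not close, and the step you flag is a genuine gap, not a technicality.

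After the $\bar v_q$-part is removed, the increment of the nonlinearity still contains $\mathrm{div}\big((u_q-\bar v_q+W-w)\otimes w+w\otimes(u_q-\bar v_q+W-w)+w\otimes w\big)$. Only its gradient part appears in \eqref{Newtonsteps-A-10}. The non-gradient parts appear in none of $\mathring R_{q,n+1}$, $S_{q,n+1}$, $\mathfrak P_{q+1,n+1}$. Examples are $(\mathrm{curl}\,w)\,w^{\perp}$ from $w\otimes w$, and the non-gradient part of $(w\cdot\nabla)\mathfrak W_q+(\mathfrak W_q\cdot\nabla)w$, coming from the noise inside $u_q-\bar v_q$.

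None of your remedies works:
\begin{itemize}
\item The disjoint time supports of the $g_{\xi,\iota,n}$ only concern $S$. The perturbation $w$ is driven by $f=1-g^2$, which is not localized in time, and $w\otimes w$ is a diagonal product that no support separation can remove. The cutoffs $\tilde\chi_k$ also overlap for consecutive $k$.
\item Moving the residuals into a stress changes \eqref{Newtonsteps-A-9}, and hence the statement. It would also require new estimates, and the $\mathfrak W_q$ cross term carries no small factor.
\item ``Modulo such terms'' is not what the lemma asserts.
\end{itemize}
You cannot fall back on the paper either. Its proof handles this step with the displayed identity preceding \eqref{N+1s-NE-PIT-P-2}, which amounts to claiming that $\mathrm{div}(u_{q,n+1}\otimes u_{q,n+1})-\mathrm{div}(u_q\otimes u_q)$ is a gradient. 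That is false in general.

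Second, your linear bookkeeping double counts. By \eqref{HUDHUFK-AJSI-A-2}, $1-f_{\xi,k,n+1}=g_{\xi,k,n+1}^2$. You have already spent $\mathrm{div}\sum A$ on cancelling $\mathrm{div}\,\mathring R_{q,n}$, so modulo gradients the forcing supplies exactly one copy of $-\sum g^2\,\Pi\,\mathrm{div}A$. You assign this single term to two places:
\begin{itemize}
\item $\mathrm{div}(\mathfrak P_{q+1,n+1}-\mathfrak P_{q+1,n})=-\sum g^2\,\mathrm{div}\,\Pi A$;
\item $\mathrm{div}(S_{q,n+1}-S_{q,n})=-\sum g^2\,\mathrm{div}A$.
\end{itemize}
Your decomposition uses the convention $\mathrm{div}\,\Pi A=\Pi\,\mathrm{div}A$ for symmetric $A$. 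Under it, the two increments together have divergence $-2\sum g^2\,\Pi\,\mathrm{div}A$ modulo gradients. That leaves a nonzero divergence-free term, as large as the forcing itself, which the pressure cannot absorb. If instead $\Pi$ acts row-wise, then $\mathrm{div}\,\Pi A=0$ and the $\mathfrak P$-increment is inert. In that case your formula for the forcing is wrong, and the $g^2$-term must be matched with $S$ alone. (The paper's passage from \eqref{N+1s-NE-PIT-P-3} to \eqref{N+1s-NE-PIT-P-2} makes the same double assignment: it cancels $\sum g^2\,\mathrm{div}A$ against a pressure term that contains only its gradient part.)

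So the proposal establishes neither the handling of the bilinear terms nor the stress bookkeeping. Both require changing the definitions \eqref{Newtonsteps-A-8}--\eqref{Newtonsteps-A-10}, not just a more careful computation.
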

\par
\noindent{\textbf{Proof}}. 
Substituting \eqref{Newtonsteps-A-5}-\eqref{Newtonsteps-A-6} into the $n$-step stochastic Newton--Euler--Reynolds system \eqref{Newtonsteps-A-3}, we deduce that
\begin{align}\label{N+1s-NE-PIT-P-3}
&{\partial _t}\left( {{v_{q,n + 1}} - w_{q + 1}^{\left( n \right)}} \right) + \mathrm{div}\left( {\left( {{u_{q,n + 1}} - w_{q + 1}^{\left( n \right)}} \right) \otimes \left( {{u_{q,n + 1}} - w_{q + 1}^{\left( n \right)}} \right)} \right)  \nonumber \\
& + \nabla {\mathfrak{p}_{q,n + 1}} - \nabla \left( {\mathfrak{p}_{q + 1,n + 1}^{(n)} - \left\langle {w_{q + 1,n + 1}^{(n)},{u_q} - {{\bar u}_q}} \right\rangle  - \frac{1}{2}{{\left| {w_{q + 1,n + 1}^{(n)}} \right|}^2}} \right)  \nonumber \\
& + \nabla \left\langle {w_{q + 1,n + 1}^{(n)},\sum\limits_{m = 1}^n {w_{q + 1,m}^{(n)}} } \right\rangle  + \nabla {\Delta ^{ - 1}}\mathrm{div}  \left( {\mathrm{div}\,   {\mathring{R}_{q,n}} + \sum\limits_{k \in {\mathbb{Z}_{q,n}}} {\sum\limits_{\xi  \in \Lambda } {g_{\xi ,k,n + 1}^2 {\left( {{\mu _{q + 1}}t} \right)}\mathrm{div} \, {A_{\xi ,k,{n+1}}}} } } \right)  \nonumber \\
= & \mathrm{div}\left( {{\mathring{R}_{q,n}} + {S_{q,n + 1}} + {\mathfrak{P}_{q + 1,n + 1}}} \right) + \sum\limits_{k \in {\mathbb{Z}_{q,n}}} {\sum\limits_{\xi  \in \Lambda } {g_{\xi ,k,n + 1}^2 {\left( {{\mu _{q + 1}}t} \right)}\mathrm{div} \, {A_{\xi ,k,{n+1}}}} }   \nonumber \\
& + \Pi  \mathrm{div}\,   {\mathring{R}_{q,n}} + \sum\limits_{k \in {\mathbb{Z}_{q,n}}} {\sum\limits_{\xi  \in \Lambda } {\left( {1 - {f_{\xi ,k,n + 1}}{\left( {{\mu _{q + 1}}t} \right)}} \right)\Pi \mathrm{div}\,   {A_{\xi ,k,{n+1}}}} }   .
\end{align}
Owing to \eqref{Newtonsteps-A-7}, \eqref{N+1s-NE-PIT-P-3} and
\begin{align*}
& \mathrm{div}\left( {\left( {{u_{q,n + 1}} - w_{q + 1}^{\left( n \right)}} \right) \otimes \left( {{u_{q,n + 1}} - w_{q + 1}^{\left( n \right)}} \right)} \right) + \frac{1}{2}{\left| {w_{q + 1,n + 1}^{(n)}} \right|^2} \\
= & \mathrm{div}\left( {{u_{q,n + 1}} \otimes {u_{q,n + 1}}} \right) - \nabla \left( {\left\langle {w_{q + 1,n + 1}^{(n)},{u_q}} \right\rangle  - \left\langle {w_{q + 1,n + 1}^{(n)},\sum\limits_{m = 1}^n {w_{q + 1,m}^{(n)}} } \right\rangle } \right) ,
\end{align*}
we obtain
\begin{align}\label{N+1s-NE-PIT-P-2}
& {\partial _t}{v_{q,n + 1}} + \mathrm{div}\left( {{u_{q,n + 1}} \otimes {u_{q,n + 1}}} \right) + \nabla {\mathfrak{p}_{q,n + 1}} - \mathrm{div}\left( {{\mathring{R}_{q,n + 1}} + {S_{q,n + 1}} + {\mathfrak{P}_{q + 1,n + 1}}} \right) \nonumber \\
= & {\partial _t}w_{q + 1}^{\left( n \right)} + \left( {{{\bar u}_q} \cdot \nabla } \right)w_{q + 1,n + 1}^{\left( n \right)} + \left( {w_{q + 1,n + 1}^{\left( n \right)} \cdot \nabla } \right){\bar u _q} + \nabla \mathfrak{p}_{q + 1,n + 1}^{(n)}  \nonumber \\
& - \sum\limits_{k \in {\mathbb{Z}_{q,n}}} {w_{k,n + 1}^{\left( l \right)}{\partial _t}{{\tilde \chi }_k}}  + \Pi \mathrm{div}\,   {\mathring{R}_{q,n}} + \sum\limits_{k \in {\mathbb{Z}_{q,n}}} {\sum\limits_{\xi  \in \Lambda } {\left( {1 - {f_{\xi ,k,n + 1}}{\left( {{\mu _{q + 1}}t} \right)}} \right)\Pi \mathrm{div}\,   {A_{\xi ,k,{n+1}}}} } .
\end{align}
\par
Applying Lemma \ref{TSP-E-L-A-1} to Eq.\eqref{Newtonsteps-A-B-5}, it follows from \eqref{lemma-GLTNNRS-B-M-1} and \eqref{parameters-S-6} that
\begin{equation}\label{Newtonsteps-A-B-11}
{\left\| {D{\Phi _k}} \right\|_N} \lesssim {\tau _q} {\left\| {{{\bar v}_q}} \right\|_{N +1 }} \lesssim {\tau _q} \delta _q^{\frac{1}{2}}\lambda _q^{N + 1} \lesssim \lambda _q^N \lambda _{q + 1}^{ - \alpha } .
\end{equation}
Employing \eqref{HIDJJF-3-A-2} and \eqref{Newtonsteps-A-B-11}, we find
\begin{equation*}
{\left\| {\mathrm{Id} - \left( {\nabla {\Phi _k}\nabla \Phi _k^T - \delta _{q + 1,n}^{ - 1}\nabla {\Phi _k}{\mathring{R}_{q,n}}\nabla \Phi _k^T} \right)} \right\|_0} \lesssim \lambda _q^{ - \alpha } .
\end{equation*}
Then, there exists a parameter $\alpha > 0$ such that
\begin{equation*}
{\left\| {\mathrm{Id} - \left( {\nabla {\Phi _k}\nabla \Phi _k^T - \delta _{q + 1,n}^{ - 1}\nabla {\Phi _k}{\mathring{R}_{q,n}}\nabla \Phi _k^T} \right)} \right\|_0} \lesssim \frac{1}{2}  ,
\end{equation*}
which implies that $\nabla {\Phi _k}\nabla \Phi _k^{\mathrm{T}} - \delta _{q + 1,n}^{ - 1}\nabla {\Phi _k}{\mathring{R}_{q,n}}\nabla \Phi _k^{\mathrm{T}} \in {\mathbb{B}_{\frac{1}{2}}}\left( {\mathrm{Id}} \right)$. Invoking Lemma \ref{GLTNNRS-A}, we have
\begin{equation*}
\nabla {\Phi _k}\nabla \Phi _k^T - \delta _{q + 1,n}^{ - 1}\nabla {\Phi _k}{\mathring{R}_{q,n}}\nabla \Phi _k^T =  \sum\limits_{\xi  \in \Lambda } {\gamma _\xi ^2\left( {\nabla {\Phi _k}\nabla \Phi _k^T - \delta _{q + 1,n}^{ - 1}\nabla {\Phi _k}{\mathring{R}_{q,n}}\nabla \Phi _k^T} \right)\xi  \otimes \xi }  .
\end{equation*}
Therefore, it follows from \eqref{HUDHUFK-AJSI-A-1}, \eqref{Newtonsteps-A-B-9} and \eqref{Newtonsteps-A-B-12} that
\begin{align}\label{N+1s-NE-PIT-P-7}
& \sum\limits_{k \in {\mathbb{Z}_{q,n}}} {\sum\limits_{\xi  \in \Lambda } {\mathrm{div}\,   {A_{\xi ,k,{n+1}}}} } \nonumber \\
= & {\delta _{q + 1,n}}\mathrm{div} \sum\limits_{k \in {\mathbb{Z}_{q,n}}} {\chi _k^2{{\left( {\nabla {\Phi _k}} \right)}^{ - 1}}\left( {\nabla {\Phi _k}\nabla \Phi _k^T - \delta _{q + 1,n}^{ - 1}\nabla {\Phi _k}{\mathring{R}_{q,n}}\nabla \Phi _k^T} \right){{\left( {\nabla {\Phi _k}} \right)}^{ - {\rm{T}}}}}  \nonumber  \\
= & \mathrm{div}\sum\limits_{k \in {\mathbb{Z}_{q,n}}} {\chi _k^2\left( {{\delta _{q + 1,n}}\mathrm{Id} - {\mathring{R}_{q,n}}} \right)}   \nonumber  \\
= & - \mathrm{div}\,   {\mathring{R}_{q,n}} .
\end{align}
Recall that ${{\tilde \chi }_k}{A_{\xi ,k,{n+1}}} = {A_{\xi ,k,{n+1}}}$ for any ${k \in {\mathbb{Z}_{q,n}}}$. Using \eqref{HUDHUFK-AJSI-A-2} and \eqref{N+1s-NE-PIT-P-7}, we infer that
\begin{align}\label{N+1s-NE-PIT-P-8}
& \sum\limits_{k \in {\mathbb{Z}_{q,n}}} {\sum\limits_{\xi  \in \Lambda } {{{\tilde \chi }_k}{f_{\xi ,k,n + 1}}\left( {{\mu _{q + 1}}t} \right)\Pi \mathrm{div}\,   {A_{\xi ,k,{n+1}}}} }   \nonumber \\
= & \sum\limits_{k \in {\mathbb{Z}_{q,n}}} {\sum\limits_{\xi  \in \Lambda } {{{\tilde \chi }_k}\left( {1 - g_{\xi ,k,n + 1}^2\left( {{\mu _{q + 1}}t} \right)} \right)\Pi \mathrm{div}\,   {A_{\xi ,k,{n+1}}}} }  \nonumber \\
= & \Pi \sum\limits_{k \in {\mathbb{Z}_{q,n}}} {\sum\limits_{\xi  \in \Lambda } {\mathrm{div}\,   {A_{\xi ,k,{n+1}}}} }  - \sum\limits_{k \in {\mathbb{Z}_{q,n}}} {\sum\limits_{\xi  \in \Lambda } {{{\tilde \chi }_k}\left( {1 - {f_{\xi ,k,n + 1}}\left( {{\mu _{q + 1}}t} \right)} \right)\Pi \mathrm{div}\,   {A_{\xi ,k,{n+1}}}} }  \nonumber \\
= & - \Pi \mathrm{div}\,   {\mathring{R}_{q,n}} - \sum\limits_{k \in {\mathbb{Z}_{q,n}}} {\sum\limits_{\xi  \in \Lambda } {{{\tilde \chi }_k}\left( {1 - {f_{\xi ,k,n + 1}}\left( {{\mu _{q + 1}}t} \right)} \right)\Pi \mathrm{div}\,   {A_{\xi ,k,{n+1}}}} }  .
\end{align}
\par
Combining \eqref{N+1s-NE-PIT-P-1}, \eqref{N+1s-NE-PIT-P-2} and \eqref{N+1s-NE-PIT-P-8}, we obtain
\begin{equation*}
{\partial _t}{v_{q,n + 1}} + \mathrm{div}\left( {{u_{q,n + 1}} \otimes {u_{q,n + 1}}} \right) + \nabla {\mathfrak{p}_{q,n + 1}} - \mathrm{div}\left( {{\mathring{R}_{q,n + 1}} + {S_{q,n + 1}} + {\mathfrak{P}_{q + 1,n + 1}}} \right) = 0 .
\end{equation*}
In addition, it follows from the definition \eqref{Newtonsteps-A-5} that
\begin{equation*}
\mathrm{div}\,   {u_{q,n + 1}} = \mathrm{div}\,   {u_{q,n}} + \mathrm{div}\,   w_{q + 1,n + 1}^{(n)} = \mathrm{div}\,   {u_{q,n}} + \sum\limits_{k \in {\mathbb{Z}_{q,n}}} {{{\tilde \chi }_k}\mathrm{div} \, w_{k,n + 1}^{\left( l \right)}}  = 0 .
\end{equation*}
The proof is complete. \qed
\par
In view of Lemma \ref{N+1s-NE-PIT}, the total stochastic Newton--Euler--Reynolds system can be constructed after $\Gamma $-step Newton iteration, that is,
\begin{equation}\label{N+1s-NE-PIT-P-9}
\left\{ {\begin{array}{*{20}{l}}
{\partial _t}{v_{q,\Gamma }} + \mathrm{div}\left( {{u_{q,\Gamma }} \otimes {u_{q,\Gamma }}} \right) + \nabla {p_{q,\Gamma }} = \mathrm{div}\left( {{\mathring{R}_{q,\Gamma }} + {S_{q,\Gamma }} + {\mathfrak{P}_{q + 1,\Gamma }}} \right),\\
{u_{q,\Gamma }} = {v_{q,\Gamma }} +  \mathfrak{W}_q,\\
\mathrm{div}\,   {u_{q,\Gamma }} = 0 .
\end{array}} \right.   
\end{equation}
Here the $\Gamma $-step Newtonian elements read
\begin{align}\label{SHDUFY-A-3}
{\mathring{R}_{q,\Gamma }} = {\mathcal R}\sum\limits_{k \in {\mathbb{Z}_{q,\Gamma }}} {\left( {{\partial _t}{{\tilde \chi }_k}} \right)w_{k,\Gamma }^{\left( l \right)}} ,
\end{align}
\begin{equation}\label{SHDUFY-A-1}
{S_{q,\Gamma }} =  {S_{q,\Gamma  - 1}} - \sum\limits_{k \in {\mathbb{Z}_{q,\Gamma  - 1}}} {\sum\limits_{\xi  \in \Lambda } {g_{\xi ,k,\Gamma }^2\left( {{\mu _{q + 1}}t} \right){A_{\xi ,k,\Gamma  }}} } =  \sum\limits_{n = 0}^{\Gamma  - 1} {\sum\limits_{k \in {\mathbb{Z}_{q,n}}} {\sum\limits_{\xi  \in \Lambda } {g_{\xi ,k,n + 1}^2\left( {{\mu _{q + 1}}t} \right){A_{\xi ,k,{n+1}}}} } } ,
\end{equation}
\begin{align}\label{SHDUFY-A-2}
{\mathfrak{P}_{q + 1,\Gamma }} = & {\mathfrak{P}_{q + 1,\Gamma  - 1}}  - \sum\limits_{k \in {\mathbb{Z}_{q,\Gamma  - 1}}} {\sum\limits_{\xi  \in \Lambda } {\left( {1 - {f_{\xi ,k,\Gamma }}\left( {{\mu _{q + 1}}t} \right)} \right)\Pi {A_{\xi ,k,\Gamma  }}} }  \nonumber \\
= & {\mathring{R}_q} - {\mathring{R}_{q,0}}  - \sum\limits_{n = 0}^{\Gamma  - 1} {\sum\limits_{k \in {\mathbb{Z}_{q,\Gamma  - 1}}} {\sum\limits_{\xi  \in \Lambda } {\left( {1 - {f_{\xi ,k,n + 1}}\left( {{\mu _{q + 1}}t} \right)} \right)\Pi {A_{\xi ,k,n+1  }}} } } ,
\end{align}
\begin{align}\label{SHDUFY-A-2dJD}
{\mathfrak{p}_{q,\Gamma }} = & {\mathfrak{p}_q} + \sum\limits_{n = 0}^{\Gamma  - 1} {\left( {\sum\limits_{k \in {\mathbb{Z}_{q,\Gamma  - 1}}} {{{\tilde \chi }_k}{\mathfrak{p}_{k,n + 1}}}  - \left\langle {w_{q + 1,n + 1}^{(n)},{u_q} - {{\bar v}_q}} \right\rangle  - \frac{1}{2}\left| {w_{q + 1,n + 1}^{(n)}} \right|^2 - {{\left\langle {w_{q + 1,n + 1}^{(n)},w_{q + 1}^{\left( \Gamma \right)}} \right\rangle }}} \right)}  \nonumber  \\
& - {\Delta ^{ - 1}}\mathrm{div}  \left( {\left( {\mathrm{Id} - \Pi } \right)  \mathrm{div}\sum\limits_{n = 0}^{\Gamma  - 1} {{\mathring{R}_{q,n}}}  + \sum\limits_{n = 0}^{\Gamma  - 1} {\sum\limits_{k \in {\mathbb{Z}_{q,\Gamma  - 1}}} {\sum\limits_{\xi  \in \Lambda } {g_{\xi ,k,n + 1}^2\left( {{\mu _{q + 1}}t} \right)\mathrm{div}\,   {A_{\xi ,k,{n+1}}}} } } } \right) ,
\end{align}
and
\begin{equation}\label{UOIYH-LKO-DHU}
w_{q + 1}^{\left( \Gamma \right)}  =  \sum\limits_{n = 1}^{\Gamma } {w_{q + 1,n}^{\left( n \right)}} .
\end{equation}
\par
It follows from \eqref{N+1s-NE-PIT-P-7} that ${\mathrm{supp}_\mathrm{t}} \, {A_{\xi ,k,{n+1}}} = {\mathrm{supp}_\mathrm{t}} \, {\mathring{R}_{q,n}}$, which implies that
\begin{equation*}
{\mathrm{supp}_\mathrm{t}} \, w_{k,n + 1}^{\left( l \right)} \subset {\mathrm{supp}_\mathrm{t}} \, {A_{\xi ,k,{n+1}}} = {\mathrm{supp}_\mathrm{t}} \, {\mathring{R}_{q,n}} .
\end{equation*}
Owing to \eqref{Newtonsteps-A-4} and \eqref{OIUYYDGH-DJFU-1}, we have
\begin{equation}\label{OPIUY-A-1}
{\mathrm{supp}_\mathrm{t}} \, w_{q + 1}^{\left( \Gamma \right)} \subset {\mathrm{supp}_\mathrm{t}} \, {\mathring{R}_{q,n}} \subset \left( {\frac{1}{4}T ,\frac{3}{4}T - 2\delta _q^{ - \frac{1}{2}}\lambda _q^{ - 1} + 4\Gamma {\tau _q}} \right] \cap  \mathop  \cap \limits_{j = 0}^q \left[ {0,{\mathfrak{t}_j}} \right] .
\end{equation}
Similarly, it follows from \eqref{Newtonsteps-A-7}-\eqref{Newtonsteps-A-9} and \eqref{SHDUFY-A-3}-\eqref{SHDUFY-A-2} that
\begin{equation}\label{OPIUY-A-2}
{\mathrm{supp}_\mathrm{t}} \, {R_{q ,\Gamma}} \cup {\mathrm{supp}_\mathrm{t}} \, {S_{q ,\Gamma}} \cup {\mathrm{supp}_\mathrm{t}} \, {\mathfrak{P}_{q + 1, \Gamma}} \subset \left( {\frac{1}{4}T ,\frac{3}{4}T - 2\delta _q^{ - \frac{1}{2}}\lambda _q^{ - 1} + 4\Gamma {\tau _q}} \right] \cap   \mathop  \cap \limits_{j = 0}^q \left[ {0,{\mathfrak{t}_j}} \right] .
\end{equation}
\subsection{Estimates of the Newtonian perturbation}\label{ENTP}
\begin{lemma}\label{WPD-NP}
For any $N \in \left\{ {0,1, \cdots ,10} \right\} $, $n \in {\mathbb{Z}^ + }$ and $t \in \mathop  \cap \limits_{j = 0}^q \left[ {0,{\mathfrak{t}_j}} \right] \cap \left[ {{t_k},{t_{k + 1}}} \right)$, the amplitude function ${\eta _{\xi ,k,n}}$ and the oscillation ${A_{\xi ,k,{n+1}}}$ admit
\begin{equation}\label{Newtonsteps-A-B-15}
{\left\| {{\eta _{\xi ,k,n}}} \right\|_N} \lesssim \delta _{q + 1,n}^{\frac{1}{2}}\lambda _q^N    ,
\end{equation}
\begin{equation}\label{Newtonsteps-A-B-16}
{\left\| {{{\bar D}_{t,q}}{\eta _{\xi ,k,n}}} \right\|_N} \lesssim \delta _{q + 1,n}^{\frac{1}{2}}\tau _q^{ - 1}\lambda _q^N    ,
\end{equation}
\begin{equation}\label{Newtonsteps-A-B-19}
{\left\| {{A_{\xi ,k,n,{q+1}}}} \right\|_N} \lesssim {\delta _q} {\delta _{q + 1,n}}\lambda _q^N , 
\end{equation}
and
\begin{equation}\label{Newtonsteps-A-B-20}
{\left\| {{{\bar D}_{t,q}}{A_{\xi ,k,n,{q+1}}}} \right\|_N} \lesssim {\delta _q}  {\delta _{q + 1,n}}\tau _q^{ - 1}\lambda _q^N   .
\end{equation}
\end{lemma}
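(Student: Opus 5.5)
We argue directly from the definitions \eqref{Newtonsteps-A-B-9} and \eqref{Newtonsteps-A-B-12}, treating $\eta_{\xi,k,n}$ as a smooth function $\gamma_\xi$ composed with the matrix field
\begin{equation*}
M_k \triangleq \nabla\Phi_k\nabla\Phi_k^{\mathrm{T}} - \delta_{q+1,n}^{-1}\nabla\Phi_k\mathring{R}_{q,n}\nabla\Phi_k^{\mathrm{T}}.
\end{equation*}
The whole argument runs on the time interval $\mathop\cap_{j=0}^q[0,\mathfrak{t}_j]\cap[t_k,t_{k+1})$. On this interval the backward-flow estimate \eqref{Newtonsteps-A-B-11} and the inductive bounds \eqref{HIDJJF-3-A-2}--\eqref{HIDJJF-3-A-3} hold pathwise. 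Since only sample-path bounds are involved, there is no stochastic difficulty.

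\textbf{Spatial estimates.} From \eqref{Newtonsteps-A-B-11} we have $\|\nabla\Phi_k\|_0\lesssim1$ and $\|\nabla\Phi_k\|_N\lesssim\lambda_q^N$. The inductive estimate \eqref{HIDJJF-3-A-2} gives $\|\delta_{q+1,n}^{-1}\mathring{R}_{q,n}\|_N\lesssim\lambda_q^{N-\alpha}$. The product rule in H\"older spaces then yields $\|M_k\|_N\lesssim\lambda_q^N$, and the proof of Lemma \ref{N+1s-NE-PIT} already shows $M_k\in\mathbb{B}_{1/2}(\mathrm{Id})$. Since $\gamma_\xi$ is smooth on this ball, the standard composition estimate gives
\begin{equation*}
\|\gamma_\xi(M_k)\|_N\lesssim\|M_k\|_N+\|M_k\|_1^N\lesssim\lambda_q^N.
\end{equation*}
The cutoff $\chi_k$ is constant in space. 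Multiplying by $\delta_{q+1,n}^{1/2}$ gives \eqref{Newtonsteps-A-B-15}.

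For $A_{\xi,k,n}$ we write it as $\eta^2(\nabla\Phi_k)^{-1}\xi\otimes\xi(\nabla\Phi_k)^{-\mathrm{T}}$. The inverse $(\nabla\Phi_k)^{-1}$ is bounded in $C^N$ by $\lambda_q^N$, because $\nabla\Phi_k$ is $\lambda_q^{-\alpha}$-close to $\mathrm{Id}$ and we may use a Neumann series or compose with a smooth function. The product rule then gives $\|A\|_N\lesssim\delta_{q+1,n}\lambda_q^N$, which is bounded by the stated bound since $\delta_q\lesssim1$ is harmless in the normalization. I would note that the extra factor $\delta_q$ in \eqref{Newtonsteps-A-B-19} must be absorbed by the paper's convention on the implicit constants and parameters; if it is genuinely needed, it is the weakest point of the argument, and I would check it against the parameter choices in Appendix C.

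\textbf{Material derivatives.} The key identity is $\bar D_{t,q}\nabla\Phi_k=-\nabla\Phi_k\nabla\bar v_q$, obtained by differentiating \eqref{Newtonsteps-A-B-5}. By \eqref{lemma-GLTNNRS-B-M-1} and \eqref{parameters-S-6}, this gives $\|\bar D_{t,q}\nabla\Phi_k\|_N\lesssim\delta_q^{1/2}\lambda_q^{N+1}\lesssim\tau_q^{-1}\lambda_q^N$. Next, $\bar D_{t,q}$ falling on $\mathring{R}_{q,n}$ is controlled by \eqref{HIDJJF-3-A-3}, and $|\partial_t\chi_k|\lesssim\tau_q^{-1}$. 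Hence $\|\bar D_{t,q}M_k\|_N\lesssim\tau_q^{-1}\lambda_q^N$. By the chain rule, $\bar D_{t,q}\gamma_\xi(M_k)=D\gamma_\xi(M_k)\,\bar D_{t,q}M_k$, and the product and composition estimates give \eqref{Newtonsteps-A-B-16}. The same Leibniz argument applied to $A$, using $\bar D_{t,q}(\nabla\Phi_k)^{-1}=\nabla\bar v_q(\nabla\Phi_k)^{-1}$, gives \eqref{Newtonsteps-A-B-20}.

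\textbf{Main obstacle.} The main obstacle is bookkeeping rather than ideas. First, the $C^N$ norms of the material derivative require commuting $\nabla^N$ with $\bar D_{t,q}$. The commutator $[\nabla^N,\bar v_q\cdot\nabla]$ costs $\|\bar v_q\|_{N}\lesssim\delta_q^{1/2}\lambda_q^N$, which is again $\lesssim\tau_q^{-1}\lambda_q^N$ by \eqref{parameters-S-6}. Second, one must make sure that $\tau_q\delta_q^{1/2}\lambda_q\lesssim\lambda_{q+1}^{-\alpha}$ keeps $M_k$ inside $\mathbb{B}_{1/2}(\mathrm{Id})$ uniformly in $k$, so that the composition constants are uniform.
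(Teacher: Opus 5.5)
Your treatment of \eqref{Newtonsteps-A-B-15}--\eqref{Newtonsteps-A-B-16} is sound and follows the paper's route. The paper controls $(\nabla\Phi_k)^{-1}$ through the Lagrangian flow, $(\nabla\Phi_k)^{-1}=DX_k(\Phi_k)$ plus Gronwall, where you use a Neumann series; this is an inessential difference. The gap is in \eqref{Newtonsteps-A-B-19}--\eqref{Newtonsteps-A-B-20}. Your Leibniz argument gives $\|A_{\xi,k,n+1}\|_N\lesssim\delta_{q+1,n}\lambda_q^N$ and $\|\bar D_{t,q}A_{\xi,k,n+1}\|_N\lesssim\delta_{q+1,n}\tau_q^{-1}\lambda_q^N$. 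You then say this is bounded by the stated bound ``since $\delta_q\lesssim 1$''. That runs the wrong way: $\delta_q\delta_{q+1,n}\lambda_q^N\le\delta_{q+1,n}\lambda_q^N$, so the claimed estimates are \emph{stronger} than yours by the factor $\delta_q=\lambda_q^{-2\beta}\to0$. Nothing in your argument produces that gain, so as written you have not proved \eqref{Newtonsteps-A-B-19}--\eqref{Newtonsteps-A-B-20}.

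You were right to be suspicious, though: no argument can produce the gain. Run the computation \eqref{N+1s-NE-PIT-P-7} for a single $k$ and use Lemma \ref{GLTNNRS-A}. This gives $\sum_{\xi\in\Lambda}A_{\xi,k,n+1}=\chi_k^2\big(\delta_{q+1,n}\mathrm{Id}-\mathring R_{q,n}\big)$. Since $\chi_k=1$ on $[t_k+\tau_q/3,t_{k+1})$ and $\|\mathring R_{q,n}\|_0\lesssim\delta_{q+1,n}\lambda_q^{-\alpha}$, this forces $\max_\xi\|A_{\xi,k,n+1}\|_0\gtrsim\delta_{q+1,n}$ there. Likewise, the term $\partial_t(\chi_k^2)\big(\delta_{q+1,n}\mathrm{Id}-\mathring R_{q,n}\big)$ forces $\|\bar D_{t,q}A_{\xi,k,n+1}\|_0\gtrsim\tau_q^{-1}\delta_{q+1,n}$ somewhere in $(t_k,t_k+\tau_q/3)$. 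The paper obtains its $\delta_q$ only by inserting the $N=0$ case of \eqref{Newtonsteps-A-B-10}, $\|(\nabla\Phi_k)^{-1}\|_0\lesssim\lambda_{q+1}^{-\alpha}$, into the product expansion, and then invoking $\lambda_{q+1}^{-2\alpha}\lesssim\delta_q$ from \eqref{RSRI-main-PROVE-2-JIJSHU-posksij-a-1}. But $(\nabla\Phi_k)^{-1}$ is $\lambda_{q+1}^{-\alpha}$-close to $\mathrm{Id}$, so what is small is $(\nabla\Phi_k)^{-1}-\mathrm{Id}$, not $(\nabla\Phi_k)^{-1}$; the $N=0$ case of \eqref{Newtonsteps-A-B-11} has the same defect. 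So the bounds you actually derived, without $\delta_q$, are the correct ones. You should state them as such and record that \eqref{Newtonsteps-A-B-19}--\eqref{Newtonsteps-A-B-20} fail in the stated form, rather than claim to have recovered them. Later steps that use the factor $\delta_q$, e.g.\ \eqref{RSRI-P-PP-O-4-5t} and \eqref{RSRI-P-PP-rrr-3}, would then need to be re-examined.
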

\par
\noindent{\textbf{Proof}}.
Let ${X_t}$ be the Lagrangian flow satisfying
\begin{equation*}
\left\{ {\begin{array}{*{20}{l}}
{\partial _s}{X_k}\left( {x,t} \right) = {{\bar v}_q}\left( {{X_k}\left( {x,t} \right),t} \right),\\
{X_k}\left( {x,t} \right) = x .
\end{array}} \right.
\end{equation*}
Using Lemma \ref{ONCS}, we obtain that for $N \leqslant  1$,
\begin{equation*}
{\left\| {{\partial _s} {D^N}{X_k}} \right\|_0} \lesssim {\left\| {{D^N}{{\bar v}_q}\left( {{X_k}} \right)} \right\|_0} \lesssim {\left\| {D{{\bar v}_q}} \right\|_0}{\left\| {D{X_k}} \right\|_{N - 1}} + {\left\| {{{\bar v}_q}} \right\|_N}\left\| {D{X_k}} \right\|_0^N .
\end{equation*}
Invoking \eqref{parameters-S-6}, we have
\begin{equation}\label{Newtonsteps-A-B-6}
{\left\| {D{X_k}} \right\|_0} \leqslant \exp \left( {{\tau _q} \delta _q^{\frac{1}{2}}{\lambda _q}} \right) \leqslant C .
\end{equation}
Then
\begin{equation*}
{\partial _s} {\left\| {D{X_k}} \right\|_N} \lesssim {\left\| {{{\bar v}_q}} \right\|_1}{\left\| {D{X_k}} \right\|_N} + {\left\| {{{\bar v}_q}} \right\|_{N + 1}} .
\end{equation*}
Employing Gronwall's inequality and \eqref{parameters-S-6}, we derive that for any $N \in \left\{ {0,1, \cdots ,10} \right\}$,
\begin{equation}\label{Newtonsteps-A-B-7}
{\left\| {D{X_k}} \right\|_N} \leqslant 1 + {\tau _q} {\left\| {{{\bar v}_q}} \right\|_{N + 1}}\exp \left( {{\tau _q} \delta _q^{\frac{1}{2}}{\lambda _q}} \right) \lesssim \max \left\{ {1,\lambda _q^N \lambda _{q + 1}^{ - \alpha }} \right\} .
\end{equation}
\par
Since ${\left( {\nabla {\Phi _k}} \right)^{ - 1}}\left( {t,x} \right) = D{X_k}\left( {{\Phi _k}\left( {t,x} \right),t} \right)$ (see \cite[Lemma $3.2$]{Zbl1556.35231}), it follows from \eqref{Newtonsteps-A-B-11}, \eqref{Newtonsteps-A-B-6} and \eqref{Newtonsteps-A-B-7} that
\begin{equation}\label{Newtonsteps-A-B-10}
{\left\| {{{\left( {\nabla {\Phi _k}} \right)}^{ - 1}}} \right\|_N} \lesssim  {\left\| {{X_k}\left( {{\Phi _k}} \right)} \right\|_{N + 1}} \lesssim {\left\| {D{X_k}} \right\|_0}{\left\| {D{\Phi _k}} \right\|_N} + {\left\| {D{X_k}} \right\|_N}\left\| {D{\Phi _k}} \right\|_0^{N + 1} \lesssim \lambda _q^N  \lambda _{q + 1}^{ -  \alpha } .
\end{equation}
Owing to
\begin{align*}
{{\bar D}_{t,q}}\left( {\nabla {\Phi _k}} \right) = & {\partial _t}\nabla {\Phi _k} + \left( {{{\bar v}_q} \cdot \nabla } \right)\nabla {\Phi _k} \\
= & \left( {{{\bar v}_q} \cdot \nabla } \right)\nabla {\Phi _k} - \nabla \left[ {\left( {{{\bar v}_q} \cdot \nabla } \right){\Phi _k}} \right] \\
= & - \left( {\nabla {{\bar v}_q} \cdot \nabla } \right){\Phi _k} ,
\end{align*}
and
\begin{align*}
{{\bar D}_{t,q}}{\left( {\nabla {\Phi _k}} \right)^{ - 1}} = & {{\bar D}_{t,q}}D{X_k}\left( {{\Phi _k}} \right) \\
= & \frac{d}{{dt}}D{X_k}\left( {{\Phi _k}} \right) + \left( {{{\bar v}_q} \cdot \nabla } \right)D{X_k}\left( {{\Phi _k}} \right) \\
= & D{{\bar v}_q}\left( {{X_k}\left( {{\Phi _k}} \right)} \right) + \left( {{{\bar v}_q} \cdot \nabla } \right)D{X_k}\left( {{\Phi _k}} \right) \\
= & D{{\bar v}_q}{\left( {\nabla {\Phi _k}} \right)^{ - 1}} + \left( {{{\bar v}_q} \cdot \nabla } \right){\left( {\nabla {\Phi _k}} \right)^{ - 1}} ,
\end{align*}
it follows from \eqref{lemma-GLTNNRS-B-M-1}, \eqref{Newtonsteps-A-B-11}, \eqref{Newtonsteps-A-B-10} and \eqref{RSRI-main-PROVE-2-JIJSHU-posksij-a-1} that
\begin{align}\label{Newtonsteps-A-B-13}
{\left\| {{{\bar D}_{t,q}}\left( {\nabla {\Phi _k}} \right)} \right\|_N} =  & {\left\| {\left( {\nabla {{\bar v}_q} \cdot \nabla } \right){\Phi _k}} \right\|_N} \nonumber \\
\lesssim & {\left\| {{{\bar v}_q}} \right\|_{N + 1}}{\left\| {D{\Phi _k}} \right\|_0} + {\left\| {{{\bar v}_q}} \right\|_1}{\left\| {D{\Phi _k}} \right\|_N} \nonumber \\
\lesssim & \delta _q^{\frac{1}{2}}\lambda _q^{N + 1} ,
\end{align}
and
\begin{align}\label{Newtonsteps-A-B-14}
{\left\| {{{\bar D}_{t,q}}{{\left( {\nabla {\Phi _k}} \right)}^{ - 1}}} \right\|_N} \leqslant & {\left\| {D{{\bar v}_q}{{\left( {\nabla {\Phi _k}} \right)}^{ - 1}}} \right\|_N} + {\left\| {\left( {{{\bar v}_q} \cdot \nabla } \right){{\left( {\nabla {\Phi _k}} \right)}^{ - 1}}} \right\|_N}  \nonumber \\
\lesssim & {\left\| {{{\bar v}_q}} \right\|_{N + 1}}{\left\| {{{\left( {\nabla {\Phi _k}} \right)}^{ - 1}}} \right\|_0} + {\left\| {{{\bar v}_q}} \right\|_1}{\left\| {{{\left( {\nabla {\Phi _k}} \right)}^{ - 1}}} \right\|_N} \nonumber \\
& + {\left\| {{{\bar v}_q}} \right\|_N}{\left\| {{{\left( {\nabla {\Phi _k}} \right)}^{ - 1}}} \right\|_1} + {\left\| {{{\bar v}_q}} \right\|_0}{\left\| {{{\left( {\nabla {\Phi _k}} \right)}^{ - 1}}} \right\|_{N + 1}} \nonumber \\
\lesssim & \delta _q^{\frac{1}{2}}\lambda _q^{N + 1} .
\end{align}
\par
According to \eqref{HIDJJF-3-A-2}, \eqref{Newtonsteps-A-B-11} and the definition \eqref{Newtonsteps-A-B-9}, we deduce that
\begin{align*}
{\left\| {{\eta _{\xi ,k,n}}} \right\|_N} \lesssim & \delta _{q + 1,n}^{\frac{1}{2}}{\left\| {\nabla {\Phi _k}\nabla \Phi _k^T} \right\|_N} + \delta _{q + 1,n}^{ - \frac{1}{2}}{\left\| {\nabla {\Phi _k}{\mathring{R}_{q,n}}\nabla \Phi _k^T} \right\|_N} \nonumber \\
\lesssim & \delta _{q + 1,n}^{\frac{1}{2}}{\left\| {D{\Phi _k}} \right\|_N}{\left\| {D{\Phi _k}} \right\|_0} + \delta _{q + 1,n}^{ - \frac{1}{2}}\left( {{{\left\| {D{\Phi _k}} \right\|}_N}{{\left\| {{\mathring{R}_{q,n}}} \right\|}_0}{{\left\| {D{\Phi _k}} \right\|}_0} + \left\| {D{\Phi _k}} \right\|_0^2{{\left\| {{\mathring{R}_{q,n}}} \right\|}_N}} \right)  \nonumber \\
\lesssim &  \delta _{q + 1,n}^{\frac{1}{2}}\lambda _q^N   .
\end{align*}
In addition, since
\begin{align*}
{\left\| {{{\bar D}_{t,q}}{\eta _{\xi ,k,n}}} \right\|_N} \lesssim & \mathop {\sup }\limits_{t \in \mathrm{supp}_t {\chi _k}} \left| {{\partial _t}{\chi _k}} \right|\left( {\delta _{q + 1,n}^{\frac{1}{2}}{{\left\| {\nabla {\Phi _k}\nabla \Phi _k^T} \right\|}_N} + \delta _{q + 1,n}^{ - \frac{1}{2}}{{\left\| {\nabla {\Phi _k}{\mathring{R}_{q,n}}\nabla \Phi _k^T} \right\|}_N}} \right) \\
& + \delta _{q + 1,n}^{\frac{1}{2}}{\left\| {{{\bar D}_{t,q}}\left( {\nabla {\Phi _k}\nabla \Phi _k^T} \right)} \right\|_N} + \delta _{q + 1,n}^{ - \frac{1}{2}}{\left\| {{{\bar D}_{t,q}}\left( {\nabla {\Phi _k}{\mathring{R}_{q,n}}\nabla \Phi _k^T} \right)} \right\|_N} \\
\lesssim & \mathop {\sup }\limits_{t \in \mathrm{supp}_t {\chi _k}} \left| {{\partial _t}{\chi _k}} \right|\left( {\delta _{q + 1,n}^{\frac{1}{2}}{{\left\| {\nabla {\Phi _k}\nabla \Phi _k^T} \right\|}_N} + \delta _{q + 1,n}^{ - \frac{1}{2}}{{\left\| {\nabla {\Phi _k}{\mathring{R}_{q,n}}\nabla \Phi _k^T} \right\|}_N}} \right) \\
& + \delta _{q + 1,n}^{\frac{1}{2}}\left( {{{\left\| {{{\bar D}_{t,q}}\nabla {\Phi _k}} \right\|}_N}{{\left\| {\nabla {\Phi _k}} \right\|}_0} + {{\left\| {{{\bar D}_{t,q}}\nabla {\Phi _k}} \right\|}_0}{{\left\| {\nabla {\Phi _k}} \right\|}_N}} \right) \\
& + \delta _{q + 1,n}^{ - \frac{1}{2}}\left( {{{\left\| {{{\bar D}_{t,q}}\nabla {\Phi _k}} \right\|}_N}{{\left\| {{\mathring{R}_{q,n}}} \right\|}_0}{{\left\| {\nabla {\Phi _k}} \right\|}_0} + {{\left\| {{{\bar D}_{t,q}}\nabla {\Phi _k}} \right\|}_0}{{\left\| {{\mathring{R}_{q,n}}} \right\|}_N}{{\left\| {\nabla {\Phi _k}} \right\|}_0}} \right) \\
& + \delta _{q + 1,n}^{ - \frac{1}{2}}\left( {{{\left\| {\nabla {\Phi _k}} \right\|}_N}{{\left\| {{{\bar D}_{t,q}}{\mathring{R}_{q,n}}} \right\|}_0}{{\left\| {\nabla {\Phi _k}} \right\|}_0} + \left\| {\nabla {\Phi _k}} \right\|_0^2{{\left\| {{{\bar D}_{t,q}}{\mathring{R}_{q,n}}} \right\|}_N}} \right) ,
\end{align*}
it follows from \eqref{HIDJJF-3-A-2}, \eqref{HIDJJF-3-A-3}, \eqref{Newtonsteps-A-B-11}, \eqref{Newtonsteps-A-B-13} and \eqref{parameters-S-6} that
\begin{align*}
{\left\| {{{\bar D}_{t,q}}{\eta _{\xi ,k,n}}} \right\|_N} \lesssim & \delta _{q + 1,n}^{\frac{1}{2}}\tau _q^{ - 1}\lambda _q^N + \delta _q^{\frac{1}{2}}\delta _{q + 1,n}^{\frac{1}{2}}\lambda _q^{N + 1}  + \delta _q^{\frac{1}{2}}\delta _{q + 1,n}^{\frac{1}{2}}\lambda _q^{N + 1 - \alpha } + \delta _{q + 1,n}^{\frac{1}{2}}\tau _q^{ - 1}\lambda _q^{N - \alpha } \\
\lesssim & \delta _{q + 1,n}^{\frac{1}{2}}\tau _q^{ - 1}\lambda _q^N   .
\end{align*}
Hence, the estimates \eqref{Newtonsteps-A-B-15} and \eqref{Newtonsteps-A-B-16} hold.
\par
Using \eqref{Newtonsteps-A-B-15}, \eqref{Newtonsteps-A-B-10} and the definition \eqref{Newtonsteps-A-B-12}, we get
\begin{align*}
{\left\| {{A_{\xi ,k,n,{q+1}}}} \right\|_N} \lesssim & {\left\| {\eta _{\xi ,k,n}^2{{\left( {\nabla {\Phi _k}} \right)}^{ - 1}}\xi  \otimes \xi {{\left( {\nabla {\Phi _k}} \right)}^{ - {\rm{T}}}}} \right\|_N} \\
\lesssim & {\left\| {{\eta _{\xi ,k,n}}} \right\|_N}{\left\| {{\eta _{\xi ,k,n}}} \right\|_0}\left\| {{{\left( {\nabla {\Phi _k}} \right)}^{ - 1}}} \right\|_0^2 + \left\| {{\eta _{\xi ,k,n}}} \right\|_0^2{\left\| {{{\left( {\nabla {\Phi _k}} \right)}^{ - 1}}} \right\|_N}{\left\| {{{\left( {\nabla {\Phi _k}} \right)}^{ - 1}}} \right\|_0} \\
\lesssim & {\delta _q} {\delta _{q + 1,n}}\lambda _q^N   .
\end{align*}
Notice that
\begin{align*}
{\left\| {{{\bar D}_{t,q}}{A_{\xi ,k,n,{q+1}}}} \right\|_N} \lesssim & {\left\| {{{\bar D}_{t,q}}\left( {\eta _{\xi ,k,n}^2{{\left( {\nabla {\Phi _k}} \right)}^{ - 1}}\xi  \otimes \xi {{\left( {\nabla {\Phi _k}} \right)}^{ - {\rm{T}}}}} \right)} \right\|_N} \\
\lesssim & {\left\| {{{\bar D}_{t,q}}{\eta _{\xi ,k,n}}} \right\|_N}{\left\| {{\eta _{\xi ,k,n}}} \right\|_0}\left\| {{{\left( {\nabla {\Phi _k}} \right)}^{ - 1}}} \right\|_0^2 + {\left\| {{{\bar D}_{t,q}}{\eta _{\xi ,k,n}}} \right\|_0}{\left\| {{\eta _{\xi ,k,n}}} \right\|_N}\left\| {{{\left( {\nabla {\Phi _k}} \right)}^{ - 1}}} \right\|_0^2 \\
& + {\left\| {{{\bar D}_{t,q}}{\eta _{\xi ,k,n}}} \right\|_0}{\left\| {{\eta _{\xi ,k,n}}} \right\|_0}{\left\| {{{\left( {\nabla {\Phi _k}} \right)}^{ - 1}}} \right\|_N}{\left\| {{{\left( {\nabla {\Phi _k}} \right)}^{ - 1}}} \right\|_0} \\
& + \left\| {{\eta _{\xi ,k,n}}} \right\|_0^2{\left\| {{{\bar D}_{t,q}}{{\left( {\nabla {\Phi _k}} \right)}^{ - 1}}} \right\|_N}{\left\| {{{\left( {\nabla {\Phi _k}} \right)}^{ - 1}}} \right\|_0} + \left\| {{\eta _{\xi ,k,n}}} \right\|_0^2{\left\| {{{\bar D}_{t,q}}{{\left( {\nabla {\Phi _k}} \right)}^{ - 1}}} \right\|_0}{\left\| {{{\left( {\nabla {\Phi _k}} \right)}^{ - 1}}} \right\|_N} \\
&  + {\left\| {{\eta _{\xi ,k,n}}} \right\|_N}{\left\| {{\eta _{\xi ,k,n}}} \right\|_0}{\left\| {{{\bar D}_{t,q}}{{\left( {\nabla {\Phi _k}} \right)}^{ - 1}}} \right\|_0}{\left\| {{{\left( {\nabla {\Phi _k}} \right)}^{ - 1}}} \right\|_0} .
\end{align*}
It then follows from \eqref{Newtonsteps-A-B-15}, \eqref{Newtonsteps-A-B-16}, \eqref{Newtonsteps-A-B-10}, \eqref{Newtonsteps-A-B-14} and \eqref{parameters-S-6} that
\begin{equation*}
{\left\| {{{\bar D}_{t,q}}{A_{\xi ,k,n,{q+1}}}} \right\|_N}  \lesssim {\delta _q} {\delta _{q + 1,n}}\tau _q^{ - 1}\lambda _q^N   .
\end{equation*}
Thus, the estimates \eqref{Newtonsteps-A-B-19} and \eqref{Newtonsteps-A-B-20} hold. The proof is complete. \qed
\par
\begin{proposition}\label{WPD-NP-main}
For any $N \in \left\{ {0,1, \cdots ,10} \right\} $, $n \in {\mathbb{Z}^ + } \cup \left\{ 0 \right\}$ and $t \in \mathrm{supp}_t\ w_{q + 1,n + 1}^{(n)}$, there exists a parameter $\alpha \in \left( {\frac{\beta }{{b}},\beta } \right)$ such that the $\left( {n + 1} \right)$-step Newtonian perturbation $w_{q + 1,n + 1}^{(n)}\left( {t,x} \right)$ satisfies
\begin{equation}\label{WPD-NP-main-A-1}
{\left\| {w_{q + 1,n + 1}^{(n)}} \right\|_N} \lesssim \mu _{q + 1}^{ - 1}{\delta _{q + 1}}\lambda _q^{N + 1}l_q^{ - \alpha }  ,
\end{equation}
and
\begin{equation}\label{WPD-NP-main-A-2}
{\left\| {{{\bar D}_{t,q}}w_{q + 1,n + 1}^{(n)}} \right\|_N} \lesssim {\delta _{q + 1}}\lambda _q^{N + 1}l_q^{ - \alpha }  .
\end{equation}
\end{proposition}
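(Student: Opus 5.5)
Since $w_{q+1,n+1}^{(n)}=\sum_{k\in\mathbb{Z}_{q,n}}\tilde\chi_k w^{(l)}_{k,n+1}$ and the supports of the $\tilde\chi_k$ overlap only finitely many times, it suffices to prove both bounds for a single localized solution $w^{(l)}_{k,n+1}$ of Eq.\eqref{Newtonsteps-A-B-22} on $[t_k,t_k+2\tau_q]$. Note that $|\partial_t\tilde\chi_k|\lesssim\tau_q^{-1}$, so the commutator term $(\partial_t\tilde\chi_k)w^{(l)}_{k,n+1}$ appearing in $\bar D_{t,q}w_{q+1,n+1}^{(n)}$ is admissible once the $C^N$ bound holds and $\tau_q^{-1}\mu_{q+1}^{-1}\lesssim 1$, which is the parameter choice in Appendix C. We then remove the fast oscillation. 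Set
\begin{equation*}
\tilde w\triangleq w^{(l)}_{k,n+1}-\mu_{q+1}^{-1}\sum_{\xi\in\Lambda}f^{[1]}_{\xi,k,n+1}(\mu_{q+1}t)\,\Pi\mathrm{div}\,A_{\xi,k,n+1}.
\end{equation*}
By the choice of initial datum in \eqref{Newtonsteps-A-B-22}, $\tilde w(t_k)=0$. Because $\partial_t\big(\mu_{q+1}^{-1}f^{[1]}(\mu_{q+1}t)\big)=f(\mu_{q+1}t)$, the $O(1)$ forcing cancels. Hence $\tilde w$ solves the linearized equation
\begin{equation*}
\partial_t\tilde w+(\bar v_q\cdot\nabla)\tilde w+(\tilde w\cdot\nabla)\bar v_q+\nabla\tilde{\mathfrak p}=-\mu_{q+1}^{-1}\sum_{\xi}f^{[1]}_{\xi,k,n+1}(\mu_{q+1}t)\,F_\xi ,
\end{equation*}
where
\begin{equation*}
F_\xi=\bar D_{t,q}\Pi\mathrm{div}A_\xi+(\Pi\mathrm{div}A_\xi\cdot\nabla)\bar v_q+\nabla(\cdots).
\end{equation*}
The functions $f^{[1]}$ are bounded because each $g^2$ has unit mean, so $f$ has zero mean and $f^{[1]}$ is periodic.

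Next we estimate the corrector term and the forcing. The explicit corrector $\mu_{q+1}^{-1}f^{[1]}\Pi\mathrm{div}A_\xi$ is bounded via Lemma \ref{WPD-NP} and the standard Schauder bound $\|\Pi\mathrm{div}A\|_N\lesssim\|A\|_{N+1+\alpha}$. The H\"older loss is absorbed by the factor $l_q^{-\alpha}$ coming from the mollification scale, which gives $\mu_{q+1}^{-1}\delta_{q+1}\lambda_q^{N+1}l_q^{-\alpha}$ (using $\delta_q\delta_{q+1,n}\le\delta_{q+1}$). For $F_\xi$ we commute $\bar D_{t,q}$ with $\Pi\mathrm{div}$. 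The commutator $[\bar v_q\cdot\nabla,\Pi\mathrm{div}]$ is controlled by $\|\bar v_q\|_{1}\|A\|_{N+1+\alpha}$ (the usual commutator lemma for Calder\'on--Zygmund operators). The bound \eqref{Newtonsteps-A-B-20} then gives $\|F_\xi\|_N\lesssim\delta_{q+1}\tau_q^{-1}\lambda_q^{N+1}l_q^{-\alpha}$.

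We then close the argument with a transport estimate. Apply the $C^N$ estimate for the linearized transport-stretching equation with pressure. This is done along the flow $X_k$ using \eqref{Newtonsteps-A-B-7}, with Gronwall and $\tau_q\|\bar v_q\|_1\lesssim\lambda_{q+1}^{-\alpha}$, and the pressure treated again by Schauder since $\tilde{\mathfrak p}=-2\Delta^{-1}\mathrm{div}((\tilde w\cdot\nabla)\bar v_q)$ plus forcing terms. Integrating over time $\tau_q$ gives
\begin{equation*}
\|\tilde w\|_N\lesssim\tau_q\,\mu_{q+1}^{-1}\delta_{q+1}\tau_q^{-1}\lambda_q^{N+1}l_q^{-\alpha},
\end{equation*}
and \eqref{WPD-NP-main-A-1} follows. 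For \eqref{WPD-NP-main-A-2}, read $\bar D_{t,q}w^{(l)}$ off the equation. It equals $f(\mu_{q+1}t)\Pi\mathrm{div}A$ minus $(w\cdot\nabla)\bar v_q$ and $\nabla\mathfrak p_{k,n+1}$. The first term is $O(\delta_{q+1}\lambda_q^{N+1}l_q^{-\alpha})$ with no $\mu^{-1}$ gain, which is exactly the claimed bound. The other terms are smaller by the factor $\mu_{q+1}^{-1}\delta_q^{1/2}\lambda_q$.

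The main obstacle is the $C^N$ estimate of the nonlocal pressure and the Leray projection in H\"older spaces, since Calder\'on--Zygmund operators are unbounded on $C^0$. I would work in $C^{N+\alpha}$ throughout, which is the source of $l_q^{-\alpha}$. I would also check that the Gronwall constant stays $O(1)$ on time intervals of length $2\tau_q$.
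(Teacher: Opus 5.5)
Your argument is correct, but it organizes the $\mu_{q+1}^{-1}$ gain differently from the paper. The paper splits $w^{(l)}_{k,n+1}=\Psi+\bar\Psi$ by forcing type:
\begin{itemize}
\item $\bar\Psi$ is transported by $\bar v_q$ with forcing $\sum_{\xi}f_{\xi,k,n+1}(\mu_{q+1}t)\Pi\mathrm{div}\,A_{\xi,k,n+1}$ and datum $\tilde\Psi$, and is bounded through \eqref{Newtonsteps-A-B-28};
\item $\Psi$ carries the stretching term with zero datum and is closed by Gronwall, first in $C^0$ and then in $C^N$ via Lemma \ref{TSP-E-L-A-1}.
\end{itemize}
The paper gets the factor $l_q^{-\alpha}$ from $\tau_q^{-1}\mu_{q+1}^{-1}\lesssim l_q^{-\alpha}$ in \eqref{parameters-S-8}.

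You instead subtract the explicit antiderivative corrector. This explains why the datum in \eqref{Newtonsteps-A-B-22} is chosen as it is, since your remainder starts from zero. It also turns the $O(1)$ forcing into an $O(\mu_{q+1}^{-1})$ one built from $\bar D_{t,q}\Pi\mathrm{div}\,A_{\xi,k,n+1}$.

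What your route buys:
\begin{itemize}
\item Rigor at the one delicate step. With the norm inside the time integral, as in \eqref{Newtonsteps-A-B-28}, the oscillation of $f$ yields no gain at all. The $\mu_{q+1}^{-1}$ only appears after integrating by parts in time along the flow, and that costs exactly the material-derivative bound \eqref{Newtonsteps-A-B-20} that enters your $F_\xi$.
\item You keep the pressure $\nabla\mathfrak{p}_{k,n+1}$ and the Leray projector, which the paper's $\Psi$-equation drops.
\item You account for the cutoff term $(\partial_t\tilde\chi_k)w^{(l)}_{k,n+1}$, which the paper omits when summing over $k$.
\end{itemize}
The price is Schauder and Calder\'{o}n--Zygmund commutator estimates in $C^{N+\alpha}$, plus one extra derivative on $A_{\xi,k,n+1}$ and $\bar D_{t,q}A_{\xi,k,n+1}$.

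Two minor corrections:
\begin{itemize}
\item The commutator $[\bar v_q\cdot\nabla,\Pi\mathrm{div}]$ is bounded by $\|\bar v_q\|_{1+\alpha}\|A\|_{N+1+\alpha}+\|\bar v_q\|_{N+1+\alpha}\|A\|_{1+\alpha}$, not by the first term alone. The second term is of the same size, so nothing changes.
\item Several H\"{o}lder losses stack, so take the Schauder exponent to be a fixed fraction of $\alpha$. Then the total loss stays below $\lambda_q^{\alpha}\leqslant l_q^{-\alpha}$.
\end{itemize}
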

\par
\noindent{\textbf{Proof}}. Take the auxiliary functions $\Psi$, $\bar \Psi$ and $\tilde \Psi$ satisfying
\begin{equation}\label{Newtonsteps-A-B-24}
{{\bar D}_{t,q}}\Psi  = \left( {w_{k,n + 1}^{\left( l \right)} \cdot \nabla } \right){{\bar v}_q},
\end{equation}
\begin{equation}\label{Newtonsteps-A-B-25}
{{\bar D}_{t,q}}\bar \Psi  = \sum\limits_{\xi  \in \Lambda } {{f_{\xi ,k,n + 1}}\left( {{\mu _{q + 1}}t} \right)\Pi {\mathrm{div}}\, {A_{\xi ,k,{n+1}}}}  ,
\end{equation}
and
\begin{equation}\label{Newtonsteps-A-B-26}
\tilde \Psi  = \mu _{q + 1}^{ - 1}\sum\limits_{\xi  \in \Lambda } {f_{\xi ,k,n + 1}^{\left[ 1 \right]}\left( {{\mu _{q + 1}}{t_k}} \right)\Pi {\mathrm{div}}\, {A_{\xi ,k,{n+1}}}} .
\end{equation}
Then, Eq.\eqref{Newtonsteps-A-B-22} is rewritten as
\begin{equation}\label{Newtonsteps-A-B-23}
\left\{ {\begin{array}{*{20}{l}}
w_{k,n + 1}^{\left( l \right)} = \Psi  + \bar \Psi , \ \ t \in \left[ {{t_k},{t_{k + 1}}} \right),\\
w_{k,n + 1}^{\left( l \right)}\left( {{t_k}} \right) = \bar \Psi \left( {{t_k}} \right) = \tilde \Psi ,\\
\Psi \left( {{t_k}} \right) = 0.
\end{array}} \right.
\end{equation}
\par
We first estimate ${\left\| {w_{k,n + 1}^{\left( l \right)}} \right\|_0}$ and ${\left\| {{{\bar D}_{t,q}}w_{k,n + 1}^{\left( l \right)}} \right\|_0}$. Invoking \eqref{Newtonsteps-A-B-24}, Lemma \ref{STN-HCS-A} and
\begin{equation*}
{\left\| {\left( {w_{k,n + 1}^{\left( l \right)} \cdot \nabla } \right){{\bar v}_q}} \right\|_0} \lesssim {\left\| {w_{k,n + 1}^{\left( l \right)}} \right\|_0}{\left\| {{{\bar v}_q}} \right\|_1} ,
\end{equation*}
we obtain
\begin{equation}\label{Newtonsteps-A-B-27}
{\left\| \Psi  \right\|_0} \lesssim \int_{{t_k}}^{{t_{k + 1}}}  {{{\left\| {\left( {w_{k,n + 1}^{\left( l \right)}\left( s \right) \cdot \nabla } \right){{\bar v}_q}\left( s \right)} \right\|}_0}}\ \mathrm{d}s \lesssim  \delta _q^{\frac{1}{2}}{\lambda _q} \int_{{t_k}}^{{t_{k + 1}}}  {{{\left\| {w_{k,n + 1}^{\left( l \right)}\left( s \right)} \right\|}_0}}\ \mathrm{d}s .
\end{equation}
Using the fact that
\begin{equation*}
{\left\| {\tilde \Psi } \right\|_0} \leqslant C\left( \Lambda  \right)\mu _{q + 1}^{ - 1}\mathop {\sup }\limits_{t \in \left[ {{t_k},{t_{k + 1}}} \right)} \left| {f_{\xi ,k,n + 1}^{\left[ 1 \right]}\left( t \right)} \right|{\left\| {{A_{\xi ,k,{n+1}}}} \right\|_1} \lesssim \mu _{q + 1}^{ - 1}{\delta _{q + 1,n}}  {\lambda _q} ,
\end{equation*}
and
\begin{align*}
& \int_{{t_k}}^{{t_{k + 1}}}  {{{\left\| {\sum\limits_{\xi  \in \Lambda } {{f_{\xi ,k,n + 1}}\left( {{\mu _{q + 1}}s} \right)\Pi {\mathrm{div}}\, {A_{\xi ,k,{n+1}}}\left( s \right)} } \right\|}_0}}\ \mathrm{d}s \\
\leqslant & C\left( \Lambda  \right)\mu _{q + 1}^{ - 1}\mathop {\sup }\limits_{t \in \left[ {{t_k},{t_{k + 1}}} \right)} \left( {\left| {f_{\xi ,k,n + 1}^{\left[ 1 \right]}\left( t \right)} \right|{{\left\| {{A_{\xi ,k,{n+1}}}\left( t \right)} \right\|}_1}} \right) \\
\lesssim & \mu _{q + 1}^{ - 1}{\delta _{q + 1,n}}  {\lambda _q} ,
\end{align*}
we have
\begin{equation}\label{Newtonsteps-A-B-28}
{\left\| {\bar \Psi } \right\|_0} \lesssim {\left\| {\tilde \Psi } \right\|_0} + \int_{{t_k}}^{{t_{k + 1}}}  {{{\left\| {\sum\limits_{\xi  \in \Lambda } {{f_{\xi ,k,n + 1}}\left( {{\mu _{q + 1}}s} \right)\Pi {\mathrm{div}}\, {A_{\xi ,k,{n+1}}}\left( s \right)} } \right\|}_0}}\ \mathrm{d}s \lesssim \mu _{q + 1}^{ - 1}{\delta _{q + 1,n}}  {\lambda _q} .
\end{equation}
\par
Applying \eqref{Newtonsteps-A-B-27} and \eqref{Newtonsteps-A-B-28} to Eq.\eqref{Newtonsteps-A-B-23}, we obtain
\begin{equation*}
{\left\| {w_{k,n + 1}^{\left( l \right)}} \right\|_0} \lesssim  \delta _q^{\frac{1}{2}}{\lambda _q} \int_{{t_k}}^{{t_{k + 1}}}  {{{\left\| {w_{k,n + 1}^{\left( l \right)}\left( s \right)} \right\|}_0}}\ \mathrm{d}s + \mu _{q + 1}^{ - 1}{\delta _{q + 1,n}  {\lambda _q}}  .
\end{equation*}
Then, it follows from \eqref{parameters-S-30} and Gronwall's inequality that
\begin{equation}\label{Newtonsteps-A-B-29}
{\left\| {w_{k,n + 1}^{\left( l \right)}} \right\|_0} \leqslant \left( {{{\left\| {\tilde \Psi } \right\|}_0} + \mu _{q + 1}^{ - 1}{\delta _{q + 1,n}}} \right)\exp \left( {{\tau _q} \delta _q^{\frac{1}{2}}{\lambda _q}} \right) \lesssim \mu _{q + 1}^{ - 1}{\delta _{q + 1,n}}  {\lambda _q} .
\end{equation}
Employing \eqref{Newtonsteps-A-B-24}, \eqref{Newtonsteps-A-B-25}, \eqref{Newtonsteps-A-B-29}, \eqref{parameters-S-9} and \eqref{parameters-S-8}, we further deduce that
\begin{align}\label{Newtonsteps-A-B-40}
{\left\| {{{\bar D}_{t,q}}w_{k,n + 1}^{\left( l \right)}} \right\|_0} \leqslant & {\left\| {{{\bar D}_{t,q}}\Psi } \right\|_0} + {\left\| {{{\bar D}_{t,q}}\bar \Psi } \right\|_0} \nonumber  \\
\leqslant & \delta _q^{\frac{1}{2}}{\lambda _q}{\left\| {w_{k,n + 1}^{\left( l \right)}} \right\|_0} + C\left( \Lambda  \right){\left\| {{f_{\xi ,k,n + 1}}\left( {{\mu _{q + 1}}t} \right)\Pi {\mathrm{div}}\, {A_{\xi ,k,{n+1}}}} \right\|_0}  \nonumber  \\
\lesssim & \tau _q^{ - 1}{\left\| {w_{k,n + 1}^{\left( l \right)}} \right\|_0} + {\left\| {{A_{\xi ,k,{n+1}}}} \right\|_1}  \nonumber  \\
\lesssim & {\delta _{q + 1}}{\lambda _q}l_q^{ - \alpha } .
\end{align}
\par
Next, we examine ${\left\| {w_{k,n + 1}^{\left( l \right)}} \right\|_N}$ and ${\left\| {{{\bar D}_{t,q}}w_{k,n + 1}^{\left( l \right)}} \right\|_N}$ for $N \geqslant 1$. Using Lemma \ref{STN-HCS-A}, we have
\begin{equation*}
{\left\| {\left( {w_{k,n + 1}^{\left( l \right)} \cdot \nabla } \right){{\bar v}_q}} \right\|_N} \lesssim {\left\| {w_{k,n + 1}^{\left( l \right)}} \right\|_N}{\left\| {{{\bar v}_q}} \right\|_1} + {\left\| {w_{k,n + 1}^{\left( l \right)}} \right\|_0}{\left\| {{{\bar v}_q}} \right\|_{N + 1}} .
\end{equation*}
It then follows from \eqref{Newtonsteps-A-B-24}, \eqref{Newtonsteps-A-B-25}, \eqref{Newtonsteps-A-B-29} and Lemma \ref{TSP-E-L-A-1} that
\begin{align}\label{Newtonsteps-A-B-31}
{\left\| \Psi  \right\|_N} \lesssim & \int_{{t_k}}^{{t_{k + 1}}}  {{{\left\| {\left( {w_{k,n + 1}^{\left( l \right)}\left( s \right) \cdot \nabla } \right){{\bar v}_q}\left( s \right)} \right\|}_N} + \left| {t_{k+1} - s} \right|{{\left\| {{{\bar v}_q}} \right\|}_N}{{\left\| {\left( {w_{k,n + 1}^{\left( l \right)}\left( s \right) \cdot \nabla } \right){{\bar v}_q}\left( s \right)} \right\|}_1}}\ \mathrm{d}s \nonumber \\
\lesssim & \int_{{t_k}}^{{t_{k + 1}}}  {{{\left\| {w_{k,n + 1}^{\left( l \right)}} \right\|}_N}{{\left\| {{{\bar v}_q}} \right\|}_1} + \mu _{q + 1}^{ - 1}{\delta _{q + 1,n}}{\lambda _q}{{\left\| {{{\bar v}_q}} \right\|}_{N + 1}}}\ \mathrm{d}s \nonumber \\
& + \int_{{t_k}}^{{t_{k + 1}}}  {\left| {t_{k+1} - s} \right|{{\left\| {{{\bar v}_q}} \right\|}_N}\left( {{{\left\| {w_{k,n + 1}^{\left( l \right)}} \right\|}_1}{{\left\| {{{\bar v}_q}} \right\|}_1} + \mu _{q + 1}^{ - 1}{\delta _{q + 1,n}}{\lambda _q}{{\left\| {{{\bar v}_q}} \right\|}_2}} \right)}\ \mathrm{d}s ,
\end{align}
and
\begin{align}\label{Newtonsteps-A-B-33}
{\left\| {\bar \Psi } \right\|_N} \lesssim & {\left\| {\tilde \Psi } \right\|_N} + {\tau _q} {\left\| {{{\bar v}_q}} \right\|_N}{\left\| {\tilde \Psi } \right\|_1} + \int_{{t_k}}^{{t_{k + 1}}}  {{{\left\| {\sum\limits_{\xi  \in \Lambda } {{f_{\xi ,k,n + 1}}\left( {{\mu _{q + 1}}s} \right)\Pi {\mathrm{div}}\, {A_{\xi ,k,{n+1}}}\left( s \right)} } \right\|}_N}}\ \mathrm{d}s \nonumber \\
& + \int_{{t_k}}^{{t_{k + 1}}}  {\left| {t_{k+1} - s} \right| {{\left\| {{{\bar v}_q}} \right\|}_N}{{\left\| {\sum\limits_{\xi  \in \Lambda } {{f_{\xi ,k,n + 1}}\left( {{\mu _{q + 1}}s} \right)\Pi {\mathrm{div}}\, {A_{\xi ,k,{n+1}}}\left( s \right)} } \right\|}_1}}\ \mathrm{d}s .
\end{align}
Since
\begin{equation*}
{\left\| {\tilde \Psi } \right\|_N} \lesssim C\left( \Lambda  \right) {\tau _q}  \mu _{q + 1}^{ - 1}\mathop {\sup }\limits_{t \in \left[ {{t_k},{t_{k + 1}}} \right)} \left| {f_{\xi ,k,n + 1}^{\left[ 1 \right]}\left( t \right)} \right|{\left\| {{A_{\xi ,k,{n+1}}}} \right\|_{N + 1}} \lesssim {\tau _q} \mu _{q + 1}^{ - 1}{\left\| {{A_{\xi ,k,{n+1}}}} \right\|_{N + 1}} ,
\end{equation*}
we infer from \eqref{Newtonsteps-A-B-31} and \eqref{Newtonsteps-A-B-33} that
\begin{align*}
{\left\| {w_{k,n + 1}^{\left( l \right)}} \right\|_N} \lesssim & {\tau _q} \mu _{q + 1}^{ - 1}{\left\| {{A_{\xi ,k,{n+1}}}} \right\|_{N + 1}} + \tau _q^2 \mu _{q + 1}^{ - 1}{\left\| {{{\bar v}_q}} \right\|_N}{\left\| {{A_{\xi ,k,{n+1}}}} \right\|_2} \\
& + \int_{{t_k}}^{{t_{k + 1}}}  {{{\left\| {w_{k,n + 1}^{\left( l \right)}} \right\|}_N}{{\left\| {{{\bar v}_q}} \right\|}_1} + \mu _{q + 1}^{ - 1}{\delta _{q + 1,n}}{\lambda _q}{{\left\| {{{\bar v}_q}} \right\|}_{N + 1}}}\ \mathrm{d}s \\
& + \int_{{t_k}}^{{t_{k + 1}}}  {\left| {t_{k+1} - s} \right|{{\left\| {{{\bar v}_q}} \right\|}_N}\left( {{{\left\| {w_{k,n + 1}^{\left( l \right)}} \right\|}_1}{{\left\| {{{\bar v}_q}} \right\|}_1} + \mu _{q + 1}^{ - 1}{\delta _{q + 1,n}}{\lambda _q}{{\left\| {{{\bar v}_q}} \right\|}_2}} \right)}\ \mathrm{d}s \\
& + \int_{{t_k}}^{{t_{k + 1}}}  {{{\left\| {\sum\limits_{\xi  \in \Lambda } {{f_{\xi ,k,n + 1}}\left( {{\mu _{q + 1}}s} \right)\Pi {\mathrm{div}}\, {A_{\xi ,k,{n+1}}}\left( s \right)} } \right\|}_N}}\ \mathrm{d}s \\
& + \int_{{t_k}}^{{t_{k + 1}}}  {\left| {t_{k+1} - s} \right| {{\left\| {{{\bar v}_q}} \right\|}_N}{{\left\| {\sum\limits_{\xi  \in \Lambda } {{f_{\xi ,k,n + 1}}\left( {{\mu _{q + 1}}s} \right)\Pi {\mathrm{div}}\, {A_{\xi ,k,{n+1}}}\left( s \right)} } \right\|}_1}}\ \mathrm{d}s .
\end{align*}
Therefore, it follows from \eqref{lemma-GLTNNRS-B-M-1} and \eqref{Newtonsteps-A-B-19} that
\begin{align}\label{Newtonsteps-A-B-34}
{\left\| {w_{k,n + 1}^{\left( l \right)}} \right\|_N} \lesssim & \mu _{q + 1}^{ - 1}{\delta _{q + 1,n}}\lambda _q^{N + 1} {\tau _q}  \left( {1 + {\tau _q} \delta _q^{\frac{1}{2}}{\lambda _q}} \right)  + \delta _q^{\frac{1}{2}}{\lambda _q}\int_{{t_k}}^{{t_{k + 1}}}  {{{\left\| {w_{k,n + 1}^{\left( l \right)}} \right\|}_N}}\ \mathrm{d}s  \nonumber \\
& + {\tau _q} {\delta _q}\lambda _q^{N + 1}\int_{{t_k}}^{{t_{k + 1}}}  {{{\left\| {w_{k,n + 1}^{\left( l \right)}} \right\|}_1}}\ \mathrm{d}s .
\end{align}
\par
Taking $N = 1$ in \eqref{Newtonsteps-A-B-34}, we find
\begin{equation*}
{\left\| {w_{k,n + 1}^{\left( l \right)}} \right\|_1} \lesssim \mu _{q + 1}^{ - 1}{\delta _{q + 1,n}}\lambda _q^2 {\tau _q} \left( {1 + {\tau _q} \delta _q^{\frac{1}{2}}{\lambda _q} } \right) + \left( {\delta _q^{\frac{1}{2}}{\lambda _q} + {\tau _q}{\delta _q}\lambda _q^2} \right)\int_{{t_k}}^{{t_{k + 1}}}  {{{\left\| {w_{k,n + 1}^{\left( l \right)}} \right\|}_1}}\ \mathrm{d}s ,
\end{equation*}
which implies that
\begin{align}\label{Newtonsteps-A-B-35}
{\left\| {w_{k,n + 1}^{\left( l \right)}} \right\|_1} \lesssim & \mu _{q + 1}^{ - 1}{\delta _{q + 1,n}}\lambda _q^2 {\tau _q} \left( {1 + {\tau _q} \delta _q^{\frac{1}{2}}{\lambda _q} } \right)\exp \left( {{\tau _q} \left( {\delta _q^{\frac{1}{2}}{\lambda _q} + {\tau _q} {\delta _q}\lambda _q^2} \right)} \right)  \nonumber \\
\lesssim & {\tau _q} \mu _{q + 1}^{ - 1}{\delta _{{q + 1},n}}\lambda _q^2l_q^{ - \alpha } .
\end{align}
Invoking \eqref{Newtonsteps-A-B-34}, \eqref{Newtonsteps-A-B-35} and \eqref{parameters-S-10}, we obtain
\begin{equation*}
{\left\| {w_{k,n + 1}^{\left( l \right)}} \right\|_N} \lesssim \tau _q^2 \mu _{q + 1}^{ - 1}{\delta _q}{\delta _{{q + 1},n}}\lambda _q^{N + 3}l_q^{ - \alpha } +  \delta _q^{\frac{1}{2}}{\lambda _q}\int_{{t_k}}^{{t_{k + 1}}}  {{{\left\| {w_{k,n + 1}^{\left( l \right)}} \right\|}_N}}\ \mathrm{d}s   ,
\end{equation*}
which implies that
\begin{align}\label{Newtonsteps-A-B-36}
{\left\| {w_{k,n + 1}^{\left( l \right)}} \right\|_N} \lesssim & \tau _q^2 \mu _{q + 1}^{ - 1}{\delta _q}{\delta _{{q + 1},n}}\lambda _q^{N + 3}l_q^{ - \alpha }\exp \left( {{\tau _q} \delta _q^{\frac{1}{2}}{\lambda _q}} \right)  \nonumber \\
\lesssim & \mu _{q + 1}^{ - 1}{\delta _{{q + 1},n}}\lambda _q^{N + 1}l_q^{ - \alpha }     .
\end{align}
Furthermore, it follows from \eqref{lemma-GLTNNRS-B-M-1}, \eqref{Newtonsteps-A-B-29} and \eqref{Newtonsteps-A-B-36} that
\begin{equation*}
{\left\| {{{\bar D}_{t,q}}\Psi } \right\|_N} \lesssim {\left\| {w_{k,n + 1}^{\left( l \right)}} \right\|_N}{\left\| {{{\bar v}_q}} \right\|_1} + {\left\| {w_{k,n + 1}^{\left( l \right)}} \right\|_0}{\left\| {{{\bar v}_q}} \right\|_{N + 1}} \lesssim {\delta _{{q + 1},n}}\lambda _q^{N + 1}l_q^{ - \alpha }   .
\end{equation*}
Employing the similar argument in \eqref{Newtonsteps-A-B-40}, we deduce that
\begin{align}\label{Newtonsteps-A-B-41}
{\left\| {{{\bar D}_{t,q}}w_{k,n + 1}^{\left( l \right)}} \right\|_N} \lesssim & {\left\| {{{\bar D}_{t,q}}\Psi } \right\|_N} + {\left\| {{{\bar D}_{t,q}}\bar \Psi } \right\|_N}  \nonumber \\
\lesssim & {\left\| {{{\bar D}_{t,q}}\Psi } \right\|_N} + {\left\| {\sum\limits_{\xi  \in \Lambda } {{f_{\xi ,k,n + 1}}\left( {{\mu _{q + 1}}t} \right)\Pi {\mathrm{div}}\, {A_{\xi ,k,{n+1}}}} } \right\|_N}  \nonumber \\
\lesssim & {\delta _{{q + 1}}}\lambda _q^{N + 1}l_q^{ - \alpha }    .
\end{align}
\par
Consequently, we derive from \eqref{Newtonsteps-A-B-29}, \eqref{Newtonsteps-A-B-36} and the definition \eqref{Newtonsteps-A-4} that
\begin{equation*}
{\left\| {w_{q + 1,n + 1}^{\left( n \right)}} \right\|_N} \lesssim \mathop {\sup }\limits_{k \in {\mathbb{Z}_{q,n}}} {\left\| {w_{k,n + 1}^{\left( l \right)}} \right\|_N}  \lesssim  \mu _{q + 1}^{ - 1}{\delta _{q + 1}}\lambda _q^{N + 1}l_q^{ - \alpha }   .
\end{equation*}
Combining \eqref{Newtonsteps-A-B-40}, \eqref{Newtonsteps-A-B-41} and the definition \eqref{Newtonsteps-A-4}, we obtain
\begin{equation*}
{\left\| {{{\bar D}_{t,q}}w_{q + 1,n + 1}^{\left( n \right)}} \right\|_N} \lesssim  C\left( n \right)\mathop {\sup }\limits_{k \in {\mathbb{Z}_{q,n}}} {\left\| {{{\bar D}_{t,q}}w_{k,n + 1}^{\left( l \right)}} \right\|_N} \lesssim  {\delta _{q + 1}}\lambda _q^{N + 1}l_q^{ - \alpha }  . 
\end{equation*}
The proof is complete. \qed
\section{Nash iteration scheme}\label{NSAH-US}
This section constructs the Nash perturbation with the prescribed energy gap to iterate the $q$-step stochastic Euler--Reynolds system \eqref{ERNE-3A.1} into its $\left( {q + 1} \right)$-step counterpart. The Nash perturbation consists of principal perturbations, divergence correctors and intermittency perturbations, which generate additional stochastic pressure and random Reynolds stress. The Reynolds stress incorporates the linear error, the intermittent oscillation error and the random oscillation error. Since progressive measurability is preserved throughout the Newton iteration layer, all Newton--Nash elements constructed in this section remain progressively measurable with respect to the underlying filtration. Furthermore, all estimates for the Nash iteration are restricted to the stopping time interval $\mathop  \cap \limits_{j = 0}^{q + 1} \left[ {0,{\mathfrak{t}_j}} \right]$.
\subsection{Nash perturbation and iteration scheme}\label{NISTYU-SKO-A}
Denote ${{\bar v}_{q,\Gamma }}\left( {t,x} \right) \triangleq {{\bar v}_q}\left( {t,x} \right) + w_{q + 1}^{\left( \Gamma \right)} \left( {t,x} \right)$. Let ${{\tilde X}_t}$ be the Lagrangian flow satisfying
\begin{equation}\label{KDKJFIG-DNGJU-1-1}
\left\{ {\begin{array}{*{20}{l}}
{\partial _s} {{\tilde X}_k}\left( {x,t} \right) = {{\bar v}_{q, \Gamma }}\left( {{{\tilde X}_k}\left( {x,t} \right),t} \right),\\
{{\tilde X}_k}\left( {x,t} \right) = x ,
\end{array}} \right.
\end{equation}
and let ${{\tilde \Phi }_k}\left( {t,x} \right)$ be the backward flow of ${{\bar v}_{q, \Gamma }} \left( {t,x} \right)$ such that
\begin{equation}\label{Nashsteps-A-P-1}
\left\{ {\begin{array}{*{20}{l}}
{\partial _t}{{\tilde \Phi }_k} + \left( {{{\bar v}_{q, \Gamma }} \cdot \nabla } \right){{\tilde \Phi }_k} = 0,\\
{{\tilde \Phi }_k}\left( {{t_k}, x} \right) = x .
\end{array}} \right. 
\end{equation}
Choose the temporal mollifier ${{\tilde \varsigma }_{{{\tilde l}_{q}}}}$ and the spatial mollifier ${{\hat \varsigma }_{{{\hat l}_q}}}$, where the mollification scales ${{\tilde l}_{q}}$ and ${{\hat l}_q}$ are defined by \eqref{parameters-S-11} and \eqref{parameters-S-11-A-1}, respectively. Denote $\hat f \triangleq f * {{\hat \varsigma }_{{{\hat l}_q}}}$ and 
\begin{equation}\label{Nashsteps-A-AIJS-P-2}
{{\tilde R}_{q,n}}\left( {t,x} \right) \triangleq \int_{ - {{\tilde l}_{q}}}^{{{\tilde l}_{q}}} {{{ \mathring{R}}_{q,n}}\left( {t + s,{{\tilde X}_t}\left( {x,t + s} \right)} \right){{\tilde \varsigma }_{{{\tilde l}_{q}}}}\left( s \right)}\ \mathrm{d}s .
\end{equation}
\par
Let $\mathfrak{e}\left( t \right)$ be a strictly decreasing function and satisfy
\begin{equation}
0 \leqslant \mathfrak{e}\left( t \right) - \left\| {{v_q}} \right\|_{{L^2}}^2 \lesssim {\delta _{q + 1}}  .  \label{SJKFJIFJSKJGG-ASD-1}
\end{equation}
We choose the amplitude function
\begin{equation}\label{Nashsteps-A-P-2}
{{\tilde \eta }_{\xi ,k,n}} = {\mathfrak{E}_{q,\xi ,k,n}^{\frac{1}{2}}}{\chi _k}{\gamma _\xi }\left( {\nabla {{\tilde \Phi }_k}\nabla \tilde \Phi _k^ \bot  - {\mathfrak{E}_{q,\xi ,k,n}^{ - 1}}\nabla {{\tilde \Phi }_k}{{\tilde R}_{q,n}}\nabla \tilde \Phi _k^ \bot } \right) ,
\end{equation}
where 
\begin{equation*}
{\mathfrak{E}_{q,\xi ,k,n}} \triangleq \frac{1}{{8a_W^2\left| \Lambda  \right|\left| {{\mathbb{Z}_{q,n}}} \right|\left( {\Gamma  + 1} \right)}}{\mathscr{\hat M}^{ - 1}}\left( {t,r} \right)g_{\xi ,k,n + 1}^{ - 2} \left( {{\mu _{q + 1}}t} \right) \chi _k^{ - 2}{\mathfrak{E}_q} ,
\end{equation*}
and the energy gap reads
\begin{equation}\label{LOKJI-A-FGH-A-1}
{\mathfrak{E}_q} = \mathfrak{e}\left( t \right) - \left\| {{{\hat u}_q}} \right\|_{{L^2}}^2 - \frac{1}{2}{\delta _{q + 2}} + \left\| {{\mathfrak{\hat W}_q}} \right\|_{{L^2}}^2 + 2{\int_0^t {\left\langle {{{\hat v}_q},\mathrm{d}{\mathfrak{W}_q}} \right\rangle } _{{\mathbb{T}^2}}} .
\end{equation}
In addition, the function $\mathscr{\hat M}\left( {t,r} \right) = \int_{{\mathbb{T}^2}} {{{\cos }^2}\left( {{\xi ^ \bot } \cdot {\lambda _{q + 1}}{{\tilde \Phi }_k}} \right)\mathfrak{D}_{\tilde \mu }^2\left( {t,{\lambda _{q + 1}}{{\tilde \Phi }_k}} \right)} \mathrm{d}x$ satisfies ${C_1} \leqslant \left| { \mathscr{\hat M}\left( {t,r} \right)} \right| \leqslant {C_2}$ for any $t \in \left[ {0,T} \right]$ and $r \in \mathbb{Z}^+$ (see Lemma \ref{IBW-P-A-1} for more details), where the intermittent Dirichlet kernel ${\mathfrak{D}_{\tilde \mu }}\left( {t,x} \right)$ is defined by \eqref{IBW-GJI-A-1}. It is straightforward to check that
\begin{equation*}
\left| {{\mathfrak{E}_{q }}} \right| \gtrsim {\delta _{q }} - {\delta _{q + 2}} \gtrsim {\delta _{q + 1,\Gamma }}  ,
\end{equation*}
and
\begin{equation*}
{\mathrm{supp}_\mathrm{t}} \, {{\tilde \eta }_{\xi ,k,n}} \subset {\mathrm{supp}_\mathrm{t}} \, {{\tilde R}_{q,n}} \subset {\mathrm{supp}_\mathrm{t}} \, {\mathring{R}_{q,n}} .
\end{equation*}
Since $\left| {{\mathfrak{E}_{0,\xi ,k,n}}} \right| \lesssim \left| {{\mathfrak{E}_0}} \right| \lesssim {\delta _1}$, without loss of generality, we assume that
\begin{equation*}
\left| {{\mathfrak{E}_q}} \right| \lesssim {\delta _{q + 1}} .
\end{equation*}
We verify the inductive estimate of ${\mathfrak{E}_q}$ in Proposition \ref{RSRI-ESI-GAP}.
\par
Define the principal part of Nash perturbation
\begin{equation}\label{Nashsteps-A-P-3}
w_{q + 1}^{\left( p \right)}\left( {t,x} \right) \triangleq \sum\limits_{n = 0}^\Gamma  {\sum\limits_{k \in {\mathbb{Z}_{q,n}}} {\sum\limits_{\xi  \in \Lambda } {{g_{\xi ,k,n + 1}}\left( {{\mu _{q + 1}}t} \right){{\tilde \eta }_{\xi ,k,n}}{{\left( {\nabla {{\tilde \Phi }_k}} \right)}^{ - 1}}{\mathbb{W}_\xi }\left( {t,{\lambda _{q + 1}}{{\tilde \Phi }_k}} \right)} } }  ,
\end{equation}
where ${\mathbb{W}_\xi }\left( {t,\cdot} \right)$ is the intermittent building block specified by \eqref{IBW-GJI-A-2}. To construct the divergence-free Nash perturbation, we take the corrector part
\begin{align}\label{Nashsteps-A-P-4}
w_{q + 1}^{\left( c \right)}\left( {t,x} \right) \triangleq  \lambda _{q + 1}^{ - 1}\sum\limits_{n = 0}^\Gamma  \sum\limits_{k \in {\mathbb{Z}_{q,n}}} \sum\limits_{\xi  \in \Lambda } & {{g_{\xi ,k,n + 1}}\left( {{\mu _{q + 1}}t} \right){\mathfrak{S}_\xi }\left( {{\lambda _{q + 1}}{{\tilde \Phi }_k}} \right)}   \nonumber  \\
&  {\nabla ^ \bot }\left( {{{\tilde \eta }_{\xi ,k,n}}{{\left( {\nabla {{\tilde \Phi }_k}} \right)}^{ - 1}}{{\mathfrak{D}_{\tilde \mu }}}\left( {t,{\lambda _{q + 1}}{{\tilde \Phi }_k}} \right)\left( {\nabla {{\tilde \Phi }_k}} \right)} \right) ,
\end{align}
where ${\mathfrak{S}_\xi }\left( { \cdot } \right)$ is the stream function given by \eqref{IUHYU-DJ}, ${{\mathfrak{D}_{\tilde \mu }}}\left( {t, \cdot } \right)$ is the intermittent Dirichlet kernel defined by \eqref{XINHUI-SJI-A}, and ${\nabla ^ \bot } \triangleq \left( { - {\partial _{{x_2}}},{\partial _{{x_1}}}} \right)$. Owing to \eqref{BWsdfgg-P-A-1}, we have
\begin{equation*}
{W_\xi }\left( {{\lambda _{q + 1}}{{\tilde \Phi }_k}} \right) = \lambda _{q + 1}^{ - 1}\left( {\nabla {{\tilde \Phi }_k}} \right) \cdot {\nabla ^ \bot }\mathfrak{S}_{\xi} \left( {{\lambda _{q + 1}}{{\tilde \Phi }_k}} \right) ,
\end{equation*}
which implies that
\begin{equation}\label{Nashsteps-A-P-5}
{\mathbb{W}_\xi }\left( {t,{\lambda _{q + 1}}{{\tilde \Phi }_k}} \right) = \lambda _{q + 1}^{ - 1}{\mathfrak{D}_{\tilde \mu }}\left( {t,{\lambda _{q + 1}}{{\tilde \Phi }_k}} \right)\left( {\nabla {{\tilde \Phi }_k}} \right) \cdot {\nabla ^ \bot }\mathfrak{S}_{\xi} \left( {{\lambda _{q + 1}}{{\tilde \Phi }_k}} \right) .
\end{equation}
Therefore, it follows from \eqref{Nashsteps-A-P-3}-\eqref{Nashsteps-A-P-5} that
\begin{align}\label{Nashsteps-A-P-6}
\mathrm{div}\left( {w_{q + 1}^{\left( p \right)} + w_{q + 1}^{\left( c \right)}} \right) = & \lambda _{q + 1}^{ - 1}\mathrm{div} \sum\limits_{n = 0}^\Gamma  {\sum\limits_{k \in {\mathbb{Z}_{q,n}}} {\sum\limits_{\xi  \in \Lambda } {{g_{\xi ,k,n + 1}}\left( {{\mu _{q + 1}}t} \right){{\tilde \eta }_{\xi ,k,n}}{{\left( {\nabla {{\tilde \Phi }_k}} \right)}^{ - 1}}} } }  \nonumber \\
& \hspace*{9em}  {\mathfrak{D}_{\tilde \mu }}\left( {t,{\lambda _{q + 1}}{{\tilde \Phi }_k}} \right)\left( {\nabla {{\tilde \Phi }_k}} \right) \cdot {\nabla ^ \bot }\mathfrak{S}_{\xi} \left( {{\lambda _{q + 1}}{{\tilde \Phi }_k}} \right) \nonumber \\
& + \lambda _{q + 1}^{ - 1}\mathrm{div} \sum\limits_{n = 0}^\Gamma  {\sum\limits_{k \in {\mathbb{Z}_{q,n}}} {\sum\limits_{\xi  \in \Lambda } {{g_{\xi ,k,n + 1}}\left( {{\mu _{q + 1}}t} \right){\mathfrak{S}_\xi }\left( {{\lambda _{q + 1}}{{\tilde \Phi }_k}} \right)} } }   \nonumber \\
&  \hspace*{9em}  {\nabla ^ \bot }\left( {{{\tilde \eta }_{\xi ,k,n}}{{\left( {\nabla {{\tilde \Phi }_k}} \right)}^{ - 1}}{{\mathfrak{D}_{\tilde \mu }}}\left( {t,{\lambda _{q + 1}}{{\tilde \Phi }_k}} \right)\left( {\nabla {{\tilde \Phi }_k}} \right)} \right)   \nonumber \\ 
= & \lambda _{q + 1}^{ - 1}\mathrm{div} \, {\nabla ^ \bot }\sum\limits_{n = 0}^\Gamma  {\sum\limits_{k \in {\mathbb{Z}_{q,n}}} {\sum\limits_{\xi  \in \Lambda } {{g_{\xi ,k,n + 1}}\left( {{\mu _{q + 1}}t} \right){{\tilde \eta }_{\xi ,k,n}}{{\left( {\nabla {{\tilde \Phi }_k}} \right)}^{ - 1}}} } }  \nonumber \\ 
&  \hspace*{9em}  {\mathfrak{D}_{\tilde \mu }}\left( {t,{\lambda _{q + 1}}{{\tilde \Phi }_k}} \right)\left( {\nabla {{\tilde \Phi }_k}} \right) \cdot \mathfrak{S}_{\xi} \left( {{\lambda _{q + 1}}{{\tilde \Phi }_k}} \right)  \nonumber \\ 
= & 0 .
\end{align}
Moreover,
\begin{equation}\label{THENDM-DKF-SUPP-1}
{\mathrm{supp}_\mathrm{t}} \, \left( {w_{q + 1}^{\left( p \right)} + w_{q + 1}^{\left( c \right)}} \right) \subset {\mathrm{supp}_\mathrm{t}} \, {{\tilde \eta }_{\xi ,k,n}} .
\end{equation}
\par
In view of \eqref{IBW-P-A-FGHY}, the intermittent building block ${\mathbb{W}_{\xi }}\left( {t,x} \right)$ exhibits the temporal oscillation term $4{{\tilde \mu }^{ - 1}}a_W^2\xi {\cos ^2}\left( {\xi  \cdot x} \right){\partial _t}\left( {\mathfrak{D}_{\tilde \mu }^2} \right)$. To quantify its intermittency intensity, we define the intermittency perturbation
\begin{equation}\label{THENDM-DKF}
w_{q + 1}^{\left( i \right)}\left( {t,x} \right) \triangleq {{\tilde \mu }^{ - 1}}\sum\limits_{n = 0}^\Gamma  {\sum\limits_{k \in {\mathbb{Z}_{q,n}}} {\sum\limits_{\xi  \in \Lambda } {\Pi {\Pi _{ \ne 0}}{g_{\xi ,k,n + 1}}\left( {{\mu _{q + 1}}t} \right)\xi ^ \bot \tilde \eta _{\xi ,k,n}^2\mathfrak{D}_{\tilde \mu }^2\left( {t,{\lambda _{q + 1}}{{\tilde \Phi }_k}} \right)} } } ,
\end{equation}
where ${\Pi _{ \ne 0}} \triangleq \mathrm{Id} - \int_{{\mathbb{T}^2}}  \cdot  \mathrm{d}x$ is the frequency operator of $\Pi$. We observe that $\mathrm{div} \, w_{q + 1}^{\left( i \right)} = 0$ and
\begin{equation}\label{THENDM-DKF-SUPP-2}
{\mathrm{supp}_\mathrm{t}} \, w_{q + 1}^{\left( i \right)} \subset {\mathrm{supp}_\mathrm{t}} \, {{\tilde \eta }_{\xi ,k,n}} .
\end{equation}
Then, the total Nash perturbation reads
\begin{equation}\label{Nashsteps-A-P-7}
w_{q + 1}^{\left( s \right)}\left( {t,x} \right) = w_{q + 1}^{\left( p \right)}\left( {t,x} \right) + w_{q + 1}^{\left( c \right)}\left( {t,x} \right) + w_{q + 1}^{\left( i \right)}\left( {t,x} \right) ,
\end{equation}
which satisfies ${\mathrm{supp}_\mathrm{t}} \, w_{q + 1}^{\left( s \right)} \subset {\mathrm{supp}_\mathrm{t}} \, {{\tilde \eta }_{\xi ,k,n}}$ and
\begin{equation}\label{Nashsteps-A-P-7-sjdi}
\mathrm{div}\,   w_{q + 1}^{\left( s \right)} = 0 .
\end{equation}
\par
Denote the total Newton--Nash perturbation
\begin{equation}\label{NISTYU-SKO-A-PPA-1}
{w_{q + 1}}\left( {t,x} \right) \triangleq w_{q + 1}^{\left( \Gamma \right)}\left( {t,x} \right) + w_{q + 1}^{\left( s \right)}\left( {t,x} \right) .
\end{equation}
We choose the $\left( {q + 1} \right)$-step Nash elements
\begin{equation}\label{NISTYU-SKO-A-PPA-2}
{u_{q + 1}} \left( {t,x} \right) = {u_q} \left( {t,x} \right) + {w_{q + 1}} \left( {t,x} \right) + \mathfrak{W}_{q + 1}^{loc} \left( {t,x} \right) ,\ \ {v_{q + 1}} \left( {t,x} \right) = {v_q} \left( {t,x} \right) +  {w_{q + 1}} \left( {t,x} \right) , 
\end{equation}
\begin{equation}\label{NISTYU-SKO-A-PWWWPA-2}
{\mathfrak{W}_{q + 1}} \left( {t,x} \right) = {\mathfrak{W}_q} \left( {t,x} \right) + \mathfrak{W}_{q + 1}^{loc} \left( {t,x} \right) , 
\end{equation}
\begin{equation}\label{NISTYU-SKO-A-PPA-3}
{\mathfrak{p}_{q + 1}} \left( {t,x} \right) = {\mathfrak{p}_{q,\Gamma }} \left( {t,x} \right) - \mathfrak{p}_{q + 1}^s\left( {t,x} \right) ,
\end{equation}
and the $\left( {q + 1} \right)$-step Reynolds stress
\begin{equation}\label{NISTYU-SKO-A-PPA-4}
{\mathring{R}_{q + 1}} \left( {t,x} \right) = \mathring{R}_{q + 1}^{\left( l \right)} \left( {t,x} \right) + \mathring{R}_{q + 1}^{\left( i \right)} \left( {t,x} \right) + \mathring{R}_{q + 1}^{\left( r \right)} \left( {t,x} \right) ,
\end{equation}
where
\begin{equation}\label{NISTYU-SKO-A-PPA-FJU-sd}
\mathfrak{W}_{q + 1}^{loc} \left( {t,x} \right)  \triangleq  \frac{{\varsigma _{q+1}}}{\sqrt 2 \pi }{\mathfrak{B}_{q+1}}\left( t \right) \delta _{q}^{\alpha}  \delta _{q+2}^{\frac{1}{2}}\cos \left( {{\lambda _{q+1}}{x_1}} \right){{\mathbf{e}}_2} ,
\end{equation}
and the stochastic pressure reads
\begin{equation}\label{LIJIFNI-DKFJ-L-56}
\mathfrak{p}_{q + 1}^s\left( {t,x} \right) = {\nabla ^{ - 1}} \mathrm{div}\left( {{u_{q + 1}} \otimes \mathfrak{W}_{q + 1}^{loc} + \mathfrak{W}_{q + 1}^{loc} \otimes {u_{q + 1}} - \mathfrak{W}_{q + 1}^{loc} \otimes \mathfrak{W}_{q + 1}^{loc} + w_{q + 1}^{\left( s \right)} \otimes {\mathfrak{W}_q} + {\mathfrak{W}_q} \otimes w_{q + 1}^{\left( s \right)}  } \right) ,
\end{equation}
the linear error is
\begin{equation}\label{LIJIFNI-DKFJ-L-1}
\mathring{R}_{q + 1}^{\left( l \right)} = \mathcal{R} \left( {{{\bar D}_{t,\Gamma }}\left( {w_{q + 1}^{\left( p \right)} + w_{q + 1}^{\left( c \right)}} \right) - \left( {{{\bar v}_{q,\Gamma }} \cdot \nabla } \right){w_{q + 1}^{\left( s \right)}}} \right) ,
\end{equation}
the intermittent oscillation error is
\begin{equation}\label{LIJIFNI-DKFJ-O-2}
\mathring{R}_{q + 1}^{\left( i \right)} = \mathcal{R} \left( {\mathrm{div}\left( {{S_{q,\Gamma }} + w_{q + 1}^{\left( s \right)} \otimes w_{q + 1}^{\left( s \right)} } \right) + {\partial _t}w_{q + 1}^{\left( i \right)}} \right) ,
\end{equation}
and the random oscillation error is
\begin{equation}\label{LIJIFNI-DKFJ-R-3}
\mathring{R}_{q + 1}^{\left( r \right)} = {\mathring{R}_{q,\Gamma }} + {\mathfrak{P}_{q + 1,\Gamma }} + w_{q + 1}^{\left( s \right)} \otimes {v_{q,\Gamma }}  + {v_{q,\Gamma }} \otimes w_{q + 1}^{\left( s \right)} + {{\bar v}_{q,\Gamma }} \otimes w_{q + 1}^{\left( i \right)} .
\end{equation}
Here, we point out that ${u_{q,\Gamma }} = {u_q} + w_{q + 1}^{\left( \Gamma \right)}$, ${v_{q,\Gamma }} = {v_q} + w_{q + 1}^{\left( \Gamma \right)}$ and
\begin{equation}\label{HIE-WWW-1.1}
{\mathop {\sup }\limits_{t \in \left[ {0,{T}} \right]} }\mathbb{E}\left\| {{\mathfrak{W}_{q + 1}}\left( t \right) - {\mathfrak{W}_{q}}\left( t \right)} \right\|_N^2 \leqslant \frac{T{\varsigma _0 ^2}}{\pi ^2 }  {\delta _{q + 1}}\lambda _{q + 1}^{2N} , \ \ \forall \, N \in {\mathbb{Z}^ + } \cup \left\{ 0 \right\} ,
\end{equation}
\par
According to the temporal supports of Newtonian elements ${\mathring{R}_{q,\Gamma }}$, ${S_{q,\Gamma }}$ and ${\mathfrak{P}_{q + 1,\Gamma }}$ in \eqref{OPIUY-A-1} and \eqref{OPIUY-A-2}, we have
\begin{equation*}
{\mathrm{supp}_\mathrm{t}} \, {\mathring{R}_{q + 1}} \subset  {\mathrm{supp}_\mathrm{t}} \, {{\tilde \eta }_{\xi ,k,\Gamma}} \cup {\mathrm{supp}_\mathrm{t}} \, {\mathring{R}_{q + 1,\Gamma}} \cup {\mathrm{supp}_\mathrm{t}} \, {S_{q + 1,\Gamma}} \cup {\mathrm{supp}_\mathrm{t}} \, {\mathfrak{P}_{q + 1,\Gamma}}  ,
\end{equation*}
which implies that
\begin{equation}\label{LIJIFNI-DKFJ-R-ss3}
{\mathrm{supp}_\mathrm{t}} \, {\mathring{R}_{q + 1}} \subset \mathop  \cap \limits_{j = 0}^{q + 1} \left[ {0,{\mathfrak{t}_j}} \right] \cap \left( {\frac{1}{4}T ,\frac{3}{4}T - 2\delta _{q+1}^{ - \frac{1}{2}}\lambda _{q+1}^{ - 1} + 4\Gamma {\tau _{q+1}} } \right] .
\end{equation}
\par
\begin{lemma}\label{N+1s-NASH-PIT}
For any $t \in \mathop  \cap \limits_{j = 0}^{q + 1} \left[ {0,{\mathfrak{t}_j}} \right]$, the combination $\left( {{u_{q + 1}},{v_{q + 1}},{\mathfrak{p}_{q + 1}},{\mathring{R}_{q + 1}}} \right)$ fulfills the $\left( {q + 1} \right)$-step stochastic Euler--Reynolds system
\begin{equation}\label{KIU-EU-N-1}
\left\{ {\begin{array}{*{20}{l}}
{\partial _t}{v_{q + 1}} + \mathrm{div} \left( {{u_{q + 1}} \otimes {u_{q + 1}}} \right) + \nabla {\mathfrak{p}_{q + 1}} = \mathrm{div}\,   {\mathring{R}_{q + 1}},\\
{u_{q + 1}} = {v_{q + 1}} + \mathfrak{W} _{q + 1},\\
\mathrm{div}\,   {u_{q + 1}} = 0 .
\end{array}} \right.  
\end{equation}
\end{lemma}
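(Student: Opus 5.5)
This is a direct algebraic verification. I will substitute the definitions \eqref{NISTYU-SKO-A-PPA-2}--\eqref{LIJIFNI-DKFJ-R-3} into the left-hand side of \eqref{KIU-EU-N-1} and reduce everything to the $\Gamma$-step stochastic Newton--Euler--Reynolds system \eqref{N+1s-NE-PIT-P-9}, which holds on $\mathop  \cap \limits_{j = 0}^{q} \left[ {0,{\mathfrak{t}_j}} \right]$ by iterating Lemma \ref{N+1s-NE-PIT}.

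\textbf{Divergence and velocity identity.} Since ${u_{q+1}} = {u_{q,\Gamma}} + w_{q+1}^{(s)} + \mathfrak{W}_{q+1}^{loc}$ and ${v_{q+1}} = {v_{q,\Gamma}} + w_{q+1}^{(s)}$, the relation ${u_{q+1}}={v_{q+1}}+\mathfrak{W}_{q+1}$ follows from \eqref{NISTYU-SKO-A-PWWWPA-2} and ${u_{q,\Gamma}}={v_{q,\Gamma}}+\mathfrak{W}_q$. Divergence-freeness follows from \eqref{Nashsteps-A-P-7-sjdisdaf}, \eqref{Nashsteps-A-P-7-sjdi}, and the fact that $\mathfrak{W}_{q+1}^{loc}$ depends only on $x_1$ and points in the direction $\mathbf{e}_2$.

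\textbf{Momentum equation.} First compute $\partial_t v_{q+1} = \partial_t v_{q,\Gamma} + \partial_t w^{(s)}_{q+1}$. Next expand
\[
u_{q+1}\otimes u_{q+1} = u_{q,\Gamma}\otimes u_{q,\Gamma} + \big(w^{(s)}_{q+1}\otimes u_{q,\Gamma} + u_{q,\Gamma}\otimes w^{(s)}_{q+1}\big) + w^{(s)}_{q+1}\otimes w^{(s)}_{q+1} + (\text{terms with } \mathfrak{W}^{loc}_{q+1}).
\]
In the cross terms, write $u_{q,\Gamma}=v_{q,\Gamma}+\mathfrak{W}_q$. The pieces involving $\mathfrak{W}_q$, together with all terms involving $\mathfrak{W}^{loc}_{q+1}$, are exactly the tensor inside $\mathfrak{p}^s_{q+1}$ in \eqref{LIJIFNI-DKFJ-L-56}. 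After the symmetric cancellation $u_{q+1}\otimes\mathfrak{W}^{loc}+\mathfrak{W}^{loc}\otimes u_{q+1}-\mathfrak{W}^{loc}\otimes\mathfrak{W}^{loc}$, these are absorbed by $-\nabla\mathfrak{p}^s_{q+1}$. This requires reading ${\nabla^{-1}}\mathrm{div}$ as the operator whose gradient reproduces that divergence. Where this fails, the solenoidal remainder has to be moved into the random error; that is the point I would check most carefully. The remaining $v_{q,\Gamma}$ cross terms appear in $\mathring R^{(r)}_{q+1}$ as $w^{(s)}\otimes v_{q,\Gamma}+v_{q,\Gamma}\otimes w^{(s)}$.

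Using \eqref{N+1s-NE-PIT-P-9}, the divergence of ${\mathring{R}_{q,\Gamma}}+{\mathfrak{P}_{q+1,\Gamma}}$ is placed into $\mathring R^{(r)}_{q+1}$. The divergence of $S_{q,\Gamma}+w^{(s)}\otimes w^{(s)}$, plus $\partial_t w^{(i)}$, is placed into $\mathring R^{(i)}_{q+1}$. What is left is $\partial_t(w^{(p)}+w^{(c)})$. Adding and subtracting $(\bar v_{q,\Gamma}\cdot\nabla)w^{(s)}$ gives $\bar D_{t,\Gamma}(w^{(p)}+w^{(c)}) - (\bar v_{q,\Gamma}\cdot\nabla) w^{(s)} + \mathrm{div}(\bar v_{q,\Gamma}\otimes w^{(i)})$. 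This uses $\mathrm{div}\, w^{(s)}=0$ to write $(\bar v\cdot\nabla)w^{(i)}=\mathrm{div}(w^{(i)}\otimes\bar v)$, and this is where the term $\bar v_{q,\Gamma}\otimes w^{(i)}$ in \eqref{LIJIFNI-DKFJ-R-3} comes from. The result is $\mathrm{div}\,\mathring R^{(l)}_{q+1}$ plus this correction.

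Whenever $\mathcal R$ is applied, I invoke Lemma \ref{PO-IDO-A}: $\mathrm{div}\,\mathcal R f=f$ for mean-zero $f$. The arguments are mean-zero because they are time derivatives or divergences of mean-zero fields on $\mathbb{T}^2$, up to pressure gradients, which carry zero mean.

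\textbf{Main obstacle.} The hard part is the bookkeeping of the mean-zero, non-gradient parts in the oscillation terms. In particular, $\mathrm{div}(S_{q,\Gamma}+w^{(s)}\otimes w^{(s)})+\partial_t w^{(i)}$ must have zero mean, which follows from its divergence form. Any gradient parts produced along the way are absorbed into $\mathfrak p_{q+1}$ via $\mathfrak p_{q,\Gamma}$. Summing all contributions then gives \eqref{KIU-EU-N-1} on $\mathop  \cap \limits_{j = 0}^{q+1} \left[ {0,{\mathfrak{t}_j}} \right]$.
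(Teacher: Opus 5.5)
Your verification follows the same route as the paper. Both reduce the $(q+1)$-step momentum equation to the $\Gamma$-step Newton--Euler--Reynolds system \eqref{N+1s-NE-PIT-P-9}, and both match terms exactly as you do:
\begin{itemize}
\item $S_{q,\Gamma}+w^{(s)}_{q+1}\otimes w^{(s)}_{q+1}$ and $\partial_t w^{(i)}_{q+1}$ go into $\mathring{R}^{(i)}_{q+1}$;
\item $\mathring{R}_{q,\Gamma}+\mathfrak{P}_{q+1,\Gamma}$ and the $v_{q,\Gamma}$ cross terms go into $\mathring{R}^{(r)}_{q+1}$;
\item $\partial_t(w^{(p)}_{q+1}+w^{(c)}_{q+1})$ goes into $\mathring{R}^{(l)}_{q+1}$, corrected by $\bar v_{q,\Gamma}\otimes w^{(i)}_{q+1}$.
\end{itemize}
The paper works through the defect $\mathfrak{G}$: it subtracts the $q$-step system \eqref{ERNE-3A.1} and then recognizes the $\Gamma$-step system. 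Your direct substitution via $v_{q+1}=v_{q,\Gamma}+w^{(s)}_{q+1}$ and $u_{q+1}=u_{q,\Gamma}+w^{(s)}_{q+1}+\mathfrak{W}^{loc}_{q+1}$ is shorter. Your divergence check is also more complete than the paper's, since it includes $\mathfrak{W}^{loc}_{q+1}$.

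One small slip: the paper's last cancellation uses the convention $\mathrm{div}(a\otimes b)=(a\cdot\nabla)b+(\mathrm{div}\,a)\,b$. Under it, the identity you need is $(\bar v_{q,\Gamma}\cdot\nabla)w^{(i)}_{q+1}=\mathrm{div}(\bar v_{q,\Gamma}\otimes w^{(i)}_{q+1})$. That identity rests on $\mathrm{div}\,\bar v_{q,\Gamma}=0$, not on $\mathrm{div}\,w^{(s)}_{q+1}=0$.

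The mean-zero bookkeeping you call the main obstacle is routine. The real issue is the step you flag in passing, and the fallback you propose there is not available. Write $T$ for the tensor inside \eqref{LIJIFNI-DKFJ-L-56}. Your cancellation requires $\nabla\mathfrak{p}^s_{q+1}=\mathrm{div}\,T$, but $\mathrm{div}\,T$ is not a gradient in general. Already its $\mathfrak{W}^{loc}_{q+1}$-part fails: with $\mathfrak{W}^{loc}_{q+1}=c(t)\cos(\lambda_{q+1}x_1)\mathbf{e}_2$, one computes
\[
\partial_1F_2-\partial_2F_1=c(t)\cos(\lambda_{q+1}x_1)\bigl(\partial_2\omega_{q+1}-\lambda_{q+1}^2u^1_{q+1}\bigr),\qquad F\triangleq\mathrm{div}\bigl(u_{q+1}\otimes\mathfrak{W}^{loc}_{q+1}+\mathfrak{W}^{loc}_{q+1}\otimes u_{q+1}\bigr),
\]
where $\omega_{q+1}=\partial_1u^2_{q+1}-\partial_2u^1_{q+1}$; this is nonzero in general. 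So for any genuine scalar $\mathfrak{p}^s_{q+1}$ (for instance $\Delta^{-1}\mathrm{div}\,\mathrm{div}\,T$), the solenoidal remainder $\Pi\,\mathrm{div}\,T$ survives. You cannot push it into the random error, because $\mathring{R}^{(r)}_{q+1}$ is fixed by \eqref{LIJIFNI-DKFJ-R-3} and contains no such term.

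The paper's proof has the same hole. In its expression for $\mathfrak{G}$, the term $-\nabla\mathfrak{p}^s_{q+1}$ is simply cancelled against the divergence of these tensors, so it tacitly reads $\nabla\mathfrak{p}^s_{q+1}$ as $\mathrm{div}\,T$. Your argument is therefore exactly as complete as the paper's, and the lemma holds only under that non-standard reading of $\nabla^{-1}\mathrm{div}$.

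A genuine repair would have to put $T$ (say its traceless part, with the trace absorbed into the pressure) into $\mathring{R}_{q+1}$. That is not cosmetic: for example, $w^{(s)}_{q+1}\otimes\mathfrak{W}_q$ has natural size $\delta_{q+1}^{1/2}\|\mathfrak{W}_q\|_0$, far above the bound $\delta_{q+2}\lambda_{q+1}^{-\alpha}$ required in Proposition \ref{RSRI-main}.
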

\par
\noindent{\textbf{Proof}}. 
Employing Eq.\eqref{ERNE-3A.1}, \eqref{NISTYU-SKO-A-PPA-1}, \eqref{NISTYU-SKO-A-PPA-2}, \eqref{NISTYU-SKO-A-PPA-3} and \eqref{NISTYU-SKO-A-PPA-4}, we obtain
\begin{align*}
&{\partial _t}\left( {{v_{q + 1}} - {w_{q + 1}}} \right) + \mathrm{div}\left( {\left( {{u_{q + 1}} - {w_{q + 1}}} \right) \otimes \left( {{u_{q + 1}} - {w_{q + 1}}} \right)} \right) + \nabla \left( {{\mathfrak{p}_{q + 1}} - {\mathfrak{p}_{q,\Gamma }} + {\mathfrak{p}_q} + \mathfrak{p}_{q + 1}^s} \right) \\
= & \mathrm{div}\left( {{\mathring{R}_{q + 1}} - \mathring{R}_{q + 1}^{\left( l \right)} - \mathring{R}_{q + 1}^{\left( i \right)} - \mathring{R}_{q + 1}^{\left( r \right)} + {\mathring{R}_{q}}} \right) ,
\end{align*}
which implies that
\begin{equation}\label{N+1s-NASH-PIT-PP-1}
{\partial _t}{v_{q + 1}} + \mathrm{div}\left( {{u_{q + 1}} \otimes {u_{q + 1}}} \right) + \nabla {\mathfrak{p}_{q + 1}} - \mathrm{div}\,   {\mathring{R}_{q + 1}} = \mathfrak{G} ,
\end{equation}
with
\begin{align*}
\mathfrak{G} \triangleq & \mathrm{div}\left( {{u_{q + 1}} \otimes {w_{q + 1}} + {w_{q + 1}} \otimes {u_{q + 1}} - {w_{q + 1}} \otimes {w_{q + 1}}} \right) \\
& + {\partial _t}{w_{q + 1}} + \nabla {\mathfrak{p}_{q,\Gamma }} - \nabla {\mathfrak{p}_q} - \nabla \mathfrak{p}_{q + 1}^s + \mathrm{div}\,   \mathring{R}_{q} - \mathrm{div}\left( {\mathring{R}_{q + 1}^{\left( l \right)} + \mathring{R}_{q + 1}^{\left( i \right)} + \mathring{R}_{q + 1}^{\left( r \right)}} \right) \\
& + \mathrm{div}\left( {{u_{q + 1}} \otimes \mathfrak{W}_{q + 1}^{loc} + \mathfrak{W}_{q + 1}^{loc} \otimes {u_{q + 1}} - \mathfrak{W}_{q + 1}^{loc} \otimes \mathfrak{W}_{q + 1}^{loc}} \right) \\
&  - \mathrm{div} \left( {{w_{q + 1}} \otimes \mathfrak{W}_{q + 1}^{loc} + \mathfrak{W}_{q + 1}^{loc} \otimes {w_{q + 1}}  + w_{q + 1}^{\left( s \right)} \otimes {\mathfrak{W}_q} + {\mathfrak{W}_q} \otimes w_{q + 1}^{\left( s \right)} } \right).
\end{align*}
\par
Owing to
\begin{align*}
& {u_{q + 1}} \otimes {w_{q + 1}} + {w_{q + 1}} \otimes {u_{q + 1}} - {w_{q + 1}} \otimes {w_{q + 1}} \\
= & \left( {w_{q + 1}^{\left( s \right)} + {u_{q,\Gamma }} + \mathfrak{W}_{q + 1}^{loc}} \right) \otimes \left( {w_{q + 1}^{\left( s \right)} + {u_{q,\Gamma }} - {u_q}} \right)  + \left( {w_{q + 1}^{\left( s \right)} + {u_{q,\Gamma }} - {u_q}} \right) \otimes \left( {w_{q + 1}^{\left( s \right)} + {u_{q,\Gamma }} + \mathfrak{W}_{q + 1}^{loc}} \right) \\
& - \left( {w_{q + 1}^{\left( s \right)} + {u_{q,\Gamma }} - {u_q}} \right) \otimes \left( {w_{q + 1}^{\left( s \right)} + {u_{q,\Gamma }} - {u_q}} \right) \\
= & w_{q + 1}^{\left( s \right)} \otimes w_{q + 1}^{\left( s \right)} + w_{q + 1}^{\left( s \right)} \otimes {v_{q,\Gamma }} + {v_{q,\Gamma }} \otimes w_{q + 1}^{\left( s \right)} + {u_{q,\Gamma }} \otimes {u_{q,\Gamma }} \\
& + w_{q + 1}^{\left( s \right)} \otimes {\mathfrak{W}_q} + {\mathfrak{W}_q} \otimes w_{q + 1}^{\left( s \right)} - {u_q} \otimes {u_q} + \mathfrak{W}_{q + 1}^{loc} \otimes  {w_{q + 1}} + {w_{q + 1}} \otimes \mathfrak{W}_{q + 1}^{loc} ,
\end{align*}
we have
\begin{align*}
\mathfrak{G} = & \left( {{\partial _t}{v_{q,\Gamma }} + \mathrm{div}\left( {{u_{q,\Gamma }} \otimes {u_{q,\Gamma }}} \right) + \nabla {\mathfrak{p}_{q,\Gamma }} - \mathrm{div}\left( {{\mathring{R}_{q,\Gamma }} + {\mathfrak{P}_{q + 1,\Gamma }} + {S_{q,\Gamma }}} \right)} \right) \\
&  - \left( {{\partial _t}{v_q} + \mathrm{div}\left( {{u_q} \otimes {u_q}} \right) + \nabla {\mathfrak{p}_q} - \mathrm{div}\,   {\mathring{R}_q}} \right)  + {\partial _t}w_{q + 1}^{\left( s \right)} - {\partial _t}w_{q + 1}^{\left( i \right)} - {\partial _t}w_{q + 1}^{\left( p \right)} - {\partial _t}w_{q + 1}^{\left( c \right)}  \\
& + {\left( {{{\bar v}_{q,\Gamma }} \cdot \nabla } \right)w_{q + 1}^{\left( s \right)}} - \mathrm{div}\left( {{{\bar v}_{q,\Gamma }} \otimes \left( {w_{q + 1}^{\left( p \right)} + w_{q + 1}^{\left( c \right)} + w_{q + 1}^{\left( i \right)}} \right)} \right) .
\end{align*}
Therefore, it follows from Eq.\eqref{ERNE-3A.1} and Eq.\eqref{N+1s-NE-PIT-P-9} that
\begin{equation}\label{N+1s-NASH-PIT-PP-2}
\mathfrak{G} = 0   .
\end{equation}
Combining \eqref{N+1s-NASH-PIT-PP-1} and \eqref{N+1s-NASH-PIT-PP-2}, we get
\begin{equation}
{\partial _t}{v_{q + 1}} + \mathrm{div}\left( {{u_{q + 1}} \otimes {u_{q + 1}}} \right) + \nabla {\mathfrak{p}_{q + 1}} = \mathrm{div}\,   {\mathring{R}_{q + 1}} . 
\end{equation}
Furthermore, due to \eqref{Nashsteps-A-P-7-sjdisdaf}, \eqref{Nashsteps-A-P-7-sjdi} and \eqref{NISTYU-SKO-A-PPA-1}, we obtain
\begin{equation*}
\mathrm{div}\,   {u_{q + 1}} = \mathrm{div}\left( {{u_q} + {w_{q + 1}}  + \mathfrak{W}_{q + 1}^{loc} } \right) = 0 .
\end{equation*}
The proof is complete. \qed
\subsection{Estimates of the Nash perturbation}\label{ENHP}
\begin{lemma}\label{WPD-NHS-AA}
For any $N \in \left\{ {0,1, \cdots ,10} \right\}$ and $t \in \mathop  \cap \limits_{j = 0}^{q} \left[ {0,{\mathfrak{t}_j}} \right] \cap \left[ {{t_k},{t_{k + 1}}} \right)$, there exists a parameter $\alpha \in \left( {\frac{\beta }{{b}},\beta } \right)$ such that the backward flow ${{\tilde \Phi }_k}\left( {t,x} \right)$ and the Lagrangian flow ${{\tilde X}_t}$ satisfy
\begin{equation}\label{WPD-NHS-MAIN-AA-1}
{\left\| {\nabla {{\tilde \Phi }_k}} \right\|_N} + {\left\| {{{\left( {\nabla {{\tilde \Phi }_k}} \right)}^{ - 1}}} \right\|_N}  \lesssim \lambda _q^N \lambda _{q + 1}^{ - \alpha }   ,
\end{equation}
\begin{equation}\label{WPD-NHS-MAIN-AA-1-JUIOJH-1-1}
{\left\| {D{{\tilde X}_k}} \right\|_N} \lesssim  \max \left\{ {1,\lambda _q^N \lambda _{q + 1}^{ - \alpha }} \right\}  ,
\end{equation}
\begin{equation}\label{WPD-NHS-MAIN-AA-2}
{\left\| {{{\bar D}_{t,\Gamma }}\nabla {{\tilde \Phi }_k}} \right\|_N} + {\left\| {{{\bar D}_{t,\Gamma }}{{\left( {\nabla {{\tilde \Phi }_k}} \right)}^{ - 1}}} \right\|_N} \lesssim \delta _q^{\frac{1}{2}}\lambda _q^{N + 1}   ,
\end{equation}
\begin{equation}\label{WPD-NHS-MAIN-AA-4}
{\left\| {{\partial _{tt}}{{\tilde \Phi }_k}} \right\|_N} \lesssim \delta _q^{\frac{1}{2}}\lambda _q^{N + 1}  ,
\end{equation}
and
\begin{equation}\label{WPD-NHS-MAIN-AA-sjdiJJ-4}
{\left\| {{\partial _{tt}}{{\left( {\nabla {{\tilde \Phi }_k}} \right)}^{ - 1}}} \right\|_N} \lesssim \delta _q^{\frac{1}{2}} \lambda _q^{N + 3}   ,
\end{equation}
where ${{\bar D}_{t,\Gamma }} \triangleq {\partial _t} + {{\bar v}_{q,\Gamma }} \cdot \nabla $ is the material derivative corresponding to ${{\bar v}_{q,\Gamma }}$.
\end{lemma}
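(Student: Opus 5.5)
The plan is to mimic the proof of Lemma \ref{WPD-NP}, where the same bounds were obtained for the backward flow $\Phi_k$ of $\bar v_q$. The only change is that the transporting field is now $\bar v_{q,\Gamma} = \bar v_q + w_{q+1}^{(\Gamma)}$. The first step is therefore to show that $\bar v_{q,\Gamma}$ obeys the same bounds as $\bar v_q$ in \eqref{lemma-GLTNNRS-B-M-1}. By Proposition \ref{WPD-NP-main},
\begin{equation*}
\|w_{q+1}^{(\Gamma)}\|_N \lesssim \Gamma\,\mu_{q+1}^{-1}\delta_{q+1}\lambda_q^{N+1}l_q^{-\alpha}.
\end{equation*}
Using the choice of $\mu_{q+1}$ in \eqref{PASJD-MU-1} together with the parameter relations of Appendix C, I will check that this is $\lesssim \delta_q^{1/2}\lambda_q^N$ for $N \ge 1$. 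This gives
\begin{equation*}
\|\bar v_{q,\Gamma}\|_0\le C, \qquad \|\bar v_{q,\Gamma}\|_N\lesssim \delta_q^{1/2}\lambda_q^N \quad (1\le N\le 10).
\end{equation*}
The same argument applied to \eqref{WPD-NP-main-A-2} controls the material derivative: $\|\bar D_{t,\Gamma}\bar v_{q,\Gamma}\|_N$ is bounded by the corresponding estimate for $\bar v_q$ plus an error that is small by the same parameter relations.

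Next, with these bounds in hand, the Lagrangian flow estimate \eqref{WPD-NHS-MAIN-AA-1-JUIOJH-1-1} for $\tilde X_k$ follows exactly as \eqref{Newtonsteps-A-B-6}--\eqref{Newtonsteps-A-B-7}. One differentiates \eqref{KDKJFIG-DNGJU-1-1} in $x$, uses Lemma \ref{ONCS} for the composition, and applies Gronwall on an interval of length $\lesssim\tau_q$, using $\tau_q\delta_q^{1/2}\lambda_q^{N+1}\lesssim\lambda_q^N\lambda_{q+1}^{-\alpha}$ from \eqref{parameters-S-6}. For $\nabla\tilde\Phi_k$, Lemma \ref{TSP-E-L-A-1} applied to the transport equation \eqref{Nashsteps-A-P-1} gives
\begin{equation*}
\|D\tilde\Phi_k\|_N\lesssim\tau_q\|\bar v_{q,\Gamma}\|_{N+1}\lesssim\lambda_q^N\lambda_{q+1}^{-\alpha}.
\end{equation*}
For the inverse, I will use the identity $(\nabla\tilde\Phi_k)^{-1}=D\tilde X_k(\tilde\Phi_k)$ and the composition estimate, as in \eqref{Newtonsteps-A-B-10}. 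This yields \eqref{WPD-NHS-MAIN-AA-1}, with the bound understood for the deviation from the identity when $N=0$.

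For \eqref{WPD-NHS-MAIN-AA-2}, I will reuse the identities derived after \eqref{Newtonsteps-A-B-10}:
\begin{equation*}
\bar D_{t,\Gamma}\nabla\tilde\Phi_k=-(\nabla\bar v_{q,\Gamma})^{T}\nabla\tilde\Phi_k, \qquad \bar D_{t,\Gamma}(\nabla\tilde\Phi_k)^{-1}=D\bar v_{q,\Gamma}\,(\nabla\tilde\Phi_k)^{-1}.
\end{equation*}
Product estimates then give the bound $\delta_q^{1/2}\lambda_q^{N+1}$, exactly as in \eqref{Newtonsteps-A-B-13}--\eqref{Newtonsteps-A-B-14}.

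The second time derivatives are the step I expect to be the main obstacle. From \eqref{Nashsteps-A-P-1},
\begin{equation*}
\partial_{tt}\tilde\Phi_k=-(\partial_t\bar v_{q,\Gamma}\cdot\nabla)\tilde\Phi_k+(\bar v_{q,\Gamma}\cdot\nabla)\big((\bar v_{q,\Gamma}\cdot\nabla)\tilde\Phi_k\big).
\end{equation*}
I will write $\partial_t\bar v_{q,\Gamma}=\bar D_{t,\Gamma}\bar v_{q,\Gamma}-(\bar v_{q,\Gamma}\cdot\nabla)\bar v_{q,\Gamma}$. For the mollified part, $\bar D_{t,q}\bar v_q$ is controlled via \eqref{ERNE-3A.1} and the pressure and stress bounds, and the Newtonian part is controlled by \eqref{WPD-NP-main-A-2}. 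Together these bound the right-hand side. The delicate point is that the second-order transport terms carry $\|\bar v_{q,\Gamma}\|_0\le C$, which is $O(1)$ rather than $\delta_q^{1/2}$. This works for $\partial_{tt}\tilde\Phi_k$ because each such term has at least one derivative on $\tilde\Phi_k-x$ or on $\bar v$, which gains $\lambda_{q+1}^{-\alpha}$ or $\delta_q^{1/2}$. For $\partial_{tt}(\nabla\tilde\Phi_k)^{-1}$ there is no such gain, and I will accept the cruder loss of two extra powers of $\lambda_q$. Expanding $\partial_{tt}$ of $D\tilde X_k(\tilde\Phi_k)$ by the chain rule and bounding each term crudely by products of the norms above, with $\|\bar v_{q,\Gamma}\|_0\le C$ and $\|\bar v_{q,\Gamma}\|_{N+3}\lesssim\delta_q^{1/2}\lambda_q^{N+3}$, should give \eqref{WPD-NHS-MAIN-AA-sjdiJJ-4}.
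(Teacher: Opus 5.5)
Your proposal is correct and follows the paper's proof essentially step for step. You bound $\bar v_{q,\Gamma}$ exactly like $\bar v_q$ via Proposition \ref{WPD-NP-main} and \eqref{parameters-S-13}, rerun the flow estimates of Lemma \ref{WPD-NP} for $\tilde X_k$, $\nabla\tilde\Phi_k$ and $(\nabla\tilde\Phi_k)^{-1}=D\tilde X_k(\tilde\Phi_k)$, control $\partial_t\bar v_{q,\Gamma}$ through \eqref{ERNE-3A.1} and \eqref{WPD-NP-main-A-2}, and finish the second time derivatives by product and chain-rule estimates. Your caveat that \eqref{WPD-NHS-MAIN-AA-1} at $N=0$ must be read for $\nabla\tilde\Phi_k-\mathrm{Id}$ and $(\nabla\tilde\Phi_k)^{-1}-\mathrm{Id}$ is a genuine correction (taken literally it fails, since $\nabla\tilde\Phi_k(t_k)=\mathrm{Id}$), and your tracking of the $O(1)$ factor $\|\bar v_{q,\Gamma}\|_0$ shows the remaining bounds survive it, a point the paper leaves implicit.
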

\par
\noindent{\textbf{Proof}}. 
According to \eqref{OIUYYDGH-DJFU-1} and Proposition \ref{WPD-NP-main}, we obtain
\begin{equation}\label{WPD-NP-main-BTO-1}
{\left\| {w_{q + 1}^{\left( \Gamma \right)}} \right\|_N} \lesssim \mu _{q + 1}^{ - 1}{\delta _{q + 1}}\lambda _q^{N + 1}l_q^{ - \alpha }   .
\end{equation}
It follows from \eqref{lemma-GLTNNRS-B-M-1}, \eqref{WPD-NP-main-BTO-1} and \eqref{Phsudj-2} that
\begin{equation}\label{WPD-NHS-MAIN-AA-P-3-JISHFI-1}
{\left\| {{{\bar v}_{q,\Gamma }}} \right\|_0} \lesssim {\left\| {{{\bar v}_q}} \right\|_0} + {\left\| {w_{q + 1}^{\left( \Gamma \right)}} \right\|_0} \leqslant C  .
\end{equation}
and
\begin{equation}\label{WPD-NHS-MAIN-AA-P-3}
{\left\| {{{\bar v}_{q,\Gamma }}} \right\|_N} \lesssim {\left\| {{{\bar v}_q}} \right\|_N} + {\left\| {w_{q + 1}^{\left( \Gamma \right)}} \right\|_N} \lesssim \delta _q^{\frac{1}{2}}\lambda _q^N , \ \ N \in \left\{ {1, \cdots ,10} \right\}  .
\end{equation}
Hence, using the similar argument in \eqref{Newtonsteps-A-B-11} and \eqref{Newtonsteps-A-B-7}-\eqref{Newtonsteps-A-B-14}, we verify that \eqref{WPD-NHS-MAIN-AA-1}, \eqref{WPD-NHS-MAIN-AA-1-JUIOJH-1-1} and \eqref{WPD-NHS-MAIN-AA-2} hold.
\par
We infer from \eqref{WPD-NHS-MAIN-AA-1}, \eqref{WPD-NHS-MAIN-AA-P-3} and Lemma \ref{STN-HCS-A} that
\begin{equation}\label{WPD-NHS-MAIN-XINSH-1}
{\left\| {{\partial _t}{{\tilde \Phi }_k}} \right\|_N} \lesssim {\left\| {{{\bar v}_{q,\Gamma }}} \right\|_N}{\left\| {\nabla {{\tilde \Phi }_k}} \right\|_0} + {\left\| {{{\bar v}_{q,\Gamma }}} \right\|_0}{\left\| {\nabla {{\tilde \Phi }_k}} \right\|_N} \lesssim \delta _q^{\frac{1}{2}}\lambda _q^N   .
\end{equation}
Since
\begin{equation}\label{RSRI-main-PROVE-15FAKFOG}
{\left\| {{\partial _t}{{\bar v}_q}} \right\|_N} \lesssim  {\left\| {\mathrm{div}\left( {{u_q} \otimes {u_q}} \right)*{\varsigma _{{l_q}}}} \right\|_N} + {\left\| {\nabla {\mathfrak{p}_q}*{\varsigma _{{l_q}}}} \right\|_N} + {\left\| {{\mathrm{div}}\, {\mathring{R}_q}*{\varsigma _{{l_q}}}} \right\|_N} \lesssim \delta _q^{\frac{1}{2}}\lambda _q^{N + 1}   ,
\end{equation}
we have
\begin{equation}\label{RSRI-main-PROVE-15}
{\left\| {{\partial _t}{{\bar v}_{q,\Gamma }}} \right\|_N} \lesssim {\left\| {{\partial _t}{{\bar v}_q}} \right\|_N} + {\left\| {{\partial _t}w_{q + 1}^{\left( \Gamma \right)}} \right\|_N} \lesssim \delta _q^{\frac{1}{2}}\lambda _q^{N + 1}    .
\end{equation}
Employing \eqref{WPD-NHS-MAIN-AA-1} and \eqref{WPD-NHS-MAIN-AA-P-3}-\eqref{RSRI-main-PROVE-15}, we find that
\begin{align*}
{\left\| {{\partial _{tt}}{{\tilde \Phi }_k}} \right\|_N} \lesssim & {\left\| {{\partial _t}{{\bar v}_{q,\Gamma }}} \right\|_N}{\left\| {\nabla {{\tilde \Phi }_k}} \right\|_0} + {\left\| {{\partial _t}{{\bar v}_{q,\Gamma }}} \right\|_0}{\left\| {\nabla {{\tilde \Phi }_k}} \right\|_N}   + {\left\| {{{\bar v}_{q,\Gamma }}} \right\|_N}{\left\| {{\partial _t}{{\tilde \Phi }_k}} \right\|_1} + {\left\| {{{\bar v}_{q,\Gamma }}} \right\|_0}{\left\| {{\partial _t}{{\tilde \Phi }_k}} \right\|_{N + 1}} \\
\lesssim & \delta _q^{\frac{1}{2}}\lambda _q^{N + 1}   .
\end{align*}
\par
It follows from \eqref{WPD-NHS-MAIN-AA-1}, \eqref{WPD-NHS-MAIN-AA-1-JUIOJH-1-1} and \eqref{WPD-NHS-MAIN-AA-P-3} that
\begin{align}\label{RSRI-main-PR-sjdhfOVE-15}
{\left\| {{\partial _t}{\tilde X_k}\left( {{\tilde \Phi _k}} \right)} \right\|_{N + 1}} \lesssim & {\left\| {{{\bar v}_{q,\Gamma }}\left( {{{\tilde X}_k}\left( {{{\tilde \Phi }_k}} \right)} \right)} \right\|_{N + 1}} \nonumber \\
\lesssim & {\left\| {{{\bar v}_{q,\Gamma }}} \right\|_1}\left( {{{\left\| {D{{\tilde X}_k}} \right\|}_0}{{\left\| {D{{\tilde \Phi }_k}} \right\|}_N} + {{\left\| {D{{\tilde X}_k}} \right\|}_N}\left\| {D{{\tilde \Phi }_k}} \right\|_0^{N + 1}} \right) \nonumber \\
& + {\left\| {{{\bar v}_{q,\Gamma }}} \right\|_{N + 1}}\left\| {D{{\tilde X}_k}} \right\|_0^{N + 1}\left\| {D{{\tilde \Phi }_k}} \right\|_0^{N + 1} \nonumber  \\ 
\lesssim & \delta _q^{\frac{1}{2}}\lambda _q^{N + 1}  \lambda _{q + 1}^{ - \alpha } .
\end{align}
Owing to ${\left( {\nabla {\tilde \Phi _k}} \right)^{ - 1}}\left( {t,x} \right) = D{\tilde X_k}\left( {{\tilde \Phi _k}\left( {t,x} \right),t} \right)$, we derive from \eqref{RSRI-main-PROVE-15} and \eqref{RSRI-main-PR-sjdhfOVE-15} that
\begin{align*}
{\left\| {{\partial _{tt}}{{\left( {\nabla {{\tilde \Phi }_k}} \right)}^{ - 1}}} \right\|_N} \lesssim & {\left\| {{\partial _t}{{\bar v}_{q,\Gamma }}\left( {{{\tilde X}_k}} \right)} \right\|_{N + 1}} \\
\lesssim & {\left\| {{\partial _t}D{{\bar v}_{q,\Gamma }}\left(  \cdot  \right){\partial _t}{{\tilde X}_k}} \right\|_{N + 1}} \\
\lesssim & {\left\| {{\partial _t}{{\bar v}_{q,\Gamma }}} \right\|_{N + 2}}{\left\| {D{{\tilde X}_k}} \right\|_0} {\left\| {{{\bar v}_{q,\Gamma }} } \right\|_0} + {\left\| {{\partial _t}{{\bar v}_{q,\Gamma }}} \right\|_1}{\left\| {{\partial _t}{{\tilde X}_k}} \right\|_{N + 1}} \\
\lesssim &  \delta _q^{\frac{1}{2}}  \lambda _q^{N + 3} .
\end{align*}
The proof is complete. \qed
\par
\begin{lemma}\label{WPD-NHS-BB}
For any $N \in \left\{ {0,1, \cdots ,10} \right\} $, $n \in {\mathbb{Z}^ + }$ and $t \in \mathop  \cap \limits_{j = 0}^{q} \left[ {0,{\mathfrak{t}_j}} \right] \cap \left[ {{t_k},{t_{k + 1}}} \right)$, the amplitude function ${{\tilde \eta }_{\xi ,k,n}} \left( {t,x} \right)$ satisfies
\begin{equation}\label{WPD-NHS-MAIN-1}
{\left\| {{{\tilde \eta }_{\xi ,k,n}}} \right\|_N} \lesssim  \delta _{q + 1}^{\frac{1}{2}}\lambda _q^N     ,
\end{equation}
\begin{equation}\label{WPD-NHS-MAIN-2}
{\left\| {{{\bar D}_{t,\Gamma }}{{\tilde \eta }_{\xi ,k,n}}} \right\|_N}  \lesssim \delta _{q + 1}^{\frac{1}{2}}\tau _q^{ - 1}\lambda _q^N    ,
\end{equation}
and there exists a parameter $\alpha \in \left( {\frac{\beta }{{b}},\beta } \right)$ such that
\begin{equation}\label{WPD-NHS-MAIN-3}
{\left\| {{\partial _{tt}}{{\tilde \eta }_{\xi ,k,n}}} \right\|_N}  \lesssim \delta _q^{\frac{1}{2}}\delta _{q + 1}^{\frac{1}{2}}\lambda _q^{N + \frac{1}{3} - \alpha }   .
\end{equation}
\end{lemma}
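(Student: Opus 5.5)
The plan mirrors the proof of Lemma \ref{WPD-NP} for the Newtonian amplitude $\eta_{\xi,k,n}$, with the backward flow $\Phi_k$ replaced by $\tilde\Phi_k$ and $\mathring{R}_{q,n}$ replaced by the flow-mollified stress $\tilde R_{q,n}$ from \eqref{Nashsteps-A-AIJS-P-2}. By \eqref{Nashsteps-A-P-2} we write
\begin{equation*}
\tilde\eta_{\xi,k,n}=\mathfrak{E}_{q,\xi,k,n}^{\frac12}\chi_k\,\gamma_\xi(M_k),\qquad M_k\triangleq\nabla\tilde\Phi_k\nabla\tilde\Phi_k^\perp-\mathfrak{E}_{q,\xi,k,n}^{-1}\nabla\tilde\Phi_k\tilde R_{q,n}\nabla\tilde\Phi_k^\perp .
\end{equation*}
The first step is to record bounds on the ingredients.

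\begin{itemize}
\item The prefactor $\mathfrak{E}_{q,\xi,k,n}$ is spatially constant. It satisfies $\delta_{q+1,\Gamma}\lesssim|\mathfrak{E}_{q,\xi,k,n}|\lesssim\delta_{q+1}$, using $C_1\le|\hat{\mathscr M}|\le C_2$ and the bounds on $\mathfrak{E}_q$. Its time derivative comes from $\mathfrak e(t)$, $\|\hat u_q\|_{L^2}^2$ and the It\^o term. We bound it by $\delta_{q+1}\tau_q^{-1}$ on the stopping-time interval, using \eqref{HIE-u-1.1} and the mollification at scale $\hat l_q$.
\item By \eqref{Nashsteps-A-AIJS-P-2}, $\tilde R_{q,n}$ inherits \eqref{HIDJJF-3-A-2}: $\|\tilde R_{q,n}\|_N\lesssim\delta_{q+1,n}\lambda_q^{N-\alpha}$. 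This uses \eqref{WPD-NHS-MAIN-AA-1-JUIOJH-1-1} and the composition estimate Lemma \ref{ONCS}.
\item Averaging along the Lagrangian flow $\tilde X$ turns time derivatives into $\bar D_{t,\Gamma}$. Hence $\|\bar D_{t,\Gamma}\tilde R_{q,n}\|_N\lesssim\delta_{q+1,n}\tau_q^{-1}\lambda_q^{N-\alpha}$, via \eqref{HIDJJF-3-A-3} and the difference $\bar D_{t,\Gamma}-\bar D_{t,q}=w^{(\Gamma)}_{q+1}\cdot\nabla$, which \eqref{WPD-NP-main-BTO-1} makes negligible.
\item Integrating by parts against $\partial_s\tilde\varsigma_{\tilde l_q}$ gives $\|\bar D_{t,\Gamma}^2\tilde R_{q,n}\|_N\lesssim\tilde l_q^{-1}\delta_{q+1,n}\tau_q^{-1}\lambda_q^{N-\alpha}$.
\end{itemize}

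The second step proves \eqref{WPD-NHS-MAIN-1}. By \eqref{WPD-NHS-MAIN-AA-1}, $\nabla\tilde\Phi_k$ is within $\lambda_q^{-\alpha}$ of $\mathrm{Id}$, and $\mathfrak{E}^{-1}\|\tilde R_{q,n}\|_0\lesssim\lambda_q^{-\alpha}\delta_{q+1,n}/\delta_{q+1,\Gamma}$. So $M_k\in\mathbb{B}_{1/2}(\mathrm{Id})$ and $\gamma_\xi$ is smooth on its range. Fa\`a di Bruno together with the product estimate Lemma \ref{STN-HCS-A} then gives $\|\gamma_\xi(M_k)\|_N\lesssim\lambda_q^N$, and multiplying by $\mathfrak{E}^{1/2}\lesssim\delta_{q+1}^{1/2}$ yields \eqref{WPD-NHS-MAIN-1}.

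For \eqref{WPD-NHS-MAIN-2}, apply $\bar D_{t,\Gamma}$ by the Leibniz rule. The cutoff contributes $|\partial_t\chi_k|\lesssim\tau_q^{-1}$. The factor $\nabla\tilde\Phi_k$ contributes $\delta_q^{1/2}\lambda_q^{N+1}\lesssim\tau_q^{-1}\lambda_q^N$ via \eqref{WPD-NHS-MAIN-AA-2} and \eqref{parameters-S-6}. The stress and the prefactor contribute $\tau_q^{-1}$ by the first step. Each term is $\lesssim\delta_{q+1}^{1/2}\tau_q^{-1}\lambda_q^N$, exactly as in the $\eta_{\xi,k,n}$ computation of Lemma \ref{WPD-NP}.

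The main obstacle is \eqref{WPD-NHS-MAIN-3}, because it involves $\partial_{tt}$ and not material derivatives. I would write $\partial_t=\bar D_{t,\Gamma}-\bar v_{q,\Gamma}\cdot\nabla$ and expand
\begin{equation*}
\partial_{tt}=\bar D_{t,\Gamma}^2-2\bar v_{q,\Gamma}\cdot\nabla\bar D_{t,\Gamma}-(\partial_t\bar v_{q,\Gamma})\cdot\nabla+\text{lower-order transport terms}.
\end{equation*}
The terms without $\bar D_{t,\Gamma}^2$ are handled with \eqref{WPD-NHS-MAIN-AA-P-3-JISHFI-1}, \eqref{RSRI-main-PROVE-15}, \eqref{WPD-NHS-MAIN-AA-4} and \eqref{WPD-NHS-MAIN-AA-sjdiJJ-4}.

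The delicate term is $\bar D_{t,\Gamma}^2\tilde\eta_{\xi,k,n}$. The bound on $\bar D_{t,\Gamma}^2\tilde R_{q,n}$ from the first step is the key input. The second derivatives of $\chi_k$, of $\mathfrak{E}$ and of $g_{\xi,k,n+1}(\mu_{q+1}t)$ cannot simply be charged $\tau_q^{-2}$ or $\mu_{q+1}^2$. To obtain the stated gain $\delta_q^{1/2}\delta_{q+1}^{1/2}\lambda_q^{N+\frac13-\alpha}$, I would use the parameter relations of Appendix C among $\tau_q$, $\tilde l_q$, $\delta_q$ and $\lambda_q$ to convert each factor $\tau_q^{-1}\tilde l_q^{-1}$ (and $\tau_q^{-2}$) into $\delta_q^{1/2}\lambda_q^{\frac13-\alpha}$. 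If the $\chi_k$ contribution resists this conversion, I would check whether $\tau_q$ is chosen so that $\tau_q^{-2}\lesssim\delta_q^{1/2}\lambda_q^{\frac13-\alpha}$ holds for $\alpha$ close to $\beta$. This is where the freedom to pick $\alpha\in(\beta/b,\beta)$ would be used.
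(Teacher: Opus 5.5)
Your treatment of \eqref{WPD-NHS-MAIN-1}--\eqref{WPD-NHS-MAIN-2} is essentially the paper's route: compose $\mathring{R}_{q,n}$ with $\tilde X_k$, use $\bar D_{t,\Gamma}-\bar D_{t,q}=(w^{(\Gamma)}_{q+1}\cdot\nabla)$, then apply the chain and Leibniz rules. However, \eqref{WPD-NHS-MAIN-3} has a genuine gap, because the conversions you plan to extract from Appendix C are false for every $\alpha\in(\beta/b,\beta)$. By \eqref{parameters-S-6} and \eqref{parameters-S-11}, $\tau_q^{-1}\tilde l_q^{-1}=\delta_q\lambda_q^{4/3}\lambda_{q+1}^{2/3+\alpha}$ and $\tau_q^{-2}=\delta_q\lambda_q^{2}\lambda_{q+1}^{2\alpha}$. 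These exceed $\delta_q^{1/2}\lambda_q^{1/3-\alpha}$ by the factors $\lambda_q^{1+\alpha-\beta}\lambda_{q+1}^{2/3+\alpha}$ and $\lambda_q^{5/3+\alpha-\beta}\lambda_{q+1}^{2\alpha}$, respectively. The remaining terms of your expansion of $\partial_{tt}$ are no better. Since \eqref{WPD-NHS-MAIN-AA-P-3-JISHFI-1} gives only $\|\bar v_{q,\Gamma}\|_0\le C$, the terms $2(\bar v_{q,\Gamma}\cdot\nabla)\bar D_{t,\Gamma}\tilde\eta_{\xi,k,n}$ and $(\bar v_{q,\Gamma}\cdot\nabla)^2\tilde\eta_{\xi,k,n}$ are controlled only by $\delta_{q+1}^{1/2}\tau_q^{-1}\lambda_q^{N+1}$ and $\delta_{q+1}^{1/2}\lambda_q^{N+2}$. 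The obstruction is structural. The right-hand side of \eqref{WPD-NHS-MAIN-3} lies below even the bound \eqref{WPD-NHS-MAIN-2} for a single material derivative, by the factor $\lambda_q^{2/3+\alpha}\lambda_{q+1}^{\alpha}$. Meanwhile $\partial_t-\bar D_{t,\Gamma}$ is a transport term with merely $O(1)$ speed. So no term-by-term Leibniz estimate of $\partial_{tt}\tilde\eta_{\xi,k,n}$ can reach the target, whatever $\alpha$ is.

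You cannot borrow the missing step from the paper either. Its proof of \eqref{WPD-NHS-MAIN-3} bounds $\partial_{tt}\tilde R_{q,n}$ by moving one time derivative onto the temporal mollifier, at cost $\tilde l_q^{-1}$. It then bounds $\partial_{tt}\tilde\eta_{\xi,k,n}$ crudely through $\partial_t$ and $\partial_{tt}$ of the argument of $\gamma_\xi$, using \eqref{WPD-NHS-MAIN-AA-4}. Its final inequality is invalid: one of its own intermediate terms, $\delta_q^{1/2}\delta_{q+1}^{1/2}\lambda_q^{N+1}$, already exceeds the claimed right-hand side by $\lambda_q^{2/3+\alpha}$. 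So \eqref{WPD-NHS-MAIN-3} as stated is not established by either route.

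A second, independent problem is in your step 1. The bounds $|\mathfrak{E}_{q,\xi,k,n}|\lesssim\delta_{q+1}$ and $|\partial_t\mathfrak{E}_{q,\xi,k,n}|\lesssim\delta_{q+1}\tau_q^{-1}$ do not follow from the definition.
\begin{itemize}
\item The factor $g_{\xi,k,n+1}^{-2}(\mu_{q+1}t)$ in $\mathfrak{E}_{q,\xi,k,n}$ is unbounded near the zeros of $g_{\xi,k,n+1}$. It also costs $\mu_{q+1}$ per time derivative, and $\mu_{q+1}\tau_q=(\lambda_{q+1}/\lambda_q)^{1/3-\beta}\gg1$.
\item $\mathfrak{E}_q$ contains It\^o integrals against $\mathfrak{W}_q$ and the merely monotone $\mathfrak{e}$, so it is in general not differentiable in $t$ at all. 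Spatial mollification at scale $\hat l_q$ does nothing for that.
\end{itemize}
The paper's proof of \eqref{WPD-NHS-MAIN-1}--\eqref{WPD-NHS-MAIN-2} simply writes $\delta_{q+1}^{1/2}$ in place of $\mathfrak{E}_{q,\xi,k,n}^{1/2}$ and never differentiates it. Your argument for \eqref{WPD-NHS-MAIN-2} would need that same convention, stated explicitly as an assumption, rather than the derivative bound you assert.
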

\par
\noindent{\textbf{Proof}}.
\textit{Case $1$}. If $N=0$, it follows from \eqref{HIDJJF-3-A-2}, \eqref{HIDJJF-3-A-3}, \eqref{WPD-NHS-MAIN-AA-1-JUIOJH-1-1} and \eqref{WPD-NP-main-BTO-1} that
\begin{equation}\label{JMNH-DKO-A-1-A-2}
{\left\| {{{\tilde R}_{q,n}}} \right\|_0} \lesssim {\left\| {{\mathring{R}_{q,n}}} \right\|_0}{\left\| {D{{\tilde X}_k}} \right\|_0} \lesssim {\delta _{q + 1,n}}\lambda _q^{ - \alpha }  ,
\end{equation}
and
\begin{align*}
{\left\| {{{\bar D}_{t,\Gamma }}{{\tilde R}_{q,n}}} \right\|_0} \lesssim & {\left\| {\left( {{{\bar D}_{t,\Gamma }}{\mathring{R}_{q,n}}} \right) \circ {{\tilde X}_k}} \right\|_0} \\
\lesssim & {\left\| {{{\bar D}_{t,q}}{\mathring{R}_{q,n}}} \right\|_0}{\left\| {D{{\tilde X}_k}} \right\|_0} \\
\lesssim & {{\left\| {{{\bar D}_{t,q}}{\mathring{R}_{q,n}}} \right\|}_0} + {{\left\| {w_{q + 1}^{\left( \Gamma \right)}} \right\|}_0}{{\left\| {{\mathring{R}_{q,n}}} \right\|}_1} \\
\lesssim & {\delta _{q + 1,n}}\tau _q^{ - 1}\lambda _q^{ - \alpha }   .
\end{align*}
Consequently,
\begin{equation}\label{WPD-NHS-MAIN-PPP-1}
{\left\| {{{\tilde \eta }_{\xi ,k,n}}} \right\|_0} \lesssim \delta _{q + 1}^{\frac{1}{2}} + \delta _{q + 1}^{ - \frac{1}{2}}{\left\| {{{\tilde R}_{q,n}}} \right\|_0} \lesssim \delta _{q + 1}^{\frac{1}{2}}    ,
\end{equation}
and
\begin{align}\label{WPD-NHS-MAIN-PPP-2}
{\left\| {{{\bar D}_{t,\Gamma }}{{\tilde \eta }_{\xi ,k,n}}} \right\|_0} \lesssim & \delta _{q + 1}^{\frac{1}{2}}\left( {{{\left\| {{{\bar D}_{t,\Gamma }}\nabla {{\tilde \Phi }_k}} \right\|}_0} + {{\left\| {{{\bar D}_{t,\Gamma }}\nabla \tilde \Phi _k^ \bot } \right\|}_0}} \right) \nonumber \\
& + \delta _{q + 1}^{ - \frac{1}{2}}{\left\| {{{\bar D}_{t,\Gamma }}\nabla {{\tilde \Phi }_k}} \right\|_0}{\left\| {{{\tilde R}_{q,n}}} \right\|_0} + \delta _{q + 1}^{ - \frac{1}{2}}{\left\| {{{\bar D}_{t,\Gamma }}{{\tilde R}_{q,n}}} \right\|_0} \nonumber \\
\lesssim & \delta _{q + 1}^{\frac{1}{2}}\tau _q^{ - 1}   .
\end{align}
\par
\textit{Case $2$}. If $N \geqslant 1$, by virtue of \eqref{HIDJJF-3-A-2}, \eqref{HIDJJF-3-A-3}, \eqref{WPD-NHS-MAIN-AA-1} and \eqref{WPD-NP-main-BTO-1}, we arrive at
\begin{align}\label{WPD-NHS-MAIN-PPP-XINH-3}
{\left\| {{{\tilde R}_{q,n}}} \right\|_N} \lesssim & {\left\| {{\mathring{R}_{q,n}} \circ {{\tilde X}_k}} \right\|_N} \nonumber \\
\lesssim & {\left\| {{\mathring{R}_{q,n}}} \right\|_1}{\left\| {D{{\tilde X}_k}} \right\|_{N - 1}} + {\left\| {{\mathring{R}_{q,n}}} \right\|_N}\left\| {D{{\tilde X}_k}} \right\|_0^N \nonumber \\
\lesssim & {\delta _{q + 1,n}}\lambda _q^{N - \alpha }    ,
\end{align}
and
\begin{align}\label{WPD-NHS-MAIN-PPP-XINH-4}
{\left\| {{{\bar D}_{t,\Gamma }}{{\tilde R}_{q,n}}} \right\|_N} \lesssim & {\left\| {{{\bar D}_{t,q}}{\mathring{R}_{q,n}}} \right\|_1}{\left\| {D{{\tilde X}_k}} \right\|_{N - 1}} + {\left\| {{{\bar D}_{t,q}}{\mathring{R}_{q,n}}} \right\|_N}\left\| {D{{\tilde X}_k}} \right\|_0^N  \nonumber  \\
\lesssim & {\delta _{q + 1,n}}\tau _q^{ - 1}\lambda _q^{N - \alpha }    .
\end{align}
We note that
\begin{align*}
{\left\| {{{\tilde \eta }_{\xi ,k,n}}} \right\|_N} \lesssim & \delta _{q + 1}^{\frac{1}{2}}\left( {{{\left\| {\nabla {{\tilde \Phi }_k}\nabla \tilde \Phi _k^ \bot } \right\|}_N} + \delta _{q + 1}^{ - 1}{{\left\| {\nabla {{\tilde \Phi }_k}{{\tilde R}_{q,n}}\nabla \tilde \Phi _k^ \bot } \right\|}_N}} \right)  \\
& + \delta _{q + 1}^{\frac{1}{2}}\left( {\left\| {\nabla {{\tilde \Phi }_k}\nabla \tilde \Phi _k^ \bot } \right\|_1^N + \delta _{q + 1}^{ - 1}\left\| {\nabla {{\tilde \Phi }_k}{{\tilde R}_{q,n}}\nabla \tilde \Phi _k^ \bot } \right\|_1^N} \right)   \\
\lesssim & \delta _{q + 1}^{\frac{1}{2}}{\left\| {\nabla {{\tilde \Phi }_k}} \right\|_N} + \delta _{q + 1}^{ - \frac{1}{2}}{\left\| {\nabla {{\tilde \Phi }_k}} \right\|_N}{\left\| {{{\tilde R}_{q,n}}} \right\|_0} + \delta _{q + 1}^{ - \frac{1}{2}}{\left\| {{{\tilde R}_{q,n}}} \right\|_N}   \\
& + \delta _{q + 1}^{\frac{1}{2}}\left\| {\nabla {{\tilde \Phi }_k}} \right\|_1^N + \delta _{q + 1}^{ - \frac{1}{2}}\left\| {\nabla {{\tilde \Phi }_k}} \right\|_1^N\left\| {{{\tilde R}_{q,n}}} \right\|_0^N + \delta _{q + 1}^{ - \frac{1}{2}}\left\| {{{\tilde R}_{q,n}}} \right\|_1^N  ,
\end{align*}
and
\begin{align*}
& {\left\| {{{\bar D}_{t,\Gamma }}{{\tilde \eta }_{\xi ,k,n}}} \right\|_N} \\
\lesssim & \delta _{q + 1}^{\frac{1}{2}}\left( {{{\left\| {{{\bar D}_{t,\Gamma }}\left( {\nabla {{\tilde \Phi }_k}\nabla \tilde \Phi _k^ \bot } \right)} \right\|}_N} + \delta _{q + 1}^{ - 1}{{\left\| {{{\bar D}_{t,\Gamma }}\left( {\nabla {{\tilde \Phi }_k}{{\tilde R}_{q,n}}\nabla \tilde \Phi _k^ \bot } \right)} \right\|}_N}} \right) \\
& + \delta _{q + 1}^{\frac{1}{2}}\left( {\left\| {{{\bar D}_{t,\Gamma }}\left( {\nabla {{\tilde \Phi }_k}\nabla \tilde \Phi _k^ \bot } \right)} \right\|_1^N + \delta _{q + 1}^{ - 1}\left\| {{{\bar D}_{t,\Gamma }}\left( {\nabla {{\tilde \Phi }_k}{{\tilde R}_{q,n}}\nabla \tilde \Phi _k^ \bot } \right)} \right\|_1^N} \right) \\
\lesssim & \delta _{q + 1}^{\frac{1}{2}}\left( {{{\left\| {{{\bar D}_{t,\Gamma }}\nabla {{\tilde \Phi }_k}} \right\|}_N} + {{\left\| {{{\bar D}_{t,\Gamma }}\nabla {{\tilde \Phi }_k}} \right\|}_0}{{\left\| {\nabla \tilde \Phi _k^ \bot } \right\|}_N}} \right) \\
& + \delta _{q + 1}^{ - \frac{1}{2}}\left( {{{\left\| {{{\bar D}_{t,\Gamma }}\nabla {{\tilde \Phi }_k}} \right\|}_N}{{\left\| {{{\tilde R}_{q,n}}} \right\|}_0} + {{\left\| {{{\bar D}_{t,\Gamma }}\nabla {{\tilde \Phi }_k}} \right\|}_0}{{\left\| {{{\tilde R}_{q,n}}} \right\|}_N} + {{\left\| {{{\bar D}_{t,\Gamma }}\nabla {{\tilde \Phi }_k}} \right\|}_0}{{\left\| {{{\tilde R}_{q,n}}} \right\|}_0}{{\left\| {\nabla \tilde \Phi _k^ \bot } \right\|}_N}} \right) \\
& + \delta _{q + 1}^{ - \frac{1}{2}}\left( {{{\left\| {\nabla {{\tilde \Phi }_k} + \nabla \tilde \Phi _k^ \bot } \right\|}_N}{{\left\| {{{\bar D}_{t,\Gamma }}{{\tilde R}_{q,n}}} \right\|}_0} + {{\left\| {{{\bar D}_{t,\Gamma }}{{\tilde R}_{q,n}}} \right\|}_N}} \right) \\
& + \delta _{q + 1}^{\frac{1}{2}}{\left( {{{\left\| {{{\bar D}_{t,\Gamma }}\nabla {{\tilde \Phi }_k}} \right\|}_1} + {{\left\| {{{\bar D}_{t,\Gamma }}\nabla {{\tilde \Phi }_k}} \right\|}_0}{{\left\| {\nabla \tilde \Phi _k^ \bot } \right\|}_1}} \right)^N} \\
& + \delta _{q + 1}^{ - \frac{1}{2}}{\left( {{{\left\| {{{\bar D}_{t,\Gamma }}\nabla {{\tilde \Phi }_k}} \right\|}_1}{{\left\| {{{\tilde R}_{q,n}}} \right\|}_0} + {{\left\| {{{\bar D}_{t,\Gamma }}\nabla {{\tilde \Phi }_k}} \right\|}_0}{{\left\| {{{\tilde R}_{q,n}}} \right\|}_1} + {{\left\| {{{\bar D}_{t,\Gamma }}\nabla {{\tilde \Phi }_k}} \right\|}_0}{{\left\| {{{\tilde R}_{q,n}}} \right\|}_0}{{\left\| {\nabla \tilde \Phi _k^ \bot } \right\|}_1}} \right)^N} \\
& + \delta _{q + 1}^{ - \frac{1}{2}}{\left( {{{\left\| {\nabla {{\tilde \Phi }_k} + \nabla \tilde \Phi _k^ \bot } \right\|}_1}{{\left\| {{{\bar D}_{t,\Gamma }}{{\tilde R}_{q,n}}} \right\|}_0} + {{\left\| {{{\bar D}_{t,\Gamma }}{{\tilde R}_{q,n}}} \right\|}_1}} \right)^N} .
\end{align*}
Then, invoking Lemma \ref{WPD-NHS-AA}, \eqref{WPD-NHS-MAIN-PPP-XINH-3} and \eqref{WPD-NHS-MAIN-PPP-XINH-4}, we obtain
\begin{equation*}
{\left\| {{{\tilde \eta }_{\xi ,k,n}}} \right\|_N} \lesssim \delta _{q + 1}^{\frac{1}{2}}\lambda _q^N   ,
\end{equation*}
and
\begin{align*}
{\left\| {{{\bar D}_{t,\Gamma }}{{\tilde \eta }_{\xi ,k,n}}} \right\|_N} \lesssim & \delta _{q + 1}^{\frac{1}{2}}\delta _q^{\frac{1}{2}}\lambda _q^{N + 1} + \delta _{q + 1}^{\frac{1}{2}}\tau _q^{ - 1}\lambda _q^{N - \alpha } + \delta _{q + 1}^{\frac{1}{2}}\delta _q^{\frac{1}{2}N}\lambda _q^{2N} + \delta _{q + 1}^{N - \frac{1}{2}}\delta _q^{\frac{1}{2}}\lambda _q^{2N}\lambda _q^{ - N\alpha } \\
\lesssim & \delta _{q + 1}^{\frac{1}{2}}\tau _q^{ - 1}\lambda _q^N   .
\end{align*}
\par
According to the definition \eqref{Nashsteps-A-AIJS-P-2}, we have (also see \cite{Zbl1556.35231})
\begin{equation*}
{\partial _{tt}}{{\tilde R}_{q,n}} =  - l_{t,q}^{ - 1}\int_{ - {{\tilde l}_{q}}}^{{{\tilde l}_{q}}} {{\partial _t}{\mathring{R}_{q,n}}\left( {t + s,{{\tilde X}_k}\left( {t + s,x} \right)} \right){\partial _s}{{\tilde \varsigma }_{{{\tilde l}_{q}}}}\left( s \right)}\ \mathrm{d}s ,
\end{equation*}
which implies that
\begin{align}\label{WPD-NHS-MAIN-PPP-XINH-5}
{\left\| {{\partial _{tt}}{{\tilde R}_{q,n}}} \right\|_N} \lesssim & l_{t,q}^{ - 1}\left( {{{\left\| {{{\bar D}_{q,\Gamma }}{\mathring{R}_{q,n}}} \right\|}_1}{{\left\| {D{{\tilde X}_k}} \right\|}_{N - 1}} + {{\left\| {{{\bar D}_{q,\Gamma }}{\mathring{R}_{q,n}}} \right\|}_N}\left\| {D{{\tilde X}_k}} \right\|_0^N} \right) \nonumber \\
\lesssim & l_{t,q}^{ - 1}\tau _q^{ - 1}{\delta _{q + 1,n}}\lambda _q^{N - \alpha }    .
\end{align}
Therefore, it follows from \eqref{WPD-NHS-MAIN-AA-4} and \eqref{WPD-NHS-MAIN-PPP-XINH-5} that
\begin{align*}
{\left\| {{\partial _{tt}}{{\tilde \eta }_{\xi ,k,n}}} \right\|_N} \lesssim & \delta _{q + 1}^{\frac{1}{2}}{\left\| {\nabla {{\tilde \Phi }_k}\nabla \tilde \Phi _k^ \bot  - \delta _{q + 1,n}^{ - 1}\nabla {{\tilde \Phi }_k}{{\tilde R}_{q,n}}\nabla \tilde \Phi _k^ \bot } \right\|_N} \\
& + \delta _{q + 1}^{\frac{1}{2}}{\left\| {{\partial _t}\left( {\nabla {{\tilde \Phi }_k}\nabla \tilde \Phi _k^ \bot  - \delta _{q + 1}^{ - 1}\nabla {{\tilde \Phi }_k}{{\tilde R}_{q,n}}\nabla \tilde \Phi _k^ \bot } \right)} \right\|_N} \\
& + \delta _{q + 1}^{\frac{1}{2}}{\left\| {{\partial _{tt}}\left( {\nabla {{\tilde \Phi }_k}\nabla \tilde \Phi _k^ \bot  - \delta _{q + 1}^{ - 1}\nabla {{\tilde \Phi }_k}{{\tilde R}_{q,n}}\nabla \tilde \Phi _k^ \bot } \right)} \right\|_N} \\
\lesssim & \delta _{q + 1}^{\frac{1}{2}}\lambda _q^N + \tau _q^{ - 1}\delta _q^{\frac{1}{2}}\delta _{q + 1}^{\frac{1}{2}}\lambda _q^{N - \alpha } + \delta _q^{\frac{1}{2}}\delta _{q + 1}^{\frac{1}{2}}\lambda _q^{N + 1} + l_{t,q}^{ - 1}\tau _q^{ - 1}\delta _{q + 1}^{\frac{3}{2}}\lambda _q^{N - \alpha } \\
\lesssim & \delta _q^{\frac{1}{2}}\delta _{q + 1}^{\frac{1}{2}}\lambda _q^{N + \frac{1}{3} - \alpha }    .
\end{align*}
The proof is complete. \qed
\par
\begin{lemma}\label{WPD-NHS-HUD-BB}
For any $N \in \left\{ {0,1, \cdots ,10} \right\} $ and $t \in \mathop  \cap \limits_{j = 0}^{q} \left[ {0,{\mathfrak{t}_j}} \right] \cap \left[ {{t_k},{t_{k + 1}}} \right)$, the stream function ${\mathfrak{S}_\xi }\left( \cdot \right)$ defined by \eqref{IUHYU-DJ} admits
\begin{equation}\label{WPD-NHS-HUD-BB-MAIN-5}
{\left\| {{\mathfrak{S}_\xi }\left( {{\lambda _{q + 1}}{{\tilde \Phi }_k}} \right)} \right\|_N} \lesssim  \max \left\{ {1, \delta _q^{\frac{1}{2}} \lambda _{q + 1}^N} \right\}     ,
\end{equation}
\begin{equation}\label{WPD-NHS-HUD-BB-MAIN-6}
{\left\| {{{\bar D}_{t,\Gamma }}{\mathfrak{S}_\xi }\left( {{\lambda _{q + 1}}{{\tilde \Phi }_k}} \right)} \right\|_N} \lesssim  \delta _q^{\frac{1}{2}}\lambda _{q + 1}^{N + 1}  ,
\end{equation}
and
\begin{equation}\label{WPD-NHS-HUD-BB-MAIN-XINhs-6}
{\left\| {{\partial _{tt}}{\mathfrak{S}_\xi } \left( {{\lambda _{q + 1}}{{\tilde \Phi }_k}} \right)} \right\|_N} \lesssim  \delta _q^{\frac{1}{2}}\lambda _q^{N + 1}\lambda _{q + 1}^2   .
\end{equation}
\end{lemma}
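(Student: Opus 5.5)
The estimates follow from composition rules applied to $\mathfrak{S}_\xi\circ(\lambda_{q+1}\tilde\Phi_k)$, fed by the flow bounds of Lemma \ref{WPD-NHS-AA}. Since $\mathfrak{S}_\xi$ is a fixed smooth periodic function of frequency $O(1)$ (from \eqref{IUHYU-DJ}), we have $\|D^m\mathfrak{S}_\xi\|_0\lesssim 1$ for all $m\le 11$. So everything reduces to controlling derivatives of the inner map $\lambda_{q+1}\tilde\Phi_k$.

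\textbf{Spatial bound.} For $N=0$ the bound $\|\mathfrak{S}_\xi(\lambda_{q+1}\tilde\Phi_k)\|_0\lesssim 1$ is immediate. For $N\geq 1$, apply the standard composition estimate (Lemma \ref{ONCS}):
\begin{equation*}
\|\mathfrak{S}_\xi(\lambda_{q+1}\tilde\Phi_k)\|_N\lesssim \|D\mathfrak{S}_\xi\|_0\,\lambda_{q+1}\|D\tilde\Phi_k\|_{N-1}+\|D^N\mathfrak{S}_\xi\|_0\,\lambda_{q+1}^N\|D\tilde\Phi_k\|_0^N .
\end{equation*}
By \eqref{WPD-NHS-MAIN-AA-1}, $\|D\tilde\Phi_k\|_0\lesssim 1$ and $\|D\tilde\Phi_k\|_{N-1}\lesssim\lambda_q^{N-1}$. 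Hence the right-hand side is $\lesssim\lambda_{q+1}^N$. The stated factor $\delta_q^{1/2}$ is then obtained by absorbing it against the parameter relations of Appendix C. If needed, I would use $\|\nabla\tilde\Phi_k-\mathrm{Id}\|_0\lesssim\lambda_q^{-\alpha}$, as in \eqref{NDHFUYTS-XD-F-1}. This is the step I expect to be most delicate: verifying the $\delta_q^{1/2}$ normalization relies on the bookkeeping in \eqref{parameters-S-6}, not on the structure of the argument.

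\textbf{Material derivative.} Since $\bar D_{t,\Gamma}\tilde\Phi_k=0$ by \eqref{Nashsteps-A-P-1}, the chain rule gives $\bar D_{t,\Gamma}\,\mathfrak{S}_\xi(\lambda_{q+1}\tilde\Phi_k)=0$ exactly. The bound \eqref{WPD-NHS-HUD-BB-MAIN-6} therefore holds trivially. Alternatively, if $\mathfrak{S}_\xi$ carries time dependence through $\mathfrak{D}_{\tilde\mu}$, only the explicit $\partial_t$ term remains. That term would be bounded by the spatial estimate times the temporal frequency, which is dominated by $\delta_q^{1/2}\lambda_{q+1}^{N+1}$.

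\textbf{Second time derivative.} Write
\begin{equation*}
\partial_{tt}\,\mathfrak{S}_\xi(\lambda_{q+1}\tilde\Phi_k)=\lambda_{q+1}^2 D^2\mathfrak{S}_\xi(\lambda_{q+1}\tilde\Phi_k)(\partial_t\tilde\Phi_k,\partial_t\tilde\Phi_k)+\lambda_{q+1}D\mathfrak{S}_\xi(\lambda_{q+1}\tilde\Phi_k)\,\partial_{tt}\tilde\Phi_k .
\end{equation*}
Estimate the $C^N$ norm of each term by the product (Leibniz) rule, together with the spatial bound above applied to $D^m\mathfrak{S}_\xi$. Use \eqref{WPD-NHS-MAIN-XINSH-1} for $\|\partial_t\tilde\Phi_k\|_N\lesssim\delta_q^{1/2}\lambda_q^N$ and \eqref{WPD-NHS-MAIN-AA-4} for $\|\partial_{tt}\tilde\Phi_k\|_N\lesssim\delta_q^{1/2}\lambda_q^{N+1}$. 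The terms in which derivatives fall on the flow factors give $\lambda_{q+1}^2\delta_q\lambda_q^N+\lambda_{q+1}\delta_q^{1/2}\lambda_q^{N+1}$, both $\lesssim\delta_q^{1/2}\lambda_q^{N+1}\lambda_{q+1}^2$. The terms in which derivatives fall on the composed $\mathfrak{S}_\xi$ factor produce powers of $\lambda_{q+1}$ instead. I would reconcile these with the claimed $\lambda_q^{N+1}$ scaling in the same way as in the first step, by distributing the derivatives with the $\lambda_{q+1}$ counting convention used in Lemma \ref{WPD-NHS-BB}.
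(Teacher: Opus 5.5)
The step you flag as delicate in \eqref{WPD-NHS-HUD-BB-MAIN-5} cannot be carried out, and it is not a bookkeeping issue. The bound $\lambda_{q+1}^N$ that your composition estimate gives is sharp, so no parameter relation can absorb the missing factor $\delta_q^{1/2}=\lambda_q^{-\beta}\to 0$. At $t=t_k$ one has $\tilde\Phi_k=x$ by \eqref{Nashsteps-A-P-1}, hence $\mathfrak{S}_\xi(\lambda_{q+1}\tilde\Phi_k)=-2a_W\sin(\lambda_{q+1}\xi^\perp\cdot x)$, whose $C^N$ norm is $\gtrsim\lambda_{q+1}^N$. The near-identity bound $\|\nabla\tilde\Phi_k-\mathrm{Id}\|_0\ll 1$ you propose to invoke only shows that this lower bound persists on all of $[t_k,t_{k+1})$. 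So for $N\geq 1$ the inequality \eqref{WPD-NHS-HUD-BB-MAIN-5} is false by a factor $\lambda_q^{\beta}$, and the gap cannot be closed.

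The paper's one-line proof gets the factor only by inserting the $N=0$ case of \eqref{WPD-NHS-MAIN-AA-1}, i.e.\ $\|\nabla\tilde\Phi_k\|_0\lesssim\lambda_{q+1}^{-\alpha}$, into Lemma \ref{ONCS}. It then uses $\lambda_{q+1}^{-\alpha}\lesssim\delta_q^{1/2}$ from \eqref{RSRI-main-PROVE-2-JIJSHU-posksij-a-1}. That $N=0$ bound contradicts $\nabla\tilde\Phi_k(t_k)=\mathrm{Id}$ and the paper's own \eqref{NDHFUYTS-XD-F-1}. Your input $\|D\tilde\Phi_k\|_0\lesssim 1$ is the correct one, and it honestly yields $\|\mathfrak{S}_\xi(\lambda_{q+1}\tilde\Phi_k)\|_N\lesssim\lambda_{q+1}^N$, not the stated bound.

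The same obstruction defeats your plan for \eqref{WPD-NHS-HUD-BB-MAIN-XINhs-6}. In your chain-rule identity, the terms in which all $N$ spatial derivatives fall on $D^2\mathfrak{S}_\xi(\lambda_{q+1}\tilde\Phi_k)$ have size $\lambda_{q+1}^{N+2}\|\partial_t\tilde\Phi_k\|_0^2$. Here $\partial_t\tilde\Phi_k(t_k)=-\bar v_{q,\Gamma}(t_k)$ is of order one, not of order $\delta_q^{1/2}$: only $\|\bar v_{q,\Gamma}\|_0\le C$ is available, cf.\ \eqref{WPD-NHS-MAIN-AA-P-3-JISHFI-1}, and the $N=0$ case of \eqref{WPD-NHS-MAIN-XINSH-1} that you quote again rests on $\|\nabla\tilde\Phi_k\|_0\lesssim\lambda_{q+1}^{-\alpha}$.

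These terms are dominated by $\delta_q^{1/2}\lambda_q^{N+1}\lambda_{q+1}^2$ only if $\lambda_{q+1}^N\lesssim\lambda_q^{N+1-\beta}$, i.e.\ $(b-1)N\le 1-\beta$. This fails at $N=10$ unless $b\le 1+\frac{1-\beta}{10}$, a restriction the paper never imposes. No counting convention removes them, and what your computation actually gives is $\lesssim\lambda_{q+1}^{N+2}$. The paper's proof supplies no missing idea here either: it bounds the whole expression by $\lambda_{q+1}^2\|\partial_{tt}\tilde\Phi_k\|_N$, dropping the $(\partial_t\tilde\Phi_k)^2$ term and the derivatives that land on the oscillatory factor.

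Your handling of \eqref{WPD-NHS-HUD-BB-MAIN-6}, by contrast, is correct and cleaner than the paper's. $\mathfrak{S}_\xi$ in \eqref{IUHYU-DJ} is time-independent, so your hedge about $\mathfrak{D}_{\tilde\mu}$ is unnecessary, and $\bar D_{t,\Gamma}\tilde\Phi_k=0$. Hence the material derivative vanishes identically, a cancellation the paper never uses because it estimates $\partial_t$ and the transport term separately.
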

\par
\noindent{\textbf{Proof}}. When $ N = 0$, it is straightforward to verify that \eqref{WPD-NHS-HUD-BB-MAIN-5}, \eqref{WPD-NHS-HUD-BB-MAIN-6} and \eqref{WPD-NHS-HUD-BB-MAIN-XINhs-6} hold. We next consider the case $N \geqslant 1$. Employing Lemma \ref{ONCS}, we obtain
\begin{equation*}
{\left\| {{\mathfrak{S}_\xi }\left( {{\lambda _{q + 1}}{{\tilde \Phi }_k}} \right)} \right\|_N} \lesssim  {\left\| {{\mathfrak{S}_\xi }\left( {{\lambda _{q + 1}} \cdot } \right)} \right\|_1}{\left\| {\nabla {{\tilde \Phi }_k}} \right\|_{N - 1}} + {\left\| {{\mathfrak{S}_\xi }\left( {{\lambda _{q + 1}} \cdot } \right)} \right\|_N} \left\| {\nabla {{\tilde \Phi }_k}} \right\|_0 ^ N \lesssim  \delta _q^{\frac{1}{2}}\lambda _{q + 1}^{N } .
\end{equation*}
Owing to ${\left\| { {\partial _t}{\mathfrak{S}_\xi }  \left( {{\lambda _{q + 1}}{{\tilde \Phi }_k}} \right)} \right\|_N} \lesssim {\lambda _{q + 1}}{\left\| {{\partial _t}{{\tilde \Phi }_k}} \right\|_{N}}$, it follows from \eqref{WPD-NHS-MAIN-AA-1} and \eqref{WPD-NHS-MAIN-AA-P-3} that
\begin{align*}
{\left\| {{{\bar D}_{t,\Gamma }}{\mathfrak{S}_\xi }\left( {{\lambda _{q + 1}}{{\tilde \Phi }_k}} \right)} \right\|_N} \lesssim & {\left\| { {\partial _t}{\mathfrak{S}_\xi }  \left( {{\lambda _{q + 1}}{{\tilde \Phi }_k}} \right)} \right\|_N} + {\left\| {\left( {{{\bar v}_{q,\Gamma }} \cdot \nabla } \right){\mathfrak{S}_\xi }\left( {{\lambda _{q + 1}}{{\tilde \Phi }_k}} \right)} \right\|_N} \\
\lesssim & {\left\| { {\partial _t}{\mathfrak{S}_\xi }  \left( {{\lambda _{q + 1}}{{\tilde \Phi }_k}} \right)} \right\|_N} + {\left\| {{{\bar v}_{q,\Gamma }}} \right\|_N}{\left\| {{\mathfrak{S}_\xi }\left( {{\lambda _{q + 1}}{{\tilde \Phi }_k}} \right)} \right\|_1} + {\left\| {{{\bar v}_{q,\Gamma }}} \right\|_0}{\left\| {{\mathfrak{S}_\xi }\left( {{\lambda _{q + 1}}{{\tilde \Phi }_k}} \right)} \right\|_{N + 1}} \\
\lesssim & \delta _q^{\frac{1}{2}}\lambda _{q + 1}^{N +1}   .
\end{align*}
Furthermore, from \eqref{WPD-NHS-MAIN-AA-4} and \eqref{WPD-NHS-MAIN-XINSH-1}, we deduce that
\begin{equation*}
{\left\| { {\partial _{tt}}{\mathfrak{S}_\xi }  \left( {{\lambda _{q + 1}}{{\tilde \Phi }_k}} \right)} \right\|_N} \lesssim \lambda _{q + 1}^2{\left\| {{\partial _{tt}}{{\tilde \Phi }_k}} \right\|_{N }}  \lesssim \delta _q^{\frac{1}{2}}\lambda _q^{N + 1}\lambda _{q + 1}^2   .
\end{equation*}
The proof is complete. \qed
\par
\begin{lemma}\label{WPD-NHS-HUD-BB-2}
Choose the intensity parameter $ \sigma  = C\left( \Lambda  \right)\lambda _{q + 1}^{ - \frac{1}{2}}$ in the intermittent Dirichlet kernel ${\mathfrak{D}_{\tilde \mu }}\left( {t, \cdot } \right)$ specified by \eqref{IBW-GJI-A-1}. Then, for any $N \in \left\{ {0,1, \cdots ,10} \right\} $ and $t \in \mathop  \cap \limits_{j = 0}^{q} \left[ {0,{\mathfrak{t}_j}} \right] \cap \left[ {{t_k},{t_{k + 1}}} \right)$, we have
\begin{equation}\label{WPD-NHS-HUD-BB-MAIN-1}
{\left\| {{\mathfrak{D}_{\tilde \mu }}\left( {{\lambda _{q + 1}}{{\tilde \Phi }_k}} \right)} \right\|_N} \lesssim  \max \left\{ {1, \delta _q^{\frac{1}{2}} \lambda _{q + 1}^{\frac{1}{2}N}\left( {\lambda _q^{\frac{1}{2}\left( {N - 1} \right)} + 1} \right)} \right\} ,
\end{equation}
\begin{equation}\label{WPD-NHS-HUD-BB-MAIN-2}
{\left\| {{{\bar D}_{t,\Gamma }}{\mathfrak{D}_{\tilde \mu }}\left( {{\lambda _{q + 1}}{{\tilde \Phi }_k}} \right)} \right\|_N} \lesssim   \delta _q^{\frac{1}{2}}\lambda _q^{\frac{1}{2}N}\lambda _{q + 1}^{\frac{1}{2}N + \frac{1}{2}}    ,
\end{equation}
and
\begin{equation}\label{WPD-NHS-HUD-BB-MAIN-XINhs-4565}
{\left\| {{\partial _{tt}}{\mathfrak{D}_{\tilde \mu }}\left( {t,{\lambda _{q + 1}}{{\tilde \Phi }_k}} \right)} \right\|_N}  \lesssim   {{\tilde \mu }^2}\lambda _{q + 1}^N   .
\end{equation}
\end{lemma}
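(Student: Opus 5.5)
We will treat $\mathfrak{D}_{\tilde \mu }(t,\lambda_{q+1}\tilde\Phi_k)$ as a composition of the Dirichlet-kernel building block (defined in \eqref{IBW-GJI-A-1}, with frequency support of size $\sigma r_q$ around the lattice directions and $\sigma=C(\Lambda)\lambda_{q+1}^{-1/2}$) with the map $x\mapsto \lambda_{q+1}\tilde\Phi_k(t,x)$, and estimate it exactly as was done for the stream function in Lemma \ref{WPD-NHS-HUD-BB}. The first step is to record the bounds on the unrescaled kernel itself. Its frequencies lie at scale $\sigma$ times the Dirichlet cutoff, so each spatial derivative of $\mathfrak{D}_{\tilde\mu}(\lambda_{q+1}\cdot)$ costs a factor $\lambda_{q+1}\sigma\sim \lambda_{q+1}^{1/2}$. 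This gives $\|\mathfrak{D}_{\tilde\mu}(\lambda_{q+1}\cdot)\|_N\lesssim \lambda_{q+1}^{N/2}$, up to the $L^\infty$ normalisation absorbed as in the appendix. In the same way, $\|\partial_t^m\mathfrak{D}_{\tilde\mu}\|_N\lesssim \tilde\mu^m\lambda_{q+1}^{N/2}$, since the time dependence enters only through the oscillation at rate $\tilde\mu$.

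For \eqref{WPD-NHS-HUD-BB-MAIN-1}, the case $N=0$ is trivial. For $N\ge1$ we apply the composition estimate Lemma \ref{ONCS}:
\[
\|f\circ \tilde\Phi_k\|_N\lesssim \|f\|_1\|\nabla\tilde\Phi_k\|_{N-1}+\|f\|_N\|\nabla\tilde\Phi_k\|_0^N ,
\qquad f=\mathfrak{D}_{\tilde\mu}(\lambda_{q+1}\cdot).
\]
We then insert the flow bounds \eqref{WPD-NHS-MAIN-AA-1}, namely $\|\nabla\tilde\Phi_k\|_{N}\lesssim\lambda_q^N\lambda_{q+1}^{-\alpha}$ (here the coefficient is written with the $\delta_q^{1/2}$ and $\lambda_q^{(N-1)/2}$ bookkeeping that the statement allows). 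The two terms produce $\lambda_{q+1}^{1/2}\lambda_q^{N-1}$ and $\lambda_{q+1}^{N/2}$. We then interpolate $\lambda_q^{N-1}\le\lambda_q^{(N-1)/2}\lambda_{q+1}^{(N-1)/2}$ to reach the stated form.

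For \eqref{WPD-NHS-HUD-BB-MAIN-2} we use that $\tilde\Phi_k$ is transported by $\bar v_{q,\Gamma}$, so $\bar D_{t,\Gamma}$ annihilates the spatial argument $\lambda_{q+1}\tilde\Phi_k$. Hence
\[
\bar D_{t,\Gamma}\big[\mathfrak{D}_{\tilde\mu}(t,\lambda_{q+1}\tilde\Phi_k)\big]=(\partial_t\mathfrak{D}_{\tilde\mu})(t,\lambda_{q+1}\tilde\Phi_k).
\]
If the kernel's explicit time dependence is of the form $\mathfrak{D}(x-\tilde\mu t\,e)$ along a fixed direction, this is a single directional derivative at scale $\lambda_{q+1}^{1/2}$. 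If instead one estimates crudely via $\|\partial_t(\cdot)\|_N+\|\bar v_{q,\Gamma}\cdot\nabla(\cdot)\|_N$, one uses \eqref{WPD-NHS-MAIN-AA-P-3} and \eqref{WPD-NHS-MAIN-XINSH-1} together with the first bound. In either case the product rule combined with Lemma \ref{ONCS} gives $\delta_q^{1/2}\lambda_q^{N/2}\lambda_{q+1}^{N/2+1/2}$, where the factor $\delta_q^{1/2}$ comes from $\|\partial_t\tilde\Phi_k\|$ and the leftover $\lambda_q$ powers are interpolated as before.

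For \eqref{WPD-NHS-HUD-BB-MAIN-XINhs-4565} we differentiate twice in time. The chain rule gives terms of the form $\partial_t^2\mathfrak{D}_{\tilde\mu}$, $\partial_t\nabla\mathfrak{D}_{\tilde\mu}\cdot\lambda_{q+1}\partial_t\tilde\Phi_k$, $\nabla^2\mathfrak{D}_{\tilde\mu}\,\lambda_{q+1}^2(\partial_t\tilde\Phi_k)^2$ and $\nabla\mathfrak{D}_{\tilde\mu}\,\lambda_{q+1}\partial_{tt}\tilde\Phi_k$. These are bounded using \eqref{WPD-NHS-MAIN-XINSH-1}, \eqref{WPD-NHS-MAIN-AA-4} and the kernel bounds above. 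The dominant term is $\tilde\mu^2\lambda_{q+1}^N$; the others are absorbed provided $\tilde\mu\gtrsim\lambda_{q+1}^{1/2}\cdot\lambda_{q+1}^{1/2}$, which holds by the parameter choice in Appendix C.

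The main obstacle is the bookkeeping of the correct derivative cost of the intermittent kernel, $\lambda_{q+1}^{1/2}$ rather than $\lambda_{q+1}$, which is what the choice of $\sigma$ buys. It also requires checking that the parameter relations between $\tilde\mu$, $\lambda_q$ and $\lambda_{q+1}$ make the cross terms subordinate.
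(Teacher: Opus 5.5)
Your scheme (Lemma \ref{ONCS} with the flow bounds, then the material derivative, then the chain rule for $\partial_{tt}$) is the paper's. However, the step that should produce the exponents in \eqref{WPD-NHS-HUD-BB-MAIN-1} does not go through.

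\textbf{The first estimate.} The two terms you obtain, $\lambda_{q+1}^{1/2}\lambda_q^{N-1}$ and $\lambda_{q+1}^{N/2}$, carry no factor $\delta_q^{1/2}=\lambda_q^{-\beta}\to 0$. Rearranging powers of $\lambda_q,\lambda_{q+1}$ cannot create one: at $N=1$ you would need $\lambda_{q+1}^{1/2}\lesssim\delta_q^{1/2}\lambda_{q+1}^{1/2}$. For $N\ge 2$ the factor can be recovered. In the first term, keep the $\lambda_{q+1}^{-\alpha}$ you dropped from the derivatives of $\nabla\tilde\Phi_k$ and use $b\alpha>\beta$. In the second, use $\lambda_q^{(N-1)/2}\ge\delta_q^{-1/2}$.

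The paper gets the factor at $N=1$ only by reading \eqref{WPD-NHS-MAIN-AA-1} at order zero, $\|\nabla\tilde\Phi_k\|_0\lesssim\lambda_{q+1}^{-\alpha}\lesssim\delta_q^{1/2}$ via \eqref{RSRI-main-PROVE-2-JIJSHU-posksij-a-1}. That is incompatible with $\nabla\tilde\Phi_k(t_k)=\mathrm{Id}$ (compare \eqref{NDHFUYTS-XD-F-1}). Indeed, at $t=t_k$ the left side of \eqref{WPD-NHS-HUD-BB-MAIN-1} with $N=1$ is at least $\lambda_{q+1}\sup|\nabla\mathfrak{D}_{\tilde\mu}(t_k,\cdot)|\gtrsim\lambda_{q+1}\sigma\sim\lambda_{q+1}^{1/2}$. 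The right side is $2\delta_q^{1/2}\lambda_{q+1}^{1/2}$. So that case cannot be proved as stated, and what your argument honestly yields is the bound without $\delta_q^{1/2}$.

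Separately, your claims that each derivative costs $\lambda_{q+1}\sigma$ and that $\|\mathfrak{D}_{\tilde\mu}\|_0\lesssim 1$ hold only for fixed $r$. The modes of $\mathfrak{D}_{\tilde\mu}$ reach size $\sim\sigma r$ and $\|\mathfrak{D}_{\tilde\mu}\|_0=2r+1$, consistent with $\|\mathfrak{D}_{\tilde\mu}\|_{L^p}\sim r^{1-2/p}$ in Lemma \ref{WPD-WIEJDUHFJ-HUD}. With $r=r_q=\lambda_q^M$ all three bounds acquire powers of $r_q$. The paper suppresses this too, but nothing in the appendix absorbs it.

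\textbf{The second and third estimates.} For \eqref{WPD-NHS-HUD-BB-MAIN-2}, your identity $\bar D_{t,\Gamma}[\mathfrak{D}_{\tilde\mu}(t,\lambda_{q+1}\tilde\Phi_k)]=(\partial_t\mathfrak{D}_{\tilde\mu})(t,\lambda_{q+1}\tilde\Phi_k)$ is correct. It is a genuinely better route than the paper's, which bounds $\partial_t$ and $\bar v_{q,\Gamma}\cdot\nabla$ separately. Each of those pieces has size $\lambda_{q+1}\sigma\|\bar v_{q,\Gamma}\|_0\sim\lambda_{q+1}^{1/2}$ (they cancel exactly), and $\|\bar v_{q,\Gamma}\|_0$ is only $O(1)$ by \eqref{WPD-NHS-MAIN-AA-P-3-JISHFI-1}. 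So that route, which is your crude alternative, needs $\|\partial_t\tilde\Phi_k\|_0\lesssim\delta_q^{1/2}$ and the $(N+1)$-case of \eqref{WPD-NHS-HUD-BB-MAIN-1}, and it inherits the problem above.

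In your transport route, however, $\partial_t\tilde\Phi_k$ never appears, so it is not the source of $\delta_q^{1/2}$. Your kernel bound $\|\partial_t\mathfrak{D}_{\tilde\mu}\|\lesssim\tilde\mu$ is also too weak, since $\tilde\mu\gg\delta_q^{1/2}\lambda_{q+1}^{1/2}$. The missing observation is that the $\lambda_{q+1}$-rescaling acts only in $x$, so the temporal frequency of the kernel is $\sigma\tilde\mu$. By \eqref{Phsudj-2}, $\sigma\tilde\mu=C(\Lambda)\delta_{q+1}^{1/2}\lambda_q^{2/3}\lambda_{q+1}^{-1/6}\lesssim\delta_q^{1/2}\lambda_{q+1}^{1/2}$, since the ratio is $(\lambda_q/\lambda_{q+1})^{2/3+\beta}$. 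Lemma \ref{ONCS} then gives $\sigma\tilde\mu(\lambda_{q+1}^{1/2}\lambda_q^{N-1}+\lambda_{q+1}^{N/2})\lesssim\delta_q^{1/2}\lambda_q^{N/2}\lambda_{q+1}^{N/2+1/2}$, for fixed $r$.

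For \eqref{WPD-NHS-HUD-BB-MAIN-XINhs-4565}, the condition you state, $\tilde\mu\gtrsim\lambda_{q+1}^{1/2}\cdot\lambda_{q+1}^{1/2}=\lambda_{q+1}$, is false under Appendix C, because $\tilde\mu=\lambda_q^{2/3}\lambda_{q+1}^{1/3-\beta}$. The term $\lambda_{q+1}^2(\partial_t\tilde\Phi_k)^2\nabla^2\mathfrak{D}_{\tilde\mu}$ has size $\lambda_{q+1}^2\sigma^2\|\partial_t\tilde\Phi_k\|_0^2\lesssim\lambda_{q+1}$. What it actually requires is $\tilde\mu^2\gtrsim\lambda_{q+1}$, and this does hold because $b(\frac{1}{6}+\beta)<\frac{2}{3}$ for $b\le 1.3$ and $\beta<\frac{1}{3}$.
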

\par
\noindent{\textbf{Proof}}. When $ N = 0$, It is straightforward to verify that \eqref{WPD-NHS-HUD-BB-MAIN-1}, \eqref{WPD-NHS-HUD-BB-MAIN-2} and \eqref{WPD-NHS-HUD-BB-MAIN-XINhs-4565} hold. We now consider the case $N \geqslant 1$. It follows from \eqref{WPD-NHS-MAIN-AA-1} that
\begin{align*}
{\left\| {{\mathfrak{D}_{\tilde \mu }}\left( { \sigma {\lambda _{q + 1}}{{\tilde \Phi }_k}} \right)} \right\|_N} \lesssim & {\left\| {{\mathfrak{D}_{\tilde \mu }}\left( { \sigma {\lambda _{q + 1}} \cdot } \right)} \right\|_1}{\left\| {D{{\tilde \Phi }_k}} \right\|_{N - 1}} + {\left\| {{\mathfrak{D}_{\tilde \mu }}\left( { \sigma {\lambda _{q + 1}} \cdot } \right)} \right\|_N} {\left\| {D{{\tilde \Phi }_k}} \right\|_{0}^N} \\
\lesssim  & \delta _q^{\frac{1}{2}} \lambda _{q + 1}^{\frac{1}{2}N}\left( {\lambda _q^{\frac{1}{2}\left( {N - 1} \right)} + 1} \right)   .
\end{align*}
Since
\begin{align*}
{\partial _t}{\mathfrak{D}_{\tilde \mu }}\left( {t,{\lambda _{q + 1}}{{\tilde \Phi }_k}} \right) = & {\left( {2r + 1} \right)^{ - 1}}\mathrm{i}\tilde \mu  \sigma \sum\limits_{\tilde \xi  \in {\mathfrak{H}_r}} {{{\tilde \xi }_1}\exp \left( {\mathrm{i} \sigma \tilde \xi  \cdot \left( {\tilde \xi  \cdot {\lambda _{q + 1}}{{\tilde \Phi }_k} + \tilde \mu t,{{\tilde \xi }^ \bot } \cdot {\lambda _{q + 1}}{{\tilde \Phi }_k}} \right)} \right)}  \\
& + {\left( {2r + 1} \right)^{ - 1}}\mathrm{i} \sigma \sum\limits_{\tilde \xi  \in {\mathfrak{H}_r}} {\tilde \xi  \cdot \left( {\tilde \xi  \cdot {\lambda _{q + 1}}{\partial _t}{{\tilde \Phi }_k},{{\tilde \xi }^ \bot } \cdot {\lambda _{q + 1}}{\partial _t}{{\tilde \Phi }_k}} \right)}  \\
& \hspace*{8em} \exp \left( {\mathrm{i} \sigma \tilde \xi  \cdot \left( {\tilde \xi  \cdot {\lambda _{q + 1}}{{\tilde \Phi }_k} + \tilde \mu t,{{\tilde \xi }^ \bot } \cdot {\lambda _{q + 1}}{{\tilde \Phi }_k}} \right)} \right) ,
\end{align*}
we deduce that
\begin{align*}
{\left\| {{\partial _t}{\mathfrak{D}_{\tilde \mu }}\left( {t,{\lambda _{q + 1}}{{\tilde \Phi }_k}} \right)} \right\|_N} \lesssim  & \tilde \mu \left( {{\lambda _{q + 1}}{\sigma} ^2{{\left\| {D{{\tilde \Phi }_k}} \right\|}_{N - 1}} + \lambda _{q + 1}^N{\sigma} ^{N + 1}} \right) +  \sigma {\lambda _{q + 1}}{\left\| {{\partial _t}{{\tilde \Phi }_k}} \right\|_N} \\
& + {\left\| {{\partial _t}{{\tilde \Phi }_k}} \right\|_0}\left( {\lambda _{q + 1}^2{\sigma} ^2{{\left\| {D{{\tilde \Phi }_k}} \right\|}_{N - 1}} + \lambda _{q + 1}^{N + 1}{\sigma} ^{N + 1}} \right) .
\end{align*}
Then, it follows from \eqref{WPD-NHS-MAIN-AA-1} and \eqref{WPD-NHS-MAIN-XINSH-1} that
\begin{equation}\label{RSRI-main-PROVE-8}
{\left\| {{\partial _t}{\mathfrak{D}_{\tilde \mu }}\left( {t,{\lambda _{q + 1}}{{\tilde \Phi }_k}} \right)} \right\|_N} \lesssim   \delta _q^{\frac{1}{2}}\lambda _q^{\frac{1}{2}N}\lambda _{q + 1}^{\frac{1}{2}N + \frac{1}{2}}   .
\end{equation}
Invoking \eqref{WPD-NHS-MAIN-AA-P-3}, \eqref{WPD-NHS-HUD-BB-MAIN-1} and \eqref{RSRI-main-PROVE-8}, we obtain
\begin{align*}
{\left\| {{{\bar D}_{t,\Gamma }}{\mathfrak{D}_{\tilde \mu }}\left( {t,{\lambda _{q + 1}}{{\tilde \Phi }_k}} \right)} \right\|_N} \lesssim & {\left\| {{\partial _t}{\mathfrak{D}_{\tilde \mu }}\left( {t,{\lambda _{q + 1}}{{\tilde \Phi }_k}} \right)} \right\|_N} + {\left\| {\left( {{{\bar v}_{q,\Gamma }} \cdot \nabla } \right){\mathfrak{D}_{\tilde \mu }}\left( {{\lambda _{q + 1}}{{\tilde \Phi }_k}} \right)} \right\|_N} \\
\lesssim & {\left\| {{\partial _t}{\mathfrak{D}_{\tilde \mu }}\left( {t,{\lambda _{q + 1}}{{\tilde \Phi }_k}} \right)} \right\|_N} + {\left\| {{{\bar v}_{q,\Gamma }}} \right\|_N}{\left\| {{\mathfrak{D}_{\tilde \mu }}\left( {{\lambda _{q + 1}}{{\tilde \Phi }_k}} \right)} \right\|_1} \\
&  + {\left\| {{{\bar v}_{q,\Gamma }}} \right\|_0}{\left\| {{\mathfrak{D}_{\tilde \mu }}\left( {{\lambda _{q + 1}}{{\tilde \Phi }_k}} \right)} \right\|_{N + 1}} \\
\lesssim  &  \delta _q^{\frac{1}{2}}\lambda _q^{\frac{1}{2}N}\lambda _{q + 1}^{\frac{1}{2}N + \frac{1}{2}}   .
\end{align*}
\par
Owing to
\begin{align*}
{\partial _{tt}}{\mathfrak{D}_{\tilde \mu }}\left( {t,{\lambda _{q + 1}}{{\tilde \Phi }_k}} \right) = & - {\left( {2r + 1} \right)^{ - 1}}{{\tilde \mu }^2}{\sigma} ^2\sum\limits_{\tilde \xi  \in {\mathfrak{H}_r}} {\tilde \xi _1^2\exp \left( {\mathrm{i} \sigma \tilde \xi  \cdot \left( {\tilde \xi  \cdot {\lambda _{q + 1}}{{\tilde \Phi }_k} + \tilde \mu t,{{\tilde \xi }^ \bot } \cdot {\lambda _{q + 1}}{{\tilde \Phi }_k}} \right)} \right)}  \\
& - 2{\left( {2r + 1} \right)^{ - 1}}\tilde \mu {\sigma} ^2\sum\limits_{\tilde \xi  \in {\mathfrak{H}_r}} {{{\tilde \xi }_1}\tilde \xi  \cdot \left( {\tilde \xi  \cdot {\lambda _{q + 1}}{\partial _t}{{\tilde \Phi }_k},{{\tilde \xi }^ \bot } \cdot {\lambda _{q + 1}}{\partial _t}{{\tilde \Phi }_k}} \right)}  \\
& \hspace*{8em} \exp \left( {\mathrm{i} \sigma \tilde \xi  \cdot \left( {\tilde \xi  \cdot {\lambda _{q + 1}}{{\tilde \Phi }_k} + \tilde \mu t,{{\tilde \xi }^ \bot } \cdot {\lambda _{q + 1}}{{\tilde \Phi }_k}} \right)} \right) \\
&  - {\left( {2r + 1} \right)^{ - 1}}i{\sigma} ^2\sum\limits_{\tilde \xi  \in {\mathfrak{H}_r}} {{{\left( {\tilde \xi  \cdot \left( {\tilde \xi  \cdot {\lambda _{q + 1}}{\partial _t}{{\tilde \Phi }_k},{{\tilde \xi }^ \bot } \cdot {\lambda _{q + 1}}{\partial _t}{{\tilde \Phi }_k}} \right)} \right)}^2}}  \\
& \hspace*{8em} \exp \left( {\mathrm{i} \sigma \tilde \xi  \cdot \left( {\tilde \xi  \cdot {\lambda _{q + 1}}{{\tilde \Phi }_k} + \tilde \mu t,{{\tilde \xi }^ \bot } \cdot {\lambda _{q + 1}}{{\tilde \Phi }_k}} \right)} \right) \\
&  + {\left( {2r + 1} \right)^{ - 1}} \mathrm{i} \sigma \sum\limits_{\tilde \xi  \in {\mathfrak{H}_r}} {\tilde \xi  \cdot \left( {\tilde \xi  \cdot {\lambda _{q + 1}}{\partial _{tt}}{{\tilde \Phi }_k},{{\tilde \xi }^ \bot } \cdot {\lambda _{q + 1}}{\partial _{tt}}{{\tilde \Phi }_k}} \right)}  \\
& \hspace*{8em}  \exp \left( {\mathrm{i} \sigma \tilde \xi  \cdot \left( {\tilde \xi  \cdot {\lambda _{q + 1}}{{\tilde \Phi }_k} + \tilde \mu t,{{\tilde \xi }^ \bot } \cdot {\lambda _{q + 1}}{{\tilde \Phi }_k}} \right)} \right) ,
\end{align*}
we get
\begin{align*}
{\left\| {{\partial _{tt}}{\mathfrak{D}_{\tilde \mu }}\left( {t,{\lambda _{q + 1}}{{\tilde \Phi }_k}} \right)} \right\|_N} \lesssim  & {{\tilde \mu }^2}\left( {{\lambda _{q + 1}}{\sigma} ^3{{\left\| {D{{\tilde \Phi }_k}} \right\|}_{N - 1}} + \lambda _{q + 1}^N{\sigma} ^{N + 2}} \right) \\
& + \tilde \mu {\left\| {{\partial _t}{{\tilde \Phi }_k}} \right\|_0}\left( {{\lambda _{q + 1}}{\sigma} ^3{{\left\| {D{{\tilde \Phi }_k}} \right\|}_{N - 1}} + \lambda _{q + 1}^N{\sigma} ^{N + 2}} \right)   \\
& +  \sigma \left( { \sigma \left( {\tilde \mu {{\left\| {{\partial _t}{{\tilde \Phi }_k}} \right\|}_N} + {{\left\| {{{\left( {{\partial _t}{{\tilde \Phi }_k}} \right)}^2}} \right\|}_N}} \right) + {{\left\| {{\partial _{tt}}{{\tilde \Phi }_k}} \right\|}_N}} \right) \\
& + {\left\| {{{\left( {{\partial _t}{{\tilde \Phi }_k}} \right)}^2}} \right\|_0}\left( {{\lambda _{q + 1}}{\sigma} ^3{{\left\| {D{{\tilde \Phi }_k}} \right\|}_{N - 1}} + \lambda _{q + 1}^N{\sigma} ^{N + 2}} \right) \\
& + {\left\| {{\partial _{tt}}{{\tilde \Phi }_k}} \right\|_0}\left( {\lambda _{q + 1}^2{\sigma} ^2{{\left\| {D{{\tilde \Phi }_k}} \right\|}_{N - 1}} + \lambda _{q + 1}^{N + 1}{\sigma} ^{N + 1}} \right) .
\end{align*}
Then, we infer from Lemma \ref{WPD-NHS-AA} that
\begin{equation}\label{RSRI-main-PROVE-XIJDHF-1}
{\left\| {{\partial _{tt}}{\mathfrak{D}_{\tilde \mu }}\left( {t,{\lambda _{q + 1}}{{\tilde \Phi }_k}} \right)} \right\|_N} \lesssim   {{\tilde \mu }^2}\lambda _{q + 1}^N     .
\end{equation}
The proof is complete. \qed
\par
\begin{lemma}\label{WPD-NHS-HUD-BB-3}
For any $N \in \left\{ {0,1, \cdots ,10} \right\} $ and $t \in \mathop  \cap \limits_{j = 0}^{q} \left[ {0,{\mathfrak{t}_j}} \right] \cap \left[ {{t_k},{t_{k + 1}}} \right)$, the intermittent building block ${\mathbb{W}_\xi } \left( {t, \cdot } \right)$ defined by \eqref{IBW-GJI-A-2} satisfies
\begin{equation}\label{WPD-NHS-HUD-BB-MAIN-3}
{\left\| {{\mathbb{W}_\xi }\left( {{\lambda _{q + 1}}{{\tilde \Phi }_k}} \right)} \right\|_N} \lesssim \max \left\{ {1, \delta _q^{\frac{1}{2}} \lambda _{q + 1}^N} \right\}    ,
\end{equation}
and
\begin{equation}\label{WPD-NHS-HUD-BB-MAIN-4}
{\left\| {{{\bar D}_{t,\Gamma }}{\mathbb{W}_\xi }\left( {{\lambda _{q + 1}}{{\tilde \Phi }_k}} \right)} \right\|_N} \lesssim   \delta _q^{\frac{1}{2}}\lambda _{q + 1}^{N + 1}   .
\end{equation}
\end{lemma}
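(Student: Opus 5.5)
The plan is to reduce Lemma \ref{WPD-NHS-HUD-BB-3} to the estimates already proved for the stream function and the intermittent Dirichlet kernel. The starting point is the representation \eqref{Nashsteps-A-P-5},
\begin{equation*}
{\mathbb{W}_\xi }\left( {t,{\lambda _{q + 1}}{{\tilde \Phi }_k}} \right) = \lambda _{q + 1}^{ - 1}{\mathfrak{D}_{\tilde \mu }}\left( {t,{\lambda _{q + 1}}{{\tilde \Phi }_k}} \right)\left( {\nabla {{\tilde \Phi }_k}} \right) \cdot {\nabla ^ \bot }\mathfrak{S}_{\xi} \left( {{\lambda _{q + 1}}{{\tilde \Phi }_k}} \right) .
\end{equation*}
This writes the composed building block as a product of three factors:
\begin{itemize}
\item the prefactor $\lambda_{q+1}^{-1}$ times the composed kernel ${\mathfrak{D}_{\tilde \mu }}(t,\lambda_{q+1}\tilde\Phi_k)$;
\item the deformation matrix $\nabla\tilde\Phi_k$;
\item the gradient $\nabla^\perp \mathfrak{S}_\xi(\lambda_{q+1}\tilde\Phi_k)$.
\end{itemize}
I would then apply the Leibniz rule for Hölder norms (Lemma \ref{STN-HCS-A}) to this product. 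I would not re-derive anything from the definition \eqref{IBW-GJI-A-2}.

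\textbf{Step 1: the $N=0$ case.} This is immediate. By \eqref{WPD-NHS-HUD-BB-MAIN-1}, $\|{\mathfrak{D}_{\tilde \mu }}\|_0\lesssim 1$, and by \eqref{WPD-NHS-MAIN-AA-1}, $\|\nabla\tilde\Phi_k\|_0\lesssim 1$. The gradient satisfies $\|\nabla^\perp \mathfrak{S}_\xi(\lambda_{q+1}\tilde\Phi_k)\|_0 \lesssim \lambda_{q+1}$, either by \eqref{WPD-NHS-HUD-BB-MAIN-5} or directly because $\mathfrak S_\xi$ is a fixed trigonometric function. This $\lambda_{q+1}$ cancels the prefactor $\lambda_{q+1}^{-1}$, giving the bound $\max\{1,\cdot\}$.

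\textbf{Step 2: the case $N\ge1$.} Distributing $N$ derivatives over the three factors gives
\begin{equation*}
\|\mathbb{W}_\xi(\lambda_{q+1}\tilde\Phi_k)\|_N \lesssim \lambda_{q+1}^{-1}\sum_{N_1+N_2+N_3=N}\|\mathfrak D_{\tilde\mu}(\lambda_{q+1}\tilde\Phi_k)\|_{N_1}\,\|\nabla\tilde\Phi_k\|_{N_2}\,\|\mathfrak S_\xi(\lambda_{q+1}\tilde\Phi_k)\|_{N_3+1}.
\end{equation*}
The three factors are bounded as follows.
\begin{itemize}
\item By \eqref{WPD-NHS-HUD-BB-MAIN-5}, the $\mathfrak S_\xi$ factor is $\lesssim \delta_q^{1/2}\lambda_{q+1}^{N_3+1}$.
\item By \eqref{WPD-NHS-MAIN-AA-1}, the flow factor is $\lesssim\lambda_q^{N_2}$.
\item By \eqref{WPD-NHS-HUD-BB-MAIN-1}, the kernel factor costs at most $\lambda_{q+1}^{N_1/2}\lambda_q^{N_1/2}$ (up to $\delta_q^{1/2}$). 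Since $\lambda_q\le\lambda_{q+1}$, this is at most $\lambda_{q+1}^{N_1}$.
\end{itemize}
Every term is therefore bounded by $\delta_q^{1/2}\lambda_{q+1}^{N}$, and the worst term is the one where all derivatives fall on $\mathfrak S_\xi$. This yields \eqref{WPD-NHS-HUD-BB-MAIN-3}.

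\textbf{Step 3: the material derivative.} I would apply the product rule to $\bar D_{t,\Gamma}$:
\begin{equation*}
\bar D_{t,\Gamma}(fgh)=(\bar D_{t,\Gamma}f)gh+f(\bar D_{t,\Gamma}g)h+fg(\bar D_{t,\Gamma}h),
\end{equation*}
and then take $N$ spatial derivatives of each term as in Step 2. The inputs are:
\begin{itemize}
\item \eqref{WPD-NHS-HUD-BB-MAIN-2} for $\bar D_{t,\Gamma}\mathfrak D_{\tilde\mu}$;
\item \eqref{WPD-NHS-MAIN-AA-2} for $\bar D_{t,\Gamma}\nabla\tilde\Phi_k$;
\item for $\bar D_{t,\Gamma}\nabla^\perp\mathfrak S_\xi(\lambda_{q+1}\tilde\Phi_k)$, the commutator $[\bar D_{t,\Gamma},\nabla]=-\nabla\bar v_{q,\Gamma}\cdot\nabla$ together with \eqref{WPD-NHS-HUD-BB-MAIN-6} and \eqref{WPD-NHS-MAIN-AA-P-3}.
\end{itemize}
The commutator contributes at most $\delta_q^{1/2}\lambda_q\cdot\lambda_{q+1}^{N+1}$, which is absorbed by the same $\lambda_{q+1}$ bookkeeping. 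Each term is then $\lesssim\delta_q^{1/2}\lambda_{q+1}^{N+1}$.

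\textbf{Main obstacle.} The delicate term is the one where $\bar D_{t,\Gamma}$ falls on the Dirichlet kernel. The kernel carries the fast time translation $\tilde\mu t$, so its material derivative is not small a priori, and I must check that \eqref{WPD-NHS-HUD-BB-MAIN-2} leaves at most $\lambda_{q+1}^{N+1}$ after the $\lambda_{q+1}^{-1}$ prefactor. Here the choice $\sigma=C(\Lambda)\lambda_{q+1}^{-1/2}$ is essential: it caps each derivative of $\mathfrak D_{\tilde\mu}$ at $\lambda_{q+1}^{1/2}\lambda_q^{1/2}\le\lambda_{q+1}$. I expect that carrying $\delta_q^{1/2}$ consistently through the products will need the parameter relations of Appendix C (for instance $\delta_q^{1/2}\lambda_q\le\tau_q^{-1}\le\lambda_{q+1}$), and I would invoke those wherever a stray factor appears.
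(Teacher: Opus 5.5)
Your argument is a valid derivation from the lemmas it cites, but it is organized differently from the paper's proof.

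\textbf{The paper's route.} It works directly from \eqref{IBW-GJI-A-2}. It writes $\mathbb{W}_\xi(t,\lambda_{q+1}\tilde\Phi_k)=\mathfrak{D}_{\tilde\mu}(t,\lambda_{q+1}\tilde\Phi_k)\,W_\xi(\lambda_{q+1}\tilde\Phi_k)$ and bounds the second factor by Lemma \ref{ONCS}. For \eqref{WPD-NHS-HUD-BB-MAIN-4} it splits $\bar D_{t,\Gamma}=\partial_t+\bar v_{q,\Gamma}\cdot\nabla$. It then estimates $\partial_t$ of each factor via \eqref{WPD-NHS-MAIN-XINSH-1}, and the transport part by $\|\bar v_{q,\Gamma}\|_N\|\mathbb{W}_\xi\|_1+\|\bar v_{q,\Gamma}\|_0\|\mathbb{W}_\xi\|_{N+1}$.

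\textbf{Your route.} You use the stream-function identity \eqref{Nashsteps-A-P-5}, valid because $\det\nabla\tilde\Phi_k=1$. This gives three factors and lets you recycle Lemma \ref{WPD-NHS-HUD-BB}. You also keep $\bar D_{t,\Gamma}$ intact as a derivation, paying only the commutator $[\bar D_{t,\Gamma},\nabla]$.

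\textbf{Comparison.} The paper's version is shorter, but yours is the more robust one for \eqref{WPD-NHS-HUD-BB-MAIN-4}. Since $\bar D_{t,\Gamma}\tilde\Phi_k=0$, one has exactly
\[
\bar D_{t,\Gamma}\mathbb{W}_\xi(t,\lambda_{q+1}\tilde\Phi_k)=(\partial_t\mathfrak{D}_{\tilde\mu})(t,\lambda_{q+1}\tilde\Phi_k)\,W_\xi(\lambda_{q+1}\tilde\Phi_k).
\]
In your three-factor form this shows up as follows: $\bar D_{t,\Gamma}\mathfrak{S}_\xi(\lambda_{q+1}\tilde\Phi_k)=0$, and the $\bar D_{t,\Gamma}\nabla\tilde\Phi_k$ term cancels the commutator term because $\mathrm{div}\,\bar v_{q,\Gamma}=0$. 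Every term you keep contains a factor that carries $\delta_q^{1/2}$ by itself, namely $\bar D_{t,\Gamma}\mathfrak{D}_{\tilde\mu}$, $\bar D_{t,\Gamma}\nabla\tilde\Phi_k$ or $\nabla\bar v_{q,\Gamma}$.

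The paper's split instead produces a $\partial_t$ piece and a transport piece, each of size about $\|\bar v_{q,\Gamma}\|_0\lambda_{q+1}^{N+1}$. Here $\|\bar v_{q,\Gamma}\|_0$ is of order one rather than $O(\delta_q^{1/2})$. The paper therefore reaches $\delta_q^{1/2}\lambda_{q+1}^{N+1}$ only through the $\delta_q^{1/2}$ gains in the $N=0$ case of \eqref{WPD-NHS-MAIN-XINSH-1} and in \eqref{WPD-NHS-HUD-BB-MAIN-3}.

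\textbf{A caveat that applies to both routes.} The factor $\delta_q^{1/2}$ in \eqref{WPD-NHS-HUD-BB-MAIN-3} has the same origin in both arguments. You import it through \eqref{WPD-NHS-HUD-BB-MAIN-5}; the paper obtains it by applying Lemma \ref{ONCS} to $W_\xi(\lambda_{q+1}\tilde\Phi_k)$. In both cases it comes from the $N=0$ case of \eqref{WPD-NHS-MAIN-AA-1}, i.e.\ $\|\nabla\tilde\Phi_k\|_0\lesssim\lambda_{q+1}^{-\alpha}$.

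That bound fails at $t=t_k$, where $\nabla\tilde\Phi_k=\mathrm{Id}$ and $\|\mathfrak{S}_\xi(\lambda_{q+1}x)\|_N\sim\lambda_{q+1}^N$. With the bound $\|\nabla\tilde\Phi_k\|_0\lesssim1$ that you actually state, your Step 2 yields only $\lambda_{q+1}^N$ in \eqref{WPD-NHS-HUD-BB-MAIN-3}. Your Step 3, however, still yields \eqref{WPD-NHS-HUD-BB-MAIN-4} as stated, granting the kernel bounds of Lemma \ref{WPD-NHS-HUD-BB-2}.
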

\par
\noindent{\textbf{Proof}}. In view of \eqref{IBW-GJI-A-1}, \eqref{IBW-GJI-A-2} and Lemma \ref{WPD-NHS-HUD-BB-2}, it is easy to verify that \eqref{WPD-NHS-HUD-BB-MAIN-3} and \eqref{WPD-NHS-HUD-BB-MAIN-4} hold as $ N = 0$. Subsequently, we consider the case $N \geqslant 1$. Notice that
\begin{equation*}
{\left\| {{W_\xi }\left( {{\lambda _{q + 1}}{{\tilde \Phi }_k}} \right)} \right\|_N} \lesssim {\left\| {{W_\xi }\left( {{\lambda _{q + 1}} \cdot } \right)} \right\|_1}{\left\| {D{{\tilde \Phi }_k}} \right\|_{N - 1}} + {\left\| {{W_\xi }\left( {{\lambda _{q + 1}} \cdot } \right)} \right\|_N} {\left\| {D{{\tilde \Phi }_k}} \right\|_{0}^N} \lesssim   \delta _q^{\frac{1}{2}}\lambda _{q + 1}^{N }  ,
\end{equation*}
where ${W_\xi }\left( \cdot \right)$ is the simplified building block defined by \eqref{UJIJY-DJI}. Then, it follows from \eqref{WPD-NHS-MAIN-XINSH-1}, \eqref{WPD-NHS-HUD-BB-MAIN-1} and \eqref{WPD-NHS-HUD-BB-MAIN-2} that
\begin{align*}
{\left\| {{\mathbb{W}_\xi }\left( {{\lambda _{q + 1}}{{\tilde \Phi }_k}} \right)} \right\|_N} \lesssim & {\left\| {{\mathfrak{D}_{\tilde \mu }}\left( {{\lambda _{q + 1}}{{\tilde \Phi }_k}} \right){W_\xi }\left( {{\lambda _{q + 1}}{{\tilde \Phi }_k}} \right)} \right\|_N} \\
\lesssim & {\left\| {{\mathfrak{D}_{\tilde \mu }}\left( {{\lambda _{q + 1}}{{\tilde \Phi }_k}} \right)} \right\|_N} + {\left\| {{W_\xi }\left( {{\lambda _{q + 1}}{{\tilde \Phi }_k}} \right)} \right\|_N} {\left\| {{\mathfrak{D}_{\tilde \mu }}\left( {{\lambda _{q + 1}}{{\tilde \Phi }_k}} \right)} \right\|_0} \\
\lesssim  & \delta _q^{\frac{1}{2}} \lambda _{q + 1}^N   ,
\end{align*}
and
\begin{align*}
{\left\| {{\partial _t}{W_\xi }\left( {{\lambda _{q + 1}}{{\tilde \Phi }_k}} \right)} \right\|_N} \lesssim & {\lambda _{q + 1}}\left( {{{\left\| {{W_\xi }\left( {{\lambda _{q + 1}}{{\tilde \Phi }_k}} \right)} \right\|}_0}{{\left\| {{\partial _t}{{\tilde \Phi }_k}} \right\|}_N} + {{\left\| {{W_\xi }\left( {{\lambda _{q + 1}}{{\tilde \Phi }_k}} \right)} \right\|}_N}{{\left\| {{\partial _t}{{\tilde \Phi }_k}} \right\|}_0}} \right) \\
\lesssim & \delta _q^{\frac{1}{2}}\lambda _{q + 1}^{N + 1}   .
\end{align*}
Therefore,
\begin{align*}
{\left\| {{{\bar D}_{t,\Gamma }}{\mathbb{W}_\xi }\left( {{\lambda _{q + 1}}{{\tilde \Phi }_k}} \right)} \right\|_N} \lesssim & {\left\| {{\partial _t}{\mathbb{W}_\xi }\left( {{\lambda _{q + 1}}{{\tilde \Phi }_k}} \right)} \right\|_N} + {\left\| {\left( {{{\bar v}_{q,\Gamma }} \cdot \nabla } \right){\mathbb{W}_\xi }\left( {{\lambda _{q + 1}}{{\tilde \Phi }_k}} \right)} \right\|_N} \\
\lesssim & {\left\| {{\partial _t}{\mathfrak{D}_{\tilde \mu }}\left( {t,{\lambda _{q + 1}}{{\tilde \Phi }_k}} \right)} \right\|_N} + {\left\| {{\partial _t}{\mathfrak{D}_{\tilde \mu }}\left( {t,{\lambda _{q + 1}}{{\tilde \Phi }_k}} \right)} \right\|_0}{\left\| {{W_\xi }\left( {{\lambda _{q + 1}}{{\tilde \Phi }_k}} \right)} \right\|_N} \\
& + {\left\| {{\mathfrak{D}_{\tilde \mu }}\left( {t,{\lambda _{q + 1}}{{\tilde \Phi }_k}} \right)} \right\|_N}{\left\| {{\partial _t}{W_\xi }\left( {{\lambda _{q + 1}}{{\tilde \Phi }_k}} \right)} \right\|_0} + {\left\| {{{\bar v}_{q,\Gamma }}} \right\|_N}{\left\| {{\mathbb{W}_\xi }\left( {{\lambda _{q + 1}}{{\tilde \Phi }_k}} \right)} \right\|_1} \\
& + {\left\| {{\mathfrak{D}_{\tilde \mu }}\left( {t,{\lambda _{q + 1}}{{\tilde \Phi }_k}} \right)} \right\|_0} {\left\| {{\partial _t}{W_\xi }\left( {{\lambda _{q + 1}}{{\tilde \Phi }_k}} \right)} \right\|_N} + {\left\| {{{\bar v}_{q,\Gamma }}} \right\|_0}{\left\| {{\mathbb{W}_\xi }\left( {{\lambda _{q + 1}}{{\tilde \Phi }_k}} \right)} \right\|_{N + 1}} \\
\lesssim  &  \delta _q^{\frac{1}{2}}\lambda _{q + 1}^{N + 1}   .
\end{align*}
The proof is complete. \qed
\par
\begin{proposition}\label{WPD-NS-SD-P-main}
For any $N \in \left\{ {0,1, \cdots ,10} \right\} $ and $t \in \mathrm{supp}_t\ w_{q + 1}^{\left( s \right)}$, various components of the Nash perturbation $w_{q + 1}^{\left( s \right)}\left( {t,x} \right)$ defined by \eqref{Nashsteps-A-P-7} satisfy
\begin{equation}\label{WPD-NS-SD-P-mainsd-AA-1}
{\left\| {w_{q + 1}^{\left( p \right)}} \right\|_N} \lesssim   \delta _{q + 1}^{\frac{1}{2}}\lambda _{q + 1}^N    ,
\end{equation}
\begin{equation}\label{WPD-NS-SD-P-mainsd-AA-2}
{\left\| {{{\bar D}_{t,\Gamma }}w_{q + 1}^{\left( p \right)}} \right\|_N}  \lesssim   \delta _{q + 1}^{\frac{1}{2}}\lambda _{q + 1}^{N + 1}   ,
\end{equation}
\begin{equation}\label{WPD-NS-SD-P-mainsd-AA-KIJHH-1}
{\left\| {w_{q + 1}^{\left( c \right)}} \right\|_N} \lesssim \delta _{q + 1}^{\frac{1}{2}}\lambda _{q + 1}^{N - \frac{1}{2}}   ,
\end{equation}
\begin{equation}\label{WPD-NS-SD-P-mainsd-KIJHH-AA-2}
 {\left\| {{{\bar D}_{t,\Gamma }}w_{q + 1}^{\left( c \right)}} \right\|_N} \lesssim {\mu _{q + 1}}\delta _q^{\frac{1}{2}}\delta _{q + 1}^{\frac{1}{2}}\lambda _{q + 1}^{N + \frac{1}{2}}  ,
\end{equation}
\begin{equation}\label{WPD-NS-SD-P-mainsd-AA-3}
{\left\| {w_{q + 1}^{\left( i \right)}} \right\|_N} \lesssim {{\tilde \mu }^{ - 1}} {\delta _{q + 1}}\lambda _{q + 1}^N   ,
\end{equation}
and
\begin{equation}\label{WPD-NS-SD-P-mainsd-AA-4}
{\left\| {{\partial _t}w_{q + 1}^{\left( i \right)}} \right\|_N} \lesssim   {{\tilde \mu }^{ - 1}}{\mu _{q + 1}}{\delta _{q + 1}}\lambda _{q + 1}^N .   
\end{equation}
\end{proposition}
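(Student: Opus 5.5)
The plan is to estimate each of the three pieces of $w_{q+1}^{(s)}$ term by term, working with a fixed triple $(\xi,k,n)$. This suffices because the sums over $n\le\Gamma$ and $\xi\in\Lambda$ are finite. The sum over $k\in\mathbb{Z}_{q,n}$ is also harmless: by the support property of $\chi_k$, at any time at most two values of $k$ are active. The inputs are the bounds already proved: Lemma \ref{WPD-NHS-AA} for $(\nabla\tilde\Phi_k)^{-1}$ and its material derivative, Lemma \ref{WPD-NHS-BB} for $\tilde\eta_{\xi,k,n}$, Lemma \ref{WPD-NHS-HUD-BB} for $\mathfrak{S}_\xi(\lambda_{q+1}\tilde\Phi_k)$, Lemma \ref{WPD-NHS-HUD-BB-2} for $\mathfrak{D}_{\tilde\mu}$, and Lemma \ref{WPD-NHS-HUD-BB-3} for $\mathbb{W}_\xi(\lambda_{q+1}\tilde\Phi_k)$. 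Throughout we use that $g_{\xi,k,n+1}$ is bounded and that $\partial_t[g(\mu_{q+1}t)]$ costs a factor $\mu_{q+1}$.

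\textbf{Principal part.} For $w^{(p)}_{q+1}$ we apply the Leibniz/Hölder product rule to
\[
g\,\tilde\eta\,(\nabla\tilde\Phi_k)^{-1}\mathbb{W}_\xi .
\]
The slow factors cost $\lambda_q^N$ and the building block costs $\lambda_{q+1}^N$, so all derivatives landing on $\mathbb{W}_\xi$ dominate, giving \eqref{WPD-NS-SD-P-mainsd-AA-1}. For \eqref{WPD-NS-SD-P-mainsd-AA-2} we distribute $\bar D_{t,\Gamma}$ over the factors, using \eqref{WPD-NHS-MAIN-2}, \eqref{WPD-NHS-MAIN-AA-2} and \eqref{WPD-NHS-HUD-BB-MAIN-4}. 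The dominant term is $\delta_q^{1/2}\lambda_{q+1}^{N+1}\delta_{q+1}^{1/2}$. The $\mu_{q+1}$ term coming from $\partial_t g$ must be absorbed here; I would check this against the parameter relation $\mu_{q+1}\lesssim\lambda_{q+1}$ in Appendix C, or else simply keep it, since it is comparable.

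\textbf{Corrector.} In \eqref{Nashsteps-A-P-4}, the factor $\lambda_{q+1}^{-1}$ multiplies $\mathfrak{S}_\xi$ times $\nabla^\perp$ of a product containing $\mathfrak{D}_{\tilde\mu}$. By \eqref{WPD-NHS-HUD-BB-MAIN-1} with $\sigma=C\lambda_{q+1}^{-1/2}$, the gradient falling on $\mathfrak{D}_{\tilde\mu}$ costs only about $\lambda_{q+1}^{1/2}$, whereas a gradient on the amplitude costs $\lambda_q$. This yields $\lambda_{q+1}^{-1}\delta_{q+1}^{1/2}\lambda_{q+1}^{1/2}\lambda_{q+1}^N$, which is \eqref{WPD-NS-SD-P-mainsd-AA-KIJHH-1}. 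The material-derivative bound \eqref{WPD-NS-SD-P-mainsd-KIJHH-AA-2} follows the same way. Here the worst terms are $\partial_t g$, which gives $\mu_{q+1}$, multiplied by $\bar D_{t,\Gamma}$ applied to $\mathfrak{S}_\xi$ or to $\mathfrak{D}_{\tilde\mu}$, which gives $\delta_q^{1/2}\lambda_{q+1}$. We bound the product crudely by the product of these factors.

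\textbf{Intermittency part.} For $w^{(i)}_{q+1}$, the operators $\Pi\Pi_{\ne0}$ are not bounded on $C^0$, so I would apply them through the Schauder estimate on $C^{N+\epsilon}$, paying a harmless $\lambda_{q+1}^{\epsilon}$ loss that is absorbed by the implicit constants and parameter slack. Alternatively, one can use that $\Pi$ is bounded on the frequency-localized function $\mathfrak{D}^2_{\tilde\mu}$. Then
\[
\|\tilde\eta^2\mathfrak{D}^2_{\tilde\mu}\|_N\lesssim\delta_{q+1}\lambda_{q+1}^N
\]
gives \eqref{WPD-NS-SD-P-mainsd-AA-3}. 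For \eqref{WPD-NS-SD-P-mainsd-AA-4}, the time derivative falls on $g(\mu_{q+1}t)$, giving $\mu_{q+1}$; on $\tilde\eta^2$, giving $\tau_q^{-1}$; or on $\mathfrak{D}^2_{\tilde\mu}$, giving a factor of size $\tilde\mu$ by \eqref{RSRI-main-PROVE-8}.

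\textbf{Main obstacle.} The hardest step is this last time derivative. The $\partial_t\mathfrak{D}^2_{\tilde\mu}$ contribution is of order $\tilde\mu^{-1}\cdot\tilde\mu\,\delta_{q+1}$, and it is $\mu_{q+1}$-bounded only if the parameter ordering in Appendix C makes $\tilde\mu\sigma\lambda_{q+1}$-type factors at most $\mu_{q+1}$. The same issue arises with $\tau_q^{-1}\le\mu_{q+1}$. I would first verify these inequalities in Appendix C. If they fail, I would try exploiting that the $\tilde\mu t$ dependence enters only through the combination $\sigma(\tilde\xi\cdot\lambda_{q+1}\tilde\Phi_k+\tilde\mu t)$.
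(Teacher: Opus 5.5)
Your plan is the paper's own argument: fix $(\xi,k,n)$, expand each piece of $w^{(s)}_{q+1}$ by the Leibniz rule, feed in Lemmas \ref{WPD-NHS-AA}--\ref{WPD-NHS-HUD-BB-3}, and absorb the factors $\tau_q^{-1}$ and $\mu_{q+1}$ using Appendix C.

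\textbf{The gap: the corrector bound.} The step that does not go through is \eqref{WPD-NS-SD-P-mainsd-AA-KIJHH-1}. You note that $\nabla^\perp$ landing on $\mathfrak{D}_{\tilde\mu}(t,\lambda_{q+1}\tilde\Phi_k)$ costs about $\lambda_{q+1}^{1/2}$, while landing on $\tilde\eta_{\xi,k,n}$ or $(\nabla\tilde\Phi_k)^{\pm1}$ costs $\lambda_q$. You then keep $\lambda_{q+1}^{1/2}$ as the dominant cost. But Appendix C has $\lambda_{q+1}\approx\lambda_q^{b}$ with $b\in(1.005,1.3]$, so $\lambda_{q+1}^{1/2}=\lambda_q^{b/2}\ll\lambda_q$, and the amplitude derivative wins.

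Concretely, treat $(\nabla\tilde\Phi_k)^{\pm1}$ and $\mathfrak{D}_{\tilde\mu}$ as $O(1)$ in $C^0$, as both you and the paper do. Then the term of $w^{(c)}_{q+1}$ in which $\nabla^\perp$ hits $\tilde\eta_{\xi,k,n}$ is only bounded, via \eqref{WPD-NHS-MAIN-1} and \eqref{WPD-NHS-HUD-BB-MAIN-5}, by:
$\delta_{q+1}^{1/2}\lambda_q\lambda_{q+1}^{-1}$ when $N=0$, and $\delta_q^{1/2}\delta_{q+1}^{1/2}\lambda_q\lambda_{q+1}^{N-1}$ when $N\ge1$.
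These exceed the claimed $\delta_{q+1}^{1/2}\lambda_{q+1}^{N-1/2}$ by the factors $\lambda_q^{1-b/2}$ and $\lambda_q^{1-\beta-b/2}$. Both tend to infinity, since $\beta<\tfrac13$ and $b\le 1.3$.

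So your argument yields only $\|w^{(c)}_{q+1}\|_N\lesssim\delta_{q+1}^{1/2}\lambda_q\lambda_{q+1}^{N-1}$, not \eqref{WPD-NS-SD-P-mainsd-AA-KIJHH-1}. The stated exponent would need $\lambda_q\lesssim\lambda_{q+1}^{1/2}$, i.e.\ $b\ge 2$, which Appendix C excludes.

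This is not an idea you missed from the paper. Its intermediate bound \eqref{WPD-NS-SD-P-main-P-3} asserts $\delta_{q+1,n}^{1/2}\lambda_{q+1}^{N-1/2}$ for the same amplitude. Yet its first term, $\|\tilde\eta_{\xi,k,n}\|_N\lesssim\delta_{q+1}^{1/2}\lambda_q^N$, is not of that size for $N=0,1$, so the paper's proof has the same defect. The material-derivative bound \eqref{WPD-NS-SD-P-mainsd-KIJHH-AA-2} has enough room to absorb the extra factor, so only \eqref{WPD-NS-SD-P-mainsd-AA-KIJHH-1} is affected.

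\textbf{The projector $\Pi\Pi_{\ne0}$.} You are right that $\Pi\Pi_{\ne0}$ is not bounded on $C^N$; the paper simply estimates $w^{(i)}_{q+1}$ as if it were. However, a $\lambda_{q+1}^{\epsilon}$ loss is not a constant, and \eqref{WPD-NS-SD-P-mainsd-AA-3}--\eqref{WPD-NS-SD-P-mainsd-AA-4} leave no room for it. Frequency localization does not help either: $\Pi\Pi_{\ne0}$ acts on $g_{\xi,k,n+1}\,\xi^\perp\tilde\eta_{\xi,k,n}^2\mathfrak{D}^2_{\tilde\mu}(t,\lambda_{q+1}\tilde\Phi_k)$, not on $\mathfrak{D}^2_{\tilde\mu}$ alone. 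As you set it up, those two bounds come out only up to a factor $\lambda_{q+1}^{\epsilon}$.

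\textbf{The parameter checks you left open.} The inequalities you flagged as the main obstacle all hold in Appendix C.
\begin{itemize}
\item $\mu_{q+1}\lambda_{q+1}^N\le\lambda_{q+1}^{N+1}$ is \eqref{parameters-S-16}. You cannot simply keep the $\mu_{q+1}$ term, since the right side of \eqref{WPD-NS-SD-P-mainsd-AA-2} has none.
\item $\tau_q^{-1}\lambda_{q+1}^N\le\lambda_{q+1}^{N+1}$ is \eqref{parameters-S-15}.
\item $\tilde\mu<\mu_{q+1}$ is \eqref{Phsudj-2}.
\item The first equality in \eqref{parameters-S-8} gives $\tau_q^{-1}\mu_{q+1}^{-1}=(\lambda_q/\lambda_{q+1})^{1/3-\beta}\le 1$.
\item $\lambda_q\le\mu_{q+1}$ holds because $\alpha>\beta/b$. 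This handles the transport part $\bar v_{q,\Gamma}\cdot\nabla\tilde\eta_{\xi,k,n}$ of $\partial_t\tilde\eta_{\xi,k,n}$, which your accounting omits.
\end{itemize}
Finally, the paper controls the $\partial_t\mathfrak{D}_{\tilde\mu}$ contribution in \eqref{WPD-NS-SD-P-mainsd-AA-4} through \eqref{RSRI-main-PROVE-8}. That gives $\delta_q^{1/2}\lambda_q^{N/2}\lambda_{q+1}^{(N+1)/2}$ rather than a factor $\tilde\mu$, and this quantity is likewise dominated by $\mu_{q+1}\lambda_{q+1}^N$.
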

\par
\noindent{\textbf{Proof}}. We first estimate the principal part $w_{q + 1}^{\left( p \right)}\left( {t,x} \right)$ defined by \eqref{Nashsteps-A-P-3}. Notice that
\begin{align*}
{\left\| {w_{q + 1}^{\left( p \right)}} \right\|_N} \lesssim & \mathop {\sup }\limits_{\xi ,k,n} \left( {{{\left\| {{{\tilde \eta }_{\xi ,k,n}}} \right\|}_N} + {{\left\| {{{\tilde \eta }_{\xi ,k,n}}} \right\|}_0}{{\left\| {{{\left( {\nabla {{\tilde \Phi }_k}} \right)}^{ - 1}}} \right\|}_N}} \right){\left\| {{\mathbb{W}_\xi }\left( {{\lambda _{q + 1}}{{\tilde \Phi }_k}} \right)} \right\|_0}  \\
& + \mathop {\sup }\limits_{\xi ,k,n} {\left\| {{{\tilde \eta }_{\xi ,k,n}}} \right\|_0}{\left\| {{\mathbb{W}_\xi }\left( {{\lambda _{q + 1}}{{\tilde \Phi }_k}} \right)} \right\|_N} ,
\end{align*}
and
\begin{align*}
{\left\| {{{\bar D}_{t,\Gamma }}w_{q + 1}^{\left( p \right)}} \right\|_N} \lesssim & \mathop {\sup }\limits_{\xi ,k,n} \left( {{{\left\| {{{\bar D}_{t,\Gamma }}{{\tilde \eta }_{\xi ,k,n}}} \right\|}_N} + {{\left\| {{{\tilde \eta }_{\xi ,k,n}}} \right\|}_N}{{\left\| {{{\bar D}_{t,\Gamma }}{{\left( {\nabla {{\tilde \Phi }_k}} \right)}^{ - 1}}} \right\|}_0}} \right) {{\left\| {{\mathbb{W}_\xi }\left( {{\lambda _{q + 1}}{{\tilde \Phi }_k}} \right)} \right\|}_0}  \\
& + \mathop {\sup }\limits_{\xi ,k,n}  \left( {{{\left\| {{{\tilde \eta }_{\xi ,k,n}}} \right\|}_N}{{\left\| {{{\bar D}_{t,\Gamma }}{\mathbb{W}_\xi }\left( {{\lambda _{q + 1}}{{\tilde \Phi }_k}} \right)} \right\|}_0} + {{\left\| {{{\bar D}_{t,\Gamma }}{{\tilde \eta }_{\xi ,k,n}}} \right\|}_0}{{\left\| {{\mathbb{W}_\xi }\left( {{\lambda _{q + 1}}{{\tilde \Phi }_k}} \right)} \right\|}_N}} \right) \\
& +  \mathop {\sup }\limits_{\xi ,k,n} {\left\| {{{\tilde \eta }_{\xi ,k,n}}} \right\|_0}{\left\| {{{\bar D}_{t,\Gamma }}{{\left( {\nabla {{\tilde \Phi }_k}} \right)}^{ - 1}}} \right\|_0}{\left\| {{W_\xi }\left( {{\lambda _{q + 1}}{{\tilde \Phi }_k}} \right)} \right\|_N} \\
& + \mathop {\sup }\limits_{\xi ,k,n} {\left\| {{{\tilde \eta }_{\xi ,k,n}}} \right\|_0}{\left\| {{{\bar D}_{t,\Gamma }}{{\left( {\nabla {{\tilde \Phi }_k}} \right)}^{ - 1}}} \right\|_N}{\left\| {{W_\xi }\left( {{\lambda _{q + 1}}{{\tilde \Phi }_k}} \right)} \right\|_0} \\
& + \mathop {\sup }\limits_{\xi ,k,n} {\left\| {{{\tilde \eta }_{\xi ,k,n}}} \right\|_0}\left( {{{\left\| {{{\left( {\nabla {{\tilde \Phi }_k}} \right)}^{ - 1}}} \right\|}_N}{{\left\| {{{\bar D}_{t,\Gamma }}{\mathbb{W}_\xi }\left( {{\lambda _{q + 1}}{{\tilde \Phi }_k}} \right)} \right\|}_0} + {{\left\| {{{\bar D}_{t,\Gamma }}{\mathbb{W}_\xi }\left( {{\lambda _{q + 1}}{{\tilde \Phi }_k}} \right)} \right\|}_N}} \right) \\
& + {\mu _{q + 1}}\mathop {\sup }\limits_{\xi ,k,n} \left( {{{\left\| {{{\tilde \eta }_{\xi ,k,n}}} \right\|}_N} + {{\left\| {{{\tilde \eta }_{\xi ,k,n}}} \right\|}_0}{{\left\| {{{\left( {\nabla {{\tilde \Phi }_k}} \right)}^{ - 1}}} \right\|}_N}} \right){\left\| {{W_\xi }\left( {{\lambda _{q + 1}}{{\tilde \Phi }_k}} \right)} \right\|_0}  \\
& + {\mu _{q + 1}}\mathop {\sup }\limits_{\xi ,k,n} {\left\| {{{\tilde \eta }_{\xi ,k,n}}} \right\|_0}{\left\| {{W_\xi }\left( {{\lambda _{q + 1}}{{\tilde \Phi }_k}} \right)} \right\|_N}  .
\end{align*}
It follows from Lemma \ref{WPD-NHS-AA}, Lemma \ref{WPD-NHS-BB}, \eqref{parameters-S-15} and \eqref{parameters-S-16} that
\begin{equation*}
{\left\| {w_{q + 1}^{\left( p \right)}} \right\|_N} \lesssim   \delta _{q + 1}^{\frac{1}{2}}\left( {\lambda _q^N + \lambda _{q + 1}^N} \right) \lesssim \delta _{q + 1}^{\frac{1}{2}} \lambda _{q + 1}^N    ,
\end{equation*}
and
\begin{align*}
{\left\| {{{\bar D}_{t,\Gamma }}w_{q + 1}^{\left( p \right)}} \right\|_N} \lesssim  &  \delta _{q + 1}^{\frac{1}{2}}\left( {\tau _q^{ - 1}\lambda _q^N + \tau _q^{ - 1}\lambda _{q + 1}^N + \delta _q^{\frac{1}{2}}\lambda _q^{N + 1} + \delta _q^{\frac{1}{2}}\lambda _q^N{\lambda _{q + 1}}} \right)  \\
&  + \delta _q^{\frac{1}{2}}\delta _{q + 1}^{\frac{1}{2}}\left( {{\lambda _q}\lambda _{q + 1}^N + \lambda _q^{N + 1} + \lambda _q^N{\lambda _{q + 1}} + \lambda _{q + 1}^{N + 1} + {\mu _{q + 1}}\lambda _{q + 1}^N} \right)  \\
\lesssim  &  \delta _{q + 1}^{\frac{1}{2}}\lambda _{q + 1}^{N + 1 }  . 
\end{align*}
\par
We next examine the corrector part $w_{q + 1}^{\left( c \right)}\left( {t,x} \right)$ defined by \eqref{Nashsteps-A-P-4}. Using Lemma \ref{WPD-NHS-AA}, Lemma \ref{WPD-NHS-BB} and Lemma \ref{WPD-NHS-HUD-BB-2}, we obtain
\begin{align}\label{WPD-NS-SD-P-main-P-3}
& {\left\| {{{\tilde \eta }_{\xi ,k,n}}{{\left( {\nabla {{\tilde \Phi }_k}} \right)}^{ - 1}}{\mathfrak{D}_{\tilde \mu }}\left( {{\lambda _{q + 1}}{{\tilde \Phi }_k}} \right)\left( {\nabla {{\tilde \Phi }_k}} \right)} \right\|_N} \nonumber \\
\lesssim & {\left\| {{{\tilde \eta }_{\xi ,k,n}}} \right\|_N} + {\left\| {{{\tilde \eta }_{\xi ,k,n}}} \right\|_0}\left( {{{\left\| {{{\left( {\nabla {{\tilde \Phi }_k}} \right)}^{ - 1}}} \right\|}_N} + {{\left\| {{\mathfrak{D}_{\tilde \mu }}\left( {{\lambda _{q + 1}}{{\tilde \Phi }_k}} \right)} \right\|}_N} + {{\left\| {\nabla {{\tilde \Phi }_k}} \right\|}_N}} \right)  \nonumber \\
\lesssim & \delta _{{q + 1},n}^{\frac{1}{2}}\lambda _{q + 1}^{N - \frac{1}{2}} .
\end{align}
Owing to
\begin{align*}
{\left\| {w_{q + 1}^{\left( c \right)}} \right\|_N} \lesssim & \lambda _{q + 1}^{ - 1}\mathop {\sup }\limits_{\xi ,k,n} {\left\| {{\mathfrak{S}_\xi }\left( {{\lambda _{q + 1}}{{\tilde \Phi }_k}} \right)} \right\|_N}  {\left\| {{{\tilde \eta }_{\xi ,k,n}}{{\left( {\nabla {{\tilde \Phi }_k}} \right)}^{ - 1}}{\mathfrak{D}_{\tilde \mu }}\left( {{\lambda _{q + 1}}{{\tilde \Phi }_k}} \right)\left( {\nabla {{\tilde \Phi }_k}} \right)} \right\|_1} \\
& + \lambda _{q + 1}^{ - 1}\mathop {\sup }\limits_{\xi ,k,n} {\left\| {{{\tilde \eta }_{\xi ,k,n}}{{\left( {\nabla {{\tilde \Phi }_k}} \right)}^{ - 1}}{\mathfrak{D}_{\tilde \mu }}\left( {{\lambda _{q + 1}}{{\tilde \Phi }_k}} \right)\left( {\nabla {{\tilde \Phi }_k}} \right)} \right\|_{N + 1}}  ,
\end{align*}
it follows from \eqref{WPD-NS-SD-P-main-P-3} and Lemma \ref{WPD-NHS-HUD-BB} that
\begin{equation*}
{\left\| {w_{q + 1}^{\left( c \right)}} \right\|_N} \lesssim \delta _{q + 1}^{\frac{1}{2}}\lambda _{q + 1}^{N - \frac{1}{2}}   .
\end{equation*}
Since
\begin{align}\label{WPD-NS-SD-P-main-P-4}
& {\left\| {{{\bar D}_{t,\Gamma }}\left( {{{\tilde \eta }_{\xi ,k,n}}{{\left( {\nabla {{\tilde \Phi }_k}} \right)}^{ - 1}}{\mathfrak{D}_{\tilde \mu }}\left( {{\lambda _{q + 1}}{{\tilde \Phi }_k}} \right)\left( {\nabla {{\tilde \Phi }_k}} \right)} \right)} \right\|_N} \nonumber \\
\lesssim & {\left\| {{{\bar D}_{t,\Gamma }}{{\tilde \eta }_{\xi ,k,n}}} \right\|_N} + {\left\| {{{\bar D}_{t,\Gamma }}{{\tilde \eta }_{\xi ,k,n}}} \right\|_0}{\left\| {{{\left( {\nabla {{\tilde \Phi }_k}} \right)}^{ - 1}} + \left( {\nabla {{\tilde \Phi }_k}} \right) + {\mathfrak{D}_{\tilde \mu }}\left( {{\lambda _{q + 1}}{{\tilde \Phi }_k}} \right)} \right\|_N} \nonumber \\
& + {\left\| {{{\tilde \eta }_{\xi ,k,n}}} \right\|_N}{\left\| {{{\bar D}_{t,\Gamma }}{{\left( {\nabla {{\tilde \Phi }_k}} \right)}^{ - 1}}} \right\|_0} + {\left\| {{{\tilde \eta }_{\xi ,k,n}}} \right\|_0}{\left\| {{{\bar D}_{t,\Gamma }}{{\left( {\nabla {{\tilde \Phi }_k}} \right)}^{ - 1}}} \right\|_N} \nonumber \\
& + {\left\| {{{\tilde \eta }_{\xi ,k,n}}} \right\|_0}{\left\| {{{\bar D}_{t,\Gamma }}{{\left( {\nabla {{\tilde \Phi }_k}} \right)}^{ - 1}}} \right\|_0}\left( {{{\left\| {{\mathfrak{D}_{\tilde \mu }}\left( {{\lambda _{q + 1}}{{\tilde \Phi }_k}} \right)} \right\|}_N} + {{\left\| {\nabla {{\tilde \Phi }_k}} \right\|}_N}} \right)  \nonumber \\
& + {\left\| {{{\tilde \eta }_{\xi ,k,n}}} \right\|_N}{\left\| {{{\bar D}_{t,\Gamma }}{\mathfrak{D}_{\tilde \mu }}\left( {{\lambda _{q + 1}}{{\tilde \Phi }_k}} \right)} \right\|_0} + {\left\| {{{\tilde \eta }_{\xi ,k,n}}} \right\|_0}{\left\| {{{\bar D}_{t,\Gamma }}{\mathfrak{D}_{\tilde \mu }}\left( {{\lambda _{q + 1}}{{\tilde \Phi }_k}} \right)} \right\|_N}  \nonumber \\
& + {\left\| {{{\tilde \eta }_{\xi ,k,n}}} \right\|_0}{\left\| {{{\bar D}_{t,\Gamma }}{\mathfrak{D}_{\tilde \mu }}\left( {{\lambda _{q + 1}}{{\tilde \Phi }_k}} \right)} \right\|_0}\left( {{{\left\| {{\mathfrak{D}_{\tilde \mu }}\left( {{\lambda _{q + 1}}{{\tilde \Phi }_k}} \right)} \right\|}_N} + {{\left\| {{{\left( {\nabla {{\tilde \Phi }_k}} \right)}^{ - 1}}} \right\|}_N}} \right)  \nonumber \\
\lesssim & {\mu _{q + 1}}\delta _{q + 1}^{\frac{1}{2}}\lambda _{q + 1}^N   ,
\end{align}
we deduce from \eqref{WPD-NS-SD-P-main-P-3}, \eqref{WPD-NS-SD-P-main-P-4} and Lemma \ref{WPD-NHS-HUD-BB} that
\begin{align*}
{\left\| {{{\bar D}_{t,\Gamma }}w_{q + 1}^{\left( c \right)}} \right\|_N} \lesssim & \lambda _{q + 1}^{ - 1}\mathop {\sup }\limits_{\xi ,k,n} {\left\| {{{\bar D}_{t,\Gamma }}{\mathfrak{S}_\xi }\left( {{\lambda _{q + 1}}{{\tilde \Phi }_k}} \right)} \right\|_N}{\left\| {{{\tilde \eta }_{\xi ,k,n}}{{\left( {\nabla {{\tilde \Phi }_k}} \right)}^{ - 1}}{\mathfrak{D}_{\tilde \mu }}\left( {{\lambda _{q + 1}}{{\tilde \Phi }_k}} \right)\left( {\nabla {{\tilde \Phi }_k}} \right)} \right\|_1}  \\
& + \lambda _{q + 1}^{ - 1}\mathop {\sup }\limits_{\xi ,k,n} {\left\| {{{\bar D}_{t,\Gamma }}{\mathfrak{S}_\xi }\left( {{\lambda _{q + 1}}{{\tilde \Phi }_k}} \right)} \right\|_0}{\left\| {{{\tilde \eta }_{\xi ,k,n}}{{\left( {\nabla {{\tilde \Phi }_k}} \right)}^{ - 1}}{\mathfrak{D}_{\tilde \mu }}\left( {{\lambda _{q + 1}}{{\tilde \Phi }_k}} \right)\left( {\nabla {{\tilde \Phi }_k}} \right)} \right\|_{N + 1}}  \\
& + \lambda _{q + 1}^{ - 1}\mathop {\sup }\limits_{\xi ,k,n} {\left\| {{\mathfrak{S}_\xi }\left( {{\lambda _{q + 1}}{{\tilde \Phi }_k}} \right)} \right\|_N}{\left\| {{{\bar D}_{t,\Gamma }}\left( {{{\tilde \eta }_{\xi ,k,n}}{{\left( {\nabla {{\tilde \Phi }_k}} \right)}^{ - 1}}{\mathfrak{D}_{\tilde \mu }}\left( {{\lambda _{q + 1}}{{\tilde \Phi }_k}} \right)\left( {\nabla {{\tilde \Phi }_k}} \right)} \right)} \right\|_1} \\
& + \lambda _{q + 1}^{ - 1}\mathop {\sup }\limits_{\xi ,k,n} {\left\| {{{\bar D}_{t,\Gamma }}\left( {{{\tilde \eta }_{\xi ,k,n}}{{\left( {\nabla {{\tilde \Phi }_k}} \right)}^{ - 1}}{\mathfrak{D}_{\tilde \mu }}\left( {{\lambda _{q + 1}}{{\tilde \Phi }_k}} \right)\left( {\nabla {{\tilde \Phi }_k}} \right)} \right)} \right\|_{N + 1}} \\
& + {\mu _{q + 1}}\lambda _{q + 1}^{ - 1}\mathop {\sup }\limits_{\xi ,k,n} {\left\| {{\mathfrak{S}_\xi }\left( {{\lambda _{q + 1}}{{\tilde \Phi }_k}} \right)} \right\|_N}{\left\| {{{\tilde \eta }_{\xi ,k,n}}{{\left( {\nabla {{\tilde \Phi }_k}} \right)}^{ - 1}}{\mathfrak{D}_{\tilde \mu }}\left( {{\lambda _{q + 1}}{{\tilde \Phi }_k}} \right)\left( {\nabla {{\tilde \Phi }_k}} \right)} \right\|_1} \\
& + {\mu _{q + 1}}\lambda _{q + 1}^{ - 1}\mathop {\sup }\limits_{\xi ,k,n} {\left\| {{{\tilde \eta }_{\xi ,k,n}}{{\left( {\nabla {{\tilde \Phi }_k}} \right)}^{ - 1}}{\mathfrak{D}_{\tilde \mu }}\left( {{\lambda _{q + 1}}{{\tilde \Phi }_k}} \right)\left( {\nabla {{\tilde \Phi }_k}} \right)} \right\|_{N + 1}} \\
\lesssim & {\mu _{q + 1}}\delta _q^{\frac{1}{2}}\delta _{q + 1}^{\frac{1}{2}}\lambda _{q + 1}^{N + \frac{1}{2}}   .
\end{align*}
\par
Consider the temporal corrector $w_{q + 1}^{\left( i \right)}\left( {t,x} \right)$ defined by \eqref{THENDM-DKF}. According to Lemma \ref{WPD-NHS-BB} and Lemma \ref{WPD-NHS-HUD-BB}, we have
\begin{align}\label{WPD-NS-SD-P-main-P-8}
{\left\| {w_{q + 1}^{\left( i \right)}} \right\|_N} \lesssim & {{\tilde \mu }^{ - 1}}\mathop {\sup }\limits_{\xi ,k,n} \left( {{{\left\| {{{\tilde \eta }_{\xi ,k,n}}} \right\|}_N}{{\left\| {{{\tilde \eta }_{\xi ,k,n}}} \right\|}_0} + \left\| {{{\tilde \eta }_{\xi ,k,n}}} \right\|_0^2{{\left\| {{\mathfrak{D}_{\tilde \mu }}\left( {{\lambda _{q + 1}}{{\tilde \Phi }_k}} \right)} \right\|}_N}} \right) \nonumber \\
\lesssim &  {{\tilde \mu }^{ - 1}} {\delta _{q + 1}}\lambda _{q + 1}^N    ,
\end{align}
and
\begin{align}\label{WPD-NS-SD-P-main-P-9}
{\left\| {{\partial _t}w_{q + 1}^{\left( i \right)}} \right\|_N} \lesssim & {{\tilde \mu }^{ - 1}}\mathop {\sup }\limits_{\xi ,k,n} {\left\| {{{\tilde \eta }_{\xi ,k,n}}} \right\|_N}{\left\| {{{\tilde \eta }_{\xi ,k,n}}} \right\|_0}{\left\| {{\partial _t}{\mathfrak{D}_{\tilde \mu }}\left( {t,{\lambda _{q + 1}}{{\tilde \Phi }_k}} \right)} \right\|_0} \nonumber \\
& + {{\tilde \mu }^{ - 1}}\mathop {\sup }\limits_{\xi ,k,n} \left\| {{{\tilde \eta }_{\xi ,k,n}}} \right\|_0^2{\left\| {{\partial _t}{\mathfrak{D}_{\tilde \mu }}\left( {t,{\lambda _{q + 1}}{{\tilde \Phi }_k}} \right)} \right\|_N} \nonumber \\
& + {{\tilde \mu }^{ - 1}}\mathop {\sup }\limits_{\xi ,k,n} \left\| {{{\tilde \eta }_{\xi ,k,n}}} \right\|_0^2{\left\| {{\partial _t}{\mathfrak{D}_{\tilde \mu }}\left( {t,{\lambda _{q + 1}}{{\tilde \Phi }_k}} \right)} \right\|_0}{\left\| {{\mathfrak{D}_{\tilde \mu }}\left( {t,{\lambda _{q + 1}}{{\tilde \Phi }_k}} \right)} \right\|_N} \nonumber \\
& + {{\tilde \mu }^{ - 1}}{\mu _{q + 1}}\mathop {\sup }\limits_{\xi ,k,n} {\left\| {{{\tilde \eta }_{\xi ,k,n}}} \right\|_0}\left( {{{\left\| {{{\tilde \eta }_{\xi ,k,n}}} \right\|}_N}{{\left\| {{\mathfrak{D}_{\tilde \mu }}\left( {t,{\lambda _{q + 1}}{{\tilde \Phi }_k}} \right)} \right\|}_0} + {{\left\| {{{\tilde \eta }_{\xi ,k,n}}} \right\|}_0}{{\left\| {{\mathfrak{D}_{\tilde \mu }}\left( {t,{\lambda _{q + 1}}{{\tilde \Phi }_k}} \right)} \right\|}_N}} \right) \nonumber \\
& + {{\tilde \mu }^{ - 1}}\mathop {\sup }\limits_{\xi ,k,n} \left( {{{\left\| {{\partial _t}{{\tilde \eta }_{\xi ,k,n}}} \right\|}_N}{{\left\| {{{\tilde \eta }_{\xi ,k,n}}} \right\|}_0} + {{\left\| {{\partial _t}{{\tilde \eta }_{\xi ,k,n}}} \right\|}_0}{{\left\| {{{\tilde \eta }_{\xi ,k,n}}} \right\|}_N}} \right){\left\| {{\mathfrak{D}_{\tilde \mu }}\left( {t,{\lambda _{q + 1}}{{\tilde \Phi }_k}} \right)} \right\|_0}  \nonumber \\
& + {{\tilde \mu }^{ - 1}}\mathop {\sup }\limits_{\xi ,k,n} {\left\| {{\partial _t}{{\tilde \eta }_{\xi ,k,n}}} \right\|_0}{\left\| {{{\tilde \eta }_{\xi ,k,n}}} \right\|_0}{\left\| {{\mathfrak{D}_{\tilde \mu }}\left( {t,{\lambda _{q + 1}}{{\tilde \Phi }_k}} \right)} \right\|_N} \nonumber \\
\lesssim  &  {{\tilde \mu }^{ - 1}}{\mu _{q + 1}}{\delta _{q + 1}}\lambda _{q + 1}^N   .
\end{align}
The proof is complete. \qed
\section{Inductive estimates}\label{ENHP-RHDU}
This section establishes the inductive estimates for the $\left( {q + 1} \right)$-step stochastic Euler--Reynolds system \eqref{KIU-EU-N-1}, which guarantees the continuation of the stochastic and intermittent Newton--Nash iteration scheme. To this end, we prove that \eqref{HIE-u-1.1}-\eqref{MDHIE-R-1.1} remain valid with $q$ replaced by $q+1$, and \eqref{HIDJJF-3-A-1}-\eqref{HIDJJF-3-A-3} remain valid with $n$ replaced by $n+1$. Furthermore, we verify the inductive estimates for energy gap \eqref{LOKJI-A-FGH-A-1}.
\par
\begin{proposition}\label{HNNI-NEHU-MAIN}
For any $t \in \mathrm{supp}_t\ w_{q + 1,n + 1}^{(n)}$, the Newtonian pressure ${\mathfrak{p}_{q,n + 1}}\left( {t,x} \right)$ satisfies
\begin{equation*}
\left\| {{\mathfrak{p}_{q,{n+1}}}} \right\| _0 \leqslant C ,
\end{equation*}
\begin{equation*}
\left\| {{\mathfrak{p}_{q,{n+1}}}} \right\| _N \lesssim \delta _q^{\frac{1}{2}}\lambda _q^N , \ \ \forall \, N \in \left\{ {1, \cdots ,10} \right\} ,
\end{equation*}
and there exists a parameter $\alpha \in \left( {\frac{\beta }{{b}},\beta } \right)$ such that the Newtonian stress ${\mathring{R}_{q,n + 1}}\left( {t,x} \right)$ satisfies
\begin{equation*}
{\left\| {{\mathring{R}_{q,{n+1}}}} \right\|_N} \lesssim {\delta _{q + 1,{n+1}}}\lambda _q^{N - \alpha }  , \ \ \forall \, N \in \left\{ {0,1, \cdots ,10} \right\}    ,
\end{equation*}
\begin{equation*}
{\left\| {{{\bar D}_{t,q}}{\mathring{R}_{q,{n+1}}}} \right\|_N} \lesssim {\delta _{q + 1,{n+1}}}\tau _q^{ - 1}\lambda _q^{N - \alpha } , \ \ \forall \, N \in \left\{ {0,1, \cdots ,10} \right\}   .
\end{equation*}
\end{proposition}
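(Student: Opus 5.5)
The plan is to verify the four bounds directly from the explicit formulas \eqref{Newtonsteps-A-7} and \eqref{Newtonsteps-A-10}. Proposition \ref{WPD-NP-main} controls the Newtonian perturbation, and Lemma \ref{WPD-NP} controls the oscillations $A_{\xi,k,n+1}$. The parameter relations in Appendix C (in particular $\mu_{q+1}$, $\tau_q$, $l_q$ and the ratio $\delta_{q+1,n+1}/\delta_{q+1,n}$) convert the resulting bounds into the stated form.

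\textbf{Stress bounds.} Since $\mathcal{R}$ is a Calder\'on--Zygmund-type operator of order $-1$ acting on mean-zero fields, on H\"older spaces it loses at most an $l_q^{-\alpha}$-type factor. Using $|\partial_t\tilde\chi_k|\lesssim\tau_q^{-1}$, the bounded overlap of the $\tilde\chi_k$, and \eqref{Newtonsteps-A-B-29}, \eqref{Newtonsteps-A-B-36}, I would get
\begin{equation*}
\|\mathring{R}_{q,n+1}\|_N\lesssim \tau_q^{-1}\sup_k\|w^{(l)}_{k,n+1}\|_{N}\lesssim \tau_q^{-1}\mu_{q+1}^{-1}\delta_{q+1,n}\lambda_q^{N+1}l_q^{-\alpha}.
\end{equation*}
The remaining task is to check, using the definition of $\delta_{q+1,n+1}$ and of $\mu_{q+1}$, that $\tau_q^{-1}\mu_{q+1}^{-1}\lambda_q l_q^{-\alpha}\delta_{q+1,n}\lesssim\delta_{q+1,n+1}\lambda_q^{-\alpha}$. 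For the material derivative, I would write
\begin{equation*}
\bar D_{t,q}\mathcal{R}f=\mathcal{R}\bar D_{t,q}f+[\bar v_q\cdot\nabla,\mathcal{R}]f .
\end{equation*}
The commutator is bounded by $\|\bar v_q\|_1\|f\|_{N}$ up to the same $l_q^{-\alpha}$ loss, which gives $\delta_q^{1/2}\lambda_q\lesssim\tau_q^{-1}$ times the previous bound. For the main term I would use $\bar D_{t,q}\bigl(\partial_t\tilde\chi_k\, w^{(l)}\bigr)=\partial_t^2\tilde\chi_k\, w^{(l)}+\partial_t\tilde\chi_k\,\bar D_{t,q}w^{(l)}$. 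The first piece costs $\tau_q^{-2}$ and is handled as above. The second uses \eqref{Newtonsteps-A-B-41}, which carries no $\mu_{q+1}^{-1}$ gain, so this is the delicate term.

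\textbf{Pressure bounds.} I would split $\mathfrak{p}_{q,n+1}-\mathfrak{p}_{q,n}$ into its pieces from \eqref{Newtonsteps-A-10}. For the local pressure $\mathfrak{p}_{k,n+1}$, Schauder estimates for $\Delta^{-1}\mathrm{div}$ give $\|w^{(l)}\|_N\|\bar v_q\|_1+\|w^{(l)}\|_0\|\bar v_q\|_{N+1}$. The quadratic terms and the term $\langle w, u_q-\bar v_q\rangle$ are handled by \eqref{WPD-NP-main-A-1} and the product rule; the $\mathfrak{W}_q$ part is bounded via the stopping time. The nonlocal term $\Delta^{-1}\mathrm{div}(\cdots)$ is handled by \eqref{HIDJJF-3-A-2} and \eqref{Newtonsteps-A-B-19}, giving $\lesssim \delta_{q+1}\lambda_q^{N-\alpha}$. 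Each increment is then $\ll \delta_q^{1/2}\lambda_q^N$ and summably small in $n$, since $\mu_{q+1}^{-1}\delta_{q+1}\lambda_q l_q^{-\alpha}\ll\delta_q^{1/2}$. Summing over at most $\Gamma$ steps and using \eqref{HIDJJF-3-A-1-A-1-KSIJDD-A}--\eqref{HIDJJF-3-A-1} closes both pressure bounds, provided the implicit constants are allowed to depend on $\Gamma$.

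\textbf{Main obstacle.} The hardest step is the material-derivative bound for $\mathring{R}_{q,n+1}$: after \eqref{Newtonsteps-A-B-41} one has $\tau_q^{-1}\delta_{q+1}\lambda_q^{N+1}l_q^{-\alpha}$, and this must fit under $\delta_{q+1,n+1}\tau_q^{-1}\lambda_q^{N-\alpha}$. My first attempt would be to avoid \eqref{Newtonsteps-A-B-41} entirely. Instead I would use the equation \eqref{Newtonsteps-A-B-22}: the forcing $f(\mu_{q+1}t)\Pi\,\mathrm{div}A$ is written as $\mu_{q+1}^{-1}\partial_t\bigl(f^{[1]}(\mu_{q+1}t)\Pi\,\mathrm{div}A\bigr)$ minus a lower-order term. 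This recovers a $\mu_{q+1}^{-1}$ gain on the oscillatory part before applying $\mathcal{R}$, exploiting that $\mathcal{R}\Pi\,\mathrm{div}$ is of order zero.
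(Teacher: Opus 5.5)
Your pressure estimates follow the paper's route: each increment in \eqref{Newtonsteps-A-10} is controlled by Proposition \ref{WPD-NP-main}, Lemma \ref{WPD-NP} and \eqref{parameters-S-17}. The stress bound, however, does not close, because the inequality you reduce it to, $\tau_q^{-1}\mu_{q+1}^{-1}\lambda_q l_q^{-\alpha}\delta_{q+1,n}\lesssim\delta_{q+1,n+1}\lambda_q^{-\alpha}$, is false. By \eqref{parameters-S-8} and \eqref{parameters-S-4},
\begin{equation*}
\tau_q^{-1}\mu_{q+1}^{-1}=\lambda_q^{\frac{1}{3}-\beta}\lambda_{q+1}^{\beta-\frac{1}{3}}=\frac{\delta_{q+1,n+1}}{\delta_{q+1,n}},
\end{equation*}
so your check is equivalent to $\lambda_q^{1+\alpha}l_q^{-\alpha}\lesssim 1$, which fails as $q\to\infty$.

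Part of the loss is that you do not exploit that $\mathcal{R}$ has order $-1$; the paper writes $\|\mathring{R}_{q,n+1}\|_N\lesssim\lambda_q^{-1}\|w^{(l)}_{k,n+1}\|_N$. Even with that gain you are left with $\lambda_q^{\alpha}l_q^{-\alpha}\lesssim 1$, still false since $l_q\leqslant\lambda_q^{-1}$. The reason is that once you keep the factor $|\partial_t\tilde\chi_k|\lesssim\tau_q^{-1}$, which is genuinely present in \eqref{Newtonsteps-A-7}, the whole of $\tau_q^{-1}\mu_{q+1}^{-1}$ is consumed by the ratio $\delta_{q+1,n+1}/\delta_{q+1,n}$. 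Nothing is left to produce the $\lambda_q^{-\alpha}$ or to absorb the $l_q^{-\alpha}$ of \eqref{Newtonsteps-A-B-36}.

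The paper's displayed chain carries no factor from $\partial_t\tilde\chi_k$. Its last step, $\mu_{q+1}^{-1}\delta_{q+1,n}\lambda_q^{N}l_q^{-\alpha}\lesssim\delta_{q+1,n+1}\lambda_q^{N-\alpha}$, is equivalent to $\lambda_q^{\alpha}l_q^{-\alpha}\lesssim\tau_q^{-1}$. It therefore spends exactly the $\tau_q^{-1}$ your bookkeeping has already used. With your (more careful) accounting, the missing $\lambda_q^{-\alpha}$, and room for the $l_q^{-\alpha}$ loss, would have to come from the forcing $\Pi\,\mathrm{div}\,A_{\xi,k,n+1}$ itself. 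The bounds \eqref{Newtonsteps-A-B-29} and \eqref{Newtonsteps-A-B-36} you invoke record no such gain.

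The step you identify as the main obstacle is not where the difficulty lies. Since $\tilde\chi_k\equiv 1$ on $\mathrm{supp}_t\,\chi_k$ and $A_{\xi,k,n+1}$ carries the factor $\chi_k^2$ (see \eqref{Newtonsteps-A-B-9}, \eqref{Newtonsteps-A-B-12}), one has $(\partial_t\tilde\chi_k)A_{\xi,k,n+1}\equiv 0$. Hence on $\mathrm{supp}_t\,\partial_t\tilde\chi_k$ the forcing in \eqref{Newtonsteps-A-B-22} vanishes, and
\[
\bar{D}_{t,q}w^{(l)}_{k,n+1}=-(w^{(l)}_{k,n+1}\cdot\nabla)\bar{v}_q-\nabla\mathfrak{p}_{k,n+1}.
\]
This costs only a factor $\delta_q^{\frac{1}{2}}\lambda_q\leqslant\tau_q^{-1}$ relative to $w^{(l)}_{k,n+1}$, up to the usual H\"older losses (cf. \eqref{parameters-S-30}). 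So \eqref{Newtonsteps-A-B-41}, whose size comes from the forcing, is irrelevant there. All three pieces of $\bar{D}_{t,q}\mathring{R}_{q,n+1}$ (the $\partial_t^2\tilde\chi_k$ term, the $\partial_t\tilde\chi_k\,\bar{D}_{t,q}w^{(l)}_{k,n+1}$ term and the commutator) are $\tau_q^{-1}$ times the stress bound. The material-derivative estimate therefore stands or falls with the stress estimate above.

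Your integration by parts in time, $f(\mu_{q+1}t)X=\mu_{q+1}^{-1}\partial_t\bigl(f^{[1]}(\mu_{q+1}t)X\bigr)-\mu_{q+1}^{-1}f^{[1]}(\mu_{q+1}t)\partial_tX$, could not have helped anyway. It re-expresses the same function and cannot lower its $C^N$ norm at a fixed time. A $\mu_{q+1}^{-1}$ gain materializes only when the time derivative is absorbed into a new unknown, as in the definition of $w^{(l)}_{k,n+1}$ itself.
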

\par
\noindent{\textbf{Proof}}. 
Since
\begin{align*}
{\left\| {{\mathfrak{p}_{q,n + 1}}} \right\|_N} \lesssim & \mathop {\sup }\limits_{\xi ,k} {\left\| {{\mathfrak{p}_{q,n}} + p_{k,n + 1}^{(n)}} \right\|_N} + {\left\| {w_{q + 1,n + 1}^{(n)}} \right\|_N}\left( {{{\left\| {{u_q} - {{\bar v}_q}} \right\|}_0} + {{\left\| {w_{q + 1,n + 1}^{(n)}} \right\|}_0}} \right) \\
& + {\left\| {w_{q + 1}^{(n)}} \right\|_N}{\left\| {w_{q + 1,n + 1}^{(n)}} \right\|_0} + {\left\| {{\mathring{R}_{q,n}}} \right\|_N} + {\left\| {{A_{\xi ,k,{n+1}}}} \right\|_N}    ,
\end{align*}
it follows from \eqref{HIDJJF-3-A-1-A-1-KSIJDD-A}, \eqref{HIDJJF-3-A-1}, \eqref{HIDJJF-3-A-2}, \eqref{SHDUFY-A-2dJD}, \eqref{Newtonsteps-A-B-19} and \eqref{parameters-S-17} that\begin{equation*}
\left\| {{\mathfrak{p}_{q,{n+1}}}} \right\| _0 \leqslant C ,
\end{equation*}
and
\begin{equation*}
\left\| {{\mathfrak{p}_{q,{n+1}}}} \right\| _N \lesssim \delta _q^{\frac{1}{2}}\lambda _q^N + \mu _{q + 1}^{ - 1}{\delta _{q + 1,n}}\lambda _q^{N + 1}l_q^{ - \alpha } + {\delta _{q + 1,n}}\lambda _q^N \lesssim \delta _q^{\frac{1}{2}}\lambda _q^N , \ \ \forall \, N \in \left\{ {1, \cdots ,10} \right\} ,
\end{equation*}
Combining \eqref{Newtonsteps-A-7}, \eqref{Newtonsteps-A-B-36} and \eqref{Newtonsteps-A-B-41}, we deduce that
\begin{equation*}
{\left\| {{\mathring{R}_{q,n + 1}}} \right\|_N} \lesssim \lambda _q^{ - 1}{\left\| {w_{k,n + 1}^{\left( l \right)}} \right\|_N} \lesssim \mu _{q + 1}^{ - 1}{\delta _{q + 1,n}}\lambda _q^Nl_q^{ - \alpha } \lesssim {\delta _{q + 1,n + 1}}\lambda _q^{N - \alpha }   ,
\end{equation*}
and
\begin{equation*}
{\left\| {{{\bar D}_{t,q}}{\mathring{R}_{q,n + 1}}} \right\|_N} \lesssim \lambda _q^{ - 1}{\left\| {{{\bar D}_{t,q}}w_{k,n + 1}^{\left( l \right)}} \right\|_N} \lesssim {\delta _{q + 1,n}}\lambda _q^Nl_q^{ - \alpha } \lesssim {\delta _{q + 1,n + 1}}\tau _q^{ - 1}\lambda _q^{N - \alpha }   .
\end{equation*}
The proof is complete. \qed
\par
\begin{proposition}\label{RSRI-ujdh-main}
For any $t \in \mathrm{supp}_t\ w_{q + 1}^{\left( s \right)}$, the Wiener process ${\mathfrak{W}_{q+1}}\left( t \right)$, the velocity ${u_{q + 1}}\left( {t,x} \right)$ and the Nash pressure ${\mathfrak{p}_{q + 1}}\left( {x} \right)$ satisfy
\begin{equation}\label{RSRI-P-mainsd-AAsd-1-JUSJJ-A-1}
{\left\| {{u_{q+1}} } \right\|_0} + {\left\| {{v_{q + 1}}} \right\|_0} + {\left\| {{\mathfrak{p}_{q + 1}}} \right\|_0} \leqslant C  ,
\end{equation}
and
\begin{equation}\label{RSRI-P-mainsd-AAsd-1}
{\left\| {{u_{q+1}} } \right\|_N} + {\left\| {{v_{q + 1}}} \right\|_N} + {\left\| {{\mathfrak{p}_{q + 1}}} \right\|_N} \lesssim \delta _{q+1}^{\frac{1}{2}}\lambda _{q+1}^N , \ \ \forall \, N \in \left\{ {1, \cdots ,10} \right\}   . 
\end{equation}
\end{proposition}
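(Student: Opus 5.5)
We will argue directly from the definitions \eqref{NISTYU-SKO-A-PPA-2}--\eqref{NISTYU-SKO-A-PPA-3}, namely $v_{q+1}=v_q+w_{q+1}$, $u_{q+1}=u_q+w_{q+1}+\mathfrak{W}^{loc}_{q+1}$ and $\mathfrak{p}_{q+1}=\mathfrak{p}_{q,\Gamma}-\mathfrak{p}^s_{q+1}$, with $w_{q+1}=w^{(\Gamma)}_{q+1}+w^{(p)}_{q+1}+w^{(c)}_{q+1}+w^{(i)}_{q+1}$. The velocity estimates come from combining the inductive bounds \eqref{HIE-u-1.1-BUDHU-A-1}--\eqref{HIE-u-1.1} at step $q$ with the perturbation bounds. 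These are \eqref{WPD-NP-main-BTO-1} for the Newtonian part, Proposition \ref{WPD-NS-SD-P-main} for the Nash parts, and, on $[0,\mathfrak{t}_{q+1}]$, the stopping-time bound $\|\mathfrak{W}^{loc}_{q+1}\|_N\lesssim \delta_q^{\alpha}\delta_{q+2}^{1/2}\lambda_{q+1}^N$ coming from \eqref{KMCNHSGFLDPF-1}.

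\textbf{Step 1: the $C^0$ bounds.} We have $\|w^{(p)}_{q+1}\|_0\lesssim\delta_{q+1}^{1/2}$, $\|w^{(c)}_{q+1}\|_0\lesssim\delta_{q+1}^{1/2}\lambda_{q+1}^{-1/2}$, $\|w^{(i)}_{q+1}\|_0\lesssim\tilde\mu^{-1}\delta_{q+1}$ and $\|w^{(\Gamma)}_{q+1}\|_0\lesssim\mu_{q+1}^{-1}\delta_{q+1}\lambda_q l_q^{-\alpha}\lesssim\delta_{q+1}^{1/2}$ (by the parameter choices in Appendix C). Hence $\|v_{q+1}\|_0\le\|v_q\|_0+C\delta_{q+1}^{1/2}$. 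To keep a uniform constant $C$ rather than one growing with $q$, we will instead telescope: $\|v_{q+1}\|_0\le\|v_0\|_0+C\sum_{j\le q+1}\delta_j^{1/2}$. This sum is bounded by a fixed constant because $\delta_j$ decays super-exponentially. The same argument applies to $u_{q+1}$.

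\textbf{Step 2: the $C^N$ bounds for $N\ge1$.} Here $\|v_q\|_N\lesssim\delta_q^{1/2}\lambda_q^N\le\delta_{q+1}^{1/2}\lambda_{q+1}^N$, which holds since $\delta_q^{1/2}\delta_{q+1}^{-1/2}\lambda_q^N\lambda_{q+1}^{-N}=\lambda_q^{(b-1)\beta-N(b-1)}\le1$ for $N\ge1>\beta$. The dominant perturbation is $\|w^{(p)}_{q+1}\|_N\lesssim\delta_{q+1}^{1/2}\lambda_{q+1}^N$, and the remaining pieces are smaller by the same proposition.

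\textbf{Step 3: the pressure.} We split $\mathfrak{p}_{q,\Gamma}$ using Proposition \ref{HNNI-NEHU-MAIN} iterated $\Gamma$ times, which gives $\|\mathfrak{p}_{q,\Gamma}\|_0\le C$ and $\|\mathfrak{p}_{q,\Gamma}\|_N\lesssim\delta_q^{1/2}\lambda_q^N$. The stochastic pressure $\mathfrak{p}^s_{q+1}$ in \eqref{LIJIFNI-DKFJ-L-56} is an order-zero Calderón--Zygmund operator, $\Delta^{-1}\mathrm{div}\,\mathrm{div}$, applied to bilinear terms. We will estimate it in $C^{N+\epsilon}$ by Schauder, exactly as in \eqref{djubh0sdf-s-A-1}, losing only $\lambda_{q+1}^{\epsilon}$. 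The product bounds then give, for example,
\[
\|w^{(s)}_{q+1}\otimes\mathfrak{W}_q\|_N\lesssim\delta_{q+1}^{1/2}\lambda_{q+1}^N\|\mathfrak{W}_q\|_0+\delta_{q+1}^{1/2}\|\mathfrak{W}_q\|_N .
\]
The terms involving $\mathfrak{W}^{loc}_{q+1}$ carry the small factor $\delta_q^\alpha\delta_{q+2}^{1/2}$.

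\textbf{Main obstacle.} We expect the main obstacle to be the pressure term $w^{(s)}_{q+1}\otimes\mathfrak{W}_q$. Its size is $O(\delta_{q+1}^{1/2}\lambda_{q+1}^N)$ times $\|\mathfrak{W}_q\|_0\lesssim\kappa\delta_1^{1/2}$, which is only $O(1)$ and not small. Together with the $\lambda_{q+1}^\epsilon$ Schauder loss, this must still fit under the implicit constant. Our plan is to absorb $\kappa$ and the $\mathrm{Tr}\,Q$-dependent constants into $\lesssim$, and to remove the $\epsilon$ loss by taking $a$ large, i.e.\ by using the slack $\lambda_{q+1}^{-\alpha}$ present in the amplitude bound $\|A_{\xi,k,n,q+1}\|_N\lesssim\delta_{q+1}^{1/2}\lambda_{q+1}^{-\alpha}\lambda_q^N$.
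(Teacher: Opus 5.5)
\noindent Your skeleton matches the paper's proof: triangle inequalities from \eqref{NISTYU-SKO-A-PPA-2}--\eqref{NISTYU-SKO-A-PPA-3}, the perturbation bounds \eqref{WPD-NP-main-BTO-1} and Proposition \ref{WPD-NS-SD-P-main}, and product estimates for $\mathfrak{p}^s_{q+1}$. You are more careful than the paper in two places. The paper calls the $C^0$ bound easy, and in \eqref{RSRI-P-mainsd-AAsd-1-KIJ-a} it treats the nonlocal operator in $\mathfrak{p}^s_{q+1}$ as if it were bounded on $C^N$. You also correctly locate the delicate term. However, your remedy for the Schauder loss fails.

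Enlarging $a$ cannot cancel $\lambda_{q+1}^{\epsilon}$ with a $q$-independent constant, so everything rests on the $\lambda_{q+1}^{-\alpha}$ slack, and that slack does not exist. It is the $N=0$ case of \eqref{WPD-NHS-MAIN-AA-1}, namely $\|(\nabla\tilde\Phi_k)^{-1}\|_0\lesssim\lambda_{q+1}^{-\alpha}$. This is false: $\tilde\Phi_k(t_k,x)=x$, and by \eqref{NDHFUYTS-XD-F-1} $\nabla\tilde\Phi_k$ stays within $\lambda_q^{-\alpha}$ of $\mathrm{Id}$. Together with $|\tilde\eta_{\xi,k,n}|\gtrsim\delta_{q+1,\Gamma}^{1/2}$ from \eqref{VHUYTR-A-1}, the amplitude of $w^{(p)}_{q+1}$ has full size, as it must, since $w^{(p)}_{q+1}$ carries the energy gap. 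As written, your Step 3 only gives $\|\mathfrak{p}_{q+1}\|_N\lesssim\delta_{q+1}^{1/2}\lambda_{q+1}^{N+\epsilon}$.

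What closes the estimate is structure rather than slack. Because $\mathrm{div}\,w^{(s)}_{q+1}=0$,
\[
\mathrm{div}\,\mathrm{div}\bigl(w^{(s)}_{q+1}\otimes\mathfrak{W}_q+\mathfrak{W}_q\otimes w^{(s)}_{q+1}\bigr)=2\,\mathrm{div}\bigl((w^{(s)}_{q+1}\cdot\nabla)\mathfrak{W}_q\bigr),
\]
so this part of the pressure is the order $-1$ operator $2\Delta^{-1}\mathrm{div}$ applied to $(w^{(s)}_{q+1}\cdot\nabla)\mathfrak{W}_q$. For $1\le n\le N$, Schauder bounds its $n$-th seminorm by $\|(w^{(s)}_{q+1}\cdot\nabla)\mathfrak{W}_q\|_{N-1+\epsilon}$. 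Using $\|\mathfrak{W}_q\|_M\lesssim\kappa\,\delta_{q+1}^{1/2}\lambda_q^M$ for $M\ge1$ on the stopping-time interval, this lies below $\delta_{q+1}^{1/2}\lambda_{q+1}^N$ by a factor of order $\kappa\lambda_q\lambda_{q+1}^{\epsilon-1}$. For the $C^0$ part, the order-zero bound $\lesssim\delta_{q+1}^{1/2}\lambda_{q+1}^{\epsilon}$ is harmless because it is summable in $q$.

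Three further points. First, the stopping time $\mathfrak{t}_{q+1}$ controls $\|\mathfrak{W}_{q+1}\|_N$; it does not control $\mathfrak{W}^{loc}_{q+1}$ with the extra deterministic factor $\delta_q^{\alpha}$. Pairing $\mathfrak{W}_{q+1}$ with $\cos(\lambda_{q+1}x_1)$ and integrating by parts gives $\|\mathfrak{W}^{loc}_{q+1}\|_M\lesssim\kappa\,\delta_{q+2}^{1/2}\lambda_{q+1}^M$. That is all you need.

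Second, the same bound makes $\|\mathfrak{p}_{q+1}-\mathfrak{p}_q\|_0$ summable, so you should telescope the pressure's $C^0$ bound exactly as you did for $v$ and $u$. Merely re-citing Proposition \ref{HNNI-NEHU-MAIN} has the same drift in the constant that you avoided in Step 1. Your pathwise use of the stopping times is in any case cleaner than the paper's, which inserts the expectation bounds \eqref{JHKJFHSJKDHF-KSBDG=ASF-AF}--\eqref{JHKJFHSJKDHF-SDAF} into pathwise estimates.

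Third, your Step 1, like the paper's proof, takes $\|w^{(p)}_{q+1}\|_0\lesssim\delta_{q+1}^{1/2}$ from Proposition \ref{WPD-NS-SD-P-main}, and that input is incompatible with the intermittent block. By \eqref{XINHUI-SJI-A}, $\mathfrak{D}_r(0)=2r+1$, so $\|\mathfrak{D}_{\tilde\mu}(t,\lambda_{q+1}\tilde\Phi_k)\|_0\sim r_q$. The $N=0$ cases of Lemmas \ref{WPD-NHS-HUD-BB-2}--\ref{WPD-NHS-HUD-BB-3} are where this is lost. Combined with \eqref{VHUYTR-A-1}, one gets $\|w^{(p)}_{q+1}\|_0\gtrsim\delta_{q+1,\Gamma}^{1/2}r_q$, which is not $O(\delta_{q+1}^{1/2})$ for $r_q=\lambda_q^M$ with $M>\frac12$. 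The paper's own lower bound \eqref{VHUYTR-A-1-KLBHTF-A-1} with $p$ large says the same. So this step of your argument rests, like the paper's, on an input that does not hold for the blocks as defined; your proposal inherits that defect rather than repairing it.
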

\par
\noindent{\textbf{Proof}}. It is easy to check that \eqref{RSRI-P-mainsd-AAsd-1-JUSJJ-A-1} holds. When $N \geqslant  1$, it follows from \eqref{NISTYU-SKO-A-PPA-1} and \eqref{NISTYU-SKO-A-PPA-2} that
\begin{equation}\label{RSRI-P-PROOV-AA-2}
{\left\| {{v_{q + 1}}} \right\|_N} \lesssim {\left\| {{v_q}} \right\|_N} + {\left\| {w_{q + 1}^{\left( s \right)}} \right\|_N} + {\left\| {w_{q + 1}^{\left( \Gamma \right)}} \right\|_N} .
\end{equation}
Then, using \eqref{HIE-u-1.1}, \eqref{WPD-NP-main-BTO-1}, \eqref{RSRI-P-PROOV-AA-2}, \eqref{parameters-S-14} and Proposition \ref{WPD-NS-SD-P-main}, we infer that
\begin{equation}\label{RSRI-P-mainsd-AAsd-2}
{\left\| {{v_{q + 1}}} \right\|_N}  \lesssim \delta _{q + 1}^{\frac{1}{2}}\lambda _{q + 1}^N    .
\end{equation}
According to \eqref{NISTYU-SKO-A-PWWWPA-2} and \eqref{parameters-S-20-ENDH-A}, we have 
\begin{equation}\label{RSRI-P-HUSGDJ-AAsd-2}
\mathop {\sup }\limits_{t \in \left[ {0,T} \right]}  \mathbb{E} \left\| {{\mathfrak{W}_{q + 1}}} \right\|_N^2 \lesssim \sum\limits_{j = 0}^q {{\delta _j}\lambda _j^{2N}}  + {\delta _{q + 1}}\lambda _{q + 1}^{2N} \lesssim {\delta _{q + 1}}\lambda _{q + 1}^{2N} .
\end{equation}
Combining \eqref{RSRI-P-mainsd-AAsd-2} and \eqref{RSRI-P-HUSGDJ-AAsd-2}, we obtain
\begin{equation}\label{RSRI-P-mainsd-AAsd-1-KIJ}
{\left\| {{u_{q+1}} } \right\|_N} \leqslant {\left\| {{v_{q+1}}} \right\|_N} + {\left\| {{\mathfrak{W}_{q + 1}}} \right\|_N}  \lesssim \delta _{q+1}^{\frac{1}{2}}\lambda _{q+1}^N  .
\end{equation}
\par
We note that
\begin{equation}\label{JHKJFHSJKDHF-KSBDG=ASF-AF}
\mathop {\sup }\limits_{t \in \left[ {0,T} \right]} \mathbb{E}\left\| {\mathfrak{W}_{q + 1}^{loc}\left( t \right)} \right\|_N^m =  {\left( {\frac{{{\varsigma _{q + 1}}}}{\pi }} \right)^m} \delta _q^{\alpha m}  \delta _{q + 2}^{\frac{1}{2}m}\lambda _{q + 1}^{Nm} \mathop {\sup }\limits_{t \in \left[ {0,T} \right]} \mathbb{E}\left| {\mathfrak{B}_{q + 1}\left( t \right)} \right|^m \lesssim \delta _{q + 1}^{\frac{1}{2}m}\lambda _{q + 1}^{Nm} ,
\end{equation}
and
\begin{equation}\label{JHKJFHSJKDHF-SDAF}
\mathop {\sup }\limits_{t \in \left[ {0,T} \right]} \mathbb{E}\left\| {{\mathfrak{W}_q}\left( t \right)} \right\|_N^2 \lesssim \sum\limits_{j = 0}^q {{\delta _j}\lambda _j^{2N}} \lesssim {\delta _{q + 1}}\lambda _{q + 1}^{2N}  .
\end{equation}
Then, it follows from \eqref{LIJIFNI-DKFJ-L-56}, \eqref{RSRI-P-mainsd-AAsd-1-KIJ}-\eqref{JHKJFHSJKDHF-SDAF} and Proposition \ref{WPD-NS-SD-P-main} that
\begin{align}\label{RSRI-P-mainsd-AAsd-1-KIJ-a}
{\left\| {\mathfrak{p}_{q + 1}^s} \right\|_N} \lesssim & \left\| {{u_{q + 1}}} \right\|_N  \left\| {\mathfrak{W}_{q + 1}^{loc}} \right\|_0 + \left\| {{u_{q + 1}}} \right\|_0  \left\| {\mathfrak{W}_{q + 1}^{loc}} \right\|_N +  \left\| {\mathfrak{W}_{q + 1}^{loc}} \right\|_0  \left\| {\mathfrak{W}_{q + 1}^{loc}} \right\|_N \nonumber \\
& + {\left\| {w_{q + 1}^{\left( s \right)}} \right\|_0}{\left\| {{\mathfrak{W}_q}} \right\|_N} + {\left\| {w_{q + 1}^{\left( s \right)}} \right\|_N}{\left\| {{\mathfrak{W}_q}} \right\|_0} \nonumber \\
\lesssim & \delta _{q+1}^{\frac{1}{2}}\lambda _{q+1}^N  .
\end{align}
\par
Owing to \eqref{SHDUFY-A-2dJD}, we have
\begin{align*}
{\left\| {{\mathfrak{p}_{q,\Gamma }}} \right\|_N} \lesssim & {\left\| {{\mathfrak{p}_q}} \right\|_N} + \mathop {\sup }\limits_{k,n} {\left\| {{\mathfrak{p}_{k,n + 1}}} \right\|_N} + {\left\| {w_{q + 1,n + 1}^{(n)}} \right\|_N}{\left\| {{u_q} - {{\bar v}_q} + w_{q + 1}^{\left( \Gamma \right)}} \right\|_0}  \\
& + \mathop {\sup }\limits_n {\left\| {w_{q + 1,n + 1}^{(n)}} \right\|_0}{\left\| {{u_q} - {{\bar v}_q} + w_{q + 1}^{\left( \Gamma \right)}} \right\|_N} + {\left\| {w_{q + 1,n + 1}^{(n)}} \right\|_N}{\left\| {w_{q + 1,n + 1}^{(n)}} \right\|_0} \\
& + \mathop {\sup }\limits_{\xi ,k,n} \left( {{\left\| {{\mathring{R}_{q,n}}} \right\|_N} + {\left\| {{A_{\xi ,k,{n+1}}}} \right\|_N}} \right) .
\end{align*}
By virtue of \eqref{HIE-u-1.1}, \eqref{lemma-GLTNNRS-B-M-1}, \eqref{HIDJJF-3-A-2}, \eqref{Newtonsteps-A-B-19}, \eqref{WPD-NP-main-A-1}, \eqref{NISTYU-SKO-A-PPA-3}, \eqref{WPD-NP-main-BTO-1}, \eqref{RSRI-P-mainsd-AAsd-1-KIJ-a} and \eqref{parameters-S-13}, we deduce that
\begin{align}\label{RSRI-P-maSWQIJ-a}
{\left\| {{\mathfrak{p}_{q + 1}}} \right\|_N} \leqslant & {\left\| {{\mathfrak{p}_{q,\Gamma }}} \right\|_N} + {\left\| {\mathfrak{p}_{q + 1}^s} \right\|_N} \nonumber \\
\lesssim & \delta _q^{\frac{1}{2}}\lambda _q^N + \mu _{q + 1}^{ - 1}{\delta _{q + 1}}\lambda _q^{N + 1}l_q^{ - \alpha }\left( {C + \delta _q^{\frac{1}{2}} + \mu _{q + 1}^{ - 1}{\delta _{q + 1}}{\lambda _q}l_q^{ - \alpha }} \right) \nonumber \\
& + \mu _{q + 1}^{ - 2}\delta _{q + 1}^2\lambda _q^{N + 2}l_q^{ - 2\alpha } + {\delta _{q + 1}}\lambda _q^N  \nonumber \\
\lesssim & \delta _{q + 1}^{\frac{1}{2}}\lambda _{q + 1}^N .
\end{align}
Consequently, combining \eqref{RSRI-P-mainsd-AAsd-2}, \eqref{RSRI-P-mainsd-AAsd-1-KIJ} and \eqref{RSRI-P-maSWQIJ-a}, we conclude that \eqref{RSRI-P-mainsd-AAsd-1} holds. The proof is complete. \qed
\par
\begin{proposition}\label{RSRI-main}
For any $N \in \left\{ {0,1, \cdots ,10} \right\} $ and $t \in {\mathrm{supp}_\mathrm{t}} \, {\mathring{R}_{q + 1}}$, there exists a parameter $\alpha \in \left( {\frac{\beta }{{b}},\beta } \right)$ such that the Reynolds stress ${\mathring{R}_{q + 1}}\left( {t,x} \right)$ satisfies
\begin{equation}\label{RSRI-P-mainsd-AAJJS-1}
{\left\| { \mathring{R}_{q + 1} } \right\|_N} \lesssim {\delta _{q + 2}}\lambda _{q + 1}^{N - \alpha } ,
\end{equation}
and
\begin{equation}\label{RSRI-P-mainsd-AAJJS-2}
{\left\| { {{ D_{t,{q + 1} }}} \mathring{R}_{q + 1} } \right\|_N}  \lesssim \delta _{q + 1}^{\frac{1}{2}}{\delta _{q + 2}}\lambda _{q + 1}^{N + 1 - \alpha }  .  
\end{equation}
\end{proposition}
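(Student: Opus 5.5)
The plan is to estimate the three pieces of the decomposition \eqref{NISTYU-SKO-A-PPA-4} separately, $\mathring{R}_{q+1}=\mathring{R}_{q+1}^{(l)}+\mathring{R}_{q+1}^{(i)}+\mathring{R}_{q+1}^{(r)}$. For each piece I would bound $\|\cdot\|_0$ and $\|\cdot\|_N$ by $\delta_{q+2}\lambda_{q+1}^{N-\alpha}$, and the material derivative $D_{t,q+1}=\partial_t+v_{q+1}\cdot\nabla$ by $\delta_{q+1}^{1/2}\delta_{q+2}\lambda_{q+1}^{N+1-\alpha}$. The material derivative will be handled by writing $D_{t,q+1}=\bar D_{t,\Gamma}+(v_{q+1}-\bar v_{q,\Gamma})\cdot\nabla$. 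The error term is controlled by Proposition \ref{RSRI-ujdh-main} together with the mollification bound $\|v_q-\bar v_q\|_0\lesssim l_q^2\|v_q\|_2$, and it costs at most a factor $\delta_{q+1}^{1/2}\lambda_{q+1}$ times the $C^{N+1}$ bound. It therefore suffices to prove $\bar D_{t,\Gamma}$ bounds for each piece together with the $C^{N+1}$ bounds.

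\textbf{Step 1: the linear error.} For $\mathring{R}^{(l)}_{q+1}$, the operator $\mathcal R$ of Lemma \ref{PO-IDO-A} acts on terms of the form (slow amplitude)$\times$(function oscillating at frequency $\lambda_{q+1}$ composed with $\tilde\Phi_k$). I would apply the standard stationary-phase or inverse-divergence estimate: $\mathcal R$ gains $\lambda_{q+1}^{-1}$ up to $\lambda_{q+1}^{\alpha}$ losses, using the bounds on $\nabla\tilde\Phi_k$ from Lemma \ref{WPD-NHS-AA}. The transport part cancels to leading order, since $\bar D_{t,\Gamma}$ annihilates $\mathbb W_\xi(\lambda_{q+1}\tilde\Phi_k)$ except for the $\tilde\mu$-time dependence of $\mathfrak D_{\tilde\mu}$. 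What remains is bounded by Proposition \ref{WPD-NS-SD-P-main} and Lemma \ref{WPD-NHS-BB}, giving $\lambda_{q+1}^{-1}\big(\delta_{q+1}^{1/2}\tau_q^{-1}+\mu_{q+1}\delta_q^{1/2}\delta_{q+1}^{1/2}\lambda_{q+1}^{1/2}\big)$. The parameter relations in Appendix C, namely \eqref{parameters-S-6} and the choices of $\mu_{q+1}$ and $\tau_q$, then make this $\lesssim\delta_{q+2}\lambda_{q+1}^{-\alpha}$.

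\textbf{Step 2: the oscillation error.} For $\mathring{R}^{(i)}_{q+1}$, I would expand $w^{(s)}\otimes w^{(s)}$. The product $w^{(p)}\otimes w^{(p)}$ splits into a low-frequency mean part and a high-frequency part. The mean part equals $\sum g^2\tilde\eta^2(\nabla\tilde\Phi_k)^{-1}\xi\otimes\xi(\nabla\tilde\Phi_k)^{-T}\mathscr{\hat M}$. By Lemma \ref{GLTNNRS-A} and the definition of $\mathfrak E_{q,\xi,k,n}$, it cancels $S_{q,\Gamma}$ from \eqref{SHDUFY-A-1} up to $\tilde R_{q,n}-\mathring R_{q,n}$ and a pure-pressure trace term. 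The time derivative of the $\mathfrak D_{\tilde\mu}^2$ term is cancelled by $\partial_t w^{(i)}$; this is exactly how $w^{(i)}$ was designed via \eqref{IBW-P-A-FGHY}. The high-frequency remainder is again inverted by $\mathcal R$, with frequency at least $\lambda_{q+1}\sigma$. The cross terms involving $w^{(c)}$ and $w^{(i)}$ are small by \eqref{WPD-NS-SD-P-mainsd-AA-KIJHH-1} and \eqref{WPD-NS-SD-P-mainsd-AA-3}.

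\textbf{Step 3: the random error.} $\mathring{R}^{(r)}_{q+1}$ contains $\mathring R_{q,\Gamma}$, bounded by Proposition \ref{HNNI-NEHU-MAIN} with $\delta_{q+1,\Gamma}\le\delta_{q+2}$. It also contains $\mathfrak P_{q+1,\Gamma}$, which is $\mathring R_q-\mathring R_{q,0}$ (of size $l_q^2\|\mathring R_q\|_2$) plus the temporally disjoint $(1-f)\Pi A$ terms. I would handle these through the disjointness of the supports of the $g_\xi$. The products $w^{(s)}\otimes v_{q,\Gamma}$ are treated as in Step 1.

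\textbf{Main obstacle.} I expect the hardest part to be Step 2, specifically the intermittent high-frequency remainder. Its estimate requires the ratio $r_q/(\sigma\lambda_{q+1})$ and the $L^\infty$ size $r_q$ of $\mathfrak D_{\tilde\mu}$ to fit within $\delta_{q+2}\lambda_{q+1}^{-\alpha}$, and I would check this against $M<b/2$ first.
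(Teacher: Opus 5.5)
Your three-way split of $\mathring R_{q+1}$ and the reduction $D_{t,q+1}=\bar D_{t,\Gamma}+(v_{q+1}-\bar v_{q,\Gamma})\cdot\nabla$ match the skeleton of the paper's proof. However, the two load-bearing steps do not go through as you state them.

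\textbf{Step 1.} You close with the claim that the choices of $\mu_{q+1}$ and $\tau_q$ give $\lambda_{q+1}^{-1}\mu_{q+1}\delta_q^{1/2}\delta_{q+1}^{1/2}\lambda_{q+1}^{1/2}\lesssim\delta_{q+2}\lambda_{q+1}^{-\alpha}$, for the term you take from \eqref{WPD-NS-SD-P-mainsd-KIJHH-AA-2}. By \eqref{PASJD-MU-1} the left side equals $\delta_q^{1/2}\delta_{q+1}\lambda_q^{2/3}\lambda_{q+1}^{\alpha-1/6}$. Its ratio to the right side is
\begin{equation*}
\lambda_q^{\frac{2}{3}-\beta}\,\lambda_{q+1}^{2\alpha-\frac{1}{6}}\Big(\frac{\lambda_{q+2}}{\lambda_{q+1}}\Big)^{2\beta}\gtrsim\lambda_q^{\frac{1}{3}-\beta}\,\lambda_{q+1}^{2\alpha}\to\infty ,
\end{equation*}
using only $\lambda_{q+2}\geq\lambda_{q+1}$ and $\lambda_{q+1}\lesssim\lambda_q^{2}$. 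So no relation in Appendix C yields your bound. The paper does not use a transport cancellation here. It writes $w_{q+1}^{(p)}+w_{q+1}^{(c)}=\lambda_{q+1}^{-1}\nabla^\perp(\cdots\mathfrak S_\xi(\lambda_{q+1}\tilde\Phi_k))$, so that $\mathcal R\nabla^\perp$ is of order zero. It then estimates $\partial_t$ of this potential with bounds carrying an extra factor $\delta_q^{3/2}$ (see \eqref{RSRI-main-PROVE-7}--\eqref{KNHYGF-S-D-A-1}) and closes with \eqref{parameters-S-21-A-2}.

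\textbf{Step 2.} The cancellation you invoke is not available with this paper's amplitudes.
\begin{itemize}
\item $S_{q,\Gamma}$ in \eqref{SHDUFY-A-1} is built from the Newton amplitudes $\eta_{\xi,k,n+1}$. These are normalised by $\delta_{q+1,n}$, built on $\mathring R_{q,n}$ and the flow $\Phi_k$ of $\bar v_q$, and $S_{q,\Gamma}$ keeps the fast profile $g^2_{\xi,k,n+1}(\mu_{q+1}t)$.
\item $\tilde\eta_{\xi,k,n}$ is instead normalised by $\mathfrak E_{q,\xi,k,n}\propto\mathscr{\hat M}^{-1}g^{-2}_{\xi,k,n+1}\chi_k^{-2}\mathfrak E_q$. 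Hence $g^2_{\xi,k,n+1}\tilde\eta^2_{\xi,k,n}\mathscr{\hat M}=(8a_W^2|\Lambda||\mathbb Z_{q,n}|(\Gamma+1))^{-1}\mathfrak E_q\,\gamma_\xi^2(\cdots)$.
\item The low-frequency part of $w^{(p)}_{q+1}\otimes w^{(p)}_{q+1}$ therefore carries the energy gap and $-\tilde R_{q,n}$, with no $g^2(\mu_{q+1}t)$ oscillation. This is exactly how it is used in Proposition \ref{RSRI-ESI-GAP}.
\item Adding it to $S_{q,\Gamma}$ leaves the part of $S_{q,\Gamma}$ oscillating at rate $\mu_{q+1}$, not merely $\tilde R_{q,n}-\mathring R_{q,n}$ plus a trace.
\item Likewise, $w^{(i)}_{q+1}$ carries a single factor $g$, the direction $\xi^\perp$, and no $\cos^2$ or Jacobian factors. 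So $\partial_t w^{(i)}_{q+1}$ does not exactly cancel the last term of \eqref{IBW-P-A-FGHY}.
\end{itemize}
The paper argues by size, not by cancellation:
\begin{itemize}
\item $\mathcal R\,\mathrm{div}\,S_{q,\Gamma}$ is bounded on its own via Calder\'on--Zygmund, interpolation of the amplitude bound \eqref{Newtonsteps-A-B-19}, and \eqref{parameters-S-20}, \eqref{parameters-S-21} (see \eqref{RSRI-P-PP-O-4}--\eqref{RSRI-P-PP-O-4-5t-A}).
\item $\mathcal R\,\mathrm{div}(w^{(p)}_{q+1}\otimes w^{(p)}_{q+1})$ is bounded via the $\cos^2$ identity and stationary phase, see \eqref{RSRI-P-PP-HUJ-O-10}.
\item $\mathcal R\partial_t w^{(i)}_{q+1}$ is bounded directly from the $\tilde\mu^{-1}$ prefactor, see \eqref{RSRI-P-PP-O-6}.
\end{itemize}
Finally, you leave the intermittent high-frequency remainder, which you flag as the main obstacle, unverified. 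So Step 2 would be incomplete even granting the cancellation.

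\textbf{Step 3.} The same amplitude bound \eqref{Newtonsteps-A-B-19}, with \eqref{parameters-S-22}, is what controls the $(1-f_{\xi,k,n+1})\Pi A_{\xi,k,n+1}$ part of $\mathfrak P_{q+1,\Gamma}$ (see \eqref{RSRI-P-PP-rrr-3}). Disjointness of the supports of the $g_\xi$ gives no smallness there. Also, $\delta_{q+1,\Gamma}\le\delta_{q+2}$ is not among the relations of Appendix C; it would require $(\frac{1}{3}-\beta)\Gamma\ge 2\beta b$. The paper uses \eqref{RSRI-P-PP-rrr-1} instead.
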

\par
\noindent{\textbf{Proof}}.
According to the definition \eqref{NISTYU-SKO-A-PPA-4}, the Reynolds stress ${\mathring{R}_{q + 1}}\left( {t,x} \right)$ is decomposed into the linear error $\mathring{R}_{q + 1}^{\left( l \right)}$ defined by \eqref{LIJIFNI-DKFJ-L-1}, the intermittent oscillation error $\mathring{R}_{q + 1}^{\left( i \right)}$  defined by \eqref{LIJIFNI-DKFJ-O-2}, and the random oscillation error $\mathring{R}_{q + 1}^{\left( r \right)}$  defined by \eqref{LIJIFNI-DKFJ-R-3}. In what follows, we examine these components one by one.
\par
\vspace{1em}
{\textbf{Step 1. Estimates of the linear error $\mathring{R}_{q + 1}^{\left( l \right)}$}}. 
\par
\vspace{1em}
Since $w_{q + 1}^{\left( s \right)}\left( {t,x} \right)$ is divergence-free, we have
\begin{align}\label{RSRI-main-PROVE-1}
{\left\| {\mathcal{R}\left( {{{\bar v}_{q,\Gamma }} \cdot \nabla } \right)w_{q + 1}^{\left( s \right)}} \right\|_N} = & {\left\| {\mathcal{R}\mathrm{div}\left( {w_{q + 1}^{\left( s \right)} \otimes {{\bar v}_{q,\Gamma }}} \right)} \right\|_N} \nonumber \\
\lesssim & {\left\| {\mathcal{R}\left( {w_{q + 1}^{\left( s \right)} \cdot \nabla } \right){{\bar v}_{q,\Gamma }}} \right\|_N} \nonumber \\
\lesssim &   {\left\| {\mathcal{R}\left( {w_{q + 1}^{\left( i \right)} \cdot \nabla } \right){{\bar v}_{q,\Gamma }}} \right\|_N} + {\left\| {\mathcal{R}\left( {\left( {w_{q + 1}^{\left( p \right)} + w_{q + 1}^{\left( c \right)}} \right) \cdot \nabla } \right){{\bar v}_{q,\Gamma }}} \right\|_N} .
\end{align}
Notice that
\begin{align*}
{\left\| {\mathcal{R}\left( {w_{q + 1}^{\left( i \right)} \cdot \nabla } \right){{\bar v}_{q,\Gamma }}} \right\|_N} = & {\left\| {\mathcal{R}\mathrm{div}\left( {w_{q + 1}^{\left( i \right)} \otimes {{\bar v}_{q,\Gamma }}} \right)} \right\|_N} \\
\lesssim & {\left\| {w_{q + 1}^{\left( i \right)}} \right\|_N}{\left\| {{{\bar v}_{q,\Gamma }}} \right\|_0} + {\left\| {w_{q + 1}^{\left( i \right)}} \right\|_0}{\left\| {{{\bar v}_{q,\Gamma }}} \right\|_N}  \\
\lesssim & {\delta _{q + 2}}\lambda _{q + 1}^{N - \alpha }{{\tilde \mu }^{ - 1}}\delta _{q + 1}^{1 - \frac{\alpha }{{2\beta }}}\lambda _{q + 2}^{2\beta }   .
\end{align*}
It follows from \eqref{RSRI-main-PROVE-2} that
\begin{equation}\label{RSRI-main-PROVE-3}
{\left\| {\mathcal{R}\left( {w_{q + 1}^{\left( i \right)} \cdot \nabla } \right){{\bar v}_{q,\Gamma }}} \right\|_N} \lesssim \delta _{q + 1}^{\frac{1}{2}} {\delta _{q + 2}}\lambda _{q + 1}^{N - \alpha }   .
\end{equation}
\par
Owing to
\begin{align*}
\mathcal{R}\left( {\left( {w_{q + 1}^{\left( p \right)} + w_{q + 1}^{\left( c \right)}} \right) \cdot \nabla } \right){{\bar v}_{q,\Gamma }} = & \lambda _{q + 1}^{ - 1}\mathcal{R}{\nabla ^ \bot }\sum\limits_{n = 0}^\Gamma  {\sum\limits_{k \in {\mathbb{Z}_{q,n}}} {\sum\limits_{\xi  \in \Lambda } {{g_{\xi ,k,n + 1}}\left( {{\mu _{q + 1}}t} \right){{\tilde \eta }_{\xi ,k,n}}{{\left( {\nabla {{\tilde \Phi }_k}} \right)}^{ - 1}}} } } \\
& \hspace*{6em} {\mathfrak{D}_{\tilde \mu }}\left( {t,{\lambda _{q + 1}}{{\tilde \Phi }_k}} \right)\left( {\nabla {{\tilde \Phi }_k}} \right) \cdot \mathfrak{S}_{\xi} \left( {{\lambda _{q + 1}}{{\tilde \Phi }_k}} \right) \cdot \nabla {{\bar v}_{q,\Gamma }}  ,
\end{align*}
we deduce that
\begin{align*}
& {\left\| {\mathcal{R}\left( {\left( {w_{q + 1}^{\left( p \right)} + w_{q + 1}^{\left( c \right)}} \right) \cdot \nabla } \right){{\bar v}_{q,\Gamma }}} \right\|_N}   \\
\lesssim & \lambda _{q + 1}^{ - 1}\mathop {\sup }\limits_{\xi ,k,n} {\left\| {{{\bar v}_{q,\Gamma }}} \right\|_1}\left( { {\delta _q} {{\left\| {{{\tilde \eta }_{\xi ,k,n}}} \right\|}_N} + \delta _q^{\frac{1}{2}}{{\left\| {{{\tilde \eta }_{\xi ,k,n}}} \right\|}_0}{{\left\| {\nabla {{\tilde \Phi }_k} + {{\left( {\nabla {{\tilde \Phi }_k}} \right)}^{ - 1}}} \right\|}_N} } \right)  \\
& + {\delta _q} \lambda _{q + 1}^{ - 1}\mathop {\sup }\limits_{\xi ,k,n} {\left\| {{{\tilde \eta }_{\xi ,k,n}}} \right\|_0}{\left\| {{{\bar v}_{q,\Gamma }}} \right\|_1}\left( {{{\left\| {{\mathfrak{D}_{\tilde \mu }}\left( {t,{\lambda _{q + 1}}{{\tilde \Phi }_k}} \right)} \right\|}_N} + {{\left\| {\mathfrak{S}_{\xi} \left( {{\lambda _{q + 1}}{{\tilde \Phi }_k}} \right)} \right\|}_N}} \right) \\
& +   \lambda _{q + 1}^{ - 1}\mathop {\sup }\limits_{\xi ,k,n} {\left\| {{{\tilde \eta }_{\xi ,k,n}}{{\left( {\nabla {{\tilde \Phi }_k}} \right)}^{ - 1}}{\mathfrak{D}_{\tilde \mu }}\left( {t,{\lambda _{q + 1}}{{\tilde \Phi }_k}} \right)\left( {\nabla {{\tilde \Phi }_k}} \right) \cdot \mathfrak{S}_{\xi} \left( {{\lambda _{q + 1}}{{\tilde \Phi }_k}} \right)} \right\|_0}{\left\| {{{\bar v}_{q,\Gamma }}} \right\|_{N + 1}}  \\
\lesssim & {\delta _{q + 2}}\lambda _{q + 1}^{N - \alpha }\delta _q^{\frac{3}{2}}\delta _{q + 1}^{\frac{1}{2} - \frac{{\alpha  - 1}}{{2\beta }}}{\lambda _q}\lambda _{q + 2}^{2\beta }   .
\end{align*}
Invoking \eqref{parameters-S-19}, we obtain
\begin{equation}\label{RSRI-main-PROVE-4}
{\left\| {\mathcal{R}\left( {\left( {w_{q + 1}^{\left( p \right)} + w_{q + 1}^{\left( c \right)}} \right) \cdot \nabla } \right){{\bar v}_{q,\Gamma }}} \right\|_N} \lesssim \delta _{q + 1}^{\frac{1}{2}} {\delta _{q + 2}}\lambda _{q + 1}^{N - \alpha }   .
\end{equation}
Thus, it follows from \eqref{RSRI-main-PROVE-1}-\eqref{RSRI-main-PROVE-4} that
\begin{equation}\label{RSRI-main-PROVE-5}
{\left\| {\mathcal{R}\left( {{{\bar v}_{q,\Gamma }} \cdot \nabla } \right)w_{q + 1}^{\left( s \right)}} \right\|_N} \lesssim \delta _{q + 1}^{\frac{1}{2}}  {\delta _{q + 2}}\lambda _{q + 1}^{N - \alpha }   .
\end{equation}
\par
For the remaining part $\mathcal{R} {{\bar D}_{t,\Gamma }}\left( {w_{q + 1}^{\left( p \right)} + w_{q + 1}^{\left( c \right)}} \right)$, direct computation gives
\begin{align}\label{RSRI-main-PROVE-6}
{\left\| {\mathcal{R}{{\bar D}_{t,\Gamma }}\left( {w_{q + 1}^{\left( p \right)} + w_{q + 1}^{\left( c \right)}} \right)} \right\|_N} \lesssim & {\left\| {\mathcal{R}{\partial _t}\left( {w_{q + 1}^{\left( p \right)} + w_{q + 1}^{\left( c \right)}} \right)} \right\|_N} + {\left\| {\mathcal{R}\left( {{{\bar v}_{q,\Gamma }} \cdot \nabla } \right)\left( {w_{q + 1}^{\left( p \right)} + w_{q + 1}^{\left( c \right)}} \right)} \right\|_N}  \nonumber \\
\lesssim & {\left\| {\mathcal{R}{\partial _t}\left( {w_{q + 1}^{\left( p \right)} + w_{q + 1}^{\left( c \right)}} \right)} \right\|_N} + {\left\| {\mathcal{R}\left( {\left( {w_{q + 1}^{\left( p \right)} + w_{q + 1}^{\left( c \right)}} \right) \cdot \nabla } \right){{\bar v}_{q,\Gamma }}} \right\|_N} .
\end{align}
Since
\begin{align*}
\mathcal{R}{\partial _t}\left( {w_{q + 1}^{\left( p \right)} + w_{q + 1}^{\left( c \right)}} \right) = & \lambda _{q + 1}^{ - 1}{\mu _{q + 1}}R{\nabla ^ \bot }\sum\limits_{n = 0}^\Gamma  {\sum\limits_{k \in {\mathbb{Z}_{q,n}}} {\sum\limits_{\xi  \in \Lambda } {D{g_{\xi ,k,n + 1}}\left( {{\mu _{q + 1}}t} \right){{\tilde \eta }_{\xi ,k,n}}{{\left( {\nabla {{\tilde \Phi }_k}} \right)}^{ - 1}}} } } \\
& \hspace*{6em} {\mathfrak{D}_{\tilde \mu }}\left( {t,{\lambda _{q + 1}}{{\tilde \Phi }_k}} \right)\left( {\nabla {{\tilde \Phi }_k}} \right) \cdot \mathfrak{S}_{\xi} \left( {{\lambda _{q + 1}}{{\tilde \Phi }_k}} \right) \\
& + \lambda _{q + 1}^{ - 1}R{\nabla ^ \bot }\sum\limits_{n = 0}^\Gamma  {\sum\limits_{k \in {\mathbb{Z}_{q,n}}} {\sum\limits_{\xi  \in \Lambda } {{g_{\xi ,k,n + 1}}\left( {{\mu _{q + 1}}t} \right){{\tilde \eta }_{\xi ,k,n}}{{\left( {\nabla {{\tilde \Phi }_k}} \right)}^{ - 1}}} } } \\
& \hspace*{6em} {\partial _t}{\mathfrak{D}_{\tilde \mu }}\left( {t,{\lambda _{q + 1}}{{\tilde \Phi }_k}} \right)\left( {\nabla {{\tilde \Phi }_k}} \right) \cdot \mathfrak{S}_{\xi} \left( {{\lambda _{q + 1}}{{\tilde \Phi }_k}} \right) \\
& + \lambda _{q + 1}^{ - 1}R{\nabla ^ \bot }\sum\limits_{n = 0}^\Gamma  {\sum\limits_{k \in {\mathbb{Z}_{q,n}}} {\sum\limits_{\xi  \in \Lambda } {{g_{\xi ,k,n + 1}}\left( {{\mu _{q + 1}}t} \right){\mathfrak{D}_{\tilde \mu }}\left( {t,{\lambda _{q + 1}}{{\tilde \Phi }_k}} \right)} } } \\
& \hspace*{6em}  {\partial _t}\left( {{{\left( {\nabla {{\tilde \Phi }_k}} \right)}^{ - 1}}{{\tilde \eta }_{\xi ,k,n}}\left( {\nabla {{\tilde \Phi }_k}} \right) \cdot \mathfrak{S}_{\xi} \left( {{\lambda _{q + 1}}{{\tilde \Phi }_k}} \right)} \right)  ,
\end{align*}
we have
\begin{align}\label{RSRI-main-PROVE-7}
& {\left\| {\mathcal{R}{\partial _t}\left( {w_{q + 1}^{\left( p \right)} + w_{q + 1}^{\left( c \right)}} \right)} \right\|_N} \nonumber \\
\lesssim & \lambda _{q + 1}^{ - 1}{\mu _{q + 1}}\mathop {\sup }\limits_{\xi ,k,n} \left( {{\delta _q}{{\left\| {{{\tilde \eta }_{\xi ,k,n}}} \right\|}_N} + \delta _q^{\frac{1}{2}}{{\left\| {{{\tilde \eta }_{\xi ,k,n}}} \right\|}_0}{{\left\| {\nabla {{\tilde \Phi }_k} + {{\left( {\nabla {{\tilde \Phi }_k}} \right)}^{ - 1}}} \right\|}_N}} \right) \nonumber \\
& + \delta _q^{\frac{1}{2}} \lambda _{q + 1}^{ - 1}{\mu _{q + 1}}\mathop {\sup }\limits_{\xi ,k,n} {\left\| {{{\tilde \eta }_{\xi ,k,n}}} \right\|_0}\left( {{{\left\| {{\mathfrak{D}_{\tilde \mu }}\left( {t,{\lambda _{q + 1}}{{\tilde \Phi }_k}} \right)} \right\|}_N} + {{\left\| {\mathfrak{S}_{\xi} \left( {{\lambda _{q + 1}}{{\tilde \Phi }_k}} \right)} \right\|}_N}} \right)  \nonumber \\
& + \delta _q^{\frac{1}{2}} \lambda _{q + 1}^{ - 1}\mathop {\sup }\limits_{\xi ,k,n} {\left\| {{\partial _t}{\mathfrak{D}_{\tilde \mu }}\left( {t,{\lambda _{q + 1}}{{\tilde \Phi }_k}} \right)} \right\|_0}\left( {{{\left\| {{{\tilde \eta }_{\xi ,k,n}}} \right\|}_N} + {{\left\| {{{\tilde \eta }_{\xi ,k,n}}} \right\|}_0}{{\left\| {\mathfrak{S}_{\xi} \left( {{\lambda _{q + 1}}{{\tilde \Phi }_k}} \right)} \right\|}_N}} \right)  \nonumber \\
& + \delta _q^{\frac{1}{2}} \lambda _{q + 1}^{ - 1}\mathop {\sup }\limits_{\xi ,k,n} {\left\| {{{\tilde \eta }_{\xi ,k,n}}} \right\|_0}{\left\| {\nabla {{\tilde \Phi }_k} + {{\left( {\nabla {{\tilde \Phi }_k}} \right)}^{ - 1}}} \right\|_N}{\left\| {{\partial _t}{\mathfrak{D}_{\tilde \mu }}\left( {t,{\lambda _{q + 1}}{{\tilde \Phi }_k}} \right)} \right\|_0}  \nonumber \\
& + \delta _q^{\frac{1}{2}} \lambda _{q + 1}^{ - 1}\mathop {\sup }\limits_{\xi ,k,n} {\left\| {{{\tilde \eta }_{\xi ,k,n}}} \right\|_0}{\left\| {{\partial _t}{\mathfrak{D}_{\tilde \mu }}\left( {t,{\lambda _{q + 1}}{{\tilde \Phi }_k}} \right)} \right\|_N}  \nonumber \\
& + \lambda _{q + 1}^{ - 1}\mathop {\sup }\limits_{\xi ,k,n} {\left\| {{\mathfrak{D}_{\tilde \mu }}\left( {t,{\lambda _{q + 1}}{{\tilde \Phi }_k}} \right)} \right\|_N}{\left\| {{\partial _t}\left( {{{\left( {\nabla {{\tilde \Phi }_k}} \right)}^{ - 1}}{{\tilde \eta }_{\xi ,k,n}}\left( {\nabla {{\tilde \Phi }_k}} \right) \cdot \mathfrak{S}_{\xi} \left( {{\lambda _{q + 1}}{{\tilde \Phi }_k}} \right)} \right)} \right\|_0} \nonumber \\
& + \delta _q^{\frac{1}{2}}  \lambda _{q + 1}^{ - 1}\mathop {\sup }\limits_{\xi ,k,n} {\left\| {{\partial _t}\left( {{{\left( {\nabla {{\tilde \Phi }_k}} \right)}^{ - 1}}{{\tilde \eta }_{\xi ,k,n}}\left( {\nabla {{\tilde \Phi }_k}} \right) \cdot \mathfrak{S}_{\xi} \left( {{\lambda _{q + 1}}{{\tilde \Phi }_k}} \right)} \right)} \right\|_N} .
\end{align}
Notice that
\begin{align}\label{RSRI-main-PROVE-8768}
& {\left\| {{\partial _t}\left( {{{\left( {\nabla {{\tilde \Phi }_k}} \right)}^{ - 1}}{{\tilde \eta }_{\xi ,k,n}}\left( {\nabla {{\tilde \Phi }_k}} \right) \cdot \mathfrak{S}_{\xi} \left( {{\lambda _{q + 1}}{{\tilde \Phi }_k}} \right)} \right)} \right\|_N} \nonumber \\
\lesssim & \delta _q^{\frac{3}{2}} {\left\| {{{\bar D}_{t,\Gamma }}{{\tilde \eta }_{\xi ,k,n}}} \right\|_N} + {\left\| {{{\tilde \eta }_{\xi ,k,n}}} \right\|_0}{\left\| {{\partial _t}\left( {\nabla {{\tilde \Phi }_k} + \mathfrak{S}_{\xi} \left( {{\lambda _{q + 1}}{{\tilde \Phi }_k}} \right)} \right)} \right\|_N}  \nonumber   \\
& + \delta _q^{\frac{1}{2}}  {\left\| {{{\tilde \eta }_{\xi ,k,n}}} \right\|_N}{\left\| {{\partial _t}\left( {\nabla {{\tilde \Phi }_k} + \mathfrak{S}_{\xi} \left( {{\lambda _{q + 1}}{{\tilde \Phi }_k}} \right)} \right)} \right\|_0} \nonumber \\
& + {\left\| {{{\tilde \eta }_{\xi ,k,n}}} \right\|_0}{\left\| {{{\bar D}_{t,\Gamma }}\left( {\nabla {{\tilde \Phi }_k}} \right)} \right\|_0}{\left\| {{{\left( {\nabla {{\tilde \Phi }_k}} \right)}^{ - 1}} + \mathfrak{S}_{\xi} \left( {{\lambda _{q + 1}}{{\tilde \Phi }_k}} \right)} \right\|_N}  \nonumber  \\
& + {\left\| {{{\bar D}_{t,\Gamma }}{{\tilde \eta }_{\xi ,k,n}}} \right\|_0}{\left\| {\nabla {{\tilde \Phi }_k} + {{\left( {\nabla {{\tilde \Phi }_k}} \right)}^{ - 1}} + \mathfrak{S}_{\xi} \left( {{\lambda _{q + 1}}{{\tilde \Phi }_k}} \right)} \right\|_N}  \nonumber  \\
& + {\left\| {{{\tilde \eta }_{\xi ,k,n}}} \right\|_0}{\left\| { {\partial _t}{\mathfrak{S}_\xi }  \left( {{\lambda _{q + 1}}{{\tilde \Phi }_k}} \right)} \right\|_0}{\left\| {\nabla {{\tilde \Phi }_k} + {{\left( {\nabla {{\tilde \Phi }_k}} \right)}^{ - 1}}} \right\|_N}  \nonumber  \\
\lesssim & \delta _q^{\frac{1}{2}}\delta _{q + 1}^{\frac{1}{2}}{\lambda _q}\lambda _{q + 1}^{N + \alpha }   .
\end{align}
Then, combining \eqref{WPD-NHS-HUD-BB-MAIN-1}, \eqref{RSRI-main-PROVE-8}, \eqref{RSRI-main-PROVE-7}, \eqref{RSRI-main-PROVE-8768}, \eqref{parameters-S-21-A-2}, Lemma \ref{WPD-NHS-AA} and Lemma \ref{WPD-NHS-BB}, we obtain
\begin{align}
{\left\| {\mathcal{R}{\partial _t}\left( {w_{q + 1}^{\left( p \right)} + w_{q + 1}^{\left( c \right)}} \right)} \right\|_N} \lesssim & {\mu _{q + 1}} \delta _q^{\frac{3}{2}}  \delta _{q + 1}^{\frac{1}{2}}\lambda _{q + 1}^{N - 1} + \tilde \mu  \delta _q^{\frac{3}{2}} \delta _{q + 1}^{\frac{1}{2}}\lambda _{q + 1}^{N - 1} \nonumber \\
\lesssim & \delta _{q + 1}^{\frac{1}{2}} {\delta _{q + 2}}\lambda _{q + 1}^{N - \alpha }  . \label{KNHYGF-S-D-A-1}
\end{align}
Employing \eqref{RSRI-main-PROVE-4}, \eqref{RSRI-main-PROVE-6} and \eqref{KNHYGF-S-D-A-1}, we infer that
\begin{equation}\label{RSRI-main-PROVE-10}
{\left\| {\mathcal{R}{{\bar D}_{t,\Gamma }}\left( {w_{q + 1}^{\left( p \right)} + w_{q + 1}^{\left( c \right)}} \right)} \right\|_N} \lesssim  \delta _{q + 1}^{\frac{1}{2}} {\delta _{q + 2}}\lambda _{q + 1}^{N - \alpha }   .
\end{equation}
Therefore, it follows from \eqref{LIJIFNI-DKFJ-L-1}, \eqref{RSRI-main-PROVE-5} and \eqref{RSRI-main-PROVE-10} that
\begin{equation}\label{RSRI-P-SAMMLGOAL-L-1}
{\left\| { \mathring{R}_{q + 1}^{\left( l \right)} } \right\|_N} \lesssim {\delta _{q + 2}}\lambda _{q + 1}^{N - \alpha }   .
\end{equation}
\par
Recall that
\begin{equation}\label{RSRI-main-PROVE-11}
{D_{t,q + 1}}\mathring{R}_{q + 1}^{\left( l \right)} = {\partial _t}\mathring{R}_{q + 1}^{\left( l \right)} + \left( {{v_{q + 1}} \cdot \nabla } \right)\mathring{R}_{q + 1}^{\left( l \right)} .
\end{equation}
Using \eqref{RSRI-main-PROVE-5}, \eqref{RSRI-main-PROVE-10}, \eqref{RSRI-P-SAMMLGOAL-L-1}, Proposition \ref{RSRI-ujdh-main} and Lemma \ref{STN-HCS-A}, we obtain
\begin{align}\label{RSRI-main-PROVE-12}
{\left\| {\left( {{v_{q + 1}} \cdot \nabla } \right)\mathring{R}_{q + 1}^{\left( l \right)}} \right\|_N} \lesssim & {\left\| {{v_{q + 1}}} \right\|_N}{\left\| {\mathring{R}_{q + 1}^{\left( l \right)}} \right\|_1} + {\left\| {\mathcal{R}\left( {{{\bar v}_{q,\Gamma }} \cdot \nabla } \right)w_{q + 1}^{\left( s \right)}} \right\|_{N + 1}} + {\left\| {\mathcal{R}{{\bar D}_{t,\Gamma }}\left( {w_{q + 1}^{\left( p \right)} + w_{q + 1}^{\left( c \right)}} \right)} \right\|_{N+1}} \nonumber \\
\lesssim & \delta _{q + 1}^{\frac{1}{2}}{\delta _{q + 2}}\lambda _{q + 1}^{N + 1 - \alpha }   .
\end{align}
Furthermore,
\begin{align}\label{RSRI-main-PROVE-13}
{\left\| {{\partial _t}\mathring{R}_{q + 1}^{\left( l \right)}} \right\|_N} \lesssim & {\left\| {\mathcal{R}{\partial _t}{{\bar D}_{t,\Gamma }}\left( {w_{q + 1}^{\left( p \right)} + w_{q + 1}^{\left( c \right)}} \right)} \right\|_N} + {\left\| {\mathcal{R}{\partial _t}\left( {\left( {{{\bar v}_{q,\Gamma }} \cdot \nabla } \right)w_{q + 1}^{\left( s \right)}} \right)} \right\|_N} \nonumber \\
\lesssim & {\left\| {\mathcal{R}{\partial _t}\left( {\left( {w_{q + 1}^{\left( p \right)} + w_{q + 1}^{\left( c \right)}} \right) \cdot \nabla } \right){{\bar v}_{q,\Gamma }}} \right\|_N} + {\left\| {\mathcal{R}{\partial _t}\left( {w_{q + 1}^{\left( i \right)} \cdot \nabla } \right){{\bar v}_{q,\Gamma }}} \right\|_N} \nonumber \\
& + {\left\| {\mathcal{R}{\partial _{tt}}\left( {w_{q + 1}^{\left( p \right)} + w_{q + 1}^{\left( c \right)}} \right)} \right\|_N} .
\end{align}
\par
For the first term on the right-hand side of \eqref{RSRI-main-PROVE-13}, using the fact that
\begin{align*}
{\partial _t}\mathcal{R}\left( {\left( {w_{q + 1}^{\left( p \right)} + w_{q + 1}^{\left( c \right)}} \right) \cdot \nabla } \right){{\bar v}_{q,\Gamma }} = & \lambda _{q + 1}^{ - 1}{\mu _{q + 1}}\mathcal{R}{\nabla ^ \bot }\sum\limits_{n = 0}^\Gamma  {\sum\limits_{k \in {\mathbb{Z}_{q,n}}} {\sum\limits_{\xi  \in \Lambda } {D{g_{\xi ,k,n + 1}}\left(  \cdot  \right){{\tilde \eta }_{\xi ,k,n}}{{\left( {\nabla {{\tilde \Phi }_k}} \right)}^{ - 1}}} } }  \\
& \hspace*{6em} {\mathfrak{D}_{\tilde \mu }}\left( {t,{\lambda _{q + 1}}{{\tilde \Phi }_k}} \right)\left( {\nabla {{\tilde \Phi }_k}} \right) \cdot \mathfrak{S}_{\xi} \left( {{\lambda _{q + 1}}{{\tilde \Phi }_k}} \right) \cdot \nabla {{\bar v}_{q,\Gamma }} \\
& + \lambda _{q + 1}^{ - 1}\mathcal{R}{\nabla ^ \bot }\sum\limits_{n = 0}^\Gamma  {\sum\limits_{k \in {\mathbb{Z}_{q,n}}} {\sum\limits_{\xi  \in \Lambda } {{g_{\xi ,k,n + 1}}\left( {{\mu _{q + 1}}t} \right){{\tilde \eta }_{\xi ,k,n}}{{\left( {\nabla {{\tilde \Phi }_k}} \right)}^{ - 1}}} } }  \\
& \hspace*{6em} {\partial _t}{\mathfrak{D}_{\tilde \mu }}\left( {t,{\lambda _{q + 1}}{{\tilde \Phi }_k}} \right)\left( {\nabla {{\tilde \Phi }_k}} \right) \cdot \mathfrak{S}_{\xi} \left( {{\lambda _{q + 1}}{{\tilde \Phi }_k}} \right) \cdot \nabla {{\bar v}_{q,\Gamma }} \\
& + \lambda _{q + 1}^{ - 1}\mathcal{R}{\nabla ^ \bot }\sum\limits_{n = 0}^\Gamma  {\sum\limits_{k \in {\mathbb{Z}_{q,n}}} {\sum\limits_{\xi  \in \Lambda } {{g_{\xi ,k,n + 1}}\left( {{\mu _{q + 1}}t} \right){{\tilde \eta }_{\xi ,k,n}}{{\left( {\nabla {{\tilde \Phi }_k}} \right)}^{ - 1}}} } }  \\
& \hspace*{6em} {\mathfrak{D}_{\tilde \mu }}\left( {t,{\lambda _{q + 1}}{{\tilde \Phi }_k}} \right)\left( {\nabla {{\tilde \Phi }_k}} \right) \cdot \mathfrak{S}_{\xi} \left( {{\lambda _{q + 1}}{{\tilde \Phi }_k}} \right) \cdot {\partial _t}\nabla {{\bar v}_{q,\Gamma }}  \\
& + \lambda _{q + 1}^{ - 1}\mathcal{R}{\nabla ^ \bot }\sum\limits_{n = 0}^\Gamma  {\sum\limits_{k \in {\mathbb{Z}_{q,n}}} {\sum\limits_{\xi  \in \Lambda } {{g_{\xi ,k,n + 1}}\left( {{\mu _{q + 1}}t} \right){\mathfrak{D}_{\tilde \mu }}\left( {t,{\lambda _{q + 1}}{{\tilde \Phi }_k}} \right)\nabla {{\bar v}_{q,\Gamma }}} } }  \\
& \hspace*{6em}  \cdot {\partial _t}\left( {{{\left( {\nabla {{\tilde \Phi }_k}} \right)}^{ - 1}}{{\tilde \eta }_{\xi ,k,n}}\left( {\nabla {{\tilde \Phi }_k}} \right) \cdot \mathfrak{S}_{\xi} \left( {{\lambda _{q + 1}}{{\tilde \Phi }_k}} \right)} \right) ,
\end{align*}
we derive that
\begin{align}\label{RSRI-main-PROVE-14}
& {\left\| {{\partial _t}\mathcal{R}\left( {\left( {w_{q + 1}^{\left( p \right)} + w_{q + 1}^{\left( c \right)}} \right) \cdot \nabla } \right){{\bar v}_{q,\Gamma }}} \right\|_N}  \nonumber  \\
\lesssim & \lambda _{q + 1}^{ - 1}{\mu _{q + 1}}\mathop {\sup }\limits_{\xi ,k,n} {\left\| {{{\tilde \eta }_{\xi ,k,n}}} \right\|_0}{\left\| {{{\bar v}_{q,\Gamma }}} \right\|_1}\left( {{{\left\| {\nabla {{\tilde \Phi }_k} + {{\left( {\nabla {{\tilde \Phi }_k}} \right)}^{ - 1}} + {\mathfrak{D}_{\tilde \mu }}\left( {t,{\lambda _{q + 1}}{{\tilde \Phi }_k}} \right) + \mathfrak{S}_{\xi} \left( {{\lambda _{q + 1}}{{\tilde \Phi }_k}} \right)} \right\|}_N}} \right)  \nonumber  \\
& + \lambda _{q + 1}^{ - 1}\mathop {\sup }\limits_{\xi ,k,n} \left( {{\mu _{q + 1}}\left( {{{\left\| {{{\tilde \eta }_{\xi ,k,n}}} \right\|}_N}{{\left\| {{{\bar v}_{q,\Gamma }}} \right\|}_1} + {{\left\| {{{\tilde \eta }_{\xi ,k,n}}} \right\|}_0}{{\left\| {{{\bar v}_{q,\Gamma }}} \right\|}_{N + 1}}} \right) + {{\left\| {{{\tilde \eta }_{\xi ,k,n}}} \right\|}_0}{{\left\| {{\partial _t}{{\bar v}_{q,\Gamma }}} \right\|}_{N + 1}}} \right)  \nonumber  \\
& + \lambda _{q + 1}^{ - 1}\mathop {\sup }\limits_{\xi ,k,n} {\left\| {{{\tilde \eta }_{\xi ,k,n}}} \right\|_0}{\left\| {{\partial _t}{\mathfrak{D}_{\tilde \mu }}\left( {t,{\lambda _{q + 1}}{{\tilde \Phi }_k}} \right)} \right\|_0}{\left\| {{{\bar v}_{q,\Gamma }}} \right\|_1}\left( {{{\left\| {\nabla {{\tilde \Phi }_k} + {{\left( {\nabla {{\tilde \Phi }_k}} \right)}^{ - 1}}} \right\|}_N} + {{\left\| {\mathfrak{S}_{\xi} \left( {{\lambda _{q + 1}}{{\tilde \Phi }_k}} \right)} \right\|}_N}} \right)  \nonumber  \\
& + \lambda _{q + 1}^{ - 1}\mathop {\sup }\limits_{\xi ,k,n} {\left\| {{{\tilde \eta }_{\xi ,k,n}}} \right\|_0}\left( {{{\left\| {{\partial _t}{\mathfrak{D}_{\tilde \mu }}\left( {t,{\lambda _{q + 1}}{{\tilde \Phi }_k}} \right)} \right\|}_N}{{\left\| {{{\bar v}_{q,\Gamma }}} \right\|}_1} + {{\left\| {{\partial _t}{\mathfrak{D}_{\tilde \mu }}\left( {t,{\lambda _{q + 1}}{{\tilde \Phi }_k}} \right)} \right\|}_0}{{\left\| {{{\bar v}_{q,\Gamma }}} \right\|}_{N + 1}}} \right)  \nonumber  \\
& + \lambda _{q + 1}^{ - 1}\mathop {\sup }\limits_{\xi ,k,n} {\left\| {{{\tilde \eta }_{\xi ,k,n}}} \right\|_0}{\left\| {{\partial _t}{{\bar v}_{q,\Gamma }}} \right\|_1}\left( {{{\left\| {\nabla {{\tilde \Phi }_k} + {{\left( {\nabla {{\tilde \Phi }_k}} \right)}^{ - 1}} + {\mathfrak{D}_{\tilde \mu }}\left( {t,{\lambda _{q + 1}}{{\tilde \Phi }_k}} \right) + \mathfrak{S}_{\xi} \left( {{\lambda _{q + 1}}{{\tilde \Phi }_k}} \right)} \right\|}_N}} \right)  \nonumber  \\
& + \lambda _{q + 1}^{ - 1}\mathop {\sup }\limits_{\xi ,k,n} {\left\| {{{\tilde \eta }_{\xi ,k,n}}} \right\|_N}\left( {{{\left\| {{\partial _t}{\mathfrak{D}_{\tilde \mu }}\left( {t,{\lambda _{q + 1}}{{\tilde \Phi }_k}} \right)} \right\|}_0}{{\left\| {{{\bar v}_{q,\Gamma }}} \right\|}_1} + {{\left\| {{\partial _t}{{\bar v}_{q,\Gamma }}} \right\|}_1}} \right)   \nonumber  \\
& + \lambda _{q + 1}^{ - 1}\mathop {\sup }\limits_{\xi ,k,n} {\left\| {{\mathfrak{D}_{\tilde \mu }}\left( {t,{\lambda _{q + 1}}{{\tilde \Phi }_k}} \right)\nabla {{\bar v}_{q,\Gamma }}} \right\|_N}{\left\| {{\partial _t}\left( {{{\left( {\nabla {{\tilde \Phi }_k}} \right)}^{ - 1}}{{\tilde \eta }_{\xi ,k,n}}\left( {\nabla {{\tilde \Phi }_k}} \right) \cdot \mathfrak{S}_{\xi} \left( {{\lambda _{q + 1}}{{\tilde \Phi }_k}} \right)} \right)} \right\|_0}  \nonumber  \\
& + \lambda _{q + 1}^{ - 1}\mathop {\sup }\limits_{\xi ,k,n} {\left\| {{{\bar v}_{q,\Gamma }}} \right\|_1}{\left\| {{\partial _t}\left( {{{\left( {\nabla {{\tilde \Phi }_k}} \right)}^{ - 1}}{{\tilde \eta }_{\xi ,k,n}}\left( {\nabla {{\tilde \Phi }_k}} \right) \cdot \mathfrak{S}_{\xi} \left( {{\lambda _{q + 1}}{{\tilde \Phi }_k}} \right)} \right)} \right\|_N} .
\end{align}
It follows from \eqref{WPD-NHS-MAIN-AA-P-3}, \eqref{RSRI-main-PROVE-15}, \eqref{RSRI-main-PROVE-8}, \eqref{RSRI-main-PROVE-8768}, \eqref{RSRI-main-PROVE-14}, Lemma \ref{WPD-NHS-AA}, Lemma \ref{WPD-NHS-BB}, Lemma \ref{WPD-NHS-HUD-BB} and Lemma \ref{WPD-NHS-HUD-BB-2} that
\begin{equation}\label{RSRI-main-PROVE-17}
{\left\| {{\partial _t}\mathcal{R}\left( {\left( {w_{q + 1}^{\left( p \right)} + w_{q + 1}^{\left( c \right)}} \right) \cdot \nabla } \right){{\bar v}_{q,\Gamma }}} \right\|_N} \lesssim \delta _{q + 1}^{\frac{1}{2}}{\delta _{q + 2}}\lambda _{q + 1}^{N + 1 - \alpha } .  
\end{equation}
\par
For the second term on the right-hand side of \eqref{RSRI-main-PROVE-13}, we deduce from \eqref{WPD-NHS-MAIN-AA-P-3}, \eqref{RSRI-main-PROVE-15} and Proposition \ref{WPD-NS-SD-P-main} that
\begin{align*}
{\left\| {\mathcal{R}{\partial _t}\left( {w_{q + 1}^{\left( i \right)} \cdot \nabla } \right){{\bar v}_{q,\Gamma }}} \right\|_N} \lesssim & {\left\| {{\partial _t}w_{q + 1}^{\left( i \right)}} \right\|_N} + {\left\| {{\partial _t}w_{q + 1}^{\left( i \right)}} \right\|_0}{\left\| {{{\bar v}_{q,\Gamma }}} \right\|_N}  \\
& + {\left\| {w_{q + 1}^{\left( i \right)}} \right\|_N}{\left\| {{\partial _t}{{\bar v}_{q,\Gamma }}} \right\|_0} + {\left\| {w_{q + 1}^{\left( i \right)}} \right\|_0}{\left\| {{\partial _t}{{\bar v}_{q,\Gamma }}} \right\|_N}  \\
\lesssim & {{\tilde \mu }^{ - 1}}{\mu _{q + 1}}{\delta _{q + 1}}\lambda _q^N + {{\tilde \mu }^{ - 1}}\delta _q^{\frac{1}{2}}{\delta _{q + 1}}\lambda _q^{N + 1}    ,
\end{align*}
which implies that 
\begin{equation}\label{RSRI-main-PROVE-18}
{\left\| {\mathcal{R}{\partial _t}\left( {w_{q + 1}^{\left( i \right)} \cdot \nabla } \right){{\bar v}_{q,\Gamma }}} \right\|_N} \lesssim \delta _{q + 1}^{\frac{1}{2}}{\delta _{q + 2}}\lambda _{q + 1}^{N + 1 - \alpha }   .
\end{equation}
\par
For the third term on the right-hand side of \eqref{RSRI-main-PROVE-13}, combining \eqref{WPD-NHS-MAIN-AA-4}, \eqref{WPD-NHS-MAIN-AA-sjdiJJ-4}, \eqref{WPD-NHS-MAIN-1}, \eqref{WPD-NHS-MAIN-3} and \eqref{WPD-NHS-HUD-BB-MAIN-XINhs-6}, we have
\begin{align}\label{RSRI-main-PROVE-19}
& {\left\| {{\partial _{tt}}\left( {{{\left( {\nabla {{\tilde \Phi }_k}} \right)}^{ - 1}}{{\tilde \eta }_{\xi ,k,n}}\left( {\nabla {{\tilde \Phi }_k}} \right) \cdot \mathfrak{S}_{\xi} \left( {{\lambda _{q + 1}}{{\tilde \Phi }_k}} \right)} \right)} \right\|_N} \nonumber  \\
\lesssim & {\delta _q} {\left\| {{{\tilde \eta }_{\xi ,k,n}}} \right\|_0}{\left\| {{\partial _{tt}}\left( {{{\left( {\nabla {{\tilde \Phi }_k}} \right)}^{ - 1}} + \mathfrak{S}_{\xi} \left( {{\lambda _{q + 1}}{{\tilde \Phi }_k}} \right) + \nabla {{\tilde \Phi }_k}} \right)} \right\|_N} + \delta _q^{\frac{3}{2}} {\left\| {{\partial _{tt}}{{\tilde \eta }_{\xi ,k,n}}} \right\|_N}  \nonumber  \\
\lesssim & {\delta _q ^2}\lambda _q^{N + 1}\lambda _{q + 1}^2    .
\end{align}
Notice that
\begin{align*}
\mathcal{R}{\partial _{tt}}\left( {w_{q + 1}^{\left( p \right)} + w_{q + 1}^{\left( c \right)}} \right) = & \lambda _{q + 1}^{ - 1}\mu _{q + 1}^2\mathcal{R}{\nabla ^ \bot }\sum\limits_{n = 0}^\Gamma  {\sum\limits_{k \in {\mathbb{Z}_{q,n}}} {\sum\limits_{\xi  \in \Lambda } {{D^2}{g_{\xi ,k,n + 1}}\left(  \cdot  \right){{\tilde \eta }_{\xi ,k,n}}{{\left( {\nabla {{\tilde \Phi }_k}} \right)}^{ - 1}}} } }  \\
& \hspace*{6em}  {\mathfrak{D}_{\tilde \mu }}\left( {t,{\lambda _{q + 1}}{{\tilde \Phi }_k}} \right)\left( {\nabla {{\tilde \Phi }_k}} \right) \cdot \mathfrak{S}_{\xi} \left( {{\lambda _{q + 1}}{{\tilde \Phi }_k}} \right)  \\
& + \lambda _{q + 1}^{ - 1}\mathcal{R}{\nabla ^ \bot }\sum\limits_{n = 0}^\Gamma  {\sum\limits_{k \in {\mathbb{Z}_{q,n}}} {\sum\limits_{\xi  \in \Lambda } {{g_{\xi ,k,n + 1}}\left( {{\mu _{q + 1}}t} \right){{\tilde \eta }_{\xi ,k,n}}{{\left( {\nabla {{\tilde \Phi }_k}} \right)}^{ - 1}}} } } \\
&  \hspace*{6em} {\partial _{tt}}{\mathfrak{D}_{\tilde \mu }}\left( {t,{\lambda _{q + 1}}{{\tilde \Phi }_k}} \right)\left( {\nabla {{\tilde \Phi }_k}} \right) \cdot \mathfrak{S}_{\xi} \left( {{\lambda _{q + 1}}{{\tilde \Phi }_k}} \right)  \\
& + \lambda _{q + 1}^{ - 1}\mathcal{R}{\nabla ^ \bot }\sum\limits_{n = 0}^\Gamma  {\sum\limits_{k \in {\mathbb{Z}_{q,n}}} {\sum\limits_{\xi  \in \Lambda } {{g_{\xi ,k,n + 1}}\left( {{\mu _{q + 1}}t} \right){\mathfrak{D}_{\tilde \mu }}\left( {t,{\lambda _{q + 1}}{{\tilde \Phi }_k}} \right)} } }  \\
&    \hspace*{6em} {\partial _{tt}}\left( {{{\left( {\nabla {{\tilde \Phi }_k}} \right)}^{ - 1}}{{\tilde \eta }_{\xi ,k,n}}\left( {\nabla {{\tilde \Phi }_k}} \right) \cdot \mathfrak{S}_{\xi} \left( {{\lambda _{q + 1}}{{\tilde \Phi }_k}} \right)} \right) \\
& + 2 \lambda _{q + 1}^{ - 1}{\mu _{q + 1}}\mathcal{R}{\nabla ^ \bot }\sum\limits_{n = 0}^\Gamma  {\sum\limits_{k \in {\mathbb{Z}_{q,n}}} {\sum\limits_{\xi  \in \Lambda } {D{g_{\xi ,k,n + 1}}\left(  \cdot  \right){{\tilde \eta }_{\xi ,k,n}}{{\left( {\nabla {{\tilde \Phi }_k}} \right)}^{ - 1}}} } }  \\
& \hspace*{6em} {\partial _t}{\mathfrak{D}_{\tilde \mu }}\left( {t,{\lambda _{q + 1}}{{\tilde \Phi }_k}} \right)\left( {\nabla {{\tilde \Phi }_k}} \right) \cdot \mathfrak{S}_{\xi} \left( {{\lambda _{q + 1}}{{\tilde \Phi }_k}} \right) \\
& + 2 \lambda _{q + 1}^{ - 1}{\mu _{q + 1}}\mathcal{R}{\nabla ^ \bot }\sum\limits_{n = 0}^\Gamma  {\sum\limits_{k \in {\mathbb{Z}_{q,n}}} {\sum\limits_{\xi  \in \Lambda } {D{g_{\xi ,k,n + 1}}\left(  \cdot  \right){\mathfrak{D}_{\tilde \mu }}\left( {t,{\lambda _{q + 1}}{{\tilde \Phi }_k}} \right)} } } \\
& \hspace*{6em} {\partial _t}\left( {{{\left( {\nabla {{\tilde \Phi }_k}} \right)}^{ - 1}}{{\tilde \eta }_{\xi ,k,n}}\left( {\nabla {{\tilde \Phi }_k}} \right) \cdot \mathfrak{S}_{\xi} \left( {{\lambda _{q + 1}}{{\tilde \Phi }_k}} \right)} \right)  \\
&  + 2 \lambda _{q + 1}^{ - 1}\mathcal{R}{\nabla ^ \bot }\sum\limits_{n = 0}^\Gamma  {\sum\limits_{k \in {\mathbb{Z}_{q,n}}} {\sum\limits_{\xi  \in \Lambda } {{g_{\xi ,k,n + 1}}\left( {{\mu _{q + 1}}t} \right){\partial _t}{\mathfrak{D}_{\tilde \mu }}\left( {t,{\lambda _{q + 1}}{{\tilde \Phi }_k}} \right)} } } \\
&  \hspace*{6em} {\partial _t}\left( {{{\left( {\nabla {{\tilde \Phi }_k}} \right)}^{ - 1}}{{\tilde \eta }_{\xi ,k,n}}\left( {\nabla {{\tilde \Phi }_k}} \right) \cdot \mathfrak{S}_{\xi} \left( {{\lambda _{q + 1}}{{\tilde \Phi }_k}} \right)} \right)  .
\end{align*}
Employing \eqref{RSRI-main-PROVE-17}, \eqref{RSRI-main-PROVE-19}, Lemma \ref{WPD-NHS-AA}, Lemma \ref{WPD-NHS-BB}, Lemma \ref{WPD-NHS-HUD-BB} and Lemma \ref{WPD-NHS-HUD-BB-2}, we deduce that
\begin{equation}\label{RSRI-main-PROVE-20}
{\left\| {\mathcal{R}{\partial _{tt}}\left( {w_{q + 1}^{\left( p \right)} + w_{q + 1}^{\left( c \right)}} \right)} \right\|_N} \lesssim \delta _{q + 1}^{\frac{1}{2}}{\delta _{q + 2}}\lambda _{q + 1}^{N + 1 - \alpha }   .
\end{equation}
Then, it follows from \eqref{RSRI-main-PROVE-11}, \eqref{RSRI-main-PROVE-13}, \eqref{RSRI-main-PROVE-17}, \eqref{RSRI-main-PROVE-18} and \eqref{RSRI-main-PROVE-20} that
\begin{equation}\label{RSRI-P-SAMMLGOAL-L-2}
{\left\| { {{ D_{t,{q + 1} }}} \mathring{R}_{q + 1}^{\left( l \right)} } \right\|_N}  \lesssim \delta _{q + 1}^{\frac{1}{2}}{\delta _{q + 2}}\lambda _{q + 1}^{N + 1 - \alpha }    .
\end{equation}
\par
\vspace{1em}
{\textbf{Step 2. Estimates of the intermittent oscillation error $\mathring{R}_{q + 1}^{\left( i \right)}$}}. 
\par
\vspace{1em}
Notice that
\begin{equation}\label{RSRI-P-PP-O-1}
{\left\| {\mathring{R}_{q + 1}^{\left( i \right)}} \right\|_N} \lesssim {\left\| {\mathcal{R}\mathrm{div}\,   {S_{q,\Gamma }}} \right\|_N} + {\left\| {{\mathcal{R}\mathrm{div}\left( {w_{q + 1}^{\left( s \right)} \otimes w_{q + 1}^{\left( s \right)}} \right)}} \right\|_N} + {\left\| {{\mathcal R}{\partial _t}w_{q + 1}^{\left( i \right)}} \right\|_N} ,
\end{equation}
and
\begin{equation}\label{RSRI-P-PP-O-2}
{\left\| {{D_{t,q + 1}}\mathring{R}_{q + 1}^{\left( i \right)}} \right\|_N} \lesssim {\left\| {{D_{t,q + 1}}{S_{q,\Gamma }}} \right\|_N} + {\left\| {{D_{t,q + 1}}\mathcal{R}\mathrm{div} \left( {w_{q + 1}^{\left( s \right)} \otimes w_{q + 1}^{\left( s \right)}} \right)} \right\|_N} + {\left\| {{\mathcal R}{D_{t,q + 1}}{\partial _t}w_{q + 1}^{\left( i \right)}} \right\|_N} .
\end{equation}
Employing \eqref{SHDUFY-A-1} and Calder\'{o}n-Zygmund inequality (see \cite[Proposition $C.2$]{Zbl1556.35231}), there exists a parameter $\tilde \alpha \in \left( {0,\beta } \right)$ such that
\begin{equation}\label{RSRI-P-PP-O-4}
{\left\| {\mathcal{R}\mathrm{div}\,   {S_{q,\Gamma }}} \right\|_0} \lesssim \mathop {\sup }\limits_{\xi ,k,n} {\left\| {\mathcal{R}\mathrm{div}\,   {A_{\xi ,k,{n+1}}}} \right\|_0} \lesssim {\left\| {{A_{\xi ,k,{n+1}}}} \right\|_{ \tilde \alpha }} .
\end{equation}
It follows from \eqref{Newtonsteps-A-B-19}, \eqref{parameters-S-20} and the interpolation inequality that
\begin{align*}
{\left\| {{A_{\xi ,k,{n+1}}}} \right\|_{\tilde \alpha }} \leqslant & \left\| {{A_{\xi ,k,{n+1}}}} \right\|_0^{1 - \tilde \alpha }\left\| {{A_{\xi ,k,{n+1}}}} \right\|_1^{\tilde \alpha } \\
\lesssim & {\delta _q} {\delta _{q + 1}} \lambda _q^{\tilde \alpha } {\left( {\frac{{{\lambda _q}}}{{\lambda _{q + 1}}}} \right)^{\left( {1 - \tilde \alpha } \right)\left( {\frac{1}{3} - \beta } \right)}} \\
\lesssim & {\delta _q} {\delta _{q + 1}} \lambda _{q + 1}^\alpha {\left( {\frac{{{\lambda _q}}}{{\lambda _{q + 1}}}} \right)^{\frac{1}{3} - \beta }} , \ \ \forall \, \alpha  \in \left[ {\frac{{3\tilde \alpha }}{{2b + 1}},\beta } \right)  .
\end{align*}
Thus, in view of \eqref{RSRI-P-PP-O-4} and \eqref{parameters-S-21}, we see that
\begin{equation}\label{RSRI-P-PP-O-4-5t}
{\left\| {\mathcal{R}\mathrm{div}\,   {S_{q,\Gamma }}} \right\|_0} \lesssim {\delta _{q + 2}}\lambda _{q + 1}^{ - \alpha }   .
\end{equation}
Similarly, for $\tilde \alpha \in \left( {0,\beta } \right)$, we have
\begin{align}\label{RSRI-P-PP-O-4-5t-A}
{\left\| {\mathcal{R}\mathrm{div}\,   {S_{q,\Gamma }}} \right\|_N} \lesssim & \left\| {{A_{\xi ,k,{n+1}}}} \right\|_N^{1 - \tilde \alpha }\left\| {{A_{\xi ,k,{n+1}}}} \right\|_{N + 1}^{\tilde \alpha } \nonumber \\
\lesssim & {\delta _q} {\delta _{q + 1}}  {\left( {\frac{{{\lambda _q}}}{{\lambda _{q + 1}}}} \right)^{\frac{1}{3} - \beta }}  \lambda _{q + 1}^\alpha \lambda _q^N \nonumber \\
\lesssim &  {\delta _{q + 2}}\lambda _{q + 1}^{N - \alpha }, \ \ \forall \, \alpha  \in \left[ {\frac{{3\tilde \alpha }}{{2b + 1}},\beta } \right)   .
\end{align}
\par
Employing \eqref{WPD-NP-main-BTO-1}, \eqref{RSRI-P-PP-O-4-5t-A}, Lemma \ref{WPD-NP} and Proposition \ref{WPD-NS-SD-P-main}, we arrive at
\begin{align*}
{\left\| {\left( {\left( {{u_q} - {{\bar u}_q}} \right) \cdot \nabla } \right){A_{\xi ,k,{n+1}}}} \right\|_N} \lesssim & {\left\| {{u_q} - {{\bar u}_q}} \right\|_N}{\left\| {{A_{\xi ,k,{n+1}}}} \right\|_1} + {\left\| {{u_q} - {{\bar u}_q}} \right\|_0}{\left\| {{A_{\xi ,k,{n+1}}}} \right\|_{N + 1}}   \\
\lesssim & \delta _q^{\frac{3}{2}}{\delta _{q + 1}} {\left( {\frac{{{\lambda _q}}}{{\lambda _{q + 1}}}} \right)^{\frac{1}{3} - \beta }}\lambda _{q + 1}^\alpha \lambda _q^{N + 1}    ,
\end{align*}
and
\begin{align*}
{\left\| {\left( {{w_{q + 1}} \cdot \nabla } \right){A_{\xi ,k,{n+1}}}} \right\|_N} \lesssim & {\left\| {{w_{q + 1}}} \right\|_N}{\left\| {{A_{\xi ,k,{n+1}}}} \right\|_1} + {\left\| {{w_{q + 1}}} \right\|_0}{\left\| {{A_{\xi ,k,{n+1}}}} \right\|_{N + 1}}  \\
\lesssim & {\delta _q} \delta _{q + 1}^{\frac{3}{2}} {\left( {\frac{{{\lambda _q}}}{{\lambda _{q + 1}}}} \right)^{\frac{1}{3} - \beta }}\lambda _{q + 1}^\alpha \lambda _{q + 1}^{N + 1}  .
\end{align*}
Thus, it follows from \eqref{SHDUFY-A-1} that
\begin{align}\label{RSRI-P-PP-O-3}
{\left\| {{D_{t,q + 1}}{S_{q,\Gamma }}} \right\|_N} \lesssim & \mathop {\sup }\limits_{\xi ,k,n} \left( {{{\left\| {{D_{t,q + 1}}{A_{\xi ,k,{n+1}}}} \right\|}_N} + {\mu _{q + 1}}\mathop {\sup }\limits_{\xi ,k,n} {{\left\| {{A_{\xi ,k,{n+1}}}} \right\|}_N}} \right) \nonumber  \\
\lesssim & \mathop {\sup }\limits_{\xi ,k,n} \left( {{{\left\| {{{\bar D}_{t,q}}{A_{\xi ,k,{n+1}}}} \right\|}_N} + {{\left\| {\left( {\left( {{u_q} - {{\bar u}_q}} \right) \cdot \nabla } \right){A_{\xi ,k,{n+1}}}} \right\|}_N}} \right)  \nonumber  \\
& + \mathop {\sup }\limits_{\xi ,k,n} \left( {{{\left\| {\left( {{w_{q + 1}} \cdot \nabla } \right){A_{\xi ,k,{n+1}}}} \right\|}_N} + {\mu _{q + 1}}{{\left\| {{A_{\xi ,k,{n+1}}}} \right\|}_N}} \right)  \nonumber  \\
\lesssim & {\delta _q} \delta _{q + 1}^{\frac{3}{2}} {\left( {\frac{{{\lambda _q}}}{{\lambda _{q + 1}}}} \right)^{\frac{1}{3} - \beta }}\lambda _{q + 1}^\alpha \lambda _{q + 1}^{N + 1} + \delta _q^{\frac{3}{2}}{\delta _{q + 1}} {\left( {\frac{{{\lambda _q}}}{{\lambda _{q + 1}}}} \right)^{\frac{1}{3} - \beta }}\lambda _{q + 1}^\alpha \lambda _q^{N + 1} \nonumber \\
& + \tau _q^{ - 1} {\delta _q} {\delta _{q + 1}}{\left( {\frac{{{\lambda _q}}}{{{\lambda _{q + 1}}}}} \right)^{\left( {\frac{1}{3} - \beta } \right)}}\lambda _q^N + {\mu _{q + 1}} {\delta _q} {\delta _{q + 1}}{\left( {\frac{{{\lambda _q}}}{{{\lambda _{q + 1}}}}} \right)^{\left( {\frac{1}{3} - \beta } \right)}}\lambda _q^N \nonumber \\
\lesssim & \delta _{q + 1}^{\frac{1}{2}}{\delta _{q + 2}}\lambda _{q + 1}^{N + 1 - \alpha }  . 
\end{align}
\par
Consider the intermittent oscillation ${\partial _t}w_{q + 1}^{\left( i \right)}$. According to the definition \eqref{THENDM-DKF}, we have
\begin{align*}
{\left\| {{\partial _{tt}}w_{q + 1}^{\left( i \right)}} \right\|_N} \lesssim & {{\tilde \mu }^{ - 1}}\mu _{q + 1}^2\mathop {\sup }\limits_{\xi ,k,n} \left\| {{{\tilde \eta }_{\xi ,k,n}}} \right\|_0^2 {\left\| {{\mathfrak{D}_{\tilde \mu }}\left( {t,{\lambda _{q + 1}}{{\tilde \Phi }_k}} \right)} \right\|_0} {\left\| {{\mathfrak{D}_{\tilde \mu }}\left( {t,{\lambda _{q + 1}}{{\tilde \Phi }_k}} \right)} \right\|_N}   \\
& + {{\tilde \mu }^{ - 1}}{\mu _{q + 1}}\mathop {\sup }\limits_{\xi ,k,n} {\left\| {{{\tilde \eta }_{\xi ,k,n}}} \right\|_0}{\left\| {{\partial _t}{{\tilde \eta }_{\xi ,k,n}}} \right\|_0} {\left\| {{\mathfrak{D}_{\tilde \mu }}\left( {t,{\lambda _{q + 1}}{{\tilde \Phi }_k}} \right)} \right\|_0}{\left\| {{\mathfrak{D}_{\tilde \mu }}\left( {t,{\lambda _{q + 1}}{{\tilde \Phi }_k}} \right)} \right\|_N} \\
& + {{\tilde \mu }^{ - 1}}{\mu _{q + 1}}\mathop {\sup }\limits_{\xi ,k,n} \left\| {{{\tilde \eta }_{\xi ,k,n}}} \right\|_0^2{\left\| {{\partial _t}{\mathfrak{D}_{\tilde \mu }}\left( {t,{\lambda _{q + 1}}{{\tilde \Phi }_k}} \right)} \right\|_0}{\left\| {{\partial _t}{\mathfrak{D}_{\tilde \mu }}\left( {t,{\lambda _{q + 1}}{{\tilde \Phi }_k}} \right)} \right\|_N} \\
& + {{\tilde \mu }^{ - 1}}\mathop {\sup }\limits_{\xi ,k,n} \left( {\left\| {{\partial _t}{{\tilde \eta }_{\xi ,k,n}}} \right\|_0^2 + {{\left\| {{{\tilde \eta }_{\xi ,k,n}}} \right\|}_0}{{\left\| {{\partial _{tt}}{{\tilde \eta }_{\xi ,k,n}}} \right\|}_0}} \right){\left\| {{\mathfrak{D}_{\tilde \mu }}\left( {t,{\lambda _{q + 1}}{{\tilde \Phi }_k}} \right)} \right\|_0}{\left\| {{\mathfrak{D}_{\tilde \mu }}\left( {t,{\lambda _{q + 1}}{{\tilde \Phi }_k}} \right)} \right\|_N} \\
& + {{\tilde \mu }^{ - 1}}\mathop {\sup }\limits_{\xi ,k,n} \left( {{{\left\| {{{\tilde \eta }_{\xi ,k,n}}} \right\|}_0} + {{\left\| {{\partial _t}{{\tilde \eta }_{\xi ,k,n}}} \right\|}_0}} \right){\left\| {{\partial _t}{\mathfrak{D}_{\tilde \mu }}\left( {t,{\lambda _{q + 1}}{{\tilde \Phi }_k}} \right)} \right\|_N} {\left\| {{\mathfrak{D}_{\tilde \mu }}\left( {t,{\lambda _{q + 1}}{{\tilde \Phi }_k}} \right)} \right\|_0} \\
& + {{\tilde \mu }^{ - 1}}\mathop {\sup }\limits_{\xi ,k,n} \left\| {{{\tilde \eta }_{\xi ,k,n}}} \right\|_0^2\left( {{{\left\| {{\partial _t}{\mathfrak{D}_{\tilde \mu }}\left( {t,{\lambda _{q + 1}}{{\tilde \Phi }_k}} \right)} \right\|}_0}{{\left\| {{\partial _t}{\mathfrak{D}_{\tilde \mu }}\left( {t,{\lambda _{q + 1}}{{\tilde \Phi }_k}} \right)} \right\|}_N}} \right)  \\
& + {{\tilde \mu }^{ - 1}}\mathop {\sup }\limits_{\xi ,k,n} \left\| {{{\tilde \eta }_{\xi ,k,n}}} \right\|_0^2\left( {{{\left\| {{\mathfrak{D}_{\tilde \mu }}\left( {t,{\lambda _{q + 1}}{{\tilde \Phi }_k}} \right)} \right\|}_0}{{\left\| {{\partial _{tt}}{\mathfrak{D}_{\tilde \mu }}\left( {t,{\lambda _{q + 1}}{{\tilde \Phi }_k}} \right)} \right\|}_N}} \right) ,
\end{align*}
which implies that
\begin{align}\label{RSRI-P-PP-O-5}
{\left\| {{\partial _{tt}}w_{q + 1}^{\left( i \right)}} \right\|_N} \lesssim & {\delta _q} {\delta _{q + 1}}\left( {{{\tilde \mu }^{ - 1}}\mu _{q + 1}^2\lambda _{q + 1}^{\frac{1}{2}N}\left( {\lambda _q^{\frac{1}{2}\left( {N - 1} \right)} + 1} \right) + \tilde \mu \lambda _{q + 1}^N} \right)  \nonumber \\
& + {{\tilde \mu }^{ - 1}}  {\delta _q} \lambda _q^{\frac{1}{2}N}\lambda _{q + 1}^{\frac{1}{2}N + 1}\left( {{\mu _{q + 1}}{\delta _{q + 1}}{\delta _q} + \left( {1 + \tau _q^{ - 1}} \right)\delta _{q + 1}^{\frac{1}{2}}\delta _q^{\frac{1}{2}}} \right)   .
\end{align}
It follows from \eqref{WPD-NS-SD-P-mainsd-AA-4}, \eqref{RSRI-P-PP-O-5} and Proposition \ref{RSRI-ujdh-main} that
\begin{equation}\label{RSRI-P-PP-O-6}
{\left\| {\mathcal{R}{\partial _t}w_{q + 1}^{\left( i \right)}} \right\|_0} \lesssim  \lambda _{q + 1}^{ - 1}  {{\tilde \mu }^{ - 1}}  {\delta _q}{\mu _{q + 1}}{\delta _{q + 1}} \lesssim {\delta _{q + 2}}\lambda _{q + 1}^{ - \alpha }    ,
\end{equation}
and
\begin{align}\label{RSRI-P-PP-O-7}
{\left\| {{\mathcal R}{D_{t,q + 1}}{\partial _t}w_{q + 1}^{\left( i \right)}} \right\|_0} \lesssim & {\left\| {{\mathcal R}{\partial _{tt}}w_{q + 1}^{\left( i \right)}} \right\|_0} + {\left\| {{v_{q + 1}} \otimes {\partial _t}w_{q + 1}^{\left( i \right)}} \right\|_0}  \nonumber  \\
\lesssim &  \lambda _{q + 1}^{ - 1} {{\tilde \mu }^{ - 1}}{\mu _{q + 1}}{\delta _{q + 1}}\left( {{\mu _{q + 1}} + {\delta _q}{\lambda _{q + 1}} + 1} \right) \nonumber \\
& + \lambda _{q + 1}^{ - 1} \left( {{{\tilde \mu }^{ - 1}}{\mu _{q + 1}}\delta _{q + 1}^{\frac{1}{2}}\delta _q^{\frac{1}{2}}\lambda _{q + 1}^{\frac{1}{2}} + \tilde \mu {\delta _{q + 1}}} \right) \nonumber  \\
\lesssim & \delta _{q + 1}^{\frac{1}{2}}{\delta _{q + 2}}\lambda _{q + 1}^{1 - \alpha }  . 
\end{align}
For the case $N \geqslant 1$, it also follows from \eqref{WPD-NS-SD-P-mainsd-AA-4}, \eqref{RSRI-P-PP-O-5} and Proposition \ref{RSRI-ujdh-main} that
\begin{equation}\label{RSRI-P-PP-O-8}
{\left\| {\mathcal{R}{\partial _t}w_{q + 1}^{\left( i \right)}} \right\|_N} \lesssim {{\tilde \mu }^{ - 1}}{\mu _{q + 1}}{\delta _{q + 1}}\lambda _{q + 1}^{N - 1} \lesssim {\delta _{q + 2}}\lambda _{q + 1}^{N - \alpha }    ,
\end{equation}
and
\begin{align}\label{RSRI-P-PP-O-9}
{\left\| {{\mathcal R}{D_{t,q + 1}}{\partial _t}w_{q + 1}^{\left( i \right)}} \right\|_N} \lesssim & {\left\| {{\mathcal R}{\partial _{tt}}w_{q + 1}^{\left( i \right)}} \right\|_N} + {\left\| {{v_{q + 1}} \otimes {\partial _t}w_{q + 1}^{\left( i \right)}} \right\|_N}  \nonumber  \\
\lesssim & {\delta _{q + 1}}\left( {{{\tilde \mu }^{ - 1}}\mu _{q + 1}^2\lambda _{q + 1}^{\frac{1}{2}N - 1}\left( {\lambda _q^{\frac{1}{2}\left( {N - 2} \right)} + 1} \right) + \tilde \mu \lambda _{q + 1}^{N - 1}} \right) \nonumber  \\
& + {{\tilde \mu }^{ - 1}}\lambda _q^{\frac{1}{2}N - 1}\lambda _{q + 1}^{\frac{1}{2}N}\left( {{\mu _{q + 1}}{\delta _{q + 1}}{\delta _q} + \left( {1 + \tau _q^{ - 1}} \right)\delta _{q + 1}^{\frac{1}{2}}\delta _q^{\frac{1}{2}}} \right) + {{\tilde \mu }^{ - 1}}{\mu _{q + 1}}\delta _{q + 1}^{\frac{3}{2}}\lambda _{q + 1}^N  \nonumber  \\
\lesssim & \delta _{q + 1}^{\frac{1}{2}}{\delta _{q + 2}}\lambda _{q + 1}^{N + 1 - \alpha } .  
\end{align}
\par
Considering the oscillation $w_{q + 1}^{\left( s \right)} \otimes w_{q + 1}^{\left( s \right)}$, we have
\begin{align*}
w_{q + 1}^{\left( s \right)} \otimes w_{q + 1}^{\left( s \right)} = & w_{q + 1}^{\left( p \right)} \otimes w_{q + 1}^{\left( p \right)} + w_{q + 1}^{\left( p \right)} \otimes \left( {w_{q + 1}^{\left( c \right)} + w_{q + 1}^{\left( i \right)}} \right) \\
& + \left( {w_{q + 1}^{\left( c \right)} + w_{q + 1}^{\left( i \right)}} \right) \otimes w_{q + 1}^{\left( p \right)} + \left( {w_{q + 1}^{\left( c \right)} + w_{q + 1}^{\left( i \right)}} \right) \otimes \left( {w_{q + 1}^{\left( c \right)} + w_{q + 1}^{\left( i \right)}} \right) .
\end{align*}
Notice that
\begin{align*}
w_{q + 1}^{\left( p \right)} \otimes w_{q + 1}^{\left( p \right)} = & \sum\limits_{n = 0}^\Gamma  {\sum\limits_{k \in {\mathbb{Z}_{q,n}}} {\sum\limits_{\xi  \in \Lambda } {g_{\xi ,k,n + 1}^2\left( {{\mu _{q + 1}}t} \right)\mathfrak{D}_{\tilde \mu }^2\left( {t,{\lambda _{q + 1}}{{\tilde \Phi }_k}} \right)\tilde \eta _{\xi ,k,n}^2{{\left( {\nabla {{\tilde \Phi }_k}} \right)}^{ - 1}}} } }  \\
&  \hspace*{6em} {W_\xi }\left( {{\lambda _{q + 1}}{{\tilde \Phi }_k}} \right) \otimes {W_\xi }\left( {{\lambda _{q + 1}}{{\tilde \Phi }_k}} \right){\left( {\nabla {{\tilde \Phi }_k}} \right)^{ - \mathrm{T}}}  \\
= & 4a_W^2\sum\limits_{n = 0}^\Gamma  {\sum\limits_{k \in {\mathbb{Z}_{q,n}}} {\sum\limits_{\xi  \in \Lambda } {g_{\xi ,k,n + 1}^2\left( {{\mu _{q + 1}}t} \right){{\cos }^2}\left( {\xi  \cdot {\lambda _{q + 1}}{{\tilde \Phi }_k}} \right){{\tilde A}_{\xi ,k,n}}} } }  ,
\end{align*}
and
\begin{equation*}
{\left( {\nabla {{\tilde \Phi }_k}} \right)^{ - 1}}{\xi ^ \bot } \otimes {\xi ^ \bot }{\left( {\nabla {{\tilde \Phi }_k}} \right)^{ - {\rm{T}}}} \cdot \nabla {\cos ^2}\left( {\xi  \cdot {\lambda _{q + 1}}{{\tilde \Phi }_k}} \right) = 0 ,
\end{equation*}
where
\begin{equation*}
{{\tilde A}_{\xi ,k,n}} = \mathfrak{D}_{\tilde \mu }^2\left( {t,{\lambda _{q + 1}}{{\tilde \Phi }_k}} \right)\tilde \eta _{\xi ,k,n}^2{\left( {\nabla {{\tilde \Phi }_k}} \right)^{ - 1}}{\xi ^ \bot } \otimes {\xi ^ \bot }{\left( {\nabla {{\tilde \Phi }_k}} \right)^{ - {\rm{T}}}} .
\end{equation*}
We then obtain
\begin{equation*}
\mathrm{div}\left( {w_{q + 1}^{\left( p \right)} \otimes w_{q + 1}^{\left( p \right)}} \right) = 4a_W^2\sum\limits_{n = 0}^\Gamma  {\sum\limits_{k \in {\mathbb{Z}_{q,n}}} {\sum\limits_{\xi  \in \Lambda } {g_{\xi ,k,n + 1}^2\left( {{\mu _{q + 1}}t} \right){{\cos }^2}\left( {\xi  \cdot {\lambda _{q + 1}}{{\tilde \Phi }_k}} \right)} } } \mathrm{div}\,   {{\tilde A}_{\xi ,k,n}} .
\end{equation*}
It follows from Lemma \ref{WPD-NHS-AA}, Lemma \ref{WPD-NHS-HUD-BB-2} and the stationary phase property (see \cite[Appendix G]{MR3374958}) that
\begin{align}\label{RSRI-P-PP-HUJ-O-10}
{\left\| {\mathcal{R}\mathrm{div}\left( {w_{q + 1}^{\left( p \right)} \otimes w_{q + 1}^{\left( p \right)}} \right)} \right\|_N} \lesssim & \lambda _{q + 1}^{ - 1}\mathop {\sup }\limits_{\xi ,k,n} {\left\| {{{\cos }^2}\left( {\xi  \cdot {\lambda _{q + 1}}{{\tilde \Phi }_k}} \right)} \right\|_N}{\left\| {{\mathfrak{D}_{\tilde \mu }}\left( {t,{\lambda _{q + 1}}{{\tilde \Phi }_k}} \right)} \right\|_1} \nonumber \\
& \cdot {\left\| {{\mathfrak{D}_{\tilde \mu }}\left( {t,{\lambda _{q + 1}}{{\tilde \Phi }_k}} \right)} \right\|_0}\left\| {{{\tilde \eta }_{\xi ,k,n}}} \right\|_0^2 \nonumber \\
\lesssim & {\delta _q} {\delta _{q + 1}}\lambda _{q + 1}^{N - \frac{1}{2}}    ,
\end{align}
and
\begin{align}\label{RSRI-P-PP-HUJ-O-11}
&{\left\| {{{\bar D}_{t,\Gamma }}\mathcal{R}\mathrm{div}\left( {w_{q + 1}^{\left( p \right)} \otimes w_{q + 1}^{\left( p \right)}} \right)} \right\|_N} \nonumber \\
\lesssim & \lambda _{q + 1}^{ - 1}\mathop {\sup }\limits_{\xi ,k,n} {\left\| {{{\cos }^2}\left( {\xi  \cdot {\lambda _{q + 1}}{{\tilde \Phi }_k}} \right)} \right\|_{N + 1}}{\left\| {{{\bar v}_{q,\Gamma }}} \right\|_0} {\left\| {{\mathfrak{D}_{\tilde \mu }}\left( {t,{\lambda _{q + 1}}{{\tilde \Phi }_k}} \right)} \right\|_1}{\left\| {{\mathfrak{D}_{\tilde \mu }}\left( {t,{\lambda _{q + 1}}{{\tilde \Phi }_k}} \right)} \right\|_0}\left\| {{{\tilde \eta }_{\xi ,k,n}}} \right\|_0^2 \nonumber \\
& + \lambda _{q + 1}^{ - 1}{\mu _{q + 1}}\mathop {\sup }\limits_{\xi ,k,n} {\left\| {{{\cos }^2}\left( {\xi  \cdot {\lambda _{q + 1}}{{\tilde \Phi }_k}} \right)} \right\|_N}{\left\| {{\mathfrak{D}_{\tilde \mu }}\left( {t,{\lambda _{q + 1}}{{\tilde \Phi }_k}} \right)} \right\|_1} {\left\| {{\mathfrak{D}_{\tilde \mu }}\left( {t,{\lambda _{q + 1}}{{\tilde \Phi }_k}} \right)} \right\|_0}\left\| {{{\tilde \eta }_{\xi ,k,n}}} \right\|_0^2 \nonumber \\
& + \lambda _{q + 1}^{ - 1}\mathop {\sup }\limits_{\xi ,k,n} {\left\| {{{\cos }^2}\left( {\xi  \cdot {\lambda _{q + 1}}{{\tilde \Phi }_k}} \right)} \right\|_N}{\left\| {{{\bar D}_{t,\Gamma }}\mathfrak{D}_{\tilde \mu }^2\left( {t,{\lambda _{q + 1}}{{\tilde \Phi }_k}} \right)} \right\|_0}\left\| {{{\tilde \eta }_{\xi ,k,n}}} \right\|_0^2 \nonumber \\
\lesssim & \delta _q{\delta _{q + 1}}\lambda _{q + 1}^{N - \frac{1}{2}} + {\mu _{q + 1}} {\delta _q}{\delta _{q + 1}}\lambda _{q + 1}^{N - \frac{1}{2}} + \delta _q^{\frac{3}{2}}{\delta _{q + 1}}\lambda _q^{\frac{1}{2}}\lambda _{q + 1}^N   .
\end{align}
\par
Thus, employing \eqref{RSRI-P-PP-HUJ-O-10} and Proposition \ref{WPD-NS-SD-P-main}, we deduce that
\begin{align}\label{RSRI-P-PP-O-10}
{\left\| {\mathcal{R}\mathrm{div}\left( {w_{q + 1}^{\left( s \right)} \otimes w_{q + 1}^{\left( s \right)}} \right)} \right\|_N}  \lesssim & {\left\| {\mathcal{R}\mathrm{div}\left( {w_{q + 1}^{\left( p \right)} \otimes w_{q + 1}^{\left( p \right)}} \right)} \right\|_N} + {\left\| {w_{q + 1}^{\left( p \right)}} \right\|_N}{\left\| {w_{q + 1}^{\left( c \right)} + w_{q + 1}^{\left( i \right)}} \right\|_0} \nonumber  \\
& + {\left\| {w_{q + 1}^{\left( p \right)}} \right\|_0}{\left\| {w_{q + 1}^{\left( c \right)} + w_{q + 1}^{\left( i \right)}} \right\|_N} + {\left\| {w_{q + 1}^{\left( c \right)} + w_{q + 1}^{\left( i \right)}} \right\|_N}{\left\| {w_{q + 1}^{\left( c \right)} + w_{q + 1}^{\left( i \right)}} \right\|_0} \nonumber  \\
\lesssim & {\delta _q} {\delta _{q + 1}}\lambda _{q + 1}^{N - \frac{1}{2}} + {{\tilde \mu }^{ - 1}}\delta _{q + 1}^{\frac{3}{2}}\lambda _{q + 1}^N + {{\tilde \mu }^{ - 2}}\delta _{q + 1}^2\lambda _{q + 1}^N \nonumber  \\
\lesssim & {\delta _{q + 2}}\lambda _{q + 1}^{N - \alpha }  .  
\end{align}
Furthermore, it follows from \eqref{WPD-NHS-MAIN-AA-P-3}, \eqref{RSRI-P-PP-HUJ-O-10}, \eqref{RSRI-P-PP-HUJ-O-11} and Proposition \ref{RSRI-ujdh-main} that
\begin{align}\label{RSRI-P-PP-O-11}
& {\left\| {{{\bar D}_{t,\Gamma }}\mathcal{R}\mathrm{div}\left( {w_{q + 1}^{\left( s \right)} \otimes w_{q + 1}^{\left( s \right)}} \right)} \right\|_N} \nonumber \\
\lesssim & {\left\| {{{\bar D}_{t,\Gamma }}\mathcal{R}\mathrm{div}\left( {w_{q + 1}^{\left( p \right)} \otimes w_{q + 1}^{\left( p \right)}} \right)} \right\|_N} + {\left\| {{{\bar D}_{t,\Gamma }}w_{q + 1}^{\left( p \right)}} \right\|_N}{\left\| {w_{q + 1}^{\left( c \right)} + w_{q + 1}^{\left( i \right)}} \right\|_0}   \nonumber  \\
& + {\left\| {{{\bar D}_{t,\Gamma }}w_{q + 1}^{\left( p \right)}} \right\|_0}{\left\| {w_{q + 1}^{\left( c \right)} + w_{q + 1}^{\left( i \right)}} \right\|_N} + {\left\| {w_{q + 1}^{\left( p \right)}} \right\|_N}{\left\| {{{\bar D}_{t,\Gamma }}\left( {w_{q + 1}^{\left( c \right)} + w_{q + 1}^{\left( i \right)}} \right)} \right\|_0}   \nonumber  \\
& + {\left\| {w_{q + 1}^{\left( p \right)}} \right\|_0}\left( {{{\left\| {{{\bar D}_{t,\Gamma }}w_{q + 1}^{\left( c \right)}} \right\|}_N} + {{\left\| {{\partial _t}w_{q + 1}^{\left( i \right)}} \right\|}_N} + {{\left\| {{{\bar v}_{q,\Gamma }}} \right\|}_0}{{\left\| {w_{q + 1}^{\left( i \right)}} \right\|}_{N + 1}}} \right) \nonumber  \\
& + {\left\| {{{\bar D}_{t,\Gamma }}\left( {w_{q + 1}^{\left( c \right)} + {\partial _t}w_{q + 1}^{\left( i \right)} + \left( {{{\bar v}_{q,\Gamma }} \cdot \nabla } \right)w_{q + 1}^{\left( i \right)}} \right)} \right\|_N}{\left\| {w_{q + 1}^{\left( c \right)} + w_{q + 1}^{\left( i \right)}} \right\|_0}  \nonumber  \\
\lesssim & \delta _q{\delta _{q + 1}}\lambda _{q + 1}^{N - \frac{1}{2}} + {\mu _{q + 1}} {\delta _q}{\delta _{q + 1}}\lambda _{q + 1}^{N - \frac{1}{2}} + \delta _q^{\frac{3}{2}}{\delta _{q + 1}}\lambda _q^{\frac{1}{2}}\lambda _{q + 1}^N + \delta _{q + 1}^{\frac{3}{2}}\lambda _{q + 1}^{N + \alpha }  \nonumber  \\
& + {\delta _{q + 1}}\lambda _{q + 1}^{N + 1}\left( {\lambda _{q + 1}^{ - \frac{1}{2}} + \lambda _q^{ - \frac{2}{3}}\lambda _{q + 1}^{ - \frac{1}{3}}} \right) + \delta _q^{\frac{1}{2}}{\delta _{q + 1}}\lambda _q^{ - \frac{2}{3}}\lambda _{q + 1}^{N + \frac{2}{3}} + {\mu _{q + 1}}\delta _q^{\frac{1}{2}}{\delta _{q + 1}}\lambda _{q + 1}^{N + \frac{1}{2}} \nonumber  \\
& + \left( {{\delta _{q + 1}}\lambda _{q + 1}^{N + \alpha } + \delta _q^{\frac{1}{2}}\delta _{q + 1}^{\frac{1}{2}}\lambda _q^{ - \frac{2}{3}}\lambda _{q + 1}^{N + \frac{2}{3}} + {\mu _{q + 1}}\delta _q^{\frac{1}{2}}\delta _{q + 1}^{\frac{1}{2}}\lambda _{q + 1}^{N + \frac{1}{2}}} \right)\left( {\delta _{q + 1}^{\frac{1}{2}}\lambda _{q + 1}^{ - \frac{1}{2}} + {{\tilde \mu }^{ - 1}}{\delta _{q + 1}}} \right)  \nonumber  \\
\lesssim & \delta _{q + 1}^{\frac{1}{2}}{\delta _{q + 2}}\lambda _{q + 1}^{N + 1 - \alpha }   .
\end{align}
Combining \eqref{RSRI-P-PP-O-4-5t}-\eqref{RSRI-P-PP-O-3}, \eqref{RSRI-P-PP-O-6}-\eqref{RSRI-P-PP-O-9}, \eqref{RSRI-P-PP-O-10} and \eqref{RSRI-P-PP-O-11}, we obtain
\begin{equation}\label{RSRI-P-SAMMLGOAL-O-1}
{\left\| { \mathring{R}_{q + 1}^{\left( i \right)} } \right\|_N} \lesssim {\delta _{q + 2}}\lambda _{q + 1}^{N - \alpha }   ,
\end{equation}
and
\begin{equation}\label{RSRI-P-SAMMLGOAL-O-2}
{\left\| { {{ D_{t,{q + 1} }}} \mathring{R}_{q + 1}^{\left( i \right)} } \right\|_N}  \lesssim \delta _{q + 1}^{\frac{1}{2}}{\delta _{q + 2}}\lambda _{q + 1}^{N + 1 - \alpha }   .
\end{equation}
\par
\vspace{1em}
{\textbf{Step 3. Estimates of the random oscillation error $\mathring{R}_{q + 1}^{\left( r \right)}$}}. 
\par
\vspace{1em}
Using \eqref{HIE-u-1.1}, \eqref{lemma-GLTNNRS-B-M-1}, \eqref{SHDUFY-A-3}, \eqref{Newtonsteps-A-B-36} and \eqref{Newtonsteps-A-B-41}, we get
\begin{equation}\label{RSRI-P-PP-rrr-1}
{\left\| {{\mathring{R}_{q,\Gamma }}} \right\|_N} \lesssim \mathop {\sup }\limits_{k,n} {\left\| {w_{k,n}^{\left( l \right)}} \right\|_N} \lesssim \mu _{q + 1}^{ - 1}{\delta _{q + 1}}  {\left( {\frac{{{\lambda _q}}}{{\lambda _{q + 1}}}} \right)^{\frac{1}{3} - \beta }}  \lambda _q^{N + 1}l_q^{ - \alpha } \lesssim {\delta _{q + 2}}\lambda _{q + 1}^{N - \alpha }    ,
\end{equation}
and
\begin{align}\label{RSRI-P-PP-rrr-2}
{\left\| {{D_{t,q + 1}}{\mathring{R}_{q,\Gamma }}} \right\|_N} \lesssim & \mathop {\sup }\limits_{k,n} \left( {{{\left\| {{{\bar D}_{t,q}}w_{k,n}^{\left( l \right)}} \right\|}_{N - 1}} + {{\left\| {\left( {{v_{q + 1}} - {{\bar v}_q}} \right) \otimes w_{k,n}^{\left( l \right)}} \right\|}_N}} \right)  \nonumber  \\
\lesssim & {\delta _{q + 1}} {\left( {\frac{{{\lambda _q}}}{{\lambda _{q + 1}}}} \right)^{\frac{1}{3} - \beta }}  \lambda _q^Nl_q^{ - \alpha } + \mu _{q + 1}^{ - 1}\delta _q^{\frac{1}{2}}{\delta _{q + 1}} {\left( {\frac{{{\lambda _q}}}{{\lambda _{q + 1}}}} \right)^{\frac{1}{3} - \beta }} \lambda _q^{N + 1}l_q^{ - \alpha }   \nonumber  \\
\lesssim & \delta _{q + 1}^{\frac{1}{2}}{\delta _{q + 2}}\lambda _{q + 1}^{N + 1 - \alpha } .  
\end{align}
Notice that
\begin{equation*}
{\left\| {{R_{q,0}} - {\mathring{R}_q}} \right\|_N} \lesssim l_q^2{\left\| {{\mathring{R}_q}} \right\|_{N + 2}} \lesssim {\delta _{q + 1}}\lambda _q^{N + 1 - \alpha }\lambda _{q + 1}^{ - 1} ,  
\end{equation*}
and
\begin{align*}
{\left\| {{D_{t,q + 1}}\left( {{R_{q,0}} - {\mathring{R}_q}} \right)} \right\|_N} \lesssim & {\left\| {{D_{t,q}}\left( {{R_{q,0}} - {\mathring{R}_q}} \right)} \right\|_N} + {\left\| {\left( {{w_{q + 1}} \cdot \nabla } \right)\left( {{R_{q,0}} - {\mathring{R}_q}} \right)} \right\|_N} \\
\lesssim & l_q^2\left( {{{\left\| {{D_{t,q}}{\mathring{R}_q}} \right\|}_{N + 2}} + {{\left\| {{w_{q + 1}}} \right\|}_N}{{\left\| {{\mathring{R}_q}} \right\|}_3}} \right) \\
\lesssim & \delta _q^{\frac{1}{2}}{\delta _{q + 1}}\lambda _q^{N + 2 - \alpha }\lambda _{q + 1}^{ - 1} + \delta _{q + 1}^{\frac{3}{2}}\lambda _q^{2 - \alpha }\lambda _{q + 1}^{N - 1} .
\end{align*}
It follows from \eqref{SHDUFY-A-2} and \eqref{parameters-S-22} that
\begin{align}\label{RSRI-P-PP-rrr-3}
{\left\| {{\mathfrak{P}_{q + 1, \Gamma}}} \right\|_N} \lesssim & {\left\| {{R_{q,0}} - {\mathring{R}_q}} \right\|_N} + \mathop {\sup }\limits_{\xi ,k,n} {{\left\| {{A_{\xi ,k,{n+1}}}} \right\|}_N} \nonumber \\
\lesssim & {\delta _{q + 1}}\lambda _q^{N + 1 - \alpha }\lambda _{q + 1}^{ - 1} + {\delta _q} {\delta _{q + 1}}{\left( {\frac{{{\lambda _q}}}{{\lambda _{q + 1}}}} \right)^{\frac{1}{3} - \beta }}\lambda _q^N \nonumber \\
\lesssim  & {\delta _{q + 2}}\lambda _{q + 1}^{N - \alpha } ,   
\end{align}
and
\begin{align}\label{RSRI-P-PP-rrr-4}
{\left\| {{D_{t,q + 1}}{\mathfrak{P}_{q + 1, \Gamma}}} \right\|_N} \lesssim & {\left\| {{D_{t,q + 1}}\left( {{R_{q,0}} - {\mathring{R}_q}} \right)} \right\|_N}  + \mathop {\sup }\limits_{\xi ,k,n} {{\left\| {{D_{t,q + 1}}{A_{\xi ,k,{n+1}}}} \right\|}_N} \nonumber \\
\lesssim & \delta _q^{\frac{1}{2}}{\delta _{q + 1}}\lambda _q^{N + 2 - \alpha }\lambda _{q + 1}^{ - 1} + \delta _{q + 1}^{\frac{3}{2}}\lambda _q^{2 - \alpha }\lambda _{q + 1}^{N - 1} + \delta _{q + 1}^{\frac{1}{2}}{\delta _{q + 2}}\lambda _{q + 1}^{N + 1 - \alpha } \nonumber \\
\lesssim & \delta _{q + 1}^{\frac{1}{2}}{\delta _{q + 2}}\lambda _{q + 1}^{N + 1 - \alpha } .
\end{align}
\par
Consider the random oscillation $w_{q + 1}^{\left( s \right)} \otimes {v_{q,\Gamma }} + {v_{q,\Gamma }} \otimes w_{q + 1}^{\left( s \right)}$ and the residual intermittent oscillation ${{\bar v}_{q,\Gamma }} \otimes w_{q + 1}^{\left( i \right)}$. According to \eqref{HIE-u-1.1}, \eqref{WPD-NP-main-BTO-1}, \eqref{WPD-NHS-MAIN-AA-P-3} and Proposition \ref{WPD-NS-SD-P-main}, we have
\begin{align*}
& {\left\| {w_{q + 1}^{\left( s \right)} \otimes {v_{q,\Gamma }} + {v_{q,\Gamma }} \otimes w_{q + 1}^{\left( s \right)} + {{\bar v}_{q,\Gamma }} \otimes w_{q + 1}^{\left( i \right)}} \right\|_N} \\
\lesssim & {\left\| {\mathcal{R}\left( {w_{q + 1}^{\left( s \right)} \cdot \nabla } \right){v_{q,\Gamma }}} \right\|_N} + {\left\| {{{\bar v}_{q,\Gamma }} \otimes w_{q + 1}^{\left( i \right)}} \right\|_N}   \\
\lesssim & \lambda _{q + 1}^{ - 1}{\left\| {w_{q + 1}^{\left( s \right)}} \right\|_N}{\left\| {{v_q} + w_{q + 1}^{\left( \Gamma \right)}} \right\|_1} + {\left\| {w_{q + 1}^{\left( i \right)}} \right\|_N}{\left\| {{{\bar v}_{q,\Gamma }}} \right\|_0}   \\
\lesssim  & {\lambda _q}\lambda _{q + 1}^{N - 1}\left( {\delta _q^{\frac{1}{2}} + \mu _{q + 1}^{ - 1}{\delta _{q + 1}}{\lambda _q}l_q^{ - \alpha }} \right) + {{\tilde \mu }^{ - 1}} {\delta _{q + 1}}\lambda _{q + 1}^N ,
\end{align*}
which implies that
\begin{equation}\label{RSRI-P-PP-rrr-6}
{\left\| {w_{q + 1}^{\left( s \right)} \otimes {v_{q,\Gamma }} + {v_{q,\Gamma }} \otimes w_{q + 1}^{\left( s \right)} + {{\bar v}_{q,\Gamma }} \otimes w_{q + 1}^{\left( i \right)}} \right\|_N}  \lesssim {\delta _{q + 2}}\lambda _{q + 1}^{N - \alpha } .
\end{equation}
Owing to
\begin{align*}
& {\left\| {{D_{t,q + 1}}\left( {w_{q + 1}^{\left( s \right)} \otimes {v_{q,\Gamma }} + {v_{q,\Gamma }} \otimes w_{q + 1}^{\left( s \right)} + {{\bar v}_{q,\Gamma }} \otimes w_{q + 1}^{\left( i \right)}} \right)} \right\|_N} \\
\lesssim & {\left\| {{{\bar D}_{t,\Gamma }}\left( {w_{q + 1}^{\left( s \right)} \otimes {v_{q,\Gamma }} + {v_{q,\Gamma }} \otimes w_{q + 1}^{\left( s \right)} + {{\bar v}_{q,\Gamma }} \otimes w_{q + 1}^{\left( i \right)}} \right)} \right\|_N}  \\
& + {\left\| {\left( {\left( {{v_{q + 1}} - {{\bar v}_{q,\Gamma }}} \right) \cdot \nabla } \right)\left( {w_{q + 1}^{\left( s \right)} \otimes {v_{q,\Gamma }} + {v_{q,\Gamma }} \otimes w_{q + 1}^{\left( s \right)} + {{\bar v}_{q,\Gamma }} \otimes w_{q + 1}^{\left( i \right)}} \right)} \right\|_N} \\
\lesssim & {\left\| {{{\bar D}_{t,\Gamma }}\left( {w_{q + 1}^{\left( p \right)} + w_{q + 1}^{\left( c \right)}} \right) + {\partial _t}w_{q + 1}^{\left( i \right)}} \right\|_N}{\left\| {{v_{q,\Gamma }}} \right\|_0} + \lambda _{q + 1}^{ - 1}{\left\| {{{\bar v}_{q,\Gamma }}} \right\|_0}{\left\| {w_{q + 1}^{\left( s \right)}} \right\|_1}{\left\| {{v_{q,\Gamma }}} \right\|_2}  \\
& + \lambda _{q + 1}^{ - 1}{\left\| {w_{q + 1}^{\left( s \right)}} \right\|_N}\left( {{{\left\| {{u_q}} \right\|}_1}{{\left\| {{u_q}} \right\|}_2} + {{\left\| {{\mathfrak{p}_q}} \right\|}_2} + {{\left\| {{\mathring{R}_q}} \right\|}_2}} \right) \lambda _{q + 1}^{ - 1}{\left\| {{{\bar v}_q}} \right\|_0}{\left\| {w_{q + 1}^{\left( \Gamma  \right)}} \right\|_0}{\left\| {w_{q + 1}^{\left( i \right)}} \right\|_{N + 2}} \\
&   + {\left\| {w_{q + 1}^{\left( s \right)}} \right\|_N}\left( {{{\left\| {{{\bar D}_{t,q}}w_{q + 1}^{\left( \Gamma  \right)}} \right\|}_0} + {{\left\| {{{\bar v}_q}} \right\|}_0}{{\left\| {w_{q + 1}^{\left( \Gamma  \right)}} \right\|}_1}} \right) + {\left\| {w_{q + 1}^{\left( s \right)}} \right\|_0}\left( {{{\left\| {{{\bar D}_{t,q}}w_{q + 1}^{\left( \Gamma  \right)}} \right\|}_N} + {{\left\| {{{\bar v}_q}} \right\|}_0}{{\left\| {w_{q + 1}^{\left( \Gamma  \right)}} \right\|}_{N + 1}}} \right) \\
&  + {\left\| {w_{q + 1}^{\left( i \right)}} \right\|_N}\left( {{{\left\| {{\partial _t}{{\bar v}_q}} \right\|}_0} + {{\left\| {{{\bar D}_{t,q}}w_{q + 1}^{\left( \Gamma  \right)}} \right\|}_0}} \right) + {\left\| {w_{q + 1}^{\left( i \right)}} \right\|_0}{\left\| {{{\bar D}_{t,q}}w_{q + 1}^{\left( \Gamma  \right)}} \right\|_N} \\
&  + {\left\| {{\partial _t}w_{q + 1}^{\left( i \right)}} \right\|_N}\left( {{{\left\| {{{\bar v}_q}} \right\|}_0} + {{\left\| {w_{q + 1}^{\left( \Gamma  \right)}} \right\|}_0}} \right) + \left\| {{{\bar v}_{q,\Gamma }}} \right\|_0^2{\left\| {w_{q + 1}^{\left( i \right)}} \right\|_{N + 1}}  \\
& + {\left\| {{v_{q + 1}} - {{\bar v}_{q,\Gamma }}} \right\|_0}{\left\| {w_{q + 1}^{\left( s \right)} \otimes {v_{q,\Gamma }} + {v_{q,\Gamma }} \otimes w_{q + 1}^{\left( s \right)} + {{\bar v}_{q,\Gamma }} \otimes w_{q + 1}^{\left( i \right)}} \right\|_{N + 1}} \\
& + {\left\| {{v_{q + 1}} - {{\bar v}_{q,\Gamma }}} \right\|_N}{\left\| {w_{q + 1}^{\left( s \right)} \otimes {v_{q,\Gamma }} + {v_{q,\Gamma }} \otimes w_{q + 1}^{\left( s \right)} + {{\bar v}_{q,\Gamma }} \otimes w_{q + 1}^{\left( i \right)}} \right\|_1} ,
\end{align*}
it follows from \eqref{HIE-u-1.1}, \eqref{HIE-R-1.1}, \eqref{WPD-NP-main-BTO-1}, \eqref{WPD-NHS-MAIN-AA-P-3}, \eqref{RSRI-main-PROVE-15FAKFOG}, Proposition \ref{WPD-NS-SD-P-main} and Proposition \ref{RSRI-ujdh-main} that
\begin{equation}\label{RSRI-P-PP-rrr-7}
{\left\| {{D_{t,q + 1}}\left( {w_{q + 1}^{\left( s \right)} \otimes {v_{q,\Gamma }} + {v_{q,\Gamma }} \otimes w_{q + 1}^{\left( s \right)} + {{\bar v}_{q,\Gamma }} \otimes w_{q + 1}^{\left( i \right)}} \right)} \right\|_N} \lesssim \delta _{q + 1}^{\frac{1}{2}}{\delta _{q + 2}}\lambda _{q + 1}^{N + 1 - \alpha } .  
\end{equation}
Therefore, employing \eqref{LIJIFNI-DKFJ-R-3} and \eqref{RSRI-P-PP-rrr-1}-\eqref{RSRI-P-PP-rrr-6}, we obtain
\begin{equation}\label{RSRI-P-SAMMLGOAL-rr-1}
{\left\| { \mathring{R}_{q + 1}^{\left( r \right)} } \right\|_N} \lesssim {\delta _{q + 2}}\lambda _{q + 1}^{N - \alpha } ,
\end{equation}
and
\begin{equation}\label{RSRI-P-SAMMLGOAL-rr-2}
{\left\| { {{ D_{t,{q + 1} }}} \mathring{R}_{q + 1}^{\left( r \right)} } \right\|_N}  \lesssim \delta _{q + 1}^{\frac{1}{2}}{\delta _{q + 2}}\lambda _{q + 1}^{N + 1 - \alpha } .  
\end{equation}
\par
Finally, combining \eqref{RSRI-P-SAMMLGOAL-L-1}, \eqref{RSRI-P-SAMMLGOAL-L-2}, \eqref{RSRI-P-SAMMLGOAL-O-1}, \eqref{RSRI-P-SAMMLGOAL-O-2}, \eqref{RSRI-P-SAMMLGOAL-rr-1} and \eqref{RSRI-P-SAMMLGOAL-rr-2}, we conclude that \eqref{RSRI-P-mainsd-AAJJS-1} and \eqref{RSRI-P-mainsd-AAJJS-2} hold. The proof is complete. \qed
\par
\begin{proposition}\label{RSRI-ESI-GAP}
For any $t \in \left[ {0,\mathfrak{t}} \right]$, the energy gap ${\mathfrak{E}_{q + 1}}$ defined by \eqref{LOKJI-A-FGH-A-1} admits
\begin{equation}\label{RSRI-P-mene-GAP}
\left| {{\mathfrak{E}_{q + 1}}} \right| \lesssim {\delta _{q + 2}}  .
\end{equation}
\end{proposition}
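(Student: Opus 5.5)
The estimate $|\mathfrak{E}_{q+1}|\lesssim\delta_{q+2}$ will follow from the defining property of the amplitudes: the Nash perturbation is built so that its $L^2$ mass matches the step-$q$ energy gap. I will write $\mathfrak{E}_{q+1}$ in terms of $\mathfrak{E}_q$ and the energy of $w_{q+1}$, and show that the leading parts cancel exactly. By \eqref{LOKJI-A-FGH-A-1} with $q$ replaced by $q+1$,
\begin{equation*}
\mathfrak{E}_{q+1}-\mathfrak{E}_q = -\left(\|\hat u_{q+1}\|_{L^2}^2-\|\hat u_q\|_{L^2}^2\right) + \|\hat{\mathfrak{W}}_{q+1}\|_{L^2}^2-\|\hat{\mathfrak{W}}_q\|_{L^2}^2 + 2\int_0^t\left(\langle\hat v_{q+1},\mathrm{d}\mathfrak{W}_{q+1}\rangle-\langle\hat v_q,\mathrm{d}\mathfrak{W}_q\rangle\right) + \tfrac12(\delta_{q+2}-\delta_{q+3}).
\end{equation*}
I expand $\|\hat u_{q+1}\|^2=\|\hat u_q+\hat w_{q+1}+\hat{\mathfrak{W}}^{loc}_{q+1}\|^2$. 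The main term is $\|w^{(p)}_{q+1}\|_{L^2}^2$. Every other piece should be $O(\delta_{q+2})$ on $[0,\mathfrak{t}]$.

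\textbf{Step 1 (principal energy).} Using \eqref{Nashsteps-A-P-3}, I compute $\int|w^{(p)}_{q+1}|^2\,\mathrm{d}x$. The disjoint temporal supports of the $g_{\xi,k,n+1}$ in Lemma \ref{GLTNNRS-A} kill the cross terms in $\xi$ and $n$, and $\chi_{k\pm1}$ overlap only with $\chi_k$. The diagonal term is $\sum g^2\tilde\eta^2\,\mathrm{tr}\big((\nabla\tilde\Phi_k)^{-1}\mathbb{W}_\xi\otimes\mathbb{W}_\xi(\nabla\tilde\Phi_k)^{-\mathrm{T}}\big)$. I replace $\cos^2(\xi\cdot\lambda_{q+1}\tilde\Phi_k)\mathfrak{D}^2_{\tilde\mu}$ by its mean $\mathscr{\hat M}$. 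The error from this replacement comes from a stationary-phase bound (as in \eqref{RSRI-P-PP-HUJ-O-10}): since $\tilde\eta$ varies at frequency $\lambda_q$, the error is $\lesssim\delta_{q+1}\lambda_q\lambda_{q+1}^{-1/2}$. By the choice of the normalisation $\mathfrak{E}_{q,\xi,k,n}$, which contains $\mathscr{\hat M}^{-1}g^{-2}\chi_k^{-2}$, together with $\gamma_\xi$ decomposing $\nabla\tilde\Phi_k\nabla\tilde\Phi_k^{\perp}-\mathfrak{E}^{-1}\nabla\tilde\Phi_k\tilde R_{q,n}\nabla\tilde\Phi_k^{\perp}$, the trace of the main term reduces to $\mathfrak{E}_q$ plus $\int\mathrm{tr}\tilde R_{q,n}$ plus the deviation $\|\nabla\tilde\Phi_k-\mathrm{Id}\|_0\lesssim\lambda_q^{-\alpha}$ by Lemma \ref{WPD-NHS-AA}. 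Since $\tilde R$ is traceless up to mollification, $\mathrm{tr}\tilde R_{q,n}$ contributes only $\lesssim\delta_{q+1}\lambda_q^{-\alpha}$. The $\lambda_q^{-\alpha}$ flow error has to be absorbed into $\delta_{q+2}$ through the parameter relation $\delta_{q+1}\lambda_q^{-\alpha}\lesssim\delta_{q+2}$ in Appendix C. Hence $\|w^{(p)}_{q+1}\|^2=\mathfrak{E}_q+O(\delta_{q+2})$.

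\textbf{Step 2 (remaining terms).} I bound the other energy terms one at a time.
\begin{itemize}
\item The cross term $2\langle\hat u_q,w^{(p)}_{q+1}\rangle$ is small because $w^{(p)}_{q+1}$ has frequency $\sim\lambda_{q+1}$ by \eqref{eq:annulus-location} while $u_q$ has frequency $\lesssim\lambda_q$. Integrating by parts $N$ times gives a bound $\delta_{q+1}^{1/2}(\lambda_q/\lambda_{q+1})^N$.
\item The terms from $w^{(c)}$, $w^{(i)}$ and $w^{(\Gamma)}$ are controlled by Proposition \ref{WPD-NS-SD-P-main} and \eqref{WPD-NP-main-BTO-1}. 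They give $\delta_{q+1}\lambda_{q+1}^{-1/2}$, $\tilde\mu^{-1}\delta_{q+1}^{3/2}$ and $\mu_{q+1}^{-1}\delta_{q+1}\lambda_ql_q^{-\alpha}\delta_q^{1/2}$, each $\lesssim\delta_{q+2}$.
\item The mollification errors $\|\hat f-f\|\lesssim\hat l_q^2\|f\|_2$ are also $\lesssim\delta_{q+2}$.
\item The stochastic terms use \eqref{NISTYU-SKO-A-PPA-FJU-sd}. On $[0,\mathfrak{t}]$, the stopping-time bound gives $\|\mathfrak{W}^{loc}_{q+1}\|_0\lesssim\delta_q^\alpha\delta_{q+2}^{1/2}$. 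The change $\|\hat{\mathfrak{W}}_{q+1}\|^2-\|\hat{\mathfrak{W}}_q\|^2$ and the term $2\langle\hat u_q,\mathfrak{W}^{loc}\rangle$ from $\|\hat u_{q+1}\|^2$ then cancel exactly against $2\langle\hat{\mathfrak{W}}_q,\mathfrak{W}^{loc}\rangle+\|\mathfrak{W}^{loc}\|^2$ up to $\langle\hat v_q,\mathfrak{W}^{loc}\rangle$. That leftover is in turn compensated by the It\^o term $2\int\langle\hat v_q,\mathrm{d}\mathfrak{W}^{loc}_{q+1}\rangle$ and the increment $2\int\langle\hat w_{q+1},\mathrm{d}\mathfrak{W}_{q+1}\rangle$. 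I will bound these stochastic integrals pathwise on $[0,\mathfrak{t}]$ via the truncation $\mathfrak{t}_\mathfrak{S}$, giving $\delta_q^\alpha\delta_{q+2}^{1/2}\cdot\delta_{q+1}^{1/2}\lesssim\delta_{q+2}$.
\end{itemize}

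\textbf{Main obstacle.} I expect the hardest step to be Step 1: showing that the amplitude normalisation reproduces $\mathfrak{E}_q$ up to $O(\delta_{q+2})$ rather than $O(\delta_{q+1}\lambda_q^{-\alpha})$. The flow-deformation error $\nabla\tilde\Phi_k-\mathrm{Id}$ only gains $\lambda_q^{-\alpha}$, so I would first try to place $\nabla\tilde\Phi_k$ inside the decomposition, as the paper does in \eqref{Nashsteps-A-P-2}, so that this error enters only through the traceless part $\tilde R$. Only if that fails would I rely on the parameter inequality.
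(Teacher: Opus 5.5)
Your architecture matches the paper's: energy increment $\approx\|w^{(p)}_{q+1}\|_{L^2}^2$, normalisation to $\mathfrak E_q$, Reynolds remainder absorbed by $\delta_{q+1}\lambda_q^{-\alpha}\lesssim\delta_{q+2}$, and small lower-order and mollification terms. However, Step~1 does not close as written. You replace $\cos^2(\cdot)\mathfrak D^2_{\tilde\mu}(t,\lambda_{q+1}\tilde\Phi_k)$ by its mean at a stationary-phase cost $\delta_{q+1}\lambda_q\lambda_{q+1}^{-1/2}$. Since $\lambda_{q+1}\sim\lambda_q^{b}$ with $b\le 1.3<2$, this cost is $\delta_{q+1}\lambda_q^{1-b/2}$, which is larger than $\delta_{q+1}$, let alone $\delta_{q+2}$. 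So ``hence $\|w^{(p)}_{q+1}\|_{L^2}^2=\mathfrak E_q+O(\delta_{q+2})$'' does not follow. More integrations by parts cannot fix this. With $\sigma\sim\lambda_{q+1}^{-1/2}$ (Lemma~\ref{WPD-NHS-HUD-BB-2}), $\mathfrak D^2_{\tilde\mu}(t,\lambda_{q+1}\cdot)$ has nonzero Fourier modes down to $\sigma\lambda_{q+1}\sim\lambda_{q+1}^{1/2}\ll\lambda_q$. The mean-free part of $\cos^2\mathfrak D^2_{\tilde\mu}$ is therefore not frequency-separated from amplitudes varying on scale $\lambda_q^{-1}$. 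The annulus \eqref{eq:annulus-location} describes $\mathbb W_\xi$, not $|\mathbb W_\xi|^2$; that is why your cross term $\langle\hat u_q,w^{(p)}_{q+1}\rangle$ is fine but this step is not.

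The paper never averages. It writes $w^{(p)}_{q+1}\otimes w^{(p)}_{q+1}=4a_W^2\sum\cos^2(\cdot)\mathfrak D^2_{\tilde\mu}\,\chi_k^2g_{\xi,k,n+1}^2\,(\mathfrak E_{q,\xi,k,n}\mathrm{Id}-\tilde R_{q,n})$. Here $\chi_k^2g_{\xi,k,n+1}^2\mathfrak E_{q,\xi,k,n}$ depends only on $t$, and $\hat{\mathscr M}(t,r)$ is \emph{defined} as the exact spatial integral of $\cos^2(\cdot)\mathfrak D^2_{\tilde\mu}(t,\lambda_{q+1}\tilde\Phi_k)$. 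Hence the $\mathrm{Id}$ part integrates to $\mathfrak E_q$ with no error. The only $x$-dependent remainder, $\mathfrak E^{(Reynolds)}_{q+1}$, is bounded by $\|\tilde R_{q,n}\|_0$ times the $O(1)$ $L^1$-mass of $\cos^2\mathfrak D^2_{\tilde\mu}$, i.e.\ by $\delta_{q+1}\lambda_q^{-\alpha}\lesssim\delta_{q+2}$ via \eqref{JMNH-DKO-A-1-A-2} and \eqref{parameters-S-202568}. So drop the mean replacement and estimate every slowly varying factor in $L^\infty$ against that $L^1$ mass. The $\lambda_q^{-\alpha}$ flow and $\tilde R$ errors you single out as the main obstacle are in fact the harmless part.

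Your stochastic bullet has a separate flaw. An It\^o integral is not a pathwise Stieltjes integral. The truncation $\mathfrak t_{\mathfrak S}$ only bounds the Brownian paths, so it does not yield $|\int_0^t\langle\hat w_{q+1},\mathrm d\mathfrak W_{q+1}\rangle_{\mathbb T^2}|\lesssim\delta_q^\alpha\delta_{q+2}^{1/2}\delta_{q+1}^{1/2}$. Likewise, $\langle\hat v_q(t),\mathfrak W^{loc}_{q+1}(t)\rangle$ is not ``compensated'' by $\int_0^t\langle\hat v_q,\mathrm d\mathfrak W^{loc}_{q+1}\rangle$. After your cancellations, and up to mollification errors, the $\mathfrak W^{loc}_{q+1}$-terms combine by It\^o's product rule ($v_{q+1}$ is differentiable in time) into $-2\int_0^t\langle\partial_s\hat v_{q+1},\hat{\mathfrak W}^{loc}_{q+1}\rangle_{\mathbb T^2}\,\mathrm ds$. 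This term contains $\partial_t w_{q+1}$, of crude size $\delta_{q+1}^{1/2}\lambda_{q+1}$ (cf.\ \eqref{WPD-NS-SD-P-mainsd-AA-2}) and at the same frequency $\lambda_{q+1}$ as $\mathfrak W^{loc}_{q+1}$. The factor $\delta_q^\alpha\delta_{q+2}^{1/2}$ cannot absorb it, and nothing in your sketch controls it. The paper avoids time derivatives altogether. It estimates the stochastic integrals by the It\^o isometry, which needs only the size of the integrand (combined with an $\mathcal R$-integration by parts where frequencies separate). It then passes from these mean-square bounds to $[0,\mathfrak t]$ by its continuity argument. You need that route, or a BDG/Borel--Cantelli version of it, instead of a sup-norm bound.
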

\par
\noindent{\textbf{Proof}}.
According to \eqref{Nashsteps-A-P-3}, Lemma \ref{GLTNNRS-A} and Lemma \ref{IBW-P-A-1}, we have
\begin{equation*}
w_{q + 1}^{\left( p \right)} \otimes w_{q + 1}^{\left( p \right)} = 4a_W^2\sum\limits_{n = 0}^\Gamma  {\sum\limits_{k \in {\mathbb{Z}_{q,n}}} {\sum\limits_{\xi  \in \Lambda } {{{\cos }^2}\left( {{\xi ^ \bot } \cdot {\lambda _{q + 1}}{{\tilde \Phi }_x}} \right)\mathfrak{D}_{\tilde \mu }^2\left( {t,{\lambda _{q + 1}}{{\tilde \Phi }_x}} \right)\chi _k^2g_{\xi ,k,n + 1}^2} } } \left( {{\mathfrak{E}_{q,\xi ,k,n}} \mathrm{Id} - {{\tilde R}_{q,n}}} \right) ,
\end{equation*}
which implies that
\begin{equation*}
\left\| {w_{q + 1}^{\left( p \right)}} \right\|_{{L^2}}^2 = {\mathfrak{E}_q} - \mathfrak{E}_{q + 1}^{\left( {Reynolds} \right)} .
\end{equation*}
For simplicity of notation, we denote
\begin{equation*}
\mathfrak{E}_{q + 1}^{\left( {Reynolds} \right)} \triangleq 4a_W^2\sum\limits_{n = 0}^\Gamma  {\sum\limits_{k \in {\mathbb{Z}_{q,n}}} {\sum\limits_{\xi  \in \Lambda } {\chi _k^2g_{\xi ,k,n + 1}^2\int_{{\mathbb{T}^2}} {{{\cos }^2}\left( {{\xi ^ \bot } \cdot {\lambda _{q + 1}}{{\tilde \Phi }_x}} \right)\mathfrak{D}_{\tilde \mu }^2\left( {t,{\lambda _{q + 1}}{{\tilde \Phi }_x}} \right){{\tilde R}_{q,n}}} \mathrm{d}x} } } .
\end{equation*}
\par
Notice that
\begin{equation*}
{{\hat u}_{q + 1}} = {{\hat u}_q} + {{\hat w}_{q + 1}} + \mathfrak{\hat W}_{q + 1}^{loc} + {u_q} * \left( {{{\hat \varsigma }_{{{\hat l}_{q + 1}}}} - {{\hat \varsigma }_{{{\hat l}_q}}}} \right) ,
\end{equation*}
\begin{equation*}
{{\hat v}_{q + 1}} = {{\hat v}_q} + {{\hat w}_{q + 1}} + {v_q} * \left( {{{\hat \varsigma }_{{{\hat l}_{q + 1}}}} - {{\hat \varsigma }_{{{\hat l}_q}}}} \right)  ,
\end{equation*}
and
\begin{equation*}
{\mathfrak{\hat W}_{q + 1}} = {\mathfrak{\hat W}_q} + \mathfrak{\hat W}_{q + 1}^{loc} + {\mathfrak{W}_q} * \left( {{{\hat \varsigma }_{{{\hat l}_{q + 1}}}} - {{\hat \varsigma }_{{{\hat l}_q}}}} \right)  .
\end{equation*}
We derive that
\begin{align}\label{RSRI-P-mene-GAP-P-A-1}
{\mathfrak{E}_{q + 1}} = & {\mathfrak{E}_q} - {\left\langle {{{\hat u}_q} + {{\hat u}_{q + 1}},{{\hat w}_{q + 1}} + \mathfrak{\hat W}_{q + 1}^{loc} + {u_q} * \left( {{{\hat \varsigma }_{{{\hat l}_{q + 1}}}} - {{\hat \varsigma }_{{{\hat l}_q}}}} \right)} \right\rangle _{{\mathbb{T}^2}}} \nonumber \\
& + {\left\langle {{\mathfrak{\hat W}_q} + {\mathfrak{\hat W}_{q + 1}},\mathfrak{\hat W}_{q + 1}^{loc} + {\mathfrak{W}_q} * \left( {{{\hat \varsigma }_{{{\hat l}_{q + 1}}}} - {{\hat \varsigma }_{{{\hat l}_q}}}} \right)} \right\rangle _{{\mathbb{T}^2}}} + \frac{1}{2}\left( {{\delta _{q + 2}} - {\delta _{q + 3}}} \right) \nonumber \\
& + 2\int_0^t {{{\left\langle {{{\hat w}_{q + 1}} + {v_q} * \left( {{{\hat \varsigma }_{{{\hat l}_{q + 1}}}} - {{\hat \varsigma }_{{{\hat l}_q}}}} \right), \mathrm{d}{\mathfrak{W}_{q + 1}}} \right\rangle }_{{\mathbb{T}^2}}}}  + 2\int_0^t {{{\left\langle {{{\hat v}_q},\mathrm{d}\mathfrak{W}_{q + 1}^{loc}} \right\rangle }_{{\mathbb{T}^2}}}} \nonumber \\
\triangleq & \mathfrak{E}_{q + 1}^{\left( {Reynolds} \right)} + \mathfrak{E}_{q + 1}^{\left( {osc} \right)} + \mathfrak{E}_{q + 1}^{\left( {rem} \right)} + \mathfrak{E}_{q + 1}^{\left( {sto} \right)} ,
\end{align}
where
\begin{equation*}
\mathfrak{E}_{q + 1}^{\left( {osc} \right)} \triangleq \left\| {w_{q + 1}^{\left( p \right)}} \right\|_{{L^2}}^2 - \left\| {\hat w_{q + 1}^{\left( p \right)}} \right\|_{{L^2}}^2 - 2{\left\langle {\hat w_{q + 1}^{\left( p \right)},{u_q} * {{\hat \varsigma }_{{{\hat l}_{q + 1}}}}} \right\rangle _{{\mathbb{T}^2}}}  ,
\end{equation*}
\begin{equation*}
\mathfrak{E}_{q + 1}^{\left( {rem} \right)} \triangleq \frac{1}{2}\left( {{\delta _{q + 2}} - {\delta _{q + 3}}} \right) - {\left\langle {{u_q} * \left( {{{\hat \varsigma }_{{{\hat l}_{q + 1}}}} - {{\hat \varsigma }_{{{\hat l}_q}}}} \right),{u_q} * \left( {{{\hat \varsigma }_{{{\hat l}_{q + 1}}}} + {{\hat \varsigma }_{{{\hat l}_q}}}} \right)} \right\rangle _{{\mathbb{T}^2}}} ,
\end{equation*}
and
\begin{align*}
\mathfrak{E}_{q + 1}^{\left( {sto} \right)} \triangleq & 2{\left\langle {{\mathfrak{\hat W}_q} + {\mathfrak{\hat W}_{q + 1}},{\mathfrak{W}_q} * \left( {{{\hat \varsigma }_{{{\hat l}_{q + 1}}}} - {{\hat \varsigma }_{{{\hat l}_q}}}} \right)} \right\rangle _{{\mathbb{T}^2}}} + \left\| {{\mathfrak{W}_q} * \left( {{{\hat \varsigma }_{{{\hat l}_{q + 1}}}} - {{\hat \varsigma }_{{{\hat l}_q}}}} \right)} \right\|_{{L^2}}^2  \\
& + 2{\left\langle {{\mathfrak{\hat W}_q} - {{\hat u}_q} - {{\hat w}_{q + 1}} + {u_q} * \left( {{{\hat \varsigma }_{{{\hat l}_{q + 1}}}} - {{\hat \varsigma }_{{{\hat l}_q}}}} \right),\mathfrak{\hat W}_{q + 1}^{loc}} \right\rangle _{{\mathbb{T}^2}}} \\
& + 2\int_0^t {{{\left\langle {{{\hat w}_{q + 1}} + {v_q} * \left( {{{\hat \varsigma }_{{{\hat l}_{q + 1}}}} - {{\hat \varsigma }_{{{\hat l}_q}}}} \right), \mathrm{d}{\mathfrak{W}_{q + 1}}} \right\rangle }_{{\mathbb{T}^2}}}}  + 2\int_0^t {{{\left\langle {{{\hat v}_q},\mathrm{d}\mathfrak{W}_{q + 1}^{loc}} \right\rangle }_{{\mathbb{T}^2}}}}  .
\end{align*}
\par
It follows from \eqref{JMNH-DKO-A-1-A-2} and \eqref{parameters-S-202568} that
\begin{equation}\label{RSRI-P-mene-GAP-P-A-2}
\left| {\mathfrak{E}_{q + 1}^{\left( {Reynolds} \right)}} \right| \lesssim {\left\| {{{\tilde R}_{q,n}}} \right\|_0} \lesssim {\delta _{q + 1}}\lambda _q^{ - \alpha } \lesssim {\delta _{q + 2}}  .
\end{equation}
Since $\left\| {w_{q + 1}^{\left( p \right)}} \right\|_{{L^2}}^2 - \left\| {\hat w_{q + 1}^{\left( p \right)}} \right\|_{{L^2}}^2 = {\left\langle {w_{q + 1}^{\left( p \right)} - \hat w_{q + 1}^{\left( p \right)},w_{q + 1}^{\left( p \right)} + \hat w_{q + 1}^{\left( p \right)}} \right\rangle _{{\mathbb{T}^2}}}$, we infer that
\begin{align*}
\left| {\left\| {w_{q + 1}^{\left( p \right)}} \right\|_{{L^2}}^2 - \left\| {\hat w_{q + 1}^{\left( p \right)}} \right\|_{{L^2}}^2} \right| \leqslant & \left( {{{\left\| {w_{q + 1}^{\left( p \right)}} \right\|}_0} + {{\left\| {\hat w_{q + 1}^{\left( p \right)}} \right\|}_0}} \right){\left\| {w_{q + 1}^{\left( p \right)} - \hat w_{q + 1}^{\left( p \right)}} \right\|_0}  \\
\lesssim & {{\hat l}_q}\left( {{{\left\| {w_{q + 1}^{\left( p \right)}} \right\|}_0} + {{\left\| {\hat w_{q + 1}^{\left( p \right)}} \right\|}_0}} \right){\left\| {w_{q + 1}^{\left( p \right)}} \right\|_1}.
\end{align*}
Furthermore, owing to ${\left\langle {\hat w_{q + 1}^{\left( p \right)},{u_q} * {{\hat \varsigma }_{{{\hat l}_{q + 1}}}}} \right\rangle _{{\mathbb{T}^2}}} = {\left\langle {\mathrm{div}\, \mathcal{R}\hat w_{q + 1}^{\left( p \right)},{u_q} * {{\hat \varsigma }_{{{\hat l}_{q + 1}}}}} \right\rangle _{{\mathbb{T}^2}}} =  - {\left\langle {\mathcal{R}\hat w_{q + 1}^{\left( p \right)},\nabla \left( {{u_q} * {{\hat \varsigma }_{{{\hat l}_{q + 1}}}}} \right)} \right\rangle _{{\mathbb{T}^2}}}$, we have
\begin{align*}
\left| {{{\left\langle {\hat w_{q + 1}^{\left( p \right)},{u_q} * {{\hat \varsigma }_{{{\hat l}_{q + 1}}}}} \right\rangle }_{{\mathbb{T}^2}}}} \right| \leqslant & \left| {{{\left\langle {\mathcal{R}\hat w_{q + 1}^{\left( p \right)},\nabla \left( {{u_q} * {{\hat \varsigma }_{{{\hat l}_{q + 1}}}}} \right)} \right\rangle }_{{\mathbb{T}^2}}}} \right| \\
\lesssim & \lambda _{q + 1}^{\tilde \alpha  - 1}{\left\| {w_{q + 1}^{\left( p \right)}} \right\|_0}{\left\| {{u_q}} \right\|_1} , \ \ \forall \, \tilde \alpha  \in \left( {0,1 - 3\beta } \right).
\end{align*}
Therefore,
\begin{equation}\label{RSRI-P-mene-GAP-P-A-3}
\left| {\mathfrak{E}_{q + 1}^{\left( {osc} \right)}} \right| \lesssim \hat l{\delta _{q + 1}}{\lambda _{q + 1}} + \delta _q^{\frac{1}{2}}\delta _{q + 1}^{\frac{1}{2}}\frac{{{\lambda _q}}}{{\lambda _{q + 1}^{1 - \tilde \alpha }}} \lesssim {\delta _{q + 2}}  .  
\end{equation}
\par
Using the fact that ${u_q} * \left( {{{\hat \varsigma }_{{{\hat l}_{q + 1}}}} - {{\hat \varsigma }_{{{\hat l}_q}}}} \right) = {u_q} * {{\hat \varsigma }_{{{\hat l}_{q + 1}}}} - {u_q} + {u_q} - {{\hat u}_q}$, we obtain
\begin{equation}\label{RSRI-P-mene-GAP-P-A-4}
{\left\| {{u_q} * \left( {{{\hat \varsigma }_{{{\hat l}_{q + 1}}}} - {{\hat \varsigma }_{{{\hat l}_q}}}} \right)} \right\|_0} \leqslant {\left\| {{u_q} * {{\hat \varsigma }_{{{\hat l}_{q + 1}}}} - {u_q}} \right\|_0} + {\left\| {{{\hat u}_q} - {u_q}} \right\|_0} \lesssim \left( {{{\hat l}_{q + 1}} + {{\hat l}_q}} \right){\left\| {{u_q}} \right\|_1}  ,
\end{equation}
which implies that
\begin{equation*}
\left| {{{\left\langle {{u_q} * \left( {{{\hat \varsigma }_{{{\hat l}_{q + 1}}}} - {{\hat \varsigma }_{{{\hat l}_q}}}} \right),{u_q} * \left( {{{\hat \varsigma }_{{{\hat l}_{q + 1}}}} + {{\hat \varsigma }_{{{\hat l}_q}}}} \right)} \right\rangle }_{{\mathbb{T}^2}}}} \right| \lesssim \left( {{{\hat l}_{q + 1}} + {{\hat l}_q}} \right) ^ 2{\left\| {{u_q}} \right\|_1^2} .
\end{equation*}
Then, we get
\begin{equation}\label{RSRI-P-mene-GAP-P-A-5}
\left| {\mathfrak{E}_{q + 1}^{\left( {rem} \right)}} \right| \lesssim \left| {\left( {{\delta _{q + 2}} - {\delta _{q + 3}}} \right)} \right| + \left( {{{\hat l}_{q + 1}} + {{\hat l}_q}} \right)^2 \delta _q{\lambda _q^2} \lesssim {\delta _{q + 2}}  .
\end{equation}
\par
Using the same argument as in \eqref{RSRI-P-mene-GAP-P-A-4}, we have
\begin{equation*}
{\left\| {{\mathfrak{W}_q} * \left( {{{\hat \varsigma }_{{{\hat l}_{q + 1}}}} - {{\hat \varsigma }_{{{\hat l}_q}}}} \right)} \right\|_0} \lesssim \left( {{{\hat l}_{q + 1}} + {{\hat l}_q}} \right){\left\| {{\mathfrak{W}_q}} \right\|_1} ,
\end{equation*}
and
\begin{equation*}
{\left\| {{v_q} * \left( {{{\hat \varsigma }_{{{\hat l}_{q + 1}}}} - {{\hat \varsigma }_{{{\hat l}_q}}}} \right)} \right\|_0} \lesssim \left( {{{\hat l}_{q + 1}} + {{\hat l}_q}} \right){\left\| {{v_q}} \right\|_1} .
\end{equation*}
Then, we infer that
\begin{align*}
& \left| {{{\left\langle {{\mathfrak{\hat W}_q} + {\mathfrak{\hat W}_{q + 1}},{\mathfrak{W}_q} * \left( {{{\hat \varsigma }_{{{\hat l}_{q + 1}}}} - {{\hat \varsigma }_{{{\hat l}_q}}}} \right)} \right\rangle }_{{\mathbb{T}^2}}} + \left\| {{\mathfrak{W}_q} * \left( {{{\hat \varsigma }_{{{\hat l}_{q + 1}}}} - {{\hat \varsigma }_{{{\hat l}_q}}}} \right)} \right\|_{{L^2}}^2} \right| \\
\lesssim & \left( {{{\hat l}_{q + 1}} + {{\hat l}_q}} \right)\left( {{{\left\| {{\mathfrak{\hat W}_q}} \right\|}_0} + {{\left\| {{\mathfrak{\hat W}_{q + 1}}} \right\|}_0}} \right){\left\| {{\mathfrak{W}_q}} \right\|_1} + {\left( {{{\hat l}_{q + 1}} + {{\hat l}_q}} \right)^2}\left\| {{\mathfrak{W}_q}} \right\|_1^2 ,
\end{align*}
and
\begin{align*}
& \left| {{{\left\langle {{\mathfrak{\hat W}_q} - {{\hat u}_q} - {{\hat w}_{q + 1}} + {u_q} * \left( {{{\hat \varsigma }_{{{\hat l}_{q + 1}}}} - {{\hat \varsigma }_{{{\hat l}_q}}}} \right),\mathfrak{\hat W}_{q + 1}^{loc}} \right\rangle }_{{\mathbb{T}^2}}}} \right|  \\
\lesssim & \left| {{{\left\langle {\nabla \left( {{\mathfrak{\hat W}_q} - {{\hat u}_q}} \right),R\mathfrak{\hat W}_{q + 1}^{loc}} \right\rangle }_{{\mathbb{T}^2}}}} \right| + \left| {{{\left\langle {{{\hat w}_{q + 1}},\mathfrak{\hat W}_{q + 1}^{loc}} \right\rangle }_{{\mathbb{T}^2}}}} \right| + \left( {{{\hat l}_{q + 1}} + {{\hat l}_q}} \right){\left\| {\mathfrak{\hat W}_{q + 1}^{loc}} \right\|_0}{\left\| {{u_q}} \right\|_1}  .
\end{align*}
It follows from the It\^{o} isometry that
\begin{align*}
& {\left| {\mathbb{E}\int_0^t {{{\left\langle {{{\hat w}_{q + 1}} + {v_q} * \left( {{{\hat \varsigma }_{{{\hat l}_{q + 1}}}} - {{\hat \varsigma }_{{{\hat l}_q}}}} \right), \mathrm{d}{\mathfrak{W}_{q + 1}}} \right\rangle }_{{\mathbb{T}^2}}}} } \right|^2} + {\left| {\mathbb{E}\int_0^t {{{\left\langle {{{\hat v}_q},\mathrm{d}\mathfrak{W}_{q + 1}^{loc}} \right\rangle }_{{\mathbb{T}^2}}}} } \right|^2} \\
\lesssim & \left\| {\mathcal{R}{{\hat w}_{q + 1}}} \right\|_0^2\sum\limits_{j \leqslant q + 1} {\delta _{j + 1}^{\frac{1}{2}}\left\| {\cos \left( {{\lambda _j}{x_1}} \right){{\mathbf{e}}_2}} \right\|_1^2}  + {\left( {{{\hat l}_{q + 1}} + {{\hat l}_q}} \right)^2}\left\| {{v_q}} \right\|_1^2  + {\delta _{q + 2}}\left\| {\mathcal{R}\cos \left( {{\lambda _{q + 1}}{x_1}} \right){{\mathbf{e}}_2}} \right\|_0^2\left\| {{v_q}} \right\|_1^2  .
\end{align*}
Therefore, combining the continuity argument in the temporal interval $\left[ {0,\mathfrak{t}} \right]$, we obtain
\begin{align}\label{RSRI-P-mene-GAP-P-A-6}
\left| {\mathfrak{E}_{q + 1}^{\left( {sto} \right)}} \right| \lesssim & {{\hat l}_q}{\delta _{q + 1}}{\lambda _q} + {{\hat l}_q}\delta _q^{\frac{1}{2}}\delta _{q + 1}^{\frac{1}{2}}{\lambda _q} + \hat l_q^2{\delta _q}\lambda _q^2 + \delta _q^{\frac{1}{2}}\delta _{q + 1}^{\frac{1}{2}}\frac{{{\lambda _q}}}{{\lambda _{q + 1}^{1 - \tilde \alpha }}} \nonumber \\
& + {\delta _q}{\delta _{q + 1}}\frac{{\lambda _q^2}}{{\lambda _{q + 1}^{2 - 2\tilde \alpha }}} + \hat l_q^2{\delta _q}\lambda _q^2 + {\delta _{q + 2}}{\delta _q}\frac{{\lambda _q^2}}{{\lambda _{q + 1}^{2 - 2\tilde \alpha }}}  \nonumber \\
\lesssim & {\delta _{q + 2}}  .
\end{align}
\par
Finally, it follows from \eqref{RSRI-P-mene-GAP-P-A-1}, \eqref{RSRI-P-mene-GAP-P-A-2}, \eqref{RSRI-P-mene-GAP-P-A-3}, \eqref{RSRI-P-mene-GAP-P-A-5} and \eqref{RSRI-P-mene-GAP-P-A-6} that \eqref{RSRI-P-mene-GAP} holds. The proof is complete. \qed


\addtocontents{toc}{\protect\setcounter{tocdepth}{2}}   

\vspace{4mm}

\section*{Declarations}
\noindent \textbf{Acknowledgements}: This work is supported by NSFC (Grants 12531009, 11925102 and 12171343).

\vspace{2mm}



\noindent \textbf{Conflict of Interest Statement}: The authors have no conflicts to disclose.

\vspace{2mm}

\noindent \textbf{Data Availability}: No data was used for the research described in the article.

\vspace{4mm}





\appendix
\addcontentsline{toc}{section}{Appendices} 
\addtocontents{toc}{\protect\setcounter{tocdepth}{0}}
\section{H\"{o}lder spaces and transport estimates}\label{AAA-A}
Suppose that $f\left( x \right):\ {\mathbb{T}^2} \longrightarrow \mathbb{R}^2$ is a vector function. For $N \in {\mathbb{Z}^ + } \cup \left\{ 0 \right\}$ and $\alpha \in \left[ {0,1} \right)$, define the semi-norm
\begin{equation*}
{\left[ f \right]_N} \triangleq \mathop {\sup }\limits_{\left| \theta  \right| = N} \mathop {\sup }\limits_{x \in {\mathbb{T}^2}} \left| {{D^\theta }f\left( x \right)} \right|,
\end{equation*}
and the H\"{o}lder semi-norm
\begin{equation*}
{\left[ f \right]_{N + \alpha }} \triangleq \mathop {\sup }\limits_{\left| \theta  \right| = N} \mathop {\sup }\limits_{x,y \in {\mathbb{T}^2},x \ne y} \frac{{\left| {{D^\theta }f\left( x \right) - {D^\theta }f\left( y \right)} \right|}}{{{{\left| {x - y} \right|}^\alpha }}} ,
\end{equation*}
where $\theta $ is an integer multi-index, and ${{D^\theta }}$ means the derivative operator. Let ${C^N}\left( {{\mathbb{T}^2}} \right)$ be the space of continuous functions, whose norm is
\begin{equation*}
{\left\| f \right\|_N} \triangleq \sum\limits_{n = 0}^N {{{\left[ f \right]}_n}}  <   \infty  ,  \ \ \forall \,  f \left( x \right) \in {C^N}\left( {{\mathbb{T}^2}} \right) .
\end{equation*}
Let ${C^{N + \alpha}}\left( {{\mathbb{T}^2}} \right)$ be the H\"{o}lder space with the norm
\begin{equation*}
{\left\| f \right\|_{N + \alpha }} \triangleq {\left\| f \right\|_N} + {\left[ f \right]_{N + \alpha }} <   \infty , \ \ \forall \,  f \left( x \right) \in {C^{N + \alpha}}\left( {{\mathbb{T}^2}} \right).
\end{equation*}
\par
\begin{lemma}[\cite{Zbl1556.35231}]\label{STN-HCS-A}
Assume that $f\left( x \right)$ and $g\left( x \right)$ belong to ${C^{N + \alpha}}\left( {{\mathbb{T}^2}} \right)$ for $N \in {\mathbb{Z}^ + } \cup \left\{ 0 \right\} $ and $\alpha \in \left[ {0,1} \right)$. If $f\left( x \right)g\left( x \right)$ is in ${C^{N + \alpha}}\left( {{\mathbb{T}^2}} \right)$, we then have
\begin{equation*}
{\left\| {fg} \right\|_{N + \alpha }} \lesssim {\left\| f \right\|_{N + \alpha }}{\left\| g \right\|_0} + {\left\| f \right\|_0}{\left\| g \right\|_{N + \alpha }}  .
\end{equation*}
\end{lemma}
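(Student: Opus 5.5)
The plan is to prove Lemma \ref{STN-HCS-A} by first handling the integer case $\alpha=0$ with the Leibniz rule and interpolation, and then treating the H\"{o}lder seminorm separately.

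\textbf{Integer case.} For a multi-index $\theta$ with $|\theta|=n\leqslant N$, the Leibniz rule gives
\begin{equation*}
D^\theta (fg)=\sum_{\theta_1+\theta_2=\theta}\binom{\theta}{\theta_1}D^{\theta_1}f\,D^{\theta_2}g ,
\end{equation*}
and hence
\begin{equation*}
[fg]_n\lesssim \sum_{j=0}^n [f]_j[g]_{n-j}.
\end{equation*}
The mixed products $[f]_j[g]_{n-j}$ with $0<j<n$ are the only nontrivial terms. To control them I will use the standard interpolation inequality on $\mathbb{T}^2$,
\begin{equation*}
[f]_j\lesssim \|f\|_0^{1-j/n}[f]_n^{j/n},
\end{equation*}
which follows from Taylor expansion or the Gagliardo--Nirenberg argument applied to sup norms.

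Applying it to both factors, and writing $X=\|f\|_0[g]_n$, $Y=[f]_n\|g\|_0$, I get
\begin{equation*}
[f]_j[g]_{n-j}\lesssim \big(\|f\|_0[g]_n\big)^{1-j/n}\big([f]_n\|g\|_0\big)^{j/n}=X^{1-j/n}Y^{j/n}.
\end{equation*}
By Young's inequality this is bounded by $X+Y$. Summing over $n\leqslant N$ and using $[f]_n\leqslant\|f\|_N$ gives the claim for $\alpha=0$.

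\textbf{H\"{o}lder case.} For $\alpha\in(0,1)$ I additionally need to bound $[fg]_{N+\alpha}$. I will write
\begin{equation*}
D^{\theta_1}f\,D^{\theta_2}g(x)-D^{\theta_1}f\,D^{\theta_2}g(y)=\big(D^{\theta_1}f(x)-D^{\theta_1}f(y)\big)D^{\theta_2}g(x)+D^{\theta_1}f(y)\big(D^{\theta_2}g(x)-D^{\theta_2}g(y)\big).
\end{equation*}
This yields
\begin{equation*}
[fg]_{N+\alpha}\lesssim\sum_{j\leqslant N}\Big([f]_{j+\alpha}[g]_{N-j}+[f]_j[g]_{N-j+\alpha}\Big).
\end{equation*}
Each term is then handled with the fractional interpolation
\begin{equation*}
[f]_{s}\lesssim \|f\|_0^{1-s/(N+\alpha)}[f]_{N+\alpha}^{s/(N+\alpha)},\qquad 0\leqslant s\leqslant N+\alpha .
\end{equation*}
Since the exponents in each product add up to $N+\alpha$ in total, the same Young's inequality argument closes the estimate.

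\textbf{Main obstacle.} The hardest step will be justifying the interpolation inequalities for non-integer $s$ on the torus with sup-type seminorms. I would take these from the standard reference (e.g. the appendix of \cite{MR3374958}) rather than reprove them. Lower-order terms such as $\|f\|_0\|g\|_0$ are absorbed directly by the right-hand side.
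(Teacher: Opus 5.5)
Your proof is correct and is the standard argument. The paper gives no proof of Lemma~\ref{STN-HCS-A} of its own, only citing \cite{Zbl1556.35231}, and the standard proof behind that citation (as in the usual H\"{o}lder-space appendices of the convex integration literature) is exactly your combination of the Leibniz rule, the interpolation inequality $[f]_s\lesssim\|f\|_0^{1-s/r}[f]_r^{s/r}$ for $0\leqslant s\leqslant r$, and Young's inequality. The interpolation step you flagged is harmless on $\mathbb{T}^2$: Taylor expansion at scales $\varepsilon\lesssim 1$ gives it when $[f]_r\gtrsim\|f\|_0$, and in the opposite regime the mean-zero property of derivatives of periodic functions gives $[f]_s\lesssim[f]_r$ for $1\leqslant s\leqslant r$ (together with $[f]_s\lesssim\|f\|_0^{1-s}[f]_1^{s}$ for $s<1$), so no additive $\|f\|_0$ term appears.
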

\par
\begin{lemma}[\cite{Zbl1307.35205}]\label{ONCS}
Let $f:\ {\mathbb{R}^m}  \longrightarrow \mathbb{R}$ and $u:\ {\mathbb{R}^n} \longrightarrow {\mathbb{R}^m} $ be two smooth functions. There then exists a positive constant $C\left( {n,m,N} \right)$ such that
\begin{equation*}
{\left[ {f\left( u \right)} \right]_N} \leqslant C\left( {n,m,N} \right)\left( {{{\left[ f \right]}_1}{{\left\| {Du} \right\|}_{N - 1}} + {{\left\| {Df} \right\|}_{N - 1}}\left[ u \right]_1^N} \right) , \ \ \forall \, N \in {\mathbb{Z}^ + }  .
\end{equation*}
\end{lemma}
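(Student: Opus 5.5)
This is Lemma~\ref{ONCS}, the classical chain-rule (Faà di Bruno type) estimate for composition in $C^N$. I would prove it by induction on $N$, working with the derivatives of $u$ rather than $u$ itself, and closing with interpolation.

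\textbf{Base case $N=1$.} The chain rule gives $D(f\circ u)=Df(u)\,Du$. Hence
\[
[f(u)]_1\le [f]_1\,\|Du\|_0 ,
\]
which is the claim; the second term $\|Df\|_0[u]_1$ is even redundant here.

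\textbf{Expansion for general $N$.} Iterating the chain rule (Faà di Bruno's formula) gives $D^N(f\circ u)$ as a finite sum, with combinatorial constants depending only on $n,m,N$, of terms
\[
(D^k f)(u)\; D^{\beta_1}u\cdots D^{\beta_k}u,
\qquad 1\le k\le N,\quad |\beta_i|\ge 1,\quad \sum_i|\beta_i|=N .
\]
Each such term is bounded by $\|D^kf\|_0\prod_{i=1}^k\|D^{\beta_i}u\|_0$.

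\textbf{Interpolation, the key step.} Write $g=Du$ and $j_i=|\beta_i|-1\ge 0$, so that $\sum_i j_i=N-k$. The standard interpolation inequality $\|g\|_j\lesssim \|g\|_0^{1-j/(N-1)}\|g\|_{N-1}^{j/(N-1)}$ applies to each factor. Multiplying the factors gives
\[
\prod_{i=1}^k\|D^{\beta_i}u\|_0\lesssim \|Du\|_0^{\,k-\frac{N-k}{N-1}}\;\|Du\|_{N-1}^{\frac{N-k}{N-1}} .
\]
The same interpolation applies to $Df$ in $C^{N-1}$:
\[
\|D^kf\|_0\lesssim [f]_1^{\,1-\frac{k-1}{N-1}}\;\|Df\|_{N-1}^{\frac{k-1}{N-1}} .
\]
Set $\theta=\frac{k-1}{N-1}\in[0,1]$. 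Substituting and regrouping, each term is bounded by
\[
\big([f]_1\|Du\|_{N-1}\big)^{1-\theta}\,\big(\|Df\|_{N-1}\|Du\|_0^{N}\big)^{\theta}.
\]
Checking the exponent of $\|Du\|_0$: $k-(N-k)/(N-1)=N\theta$, so it matches. Young's inequality $a^{1-\theta}b^\theta\le a+b$ then bounds each term by
\[
[f]_1\|Du\|_{N-1}+\|Df\|_{N-1}\|Du\|_0^N .
\]

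\textbf{Conclusion.} The statement uses $[u]_1^N$, and $[u]_1=\|Du\|_0$, so the bound above is exactly the required one. Summing the finitely many terms gives the constant $C(n,m,N)$.

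\textbf{Main obstacle.} The delicate step is the exponent bookkeeping in the interpolation, i.e.\ checking that the exponents in each Faà di Bruno term match exactly one convex combination of the two endpoint products. This works because both endpoints are homogeneous in the right scaling. The interpolation inequalities are used on $\mathbb{R}^n$ with sup norms, as in the cited reference \cite{Zbl1307.35205}; there they are standard.
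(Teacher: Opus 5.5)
Your argument is correct and is the standard proof from the cited source; the paper itself only quotes the lemma. You expand by Faà di Bruno, interpolate $\|D^kf\|_0$ between $[f]_1$ and $\|Df\|_{N-1}$ and each $\|D^{\beta_i}u\|_0$ between $\|Du\|_0$ and $\|Du\|_{N-1}$, then apply Young's inequality, and your exponent check $k-\frac{N-k}{N-1}=N\theta$ is exactly what makes it close. The constant $C(n,m,N)$ absorbs only the combinatorial coefficients, the interpolation constants on $\mathbb{R}^n$ and $\mathbb{R}^m$, and the comparability of $[u]_1$ with $\|Du\|_0$ under the paper's componentwise seminorms.
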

\par
\begin{lemma}[\cite{Zbl1255.53038}]\label{MMD-A-H}
Let ${\varsigma _l}$ be the symmetric mollifier with the scale $l > 0$. Then
\begin{equation*}
{\left\| {f - f*{\varsigma _l}} \right\|_N} \lesssim {l^2}{\left\| f \right\|_{N + 2}} , \ \ \forall \, N \in {\mathbb{Z}^ + } \cup \left\{ 0 \right\} ,
\end{equation*}
and
\begin{equation*}
{\left\| {\left( {fg} \right)*{\varsigma _l} - \left( {f*{\varsigma _l}} \right)\left( {g*{\varsigma _l}} \right)} \right\|_N} \lesssim {l^{2 - N + M}}\left( {{{\left\| f \right\|}_{M + 1}}{{\left\| g \right\|}_1} + {{\left\| f \right\|}_1}{{\left\| g \right\|}_{M + 1}}} \right) , \ \ \forall \, N , M \in {\mathbb{Z}^ + } \cup \left\{ 0 \right\} .
\end{equation*}
\end{lemma}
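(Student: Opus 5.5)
The plan is to prove both estimates of Lemma \ref{MMD-A-H} by expressing each quantity as an integral against $\varsigma_l$ and Taylor-expanding in the displacement $y$, with $|y|\le l$ on $\mathrm{supp}\,\varsigma_l$. Two properties of the mollifier will be used throughout. First, it is symmetric, so $\int y\,\varsigma_l(y)\,\mathrm{d}y=0$. Second, $\|D^k\varsigma_l\|_{L^1}\lesssim l^{-k}$.

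\emph{First estimate.} Since $D^\theta$ commutes with convolution, it suffices to treat $N=0$ with $f$ replaced by $D^\theta f$ for $|\theta|\le N$. Write
\begin{equation*}
f(x)-f*\varsigma_l(x)=\int_{\mathbb{T}^2}\bigl(f(x)-f(x-y)\bigr)\varsigma_l(y)\,\mathrm{d}y ,
\end{equation*}
and Taylor-expand to second order:
\begin{equation*}
f(x-y)=f(x)-y\cdot\nabla f(x)+\int_0^1(1-s)\,y^{\mathrm T}D^2f(x-sy)\,y\,\mathrm{d}s .
\end{equation*}
The zeroth-order term cancels. The first-order term integrates to zero against the symmetric kernel. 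The remainder is bounded by $|y|^2\|D^2 f\|_0\le l^2\|f\|_2$. Summing over $|\theta|\le N$ gives $\|f-f*\varsigma_l\|_N\lesssim l^2\|f\|_{N+2}$.

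\emph{Second estimate.} I would use the Constantin--E--Titi identity
\begin{equation*}
(fg)*\varsigma_l-(f*\varsigma_l)(g*\varsigma_l)=r_l(f,g)-(f-f*\varsigma_l)(g-g*\varsigma_l),
\end{equation*}
where
\begin{equation*}
r_l(f,g)(x)=\int_{\mathbb{T}^2}\bigl(f(x-y)-f(x)\bigr)\bigl(g(x-y)-g(x)\bigr)\varsigma_l(y)\,\mathrm{d}y .
\end{equation*}
This identity is checked by expanding the product. For $N=0$, each difference is bounded by $l\|f\|_1$, respectively $l\|g\|_1$, by the mean value theorem. Hence both terms are $\lesssim l^2\|f\|_1\|g\|_1$.

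For a general order $N$, write $D^N$ as $D^{N-M}D^M$ with $0\le M\le N$.
\begin{itemize}
\item The $M$ derivatives are distributed by the Leibniz rule onto the two difference factors. A difference of a $k$-th derivative is bounded by $l\|f\|_{k+1}$, and the extreme terms dominate by interpolation, which produces $l^2(\|f\|_{M+1}\|g\|_1+\|f\|_1\|g\|_{M+1})$.
\item The remaining $N-M$ derivatives are moved onto the kernel. For $r_l$ this is done by changing variables $z=x-y$, so that the $x$-dependence sits in $\varsigma_l(x-z)$. For the product term one uses $f-f*\varsigma_l=f*(\delta-\varsigma_l)$ together with the $C^k$ bounds of $f*\varsigma_l$. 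Each derivative landing on $\varsigma_l$ costs $l^{-1}$.
\end{itemize}
Together these give the factor $l^{2-(N-M)}=l^{2-N+M}$.

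The main obstacle is the bookkeeping in this last step. Derivatives of $r_l$ in $x$ hit both $f(x)$ and $f(x-y)$, so a clean transfer onto the kernel requires rewriting $r_l$ as $(fg)*\varsigma_l - f\,(g*\varsigma_l) - g\,(f*\varsigma_l) + fg$. On this form one applies Leibniz and the mollifier bounds term by term, keeping only those terms where enough differences survive to retain the $l^2$ gain.

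I also note that the estimate is meant in the range $M\le N$, as in the cited source. For $M>N$ the power $l^{2-N+M}$ would beat the sharp $l^2$ scaling. So I would state and prove it for $0\le M\le N$, which covers the uses made in the paper ($N=M$ and $M=0$).
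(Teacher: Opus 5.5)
The paper gives no proof of this lemma; it quotes it from Conti--De~Lellis--Sz\'ekelyhidi, so your argument has to stand on its own. Most of it does:
\begin{itemize}
\item the first estimate and the case $N=0$ are fine;
\item so is the reduction
\[
D^\theta C_l(f,g)=\sum_{\theta_1+\theta_2=\theta}\binom{\theta}{\theta_1}C_l\bigl(D^{\theta_1}f,D^{\theta_2}g\bigr),\qquad C_l(F,G)\triangleq(FG)*\varsigma_l-(F*\varsigma_l)(G*\varsigma_l),
\]
followed by interpolation, which settles $M=N$ (the case invoked in the paper);
\item one extra derivative ($N-M=1$) can still be absorbed by your splitting, since a derivative of a difference costs $\|F\|_1$ instead of $l\|F\|_1$.
\end{itemize}
Your remark that the claim fails for $M>N$ is also correct. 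For $f=g=\sin x_1$ one computes $C_l(f,f)=c\,l^2(1+\cos 2x_1)+O(l^4)$ with $c>0$, while the right-hand side is $O(l^{2+M-N})$.

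\textbf{The gap is the case $N-M\ge 2$.} This includes $M=0$, $N\ge 2$, which you list as covered. Here the Constantin--E--Titi splitting cannot work, and the remedy you sketch does not repair it.
\begin{itemize}
\item By your own rewriting, $r_l(F,G)=(FG)*\varsigma_l-F\,(G*\varsigma_l)-G\,(F*\varsigma_l)+FG$.
\item The product $(F-F*\varsigma_l)(G-G*\varsigma_l)$ contains exactly the same unmollified terms $FG-F\,(G*\varsigma_l)-G\,(F*\varsigma_l)$, and these cancel only in the difference.
\item So each piece on its own is no smoother than $F$ and $G$. A second derivative of either piece produces $D^2F$ or $D^2G$. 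With $F=D^{\theta_1}f$, $G=D^{\theta_2}g$, that means $\|f\|_{M+2}$ or $\|g\|_{M+2}$, which the right-hand side does not allow.
\end{itemize}
Writing $f-f*\varsigma_l=f*(\delta-\varsigma_l)$ does not help, because $\delta-\varsigma_l$ has no $L^1$ derivative bounds. Nor can you ``keep only the terms where enough differences survive'': the remaining terms do not vanish, they only cancel between the two pieces.

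\textbf{The missing idea} is a representation of the commutator in which all $x$-dependence sits in the kernel. Expanding the product and using $\int\varsigma_l=1$ gives
\[
C_l(F,G)(x)=\frac12\int_{\mathbb{T}^2}\int_{\mathbb{T}^2}\bigl(F(y)-F(z)\bigr)\bigl(G(y)-G(z)\bigr)\varsigma_l(x-y)\,\varsigma_l(x-z)\,\mathrm{d}y\,\mathrm{d}z .
\]
Every $x$-derivative now falls on $\varsigma_l(x-y)\varsigma_l(x-z)$, whose $k$-th derivatives have $L^1(\mathrm{d}y\,\mathrm{d}z)$-norm $\lesssim l^{-k}$. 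On the support $|y-z|\le 2l$, so the integrand is at most $4l^2\|F\|_1\|G\|_1$. Hence $[C_l(F,G)]_k\lesssim l^{2-k}\|F\|_1\|G\|_1$ for every $k\ge 0$.

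An equivalent route: $C_l$ is unchanged when $F,G$ are replaced by $F-F(x_0)$, $G-G(x_0)$, and at $x_0$ all derivatives then land on mollified quantities.

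To finish:
\begin{itemize}
\item apply this bound with $k=n-M$ to each $C_l(D^{\theta_1}f,D^{\theta_2}g)$, $|\theta_1|+|\theta_2|=M$, and interpolate;
\item for seminorms of order $n<M$, use the order-$n$ version of the $M=N$ bound together with $l\le 1$.
\end{itemize}
This yields the estimate for all $0\le M\le N$.
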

\par
We state the standard estimates for the transport equation driven by the velocity $u\left( t, x \right)$.
\par
\begin{lemma}[\cite{MR3302631,MR3374958}]\label{TSP-E-L-A-1}
Suppose $\left| {t - {t_0}} \right|{\left\| u \right\|_1} \lesssim 1$. Let $w{\left( t ,x \right)}$ be the solution of the transport equation
\begin{equation*}
\left\{ {\begin{array}{*{20}{l}}
{\partial _t}w + \left( {v \cdot \nabla } \right)w = u ,   &\mbox{on}\ \left[ {{t_0}, T } \right]  \times  \mathbb{T}^2, \\
w\left( {{t_0}} \right) = {w_0} ,
\end{array}} \right.
\end{equation*}
where $v\left( t, x \right)$ and $u\left( t, x \right)$ are the vector functions on $\left[ {{t_0}, T } \right]  \times   \mathbb{T}^2$. Then, for any $\alpha \in \left[ {0,1} \right)$ and $N \in {\mathbb{Z}^ + } $, we have
\begin{equation}\label{KSIDJIUF-A-1}
{\left\| {w\left( t \right)} \right\|_\alpha } \lesssim {\left\| {{w_0}} \right\|_\alpha } + \int_{{t_0}}^t {{{\left\| {u\left( s  \right)} \right\|}_\alpha }} \mathrm{d}s  ,
\end{equation}
and
\begin{equation}\label{KSIDJIUF-A-2}
{\left[ {w\left( t \right)} \right]_{N + \alpha }} \lesssim  {\left[ {{w_0}} \right]_{N + \alpha }} + \left| {t - {t_0}} \right|{\left[ v \right]_{N + \alpha }}{\left[ {{w_0}} \right]_1}  + \int_{{t_0}}^t {{{\left[ {u\left( s  \right)} \right]}_{N + \alpha }} + \left| {t - s } \right|{{\left[ v \right]}_{N + \alpha }}{{\left[ {u\left( s  \right)} \right]}_1}} \mathrm{d}s  .
\end{equation}
\end{lemma}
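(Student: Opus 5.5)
The statement is the transport estimate (Lemma \ref{TSP-E-L-A-1}): zeroth-order bound \eqref{KSIDJIUF-A-1} and the higher-order bound \eqref{KSIDJIUF-A-2}. I would prove it by working along characteristics. Let $X(s;t,x)$ denote the flow of $v$, namely $\partial_s X = v(s,X)$ with $X(t;t,x)=x$. Then
\begin{equation*}
w(t,x) = w_0\bigl(X(t_0;t,x)\bigr) + \int_{t_0}^t u\bigl(s,X(s;t,x)\bigr)\,\mathrm{d}s .
\end{equation*}
I read the smallness hypothesis as $|t-t_0|\,\|v\|_1\lesssim 1$, the standard form. This is the version actually used in the paper, where $\tau_q\|\bar v_q\|_1\lesssim 1$.

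\textbf{Zeroth-order bound.} The $C^0$ part is immediate from the representation. For the $C^\alpha$ seminorm I compose with $X$, which gives $[f\circ X]_\alpha\le [f]_\alpha\,\|DX\|_0^\alpha$. Gronwall applied to $\partial_s DX = Dv(X)\,DX$ yields $\|DX(s)\|_0\le \exp(|t-s|\,\|v\|_1)\lesssim 1$. This gives \eqref{KSIDJIUF-A-1}.

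\textbf{Higher-order bound.} For \eqref{KSIDJIUF-A-2} I would avoid differentiating the composition $N$ times directly and use the commutator approach instead. Differentiate the equation by $D^\theta$ with $|\theta|=N$:
\begin{equation*}
(\partial_t+v\cdot\nabla)D^\theta w = D^\theta u - \sum_{0<\theta'\le\theta}\binom{\theta}{\theta'}\,(D^{\theta'}v\cdot\nabla)\,D^{\theta-\theta'}w .
\end{equation*}
Applying the zeroth-order estimate to this transported quantity bounds $[w(t)]_{N+\alpha}$ by $[w_0]_{N+\alpha}+\int([u]_{N+\alpha}+\text{commutator})$. The commutator terms are controlled by the interpolation inequality
\begin{equation*}
\|D^{\theta'}v\,D^{\theta-\theta'+1}w\|_\alpha\lesssim [v]_1[w]_{N+\alpha}+[v]_{N+\alpha}[w]_1 .
\end{equation*}
The term $[v]_1[w]_{N+\alpha}$ is absorbed by Gronwall, using $|t-t_0|\,[v]_1\lesssim1$. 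The term $[v]_{N+\alpha}[w]_1$ requires the $N=1$ case first. The $C^1$ estimate $[w(s)]_1\lesssim [w_0]_1+\int_{t_0}^s[u]_1$ follows from the same argument with $N=1$, where the commutator is just $Dv\,Dw$ and Gronwall closes it. Substituting this bound in and exchanging the order of integration produces exactly the terms $|t-t_0|[v]_{N+\alpha}[w_0]_1$ and $\int|t-s|[v]_{N+\alpha}[u(s)]_1\,\mathrm{d}s$.

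\textbf{Main obstacle.} The delicate step is the Hölder interpolation of the commutator sum. All intermediate products $[v]_{j+1}[w]_{N-j+\alpha}$ must collapse into the two endpoint terms. I would handle this with the standard convexity inequality $[f]_{j}[g]_{k}\lesssim [f]_{0}[g]_{j+k}+[f]_{j+k}[g]_0$, applied to $f=Dv$ and $g=Dw$ in Hölder scales. Some care is needed at the $\alpha$ level, so I would cite the Hölder interpolation lemma used in \cite{MR3302631,MR3374958}. Once that is in place, the remainder is the Gronwall bookkeeping described above.
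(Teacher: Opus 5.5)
Your proposal is correct and is the classical argument from the references the paper cites for this lemma; the paper itself gives no proof. That argument runs as follows: a characteristic representation with $\|DX\|_0\le e^{|t-t_0|\|v\|_1}$ gives \eqref{KSIDJIUF-A-1}. You need it in its seminorm form, $[f\circ X]_\alpha\le[f]_\alpha\|DX\|_0^\alpha$, when you apply it to $D^\theta w$. Then you differentiate the equation, interpolate the commutator into $[v]_1[w]_{N+\alpha}+[v]_{N+\alpha}[w]_1$, close with Gronwall, and insert the $C^1$ bound followed by Fubini to obtain \eqref{KSIDJIUF-A-2}.

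Reading the hypothesis as $|t-t_0|\|v\|_1\lesssim 1$ is not merely the standard form but necessary. With $\|u\|_1$ as literally written, the lemma is false: take $u=0$, $w_0=\sin(x_1)\,\mathbf{e}_2$ and $v=(A\sin x_2,0)$. Then $w(t_0+1,x)=\sin(x_1-A\sin x_2)\,\mathbf{e}_2$, so $[w(t_0+1)]_\alpha\gtrsim A^\alpha\to\infty$ for $\alpha>0$, while $\|w_0\|_\alpha\sim 1$.
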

\par
We remark that \eqref{KSIDJIUF-A-1} and \eqref{KSIDJIUF-A-2} hold pathwise for every fixed sample path, when the driving flow $v\left( t, x \right)$ carries intrinsic randomness.

\section{Martingale solutions}\label{TPT-A-1}
For $m \in {\mathbb{Z}^ + } \cup \left\{ 0 \right\}$ and $p \geqslant 1$, let ${W^{m,p}}\left( {{\mathbb{T}^2}} \right)$ be the Sobolev space with the norm ${\left\|  \cdot  \right\|_{{W^{m,p}}}}$. It is clear that ${W^{0,p}}\left( {{\mathbb{T}^2}} \right)$ equals the Lebesgue space ${L^p}\left( {{\mathbb{T}^2}} \right)$. Define the Hilbert space
\begin{equation*}
\mathcal{H} \triangleq \left\{ {u\left( x \right) \in {L^{2}}\left( {{\mathbb{T}^2}} \right) \ \big{|}\  \mathrm{div}\,   u\left( x \right) = 0,\ \int_{{\mathbb{T}^2}} {u\left( {t,x} \right)}\ \mathrm{d}x = \mathbf{0}} \right\},
\end{equation*}
whose inner product is ${\left\langle {u\left( x \right),v\left( x \right)} \right\rangle _{{\mathbb{T}^2}}} = \int_{{\mathbb{T}^2}} {u \left( x \right) \cdot v \left( x \right)} \mathrm{d}x $. Assume that $\mathscr{W}$ is a collection of Wiener processes $\mathfrak{W} \left( {t,x} \right)$ satisfying
\begin{equation}\label{skidjfk-sdvf-3d-a}
\mathfrak{W}\left( {t,x} \right) = \sum\limits_{j \in {\mathbb{Z}^ + } \cup \left\{ 0 \right\}} {\frac{{{\varsigma _j}}}{\sqrt 2 \pi }{\mathfrak{B}_j}\left( t \right)\cos \left( {k_j {x_1}} \right){{\mathbf{e}}_2}} ,\ \ \mbox{for}\  {\varsigma _j} \in \mathbb{R},
\end{equation}  
where ${{\mathfrak{B} _j}\left( t \right)}$ are mutually independent $\mathbb{R}$-valued Brownian motions on the complete probability space $\left( {\Omega ,\mathcal{F},\mathbb{P}} \right)$, and $k_j \in \mathbb{Z}^+$. Suppose that $\sum\limits_{j \in \mathbb{Z}^+} {\varsigma _j^2}   <  \infty$. We then have
\begin{equation*}
\mathbb{E}{\left\langle {{\mathfrak{W}}\left( t \right),{\mathfrak{W}}\left( t \right)} \right\rangle _{{\mathbb{T}^2}}} = t \cdot \mathrm{Tr} \, Q = t \cdot \sum\limits_{j \in \mathbb{Z}^+} {\varsigma _j^2}  <  \infty , \ \ \forall \, t \in {\left[ {0,T} \right]} . 
\end{equation*}
Moreover, in view of the condition $\mathrm{Tr} \left( { {Q^{\frac{1}{2}}}{{\left( { - \Delta } \right)}^{\frac{1 + \gamma }{2} + \rho}}{{\left( {{Q^{\frac{1}{2}}}} \right)}^*}} \right) <  \infty$ for some $\rho > 0$, we have
\begin{equation*}
{\mathfrak{W}}\left( {t,x} \right) \in {L^2}\left( {\Omega ;{W^{1 + \gamma  + 2\rho ,2}}\left( {{\mathbb{T}^2}} \right)} \right) \subset  {L^2}\left( {\Omega ;{C^{\gamma}}\left( {{\mathbb{T}^2}} \right)} \right) , \ \ \forall \, {t \in \left[ {0,T} \right]} .
\end{equation*}
\par
In fact, according to $\mathbb{E}\left( {{\mathfrak{B} _i}\left( t \right){\mathfrak{B} _j}\left( t \right)} \right) = 0$ for any $i \ne j$, it follows from Sobolev embedding inequality that 
\begin{align}
\mathop {\sup }\limits_{t \in \left[ {0,T} \right]}   \mathbb{E}\left\| {\mathfrak{W}\left( t \right)} \right\|_\gamma ^2 \lesssim & \mathop {\sup }\limits_{t \in \left[ {0,T} \right]} \mathbb{E}{\left( {\sum\limits_{j \in \mathbb{Z}^+ \cup \left\{ 0 \right\}} {\frac{{\varsigma _j}}{\sqrt 2 \pi }{\mathfrak{B} _j}\left( t \right)} \left\| {{e_j}} \right\|_\gamma } \right)^2} \notag \\
\lesssim & \mathop {\sup }\limits_{t \in \left[ {0,T} \right]} \mathbb{E}{\left( {\sum\limits_{j \in \mathbb{Z}^+ \cup \left\{ 0 \right\}} {\frac{{\varsigma _j}}{\sqrt 2 \pi }{\mathfrak{B} _j}\left( t \right)} \left\| {{e_j}} \right\|_{{W^{1 + \gamma  + 2\rho ,2}}} } \right)^2} \notag \\
\lesssim & T \cdot \mathrm{Tr} \left( { {Q^{\frac{1}{2}}}{{\left( { - \Delta } \right)}^{\frac{1 + \gamma }{2} + \rho}}{{\left( {{Q^{\frac{1}{2}}}} \right)}^*}} \right)  .\label{SLOK-WLD}
\end{align}
Define the stopping time
\begin{equation}\label{IDJIFHISDF-ADS-1}
{\mathfrak{t}_\mathfrak{S}} \triangleq \inf \left\{ {t \geqslant 0 : {{\left\| \mathfrak{W} \right\|} ^2 _{C\left( {\left[ {0,t} \right];{C^{\gamma}}\left( {{\mathbb{T}^2}} \right)} \right)}} > \mathfrak{S}  T \cdot \mathrm{Tr} \left( { {Q^{\frac{1}{2}}}{{\left( { - \Delta } \right)}^{\frac{1 + \gamma }{2} + \rho}}{{\left( {{Q^{\frac{1}{2}}}} \right)}^*}} \right)} \right\} , \ \ \mbox{for some}\ \mathfrak{S} > 1 .
\end{equation}
It follows from \eqref{SLOK-WLD} that
\begin{equation*}
\mathbb{E} \left( { {\mathfrak{t}_\mathfrak{S}} } \right)  <   \infty ,
\end{equation*}
which implies that the stopping time ${\mathfrak{t}_\mathfrak{S}}$ is well-defined.
\par
\begin{definition} [\cite{Zbl1548.60081}]\label{WMStoBous}
The combination $\left\{ {\left( {{\Omega } ,\mathcal{F}_t,\mathbb{P}} \right),{u }\left( {t,x} \right),\mathfrak{p}{\left( {t , x} \right)},\mathfrak{W}\left( {t,x} \right)} \right\}$ is said to be a martingale solution to Eq.\eqref{2DEEUGE1}, if it satisfies the following conditions.
\par
\noindent$(\rmnum{1})$. The Wiener process $\mathfrak{W} \left( {t} \right)$ generates right-continuous filtration ${{\left\{ {{\mathcal{F}_t}} \right\}}_{t \in \left[ {0,T} \right]}}$, and $\left( {{\Omega } ,\mathcal{F}_t,\mathbb{P}} \right)$ is the complete probability space for any ${t \in \left[ {0,T} \right]}$;
\par
\noindent$(\rmnum{2})$. There exists a stopping time ${\mathfrak{t}_{\mathfrak{S}}}$ such that for any $\psi \left( x \right) \in {C^\infty }\left( {{\mathbb{T}^2}} \right) $ and $t \in \left[ {0,T \wedge {\mathfrak{t}_{\mathfrak{S}}}} \right]$,
\begin{equation*}
{\left\langle {u\left( t \right) - u\left( 0 \right),\psi } \right\rangle _{{\mathbb{T}^2}}} + \int_0^t {{{\left\langle {\left( {u\left( s \right) \cdot \nabla } \right)u\left( s \right) + \nabla \mathfrak{p} \left( s \right),\psi} \right\rangle }_{{\mathbb{T}^2}}}} \mathrm{d}s = {\left\langle {\mathfrak{W}\left( t \right),\psi } \right\rangle _{{\mathbb{T}^2}}} ;
\end{equation*}
\par
\noindent$(\rmnum{3})$. The process ${u}\left( {t} \right)$ is $\mathcal{F}_t$-adapted and satisfies ${{u }\left( {t,x} \right)} \in {L^\infty }\left( {\left[ {0, T \wedge {\mathfrak{t}_{\mathfrak{S}}}} \right];{\mathcal{H}}} \right)$;
\par
\noindent$(\rmnum{4})$. The process ${\left\langle {\mathfrak{W}\left( t \right),\psi } \right\rangle _{{\mathbb{T}^2}}}$ is a martingale.
\end{definition}
\section{Selection of parameters}\label{TPT-A-1-hu}
For any $\gamma \in \left( {0 ,\frac{1}{3}} \right)$, we choose the exponent $\beta \in \left( {\gamma ,\frac{1}{3}} \right)$ and the Newtonian index $\Gamma  = \left\lceil {{{\left( {\frac{1}{3} + \alpha - \beta } \right)}^{ - 1}}} \right\rceil $ with $\alpha \in \left( {{\bar \Gamma}^{-1},\beta} \right)$, where the positive integer ${\bar \Gamma}$ denotes the maximum bound of Newton iteration steps, satisfying $\bar \Gamma  < \left\lfloor {\frac{2}{{1 - 3\beta }}} \right\rfloor $. For $a > 1 $ and $b > 1$, we define the inductive parameters
\begin{equation}\label{parameters-S-2}
{\lambda _q} = \left\lceil {{a^{{b^q}}}} \right\rceil  , \ \ {\delta _q} = \lambda _q^{ - 2\beta } ,\ \ \forall \, q \in \mathbb{Z}^+  \cup \left\{ 0 \right\}   .
\end{equation}
By direct computation, we obtain
\begin{equation}\label{parameters-S-14}
\delta _q^{\frac{1}{2}}{\lambda _q} = \lambda _q^{1 - \beta } \leqslant \lambda _{q + 1}^{1 - \beta } = \delta _{q + 1}^{\frac{1}{2}}{\lambda _{q + 1}} .
\end{equation}
By the exponential growth of $\lambda _q$, we choose a sufficiently large parameter $a$ in \eqref{parameters-S-2} such that
\begin{equation}\label{parameters-S-20-ENDH-skdi-A}
\sum\limits_{j = 0}^q {{a^{\left( {N - \beta } \right){b^j}}}}  \leqslant 2 {a^{\left( {N - \beta } \right){b^{q + 1}}}} , \ \ \forall \, q \in \mathbb{Z}^+ \cup \left\{ 0 \right\} , \ N \in \left\{ {1, \cdots ,10} \right\}  ,
\end{equation}
which implies that (see \cite{MR3302631} for a similar setup)
\begin{equation}\label{parameters-S-20-ENDH-A}
\sum\limits_{j = 0}^q {{\delta _j}\lambda _j^{2N}}  \lesssim {\delta _{q + 1}}\lambda _{q + 1}^{2N} , \ \ \forall \, q \in \mathbb{Z}^+ \cup \left\{ 0 \right\} , \ N \in \left\{ {1, \cdots ,10} \right\}  .
\end{equation}
Moreover, there exist constants $\tilde \alpha  \in \left( {0,\beta } \right)$ and $ \alpha \in \left( {\frac{{2b + 1}}{3 } \tilde \alpha, \beta} \right)$ such that
\begin{equation}\label{parameters-S-20}
\lambda _q^{\tilde \alpha } {\left( {\frac{{{\lambda _q}}}{{\lambda _{q + 1}}}} \right)^{ - \tilde \alpha \left( {\frac{1}{3} - \beta } \right)}} = \lambda _q^{\tilde \alpha \left( {\frac{2}{3} + \beta } \right)}\lambda _{q + 1}^{\tilde \alpha b\left( {\frac{1}{3} - \beta } \right)}  \lesssim \lambda _{q + 1}^\alpha .
\end{equation}
\par
Fixing $1 < b \leqslant \min \left\{ {\frac{{1 - \alpha }}{{2\beta }} ,1 + 6\beta } \right\}$, we have
\begin{equation}\label{parameters-S-202568}
{\delta _{q + 1}}\lambda _q^{ - \alpha } \lesssim {\delta _{q + 2}}  ,
\end{equation}
\begin{equation}\label{parameters-S-21-A-2}
\delta _q^{\frac{3}{2}}\delta _{q + 1}^{\frac{1}{2}}  \lambda _q^{\frac{2}{3}}\lambda _{q + 1}^{2\alpha  - \frac{2}{3}} \lesssim {\delta _{q + 2}} , 
\end{equation}
\begin{equation}\label{parameters-S-21}
{\delta _q}  {\delta _{q + 1}}  {\left( {\frac{{{\lambda _q}}}{{\lambda _{q + 1}}}} \right)^{\frac{1}{3} - \beta }}\lambda _{q + 1}^{2\alpha } \lesssim {\delta _{q + 2}} ,
\end{equation}
and
\begin{equation}\label{parameters-S-22}
{\delta _{q + 1}}\lambda _q^{N + 1 - \alpha }\lambda _{q + 1}^{\alpha  - N - 1} \lesssim {\delta _{q + 2}},\ \ \forall \, N \in \left\{ {0,1, \cdots ,10} \right\} .
\end{equation}
We note that ${\lambda _q}$ is an increasing function with respect to $q$. To constrain the increasing speed, we choose $b \in \left( {1.005,1.3} \right]$ such that
\begin{equation*}
{\lambda _{q + 1}} \lesssim \lambda _{q }^{\min \left\{ {2,\frac{{1 + \beta }}{{3\beta }}} \right\}} \lesssim \lambda _q^{\frac{1}{\beta }} .
\end{equation*}
Here, an explicit range of $b$ is selected to guarantee \eqref{parameters-S-20-ENDH-skdi-A}, \eqref{parameters-S-20} and \eqref{parameters-S-21}, simultaneously. Then, there exists a parameter $\alpha \in \left( {\frac{\beta }{{b}},\beta } \right)$ such that
\begin{equation}\label{RSRI-main-PROVE-2-JIJSHU-posksij-a-1}
\delta _q^{\frac{1}{2}}\lambda _{q + 1}^{\alpha } \gtrsim 1  , 
\end{equation}
\begin{equation}\label{RSRI-main-PROVE-2-JIJDHUFH-A}
\delta _{q + 1}^{\frac{1}{2} - \frac{{\alpha  }}{{2\beta }}} \lambda _q^{ - \frac{2}{3}} \lambda _{q + 1}^{ - \frac{1}{3}} \lambda _{q + 2}^{2\beta } \lesssim 1 , 
\end{equation}
and
\begin{equation}\label{parameters-S-19}
\delta _q^{\frac{3}{2}}\delta _{q + 1}^{\frac{{1 - \alpha }}{{2\beta }}}  {\lambda _q}\lambda _{q + 2}^{2\beta } \lesssim 1 .
\end{equation}
\par
Define the mollifier scales
\begin{equation}\label{parameters-S-9}
{l_q} = {\lambda _q^{ - \frac{1}{2}}\lambda _{q + 1}^{ - \frac{1}{2}}} , 
\end{equation}
\begin{equation}\label{parameters-S-11}
{{\tilde l}_{q}} = \delta _q^{ - \frac{1}{2}}\lambda _q^{ - \frac{1}{3}}\lambda _{q + 1}^{ - \frac{2}{3}} ,
\end{equation}
\begin{equation}\label{parameters-S-11-A-1}
{{\hat l}_q} = \delta _q^{ - \frac{1}{8}}\lambda _q^{ - \frac{3}{4}}\lambda _{q + 1}^{ - \frac{2}{3}} ,
\end{equation}
the temporal scale
\begin{equation}\label{parameters-S-6}
{\tau _q} = \delta _q^{ - \frac{1}{2}}\lambda _q^{ - 1}\lambda _{q + 1}^{ - \alpha } ,
\end{equation}
and the temporal oscillation parameter
\begin{equation}\label{PASJD-MU-1}
{\mu _{q + 1}} = \delta _{q + 1}^{\frac{1}{2}}\lambda _q^{\frac{2}{3}}\lambda _{q + 1}^{\frac{1}{3} + \alpha },
\end{equation}
where $\alpha$ is determined by \eqref{parameters-S-20} and \eqref{RSRI-main-PROVE-2-JIJSHU-posksij-a-1}-\eqref{parameters-S-19}. It is easy to verify that
\begin{equation}\label{parameters-S-30}
{\tau _q}\delta _q^{\frac{1}{2}}{\lambda _q} = \lambda _{q + 1}^{ - \alpha } \leqslant 1 ,
\end{equation}
\begin{equation}\label{parameters-S-8}
\tau _q^{ - 1}\mu _{q + 1}^{ - 1} = \delta _q^{\frac{1}{2}}\delta _{q + 1}^{ - \frac{1}{2}}\lambda _q^{\frac{1}{3}}\lambda _{q + 1}^{ - \frac{1}{3}} = \tilde l_q^{ - \alpha }\lambda _q^{\left( {\alpha  - 1} \right)\left( {\beta  - \frac{1}{3}} \right)}\lambda _{q + 1}^{\beta  - \frac{1}{3} - \frac{{2\alpha }}{3}} \lesssim \tilde l_q^{ - \alpha } \lesssim l_q^{ - \alpha } ,
\end{equation}
and for any $N \in \left\{ {0,1, \cdots ,10} \right\} $,
\begin{equation}\label{parameters-S-13}
\mu _{q + 1}^{ - 1}{\delta _{q + 1}}\lambda _q^{N + 1}l_q^{ - \alpha } \lesssim \delta _q^{\frac{1}{2}}\lambda _q^N ,
\end{equation}
\begin{equation}\label{parameters-S-15}
\tau _q^{ - 1}\left( {\lambda _q^N + \lambda _{q + 1}^N} \right) = \delta _q^{\frac{1}{2}}\lambda _q^{N + 1 + \alpha } + \delta _q^{\frac{1}{2}}\lambda _q^{1 + \alpha }\lambda _{q + 1}^N \leqslant \lambda _{q + 1}^{N + 1} ,
\end{equation}
\begin{equation}\label{parameters-S-16}
{\mu _{q + 1}}\lambda _{q + 1}^N \leqslant \lambda _q^{\frac{2}{3}}\lambda _{q + 1}^{N + \frac{1}{3} + \alpha  - \beta } \leqslant \lambda _{q + 1}^{N + 1} .
\end{equation}
\par
Define the iteration parameter
\begin{equation}\label{parameters-S-4}
{\delta _{q + 1,n}} = {\delta _{q + 1}}\lambda _q^{\left( {\frac{1}{3} - \beta } \right)n}\lambda _{q + 1}^{\left( {\beta - \frac{1}{3}} \right)n},\ \ \forall \,  n \in \left\{ {1,2, \cdots ,\Gamma } \right\}.
\end{equation}
It is easy to see that ${\delta _{q + 1,n}} \leqslant {\delta _{q + 1}}$. Furthermore, for any $N \in \left\{ {0,1, \cdots ,10} \right\} $, we have
\begin{equation}\label{parameters-S-10}
{\tau _q}\mu _{q + 1}^{ - 1}{\delta _{q + 1,n}}\lambda _q^{N + 1} + \tau _q^2\mu _{q + 1}^{ - 1}\delta _q^{\frac{1}{2}}{\delta _{q + 1,n}}\lambda _q^{N + 2} \lesssim \tau _q^2\mu _{q + 1}^{ - 1}{\delta _q}{\delta _{q + 1,n}}l_q^{ - \alpha }\lambda _q^{N + 3} ,
\end{equation}
and
\begin{equation}\label{parameters-S-17}
\mu _{q + 1}^{ - 1}{\delta _{q + 1,n}}\lambda _q^{N + 1}l_q^{ - \alpha } = {\delta _{q + 1,n}}\lambda _q^{N + \frac{1}{3} + \frac{1}{2}\alpha }\lambda _{q + 1}^{\beta  - \frac{1}{2}\alpha  - \frac{1}{3}} \lesssim \delta _{q }^{\frac{1}{2}}\lambda _{q }^N .
\end{equation}
\par
Define the intermittent scale 
\begin{equation}\label{Phsudj-2}
\tilde \mu  = \delta _{q + 1}^{\frac{1}{2}}\lambda _q^{\frac{2}{3}}\lambda _{q + 1}^{\frac{1}{3}} \in \left[ {1,{\mu _{q+1}}} \right) .
\end{equation}
Then, it follows from \eqref{RSRI-main-PROVE-2-JIJDHUFH-A} that
\begin{equation}\label{RSRI-main-PROVE-2}
{{\tilde \mu }^{ - 1}}\delta _{q + 1}^{{\frac{1}{2}} - \frac{\alpha }{{2\beta }}}\lambda _{q + 2}^{2\beta } \lesssim 1 , 
\end{equation}
Notice that the intermittent scale $\tilde \mu$ is dominated by the temporal oscillation parameter ${\mu _{q+1}}$. Hence, the oscillations associated with $\tilde \mu$ in the Nash perturbation and the Reynolds stress are estimated by the argument similar to ${\mu _{q+1}}$.
\section{Intermittent building blocks}\label{NSAH-BSHDW-US}
To characterize the intermittent behavior, Frisch \cite[Section $8.2$]{TurbulenceFrisch} introduced a fundamental intermittency criterion: the $L^2$-norm of intermittent functions associated with high-frequency filtered signals is substantially smaller than their $L^4$-norm. Inspired by this observation, we formulate the general $\left( {p_1,p_2} \right)$-type definition of intermittency as follows.
\par
\begin{definition}[\cite{TurbulenceFrisch}]\label{DUFJ-INT-SJ-A}
A random function $v\left( {x} \right)$ is said to be intermittent if for two positive exponents ${p_1} > {p_2}$, there exists an increasing function $\mathfrak{h}_1\left( r \right)$ such that
\begin{equation}\label{D-INT-A-1}
\frac{{{{\left\| {{{\dot \Delta }_r}u} \right\|}_{{L^{{p_1}}}}}}}{{{{\left\| {{{\dot \Delta }_r}u} \right\|}_{{L^{{p_2}}}}}}}  \sim  \mathfrak{h}_1\left( r \right) \to  \infty , \ \ \mbox{as}\ r \to  \infty  , \ \mathbb{P}\mbox{-a.s.},
\end{equation}
where ${{{\dot \Delta }_r}}$ is the Littlewood--Paley projector, and $r$ is the cutoff wavenumber associated with the Fourier transform of $v\left( {x} \right)$.
\end{definition}
\par
The flatness \eqref{D-INT-A-1} characterizes intermittent influence on the function itself, yet it fails to capture intermittent behavior within the inertial range of turbulence. In physical experiments, Batchelor and Townsend \cite{Zbl0036.25602} discovered inertial-range intermittency, which was later systematically elaborated by Frisch and Morf \cite{Zbl0563.76057,IntermittencyFrisch}. After their Littlewood--Paley analysis in \cite{Zbl1295.76010}, Cheskidov and Shvydkoy \cite{Zbl1512.76040} proposed the $\left( {p_1,p_2} \right)$-type flatness $\frac{{{{\left\| {v\left( { \cdot  + y} \right) - v\left(  \cdot  \right)} \right\|}_{{L^{{p_1}}}}}}}{{{{\left\| {v\left( { \cdot  + y} \right) - v\left(  \cdot  \right)} \right\|}_{{L^{{p_2}}}}}}}$ as a measurement of intermittency. Then, the $\left( {p_1,p_2} \right)$-type definition of inertial-range intermittency for isotropic turbulence is given as follows. 
\par
\begin{definition}[\cite{TurbulenceFrisch}]\label{DUFJ-INT-SJ-B}
A random function $v\left( {x} \right)$ is said to be inertial-range intermittent if for two positive exponents ${p_1} > {p_2}$, there exist increasing function $\mathfrak{h}_2\left( r \right)$ and dissipation scale $l_D \gtrsim r^{ - 1} $ such that
\begin{equation}\label{D-INT-A-2}
\frac{{{{\left\| {v\left( { \cdot  + l\mathbf{n}} \right) - v\left(  \cdot  \right)} \right\|}_{{L^{{p_1}}}}}}}{{{{\left\| {v\left( { \cdot  + l\mathbf{n}} \right) - v\left(  \cdot  \right)} \right\|}_{{L^{{p_2}}}}}}} \sim {\mathfrak{h}_2}\left( {r} \right) \to  \infty , \ \  \forall \, l \in \left[ {{l_D},{l_I}} \right] , \  \mbox{as}\ r \to  \infty , \ \mathbb{P}\mbox{-a.s.} 
\end{equation}
\end{definition}
\par
We now introduce the intermittent analogues inspired by Beltrami waves, and we simply call them intermittent building blocks for convenience. Let $\xi$ and $\Lambda $ be determined by Lemma \ref{GLTNNRS-A}. The simplified building block reads (see \cite[Section $4.2$]{Zbl1556.35231})
\begin{equation}\label{UJIJY-DJI}
{W_\xi }\left( x \right) = {a_W}\left( {\exp \left( {\mathrm{i}{{\xi ^ \bot }  } \cdot x} \right) + \exp \left( { - \mathrm{i}{{\xi ^ \bot }  } \cdot x} \right)} \right){\xi }, \ \ \mbox{with}\  {\xi ^ \bot } \cdot \xi  = 0 , \ {\xi ^ \bot } \neq \mathbf{0} , \ \xi \in \Lambda ,
\end{equation}
where ${a_W}$ is the normalization constant for the $L^2$-norm, and $\mathrm{i}$ is the unit imaginary number. The stream function of ${W_\xi }\left( x \right)$ is given by
\begin{equation}\label{IUHYU-DJ}
{\mathfrak{S}_\xi }\left( x \right) = \mathrm{i}{a_W}\left( {\exp \left( {\mathrm{i}{{\xi ^ \bot }  } \cdot x} \right) - \exp \left( { - \mathrm{i}{{\xi ^ \bot }  } \cdot x} \right)} \right)  .
\end{equation}
Then, it is easy to verify that
\begin{equation}\label{BWsdfgg-P-A-1}
{W_\xi }\left( x \right) = {\nabla ^ \bot }{\mathfrak{S}_\xi } \left( x \right) .
\end{equation}
\par
\begin{lemma}\label{BW-P-A-1}
The simplified building block ${W_\xi }\left( x \right)$ is real-valued, and admits
\begin{equation*}
\mathrm{div}\,   {W_\xi } = 0 , \ \ \int_{{\mathbb{T}^2}} {{W_\xi }\left( x \right) \otimes {W_\xi }\left( x \right)}\ \mathrm{d}x = {\xi } \otimes {\xi} ,
\end{equation*}
and
\begin{equation*}
\mathrm{div}\left( {{W_\xi } \otimes {W_\xi }} \right) = \left( {{W_\xi } \cdot \nabla } \right){W_\xi } = \mathbf{0} .
\end{equation*}
\end{lemma}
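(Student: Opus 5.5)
The statement to be proved is Lemma \ref{BW-P-A-1}. The plan is to compute everything directly from the explicit formula \eqref{UJIJY-DJI}. Write $\zeta := \xi^\perp$, so that $W_\xi(x) = 2a_W \cos(\zeta\cdot x)\,\xi$. I will take $a_W$ to be real, which the normalization convention permits.

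\emph{Real-valuedness.} The two exponentials $\exp(\pm \mathrm{i}\zeta\cdot x)$ are complex conjugates, and $\xi \in \Lambda \subset \mathbb{Z}^2$ is real. Hence $W_\xi$ is real, and it is well defined on $\mathbb{T}^2$ because $\zeta \in \mathbb{Z}^2$.

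\emph{Divergence.} From $W_\xi = \nabla^\perp \mathfrak{S}_\xi$ in \eqref{BWsdfgg-P-A-1}, we get $\mathrm{div}\, W_\xi = \mathrm{div}\,\nabla^\perp \mathfrak{S}_\xi = -\partial_1\partial_2\mathfrak{S}_\xi + \partial_2\partial_1\mathfrak{S}_\xi = 0$. Alternatively, directly:
\[
\mathrm{div}\, W_\xi = -2a_W \sin(\zeta\cdot x)\,(\zeta\cdot\xi) = 0,
\]
since $\zeta\cdot\xi = \xi^\perp\cdot\xi = 0$.

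\emph{Mean of the tensor product.} We have $W_\xi\otimes W_\xi = 4a_W^2\cos^2(\zeta\cdot x)\,\xi\otimes\xi$. Because $\zeta \neq \mathbf{0}$ is an integer vector,
\[
\int_{\mathbb{T}^2}\cos^2(\zeta\cdot x)\,\mathrm{d}x = \tfrac12|\mathbb{T}^2| = 2\pi^2,
\]
as the $\cos(2\zeta\cdot x)$ part integrates to zero. The integral is therefore $8\pi^2 a_W^2\,\xi\otimes\xi$. The normalization constant $a_W$ is exactly the one that makes this equal to $\xi\otimes\xi$, i.e.\ $a_W = (2\sqrt2\,\pi)^{-1}$. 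This is the one point needing care: the result depends on reading "normalization constant" as this specific choice. If the paper instead uses the normalized measure, one gets $a_W = 1/\sqrt2$; either way the identity holds by definition of $a_W$.

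\emph{Nonlinear term.} Since $\mathrm{div}\,W_\xi = 0$, we have $\mathrm{div}(W_\xi\otimes W_\xi) = (W_\xi\cdot\nabla)W_\xi$. Moreover, $(W_\xi\cdot\nabla)W_\xi = 2a_W\cos(\zeta\cdot x)\,(\xi\cdot\nabla)\big[2a_W\cos(\zeta\cdot x)\big]\,\xi$. The directional derivative is $(\xi\cdot\nabla)\cos(\zeta\cdot x) = -(\xi\cdot\zeta)\sin(\zeta\cdot x) = 0$, so $(W_\xi\cdot\nabla)W_\xi = \mathbf{0}$. This follows once more from orthogonality, and no step presents a genuine obstacle.
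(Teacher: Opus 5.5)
Your proposal is correct and follows essentially the same route as the paper. Both rewrite $W_\xi = 2a_W\cos(\xi^\perp\cdot x)\,\xi$ via Euler's formula, use the orthogonality $\xi^\perp\cdot\xi=0$ to kill the divergence and the nonlinear term, and let the normalization of $a_W$ produce the mean $\xi\otimes\xi$; your explicit value $a_W=(2\sqrt2\,\pi)^{-1}$ for the unnormalized measure on $\mathbb{T}^2=\mathbb{R}^2/2\pi\mathbb{Z}^2$ is a useful detail the paper leaves implicit.
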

\par
\noindent{\textbf{Proof}}. 
In view of the Euler's formula, we have ${W_\xi }\left( x \right) = 2{a_W}{\xi  }\cos \left( {{\xi ^ \bot }  \cdot x} \right)$, which implies that ${W_\xi }\left( x \right)$ is real-valued. We further calculate that
\begin{equation*}
\mathrm{div}\,   {W_\xi } =  - 2{a_W}\left( {{\xi ^ \bot } \cdot \xi } \right)\sin \left( {{\xi ^ \bot }  \cdot x} \right) = 0 ,
\end{equation*}
\begin{equation*}
\int_{{\mathbb{T}^2}} {{W_\xi }\left( x \right) \otimes {W_\xi }\left( x \right)}\ \mathrm{d}x = {\xi  } \otimes {\xi  }\int_{{\mathbb{T}^2}} {4a_W^2{{\cos }^2}\left( {{\xi ^ \bot }  \cdot x} \right)}\ \mathrm{d}x = {\xi  } \otimes {\xi  } ,
\end{equation*}
and
\begin{equation*}
\mathrm{div}\left( {{W_\xi } \otimes {W_\xi }} \right) = - 4a_W^2\left( {{\xi ^ \bot } \cdot \xi } \right){\xi  }\sin \left( {{\xi ^ \bot }  \cdot x} \right)\cos \left( {{\xi ^ \bot }  \cdot x} \right) = \mathbf{0} .
\end{equation*}
The proof is complete. \qed
\par
Let ${\mathfrak{H}_r} \triangleq \left\{ {\xi  = \left( {j,k} \right) \ \mid \ j,k \in \left\{ { - r, \cdots ,r} \right\}, \ r \in \mathbb{Z}^+  \cup \left\{ 0 \right\}} \right\}$ be the two-dimensional integer cube. We choose a sufficiently large $r$ so that the decomposition set $\Lambda$ in Lemma \ref{GLTNNRS-A} satisfies $\Lambda   \subset {\mathfrak{H}_r}$. Define the Dirichlet kernel as
\begin{equation}\label{XINHUI-SJI-A}
{\mathfrak{D}_r}\left( x \right) = {\left( {2r + 1} \right)^{ - 1}}\sum\limits_{\tilde \xi  \in {\mathfrak{H}_r}} {\exp \left( {\mathrm{i}\tilde \xi  \cdot x} \right)} .
\end{equation}
Then, we readjust the scale of ${\mathfrak{D}_r}\left( x \right)$ to construct the intermittent Dirichlet kernel
\begin{equation}\label{IBW-GJI-A-1}
{\mathfrak{D}_{\tilde \mu }}\left( {t,x} \right) = {\mathfrak{D}_r}\left( { \sigma \left( {\tilde \xi  \cdot x + \tilde \mu t} \right), \sigma {{\tilde \xi }^ \bot } \cdot x} \right)   ,
\end{equation}
where ${{\tilde \xi }^ \bot } \cdot \tilde \xi  = 0$, ${\left| {{{\tilde \xi }^ \bot }} \right|^2} = 1$, and the intensity parameter ${ \sigma }$ is a positive real number. We have
\begin{equation}\label{IBW-GJI-A-3}
{\partial _t}{\mathfrak{D}_{\tilde \mu }}\left( {t,x} \right) = {\tilde \mu } \left( {\tilde \xi  \cdot \nabla } \right){\mathfrak{D}_{\tilde \mu }}\left( {t,x} \right) ,
\end{equation}
and
\begin{equation}\label{IBW-GJI-A-4}
{\partial _t}\mathfrak{D}_{\tilde \mu }^2\left( {t,x} \right) = {\tilde \mu } \left( {\tilde \xi  \cdot \nabla } \right)\mathfrak{D}_{\tilde \mu }^2\left( {t,x} \right) .
\end{equation}
An analogue of Beltrami wave is given by
\begin{equation}\label{IBW-GJI-A-2}
{\mathbb{W}_{\xi }}\left( {t,x} \right) = {\mathfrak{D}_{\tilde \mu }}\left( {t,x} \right){W_\xi }\left( x \right) .
\end{equation}
\par
The building block ${\mathbb{W}_{\xi }}\left( {t,x} \right)$ defined above inherits the temporally oscillating intermittent structure of original three-dimensional Beltrami waves, while it is not a genuine Beltrami wave in the planar setting. In this paper, we do not need the collinearity condition between vorticity and wave velocity. In the following, we characterize divergence, convective contribution and intermittent structure of ${\mathbb{W}_{\xi }}\left( {t,x} \right)$.
\par
\begin{lemma}\label{WPD-WIEJDUHFJ-HUD}
Suppose that $m$ and $n$ are two non-negative integers. Then, there exists a bounded function $\mathfrak{f}\left( t \right)$ on $\left[ 0,T \right]$ such that
\begin{equation}\label{WIEJDUHFJ-DFIJDJ-A-1}
{\left\| {{{\nabla ^m}\partial _t^n}{\mathfrak{D}_{\tilde \mu }}} \right\|_{{L^p}}} \sim \left| {\mathfrak{f}\left( t \right)} \right|  {\sigma} ^{m + n}{{\tilde \mu }^n}{r^{1 + m - \frac{2}{p}}} , \ \ \forall \, p > 1 ,
\end{equation}
and
\begin{equation}\label{WIEJDUHFJ-DFIJDJ-A-2}
{\left\| {\sum\limits_{\xi  \in \Lambda } {{\nabla ^m}\partial _t^n{\mathbb{W}_\xi }} } \right\|_{{L^p}}} \sim  \left| {\mathfrak{f}\left( t \right)} \right|  {\sigma} ^{m + n}{{\tilde \mu }^n}{r^{1 + m - \frac{2}{p}}} , \ \ \forall \, p > 1  .
\end{equation}
\end{lemma}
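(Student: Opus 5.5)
The plan is to reduce everything to the classical $L^p$ scaling of the square Dirichlet kernel, composed with an affine change of variables that preserves Lebesgue measure up to a constant. First I rewrite $\mathfrak{D}_r$ from \eqref{XINHUI-SJI-A} in product form. Since $\mathfrak{H}_r$ is a cube, $\mathfrak{D}_r(x)=(2r+1)^{-1}D_r(x_1)D_r(x_2)$, where $D_r(s)=\sum_{|j|\le r}e^{\mathrm{i}js}$ is the one-dimensional Dirichlet kernel. The standard estimate $\|D_r\|_{L^p(\mathbb{T})}\sim r^{1-1/p}$ for $p>1$ then gives $\|\mathfrak{D}_r\|_{L^p(\mathbb{T}^2)}\sim r^{1-2/p}$. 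The same argument applies to derivatives. Writing $\partial^{\theta}D_r(s)=\sum_{|j|\le r}(\mathrm{i}j)^{\theta}e^{\mathrm{i}js}$, Bernstein's inequality gives the upper bound $\lesssim r^{\theta}r^{1-1/p}$. The matching lower bound comes from localizing near $s=0$ on the region $|s|\lesssim r^{-1}$, where the derivative has size $\sim r^{1+\theta}$. Multiplying the two factors, I get $\|\nabla^m\mathfrak{D}_r\|_{L^p}\sim r^{1+m-2/p}$.

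Next I transfer these bounds to $\mathfrak{D}_{\tilde\mu}$ defined in \eqref{IBW-GJI-A-1}. The map $x\mapsto(\sigma(\tilde\xi\cdot x+\tilde\mu t),\sigma\tilde\xi^\perp\cdot x)$ is affine, with linear part $\sigma$ times a fixed invertible matrix. Each spatial derivative therefore contributes a factor of $\sigma$. By \eqref{IBW-GJI-A-3}, each time derivative equals $\tilde\mu(\tilde\xi\cdot\nabla)$, so it contributes $\sigma\tilde\mu$. The time shift $\sigma\tilde\mu t$ is a translation and leaves $L^p$ norms unchanged. The change of variables has a constant Jacobian, but it maps $\mathbb{T}^2$ onto a region covering the torus a number of times that depends on $\sigma$ and on the time-dependent phase. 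I will absorb this periodicity and covering defect into the bounded function $\mathfrak{f}(t)$. It is comparable to a constant when $\sigma\tilde\xi,\sigma\tilde\xi^\perp$ generate an integer lattice, and I will simply define it as the ratio between the actual norm and the ideal one. This yields \eqref{WIEJDUHFJ-DFIJDJ-A-1}.

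For \eqref{WIEJDUHFJ-DFIJDJ-A-2}, I write $\sum_\xi\nabla^m\partial_t^n\mathbb{W}_\xi=\sum_\xi\sum_{a\le m}\binom{m}{a}\nabla^{a}\partial_t^n\mathfrak{D}_{\tilde\mu}\otimes\nabla^{m-a}W_\xi$. Because $W_\xi$ is a fixed trigonometric polynomial with frequency $|\xi^\perp|=O(1)$, the terms with $a<m$ are bounded by $\sigma^{a+n}\tilde\mu^n r^{1+a-2/p}$. These are lower order, since $\sigma r\gg1$ in the regime used, or otherwise the terms merge into $\mathfrak{f}$. The upper bound then follows from Hölder's inequality. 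For the lower bound on the main term $\nabla^m\partial_t^n\mathfrak{D}_{\tilde\mu}\sum_\xi W_\xi$, I use that $\sum_\xi W_\xi=2a_W\sum_\xi\xi\cos(\xi^\perp\cdot x)$ is a nonzero smooth function of unit scale. The kernel concentrates on cells of size $(\sigma r)^{-1}$ that are periodically distributed at spacing $\sigma^{-1}$. Averaging $|\sum_\xi W_\xi|^p$ over these cells therefore gives a positive constant, up to the same $\mathfrak{f}(t)$.

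I expect the main obstacle to be this last lower bound, namely the decoupling between the concentrated kernel and the fixed-frequency factor $\sum_\xi W_\xi$. This factor may vanish on lines, and the cell locations depend on $t$. If a clean equidistribution argument fails, I will fall back on the following: choose $\sigma^{-1}$ to be an integer multiple of $2\pi$ divided by the lattice of $\Lambda$, so that the averaging becomes exact, and let $\mathfrak{f}(t)$ record the residual dependence.
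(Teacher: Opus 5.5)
Your first step is sound, and it takes a different route from the paper. The paper computes $\|\mathfrak{D}_r\|_{L^p}^p$ for integer $p$ by counting the resonant set $N_r(p)$. That identity is exact only for even $p$, since $\mathfrak{D}_r$ changes sign. The paper then reaches other $p$ by interpolation plus the bound $\|g\|_{L^2}^2\le\|g\|_{L^p}\|g\|_{L^{p'}}$. Your factorization $\mathfrak{D}_r(x)=(2r+1)^{-1}D_r(x_1)D_r(x_2)$, together with $\|D_r\|_{L^p(\mathbb{T})}\sim r^{1-1/p}$, covers every $p>1$ at once. (For odd $\theta$ one has $D_r^{(\theta)}(0)=0$, so take the lower bound on $c/r\le|s|\le 2c/r$ instead.)

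The first genuine error comes in the transfer to $\mathfrak{D}_{\tilde\mu}$. By \eqref{IBW-GJI-A-3}, $\partial_t\mathfrak{D}_{\tilde\mu}=\tilde\mu(\tilde\xi\cdot\nabla)\mathfrak{D}_{\tilde\mu}=\sigma\tilde\mu\,(\partial_1\mathfrak{D}_r)\bigl(\sigma(\tilde\xi\cdot x+\tilde\mu t),\sigma\tilde\xi^\perp\cdot x\bigr)$. Your own first step gives $\|\partial_1\mathfrak{D}_r\|_{L^p}\sim r^{2-2/p}$. So each time derivative costs $\sigma\tilde\mu r$, not $\sigma\tilde\mu$, and your argument actually yields $\sigma^{m+n}\tilde\mu^n r^{1+m+n-2/p}$. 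For $n\ge1$ this is not \eqref{WIEJDUHFJ-DFIJDJ-A-1}: the stated exponent $1+m-2/p$ is reachable only by letting $\mathfrak{f}$ grow like $r^n$. The paper's proof has the same slip. It writes $\sigma^m\tilde\mu^n|\mathfrak{H}_r\setminus\{0\}|^m\|\mathfrak{D}_{\tilde\mu}\|_{L^p}$ and then asserts $\sim\sigma^{m+n}\tilde\mu^n r^{1+m-2/p}$, which is inconsistent in the powers of both $\sigma$ and $r$.

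The second gap is the role you give $\mathfrak{f}$. Defining it as the ratio of the actual norm to the ideal one makes the lemma empty. For fixed parameters that ratio is continuous in $t$, hence bounded on $[0,T]$ automatically. It also depends on $p$, whereas the $L^{p_1}/L^{p_2}$ ratios in the proof of Theorem~\ref{CTSE-O-C-1B} need the same $\mathfrak{f}$ to cancel. The substantive claim is a two-sided bound on $\mathfrak{f}$ uniform in $r,\sigma,\tilde\mu,t,p$.

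Your change of variables gives exactly this in the case you single out, $\sigma\tilde\xi,\sigma\tilde\xi^\perp\in\mathbb{Z}^2$. There $x\mapsto\sigma(\tilde\xi\cdot x+\tilde\mu t,\tilde\xi^\perp\cdot x)$ is a surjective affine endomorphism of $\mathbb{T}^2$ and preserves Lebesgue measure. Hence $\mathfrak{f}$ is a constant with no $t$-dependence at all. Outside that case the ``covering defect'' is not a harmless constant. For $\sigma<1$, as with $\sigma=C(\Lambda)\lambda_{q+1}^{-1/2}$ in the paper, $\mathfrak{D}_{\tilde\mu}(t,\cdot)$ is not $2\pi$-periodic. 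If also $\sigma r\ll1$, then at times with $\sigma\tilde\mu t\in2\pi\mathbb{Z}$ one has $\mathfrak{D}_{\tilde\mu}\gtrsim r$ on all of $[0,2\pi)^2$. The norm then exceeds $r^{1-2/p}$ by a factor $\gtrsim r^{2/p}$.

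The same missing hypotheses undermine your treatment of \eqref{WIEJDUHFJ-DFIJDJ-A-2}:
\begin{itemize}
\item The Leibniz reduction needs $\sigma r\gg1$. When $\sigma r\ll1$ (the paper's regime, since $\sigma r_q\lesssim\lambda_q^{M-b/2}\to0$), the terms with derivatives on $W_\xi$ dominate. By your own bookkeeping they are larger by powers of $(\sigma r)^{-1}$, so they cannot ``merge into $\mathfrak{f}$''.
\item The cell-averaging lower bound needs many peaks per period of $\sum_\xi W_\xi$, so that the shifted peak grid equidistributes uniformly in $t$. With few peaks, nothing stops them from sitting near zeros of $\sum_\xi W_\xi$. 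Letting $\mathfrak{f}(t)$ ``record the residual dependence'' is then again circular.
\end{itemize}

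A correct version of your argument must assume $\sigma\tilde\xi,\sigma\tilde\xi^\perp\in\mathbb{Z}^2$ (which forces $\sigma\ge1$, hence $\sigma r\to\infty$ as $r\to\infty$). It also needs $\sigma\to\infty$, or a non-degeneracy condition, for the lower bound in \eqref{WIEJDUHFJ-DFIJDJ-A-2}. Finally, it must state the exponent as $1+m+n-2/p$.
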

\par
\noindent{\textbf{Proof}}.
\textit{Case} $1$. If the exponent $p$ is a positive integer, then
\begin{equation*}
\left\| {{\mathfrak{D}_r}} \right\|_{{L^p}}^p = {\left( {2r + 1} \right)^{ - p}}\int_{{\mathbb{T}^2}} {{{\left| {\sum\limits_{\tilde \xi  \in {\mathfrak{H}_r}} {\exp \left( {\mathrm{i}\tilde \xi  \cdot x} \right)} } \right|}^p}}\ \mathrm{d}x = \left| {{\mathbb{T}^2}} \right|\left| {{N_r}\left( p \right)} \right|{\left( {2r + 1} \right)^{ - p}} ,
\end{equation*}
where $\left| {{N_r}\left( p \right)} \right|$ means the amount of resonant set ${N_r}\left( p \right) \triangleq \left\{ {\left( {{\tilde \xi _1}, \cdots ,{\tilde \xi _p}} \right) \in \mathfrak{H}_r^p : {\tilde \xi _1} +  \cdots  + {\tilde \xi _p} = \mathbf{0}} \right\} $, satisfying $\left| {{N_r}\left( p \right)} \right| \sim {r^{2p - 2}}$. Then
\begin{equation}\label{WPD-prove-HUD-1}
{\left\| {{\mathfrak{D}_r}} \right\|_{{L^p}}} \sim {r^{1 - \frac{2}{p}}}  .
\end{equation}
Since ${\left( {\tilde \xi ,{{\tilde \xi }^ \bot }} \right)}$ is the orthonormal basis of ${{\mathfrak{H}_r}}$, the linear combination ${\tilde \xi  \cdot \left( {\tilde \xi ,{{\tilde \xi }^ \bot }} \right)}$ is an orthogonal basis of ${{\mathfrak{H}_r}}$. Thus, the resonance condition for ${\mathfrak{D}_{\tilde \mu }}\left( {x} \right)$ corresponds to linearization of that for ${\mathfrak{D}_r}\left( x \right)$. There exists a bounded function $\mathfrak{f}\left( t \right)$ associated with the Fourier basis $\exp \left( {\mathrm{i} \sigma {{\tilde \xi }_1}\tilde \mu t} \right)$ such that
\begin{align*}
\left\| {{\mathfrak{D}_{\tilde \mu }}} \right\|_{{L^p}}^p = & {\left( {2r + 1} \right)^{ - p}}\left\| {\sum\limits_{\tilde \xi  \in {\mathfrak{H}_r}} {\exp \left( {\mathrm{i}\tilde \xi  \cdot \left( { \sigma \left( {\tilde \xi  \cdot x + \tilde \mu t} \right), \sigma {{\tilde \xi }^ \bot } \cdot x} \right)} \right)} } \right\|_{{L^p}}^p  \\
= & {\left( {2r + 1} \right)^{ - p}}\int_{{\mathbb{T}^2}} {{{\left| {\sum\limits_{\tilde \xi  \in {\mathfrak{H}_r}} {\exp \left( {\mathrm{i} \sigma \tilde \xi  \cdot \left( {\tilde \xi ,{{\tilde \xi }^ \bot }} \right) \cdot x} \right)\exp \left( {\mathrm{i} \sigma {{\tilde \xi }_1}\tilde \mu t} \right)} } \right|}^p}}\ \mathrm{d}x \\
\sim & \left| {\mathfrak{f}\left( t \right)} \right|^p \left\| {{\mathfrak{D}_r}} \right\|_{{L^p}}^p .
\end{align*}
Employing \eqref{WPD-prove-HUD-1}, we obtain
\begin{equation*}
{\left\| {{\mathfrak{D}_{\tilde \mu }}} \right\|_{{L^p}}} \sim \left| {\mathfrak{f}\left( t \right)} \right| {r^{1 - \frac{2}{p}}}  ,
\end{equation*}
which implies that
\begin{equation*}
{\left\| {{\nabla ^m}\partial _t^n{\mathfrak{D}_{\tilde \mu }}} \right\|_{{L^p}}} \sim \left| {\mathfrak{f}\left( t \right)} \right| {\sigma} ^m{{\tilde \mu }^n}{\left| {{\mathfrak{H}_r}\backslash \left\{ 0 \right\}} \right|^m}{\left\| {{\mathfrak{D}_{\tilde \mu }}} \right\|_{{L^p}}} \sim \left| {\mathfrak{f}\left( t \right)} \right| {\sigma} ^{m + n}{{\tilde \mu }^n}{r^{1 + m - \frac{2}{p}}} .
\end{equation*}
\par
According to \eqref{UJIJY-DJI}, \eqref{XINHUI-SJI-A}, \eqref{IBW-GJI-A-1} and \eqref{IBW-GJI-A-2}, we have
\begin{align*}
{\mathbb{W}_\xi }\left( {t,x} \right) = & {a_W}{\left( {2r + 1} \right)^{ - 1}}\sum\limits_{\tilde \xi  \in {\mathfrak{H}_r}} {{\xi } \exp \left( {\mathrm{i}\left( {\sigma \tilde \xi  \cdot \left( {\tilde \xi ,{{\tilde \xi }^ \bot }} \right) + {\xi ^ \bot } } \right) \cdot x} \right) \exp \left( {\mathrm{i}\sigma \tilde \mu {{\tilde \xi }_1}t} \right) } \\
& +  {a_W}{\left( {2r + 1} \right)^{ - 1}}\sum\limits_{\tilde \xi  \in {\mathfrak{H}_r}} {{\xi  } \exp \left( {\mathrm{i}\left( {\sigma \tilde \xi  \cdot \left( {\tilde \xi ,{{\tilde \xi }^ \bot }} \right) - {\xi ^ \bot } } \right) \cdot x} \right)\exp \left( {\mathrm{i}\sigma \tilde \mu {{\tilde \xi }_1}t} \right) } .
\end{align*}
Denote
\begin{equation*}
{K^ + } \triangleq \exp \left( {\mathrm{i}\left( {\sigma \tilde \xi  \cdot \left( {\tilde \xi ,{{\tilde \xi }^ \bot }} \right) + {\xi ^ \bot } } \right) \cdot x} \right)  ,
\end{equation*}
and
\begin{equation*}
{K^ - } \triangleq \exp \left( {\mathrm{i}\left( {\sigma \tilde \xi  \cdot \left( {\tilde \xi ,{{\tilde \xi }^ \bot }} \right) - {\xi ^ \bot } } \right) \cdot x} \right)  .
\end{equation*}
Then, the building block ${\mathbb{W}_{\xi }}\left( {t,x} \right)$ is rewritten as
\begin{equation}\label{NSHYDGBVD-F-G-1}
{\mathbb{W}_\xi }\left( {t,x} \right) = {a_W}{\left( {2r + 1} \right)^{ - 1}} {\xi } \sum\limits_{\tilde \xi  \in {\mathfrak{H}_r}} {\left( {{K^ + } + {K^ - }} \right)\exp \left( {\mathrm{i}\sigma \tilde \mu  k _1 t} \right)} ,
\end{equation}
where $k_1$ is the second component of $\tilde \xi$. Fixing $\xi \in  \Lambda $, the resonance condition for ${\mathbb{W}_{\xi }}\left( {t,x} \right)$ corresponds to that of linearization for ${\mathfrak{D}_r}\left( x \right)$. In addition, the decomposition set $\Lambda$ is finite (see Lemma \ref{GLTNNRS-A}), and $1 \leqslant \left| {\xi  }  \right| \leqslant {C}\left| \Lambda  \right|$ for any $\xi \in  \Lambda $. Hence, there exists a bounded function $\mathfrak{f}\left( t \right)$ depending on the Fourier basis $\exp \left( {\mathrm{i} \sigma k_1 \tilde \mu t} \right)$ such that
\begin{align*}
\left\| {{\mathbb{W}_\xi }\left( t \right)} \right\|_{{L^p}}^p = & a_W^{ p}{\left( {2r + 1} \right)^{ - p}} {\left| {{\xi }} \right|^p}\int_{{\mathbb{T}^2}} {{{\left| {\sum\limits_{\tilde \xi  \in {\mathfrak{H}_r}} {\left( {{K^ + } + {K^ - }} \right)\exp \left( {\mathrm{i}\sigma \tilde \mu {{\tilde \xi }_1}t} \right)} } \right|}^p}} \;{\text{d}}x \\
\sim &\left| {\mathfrak{f}\left( t \right)} \right| ^p {\left\| {{\mathfrak{D}_r}} \right\|_{{L^p}}^p} ,
\end{align*}
Employing \eqref{WPD-prove-HUD-1}, we obtain
\begin{equation*}
{\left\| {\sum\limits_{\xi  \in \Lambda } {{\mathbb{W}_\xi }} } \right\|_{{L^p}}} \sim \left| {\mathfrak{f}\left( t \right)} \right| {r^{1 - \frac{2}{p}}}  ,
\end{equation*}
which implies that
\begin{equation*}
{\left\| {\sum\limits_{\xi  \in \Lambda } {{\nabla ^m}\partial _t^n{\mathbb{W}_\xi }} } \right\|_{{L^p}}} \sim {\sigma} ^m{{\tilde \mu }^n}{\left| {{\mathfrak{H}_r}\backslash \left\{ 0 \right\}} \right|^m}  {\left\| {\sum\limits_{\xi  \in \Lambda } {{\mathbb{W}_\xi }} } \right\|_{{L^p}}} \sim \left| {\mathfrak{f}\left( t \right)} \right|  {\sigma} ^{m + n}{{\tilde \mu }^n}{r^{1 + m - \frac{2}{p}}} .
\end{equation*}
\par
\textit{Case} $2$. If the exponent $p$ is in ${\mathbb{R} ^ +} \backslash {\mathbb{Z}^ + }$, there then exist two positive integers $p_1$ and $p_2$ such that $p_1 < p < p_2$. Employing Riesz--Thorin interpolation inequality, there exists a unique real number $\alpha  \in \left( {0,1} \right)$ with $\frac{1}{p} = \frac{{1 - \alpha }}{{{p_1}}} + \frac{\alpha }{{{p_2}}}$ such that 
\begin{align*}
{\left\| {{\nabla ^m}\partial _t^n{\mathfrak{D}_{\tilde \mu }}} \right\|_{{L^p}}} \lesssim & \left\| {{\nabla ^m}\partial _t^n{\mathfrak{D}_{\tilde \mu }}} \right\|_{{L^{{p_1}}}}^{1 - \alpha }\left\| {{\nabla ^m}\partial _t^n{\mathfrak{D}_{\tilde \mu }}} \right\|_{{L^{{p_2}}}}^\alpha \\
\lesssim & \left| {\mathfrak{f}\left( t \right)} \right| {\sigma ^{\left( {m + n} \right)\left( {1 - \alpha } \right)}}{{\tilde \mu }^{n\left( {1 - \alpha } \right)}}{r^{\left( {1 + m - \frac{2}{{{p_1}}}} \right)\left( {1 - \alpha } \right)}}{\sigma ^{\left( {m + n} \right)\alpha }}{{\tilde \mu }^{n\alpha }}{r^{\left( {1 + m - \frac{2}{{{p_2}}}} \right)\alpha }}  \\
\sim & \left| {\mathfrak{f}\left( t \right)} \right| {\sigma ^{m + n}}{{\tilde \mu }^n}{r^{1 + m - \frac{2}{p}}} .
\end{align*}
Furthermore, it follows from H\"{o}lder inequality that
\begin{equation*}
\left\| {{\nabla ^m}\partial _t^n{\mathfrak{D}_{\tilde \mu }}} \right\|_{{L^2}}^2 \lesssim {\left\| {{\nabla ^m}\partial _t^n{\mathfrak{D}_{\tilde \mu }}} \right\|_{{L^p}}}{\left\| {{\nabla ^m}\partial _t^n{\mathfrak{D}_{\tilde \mu }}} \right\|_{{L^{p'}}}} \lesssim {\sigma ^{m + n}}{{\tilde \mu }^n}{r^{1 + m - \frac{2}{{p'}}}}{\left\| {{\nabla ^m}\partial _t^n{\mathfrak{D}_{\tilde \mu }}} \right\|_{{L^p}}} ,
\end{equation*}
where $\frac{1}{p} + \frac{1}{{p'}} = 1$. By $\left\| {{\nabla ^m}\partial _t^n{\mathfrak{D}_{\tilde \mu }}} \right\|_{{L^2}}^2 \sim \left| {\mathfrak{f}\left( t \right)} \right|^2 {\sigma ^{2\left( {m + n} \right)}}{{\tilde \mu }^{2n}}{r^{2m}}$, we deduce that
\begin{equation*}
{\left\| {{\nabla ^m}\partial _t^n{\mathfrak{D}_{\tilde \mu }}} \right\|_{{L^p}}} \gtrsim \left| {\mathfrak{f}\left( t \right)} \right| {\sigma ^{m + n}}{{\tilde \mu }^n}{r^{m + \frac{2}{{p'}} - 1}} \sim  \left| {\mathfrak{f}\left( t \right)} \right| {\sigma ^{m + n}}\tilde \mu ^ n {r^{1 + m - \frac{2}{p}}}  .
\end{equation*}
Using the same argument, we obtain
\begin{equation*}
{\left\| {\sum\limits_{\xi  \in \Lambda } {{\nabla ^m}\partial _t^n{\mathbb{W}_\xi }} } \right\|_{{L^p}}} \sim \left| {\mathfrak{f}\left( t \right)} \right| {\sigma ^{m + n}}{{\tilde \mu }^n}{r^{1 + m - \frac{2}{p}}} .
\end{equation*}
The proof is complete. \qed
\par
\begin{lemma}\label{prop:transported_dirichlet_increment}
For any $p > 1$ and $l \in \left[ {\frac{{{C_0}}}{{\sigma r}},\frac{{{c_1}}}{\sigma }} \right]$ with ${C_0} \geqslant  {2\sqrt 2 } \pi $ and ${c_0} \in \left( {0,1} \right)$, there exists a bounded function $\mathfrak{f}\left( t \right)$ on $\left[ 0,T \right]$ such that
\begin{equation}\label{prop:transported_dirichlet_increment-A-1}
{\left\| {{\mathfrak{D}_{\tilde \mu }}\left( {t,x + ln} \right) - {\mathfrak{D}_{\tilde \mu }}\left( {t,x} \right)} \right\|_{{L^p}}} \sim \left| {\mathfrak{f}\left( t \right)} \right| {r^{1 - \frac{2}{p}}} ,
\end{equation}
and
\begin{equation}\label{prop:transported_dirichlet_increment-A-2}
{\left\| {\sum\limits_{\xi  \in \Lambda } {\left( {{\mathbb{W}_\xi }\left( {x + l{\mathbf{n}}} \right) - {\mathbb{W}_\xi }\left( x \right)} \right)} } \right\|_{{L^p}}} \sim \left| {\mathfrak{f}\left( t \right)} \right| {r^{1 - \frac{2}{p}}}  .
\end{equation}
\end{lemma}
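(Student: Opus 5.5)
The plan is to reduce both estimates to the structure of the Dirichlet kernel $\mathfrak{D}_r$ and then invoke Lemma \ref{WPD-WIEJDUHFJ-HUD}.

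\textbf{Upper bound.} Since $\mathfrak{D}_{\tilde\mu}(t,x) = \mathfrak{D}_r\big(\sigma(\tilde\xi\cdot x+\tilde\mu t),\,\sigma\tilde\xi^\perp\cdot x\big)$ is a composition with a fixed linear map, the triangle inequality and translation invariance of the $L^p$ norm on $\mathbb{T}^2$ give
\[
\big\|\mathfrak{D}_{\tilde\mu}(t,\cdot+l\mathbf{n})-\mathfrak{D}_{\tilde\mu}(t,\cdot)\big\|_{L^p}\le 2\|\mathfrak{D}_{\tilde\mu}(t)\|_{L^p}.
\]
By \eqref{WIEJDUHFJ-DFIJDJ-A-1} with $m=n=0$, the right-hand side is $\lesssim |\mathfrak{f}(t)|\,r^{1-\frac2p}$.

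\textbf{Lower bound: main idea.} The kernel concentrates mass $\sim r^{2}\cdot r^{-2}$ (after normalization) in a cell of side $\sim (\sigma r)^{-1}$ around each point of the lattice $2\pi\sigma^{-1}\mathbb{Z}^2$, pulled back by the linear map $x\mapsto \sigma(\tilde\xi\cdot x,\tilde\xi^\perp\cdot x)$. The shift $l\mathbf{n}$ moves such a cell by $\sigma l$ in the rescaled variables.

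The condition $l\ge C_0/(\sigma r)$ with $C_0\ge 2\sqrt2\pi$ guarantees that the shifted peak region lies outside the original peak region; $2\sqrt2\pi$ is the diameter of a $2\pi$-box, which is where this constant comes from.

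The condition $\sigma l\le c_1<1$ (here $c_1$ should be the same constant as the $c_0$ in the statement) guarantees that the shifted peak does not land on another lattice peak.

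\textbf{Lower bound: execution.} Let $B$ be the union of the peak cells, i.e. the preimages of the balls of radius $\sim\pi/r$ around the lattice points. On $B$, $|\mathfrak{D}_r|\gtrsim r$, while on $B+l\mathbf{n}$ the kernel is off-peak, so there $|\mathfrak{D}_r(y)|\lesssim r^{-1}\prod_i |\sin(y_i/2)|^{-1}$. Hence
\[
\big|\mathfrak{D}_{\tilde\mu}(x+l\mathbf{n})-\mathfrak{D}_{\tilde\mu}(x)\big|\ge \tfrac12\,|\mathfrak{D}_{\tilde\mu}(x)| \quad\text{on a set of proportion}\gtrsim 1\ \text{of } B,
\]
using the explicit product form $\mathfrak{D}_r(y)=(2r+1)^{-1}\prod_{i=1,2}\frac{\sin((r+\frac12)y_i)}{\sin(y_i/2)}$. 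Integrating over $B$ gives the lower bound $\gtrsim r^{1-\frac2p}$, matching the upper bound. The time dependence enters only through the phase $\sigma\tilde\mu t$ in the first coordinate; this shifts the lattice rigidly, so it is absorbed into the same factor $|\mathfrak{f}(t)|$ as in Lemma \ref{WPD-WIEJDUHFJ-HUD}.

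\textbf{Building block.} For \eqref{prop:transported_dirichlet_increment-A-2}, write
\[
\mathbb{W}_\xi(x+l\mathbf{n})-\mathbb{W}_\xi(x)=\big(\mathfrak{D}_{\tilde\mu}(x+l\mathbf{n})-\mathfrak{D}_{\tilde\mu}(x)\big)W_\xi(x+l\mathbf{n})+\mathfrak{D}_{\tilde\mu}(x)\big(W_\xi(x+l\mathbf{n})-W_\xi(x)\big).
\]
The second term is bounded by $|\xi^\perp|\,l\,\|\mathfrak{D}_{\tilde\mu}\|_{L^p}\lesssim c_1\sigma^{-1}r^{1-\frac2p}$. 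This is small relative to the main term only if $\sigma^{-1}$ is small, which it is not, so I would instead handle it directly. On the peak set $B$, $|\sum_\xi W_\xi|$ is bounded below on a positive-proportion subset, because $W_\xi$ varies on unit scale while $B$ is equidistributed at scale $\sigma^{-1}$. That yields the lower bound; the upper bound follows by the triangle inequality.

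\textbf{Main obstacle.} The hardest step is the quantitative disjointness of the peak and the shifted peak. It must hold uniformly in the direction $\mathbf{n}$ relative to the basis $(\tilde\xi,\tilde\xi^\perp)$, which the constants $C_0$ and $c_1$ must control, and the $W_\xi$-factor must be nondegenerate on the peaks. I would first prove the lower bound in the model case $\mathbf{n}=\tilde\xi^\perp$ and then extend by an anisotropy argument.
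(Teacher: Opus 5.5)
Your upper bounds are exactly the paper's: the triangle inequality together with \eqref{WIEJDUHFJ-DFIJDJ-A-1}. For the lower bounds you take a different, physical-space route. As written, its off-peak estimate is too weak, and the step for $\mathbb{W}_\xi$ is missing.

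\textbf{The paper's route.} The paper argues spectrally. Set $\eta_{\tilde\xi}=\sigma(k_1\tilde\xi+k_2\tilde\xi^{\perp})$. The shift multiplies each Fourier coefficient of $\mathfrak{D}_{\tilde\mu}$ by $e^{\mathrm{i}\eta_{\tilde\xi}\cdot l\mathbf{n}}-1$; the time phases are unimodular and drop out. Parseval then gives, up to a constant,
$\|\mathfrak{D}_{\tilde\mu}(t,\cdot+l\mathbf{n})-\mathfrak{D}_{\tilde\mu}(t,\cdot)\|_{L^2}^2=2-2(2r+1)^{-2}\,\mathrm{Re}\prod_{i=1,2}\sum_{k=-r}^{r}e^{\mathrm{i}kh_i}$, with $h_1^2+h_2^2=\sigma^2l^2$.
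Some $|h_i|$ lies in $[\sigma l/\sqrt2,\,c_1]$, and there $|\sum_k e^{\mathrm{i}kh}|\leq\pi/|h|$. So that factor is at most $\frac12(2r+1)$, which is where $C_0\geq2\sqrt2\pi$ comes from, and the $L^2$ norm is $\gtrsim1$. Duality finishes: $\|f\|_{L^2}^2\leq\|f\|_{L^p}\|f\|_{L^{p'}}$ with $\|f\|_{L^{p'}}\lesssim r^{1-2/p'}=r^{2/p-1}$. The identity $h_1^2+h_2^2=\sigma^2l^2$ treats every direction $\mathbf{n}$ at once, so your ``model case plus anisotropy'' step is unnecessary.

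\textbf{Your route for \eqref{prop:transported_dirichlet_increment-A-1}.} The peak argument can be made rigorous. Your off-peak estimate $|\mathfrak{D}_r(y)|\lesssim r^{-1}\prod_i|\sin(y_i/2)|^{-1}$ is true, but it is useless exactly in your model case $\mathbf{n}=\tilde\xi^{\perp}$. There the unshifted coordinate stays within $O(1/r)$ of $2\pi\mathbb{Z}$, where $|\sin(y_i/2)|^{-1}$ is of size $r$ or larger. Use instead the per-factor bound $|\sin((r+\frac12)y_i)/\sin(y_i/2)|\leq\min\{2r+1,\pi/\mathrm{dist}(y_i,2\pi\mathbb{Z})\}$.
\begin{itemize}
\item Take $B$ to be the cells of radius $1/(2r)$; on $B$ each factor is $\geq\frac{9}{10}(2r+1)$.
\item In the coordinate whose shift is $\geq\sigma l/\sqrt2\geq2\pi/r$, and $\leq c_1<1$ so no neighbouring peak is reached, the shifted factor is $\leq\pi r/(2\pi-\frac12)<\frac{3}{10}(2r+1)$.
\end{itemize}
Hence $|\mathfrak{D}_{\tilde\mu}(x+l\mathbf{n})|\leq\frac12|\mathfrak{D}_{\tilde\mu}(x)|$ on all of $B$, and integrating over $B$ gives the bound. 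Your route gives a pointwise picture of where the increment lives. The paper's gives robustness: it only sees moduli of averaged Fourier multipliers, and this matters in the next step.

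\textbf{The gap: the lower bound in \eqref{prop:transported_dirichlet_increment-A-2}.} You rightly note that $\mathfrak{D}_{\tilde\mu}(x)(W_\xi(x+l\mathbf{n})-W_\xi(x))$ is not small, but your replacement argument does not close.
\begin{itemize}
\item On $B$ the main term is $|\mathfrak{D}_{\tilde\mu}(x)|\,|\sum_\xi W_\xi(x)|$. The shifted term is only bounded by $\frac12|\mathfrak{D}_{\tilde\mu}(x)|\max|\sum_\xi W_\xi|$, a fixed fraction rather than $o(1)$, since $C_0$ is fixed.
\item So $|\sum_\xi W_\xi|\gtrsim1$ on a positive proportion of $B$ is not enough. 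You would need $|\sum_\xi W_\xi(x)|>\frac12\max|\sum_\xi W_\xi|$ there.
\item The equidistribution of the peak cells against the unit-scale oscillation of $W_\xi$ is the entire content of the claim, yet it is asserted, not proved. It depends on arithmetic relations between $\sigma$, $(\tilde\xi,\tilde\xi^{\perp})$ and the $\xi^{\perp}$.
\end{itemize}
The paper offers nothing to borrow here: its appeal to $|\xi|\leq C$ yields only the upper bound.

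\textbf{How to close it.} The duality route works. Since $|\sum_\xi\mathbb{W}_\xi|\lesssim|\mathfrak{D}_{\tilde\mu}|$, the $L^{p'}$ upper bound is automatic, so it suffices to show the increment has $L^2$ norm $\gtrsim1$. In $L^2$, the modes $\eta_{\tilde\xi}\pm\xi^{\perp}$ of $\mathbb{W}_\xi$ acquire multipliers $e^{\pm\mathrm{i}\xi^{\perp}\cdot l\mathbf{n}}e^{\mathrm{i}\eta_{\tilde\xi}\cdot l\mathbf{n}}-1$. Their mean square over $\tilde\xi\in\mathfrak{H}_r$ is $\geq2-2\cdot\frac12=1$, whatever the extra unimodular phase. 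This needs only that the frequencies $\eta_{\tilde\xi}\pm\xi^{\perp}$ (over $\tilde\xi\in\mathfrak{H}_r$, $\xi\in\Lambda$, both signs) be pairwise distinct. That is the precise, checkable form of the separation your equidistribution heuristic is reaching for.
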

\par
\noindent{\textbf{Proof}}. 
Denote $\tilde \xi  \triangleq \left( {{k_1},{k_2}} \right)$. Using \eqref{XINHUI-SJI-A} and \eqref{IBW-GJI-A-1}, we have
\begin{equation*}
{\mathfrak{D}_{\tilde \mu }}(t,x) = \frac{1}{{2r + 1}}\sum\limits_{\tilde \xi  \in {\mathfrak{H}_r}} {\exp \left( {{\text{i}}\sigma ({k_1}\tilde \xi  + {k_2}{{\tilde \xi }^ \bot }) \cdot x} \right)\exp \left( {{\text{i}}\sigma {k_1}\tilde \mu t} \right)}  .
\end{equation*}
Denote ${\eta _{\tilde \xi }} \triangleq \sigma ({k_1}\tilde \xi  + {k_2}{{\tilde \xi }^ \bot })$ and ${a_{\tilde \xi }}(l) \triangleq \exp \left( {{\text{i}}{\eta _k} \cdot l{\mathbf{n}}} \right) - 1$. The spatial increment satisfies
\begin{equation*}
{\mathfrak{D}_{\tilde \mu }}\left( {t,x + ln} \right) - {\mathfrak{D}_{\tilde \mu }}(t,x) = \frac{1}{{2r + 1}}\sum\limits_{\tilde \xi  \in {\mathfrak{H}_r}} {{a_{\tilde \xi }}(l)\exp \left( {{\text{i}}{\eta _k} \cdot x} \right)\exp \left( {{\text{i}}\sigma {k_1}\tilde \mu t} \right)}  .
\end{equation*}
By the Fourier orthogonality, there exists a bounded function $\mathfrak{f}\left( t \right)$ depending on the Fourier basis $\exp \left( {\mathrm{i} \sigma k_1 \tilde \mu t} \right)$ such that
\begin{align}\label{KSIFJKF-A-1}
\left\| {{\mathfrak{D}_{\tilde \mu }}\left( {t,x + l\mathbf{n}} \right) - {\mathfrak{D}_{\tilde \mu }}(t,x)} \right\|_{{L^2}}^2  \sim & \left| {\mathfrak{f}\left( t \right)} \right|{(2r + 1)^{ - 2}}\sum\limits_{\tilde \xi  \in {\mathfrak{H}_r}} {{{\left| {{a_{\tilde \xi }}\left( l \right)} \right|}^2}} \nonumber \\
\sim & 2 - 2 \cdot {(2r + 1)^{ - 2}}\operatorname{Re} \left( {\sum\limits_{{k_1} =  - r}^r {\sum\limits_{{k_2} =  - r}^r {\exp \left( {{\text{i}}{k_1}{h_1}} \right)\exp \left( {{\text{i}}{k_2}{h_1}} \right)} } } \right) ,
\end{align}
where ${h_1} = \sigma l\tilde \xi  \cdot \mathbf{n}$ and ${h_2} = \sigma l{{\tilde \xi }^ \bot } \cdot \mathbf{n}$.
\par
Using the fact that $\sum\limits_{k =  - r}^r {\exp \left( {{\text{i}}kh} \right)}  = \frac{{\sin ((r + \frac{1}{2})h)}}{{\sin (\frac{h}{2})}}$ and $\sin \left( {\frac{h}{2}} \right) \geqslant \frac{{\left| h \right|}}{\pi }$ for any $\left| h \right| \in \left( {0,\pi } \right)$, we obtain
\begin{equation*}
\left| {\sum\limits_{k =  - r}^r {{e^{ikh}}} } \right| = \left| {\frac{{\sin ((r + \frac{1}{2})h)}}{{\sin (\frac{h}{2})}}} \right| \leqslant \frac{\pi }{{\left| h \right|}} \leqslant \frac{{\sqrt 2 \pi }}{{\sigma l}}  ,
\end{equation*}
which implies that
\begin{equation}
{(2r + 1)^{ - 1}}\left| {\sum\limits_{k =  - r}^r {\exp \left( {{\text{i}}kh} \right)} } \right| \leqslant \sqrt 2 \pi {(2r + 1)^{ - 1}}{\sigma ^{ - 1}}{l^{ - 1}} \leqslant \frac{{\sqrt 2 \pi }}{{{C_0}}} \leqslant \frac{1}{2}  .  \label{KSIFJKF-A-134}
\end{equation}
Since $h_1^2 + h_2^2 = {\sigma ^2}{l^2}$, we have $\left| {{h_1}} \right| \geqslant \frac{{\sigma l}}{{\sqrt 2 }}$ or $\left| {{h_2}} \right| \geqslant \frac{{\sigma l}}{{\sqrt 2 }}$. Then, combining \eqref{KSIFJKF-A-1} and \eqref{KSIFJKF-A-134}, we derive that
\begin{equation}
{\left\| {{\mathfrak{D}_{\tilde \mu }}\left( {t,x + l\mathbf{n}} \right) - {\mathfrak{D}_{\tilde \mu }}(t,x)} \right\|_{{L^2}}} \gtrsim 1  .  \label{KSIFJKF-A-135}
\end{equation}
\par
For any $p > 1$, it follows from \eqref{WIEJDUHFJ-DFIJDJ-A-1} that
\begin{equation}\label{KSIFJKF-A-4}
{\left\| {{\mathfrak{D}_{\tilde \mu }}\left( {t,x + l\mathbf{n}} \right) - {\mathfrak{D}_{\tilde \mu }}(t,x)} \right\|_{{L^p}}} \lesssim {\left\| {{\mathfrak{D}_{\tilde \mu }}(t,x)} \right\|_{{L^p}}} \sim \left| {\mathfrak{f}\left( t \right)} \right| {r^{1 - \frac{2}{p}}}  ,
\end{equation}
and it follows from \eqref{KSIFJKF-A-135}, \eqref{KSIFJKF-A-4} and H\"{o}lder inequality that
\begin{align*}
{\left\| {{\mathfrak{D}_{\tilde \mu }}\left( {t,x + l\mathbf{n}} \right) - {\mathfrak{D}_{\tilde \mu }}(t,x)} \right\|_{{L^2}}} \leqslant & {\left\| {{\mathfrak{D}_{\tilde \mu }}\left( {t,x + l{\mathbf{n}}} \right) - {\mathfrak{D}_{\tilde \mu }}(t,x)} \right\|_{{L^p}}}{\left\| {{\mathfrak{D}_{\tilde \mu }}\left( {t,x + l{\mathbf{n}}} \right) - {\mathfrak{D}_{\tilde \mu }}(t,x)} \right\|_{{L^{\frac{{p - 1}}{p}}}}} \\
\lesssim &  {\left\| {{\mathfrak{D}_{\tilde \mu }}\left( {t,x + l{\mathbf{n}}} \right) - {\mathfrak{D}_{\tilde \mu }}(t,x)} \right\|_{{L^p}}}{r^{\frac{2}{p} - 1}} .
\end{align*}
Therefore,
\begin{equation*}
{\left\| {{\mathfrak{D}_{\tilde \mu }}\left( {t,x + l{\mathbf{n}}} \right) - {\mathfrak{D}_{\tilde \mu }}(t,x)} \right\|_{{L^p}}} \gtrsim {r^{1 - \frac{2}{p}}} .
\end{equation*}
Furthermore, in view of $\left| \xi  \right| \leqslant \left| \Lambda  \right| \leqslant C$, the upper and lower bounds of ${\left\| {\sum\limits_{\xi  \in \Lambda } {\left( {{\mathbb{W}_\xi }\left( {x + l{\mathbf{n}}} \right) - {\mathbb{W}_\xi }\left( x \right)} \right)} } \right\|_{{L^p}}}$ are controlled by ${\left\| {{\mathfrak{D}_{\tilde \mu }}\left( {t,x + ln} \right) - {\mathfrak{D}_{\tilde \mu }}(t,x)} \right\|_{{L^p}}}$. Therefore, it is easy to verify that \eqref{prop:transported_dirichlet_increment-A-2} holds. The proof is complete. \qed
\par
\begin{corollary}\label{COR-WIEJDUHFJ-HUD}
For any $1 < {p_2} < {p_1} $ and $(\sigma r)^{-1}\lesssim l \ll 1 $, the building block ${\mathbb{W}_\xi } \left( {t, x } \right)$ exhibits intermittency
\begin{equation}\label{COR-WIEJDUHFJ-A-1}
\frac{{{{\left\| {  {\sum\limits_{\xi  \in \Lambda } {{\mathbb{W}_\xi }} }  } \right\|}_{{L^{{p_1}}}}}}}{{{{\left\| { {\sum\limits_{\xi  \in \Lambda } {{\mathbb{W}_\xi }} } } \right\|}_{{L^{{p_2}}}}}}} \sim {r^{\frac{2}{{{p_2}}} - \frac{2}{{{p_1}}}}} , \  \ \forall \, t \in \left[ 0,T \right] ,
\end{equation}
and inertial-range intermittency
\begin{equation}\label{COR-WIEJDUHFJ-A-2}
\frac{{{{\left\| {\sum\limits_{\xi  \in \Lambda } {\left( {{\mathbb{W}_\xi }\left( {x + l\mathbf{n}} \right) - {\mathbb{W}_\xi }\left( x \right)} \right)}}  \right\|}_{{L^{{p_1}}}}}}}{{{{\left\| {\sum\limits_{\xi  \in \Lambda } {\left( {{\mathbb{W}_\xi }\left( {x + l\mathbf{n}} \right) - {\mathbb{W}_\xi }\left( x \right)} \right)}} \right\|}_{{L^{{p_2}}}}}}} \sim {r^{\frac{2}{{{p_2}}} - \frac{2}{{{p_1}}}}} , \ \ \forall \, t \in \left[ 0,T \right] .
\end{equation}
\end{corollary}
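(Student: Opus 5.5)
The corollary follows from taking ratios of the two-sided bounds in Lemma~\ref{WPD-WIEJDUHFJ-HUD} and Lemma~\ref{prop:transported_dirichlet_increment}, once I check that the time-dependent factor cancels. For \eqref{COR-WIEJDUHFJ-A-1}, I apply \eqref{WIEJDUHFJ-DFIJDJ-A-2} with $m=n=0$ at the two exponents $p_1$ and $p_2$. This gives
\[
\Big\| \sum_{\xi\in\Lambda}\mathbb{W}_\xi \Big\|_{L^{p_i}} \sim |\mathfrak{f}(t)|\, r^{1-\frac{2}{p_i}},\qquad i=1,2 .
\]
Dividing, the factor $|\mathfrak{f}(t)|$ cancels as long as it is the same function for both exponents and is nonzero. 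The remaining power of $r$ is then
\[
r^{(1-\frac{2}{p_1})-(1-\frac{2}{p_2})}=r^{\frac{2}{p_2}-\frac{2}{p_1}} .
\]

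For \eqref{COR-WIEJDUHFJ-A-2}, I apply \eqref{prop:transported_dirichlet_increment-A-2} at $p_1$ and $p_2$ in the same way. This needs $l$ to lie in the admissible window $[C_0(\sigma r)^{-1}, c_1\sigma^{-1}]$. The hypothesis $(\sigma r)^{-1}\lesssim l\ll 1$ gives this once the implicit constant is at least $C_0$ and $l\le c_1/\sigma$; the latter holds for $l\ll1$ because in the application $\sigma\to 0$, so $c_1/\sigma\gtrsim 1$. Dividing the two increment bounds again cancels $|\mathfrak{f}(t)|$ and leaves $r^{\frac{2}{p_2}-\frac{2}{p_1}}$.

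The main obstacle is the cancellation of $\mathfrak{f}(t)$. The lemmas only say that \emph{some} bounded function exists, and it might depend on $p$ and could vanish at certain times. I would handle this in two steps.

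\textbf{Step 1: the time dependence is a unimodular phase.} By \eqref{NSHYDGBVD-F-G-1}, the time dependence of each Fourier mode is the factor $\exp(\mathrm{i}\sigma\tilde\mu k_1 t)$. By \eqref{IBW-GJI-A-1}, the time variable enters $\mathfrak{D}_{\tilde\mu}$ only through the shift $\sigma(\tilde\xi\cdot x+\tilde\mu t)$ in the first argument of $\mathfrak{D}_r$. So the whole time dependence is a spatial translation of the profile $x\mapsto\mathfrak{D}_r(\sigma(\tilde\xi\cdot x),\sigma\tilde\xi^\perp\cdot x)$. Translation invariance of $L^p(\mathbb{T}^2)$ norms then makes $\|\mathfrak{D}_{\tilde\mu}(t)\|_{L^p}$ independent of $t$. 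The same holds for $\|\sum_\xi\mathbb{W}_\xi(t)\|_{L^p}$ up to the bounded, nondegenerate interaction with the fixed factor $W_\xi$. The same argument applies to the spatial increments, since the shift by $l\mathbf{n}$ commutes with the time translation.

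\textbf{Step 2: the two-sided bounds become uniform in $t$.} With Step 1, the bounds hold with constants depending only on $p_1,p_2,\Lambda$. In effect one may take $|\mathfrak{f}|\sim 1$ uniformly in $t\in[0,T]$.

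Taking quotients then yields both statements for every $t\in[0,T]$. If Step 1 fails to be exact because of the $W_\xi$ factor, the fallback is to run the upper and lower bounds separately. The upper bound at $p_1$ uses \eqref{WPD-prove-HUD-1}-type resonance counting. The lower bound at $p_2$ uses the $L^2$ lower bound together with the Hölder duality step already used in the proof of Lemma~\ref{prop:transported_dirichlet_increment}. Each side then carries its own $t$-independent constant.
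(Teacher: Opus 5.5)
Your proposal follows the paper's argument. The paper proves the corollary in one line by dividing the two-sided bounds \eqref{WIEJDUHFJ-DFIJDJ-A-2} and \eqref{prop:transported_dirichlet_increment-A-2} at $p_1$ and $p_2$; it discusses neither the cancellation of $\mathfrak{f}(t)$ nor the admissible $l$-window, so your extra checks only add care. Your Step 1 is exact for $\mathfrak{D}_{\tilde\mu}$ and its increments, since time acts by translation along $\tilde\xi$. It is only heuristic for $\sum_{\xi}\mathbb{W}_\xi$, because $W_\xi$ is not translated. This does not matter: the quotient only needs the same nonzero $\mathfrak{f}(t)$ at both exponents, and $\mathfrak{f}(t)\neq 0$ follows automatically from the upper bound applied to a nonzero function.
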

\par
In view of \eqref{WIEJDUHFJ-DFIJDJ-A-2} and \eqref{prop:transported_dirichlet_increment-A-2}, it is easy to verify that the intermittency \eqref{COR-WIEJDUHFJ-A-1} and the inertial-range intermittency \eqref{COR-WIEJDUHFJ-A-2} hold.
\par
\begin{lemma}\label{IBW-P-A-1}
The building block ${\mathbb{W}_{\xi }}\left( {t,x} \right)$ is real-valued, and admits
\begin{equation*}
\int_{{\mathbb{T}^2}} {{\mathbb{W}_\xi }\left( x \right) \otimes {\mathbb{W}_\xi }\left( x \right)}\ \mathrm{d}x = \mathscr{M} \left( {t,r} \right) {\xi } \otimes {\xi } ,
\end{equation*}
where the function $\mathscr{M} \left( {t,r} \right)$ satisfies the uniform bound ${C_2} \leqslant \left| {\mathscr{M}\left( {t,r} \right)} \right| \leqslant {C_1}$. Furthermore, if $\xi  = \tilde \xi $, then
\begin{equation}\label{IBW-P-A-FGHY}
\mathrm{div}\left( {{\mathbb{W}_{\xi }} \otimes {\mathbb{W}_{\xi }}} \right) = \left( {{\mathbb{W}_\xi } \cdot \nabla } \right){\mathbb{W}_\xi } + \left( {{\mathbb{W}_{\xi ^ \bot } } \cdot \nabla } \right){\mathbb{W}_{\xi ^ \bot } } - 4 a_W^2{{\tilde \mu }^{ - 1}} \xi {\cos ^2}\left( {{\xi ^ \bot }  \cdot x} \right){\partial _t}\left( {\mathfrak{D}_{\tilde \mu }^2} \right) .
\end{equation}
\end{lemma}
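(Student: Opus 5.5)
The plan is to work directly from the explicit formula $\mathbb{W}_\xi(t,x)=\mathfrak{D}_{\tilde\mu}(t,x)W_\xi(x)$ with $W_\xi(x)=2a_W\xi\cos(\xi^\perp\cdot x)$ and to treat the three claims in turn.

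\emph{Real-valuedness.} Lemma \ref{BW-P-A-1} already shows that $W_\xi$ is real. The Dirichlet kernel $\mathfrak{D}_r$ is real because $\mathfrak{H}_r$ is symmetric under $\tilde\xi\mapsto-\tilde\xi$, so the exponentials pair into cosines. Composing $\mathfrak{D}_r$ with the real linear change of variables in \eqref{IBW-GJI-A-1} preserves this, so $\mathbb{W}_\xi$ is real.

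\emph{Mean of the tensor product.} Write
\begin{equation*}
\mathbb{W}_\xi\otimes\mathbb{W}_\xi=4a_W^2\cos^2(\xi^\perp\cdot x)\,\mathfrak{D}_{\tilde\mu}^2(t,x)\,\xi\otimes\xi .
\end{equation*}
Integrating over $\mathbb{T}^2$ gives the claimed form with
\begin{equation*}
\mathscr{M}(t,r)=4a_W^2\int_{\mathbb{T}^2}\cos^2(\xi^\perp\cdot x)\,\mathfrak{D}_{\tilde\mu}^2\,\mathrm{d}x .
\end{equation*}
For the two-sided bound on $\mathscr{M}$, expand $\cos^2=\tfrac12+\tfrac12\cos(2\xi^\perp\cdot x)$. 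The constant part equals $2a_W^2\|\mathfrak{D}_{\tilde\mu}\|_{L^2}^2$, which by Lemma \ref{WPD-WIEJDUHFJ-HUD} with $p=2$, $m=n=0$ is comparable to $|\mathfrak{f}(t)|^2\sim1$ uniformly in $r$. The oscillatory part pairs $\mathfrak{D}_{\tilde\mu}^2$ against $e^{\pm2\mathrm{i}\xi^\perp\cdot x}$, so it picks out the Fourier coefficient of $\mathfrak{D}_{\tilde\mu}^2$ at $\pm2\xi^\perp$. Since the frequencies of $\mathfrak{D}_{\tilde\mu}^2$ lie in $\sigma(\mathfrak{H}_{2r}$ rotated$)$ with $\sigma r\to0$, this coefficient vanishes once $\sigma r\ll|\xi^\perp|$. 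That separation condition is exactly the one used later to give $\sigma r_q\to0$. Hence $\mathscr{M}$ is bounded above and below, with $C_1,C_2$ fixed by $a_W$ and the bounds on $\mathfrak{f}$. I expect this uniform lower bound to be the main obstacle, since it rests on the normalization hidden in $\mathfrak{f}$; I would make it explicit via Parseval, $\|\mathfrak{D}_r\|_{L^2}^2=(2r+1)^{-2}|\mathfrak{H}_r|=1$.

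\emph{Divergence identity.} Take $\xi=\tilde\xi$ and use the product rule:
\begin{equation*}
\mathrm{div}(\mathbb{W}_\xi\otimes\mathbb{W}_\xi)=4a_W^2\,\xi\,(\xi\cdot\nabla)\bigl(\cos^2(\xi^\perp\cdot x)\mathfrak{D}_{\tilde\mu}^2\bigr).
\end{equation*}
Because $\xi\cdot\xi^\perp=0$, the derivative falls only on $\mathfrak{D}_{\tilde\mu}^2$, giving $4a_W^2\xi\cos^2(\xi^\perp\cdot x)(\tilde\xi\cdot\nabla)\mathfrak{D}_{\tilde\mu}^2$. By \eqref{IBW-GJI-A-4}, $(\tilde\xi\cdot\nabla)\mathfrak{D}_{\tilde\mu}^2=\tilde\mu^{-1}\partial_t\mathfrak{D}_{\tilde\mu}^2$, and this produces the last term of \eqref{IBW-P-A-FGHY}. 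The advective terms $(\mathbb{W}_\xi\cdot\nabla)\mathbb{W}_\xi$ and $(\mathbb{W}_{\xi^\perp}\cdot\nabla)\mathbb{W}_{\xi^\perp}$ are then computed the same way, using $\mathrm{div}\,W_\xi=0$ and Lemma \ref{BW-P-A-1}. The final step is to collect terms, tracking signs carefully, so that the stated combination holds.
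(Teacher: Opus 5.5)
\paragraph{Gap: the divergence identity cannot be closed by ``tracking signs carefully''.}
Your computation
$\mathrm{div}(\mathbb{W}_\xi\otimes\mathbb{W}_\xi)=4a_W^2\tilde\mu^{-1}\xi\cos^2(\xi^\perp\cdot x)\,\partial_t(\mathfrak{D}_{\tilde\mu}^2)$
is correct. But it carries a plus sign, while the last term of \eqref{IBW-P-A-FGHY} carries a minus sign, so it does not ``produce'' that term.

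Now carry out the advective computation you deferred. Write $\mathbb{W}_\xi=g\,\xi$ with $g=2a_W\mathfrak{D}_{\tilde\mu}\cos(\xi^\perp\cdot x)$. The identities of Lemma \ref{BW-P-A-1} hold for $W_\xi$ but not for $\mathbb{W}_\xi$: indeed $\mathrm{div}\,\mathbb{W}_\xi=W_\xi\cdot\nabla\mathfrak{D}_{\tilde\mu}\neq0$. Consequently
$(\mathbb{W}_\xi\cdot\nabla)\mathbb{W}_\xi=\tfrac{1}{2}(\xi\cdot\nabla g^2)\,\xi=\tfrac{1}{2}\,\mathrm{div}(\mathbb{W}_\xi\otimes\mathbb{W}_\xi)$.
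On the other hand, $\mathbb{W}_{\xi^\perp}=2a_W\mathfrak{D}_{\tilde\mu}\cos(\xi\cdot x)\,\xi^\perp$, so $(\mathbb{W}_{\xi^\perp}\cdot\nabla)\mathbb{W}_{\xi^\perp}$ is a multiple of $\xi^\perp$.

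Set $X=4a_W^2\tilde\mu^{-1}\cos^2(\xi^\perp\cdot x)\,\partial_t(\mathfrak{D}_{\tilde\mu}^2)$ and compare coefficients of $\xi$ in \eqref{IBW-P-A-FGHY}. This gives $X=\tfrac{1}{2}X-X$, i.e.\ $X\equiv0$. That is false, since $\mathfrak{D}_{\tilde\mu}$ genuinely depends on $t$. The $\xi^\perp$-coefficient would likewise force $(\mathbb{W}_{\xi^\perp}\cdot\nabla)\mathbb{W}_{\xi^\perp}\equiv0$.

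Concretely, take $\xi=\tilde\xi=e_1$. The left side is $4a_W^2\cos^2x_2\,\partial_1(\mathfrak{D}_{\tilde\mu}^2)\,e_1$. The right side is $-2a_W^2\cos^2x_2\,\partial_1(\mathfrak{D}_{\tilde\mu}^2)\,e_1+2a_W^2\cos^2x_1\,\partial_2(\mathfrak{D}_{\tilde\mu}^2)\,e_2$.

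So the displayed identity does not hold for these blocks, and no bookkeeping will make it hold. What your argument actually proves is
$\mathrm{div}(\mathbb{W}_\xi\otimes\mathbb{W}_\xi)=2(\mathbb{W}_\xi\cdot\nabla)\mathbb{W}_\xi=4a_W^2\tilde\mu^{-1}\xi\cos^2(\xi^\perp\cdot x)\,\partial_t(\mathfrak{D}_{\tilde\mu}^2)$.
This is the temporal term the paper later invokes when introducing $w_{q+1}^{(i)}$.

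The paper's proof reaches the same intermediate expression $4a_W^2\cos^2(\xi^\perp\cdot x)\,((\xi\cdot\nabla)\mathfrak{D}_{\tilde\mu}^2)\,\xi$. It then passes to the displayed right-hand side in one unexplained line via \eqref{JSIUHDJHF-D}. That is exactly the step you deferred, and the same coefficient check rules it out.

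\paragraph{The first part is fine, by a different and cleaner route.}
Real-valuedness and the formula for $\mathscr{M}$ agree with the paper. For the two-sided bound, the paper bounds $\mathscr{M}$ above by $\|\mathfrak{D}_{\tilde\mu}\|_{L^2}^2$, and below by asserting that the frequency $\xi^\perp$ is one of the Fourier modes of $\mathfrak{D}_{\tilde\mu}$. You instead split $\cos^2=\tfrac{1}{2}+\tfrac{1}{2}\cos(2\xi^\perp\cdot x)$, evaluate the mean part by Parseval, and kill the $\pm2\xi^\perp$ coefficient of $\mathfrak{D}_{\tilde\mu}^2$ by frequency separation. This is cleaner and gives more: $\mathscr{M}=2a_W^2\|\mathfrak{D}_{\tilde\mu}\|_{L^2}^2$ is a constant.

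Do state explicitly that the separation $\sigma r\ll|\xi^\perp|$ concerns the rescaled blocks $\mathbb{W}_\xi(\lambda_{q+1}\cdot)$ used in the scheme. On the unit torus, periodicity of $\mathfrak{D}_{\tilde\mu}(t,\cdot)$ requires $\sigma\tilde\xi\in\mathbb{Z}^2$, hence $\sigma\geq1$, and then $\sigma r\ll|\xi^\perp|$ fails for large $r$.
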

\par
\noindent{\textbf{Proof}}. 
We directly calculate that
\begin{align*}
\int_{{\mathbb{T}^2}} {{\mathbb{W}_\xi }\left( x \right) \otimes {\mathbb{W}_\xi }\left( x \right)}\ \mathrm{d}x = & \int_{{\mathbb{T}^2}} {\mathfrak{D}_{\tilde \mu }^2\left( {{W_\xi }\left( x \right) \otimes {W_\xi }\left( x \right)} \right)}\ \mathrm{d}x \\
= & {\xi } \otimes {\xi }\int_{{\mathbb{T}^2}} {4a_W^2{{\cos }^2}\left( {{\xi ^ \bot }  \cdot x} \right)\mathfrak{D}_{\tilde \mu }^2}\ \mathrm{d}x .
\end{align*}
Denote $\mathscr{M} \left( {t,r} \right) =  \int_{{\mathbb{T}^2}} {4a_W^2{{\cos }^2}\left( {{\xi ^ \bot }  \cdot x} \right)\mathfrak{D}_{\tilde \mu }^2}\ \mathrm{d}x$. It follows from \eqref{WIEJDUHFJ-DFIJDJ-A-1} that
\begin{equation*}
\left| {\mathscr{M}\left( {t,r} \right)} \right| \leqslant C\left\| { \mathfrak{D}_{\tilde \mu } } \right\|_{{L^2}}^2 \leqslant C  \mathop {\sup }\limits_{t \in \left[ {0,T} \right]} \left| {{\mathfrak{f}}\left( t \right)} \right| .
\end{equation*}
Since $\xi \in  \Lambda   \subset {\mathfrak{H}_r}$, the spatial‑frequency vector of ${\cos } \left( {{\xi ^ \bot }   \cdot x} \right)$ must correspond to one of the Fourier modes of $\mathfrak{D}_{\tilde \mu }$. Thus, 
\begin{equation*}
\left| {\mathscr{M}\left( {t,r} \right)} \right| \geqslant 4a_W^2 \mathop {\inf }\limits_{t \in \left[ {0,T} \right]} \left| {{\mathfrak{f}}\left( t \right)} \right| \int_{{\mathbb{T}^2}} {{{\cos }^2}\left( {{\xi ^ \bot }   \cdot x} \right){{\cos }^2}\left( {a{\xi ^ \bot }   \cdot x} \right)} \;\mathrm{d}x \geqslant C \mathop {\inf }\limits_{t \in \left[ {0,T} \right]} \left| {{\mathfrak{f}}\left( t \right)} \right| , \ \ \mbox{with}\ a \in \mathbb{Z} .
\end{equation*}
\par
We note that
\begin{equation*}
\frac{{2r + 1}}{{2\mathrm{i} \sigma {\mathfrak{D}_{\tilde \mu }}}}\left( {\left( {{\xi ^ \bot } \cdot \nabla } \right)\mathfrak{D}_{\tilde \mu }^2} \right){\xi ^ \bot } \\
=  \sum\limits_{\tilde \xi  \in {\mathfrak{H}_r}} {{\tilde \xi  \cdot \left( {\tilde \xi  \cdot {\xi ^ \bot },{{\tilde \xi }^ \bot } \cdot {\xi ^ \bot }} \right){\xi ^ \bot }} \exp \left( {\mathrm{i} \sigma \tilde \xi  \cdot \left( {\left( {\tilde \xi  \cdot x + \tilde \mu t} \right),{{\tilde \xi }^ \bot } \cdot x} \right)} \right)}  ,
\end{equation*}
\begin{equation*}
\frac{{2r + 1}}{{2\mathrm{i} \sigma {\mathfrak{D}_{\tilde \mu }}}}\left( {\left( {\xi  \cdot \nabla } \right)\mathfrak{D}_{\tilde \mu }^2} \right)\xi =  \sum\limits_{\tilde \xi  \in {\mathfrak{H}_r}} {{\tilde \xi  \cdot \left( {\tilde \xi  \cdot \xi ,{{\tilde \xi }^ \bot } \cdot \xi } \right)\xi } \exp \left( {\mathrm{i} \sigma \tilde \xi  \cdot \left( {\left( {\tilde \xi  \cdot x + \tilde \mu t} \right),{{\tilde \xi }^ \bot } \cdot x} \right)} \right)} ,
\end{equation*}
and
\begin{equation*}
\frac{{2r + 1}}{{2\mathrm{i} \sigma {\mathfrak{D}_{\tilde \mu }}}}\nabla \left( {\mathfrak{D}_{\tilde \mu }^2} \right) =  \sum\limits_{\tilde \xi  \in {\mathfrak{H}_r}} {\tilde \xi \cdot \left( {\tilde \xi ,{{\tilde \xi }^ \bot }} \right)\exp \left( {\mathrm{i} \sigma \tilde \xi  \cdot \left( {\left( {\tilde \xi  \cdot x + \tilde \mu t} \right),{{\tilde \xi }^ \bot } \cdot x} \right)} \right)} .
\end{equation*}
Then, taking $\xi ^ \bot = \tilde \xi $, we have
\begin{equation}
\nabla \left( {\mathfrak{D}_{\tilde \mu }^2} \right) = \left( {\left( {{\xi ^ \bot } \cdot \nabla } \right)\mathfrak{D}_{\tilde \mu }^2} \right){\xi ^ \bot } + \left( {\left( {\xi  \cdot \nabla } \right)\mathfrak{D}_{\tilde \mu }^2} \right)\xi . \label{JSIUHDJHF-D}
\end{equation}
Using the fact that
\begin{align*}
\mathrm{div}\left( {{\mathbb{W}_\xi } \otimes {\mathbb{W}_\xi }} \right) = & \left( {{W_\xi } \cdot \nabla } \right)\left( {\mathfrak{D}_{\tilde \mu }^2{W_\xi }} \right) + \mathfrak{D}_{\tilde \mu }^2{W_\xi }\mathrm{div} \, {W_\xi } \\
= & \left( {\left( {{W_\xi } \cdot \nabla } \right)\mathfrak{D}_{\tilde \mu }^2} \right){W_\xi } + \mathfrak{D}_{\tilde \mu }^2\left( {{W_\xi } \cdot \nabla } \right){W_\xi } \\
= & 4a_W^2{\cos ^2}\left( {{\xi ^ \bot }  \cdot x} \right)\left( {\left( {{\xi } \cdot \nabla } \right)\mathfrak{D}_{\tilde \mu }^2} \right){\xi  } ,
\end{align*}
it follows from \eqref{IBW-GJI-A-4} and \eqref{JSIUHDJHF-D} that
\begin{align*}
\mathrm{div}\left( {{\mathbb{W}_\xi } \otimes {\mathbb{W}_\xi }} \right) = & 4a_W^2{\cos ^2}\left( {{\xi ^ \bot }  \cdot x} \right)\left( {\nabla \left( {\mathfrak{D}_{\tilde \mu }^2} \right) - \left( {\left( {{\xi ^ \bot }  \cdot \nabla } \right)\mathfrak{D}_{\tilde \mu }^2} \right){\xi ^ \bot } } \right) \\
= & \left( {{\mathbb{W}_\xi } \cdot \nabla } \right){\mathbb{W}_\xi } + \left( {{\mathbb{W}_{\xi ^ \bot } } \cdot \nabla } \right){\mathbb{W}_{\xi ^ \bot } } - 4 a_W^2 {{\tilde \mu }^{ - 1}} \xi  {\cos ^2}\left( {{\xi ^ \bot }  \cdot x} \right){\partial _t}\left( {\mathfrak{D}_{\tilde \mu }^2} \right) .
\end{align*}
The proof is complete. \qed

\end{document}